\theoremstyle{plain}
\newtheorem{thm}{Theorem}[section]
\newtheorem{prop}[thm]{Proposition}
\newtheorem{lemma}[thm]{Lemma}
\newtheorem{cor}[thm]{Corollary}
\theoremstyle{definition}
\newtheorem{defn}[thm]{Definition}
\newtheorem{ex}[thm]{Example}
\newtheorem{rmk}[thm]{Remark}
\newtheorem{nota}[thm]{Notation}
\newcounter{commentCounter}
\newcommand{\curl}{\ensuremath{\operatorname{curl}}}
\newcommand{\dive}{\ensuremath{\operatorname{div}}}
\newcommand{\dirac}{\ensuremath{\slashed D}}
\newcommand{\mdirac}{\ensuremath{\breve{\dirac}}}
\newcommand{\mdiracstar}{\ensuremath{\mdirac{}^*}}
\newcommand{\mlap}{\ensuremath{\breve{\Delta}}}
\newcommand{\vol}{\ensuremath{\mathsf{vol}}}
\newcommand{\hol}{\ensuremath{\mathrm{Hol}_{\gp} (M)}}
\newcommand{\real}{\ensuremath{\mathrm{Re}}}
\newcommand{\imag}{\ensuremath{\mathrm{Im}}}
\newcommand{\G}{\ensuremath{\mathrm{G}_2}}
\newcommand{\SP}{\ensuremath{\mathrm{Spin}(7)}}
\newcommand{\SUtw}{\ensuremath{\mathrm{SU}(2)}}
\newcommand{\SUth}{\ensuremath{\mathrm{SU}(3)}}
\newcommand{\SOs}{\ensuremath{\mathrm{SO}(7)}}
\newcommand{\Gs}{\ensuremath{\mathrm{G}_2}{-structure}}
\newcommand{\R}{\ensuremath{\mathbb R}}
\newcommand{\C}{\ensuremath{\mathbb C}}
\newcommand{\Z}{\ensuremath{\mathbb Z}}
\newcommand{\PR}{\ensuremath{\mathbb P}}
\newcommand{\ph}{\ensuremath{\varphi}}
\newcommand{\ps}{\ensuremath{\psi}}
\newcommand{\st}{\ensuremath{\ast}}
\newcommand{\hk}{\mathbin{\! \hbox{\vrule height0.3pt width5pt depth 0.2pt \vrule height5pt width0.4pt depth 0.2pt}}}
\newcommand{\ddr}{\ensuremath{\frac{\del}{\del\:\!\! r}}}
\newcommand{\dx}[1]{\ensuremath{d\:\!\! x^{#1}}}
\newcommand{\nab}[1]{\ensuremath{\nabla_{\! \! #1 \,}}}
\newcommand{\del}{\ensuremath{\partial}}
\newcommand{\ddt}{\ensuremath{\frac{d}{d t}}}
\newcommand{\z}{\ensuremath{\mathbf 0}}
\newcommand{\Fm}{\ensuremath{F_{\scriptscriptstyle M}}}
\newcommand{\Fs}{\ensuremath{F_{\scriptscriptstyle \Sigma}}}
\newcommand{\Fc}{\ensuremath{F_{\scriptscriptstyle C}}}
\newcommand{\Xs}{\ensuremath{X_{\scriptscriptstyle \Sigma}}}
\newcommand{\Xc}{\ensuremath{X_{\scriptscriptstyle C}}}
\newcommand{\Pc}{\ensuremath{P_{\scriptscriptstyle C}}}
\newcommand{\Pci}{\ensuremath{P_{\scriptscriptstyle C_i}}}
\newcommand{\dc}{\ensuremath{d_{\scriptscriptstyle C}}}
\newcommand{\dci}{\ensuremath{d_{\scriptscriptstyle C_i}}}
\newcommand{\ds}{\ensuremath{d_{\scriptscriptstyle \Sigma}}}
\newcommand{\dsc}{\ensuremath{d^*_{\scriptscriptstyle C}}}
\newcommand{\dss}{\ensuremath{d^*_{\scriptscriptstyle \Sigma}}}
\newcommand{\dssi}{\ensuremath{d^*_{\scriptscriptstyle \Sigma_i}}}
\newcommand{\curlc}{\ensuremath{\curl_{\scriptscriptstyle \! C}}}
\newcommand{\nabc}{\ensuremath{\nabla_{\scriptscriptstyle \! C}}}
\newcommand{\nabs}{\ensuremath{\nabla_{\scriptscriptstyle \! \Sigma}}}
\newcommand{\stc}{\ensuremath{\st_{\scriptscriptstyle C}}}
\newcommand{\sts}{\ensuremath{\st_{\scriptscriptstyle \Sigma}}}
\newcommand{\lapc}{\ensuremath{\Delta_{\scriptscriptstyle C}}}
\newcommand{\laps}{\ensuremath{\Delta_{\scriptscriptstyle \Sigma}}}
\newcommand{\gc}{\ensuremath{g_{\scriptscriptstyle C}}}
\newcommand{\gs}{\ensuremath{g_{\scriptscriptstyle \Sigma}}}
\newcommand{\volc}{\ensuremath{\mathsf{vol}_{\scriptscriptstyle C}}}
\newcommand{\vols}{\ensuremath{\mathsf{vol}_{\scriptscriptstyle \Sigma}}}
\newcommand{\phc}{\ensuremath{\ph_{\scriptscriptstyle C}}}
\newcommand{\psc}{\ensuremath{\ps_{\scriptscriptstyle C}}}
\newcommand{\gci}{\ensuremath{g_{\scriptscriptstyle C_i}}}
\newcommand{\phci}{\ensuremath{\ph_{\scriptscriptstyle C_i}}}
\newcommand{\psci}{\ensuremath{\ps_{\scriptscriptstyle C_i}}}
\newcommand{\nabci}{\ensuremath{\nabla_{\scriptscriptstyle \! C_i}}}
\newcommand{\lapci}{\ensuremath{\Delta_{\scriptscriptstyle C_i}}}
\newcommand{\dsci}{\ensuremath{d^*_{\scriptscriptstyle C_i}}}
\newcommand{\gm}{\ensuremath{g_{\scriptscriptstyle M}}}
\newcommand{\phm}{\ensuremath{\ph_{\scriptscriptstyle M}}}
\newcommand{\phn}{\ensuremath{\ph_{\scriptscriptstyle N}}}
\newcommand{\psn}{\ensuremath{\ps_{\scriptscriptstyle N}}}
\newcommand{\stm}{\ensuremath{\st_{\scriptscriptstyle M}}}
\newcommand{\nabm}{\ensuremath{\nabla_{\scriptscriptstyle \! M}}}
\newcommand{\volm}{\ensuremath{\mathsf{vol}_{\scriptscriptstyle M}}}
\newcommand{\lapm}{\ensuremath{\Delta_{\scriptscriptstyle M}}}
\newcommand{\dsm}{\ensuremath{d^*_{\scriptscriptstyle M}}}
\newcommand{\gp}{\ensuremath{g_{\ph}}}
\newcommand{\stp}{\ensuremath{\st_{\ph}}}
\newcommand{\Qp}{\ensuremath{Q_{\ph}}}
\newcommand{\Lp}{\ensuremath{L_{\ph}}}
\newcommand{\e}{\ensuremath{\varepsilon}}
\newcommand{\im}{\ensuremath{\operatorname{im}}}
\newcommand{\coker}{\ensuremath{\operatorname{coker}}}
\newcommand{\Ann}{\ensuremath{\operatorname{Ann}}}
\newcommand{\spi}{\ensuremath{\slashed S}}
\newcommand{\diracm}{\ensuremath{\dirac_{\scriptscriptstyle M}}}
\newcommand{\diracc}{\ensuremath{\dirac_{\scriptscriptstyle C}}}
\newcommand{\mdiracm}{\ensuremath{\mdirac_{{\scriptscriptstyle M}}}}
\newcommand{\mdiracc}{\ensuremath{\mdirac_{{\scriptscriptstyle C}}}}
\newcommand{\mdiracci}{\ensuremath{\mdirac_{{\scriptscriptstyle C_i}}}}
\newcommand{\mlapc}{\ensuremath{\mlap_{{\scriptscriptstyle C}}}}
\newcommand{\mlaps}{\ensuremath{\mlap_{{\scriptscriptstyle \Sigma}}}}
\newcommand{\ind}{\ensuremath{\operatorname{ind}}}
\newcommand{\rest}[2]{\ensuremath{ {\left. {#1} \right|}_{{#2}}}}
\newcommand{\DF}{\ensuremath{D F |_0}}
\newcommand{\DFhat}{\ensuremath{D \widehat F |_0}}
\newcommand{\os}{\ensuremath{\omega}}
\newcommand{\Os}{\ensuremath{\Omega}}
\newcommand{\Js}{\ensuremath{J}}
\newcommand{\vdim}{\ensuremath{\mathrm{v}\text{-}\! \dim}}
\newcommand{\dd}{\ensuremath {\mathfrak D}}
\newcommand{\ddc}{\ensuremath {\mathfrak D}_{\scriptscriptstyle C}}
\newcommand{\ddci}{\ensuremath {\mathfrak D}_{\scriptscriptstyle C_i}}
\newcommand{\ddm}{\ensuremath {\mathfrak D}_{\scriptscriptstyle M}}
\DeclareMathOperator\id{\mathrm{Id}}
\begin{document}

\title{Deformation theory of \texorpdfstring{$\mathbf{\G}$}{G2} conifolds}

\author{Spiro Karigiannis\footnote{A very small portion of this research was completed while the first author was supported by a Marie Curie Fellowship of the European Commission under contract number MIF1-CT-2006-039113. The contents of this work reflect only the authors' views and not the views of the European Commission.} \\ {\it Department of Pure Mathematics, University of Waterloo} \\ \tt{karigiannis@uwaterloo.ca} \and Jason D. Lotay\footnote{The second author was supported by an EPSRC Career Acceleration Fellowship EP/H003584/1 and EPSRC grant EP/K010980/1.} \\ {\it Department of Mathematics, University College London} \\ \tt{j.lotay@ucl.ac.uk} }

\maketitle

\begin{abstract}
We consider the deformation theory of \emph{asymptotically conical} (AC) and of \emph{conically singular} (CS) $\G$~manifolds. In the AC case, we show that if the rate of convergence $\nu$ to the cone at infinity is generic in a precise sense and lies in the interval $(-4, 0)$, then the moduli space is smooth and we compute its dimension in terms of topological and analytic data. For generic rates $\nu < -4$ in the AC case, and for generic positive rates of convergence to the cones at the singular points in the CS case, the deformation theory is in general obstructed. We describe the obstruction spaces explicitly in terms of the spectrum of the Laplacian on the link of the cones on the ends, and compute the virtual dimension of the moduli space.

We also present many applications of these results, including: the uniqueness of the Bryant--Salamon AC $\G$~manifolds via local rigidity and the cohomogeneity one property of AC $\G$~manifolds asymptotic to homogeneous cones; 
the smoothness of the CS moduli space if the singularities are modeled on particular $\G$~cones; and the proof of existence of a ``good gauge'' needed for desingularization of CS $\G$~manifolds. Finally, we discuss some open problems.
\end{abstract}

\tableofcontents

\section{Introduction} \label{introsec}

In this paper we study the deformation theory of certain $\G$~manifolds that are modeled on cones, which we call \emph{conifolds}. Specifically, we consider  \emph{asymptotically conical} (AC) $\G$~manifolds, which are noncompact manifolds of $\G$ holonomy with one end that is asymptotic (in an appropriate sense) to a $\G$~cone at infinity. We also consider \emph{conically singular} (CS) $\G$~manifolds. These are compact topological spaces such that after removing a finite set of points $\{x_1, \ldots, x_n\}$ they are noncompact manifolds of $\G$ holonomy with $n$ ends that are asymptotic (in an appropriate sense) to $n$ possibly distinct $\G$~cones at their vertices. The precise definitions will be given in Section~\ref{conemanifoldssec}.

This paper is a sequel to~\cite{Kdesings}, in which the first author studied the \emph{desingularization} of CS $\G$~manifolds by gluing in AC $\G$~manifolds. We shall therefore adopt the notation and conventions from~\cite{Kdesings}, but we shall review and restate the important definitions and results from~\cite{Kdesings} that are needed to keep the present paper as self-contained as possible.

The deformation theory of CS or AC manifolds in the context of special holonomy and calibrated geometry has been studied by Joyce in~\cite{JSL2} for CS special Lagrangian submanifolds, by Marshall~\cite{M} and Pacini~\cite{Pacini} for AC special Lagrangian submanifolds of $\C^m$, and by the second author~\cite{Lth, L1, L2, L3} for coassociative AC and CS submanifolds and associative AC submanifolds of $\G$~manifolds. Nordstr\"om~\cite{Nord} considered the deformation theory of \emph{asymptotically cylindrical} $\G$~manifolds. Finally, Pacini~\cite{Pac1, Pac2, Pac3} has a series of papers on the analysis of special Lagrangian conifolds, allowing for a mixture of both AC and CS ends.

It is interesting to compare the study of moduli spaces of conifolds which are \emph{submanifolds}, such as special Lagrangian, coassociative, or associative, to the moduli spaces of conifolds which are the ambient special holonomy manifolds themselves. The results in both cases are similar in spirit, but there are some notable exceptions in the details. One key point is the issue of gauge-fixing: in the submanifold case this is solved in a trivial way by essentially considering deformations defined using normal vector fields, but in the manifold case one has to work much harder leading to numerous analytic difficulties. Another key point is that in the submanifold case one can easily identify certain deformations (in the AC case) and obstructions (in the CS case) in a simple concrete way by using the fact that on the ends the submanifold may be viewed as a graph over a cone in Euclidean space, whereas in the manifold setting no such easy interpretation is usually possible.

\subsubsection*{Main results and applications}

The main theorem we prove in this paper is the following. The notation and terminology used in this theorem is defined in Section~\ref{conemanifoldssec} and Section~\ref{conifoldmodulidefnsec}.

\noindent {\bf Main Theorem} (Theorem~\ref{mainthm})
\emph{
Let $(M, \ph)$ be a $\G$~conifold, asymptotic to given $\G$~cones on the ends, at some rate $\nu$. Let $\mathcal M_{\nu}$ be the moduli space of all torsion-free $\G$~structures on $M$, asymptotic to the same cones on the ends, at the same rate $\nu$, modulo the appropriate notion of equivalence that preserves these conditions. Then for generic $\nu$ (in a sense made precise later), we have
\begin{itemize}
\item In the AC case, if $\nu \in (-4, 0)$, the space $\mathcal M_{\nu}$ is a \emph{smooth manifold} whose dimension consists of topological and analytic contributions, given precisely in Corollary~\ref{vdimfinalcor}.
\item In the AC case if $\nu < -4$, or in the CS case for any $\nu > 0$, the space $\mathcal M_{\nu}$ is in general only a \emph{topological space}, and the deformation theory may be obstructed. The virtual dimension of $\mathcal M_{\nu}$ again consists of topological and analytic contributions, given precisely in Corollary~\ref{vdimfinalcor}.
\end{itemize}
}
Here the ``appropriate notion of equivalence'' is by the action of diffeomorphisms which are asymptotic to the identity on the ends. This means that we consider only deformations of $\G$~conifolds that \emph{fix} the $\G$~cones on the ends. Note that the link of a $\G$~cone is a compact strictly \emph{nearly K\"ahler} $6$-manifold, also known as a \emph{Gray manifold}. The infinitesimal deformations of Gray manifolds were considered by Moroianu--Nagy--Semmelmann~\cite{MNS}, and the deformations were recently shown by Foscolo~\cite{F} to be obstructed in general. Indeed, there are at present only six known examples of simply-connected Gray manifolds, including the round $S^6$ and two inhomogeneous examples found recently by Foscolo--Haskins~\cite{FH}. Non-simply connected locally homogeneous examples were also found recently by Cort\'es--V\'asquez~\cite{CV}. The known simply-connected Gray manifolds are diffeomorphic to $S^6$, $\C \PR^3$, $\mathrm{SU}(3) / T^2$, and $S^3 \times S^3$, and the homogeneous structures on the latter three spaces are described in more detail in Section~\ref{g2conessec}.

\subsubsection*{Applications and corollaries}

Perhaps even more interesting than our main theorem are its several applications, which are stated precisely in Section~\ref{applicationssec}. In particular, we use Theorem~\ref{mainthm} and its ingredients to establish the following corollaries:

\begin{itemize} \setlength\itemsep{-1mm}
\item The AC moduli space, when it is smooth, is always at least $1$-dimensional because it contains deformations that are asymptotic to rescaling of the $\G$~cone at infinity. As a corollary of this observation and our moduli space dimension formula, we prove the local rigidity of the Bryant--Salamon AC $\G$~manifolds, not just as AC manifolds of rate $\nu +\e$, where $\nu$ is $-3$ or $-4$ in these cases, but in fact as AC manifolds of rate $\lambda$ all the way to any $\lambda < 0$.
\item A consequence of our main theorem is that when $\nu \in (-4, -3 + \e)$, the moduli space $\mathcal M_{\nu}$ of AC $\G$~manifolds, which is smooth in this case, is determined purely by the topology of the underlying $\G$~manifold $M$. Moreover, we show that for $\nu = -3 + \e$ this moduli space $\mathcal M_{-3 + \e}$ can be naturally \emph{immersed} into the vector space $H^3 (M, \R) \times H^4 (M, \R)$.
\item We prove that an AC $\G$~manifold that is asymptotic to a \emph{homogeneous} $\G$~cone must be of cohomogeneity one. Combining this with work of Cleyton--Swann~\cite{ClSw} and Brandhuber~\cite{Brandhuber} establishes that the Bryant--Salamon manifolds $\Lambda^2_-(S^4)$, $\Lambda^2_-(\C \PR^2)$, and $\spi (S^3)$ are the unique AC $\G$~manifolds asymptotic to the cones over the homogeneous Gray manifolds $\C \PR^3$, $\mathrm{SU}(3)/T^2$, and $S^3 \times S^3$, respectively.
\item We argue that CS $\G$~manifolds with particular types of conical singularities, including those modeled on the $\G$~cones over $\C \PR^3$ or $S^3 \times S^3$, have unobstructed deformations and thus admit a smooth moduli space $\mathcal M_{\nu}$.
\item We explicitly compare the dimensions of the moduli space $\mathcal M_{\nu}$ of CS $\G$~manifolds with one singularity (when it is smooth) to the moduli space of the compact smooth $\G$~manifolds obtained by the desingularization construction in~\cite{Kdesings}. This observation gives evidence of the likelihood that CS $\G$~manifolds are the dominant contributor to the ``boundary'' of the moduli space of compact smooth $\G$~manifolds.
\item We prove that an appropriate \emph{gauge-fixing condition} on AC $\G$~manifolds, which is required for the desingularization theorem in~\cite{Kdesings}, can in fact always be achieved.
\end{itemize}

To prove our main theorem and describe the deformation theory of $\G$~conifolds, we follow in spirit the approach of Joyce~\cite{J4}, who considered the deformation theory of \emph{compact} $\G$~manifolds. However, almost all of the steps in his proof use compactness in a crucial way, so we need to make nontrivial extensions to establish our results in the noncompact setting of conifolds.

One technical issue is that we use weighted Banach spaces of sections that decay at some rate $\lambda$ on the ends of the manifold, but these Banach spaces are \emph{not} always subspaces of $L^2$. Indeed, the rate at which the transition occurs between being in $L^2$ or not, specifically $\lambda = - \frac{7}{2}$, lies \emph{precisely between} the rates $-4$ and $-3$ that together encompass all known examples of AC $\G$~manifolds. As a result, we need to delicately work right on the edge of where certain analytic results actually hold, and in several cases we need to find subtle ways to enable us to extend the range where these results hold. This is in contrast, for example, to the asymptotically cylindrical case where one can always essentially work with $L^2$ sections. Another issue is that in the non-$L^2$ case, we are forced to use the \emph{Dirac operator} on $\G$~manifolds to prove our ``slice theorem''. This is similar to Nordstr\"om~\cite{Nord}. Finally, the noncompactness of the manifolds (and thus the nonavailability of classical Hodge theory) makes it more natural to consider the Fredholm theory of the operator $d + d^*$ rather than the Laplacian $\Delta$ to study the moduli space.  Some of the issues highlighted here are purely analytic technical problems but others are geometrically relevant.

\begin{rmk} \label{joycermk}
There are at present no known examples of CS~$\G$~manifolds. The first author and Dominic Joyce have a new construction~\cite{JK} of \emph{smooth, compact} $\G$~manifolds that may be generalizable to produce the first examples of CS~$\G$~manifolds. In these examples the singular cones would all be cones over the nearly K\"ahler $\C \PR^3$. This possible generalization is currently being investigated by the authors of the present paper. The authors are also aware of a proposed construction of CS~$\G$~manifolds due to Foscolo, Haskins, and Nordstr\"om, where the cones at the singularities would have link $S^3\times S^3$.
\end{rmk}

\subsubsection*{Organization of the paper}

We now discuss the organization of our paper. Section~\ref{prelimsec} reviews some aspects of $\G$~geometry that we require, including the spinor bundle and the Dirac operator for $\G$~manifolds. More details about $\G$~structures can be found in Bryant~\cite{Br1} and Joyce~\cite{J4}. Section~\ref{conemanifoldssec} is a review of some of the material from~\cite{Kdesings} about $\G$~conifolds. In Section~\ref{analsec}, we begin with a brief review and summary of the relevant results that we need from the Lockhart--McOwen theory of weighted Sobolev spaces on conifolds, including some Hodge-theoretic results in this setting. We then discuss in great detail many analytic results about $\G$~conifolds. In particular, this includes: a special index-change theorem; topological results about $\G$~conifolds; parallel tensors on $\G$~conifolds; various results concerning our gauge-fixing condition on moduli spaces of conifolds; and some material on analytic aspects of the Dirac operator on $\G$~cones that we require. In Section~\ref{conifoldmodulisec} we consider the deformation theory of $\G$~conifolds, and prove our main theorem in four steps. Finally, in Section~\ref{applicationssec} we present many applications of our results, as described above, and discuss some open problems.

\subsubsection*{Conventions}

We use single vertical bars $| \cdot |$ or angle braces $\langle \cdot, \cdot \rangle$ for a \emph{pointwise} inner product on sections of some vector bundle, and we use double vertical bars $|| \cdot ||$ or angle braces $\langle \langle \cdot, \cdot, \rangle \rangle$ for a global ($L^2$) inner product on sections. Since all of our manifolds are Riemannian, we often use the metric $g$ to identify vector fields and $1$-forms. This will always be clear by the context.

There are two sign conventions in $\G$~geometry. The convention we choose is the one used in Bryant--Salamon~\cite{BS} and in Harvey--Lawson~\cite{HL}, but differs from the convention used in Bryant~\cite{Br1} or Joyce~\cite{J4}. A detailed discussion of sign conventions and orientations in $\G$~geometry can be found in the first author's note~\cite{K2}.

\subsubsection*{Acknowledgements}

The authors would like to thank Benoit Charbonneau, Dominic Joyce, Johannes Nordstr\"om, Uwe Semmelmann, and Nico Spronk for useful discussions. The authors are also extremely grateful to the anonymous referees who made numerous useful suggestions that greatly improved our paper. In particular, one referee made extremely useful suggestions for improving the proofs of Propositions~\ref{excludeextensionprop} and~\ref{gaugefixingpromiseprop.2}, provided the idea for Lemma~\ref{BochnerKillinglemma} and its applications, and suggested some ideas for Section~\ref{cohomogeneitysec}.

\section{Preliminaries on \texorpdfstring{$\mathbf{\G}$}{G2} manifolds} \label{prelimsec}

\subsection{\texorpdfstring{$\mathbf{\G}$}{G2} structures} \label{G2structuressec}

A $\G$~structure on a smooth $7$-manifold $M$ is a smooth $3$-form $\ph$ satisfying a certain ``nondegeneracy'' condition. Various approaches to describing this nondegeneracy condition can be found, for example, in~\cite{Br1, J4, Kflows} but we will not explicitly need these. A $\G$~structure $\ph$ determines a Riemannian metric $g_{\ph}$ and an orientation $\vol_{\ph}$ in a nonlinear way. Thus $\ph$ determines a Hodge star operator $\st_{\ph}$, and we let $\ps = \st_{\ph} \ph$ denote the dual $4$-form. When a $\G$~structure exists, there is an open subbundle of the bundle of $3$-forms whose space of sections, denoted $\Omega^3_+$, consists of nondegenerate $3$-forms, also called \emph{positive} or \emph{stable} $3$-forms.

\begin{defn} \label{g2manifolddefn}
A $\G$~manifold is a connected manifold with a $\G$~structure $(M, \ph)$ such that $\ph$ is \emph{parallel} with respect to the Levi-Civita connection $\nab{}$ determined by $g_{\ph}$. That is, $\nab{g_{\ph}}\ph = 0$. Such a $\G$~structure is also called \emph{torsion-free}. In this case the Riemannian
holonomy $\hol$ of $(M, g_{\ph})$ is contained in the group $\G \subseteq \SOs$.
\end{defn}
\begin{rmk} \label{g2manifoldrmk}
A $\G$~manifold is always \emph{Ricci-flat}, and a $\G$~structure $\ph$ is torsion-free if and only if it is both closed and coclosed: $d\ph = 0$ and $d \ps = 0$.
\end{rmk}

On a manifold with $\G$~structure, there is a decomposition of the bundle $\Lambda^k T^* M$ of $k$-forms determined by irreducible representations of $\G$. The space $\Omega^3$ of $3$-forms decomposes as
\begin{equation} \label{lambda3eq}
\Omega^3 = \Omega^3_1 \oplus \Omega^3_7 \oplus \Omega^3_{27} .
\end{equation}
Similarly we have a decomposition of the space $\Omega^2$ as
\begin{equation} \label{lambda2eq}
\Omega^2 = \Omega^2_7 \oplus \Omega^2_{14},
\end{equation}
and isomorphic splittings of $\Omega^4$ and $\Omega^5$ given by the Hodge star of the above decompositions: $\Omega^k_l = \st_{\ph} (\Omega^{7-k}_l)$. The space  $\Omega^k_l$ consists of sections of a bundle with fibre dimension $l$ and these decompositions of $\Omega^k$ are orthogonal with respect to the metric $\gp$. The explicit descriptions of these spaces that we will need are as follows:
\begin{align}
\label{o27eq} \Omega^2_7 & \, = \, \{ \st (\alpha \wedge \ps) ;\, \alpha \in \Omega^1 \} \, = \, \{ \beta \in \Omega^2; \, \st ( \ph \wedge \beta ) = -2 \beta \}, \\ \label{o214eq} \Omega^2_{14} & \, = \, \{ \beta \in \Omega^2; \, \beta \wedge \ps = 0 \} \, = \, \{ \beta \in \Omega^2; \, \st ( \ph \wedge \beta) = \beta \}, \\ \label{o31eq} \Omega^3_1 & \, = \, \{ f \ph; \, f \in \Omega^0 \}, \\
\label{o37eq} \Omega^3_7 & \, = \, \{ \st( \alpha \wedge \ph); \, \alpha \in \Omega^1 \}, \\
\label{o327eq} \Omega^3_{27} & \, = \, \{ \eta \in \Omega^3; \, \eta \wedge \ph = 0 \text{ and } \eta \wedge \ps = 0 \}.
\end{align}
Moreover, the space $\Omega^3_{27}$ is isomorphic to the space of sections of $S^2_0 (T^*M)$, the \emph{traceless symmetric $2$-tensors} on $M$, where the correspondence is given explicitly as
\begin{equation} \label{o327detailedeq}
\begin{aligned}
\eta = \frac{1}{6} \eta_{ijk} \dx{i} \wedge \dx{j} \wedge \dx{k} \in \Omega^3_{27} \, & \longleftrightarrow \, h_{ab} \dx{a} \dx{b} \in C^{\infty}(S^2_0 (T^*M)), \\
\text{ where } \, \eta_{ijk} \, & \, = \, h_{ip} g^{pq} \ph_{qjk} + h_{jp} g^{pq} \ph_{iqk} + h_{kp} g^{pq} \ph_{ijq}.
\end{aligned}
\end{equation}

\begin{rmk} \label{torsionrmk}
One can thus decompose $d\ph = \pi_1(d\ph) + \pi_7(d\ph) + \pi_{27}(d\ph)$ and $d\ps = \pi_7(d\ps) + \pi_{14}(d\ps)$ for any $\G$~structure. It is a nontrivial but well known fact that $\pi_7(d\ph)$ vanishes if and only if $\pi_7(d\ps)$ vanishes. See, for example,~\cite{Kflows} for a direct verification of this fact. In particular, the implication of this that we will require is that for a \emph{closed} $\G$~structure, we have $d\ps \in \Omega^5_{14}$.
\end{rmk}

\begin{rmk} \label{laplaciansrmk}
When the $\G$~structure is torsion-free, the given decompositions of the spaces of forms are preserved by the Hodge Laplacian $\Delta = d d^* + d^* d$. The essential aspect of this fact that we will need is the following. Suppose $f$ is any function and $X$ is any $1$-form on a $\G$~manifold $M$. Then
\begin{align} \label{temp27eq1}
\Delta (f \ph) & = (\Delta f) \ph, & \Delta (f \ps) & = (\Delta f) \ps, & \\
\label{temp27eq2} \Delta (X \wedge \ph) & = (\Delta X) \wedge \ph, & \Delta (X \wedge \ps) & = (\Delta X) \wedge \ps.
\end{align}
The identities in~\eqref{temp27eq1} can be proved using just the fact that $\ph$ and $\ps$ are parallel, while the identities in~\eqref{temp27eq2} also require the fact that $\G$~manifolds have vanishing Ricci curvature.
\end{rmk}

We end this section with a discussion of the nonlinear map $\Theta : \Omega^3_+ \to \Omega^4$ which associates to any $\G$~structure $\ph$, the dual $4$-form $\ps = \Theta(\ph) = \st_{\ph} \ph$ with respect to the metric $g_{\ph}$ and orientation associated to $\ph$. One result which will be crucial is the following. This is Proposition 10.3.5 in Joyce~\cite{J4}, adapted to suit our present purposes.

\begin{lemma} \label{quadlemma}
Suppose that $\ph$ is a torsion-free $\G$~structure with induced metric $\gp$, and dual $4$-form $\ps = \st_{\ph}  \ph$. Let $\eta$ be a $3$-form which has sufficiently small $C^0$ norm with respect to $\gp$, so that $\ph + \eta$ is still nondegenerate. Then we have
\begin{equation} \label{quadeq}
\Theta(\ph + \eta) \, = \, \ps + \stp \left( \frac{4}{3} \pi_1 (\eta) + \pi_7 (\eta) - \pi_{27} (\eta) \right) + \Qp(\eta),
\end{equation}
where $\pi_k$ is the projection onto the subspace $\Omega^3_k$ with respect to the $\G$~structure $\ph$. The nonlinear map $\Qp : \Omega^3 \to \Omega^4$ satisfies
\begin{equation} \label{quadeq2}
\Qp(0) = 0, \qquad \qquad | \Qp(\eta) | \leq C |\eta|^2, \qquad \qquad |\nab{} \Qp (\eta)| \leq C |\eta| |\nab{} \eta|,
\end{equation}
for some $C > 0$, where the norms and the covariant derivatives are taken with respect to $\gp$.
\end{lemma}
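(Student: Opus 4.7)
The plan is to prove this lemma by Taylor-expanding the pointwise algebraic map $\Theta \colon \Omega^3_+ \to \Omega^4$ around $\ph$. Because $\Theta$ is defined fibrewise on the open subbundle $\Omega^3_+ \subset \Lambda^3 T^* M$ of nondegenerate $3$-forms and is smooth there, the small-$C^0$ hypothesis on $\eta$ ensures that $\ph + t\eta \in \Omega^3_+$ pointwise for all $t \in [0,1]$. I would simply \emph{define}
\begin{equation*}
\Qp(\eta) \,:=\, \Theta(\ph + \eta) \, - \, \ps \, - \, \stp\!\left(\tfrac{4}{3}\pi_1(\eta) + \pi_7(\eta) - \pi_{27}(\eta)\right),
\end{equation*}
so that $\Qp(0) = 0$ is automatic. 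The statement then reduces to (i) verifying that the displayed linear term in $\eta$ is indeed the first-order Taylor coefficient of $\Theta$ at $\ph$, and (ii) establishing the two pointwise bounds in~\eqref{quadeq2}.

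For step (i), I would compute $\frac{d}{dt}|_{t=0} \Theta(\ph + t\eta)$ as a purely algebraic identity at a single point, working in a $\G$-adapted orthonormal coframe in which $\ph$ has its standard model form. Using the decomposition~\eqref{o31eq}--\eqref{o327eq}, I would write $\eta = \eta_1 + \eta_7 + \eta_{27}$ and compute the variations of both the metric $g_{\ph + t\eta}$ and the Hodge star $\st_{\ph + t\eta}$ induced by each component separately (cf.~\cite{Kflows}, and Proposition~10.3.5 of~\cite{J4} in the Joyce convention). By $\G$-equivariance and Schur's lemma the linearization $D\Theta|_\ph$ acts as a scalar on each irreducible summand $\Omega^3_k$; the three scalars $\tfrac{4}{3}, 1, -1$ are then extracted by evaluating on a convenient representative of each type, with careful tracking of the sign convention of~\cite{K2}.

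For the pointwise bound $|\Qp(\eta)| \leq C|\eta|^2$, I would apply Taylor's theorem with integral remainder to the smooth function $F(t) := \Theta(\ph + t\eta)$ on $[0,1]$, obtaining
\begin{equation*}
\Qp(\eta) \,=\, F(1) - F(0) - F'(0) \,=\, \int_0^1 (1 - t)\, F''(t)\, dt.
\end{equation*}
Since $\Theta$ is smooth on $\Omega^3_+$ and $\ph + t\eta$ remains in a compact neighborhood of $\ph$ in $\Omega^3_+$ for $\eta$ small in $C^0$, the integrand is bounded by $C|\eta|^2$ uniformly in $t$. For the derivative bound, the torsion-free hypothesis $\nab{}\ph = 0$ is crucial: it implies $\nab{}\ps = 0$, $\nab{}\gp = 0$, and hence $\nab{}\stp = 0$, so each projection $\pi_k$ commutes with $\nab{}$. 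Differentiating the definition of $\Qp$ in a direction $v$, every occurrence of $\ph$, $\ps$, $\stp$, or a $\pi_k$ differentiates to zero, leaving
\begin{equation*}
\nab{v} \Qp(\eta) \,=\, \bigl( D\Theta|_{\ph + \eta} - D\Theta|_{\ph} \bigr)(\nab{v}\eta).
\end{equation*}
Smoothness of $\psi \mapsto D\Theta|_\psi$ together with the mean value theorem then gives $\| D\Theta|_{\ph + \eta} - D\Theta|_\ph \| \leq C|\eta|$ for $\eta$ small in $C^0$, whence $|\nab{} \Qp(\eta)| \leq C|\eta|\,|\nab{}\eta|$.

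The main obstacle is step (i): $\G$-equivariance fixes the \emph{form} of the linearization but not the three numerical coefficients, and computing $\frac{d}{dt}|_0 \st_{\ph + t\eta}$ on $\Omega^3_1$ and $\Omega^3_{27}$ requires the nonlinear dependence of $\gp$ on $\ph$, from which the factor $\tfrac{4}{3}$ and the sign of the type-$27$ term emerge. Once the linearization is in hand, the two estimates are essentially formal consequences of smooth calculus on a relatively compact neighborhood of $\ph$ in $\Omega^3_+$, with the torsion-free identity $\nab{}\ph = 0$ precisely killing the curvature-type terms that would otherwise spoil the derivative bound.
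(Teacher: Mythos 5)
Your argument is correct, but note that the paper does not actually prove this lemma: it simply cites Proposition 10.3.5 of Joyce~\cite{J4}, adapted to the sign conventions of~\cite{BS, HL}. What you have written is essentially a reconstruction of that standard proof — define $\Qp$ as the difference, identify the linear term $D\Theta|_{\ph}$, and get~\eqref{quadeq2} from fibrewise Taylor expansion plus $\nab{}\ph = 0$ — so there is no genuinely different route here, just a proof where the paper has a citation. Two small points are worth making explicit if you write this up. First, since $M$ is noncompact, the constant $C$ must be uniform over $M$; this follows because $\Theta$ is a fixed $\mathrm{GL}^+(7,\R)$-equivariant map on the model fibre $\Lambda^3(\R^7)^*$, so in a $\gp$-adapted coframe $\ph$ is the standard constant form and the bounds on $D^2\Theta$ over the ball $\{|\eta| \leq \e\}$ are the same in every fibre — your appeal to a ``compact neighborhood of $\ph$'' should be phrased this way. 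Second, your identity $\nab{v}\Qp(\eta) = \bigl(D\Theta|_{\ph+\eta} - D\Theta|_{\ph}\bigr)(\nab{v}\eta)$ tacitly uses the chain rule $\nab{v}\bigl(\Theta(\sigma)\bigr) = D\Theta|_{\sigma}(\nab{v}\sigma)$ for the Levi-Civita connection; this is justified precisely by the naturality of $\Theta$ (it commutes with parallel transport, which acts on the form bundles through $\mathrm{GL}^+$), and then torsion-freeness kills the remaining terms exactly as you say. With those two remarks, and the representation-theoretic computation of the scalars $\tfrac{4}{3}, 1, -1$ carried out in the convention of~\cite{K2, Kflows}, your outline is a complete and correct proof.
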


We will denote the second term on the right hand side of~\eqref{quadeq}, which is the term \emph{linear} in $\eta$, by $\Lp (\eta)$. That is,
\begin{equation} \label{Lpeq}
\Lp (\eta) \, = \, \stp \left( \frac{4}{3} \pi_1 (\eta) + \pi_7 (\eta) - \pi_{27} (\eta) \right).
\end{equation}
The map $\Lp : \Omega^3 \to \Omega^4$ is the linearization of the nonlinear map $\Theta$ at $\ph$, and is therefore a key ingredient for understanding the infinitesimal deformations of torsion-free $\G$~structures.

Suppose that $\ph$ is a torsion-free $\G$~structure, so that in particular it is coclosed: $d\ps = 0$. Take the exterior derivative of~\eqref{quadeq} to obtain:
\begin{equation} \label{dThetaeq0}
d(\Theta(\ph + \eta)) \, = \, d (\Lp(\eta)) + d(\Qp(\eta))
\end{equation}
and hence
\begin{equation} \label{dThetaeq}
\stp d(\Theta(\ph + \eta)) \, = \, - d^* \stp (\Lp(\eta)) - d^* \stp (\Qp(\eta)).
\end{equation}
We will use~\eqref{dThetaeq} in Section~\ref{onetoonesec} when we establish a one-to-one correspondence between torsion-free ``gauge-fixed'' $\G$~structures and solutions to a nonlinear partial differential equation.

\subsection{The spinor bundle and the Dirac operator on \texorpdfstring{$\mathbf{\G}$}{G2} manifolds} \label{spinorsec}

A $\G$~structure on a manifold $M$ induces a spin structure, and therefore $M$ admits an associated \emph{Dirac operator} $\dirac$ on its spinor bundle $\spi(M)$. When the $\G$~structure is torsion-free this Dirac operator squares to the Hodge Laplacian, after identifying spinors with forms. These facts are explained in detail in the first author's note~\cite{K2}. Here we only review the facts that are needed in the present paper. The $\G$~structure $\ph$ is always understood to be torsion-free in this section. Also, we will make repeated use of the identities relating the interior product, the wedge product, and the star operator for $\G$~structures, which can be found in~\cite[Lemma 2.23]{K1}. (Note that since we are using the opposite orientation convention from~\cite{K1}, equation (2.13) in that paper should have a factor of $-2$ instead of $+2$.)

\begin{defn} \label{curldefn}
We define the \emph{curl} of a vector field $X$ to be the vector field $\curl X$ given by
\begin{equation} \label{curleq}
\curl X \, = \, \st (d X \wedge \ps)
\end{equation}
In other words, up to $\G$-equivariant isomorphisms, the vector field $\curl X$ is the projection onto the $\Omega^2_7$ component of the $2$-form $d X$. It is easy to check that in local coordinates we have
\begin{equation} \label{curleq2}
(\curl X)_k \, = \, g^{pi} g^{qj} (\nabla _p X_q) \ph_{ijk}.
\end{equation}
\end{defn}

\begin{lemma} \label{curllemma}
Consider the vector field $X$ as a $1$-form using the metric. Then $dX \in \Omega^2 = \Omega^2_7 \oplus \Omega^2_{14}$. The $\Omega^2_7$ component of $dX$ is given by
\begin{equation} \label{curllemmaeq}
\pi_7 (dX) \, = \, \frac{1}{3} (\curl X) \hk \ph \, = \, \frac{1}{3} \st \bigl( (\curl X) \wedge \ps \bigr).
\end{equation}
\end{lemma}
\begin{proof}
We know that $\pi_7 (dX) = W \hk \ph$ for some vector field $W$. We compute
\begin{align*}
\curl X \, & = \, \st (dX \wedge \ps) \, = \, \st ( \pi_7(dX) \wedge \ps) \\
& = \, \st ( (W \hk \ph) \wedge \ps) \, = \, \st ( 3 \st W) \, = \ 3 W,
\end{align*}
as claimed.
\end{proof}

\begin{rmk} \label{curlrmk}
We will have several occasions to use relations between gradient, curl, and divergence. Recall that we always identify $1$-forms with their metric dual vector fields. The facts that will be needed are
\begin{align*}
d^* (\curl Y) \, & = \, 0 \quad \text{for any vector field $Y$}, \\ 
\curl (df) \, & = \, 0 \quad \text{for any function $f$}, \\ 
\curl (\curl Y) \, & = \, - d d^* Y + \Delta Y \, = \, d^* d Y \quad \text{for any vector field $Y$}. 
\end{align*}
These facts are all proved in~\cite{K2}.
\end{rmk}

There is a natural identification of the spinor bundle $\spi(M)$, a rank $8$ real vector bundle, with the bundle $\R \oplus TM$ whose sections lie in $\Omega^0_1 \oplus \Omega^1_7$.
\begin{defn} \label{diracdefn}
The \emph{Dirac operator} $\dirac$ is a first order differential operator from $\spi(M)$ to $\spi(M)$ defined as follows. Let $s = (f, X)$ be a section of $\spi(M)$. Then
\begin{equation} \label{diraceq}
\dirac (f, X) \, = \,  ( d^* X \, , \, df + \curl X ).
\end{equation}
The Dirac operator is \emph{formally self-adjoint}: $\dirac^* = \dirac$.
\end{defn}

We now relate the \emph{Dirac Laplacian} $\dirac^* \! \dirac = \dirac^2$ to the Hodge Laplacian $\Delta$.
\begin{prop} \label{twolapsprop}
Under the identification of the spinor bundle $\spi (M)$ with the bundle $\Omega^0_1 \oplus \Omega^1_7$, the Dirac Laplacian $\dirac^2$ and the Hodge Laplacian $\Delta$ are equal:
\begin{equation} \label{twolapseq}
\dirac^2 (f, X) \, = \, (\Delta f, \Delta X).
\end{equation}
\end{prop}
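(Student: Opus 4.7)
The plan is to expand $\dirac^2(f,X)$ directly from~\eqref{diraceq} and verify that the off-diagonal terms vanish while the diagonal terms reproduce the Hodge Laplacian on the appropriate factor. Applying $\dirac$ twice,
\begin{equation*}
\dirac^2(f,X) \, = \, \dirac(d^*X,\, df + \curl X) \, = \, \bigl(d^*df + d^*\curl X,\ dd^*X + \curl df + \curl\curl X\bigr),
\end{equation*}
so that~\eqref{twolapseq} reduces to the three identities
\begin{equation*}
\curl df \, = \, 0, \qquad d^* \curl X \, = \, 0, \qquad \curl \curl X \, = \, d^* dX.
\end{equation*}

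The first two will be immediate consequences of $d^2 = 0$ together with the torsion-free hypothesis. Indeed $\curl df = \st(d^2 f \wedge \ps) = 0$ from~\eqref{curleq}, and since $\st^2 = 1$ on forms of any degree in dimension seven we have $\st \curl X = dX \wedge \ps$; hence, up to the sign appearing in $d^* = \pm \st d \st$,
\begin{equation*}
d^* \curl X \, = \, \pm \st d(dX \wedge \ps) \, = \, \pm \st\bigl(d^2 X \wedge \ps + dX \wedge d\ps\bigr) \, = \, 0,
\end{equation*}
using $d\ps = 0$ from Remark~\ref{g2manifoldrmk}.

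The substantive step is $\curl\curl X = d^* dX$. I would prove it by decomposing $dX = \omega_7 + \omega_{14}$ under~\eqref{lambda2eq}. Since $\omega_{14} \wedge \ps = 0$ by~\eqref{o214eq}, $\curl X$ depends only on $\omega_7$; writing $\omega_7 = Z \hk \ph$ and using the $\G$-equivariant isomorphism $\Omega^2_7 \cong \Omega^1$ gives $\curl X = c Z$ for a universal constant $c$ determined by a single coordinate computation on flat $\R^7$. On the other hand, $\st \omega_7 = Z \wedge \ps$ together with $d\ps = 0$ expresses $d^* \omega_7$ in terms of $\curl Z$, and the relation $d(dX) = 0$ allows $d\omega_{14}$ to be traded for $-d\omega_7$ and decomposed via~\eqref{lambda3eq}; combined with Propositions~\ref{special14prop} and~\ref{special27prop}, this yields a clean expression for $d^*\omega_{14}$, and matching all constants gives the identity.

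The main obstacle in this direct route is the careful bookkeeping of the universal constants and sign conventions attached to the $\G$-equivariant identifications $\Omega^k_l \cong \Omega^{7-k}_l$ and to the projection operators $\pi_l$. A conceptually cleaner alternative, granted the spinor-bundle identification of Section~\ref{spinorsec}, is to combine the Lichnerowicz formula $\dirac^2 = \nabla^* \nabla + R/4$ on $\spi(M)$ with the Weitzenb\"ock identity $\Delta = \nabla^* \nabla + \operatorname{Ric}$ on $1$-forms; since $\G$~manifolds are Ricci-flat and hence scalar-flat, both operators collapse to $\nabla^* \nabla$ on their respective factors, and~\eqref{twolapseq} follows at once without any coefficient chase.
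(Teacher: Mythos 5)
Your reduction of~\eqref{twolapseq} to the three identities $\curl (df) = 0$, $d^* (\curl X) = 0$, and $\curl \curl X = d^* d X$ is exactly the route the paper has in mind: the proposition is not proved in-text but deferred to~\cite{K2}, and Remark~\ref{modifieddiracrmk} records precisely these three facts as the required input. Your proofs of the first two identities are correct ($d^2 = 0$ together with $d\ps = 0$). The gap is in the third identity, which is where all the content lies: as written, the appeal to Propositions~\ref{special14prop} and~\ref{special27prop} does not carry the step, since Proposition~\ref{special14prop} assumes the $\Omega^2_{14}$ form is \emph{coclosed} (essentially what you are trying to compute, not a hypothesis you have), and Proposition~\ref{special27prop} is a statement about forms in $\Omega^3_{27}$ that yields no formula for $d^* \pi_{14}(dX)$; moreover the ``matching of constants'' is never actually performed. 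The plan can be completed, but with different tools: writing $dX = \omega_7 + \omega_{14}$ with $\omega_7 = Z \hk \ph$ (so $\curl X = 3Z$, using $(Z \hk \ph) \wedge \ps = 3 \st Z$), one uses $\st \omega_7 = Z \wedge \ps$ and $\st \omega_{14} = \ph \wedge \omega_{14}$ from~\eqref{o27eq}--\eqref{o214eq}, $d\ph = d\ps = 0$, the trade $d\omega_{14} = - d\omega_7$, the vanishing of $\ph \wedge \Omega^3_{27}$ and of $\ph \wedge \ph$, and the computations $\pi_1 (d(Z \hk \ph)) = -\tfrac{3}{7} (d^* Z) \ph$ and $\pi_7 (d (Z \hk \ph)) = \tfrac{1}{2} \st (\curl Z \wedge \ph)$ from the proof of Proposition~\ref{modifieddiracprop}, together with $\st(Y \wedge \ph) \wedge \ph = -4 \st Y$. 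This gives $d^* \omega_7 = \curl Z = \tfrac{1}{3} \curl \curl X$ and $d^* \omega_{14} = 2 \curl Z = \tfrac{2}{3} \curl \curl X$, whence $d^* d X = \curl \curl X$.

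Your fallback argument, by contrast, is sound and is genuinely different from the computational verification the paper relies on via~\cite{K2}: once one observes that the identification $\spi(M) \cong \Omega^0_1 \oplus \Omega^1_7$ is \emph{parallel} --- it is given by Clifford multiplication against the parallel unit spinor, so the spin connection corresponds to the Levi-Civita connection acting diagonally on the two summands --- the Lichnerowicz formula $\dirac^2 = \nabla^* \nabla + \tfrac{R}{4}$ and the Bochner formula $\Delta = \nabla^* \nabla + \mathrm{Ric}$ on $1$-forms, combined with Ricci-flatness (Remark~\ref{g2manifoldrmk}), give~\eqref{twolapseq} immediately. The parallelism of the identification is the one hypothesis you should state explicitly, since without it the two connection Laplacians need not correspond; with it, this is the cleanest proof and entirely avoids the coefficient chase, at the cost of taking as given that the operator of Definition~\ref{diracdefn} really is the spin Dirac operator under that identification (which is what Section~\ref{spinorsec} and~\cite{K2} provide).
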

\begin{proof}
Proposition~\ref{twolapsprop} is proved in~\cite{K2}, using the facts from Remark~\ref{curlrmk}.\end{proof}

For our present purposes, we actually require a slight \emph{modification} of the Dirac operator as follows. The spinor bundle $\spi (M)$ is isomorphic to $\Omega^0_1 \oplus \Omega^1_7$ and hence, via a $\G$-equivariant isomorphism, it is also isomorphic to $\Omega^3_1 \oplus \Omega^3_7$. Now consider the map
\begin{equation} \label{modifieddiracdefneq}
\begin{aligned}
\mdirac \, : \, \Omega^0_1 \oplus \Omega^1_7 & \to \Omega^3_1 \oplus \Omega^3_7 \\
\quad (f, X) & \mapsto \frac{1}{2} \st (df \wedge \ph) + \pi_{1 + 7} (d (X \hk \ph))
\end{aligned}
\end{equation}
where $\pi_{1 + 7}$ denotes orthogonal projection onto $\Omega^3_1 \oplus \Omega^3_7$. This is a first order linear differential operator. Using a particular $\G$-equivariant isomorphism that identifies the codomain of $\mdirac$ with $\Omega^0_1 \oplus \Omega^1_7$, we can compare this operator $\mdirac$ with the usual Dirac operator $\dirac$ from Definition~\ref{diracdefn}. Before we can explicitly describe this identification, we need a preliminary lemma that will be useful on multiple occasions.

\begin{lemma} \label{dXhkphlemma}
Let $X$ be a vector field, and consider the form $X \hk \ph \in \Omega^2_7$. Then
\begin{equation}  \label{dXhkpheq}
\pi_1 \bigl( d (X \hk \ph) \bigr) \, = \, - \frac{3}{7} (d^* X) \ph, \qquad \pi_7 \bigl( d (X \hk \ph) \bigr) \, = \, \frac{1}{2} \st \bigl( (\curl X) \wedge \ph \bigr).
\end{equation}
\end{lemma}
\begin{proof}
We have
\begin{equation*}
\pi_1 ( d(X \hk \ph)) \, = \, h \ph \quad \text{for some $h \in \Omega^0_1$.}
\end{equation*}
Using the fact that $\Omega^3_7 \oplus \Omega^3_{27}$ lies in the kernel of wedge product with $\ps$, we compute
\begin{equation*}
d ( (X \hk \ph) \wedge \ps ) \, = \, d(X \hk \ph) \wedge \ps \, = \, \pi_1 ( d(X \hk \ph)) \wedge \ps \, = \, h \ph \wedge \ps \, = \, 7 h \vol.
\end{equation*}
Hence, we find that
\begin{equation*}
d ( 3 \st X ) \, = \, d ( (X \hk \ph) \wedge \ps ) \, = \, 7 h \vol,
\end{equation*}
and thus $h = \frac{3}{7} \st d (\st X) = -\frac{3}{7} d^* X$. Similarly, we have
\begin{equation*}
\pi_7 ( d(X \hk \ph)) \, = \, \st (Y \wedge \ph) \quad \text{for some $Y \in \Omega^1_7$.}
\end{equation*}
Using the fact that $\Omega^3_1 \oplus \Omega^3_{27}$ lies in the kernel of wedge product with $\ph$, we compute
\begin{equation*}
d ( (X \hk \ph) \wedge \ph ) \, = \, d(X \hk \ph) \wedge \ph \, = \, \pi_7 ( d(X \hk \ph)) \wedge \ph \, = \, \st (Y \wedge \ph) \wedge \ph \, = \, -4 \st Y.
\end{equation*}
Hence, we find that
\begin{equation*}
-4 \st Y \, = \, d ( (X \hk \ph) \wedge \ph ) \, = \, d ( -2 \st ( X \hk \ph ) ) \, = \, -2 d (X \wedge \ps) \, = \, -2 (dX) \wedge \ps,
\end{equation*}
and thus $Y = \frac{1}{2} \st ((dX) \wedge \ps) = \frac{1}{2} \curl X$.
\end{proof}

\begin{prop} \label{modifieddiracprop}
The ``modified Dirac operator'' $\mdirac$ of equation~\eqref{modifieddiracdefneq}, when considered as a linear operator on $\Omega^0_1 \oplus \Omega^1_7$ via the $\G$-equivariant isomorphism
\begin{equation} \label{modifieddiracisomorphismeq}
\begin{aligned}
\Omega^0_1 \oplus \Omega^1_7 & \, \cong \, \Omega^3_1 \oplus \Omega^3_7 \\
(f, X) & \, \leftrightarrow \, \left( - \frac{3}{7} f \ph, \, \frac{1}{2} \st (X \wedge \ph) \right)
\end{aligned}
\end{equation}
is the usual Dirac operator
\begin{equation} \label{modifieddiracdefneq2}
\dirac \, : \, (f, X) \mapsto \left( d^* X, \, df + \curl X \right).
\end{equation}
Hence the operator $\mdirac : \Omega^0_1 \oplus \Omega^1_7 \to \Omega^3_1 \oplus \Omega^3_7$ is essentially the same as $\dirac : \Omega^0_1 \oplus \Omega^1_7 \to \Omega^0_1 \oplus \Omega^1_7$ and is in particular elliptic.
\end{prop}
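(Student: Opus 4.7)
The strategy is to decompose $\mdirac(f,X)$ along the $\Omega^3_1 \oplus \Omega^3_7$ splitting of its codomain and verify that under the isomorphism~\eqref{modifieddiracisomorphismeq} each piece corresponds to the expected slot of $\dirac(f,X) = (d^*X,\, df + \curl X)$. Since the first summand $\frac{1}{2}\st(df \wedge \ph)$ of $\mdirac(f,X)$ lies entirely in $\Omega^3_7$ and is visibly the image of $df \in \Omega^1_7$ under $Y \leftrightarrow \frac{1}{2}\st(Y \wedge \ph)$, it remains to identify $\pi_1(d(X \hk \ph)) \in \Omega^3_1$ with $-\frac{3}{7}(d^*X)\ph$ and $\pi_7(d(X \hk \ph)) \in \Omega^3_7$ with $\frac{1}{2}\st(\curl X \wedge \ph)$.

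For the $\Omega^3_1$ piece, I use $\pi_1(\eta) = \frac{1}{7}\langle \eta, \ph\rangle\,\ph$, which is the orthogonal projection since $|\ph|^2_{\gp} = 7$. In local coordinates one has $(d(X \hk \ph))_{ijk} = (\nabla_i X^m)\ph_{mjk} + (\nabla_j X^m)\ph_{mki} + (\nabla_k X^m)\ph_{mij}$, so contracting with $\ph^{ijk}$ and applying the standard $\G$-contraction identity $\ph_{mjk}\ph^{ijk} = 6\,\delta^i_m$ from the appendix of~\cite{Kflows} gives $\langle d(X \hk \ph), \ph\rangle = -3\,d^*X$. Hence $\pi_1(d(X \hk \ph)) = -\frac{3}{7}(d^*X)\ph$, which is precisely the image of $d^*X \in \Omega^0_1$ under $f \leftrightarrow -\frac{3}{7}f\ph$.

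For the $\Omega^3_7$ piece, every element is uniquely of the form $\frac{1}{2}\st(\alpha \wedge \ph)$ for some $\alpha \in \Omega^1$, so I extract $\alpha$ for $\pi_7(d(X \hk \ph))$ by a pairing argument. For any test $1$-form $\beta$ one has $\langle \frac{1}{2}\st(\alpha \wedge \ph),\, \st(\beta \wedge \ph)\rangle = 2\langle \alpha, \beta\rangle$ using the identity $\langle \alpha \wedge \ph,\, \beta \wedge \ph\rangle = 4\langle \alpha, \beta\rangle$, while $\langle d(X \hk \ph),\, \st(\beta \wedge \ph)\rangle = -\langle \beta,\, \st(d(X \hk \ph) \wedge \ph)\rangle$ after moving Hodge stars and using that $d(X \hk \ph)$ has odd degree. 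Equating these yields $\alpha = -\frac{1}{2}\st(d(X \hk \ph) \wedge \ph)$, and a short local-coordinate computation using the Cartan identity $d(X \hk \ph) = \mathcal{L}_X \ph$ (valid since $d\ph = 0$), the defining formula $\curl X = \st(dX \wedge \ps)$, and once more the contraction identities from~\cite{Kflows}, identifies this $\alpha$ with $\curl X$. Combining the three contributions, $\mdirac(f,X)$ corresponds under the isomorphism to $(d^*X,\, df + \curl X) = \dirac(f,X)$, and ellipticity is inherited from the well-known ellipticity of the Dirac operator.

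The main obstacle is this third step: while the identification of $\pi_7(d(X \hk \ph))$ with $\frac{1}{2}\st(\curl X \wedge \ph)$ is ultimately an algebraic consequence of the $\G$-contraction identities, it requires patient bookkeeping of signs and numerical factors, particularly because the paper follows the Bryant--Salamon/Harvey--Lawson orientation convention rather than the Bryant/Joyce one in which many of these identities are most commonly recorded.
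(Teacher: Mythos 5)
Your proposal is correct and arrives at the same two essential facts as the paper --- that $\pi_1(d(X \hk \ph)) = -\frac{3}{7}(d^* X)\,\ph$ and $\pi_7(d(X \hk \ph)) = \frac{1}{2}\st(\curl X \wedge \ph)$ --- but by a different mechanism. You compute the $\Omega^3_1$ component via the projection formula $\pi_1(\eta) = \frac{1}{7}\langle \eta, \ph\rangle\,\ph$ and the coordinate contraction $\ph_{mjk}\ph^{ijk} = 6\,\delta^i_m$, and you extract the $\Omega^3_7$ component by a pointwise pairing argument, arriving at $\alpha = -\frac{1}{2}\st\bigl(d(X \hk \ph)\wedge\ph\bigr)$; these identities, including the parity sign in the pairing step, all check out. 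The paper instead wedges $d(X \hk \ph)$ with $\ps$ and with $\ph$, notes that only the $\Omega^3_1$ (resp.\ $\Omega^3_7$) component survives each wedge, and then pulls the exterior derivative outside using $d\ph = d\ps = 0$ together with $(X \hk \ph)\wedge\ps = 3\st X$ and $(X \hk \ph)\wedge\ph = -2\st(X \hk \ph) = -2\, X\wedge\ps$. The one place your write-up is incomplete is exactly the step you flag as the main obstacle: the identification of $-\frac{1}{2}\st\bigl(d(X \hk \ph)\wedge\ph\bigr)$ with $\curl X$ is only asserted and deferred to an unspecified coordinate computation. It is true, and the paper's trick disposes of it in two lines with no coordinates: since $d\ph = 0$ and $d\ps = 0$, one has $d(X \hk \ph)\wedge\ph = d\bigl((X \hk \ph)\wedge\ph\bigr) = -2\, d(X\wedge\ps) = -2\, dX\wedge\ps$, so your $\alpha$ equals $\st(dX\wedge\ps) = \curl X$ by Definition~\ref{curldefn}. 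In short, your route buys a slightly more systematic derivation of the coefficients $-\tfrac{3}{7}$ and $\tfrac{1}{2}$ from projection formulae, while the paper's route buys complete avoidance of coordinate bookkeeping; the sign-convention worry you raise evaporates once you invoke the corrected identity $(X \hk \ph)\wedge\ph = -2\st(X \hk \ph)$ quoted in Section~\ref{spinorsec}.
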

\begin{proof}
Using Lemma~\ref{dXhkphlemma} and equation~\eqref{modifieddiracdefneq}, we have
\begin{align*}
\mdirac (f, X) \, & = \, \frac{1}{2} \st (df \wedge \ph) - \frac{3}{7} (d^* X) \ph + \frac{1}{2} \st (\curl X \wedge \ph) \\
& = \, \left(- \frac{3}{7} (d^* X) \ph, \frac{1}{2} \st \bigl( ( df + \curl X ) \wedge \ph \bigr) \right) \, \in \,  \Omega^3_1 \oplus \Omega^3_7,
\end{align*}
which is what we wanted to show.
\end{proof}

\begin{cor} \label{mdirackernelcor}
Suppose that $s = (f , X)$ lies in the kernel of $\mdirac$ or $\mdiracstar$. Then $\Delta f = 0$ and $\Delta X = 0$.
\end{cor}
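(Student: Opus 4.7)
The plan is to obtain the conclusion directly from Propositions~\ref{modifieddiracprop} and~\ref{twolapsprop}, essentially by bookkeeping the bundle isomorphisms.

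Let $T \colon \Omega^0_1 \oplus \Omega^1_7 \to \Omega^3_1 \oplus \Omega^3_7$ denote the $\G$-equivariant bundle isomorphism~\eqref{modifieddiracisomorphismeq}, i.e.\ $T(f,X) = \bigl(-\tfrac{3}{7} f \ph,\, \tfrac{1}{2}\st(X \wedge \ph)\bigr)$; it is clearly a bundle isomorphism on each irreducible summand. Proposition~\ref{modifieddiracprop} then reads $\mdirac = T \circ \dirac$, where $\dirac$ is the usual Dirac operator from Definition~\ref{diracdefn}. Taking the $L^2$ formal adjoint and using the self-adjointness $\dirac^* = \dirac$ yields $\mdiracstar = \dirac \circ T^*$, where $T^* \colon \Omega^3_1 \oplus \Omega^3_7 \to \Omega^0_1 \oplus \Omega^1_7$ is again a $\G$-equivariant bundle isomorphism that respects the two irreducible summands (so, by Schur's lemma, it acts as nonzero scalar multiplication on each summand).

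For the first case, if $(f,X) \in \Omega^0_1 \oplus \Omega^1_7$ lies in $\ker \mdirac$, then $T\bigl(\dirac(f,X)\bigr) = 0$ and the injectivity of $T$ forces $\dirac(f,X) = 0$. Applying $\dirac$ once more and invoking Proposition~\ref{twolapsprop} gives
\begin{equation*}
(\Delta f,\,\Delta X) \, = \, \dirac^2 (f,X) \, = \, \dirac\bigl(\dirac(f,X)\bigr) \, = \, 0,
\end{equation*}
so that $\Delta f = 0$ and $\Delta X = 0$, as required.

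For the second case, an element $s$ of $\ker \mdiracstar \subseteq \Omega^3_1 \oplus \Omega^3_7$ is written as $s = (f,X)$ via the identification~\eqref{modifieddiracisomorphismeq}, i.e.\ $s = T(f,X)$ with $(f,X) \in \Omega^0_1 \oplus \Omega^1_7$. Then $\mdiracstar s = \dirac\bigl(T^* s\bigr) = 0$, so $T^* s \in \ker \dirac$. Writing $T^* s = (\alpha f, \beta X)$ for the scalars $\alpha, \beta \neq 0$ coming from $T^* \circ T$ on each irreducible summand, Proposition~\ref{twolapsprop} yields $\dirac^2 (T^* s) = (\Delta(\alpha f),\,\Delta(\beta X)) = 0$, and since $\alpha, \beta$ are constants this again forces $\Delta f = 0$ and $\Delta X = 0$. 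The argument is entirely a matter of unwinding the identifications from Proposition~\ref{modifieddiracprop}; there is no real analytic obstacle, only the need to track the two distinct bundle isomorphisms $T$ and $T^*$ appearing on the left and right of $\dirac$ in the factorizations of $\mdirac$ and $\mdiracstar$.
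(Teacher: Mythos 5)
Your proof is correct and follows essentially the same route as the paper: the paper's argument is simply that the conclusion is immediate from Proposition~\ref{modifieddiracprop}, $\dirac^* = \dirac$, and $\dirac^2(f,X) = (\Delta f, \Delta X)$, which is exactly what you do. Your explicit factorizations $\mdirac = T \circ \dirac$ and $\mdiracstar = \dirac \circ T^*$, with the observation that $T^*T$ is a parallel automorphism acting by nonzero constants on each irreducible summand, just spell out the identifications the paper leaves implicit.
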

\begin{proof}
This is immediate from Proposition~\ref{modifieddiracprop}, $\dirac^* = \dirac$ and $\dirac^2(f, X) = (\Delta f, \Delta X)$.
\end{proof}

\begin{cor} \label{do27cor}
Let $\mu = X \hk \ph = \st (X \wedge \ps) \in \Omega^2_7$. Then $\pi_1(d\mu) = 0$ if and only if $d^* X = 0$; and $\pi_7(d\mu) = 0$ if and only if $\curl X = 0$.
\end{cor}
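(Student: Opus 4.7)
The plan is to read off the statement directly from the computations performed inside the proof of Proposition~\ref{modifieddiracprop}, since the decomposition of $d(X \hk \ph)$ into its $\Omega^3_1$ and $\Omega^3_7$ components was essentially computed there explicitly.

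First, I would recall that $\mu = X \hk \ph \in \Omega^2_7$, and apply $d$ to get a $3$-form whose $\Omega^3_1 \oplus \Omega^3_7$ components I want to identify. From the computations inside Proposition~\ref{modifieddiracprop}, we already know
\begin{equation*}
\pi_1(d(X \hk \ph)) \, = \, -\tfrac{3}{7}(d^*X)\,\ph, \qquad \pi_7(d(X \hk \ph)) \, = \, \tfrac{1}{2}\st(\curl X \wedge \ph).
\end{equation*}
The first identity follows by writing $\pi_1(d(X\hk\ph)) = h\ph$, wedging with $\ps$, using $d((X\hk\ph)\wedge\ps) = d(3\st X) = -3(d^*X)\vol$, and noting $\ph\wedge\ps = 7\vol$. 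The second identity follows by writing $\pi_7(d(X\hk\ph)) = \st(Y\wedge\ph)$, wedging with $\ph$, using $(X\hk\ph)\wedge\ph = -2\st(X\hk\ph) = -2(X\wedge\ps)$ and $\st(Y\wedge\ph)\wedge\ph = -4\st Y$, from which $Y = \frac{1}{2}\st(dX\wedge\ps) = \frac{1}{2}\curl X$.

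Now the conclusions are immediate. The projection $\pi_1(d\mu)$ vanishes if and only if $-\frac{3}{7}(d^*X)\ph = 0$, which is equivalent to $d^*X = 0$ since $\ph$ is nowhere zero. For the $\Omega^3_7$ part, recall from equation~\eqref{o37eq} that the map $\alpha \mapsto \st(\alpha\wedge\ph)$ is a $\G$-equivariant isomorphism $\Omega^1 \to \Omega^3_7$, so $\pi_7(d\mu) = \frac{1}{2}\st(\curl X \wedge \ph)$ vanishes if and only if $\curl X = 0$.

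The only mild subtlety is bookkeeping the signs and numerical factors, together with keeping track of the identities $\st(X\hk\ph) = X\wedge\ps$ and $\ph\wedge\ps = 7\vol$, $\st(\alpha\wedge\ph)\wedge\ph = -4\st\alpha$, all of which are standard $\G$~identities as used in~\cite{Kflows}; no new idea beyond what already appears in Proposition~\ref{modifieddiracprop} is required.
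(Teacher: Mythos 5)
Your proposal is correct and follows essentially the same route as the paper: both read off $\pi_1(d(X\hk\ph)) = -\frac{3}{7}(d^*X)\,\ph$ and $\pi_7(d(X\hk\ph)) = \frac{1}{2}\st(\curl X \wedge \ph)$ from the computation in Proposition~\ref{modifieddiracprop} and then conclude using the injectivity of $\alpha \mapsto \alpha \wedge \ph$ (equivalently, $\alpha \mapsto \st(\alpha\wedge\ph)$) on $1$-forms. No gap.
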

\begin{proof}
In Lemma~\ref{dXhkphlemma}, we showed $\pi_1 (d \mu) = - \frac{3}{7} d^* X$ and $\pi_7 (d \mu) = \frac{1}{2} \st (\curl X \wedge \ph)$. The result follows since wedge product with $\ph$ is injective on $1$-forms.
\end{proof}

Notice that Corollary~\ref{do27cor} demonstrates a relationship between symmetries of $\ph$ and the kernels of the Dirac operators. Explicitly, if $\mathcal{L}_X \ph = d (X \hk \ph) = 0$ then $d^*X = 0$ and $\curl X = 0$, and thus $\dirac (0,X) = \mdirac (0,X) = 0$. Of course, if $\mathcal{L}_X \ph = 0$ then $X$ is a Killing vector field (that is, $\mathcal{L}_X g_{\ph} = 0$), but the converse is not necessarily true. We can now see precisely which Killing fields preserve $\ph$, which will be useful in studying the kernels of $\dirac$ and $\mdirac$. Recall that for any Killing field $X$, we always have $d^* X = 0$, which can be seen by taking the trace of $(\mathcal L_X g)_{ij} = \nabla_i X_j + \nabla_j X_i$.

\begin{prop}\label{Killing.prop}
A vector field $X$ on $(M,\ph)$ satisfies $\mathcal{L}_X \ph = 0$ if and only if $\mathcal{L}_X g_{\ph} = 0$ and $\curl X = 0$.
\end{prop}

\begin{proof}
From Corollary~\ref{do27cor} we know that $\pi_{1+7} (\mathcal{L}_X \ph) = \pi_{1+7} d (X \hk \ph) = 0$ if and only if $d^*X = 0$ and $\curl X = 0$, so it only remains to consider $\pi_{27} (\mathcal{L}_X \ph)$. Recall that $\Omega^3_1\oplus\Omega^3_{27}\cong C^{\infty}(S^2T^*M)$ using the map~\eqref{o327detailedeq}. Under this identification, $\pi_{1+27}(\mathcal{L}_X\ph) = \frac{1}{2} \mathcal{L}_X g_{\ph}$. This is explicitly derived in~\cite[Equation (4.7)]{Kflows}. See also~\cite[Lemma 9.3]{LotayWei}. The result now follows.
\end{proof}

\subsection{Some identities for \texorpdfstring{$2$}{2}-forms and \texorpdfstring{$3$}{3}-forms on \texorpdfstring{$\G$}{G2}~manifolds} \label{formsidentitiessec}

In this section we collect some results related to the decompositions of $2$-forms and $3$-forms in the torsion-free case. In some of the proofs in this section we use the local coordinate identities for $\G$~structures that can be found in~\cite{Kflows}. In particular we repeatedly use identities for contractions of $\ph$ with itself given in the appendix of~\cite{Kflows}.

Recall that an element $\eta \in \Omega^3_{27}$ corresponds uniquely to a symmetric traceless $2$-tensor $h$ on $M$.
\begin{defn} \label{divedefn}
The \emph{divergence} $\dive h$ of a symmetric $2$-tensor $h$ is the $1$-form given in local coordinates by
\begin{equation} \label{divedefneq}
(\dive h)_k \, = \, g^{pq} \nabla_p h_{qk}.
\end{equation}
This operation is formally the same as taking the divergence of a vector field to obtain a function, except that we still have a free index, so the resulting object is a $1$-form. It is also formally the same in local coordinates as $- d^* \beta$ when $\beta$ is a $2$-form.
\end{defn}

\begin{prop} \label{special3formprop}
Let $\ph$ be a torsion-free $\G$~structure and let $\zeta \in \Omega^3$ be written in the form $\zeta = f \ph + \st (X \wedge \ph) + \eta$ in terms of the decomposition $\Omega^3 = \Omega^3_1 \oplus \Omega^3_7 \oplus \Omega^3_{27}$, where $\eta \in \Omega^3_{27}$ corresponds to a traceless symmetric $2$-tensor $h$ by~\eqref{o327detailedeq}. Then we have
\begin{align} \label{pi1dzetaeq}
\pi_1 (d \zeta) \, & = \, \frac{4}{7} (d^* X) \ps, & & \\ \label{pi7dzetaeq}
\pi_7 (d \zeta) \, & = \, Y \wedge \ph, & & \text{ where $Y = df - \frac{1}{2} \curl X - \frac{1}{2} \dive h$}, \\ \label{pi7dstarzetaeq}
\pi_7 (d^* \zeta) \, & = \, \st (W \wedge \ph), & & \text{ where $W = - df + \frac{2}{3} \curl X - \frac{2}{3} \dive h$}.
\end{align}
\end{prop}
\begin{proof}
Note that $\st \zeta = f \ps + X \wedge \ph + \st \eta$. Since $\ph$ is torsion-free, we thus find that
\begin{equation} \label{special3formproptempeq1}
\begin{aligned}
d \zeta \, & = \, df \wedge \ph + d \st (X \wedge \ph) + d \eta, \\
d^* \zeta \, & = \, - \st d \st \zeta \, = \, - \st ( df \wedge \ps)  - \st ( d X \wedge \ph) + d^* \eta.
\end{aligned}
\end{equation}

We know that $\pi_1 (d \zeta) = \lambda \ps$ for some function $\lambda$. Thus we compute
\begin{align*}
7 \lambda \, & = \, \langle \lambda \ps, \ps \rangle \, = \, \langle \pi_1 (d \zeta), \ps \rangle \\
& = \, \langle d \zeta, \ps \rangle \, = \, \langle df \wedge \ph + d \st (X \wedge \ph) + d \eta, \ps \rangle.
\end{align*}
The first term on the right hand side vanishes because $df \wedge \ph$ is of type $\Omega^4_7$. The last term vanishes because $\langle d \eta , \ps \rangle \vol = d \eta \wedge \ph = d (\eta \wedge \ph) = 0$ by~\eqref{o327eq}. Thus we have
\begin{equation*}
7 \lambda \, = \, \langle d \st (X \wedge \ph), \ps \rangle \, = \, \langle \st d \st (X \wedge \ph), \ph \rangle \, = \, \langle d^* (X \wedge \ph), \ph \rangle
\end{equation*}
as $\st$ is an isometry and $d^* = \st d \st$ on $4$-forms. Computing in local coordinates we find
\begin{align*}
7 \lambda \, & = \, \frac{1}{6} ( d^* (X \wedge \ph) )_{ijk} \ph_{abc} g^{ia} g^{jb} g^{kc} \\
& = \, - \frac{1}{6} g^{pq} \nabla_p (X \wedge \ph)_{qijk} \ph_{abc} g^{ia} g^{jb} g^{kc} \\
& = \, - \frac{1}{6} g^{pq} \nabla_p (X_q \ph_{ijk} - X_i \ph_{qjk} - X_j \ph_{iqk} - X_k \ph_{ijq}) \ph_{abc} g^{ia} g^{jb} g^{kc} \\
& = \, -\frac{1}{6} g^{pq} ( (\nabla_p X_q) \ph_{ijk} - 3 (\nabla_p X_i) \ph_{qjk} ) \ph_{abc} g^{ia} g^{jb} g^{kc} \\
& = \, -\frac{1}{6} g^{pq} ( 42 \nabla_p X_q - 3 (\nabla_p X_i) g^{ia} (6 g_{qa}) ) \, = \, -\frac{1}{6}(42 - 18) g^{pq} \nabla_p X_q \, = \, 4 d^* X,
\end{align*}
which establishes~\eqref{pi1dzetaeq}.

To derive~\eqref{pi7dzetaeq} and~\eqref{pi7dstarzetaeq}, we will need to contract $\eta \in \Omega^3_{27}$ with $\ph$ on two indices. A short computation using~\eqref{o327detailedeq} gives
\begin{equation} \label{special27eq1}
h_{ia} = \frac{1}{4} \ph_{ijk} \eta_{abc} g^{jb} g^{kc}.
\end{equation}
We have $\pi_7 (d \zeta) = Y \wedge \ph$ for some $1$-form $Y$. Let $Z$ be an arbitrary $1$-form. Computing as before and using identities from~\cite[Lemma 2.2.2]{K1}, we find
\begin{align} \nonumber
4 \langle Y, Z \rangle \, & = \, \langle Y \wedge \ph, Z \wedge \ph \rangle \, = \, \langle \pi_7 (d \zeta), Z \wedge \ph \rangle \, = \langle d \zeta, Z \wedge \ph \rangle \\ \nonumber
& = \, \langle df \wedge \ph + d \st (X \wedge \ph) + d \eta, Z \wedge \ph \rangle \\ \label{pi7dzetatempeq1}
& = \, 4 \langle df, Z \rangle + \langle d \st (X \wedge \ph) , Z \wedge \ph \rangle + \langle d \eta, Z \wedge \ph \rangle.
\end{align}
We can compute the second term on the right hand side of~\eqref{pi7dzetatempeq1} in local coordinates as follows:
\begin{align*}
\langle d \st (X \wedge \ph) , Z \wedge \ph \rangle \, & = \, \langle \st d \st (X \wedge \ph) , \st (Z \wedge \ph) \rangle \\
& = \, \langle d^* (X \wedge \ph), - Z \hk \ps \rangle \, = \, - \frac{1}{6} ( d^*(X \wedge \ph) )_{ijk} (Z \hk \ps)_{abc} g^{ia} g^{jb} g^{kc} \\
& = \, \frac{1}{6} g^{pq} \nabla_p (X_q \ph_{ijk} - X_i \ph_{qjk} - X_j \ph_{iqk} - X_k \ph_{ijq}) Z^m \ps_{mabc} g^{ia} g^{jb} g^{kc} \\
& = \, \frac{1}{6} Z^m g^{pq} ( (\nabla_p X_q) \ph_{ijk} - 3 (\nabla_p X_i) \ph_{qjk} ) \psi_{mabc} g^{ia} g^{jb} g^{kc} \\
& = \,\frac{1}{6} Z^m ( 0 - 3 (\nabla_p X_i) (-4 \ph_{qma}) ) g^{pq} g^{ia} \, = \, -2 Z^m (\nabla_p X_i) \ph_{qam} g^{pq} g^{ia} \\
& \, = \, - 2 Z^m (\curl X)_m \, = \, -2 \langle \curl X, Z \rangle,
\end{align*}
where we have used~\eqref{curleq2} in the last line. Substituting this result into~\eqref{pi7dzetatempeq1} gives
\begin{equation} \label{pi7dzetatempeq2}
4 \langle Y, Z \rangle \, = \, 4 \langle df, Z \rangle - 2 \langle \curl X , Z \rangle + \langle d \eta, Z \wedge \ph \rangle.
\end{equation}
Again, we compute the last term above in local coordinates. We obtain
\begin{align*}
\langle d \eta, Z \wedge \ph \rangle \, & = \, \frac{1}{24} (d \eta)_{ijkl} (Z \wedge \ph)_{abcd} g^{ia} g^{jb} g^{kc} g^{ld} \\
& = \, \frac{1}{24} ( \nabla_i \eta_{jkl} - \nabla_j \eta_{ikl} - \nabla_k \eta_{jil} - \nabla_l \eta_{jki} ) (Z \wedge \ph)_{abcd} g^{ia} g^{jb} g^{kc} g^{ld} \\
& = \, \frac{4}{24} (\nabla_i \eta_{jkl}) (Z_a \ph_{bcd} - Z_b \ph_{acd} - Z_c \ph_{bad} - Z_d \ph_{bca} ) g^{ia} g^{jb} g^{kc} g^{ld} \\
& = \, \frac{1}{6} g^{ia} \bigl( \nabla_i ( \eta_{jkl} \ph_{bcd} g^{jb} g^{kc} g^{ld} ) Z_a - 3 \nabla_i (\eta_{jkl} \ph_{acd} g^{kc} g^{ld} ) g^{jb} Z_b \bigr).
\end{align*}
Now, using~\eqref{special27eq1} twice and the fact that $h$ is traceless and symmetric, we find
\begin{align*}
\langle d \eta, Z \wedge \ph \rangle \, & = \, \frac{1}{6} g^{ia} \bigl( \nabla_i ( 4 h_{jb} g^{jb} ) Z_a - 3 \nabla_i (4 h_{ja}) g^{jb} Z_b \bigr) \\
& = \, \frac{1}{6} g^{ia} (0 - 12 g^{jb} (\nabla_i h_{ja}) Z_b) \, = \, - 2 (\dive h)_j Z_b g^{jb} \, = \, - 2 \langle \dive h, Z \rangle.
\end{align*}
Since $Z$ is arbitrary, substituting the above expression into~\eqref{pi7dzetatempeq2} establishes~\eqref{pi7dzetaeq}.

Next, from~\eqref{special3formproptempeq1}, the decompositions~\eqref{o27eq} and~\eqref{o214eq}, and equation~\eqref{curllemmaeq}, we find
\begin{align} \nonumber
d^* \zeta \, & = \, - \st ( df \wedge \ps) + 2 \pi_7 (dX) - \pi_{14} (dX) + d^* \eta \\ \label{pi7dstarzetatempeq1}
& = \, - \st ( df \wedge \ps) + \frac{2}{3} \st \bigl( (\curl X) \wedge \ps \bigr) - \pi_{14} (dX) + d^* \eta
\end{align}
We have $\pi_7 (d^* \zeta) = \st (W \wedge \ps)$ for some $1$-form $W$. Let $Z$ be an arbitrary $1$-form. Computing again with identities from~\cite[Lemma 2.2.2]{K1}, using~\eqref{pi7dstarzetatempeq1} we find
\begin{align} \nonumber
3 \langle W, Z \rangle \, & = \, \langle \st (W \wedge \ps), \st (Z \wedge \ps) \rangle \, = \, \langle \pi_7 (d^* \zeta), \st (Z \wedge \ps) \rangle \, = \langle d^* \zeta, \st (Z \wedge \ps) \rangle \\ \nonumber
& = \, \langle - \st ( df \wedge \ps) + \frac{2}{3} \st \bigl( (\curl X) \wedge \ps \bigr) - \pi_{14} (dX) + d^* \eta
, \st (Z \wedge \ps) \rangle \\ \label{pi7dstarzetatempeq2}
& = \, -3 \langle df, Z \rangle + 2 \langle \curl X, Z \rangle + 0 + \langle d^* \eta, \st (Z \wedge \ps) \rangle.
\end{align}
As before, we compute the last term above in local coordinates, using~\eqref{special27eq1}. We obtain
\begin{align*}
\langle d^* \eta, \st (Z \wedge \ps) \rangle \, & = \, \langle d^* \eta, Z \hk \ph \rangle \, = \, \frac{1}{2} (d^* \eta)_{ij} (Z \hk \ph)_{ab} g^{ia} g^{jb} \\
& = \, -\frac{1}{2} g^{pq} (\nabla_p \eta_{qij}) Z^m \ph_{mab} g^{ia} g^{jb} \, = \, -\frac{1}{2} g^{pq} \bigl( \nabla_p (\eta_{qij} \ph_{mab} g^{ia} g^{jb} ) \bigr) Z^m \\ 
& = \, - \frac{1}{2} g^{pq} \bigl( \nabla_p (4 h_{qm}) \bigr) Z^m \, = \, -2 \langle \dive h, Z \rangle.
\end{align*}
Since $Z$ is arbitrary, substituting the above expression into~\eqref{pi7dstarzetatempeq2} establishes~\eqref{pi7dstarzetaeq}.
\end{proof}

\begin{rmk} \label{special27bryantcor}
If $\eta \in \Omega^3_{27}$, then (since $f = 0$ and $X = 0$), we conclude that $\pi_7 (d \eta) = 0$ if and only if $\pi_7 (d^* \eta) = 0$, because in this case both conditions are equivalent to $\dive h = 0$ by Proposition~\ref{special3formprop}. This fact was justified in~\cite{Br1} using representation theory.
\end{rmk}

\begin{cor} \label{special27cor}
Let $\ph$ be a torsion-free $\G$~structure and consider $\zeta = f \ph + \st (X \wedge \ph) + \eta$ as in Proposition~\ref{special3formprop}. If $d \zeta = 0$, then $d^* X = 0$ and $\pi_7 (d^* \zeta) = ( - \frac{7}{3} df + \frac{4}{3} \curl X ) \hk \ph$.
\end{cor}
\begin{proof}
This follows immediately from~\eqref{pi1dzetaeq},~\eqref{pi7dzetaeq}, and~\eqref{pi7dstarzetaeq}, by solving for $\dive h$.
\end{proof}

\begin{cor} \label{special27cor2}
Let $\ph$ be a torsion-free $\G$~structure and consider $\zeta = f \ph + \st (X \wedge \ph) + \eta$ as in Proposition~\ref{special3formprop}. If $d \zeta = 0$ and $\pi_7 (d^* \zeta) = 0$, then $\Delta f = 0$ and $\Delta X = 0$.
\end{cor}
\begin{proof}
From Corollary~\ref{special27cor} we have $d^* X = 0$ and $df = \frac{4}{7} \curl X$. Recall the relations between $d^*$, $\curl$, and $\Delta$ given in Remark~\ref{curlrmk}. Taking $d^*$ of both sides of $df = \frac{4}{7} \curl X$ gives $\Delta f = 0$, and taking curl of both sides and using $d^* X = 0$ gives $\Delta X = 0$.
\end{proof}

\begin{cor} \label{special27cor3}
Let $\ph$ be a torsion-free $\G$~structure and consider $\zeta = f \ph + \st (X \wedge \ph) + \eta$ as in Proposition~\ref{special3formprop}. Suppose that $d \zeta = 0$. If $df = 0$ and $\curl X = 0$, then $\pi_7 (d^* \zeta) = 0$.
\end{cor}
\begin{proof}
This follows from Proposition~\ref{special3formprop}, noting that the hypotheses imply that $\dive h = 0$.
\end{proof}

\begin{cor} \label{mdiracusefulcor}
Let $(h, Y) \in \Omega^0_1 \oplus \Omega^1_7$. Consider the modified Dirac operator $\mdirac$ of equation~\eqref{modifieddiracdefneq}. Then we have
\begin{equation} \label{mdiracusefuleq}
\pi_1 d \bigl( \mdirac(h,Y) \bigr) \, = \, \frac{2}{7} (\Delta h)\ps.
\end{equation}
\end{cor}
\begin{proof}
From the proof of Propostion~\ref{modifieddiracprop}, we have
\begin{equation*}
\mdirac (h, Y) \, = \, - \frac{3}{7}(d^* Y) \ph + \frac{1}{2} \st \bigl( (dh + \curl Y) \wedge \ph \bigr).
\end{equation*}
Also, using Proposition~\ref{special3formprop}, if $\gamma = \bigl(f \ph, \st (X \wedge \ph) \bigr) \in \Omega^3_1 \oplus \Omega^3_7$, then $\pi_1 (d \gamma) = \frac{4}{7} (d^*X) \ps$. Hence, taking $X = \frac{1}{2} (dh + \curl Y)$ and using  $d^* \curl(Y) = 0$, we deduce that $\pi_1 d \bigl( \mdirac(h,Y) \bigr)  = \frac{2}{7} (\Delta h) \ps$.
\end{proof}

To motivate the next proposition, consider the following situation. Let $f_t$ be a one parameter family of diffeomorphisms generated by a vector field $X$ on $M$. Then $\ph_t = f_t^* \ph$ is torsion-free for all $t$ for which $f_t$ is defined. Thus in particular, the dual $4$-form $\ps_t = \st_{\ph_t} \ph_t$ is closed. Since $\rest{\ddt}{t=0} \ph_t = \mathcal L_X \ph = d( X \hk \ph)$, differentiating the equation $d \ps_t  = 0$ at $t = 0$ and using~\eqref{dThetaeq0} shows that $d \bigl( \Lp d( X \hk \ph) \bigr) = 0$. Using the identities we have derived, we can actually give a direct proof of this result, which is instructive. It says that infinitesimal diffeomorphisms satisfy the linearized torsion-free equations.
\begin{prop} \label{infinitesimaldiffeosprop}
Let $\ph$ be a torsion-free $\G$~structure and define $\eta = d (X \hk \ph)$ for some vector field $X$. Then $d ( \Lp \eta) = 0$.
\end{prop}
\begin{proof}
From equations~\eqref{Lpeq} and~\eqref{dXhkpheq}, we find
\begin{align*}
\Lp \eta \, & = \, \frac{4}{3} \st \pi_1 \eta + \st \pi_7 \eta - \st \pi_{27} \eta \\
& = \, \frac{7}{3} \st \pi_1 \eta + 2 \st \pi_7 \eta - \st \eta \\
& = - (d^* X) \ps + (\curl X) \wedge \ph - \st \eta.
\end{align*}
Hence we have
\begin{equation} \label{infinitesimaldiffeotempeq}
d (\Lp \eta) \, = \, - (d d^* X) \wedge \ps + (d \curl X) \wedge \ph - d \st \eta.
\end{equation}
Using~\eqref{temp27eq2} and~\eqref{curleq}, the third term above can be rewritten as
\begin{align*}
- d \st \eta \, & = \, - d \st d (X \hk \ph) \, = \, - d \st d \st (X \wedge \ps) \\
& = \, d d^* (X \wedge \ps) \, = \, (\Delta - d^* d ) (X \wedge \ps) \\
& = \, (\Delta X) \wedge \ps - d^* \bigl( (dX) \wedge \ps \bigr) \\
& = \, (\Delta X) \wedge \ps - d^* (\st \curl X) \, = \, (\Delta X) \wedge \ps - \st (d \curl X).
\end{align*}
Substituting the above into~\eqref{infinitesimaldiffeotempeq} and using~\eqref{o27eq} and~\eqref{o214eq}, we obtain
\begin{align*}
d (\Lp \eta) \, & = \, - (d d^* X) \wedge \ps + \bigl( -2 \st \pi_7 (d \curl X) + \st \pi_{14} (d \curl X) \bigr) \\
& \qquad {} + (\Delta X) \wedge \ps - \bigl( \st \pi_7 (d \curl X) + \st \pi_{14} (d \curl X) \bigr) \\
& = \, (d^* d X) \wedge \ps - 3 \st \pi_7 (d \curl X).
\end{align*}
Applying equation~\eqref{curllemmaeq} to the last equation (for the vector field $\curl X$), we obtain
\begin{equation*}
d (\Lp \eta) \, = \, (d^* d X) \wedge \ps - (\curl \curl X) \wedge \ps.
\end{equation*}
The right hand side above vanishes by Remark~\ref{curlrmk}, completing the proof.
\end{proof}

The final result in this section concerns the operator $\pi_7 d^* d$ from $\Omega^2_7$ to itself and will be used many times in the rest of the paper.

\begin{prop} \label{specialellipticprop}
Let $\ph$ be a torsion-free $\G$~structure, and consider the operator $\pi_7 d^* d : \Omega^2_7 \to \Omega^2_7$. Under the identification $\Omega^2_7 \cong \Omega^1$, $\pi_7 d^* d$ corresponds to the operator $\mlap$, where $\mlap X = d d^* X + \frac{2}{3} d^* d X$, and is therefore \emph{elliptic}.
\end{prop}
\begin{proof}
Let $X \hk \ph \in \Omega^2_7$ for $X \in \Omega^1$, where as usual we use the metric $g$ to identify vector fields and $1$-forms. Then $\mlap X = Y$, where $Y \hk \ph = \pi_7 d^* d (X \hk \ph)$. We now compute in local coordinates:
\begin{align*}
(X \hk \ph)_{jk} \, & = \, X^m \ph_{mjk}, \\
\bigl( d (X \hk \ph) \bigr)_{ijk} \, & = \, (\nabla_i X^m) \ph_{mjk} + (\nabla_j X^m) \ph_{mki} + (\nabla_k X^m) \ph_{mij}, \\
\bigl( d^* d (X \hk \ph) \bigr)_{jk} \, & = \, - g^{pi} \nabla_p \bigl( d (X \hk \ph) \bigr)_{ijk} \\ & = \, - g^{pi} (\nabla_p \nabla_i X^m) \ph_{mjk} - g^{pi} (\nabla_p \nabla_j X^m) \ph_{mki} - g^{pi} (\nabla_p \nabla_k X^m) \ph_{mij} \\
& = \, (Y \hk \ph)_{jk} + \mu_{jk},
\end{align*}
for some $\mu \in \Omega^2_{14}$. Contracting both sides of the last equation above with $\ph$ on two indices, we find
\begin{align*}
6 \, Y_l \, & = \, \Bigl( - g^{pi} (\nabla_p \nabla_i X^m) \ph_{mjk} - g^{pi} (\nabla_p \nabla_j X^m) \ph_{mki} - g^{pi} (\nabla_p \nabla_k X^m) \ph_{mij} \Bigr) \ph_{lab} g^{ja} g^{kb} \\
& = \, - 6 \, g^{pi} (\nabla_p \nabla_i X_l) + 2 \, g^{pi} (\nabla_p \nabla_j X^m) (g_{ml} g_{ia} - g_{ma} g_{il} - \ps_{mila})g^{ja} \\
& = \, 6 \, (\Delta X)_l + 2 \, g^{pi} (\nabla_p \nabla_i X_l) - 2 \, (\nabla_l \nabla_j X^j) + 2 \, (\nabla_p \nabla_j X_n) g^{pi} g^{ja} g^{nm} \ps_{iaml} \\
& = \, 6 \, (\Delta X)_l - 2 (\Delta X)_l + 2 (d d^*X)_l + (\nabla_p \nabla_j X_n - \nabla_j \nabla_p X_n) g^{pi} g^{ja} g^{nm} \ps_{iaml}.
\end{align*}
Using the Ricci identities, the last term becomes $- R_{pjnc} X^c g^{pi} g^{ja} g^{nm} \ps_{iaml} = - 2 \, R_{mlnc} X^c g^{nm} = 0$, where we have used the fact that the Riemann tensor is in $\Omega^2_{14}$ with respect to its first and last pair of indices, and that the Ricci curvature vanishes. Thus we conclude that
\begin{equation*}
6 \, Y \, = \, 4 \Delta X + 2 d d^* X \, = \, 4 d^* d X + 6 d d^* X,  
\end{equation*}
which is what we wanted to show.
\end{proof}

\section{\texorpdfstring{$\mathbf{\G}$}{G2} conifolds} \label{conemanifoldssec}

In this section we discuss facts about $\G$~cones, and asymptotically conical (AC) and conically singular (CS) $\G$~manifolds that will be needed in the present paper. Any results stated in this section without proof can be found in~\cite[Section 2]{Kdesings}.

\subsection{\texorpdfstring{$\mathbf{\G}$}{G2} cones} \label{g2conessec}

Let $\Sigma^6$ be a compact, connected, smooth $6$-manifold.  An \SUth~structure on $\Sigma$ is described by a Riemannian metric $\gs$, an almost complex structure $\Js$ which is
orthogonal with respect to $\gs$, the associated $2$-form $\os(u,v) = \gs(\Js u,v)$ which is real and of type $(1,1)$ with respect to $\Js$, and a nowhere vanishing complex $(3,0)$-form $\Os $. The
two forms are related by the normalization condition
\begin{equation*}
\vols = \frac{1}{6} \os^3 = \frac{i}{8} \Os \wedge \bar \Os = \frac{1}{4} \real(\Os) \wedge \imag(\Os).
\end{equation*}

A manifold $\Sigma^6$ with $\SUth$ structure is called (strictly) \emph{nearly K\"ahler} if the following equations are satisfied:
\begin{equation} \label{grayeq}
\ds \os = - 3 \, \real(\Os), \qquad \qquad \ds \imag(\Os) = 2 \, \os^2.
\end{equation}
Such manifolds are also called \emph{Gray manifolds}. The Riemannian metric of a Gray manifold is always Einstein with \emph{positive} Einstein constant~\cite{PS1}.

\begin{defn} \label{g2conedefn}
Let $\Sigma^6$ be nearly K\"ahler. Then there exists a torsion-free $\G$~structure $(\phc, \psc, \gc)$ on $C = (0, \infty) \times \Sigma$ defined by
\begin{align*}
\phc & = r^3 \real(\Os) - r^2 dr \wedge \os, \\ \psc & = - r^3 dr \wedge \imag(\Os) - r^ 4\frac{\os^2}{2}, \\ \gc & = dr^2 + r^2 \gs,
\end{align*}
where $r$ is the coordinate on $(0, \infty)$. The space $C$ is a $\G$~\emph{cone}, and $\Sigma$ is called the \emph{link} of the cone. We choose the orientation on $C$ so that $\volc = r^{6} dr \wedge \vols$ is the volume form on $C$.
\end{defn}

It is known that for a Riemannian cone $C$ with holonomy contained in $\G$, the holonomy is either trivial, in which case $\Sigma$ is the standard round sphere $S^6$ and $C$ is the Euclidean $\R^7$, or else the holonomy is exactly equal to $\G$, in which case the link $\Sigma$ is nearly K\"ahler, but not equal to the round $S^6$. (See B\"ar~\cite{Bar} for more details.) We reiterate that for us, a $\G$~cone will always have holonomy exactly $\G$, thus we will exclude the case where the link is the round $S^6$.

\begin{rmk} \label{Obatarmk}
A theorem of Obata~\cite{Obata} states that on a compact Einstein $6$-manifold $\Sigma$ with positive scalar curvature $R$, the first nonzero eigenvalue of the Laplacian on functions is not less than $\frac{R}{5}$, with equality {if and only if} $\Sigma$ is isometric to the round $S^6$. The Einstein metric on the link $\Sigma$ of our $\G$~cones has been scaled so that $R = 30$ (see~\cite{MNS, MS}) and we always exclude the case of $\Sigma = S^6$, so if $\laps h = \mu h$ for some nonconstant $h \in C^{\infty}(\Sigma)$, then we must have $\mu > 6$. We will use this result repeatedly in what follows.
\end{rmk}

The next two results relate symmetries of $\Sigma$ to symmetries of $C$, and will be useful later.
\begin{prop} \label{Killing-cone-prop}
Let $C$ be a Riemannian cone, and let $X = r^{\lambda + 1} h \ddr + r^{\lambda} Y$ be a vector field on $C$, where $h$ is a function on $\Sigma$, and $Y$ is a vector field on $\Sigma$. Then $X$ is a Killing field for $\gc$ if and only if
\begin{itemize}
\item \emph{either} $\lambda = 0$, $h = 0$, and $Y$ is a Killing field for $\gs$ (so in this case $X = Y$),
\item \emph{or} $\lambda = -1$, $Y = \ds h$, and $\mathcal L_Y \gs = - 2 h \gs$. Moreover, in this case $\laps h = nh$, where $n = \dim \Sigma$.
\end{itemize}
Finally, if $C$ is a \emph{\G~cone}, then only the first case can occur.
\end{prop}
\begin{proof}
It is elementary to compute that
\begin{align*}
\mathcal L_X dr \, & = \, (\lambda + 1) r^{\lambda} h \, dr + r^{\lambda + 1} \ds h, \\
\mathcal L_{r^{\lambda + 1} h \ddr} \gs \, & = \, 0, \\
\mathcal L_{r^{\lambda} Y} \gs \, & = \, r^{\lambda} \gs + \lambda r^{\lambda - 1} ( dr \otimes Y + Y \otimes dr ).
\end{align*}
Using these results and $\gc = dr \otimes dr + r^2 \gs$, a short calculation gives
\begin{equation*}
\begin{aligned}
\mathcal L_X \gc \, & = \, 2 (\lambda + 1) r^{\lambda} h \, dr \otimes dr + r^{\lambda + 1} ( dr \otimes \ds h + \ds h \otimes dr ) \\
& \qquad {} + \lambda r^{\lambda + 1} ( dr \otimes Y + Y \otimes dr ) + 2 h r^{\lambda + 2} \gs + r^{\lambda + 2} \mathcal L_Y \gs.
\end{aligned}
\end{equation*}
Thus we deduce that $\mathcal L_X \gc = 0$ if and only if the following three equations are all satisfied:
\begin{equation*}
(\lambda + 1) h \, = \, 0, \qquad \qquad \ds h + \lambda Y \, = \, 0, \qquad \qquad 2 h \gs + \mathcal L_Y \gs \, = \, 0.
\end{equation*}
If $h = 0$, then the third equation says that $Y$ is a Killing field for $\gs$, and the second equation says $\lambda Y = 0$. Then, either $\lambda = 0$, which yields the first case, or $Y = 0$ in which case $X = 0$ so the value of $\lambda$ is undetermined and can be taken to be zero. On the other hand, if $h \neq 0$, then we must have $\lambda = -1$, which forces $Y = \ds h$ (where we have identified vector fields and $1$-forms on $\Sigma$ using the metric $\gs$), and $\mathcal L_Y \gs = - 2 h \gs$, yielding the second case. It follows easily from a computation in local coordinates that if $Y = \nabs h$ and $\mathcal L_Y \gs = - 2 h \gs$, then $\laps h = n h$.

Now suppose that $C$ is a $\G$~cone. Then $n = 6$, and the second case implies $\laps h = 6 h$. By Remark~\ref{Obatarmk}, we must have $h = 0$, which excludes the second case.
\end{proof}

\begin{prop} \label{NK-Killing-prop}
Let $\Sigma^6$ be nearly K\"ahler (with $\Sigma^6 \neq S^6$) and let $Y$ be a \emph{nonzero} Killing vector field on $\Sigma$, so $\mathcal L_Y \gs = 0$. Then the vector field $X = r^{\lambda - 1} Y$ is a symmetry of the $\G$~cone structure $\phc$ of Definition~\ref{g2conedefn}, in the sense that $\mathcal L_{r^{\lambda - 1} Y} \phc = 0$, if and only if $\lambda = 1$.
\end{prop}
\begin{proof}
In~\cite[Theorem 4.1]{MNS} it is shown that, for a nearly K\"ahler $6$-manifold that is not the round $S^6$, the metric determines the remaining objects $J$, $\omega$, and $\Omega$. In particular, since $\mathcal L_Y \gs = 0$, it follows from~\eqref{grayeq} that
\begin{equation} \label{graysymmetryeq}
\begin{aligned}
\mathcal L_Y \omega \, & = \, \ds (Y \hk \omega) + Y \hk (\ds \omega) \, = \, \ds (Y \hk \omega) - 3 Y \hk \real (\Omega) \, = \, 0, \\
\mathcal L_Y \real(\Omega) \, & = \, \ds (Y \hk \real(\Omega)) \, = \, 0.
\end{aligned}
\end{equation}
Now, using Definition~\ref{g2conedefn} and~\eqref{graysymmetryeq}, since $\dc \phc = 0$, we find that
\begin{align*}
\mathcal L_{r^{\lambda-1} Y} \phc \, & = \, \dc (r^{\lambda-1} Y \hk \phc) \\
& = \, \dc \big( r^{\lambda+1} dr \wedge (Y \hk \omega) + r^{\lambda+2} Y \hk \real(\Omega) \big) \\
& = \, r^{\lambda+1} dr \wedge \big( (\lambda+2) Y \hk \real(\Omega) - \ds (Y \hk \omega) \big) + 
r^{\lambda+2} \ds \big( Y \hk \real(\Omega) \bigr) \\
& = \, (\lambda-1) r^{\lambda+1} dr \wedge \bigl( Y \hk \real(\Omega) \bigr).
\end{align*}
It is easy to check that $Y \hk \real(\Omega) = 0$ if and only if $Y = 0$. Therefore, $\mathcal L_{r^{\lambda - 1} Y} \phc = 0$ if and only if $\lambda = 1$ as claimed.
\end{proof}

\begin{rmk} \label{NK-known-rmk}
There are five known simply connected, compact, nearly K\"ahler manifolds (other than the round $S^6$), and thus five known $\G$~cones with simply connected links. The three homogeneous examples are discussed in detail in B\"ar~\cite{Bar}, but we summarize them here. They are all obtained by taking the bi-invariant metric on a compact Lie group $G$ and descending this to the normal metric on $G/H$ for an appropriate Lie subgroup $H$. In particular, all these examples are homogeneous spaces, and there is a proof by Butruille~\cite{But} that these examples are the only compact, simply connected, \emph{homogeneous} nearly K\"ahler  $6$-manifolds.  The three homogeneous examples (as smooth manifolds) are: $\C \PR^3 \cong \mathrm{Sp}(2) / (\mathrm{Sp}(1) \times \mathrm{U}(1))$, the flag manifold $F_{1,2} \cong \SUth/T^2$, and $S^3 \times S^3 \cong (S^3 \times S^3 \times S^3) / S^3$ where we embed $S^3$ into $S^3 \times S^3 \times S^3$ as the diagonal subgroup. We note for later use some of the Betti numbers for these examples:
\begin{equation} \label{NKBettinumberseq}
\begin{aligned}
b^2 (\C \PR^3) & = 1, \qquad & b^3 (\C \PR^3) & = 0, \\
b^2 (F_{1,2}) & = 2, \qquad & b^3 (F_{1,2}) & = 0, \\
b^2 (S^3 \times S^3) & = 0, \qquad & b^3 (S^3 \times S^3) & = 2.
\end{aligned}
\end{equation}
In~\cite{PS1, PS2} Podest\`a--Spiro obtain some classification results about compact examples of cohomogeneity one. Furthermore, it is demonstrated in~\cite{FH} by Foscolo--Haskins that on $S^6$ and on $S^3\times S^3$ there is a cohomogeneity one nearly K\"ahler structure which is not homogeneous. Locally homogeneous examples that are finite quotients of $S^3 \times S^3$ are described in~\cite{CV} by Cort\'es--V\'asquez.
\end{rmk}

For any $t > 0$, we have a \emph{dilation} map $\mathbf{t} : C \to C$ defined by
\begin{equation*}
\mathbf{t} (\z) = \z, \qquad \qquad \mathbf{t} (r, \sigma) = (t r, \sigma).
\end{equation*}
It is easy to see that
\begin{equation*}
\begin{aligned}
\mathbf{t}^* (\phc) & = t^3 \, \phc, \qquad & \mathbf{t}^* (\psc) & = t^4 \, \psc, & \\ \mathbf{t}^* (\gc) & = t^2 \, \gc, \qquad & \mathbf{t}^* (\volc) & = t^7 \, \volc, &
\end{aligned}
\end{equation*}
and hence we say that the conical $\G$~structure is \emph{dilation-equivariant}. Since $\mathbf{t}^*g_C=t^2g_C$, we have
\begin{equation*}
|\mathbf{t}^*(\gamma)(r, \sigma)|_{\gc(r, \sigma)} \, = \, t^k | \gamma(tr, \sigma) |_{\gc(tr, \sigma)}
\end{equation*}
whenever $\gamma$ is a contravariant tensor of degree $k$.

Let $\alpha$ be a $(k-1)$-form on $\Sigma$ and $\beta$ be a $k$-form on $\Sigma$. Then we have
\begin{equation*}
|r^{k-1}dr \wedge \alpha + r^k \beta|^2_{\gc} \, = \, |\alpha|^2_{\gs} + |\beta|^2_{\gs}.
\end{equation*}
For this reason, we will always write a $k$-form on $C$ as $\gamma = r^{k-1}dr \wedge \alpha + r^k \beta$ for some $\alpha$ and $\beta$, which are forms on $\Sigma$ possibly depending on the parameter $r$. Note that if $\alpha$ and $\beta$ were independent of $r$, then $\gamma$ would be dilation-equivariant, as defined above.

\begin{defn} \label{homoformsdefn}
We say that a smooth $k$-form $\gamma$ on $C$ is \emph{homogeneous of order} $\lambda$ if
\begin{equation*}
\gamma = r^{\lambda} \left( r^{k-1}dr \wedge \alpha + r^k \beta \right)
\end{equation*}
where $\alpha$ and $\beta$ are forms on $\Sigma$, independent of $r$. Then
we see that
\begin{equation*}
| \gamma (tr, \sigma)|_{\gc(tr, \sigma)} = | t^{\lambda + k} \gamma (r, \sigma) |_{\gc(tr, \sigma)} =  t^{\lambda + k} t^{-k} | \gamma (r, \sigma) |_{\gc(r, \sigma)},
\end{equation*}
which we can write more concisely as
\begin{equation*}
\mathbf{t}^* |\gamma|_{\gc} = t^{\lambda} |\gamma|_{\gc},
\end{equation*}
so the function $|\gamma|_{\gc}$ on $C$ is homogeneous of order $\lambda$ in the variable $r$ in the usual sense.
\end{defn}

Let $\sts$, $\nabs$, $\ds$, $\dss$, and $\laps$ denote the Hodge star, Levi-Civita connection, exterior derivative, coderivative, and Hodge Laplacian on $\Sigma$, respectively. Similarly $\stc$, $\nabc$, $\dc$, $\dsc$, and $\lapc$ will denote the corresponding operators on the cone $C$.

For a homogenous $k$-form $\gamma = r^{\lambda} \left( r^{k-1}dr \wedge \alpha + r^k \beta \right)$ of order $\lambda$, it is trivial to calculate that:
\begin{equation} \label{homoddseq}
\begin{aligned}
\dc \gamma = \, & \, r^{\lambda + k - 1} dr \wedge ( (\lambda + k) \beta - \ds \alpha) + r^{\lambda + k} \ds \beta, \\ \dsc \gamma = \, & \, r^{\lambda + k - 3} dr \wedge ( -\dss \alpha) + r^{\lambda + k - 2} ( -(\lambda - k + 7) \alpha + \dss \beta ),
\end{aligned}
\end{equation}
\begin{equation} \label{homolapeq}
\begin{aligned} 
\lapc \gamma & \, = \, r^{\lambda + k - 3} dr \wedge \left( \laps \alpha  - (\lambda + k - 2)(\lambda - k + 7) \alpha - 2 \dss \beta \right) \\ & \qquad {} + r^{\lambda + k - 2} \left( \laps \beta  - (\lambda + k)(\lambda - k + 5) \beta - 2 \ds \alpha \right).
\end{aligned}
\end{equation}

The next lemma is a special case of~\cite[Lemma 2.12]{Kdesings}.
\begin{lemma} \label{exactformslemma}
Let $\gamma$ be a smooth \emph{closed} $3$-form on $C$. Suppose that either
\begin{align*}
\text{i) \, } | \gamma |_{\gc} & = O(r^{\lambda}) \text{ on } (0, \e) \times \Sigma, {\text \, for \, \, } \lambda > -3 \text{ or } \\ \text{ii) \, } | \gamma |_{\gc} & = O(r^{\lambda}) \text{ on } (R, \infty) \times \Sigma, {\text \, for \, \, } \lambda < -3.
\end{align*}
for some small $\e$ or some large $R$. Then for each case respectively we have that
\begin{align*}
\text{i) \, } \gamma & = d\zeta \text{ for some $2$-form } \zeta \text{ on } (0, \e) \times \Sigma, \text{ or } \\ \text{ii) \, } \gamma & = d\zeta \text{ for some $2$-form } \zeta \text{ on } (R, \infty) \times \Sigma.
\end{align*}
\end{lemma}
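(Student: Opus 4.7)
The plan is to construct a primitive directly by radial integration. The first step is to write any smooth $3$-form on $C = (0,\infty) \times \Sigma$ in the unique decomposition $\gamma = dr \wedge \alpha(r) + \beta(r)$, where at each fixed $r$ the form $\alpha(r,\cdot)$ lies in $\Omega^2(\Sigma)$ and $\beta(r,\cdot)$ lies in $\Omega^3(\Sigma)$. Imposing $d\gamma = 0$ on the cone then yields two coupled equations: $\ds \beta = 0$ and $\partial_r \beta = \ds \alpha$.

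Next I would translate the rate hypothesis into rates on $\alpha$ and $\beta$ separately. Writing $\alpha = r^2 \tilde\alpha$ and $\beta = r^3 \tilde\beta$ puts $\gamma = r^{3-1} dr\wedge \tilde\alpha + r^3 \tilde\beta$ into the shape used in Definition~\ref{homoformsdefn}, from which $|\gamma|_{\gc}^2 = |\tilde\alpha|_{\gs}^2 + |\tilde\beta|_{\gs}^2$. Thus the assumption $|\gamma|_{\gc} = O(r^\lambda)$ is equivalent to the uniform-in-$\sigma$ bounds $|\alpha(r,\cdot)|_{\gs} = O(r^{\lambda+2})$ and $|\beta(r,\cdot)|_{\gs} = O(r^{\lambda+3})$.

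For case (i), I would define the $2$-form $\zeta(r,\sigma) = \int_0^r \alpha(s,\sigma)\,ds$. The bound $\lambda + 2 > -1$ ensures the integrand is integrable at $0$, so $\zeta$ is well defined on $(0,\e)\times \Sigma$. A short computation, differentiating under the integral and using $\partial_s \beta = \ds \alpha$, gives
\begin{equation*}
d\zeta \, = \, dr\wedge \alpha(r) + \ds \zeta \, = \, dr \wedge \alpha(r) + \int_0^r \partial_s \beta(s,\sigma)\,ds \, = \, dr\wedge\alpha(r) + \beta(r) - \lim_{s\to 0^+} \beta(s),
\end{equation*}
and the boundary term vanishes because $\lambda + 3 > 0$ forces $|\beta(s)|_{\gs} \to 0$ as $s \to 0^+$. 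Hence $d\zeta = \gamma$ on $(0,\e) \times \Sigma$.

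Case (ii) proceeds dually: I would set $\zeta(r,\sigma) = -\int_r^\infty \alpha(s,\sigma)\,ds$, which converges because $\lambda + 2 < -1$, and the analogous calculation yields $d\zeta = dr\wedge \alpha(r) + \beta(r) - \lim_{s\to\infty}\beta(s) = \gamma$, the limit vanishing because $\lambda + 3 < 0$. The only real ``obstacle'' is arithmetic bookkeeping of rates, and this bookkeeping is in fact the content of the lemma: the threshold $\lambda = -3$ is precisely where the boundary term at $0$ ceases to vanish and, symmetrically, where the tail integral at $\infty$ ceases to converge, which is exactly what dictates the dichotomy in the statement.
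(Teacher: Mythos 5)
Your construction is the natural radial-integration (homotopy operator) argument; the paper itself gives no proof of this lemma, deferring to the cited reference, and your decomposition $\gamma = dr\wedge\alpha(r)+\beta(r)$, the closedness equations $\ds\beta=0$, $\partial_r\beta=\ds\alpha$, the translation of the hypothesis into $|\alpha(r,\cdot)|_{\gs}=O(r^{\lambda+2})$, $|\beta(r,\cdot)|_{\gs}=O(r^{\lambda+3})$, and the identification of $\lambda=-3$ as the threshold are all correct. The one step that is not justified as written is ``differentiating under the integral'': the hypothesis bounds only $|\gamma|_{\gc}$, not any of its derivatives, so near the bad endpoint $s\to 0$ (resp.\ $s\to\infty$) you have no integrable dominating function for the $\Sigma$-derivatives of $\alpha(s,\cdot)$. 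Consequently $\zeta(r,\sigma)=\int_0^r\alpha(s,\sigma)\,ds$ is a priori only continuous in the link directions, and the identity $\ds\zeta=\int_0^r\ds\alpha\,ds$ (hence $d\zeta=\gamma$ classically, with a genuine smooth $2$-form $\zeta$) does not follow directly. Note that closedness only controls the antisymmetrized derivative $\ds\alpha=\partial_s\beta$, which is not enough to license the interchange or to give differentiability of $\zeta$.

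The gap is repairable with a short extra argument. Integrate only from $\delta>0$ (resp.\ only up to a finite radius), where everything is smooth: with $\zeta_\delta(r,\cdot)=\int_\delta^r\alpha(s,\cdot)\,ds$ your computation is valid and yields $\gamma-d\zeta_\delta=\beta(\delta)$, the pullback to the cone of a closed $3$-form on $\Sigma$. Its class in $H^3(\Sigma)\cong H^3((0,\e)\times\Sigma)$ is independent of $\delta$, since $\beta(r)-\beta(\delta)=\ds\bigl(\int_\delta^r\alpha\,ds\bigr)$ is exact on $\Sigma$, while $\sup_\Sigma|\beta(\delta)|_{\gs}=O(\delta^{\lambda+3})\to 0$; pairing against $3$-cycles in $\Sigma$ shows the class is zero, so $\beta(\delta)$ is exact and hence $\gamma=d\zeta$ for a smooth $\zeta$. (Alternatively, your direct interchange is legitimate if one also assumes $|\nabc\gamma|_{\gc}=O(r^{\lambda-1})$, which holds in all the applications in the paper but is not part of the hypothesis; or one can interpret your computation in the sense of currents and invoke the fact that a smooth form which is the differential of a current is the differential of a smooth form.) Case (ii) is handled identically with the roles of $0$ and $\infty$ exchanged, and with the same caveat about the convergence of $\int_r^\infty\ds\alpha\,ds$ being justified only after one of these fixes.
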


We will need to consider the possible order $\lambda$ of a homogeneous $k$-form $\gamma_k$ on a cone $C$ which is in the kernel of $\lapc$, or of a mixed degree form $\gamma = \sum_{k=0}^7 \gamma_k$ which is in the kernel of $\dc + \dsc$.
\begin{prop} \label{homolapexcludeprop}
Let $\gamma$ be a homogeneous $k$-form of order $\lambda$ which is harmonic on the cone: $\lapc \gamma = 0$. Then we have:
\begin{align} \label{k07lapexcludeeq}
& \text{For } k = 0, 7, \qquad \qquad \gamma = 0 \, \, \text{if } \, \lambda \in (-5,0), & \\ \label{k16lapexcludeeq} & \text{For } k = 1, 6, \qquad \qquad \gamma = 0 \, \, \text{if } \, \lambda \in (-4,-1), & \\ \label{k25lapexcludeeq} & \text{For } k = 2, 5, \qquad \qquad \gamma = 0 \, \, \text{if } \, \lambda \in (-3,-2). &
\end{align}
\end{prop}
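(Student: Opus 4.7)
The first step is to reduce the PDE $\lapc \gamma = 0$ on $C$ to a system on the link $\Sigma$. Writing a homogeneous $k$-form of order $\lambda$ as $\gamma = r^{\lambda}(r^{k-1} dr \wedge \alpha + r^k \beta)$ with $\alpha \in \Omega^{k-1}(\Sigma)$ and $\beta \in \Omega^k(\Sigma)$ independent of $r$, and substituting into~\eqref{homolapeq}, the condition $\lapc \gamma = 0$ is equivalent to the coupled system on $\Sigma$
\begin{align*}
\laps \alpha - A \alpha &= 2 \dss \beta, \\
\laps \beta - B \beta &= 2 \ds \alpha,
\end{align*}
where $A = (\lambda + k - 2)(\lambda - k + 7)$ and $B = (\lambda + k)(\lambda - k + 5)$. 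Since $\stc$ commutes with $\lapc$ and sends a homogeneous $k$-form of order $\lambda$ to a homogeneous $(7-k)$-form of the same order, it suffices to treat $k = 0, 1, 2$.

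For $k = 0$ there is no $\alpha$ and the system collapses to $\laps \beta = \lambda(\lambda+5)\beta$ on functions. Since $\laps$ on the compact manifold $\Sigma$ has non-negative spectrum, a nontrivial $\beta$ forces $\lambda(\lambda+5) \geq 0$, which excludes the open interval $(-5, 0)$.

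For $k = 1$ and $k = 2$, the plan is to apply Hodge decomposition on $\Sigma$, splitting $\alpha$ and $\beta$ into harmonic, exact, and coexact pieces; $\ds$, $\dss$, and $\laps$ all respect this splitting. The harmonic piece of each satisfies an uncoupled scalar equation with coefficient $A$ or $B$, both of which are nonzero on the stated $\lambda$-intervals, so the harmonic pieces vanish. The coexact piece of $\beta$, and in the $k=2$ case also the exact piece of $\alpha$, satisfy a single eigenvalue equation of the form $\laps = B$ or $\laps = A$ respectively; the relevant coefficient is strictly negative on the stated interval, incompatible with the non-negative spectrum of $\laps$, so those pieces also vanish.

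What remains is a coupled system between the exact part of $\beta$ (writing $\beta^{\mathrm{ex}} = \ds \eta$ with $\eta$ coexact) and the ``non-harmonic'' part of $\alpha$ (the coexact part for $k=2$, or the non-constant part for $k=1$). Expanding both components in an orthonormal eigenbasis of $\laps$ with eigenvalue $\mu > 0$ on the appropriate subspace, the two equations become a $2 \times 2$ linear system in each eigenspace whose determinant vanishes precisely when $\mu^2 - (A + B + 4)\mu + AB = 0$. Using $B - A = 10$ for $k = 1$ and $B - A = 6$ for $k = 2$, the discriminant simplifies to the perfect square $4(2\lambda + 5)^2$, giving two admissible values of $\mu$ that factor as products of binomials in $\lambda$: namely $(\lambda+1)(\lambda+6)$ and $(\lambda-1)(\lambda+4)$ for $k=1$; and $(\lambda+2)(\lambda+5)$ and $\lambda(\lambda+3)$ for $k=2$. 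All four expressions are strictly negative on the respective open intervals $(-4, -1)$ and $(-3, -2)$, so no positive eigenvalue $\mu$ can match, and the coupled pieces vanish too. The main technical step is the clean Hodge separation together with the perfect-square discriminant computation; the only input about $\Sigma$ used is the non-negativity of its Laplacian.
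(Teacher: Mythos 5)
Your proposal is correct, and your computations check out: the reduction via~\eqref{homolapeq} gives exactly the coupled system with $A=(\lambda+k-2)(\lambda-k+7)$, $B=(\lambda+k)(\lambda-k+5)$; the $2\times 2$ eigenspace system $(\mu-A)a=2\mu b$, $(\mu-B)b=2a$ has determinant condition $\mu^2-(A+B+4)\mu+AB=0$ with discriminant $(A-B)^2+8(A+B)+16=4(2\lambda+5)^2$ in both cases, and the roots factor as you state, all strictly negative on the claimed intervals. Note that the paper itself gives no proof of this proposition -- it is quoted from~\cite[Section 2]{Kdesings} -- so there is no in-text argument to compare against; but your route (separation of variables on the cone, Hodge decomposition on the link, and exclusion of eigenvalues by the non-negativity of $\laps$) is the standard one, and it is consistent with the paper's remark that Propositions~\ref{homolapexcludeprop} and~\ref{homoddstarexcludeprop} use only the structure of a $7$-dimensional Riemannian cone, in contrast to Propositions~\ref{excludeextensionprop} and~\ref{excludeextensionprop2}, which sharpen the ranges for $k=0,1$ by invoking Obata's theorem and the Bochner formula on the Einstein link. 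One small wording caveat: $\ds$ and $\dss$ do not preserve the three Hodge summands (they interchange exact and coexact pieces and kill the rest), so ``respect this splitting'' should be read in that sense; your actual use of the decomposition -- projecting each equation onto harmonic, exact and coexact parts and noting that $\dss\beta$ is coexact and $\ds\alpha$ is exact, hence orthogonal to the other summands -- is exactly right.
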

\begin{prop} \label{homoddstarexcludeprop}
Suppose that $\gamma$ is a homogeneous $k$-form of order $\lambda$ which is closed and coclosed: $\dc \gamma = 0$ and $\dsc \gamma = 0$. Then we have:
\begin{align} \label{k07ddstarexcludeeq}
& \text{For } k = 0, 7, \qquad \qquad \gamma = 0 \, \, \text{if } \, \lambda \in (-7,0), & \\ \label{k16ddstarexcludeeq} & \text{For } k = 1, 6, \qquad \qquad \gamma = 0 \, \, \text{if } \, \lambda \in (-6,-1), & \\ \label{k25ddstarexcludeeq} & \text{For } k = 2, 5, \qquad \qquad \gamma = 0 \, \, \text{if } \, \lambda \in (-5,-2), & \\ \label{k34ddstarexcludeeq} & \text{For } k = 3, 4, \qquad \qquad \gamma = 0 \, \, \text{if } \, \lambda \in (-4,-3). &
\end{align}
\end{prop}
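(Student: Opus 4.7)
The plan is to pull back the conditions $\dc \gamma = 0$ and $\dsc \gamma = 0$ to equations on $\alpha$ and $\beta$ using formulas~\eqref{homoddseq}, and then to combine these equations with $\ds \dss + \dss \ds = \laps$ to force $\alpha$ and $\beta$ to be eigenforms of $\laps$ on $\Sigma$ with eigenvalue $(\lambda + k)(\lambda - k + 7)$. Nonnegativity of the Laplace spectrum on the compact link $\Sigma$ then yields the stated exclusion ranges.

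In detail, writing $\gamma = r^\lambda(r^{k-1} dr \wedge \alpha + r^k \beta)$ and reading off the two components on each side, the equation $\dc \gamma = 0$ becomes $\ds \alpha = (\lambda + k) \beta$ and $\ds \beta = 0$, while $\dsc \gamma = 0$ becomes $\dss \alpha = 0$ and $\dss \beta = (\lambda - k + 7) \alpha$. Applying $\dss$ to the first equation and substituting the fourth gives $\dss \ds \alpha = (\lambda + k)(\lambda - k + 7) \alpha$; together with $\dss \alpha = 0$ this yields $\laps \alpha = (\lambda + k)(\lambda - k + 7) \alpha$. A symmetric computation, applying $\ds$ to the fourth equation and using the second, gives $\laps \beta = (\lambda + k)(\lambda - k + 7) \beta$. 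Since the spectrum of $\laps$ on any bundle of forms over the compact Riemannian manifold $\Sigma$ is nonnegative, whenever $(\lambda + k)(\lambda - k + 7) < 0$ both $\alpha$ and $\beta$ must vanish and hence $\gamma = 0$. This product is strictly negative precisely when $\lambda$ lies strictly between $-k$ and $k - 7$, which a short case check by degree turns into the intervals $(-7, 0)$, $(-6, -1)$, $(-5, -2)$, and $(-4, -3)$ of \eqref{k07ddstarexcludeeq}--\eqref{k34ddstarexcludeeq}.

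For $k = 0$ (respectively $k = 7$) only $\beta$ (respectively $\alpha$) is present in $\gamma$ and the argument collapses to a single eigenvalue equation, so no additional case analysis is needed. The key feature making this work is that, in contrast to the coupled system produced by $\lapc$, the closed and coclosed conditions decouple into independent eigenvalue equations for $\alpha$ and $\beta$ with the \emph{same} eigenvalue $(\lambda + k)(\lambda - k + 7)$; this is precisely what allows us to improve on the weaker exclusion ranges of Proposition~\ref{homolapexcludeprop}, and in particular to obtain a nontrivial exclusion in the middle degrees $k = 3, 4$ where the Laplacian alone gives none. I do not foresee any real obstacle: the argument is an elementary separation of variables combined with Hodge-theoretic positivity on $\Sigma$.
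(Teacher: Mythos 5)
Your proof is correct, and it is essentially the argument the paper relies on: the paper states this result without proof, citing~\cite[Section 2]{Kdesings}, where the same separation of variables via~\eqref{homoddseq} yields the system of Lemma~\ref{KPspacelemma} and hence $\laps \alpha = (\lambda+k)(\lambda-k+7)\alpha$ and $\laps \beta = (\lambda+k)(\lambda-k+7)\beta$, so nonnegativity of the Hodge Laplacian on the compact link forces $\gamma = 0$ exactly when $\lambda$ lies strictly between $-k$ and $k-7$.
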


Propositions~\ref{homolapexcludeprop} and~\ref{homoddstarexcludeprop} use only the fact that we have a $7$-dimensional Riemannian cone. However, the fact that the link $\Sigma^6$ is a compact Einstein manifold of positive scalar curvature allows us to slightly extend the results of these propositions, for functions and $1$-forms. This is the content of the next several results. These extended ranges of excluded rates for harmonic functions and $1$-forms are used, for example, in Theorem~\ref{infinitesimalslicethm} to establish that theorem in the AC case all the way to $\nu \leq -1$, rather than just $\nu < -2$.

\begin{prop} \label{excludeextensionprop}
Let $f$ be a harmonic function on a $\G$~cone, homogeneous of order $\lambda$. Then
\begin{equation*}
f \, = \begin{cases} \, 0 & \text{if $\lambda \in [-6, 1] \setminus \{-5, 0\}$}, \\
\, K & \text{if $\lambda = 0$}, \\
\, K r^{-5} & \text{if $\lambda = -5$}, \end{cases}
\end{equation*}
where $K$ is a constant. Let $\omega$ be a harmonic $1$-form on a $\G$~cone, homogeneous of order $\lambda$. Then
\begin{equation*}
\omega \, = \, 0, \qquad \text{if $\lambda \in [-5,0]$}.
\end{equation*}
\end{prop}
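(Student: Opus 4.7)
The plan is to reduce both harmonic equations to eigenvalue problems on the nearly K\"ahler link $\Sigma$, and then exploit the positivity of its Einstein constant ($\mathrm{Ric} = 5 \gs$ in the normalisation~\eqref{grayeq}) together with the \emph{strict} Obata inequality $\mu_1^{(0)}(\laps) > 6$ on functions. The strictness is the key input: Obata's rigidity theorem, applied to a compact $6$-manifold with $\mathrm{Ric} = 5 \gs$, gives $\mu_1 = 6$ only when $\Sigma$ is isometric to the round $S^6$, which is excluded here since we require the cone to have holonomy exactly $\G$.

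For the function case, set $f = r^\lambda \beta(\sigma)$. Formula~\eqref{homolapeq} with $k=0$ reduces $\lapc f = 0$ to $\laps \beta = \lambda(\lambda+5) \beta$. Since $\laps \geq 0$ on functions, either $\lambda(\lambda+5) = 0$ (forcing $\beta$ constant, which gives $f = C$ at $\lambda = 0$ and $f = C r^{-5}$ at $\lambda = -5$), or $\lambda(\lambda+5) \geq \mu_1^{(0)} > 6$, which rearranges to $(\lambda+6)(\lambda-1) > 0$, i.e.\ $\lambda > 1$ or $\lambda < -6$. This proves the first assertion.

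For the 1-form case, write $\omega = r^\lambda (dr \wedge \alpha + r \beta)$ with $\alpha \in C^\infty(\Sigma)$ and $\beta \in \Omega^1(\Sigma)$, and Hodge-decompose $\beta = \ds g + \beta_0$ with $\dss \beta_0 = 0$. Set $A = (\lambda-1)(\lambda+6)$ and $B = (\lambda+1)(\lambda+4)$; a direct computation shows $A \leq -6 < 0$ and $B \in [-9/4, 4]$ throughout $[-5, 0]$. Using~\eqref{homolapeq} and orthogonality, $\lapc \omega = 0$ decouples into a coclosed equation $\laps \beta_0 = B \beta_0$ and the exact-part system
\[
\laps \alpha - A \alpha = 2 \laps g, \qquad \laps g - B g = 2 \alpha,
\]
where an integration constant in the exact part is absorbed by shifting $g$ by its mean, and integration of the first equation then forces $\int_\Sigma \alpha = \int_\Sigma g = 0$ (using $A \neq 0$). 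For the coclosed part, the Weitzenb\"ock formula $\laps = \nabla^* \nabla + 5$ on 1-forms gives $\langle \laps \beta_0, \beta_0 \rangle = \|\nabla \beta_0\|^2 + 5 \|\beta_0\|^2 \geq 5 \|\beta_0\|^2$; comparing with $B \|\beta_0\|^2$ and using $B < 5$ yields $\beta_0 = 0$.

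For the exact system, expand $\alpha = \sum a_k \phi_k$ and $g = \sum g_k \phi_k$ in an $L^2$-orthonormal basis of eigenfunctions $\laps \phi_k = \mu_k \phi_k$. Mode by mode this becomes a $2 \times 2$ linear system whose determinant vanishes precisely when $\mu_k^2 - (A+B+4) \mu_k + AB = 0$; writing $u = \lambda(\lambda+5) \in [-25/4, 0]$, the roots are $\mu_k = (u+1) \pm \sqrt{4u+25}$, and an elementary monotonicity analysis in $u$ confines them to $[-25/4, 6]$. Since every eigenvalue $\mu_k$ of $\laps$ on $\Sigma$ is either $0$ (excluded by the zero-mean condition) or strictly greater than $6$, the determinant is never zero on admissible modes, the matrix is invertible, $a_k = g_k = 0$ for all $k$, and thus $\omega = 0$. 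The main obstacle is the strict Obata bound: without it, the boundary cases $\lambda \in \{-5, 0\}$ (where $u = 0$ and the upper root equals exactly $6$) would fail, as witnessed by the Euclidean cone on round $S^6$, whose constant vector fields are genuine homogeneous harmonic 1-forms of order $0$ arising precisely from the $\mu_1 = 6$ eigenspace on $S^6$.
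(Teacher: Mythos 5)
Your proof is correct, and for the harder ($1$-form) half it takes a genuinely different route from the paper, even though the two arguments turn out to share the same algebraic core. The function case is identical (reduce to $\laps \beta = \lambda(\lambda+5)\beta$ and invoke the strict Obata bound). For the $1$-form case, the paper works directly with the pair $(\alpha,\beta)$: it introduces the hand-picked scalar combination $h = \dss \beta + (\lambda-1)\alpha$, shows $\laps h = (\lambda+1)(\lambda+6) h$, kills $h$ and then $\alpha$ by Obata, kills $\beta$ by the Bochner formula with $\mathrm{Ric}_{\scriptscriptstyle{\Sigma}} = 5 \gs$, and then has to treat $\lambda = -4$ (constant $\alpha$) and especially $\lambda = -1$ separately, the latter via Hodge theory and Myers' theorem. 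You instead Hodge-split $\beta = \ds g + \beta_0$, dispose of the coclosed part at once by the same Bochner estimate ($B \leq 4 < 5$), and convert the exact part into a $2\times 2$ mode-by-mode system whose determinant has roots $(u+1) \pm \sqrt{4u+25}$ with $u = \lambda(\lambda+5)$; since $\sqrt{4u+25} = |2\lambda+5|$, these roots are exactly $(\lambda+1)(\lambda+6)$ and $(\lambda-1)(\lambda+4)$, i.e.\ the same two quantities the paper extracts by hand, so your characteristic-polynomial computation is the "diagonalization" that the paper performs via the ansatz $h$. What your packaging buys is uniformity: the constant mode is excluded once and for all by the zero-mean normalization (using $A \leq -6 \neq 0$), so the awkward values $\lambda = -4, -1$ require no separate treatment and the strict Obata bound $\mu_k > 6$ finishes every mode simultaneously; what the paper's version buys is that it avoids eigenfunction expansions entirely and stays at the level of pointwise identities plus two integrations. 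One small wording point: shifting $g$ by its mean does not by itself absorb the integration constant $c$ in the exact-part equation; the clean order is to normalize $\int_\Sigma g = 0$, deduce $\int_\Sigma \alpha = 0$ by integrating the first equation (here $A \neq 0$ is used), and then integrate the exact-part equation to conclude $c = 0$ — this works for all $\lambda \in [-5,0]$, including $B = 0$, so the gap is purely expository. Your closing observation that the Euclidean cone over the round $S^6$ realizes the borderline root $6$ at $\lambda \in \{-5,0\}$ correctly identifies why the strictness of Obata (equivalently, the exclusion of $S^6$ as a link) is indispensable, exactly as in the paper.
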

\begin{proof}
As in Definition~\ref{homoformsdefn}, we can write $f = r^{\lambda} \beta$ for some function $\beta$ on $\Sigma$, independent of $r$. From~\eqref{homolapeq} we have $\lapc f = 0$ if and only if $\laps \beta = \lambda(\lambda + 5) \beta$. Applying the result of Obata from Remark~\ref{Obatarmk} we can say that if $f$ is nonzero and $\mu = \lambda(\lambda + 5) \neq 0$, then $\mu > 6$. Thus if $\lambda \neq -5,0$, then for $f$ to be nonzero we must have $\lambda(\lambda + 5) > 6$ which easily implies that $\lambda > 1$ or $\lambda < -6$. This proves the result for functions.

Now consider a $1$-form $\omega = r^{\lambda} (dr \wedge \alpha + r \beta)$, where $\alpha$ is a function on $\Sigma$ and $\beta$ is a $1$-form on $\Sigma$. From~\eqref{homolapeq} we find that $\lapc \omega = 0$ if and only if
\begin{align} \label{excludeextensiontempeq}
\laps \alpha \, & = \, (\lambda - 1)(\lambda + 6) \alpha + 2 \dss \beta, \\  \label{excludeextensiontempeq2}
\laps \beta \, & = \, (\lambda + 1)(\lambda + 4) \beta + 2 \ds \alpha.
\end{align}
Using~\eqref{excludeextensiontempeq} and~\eqref{excludeextensiontempeq2}, some easy computation yields
\begin{align} \label{excludeextensiontempeq3}
\laps (\dss \beta + (\lambda - 1) \alpha) \, & = \, (\lambda + 1)(\lambda + 6) (\dss \beta + (\lambda - 1) \alpha), \\ \label{excludeextensiontempeq4}
\laps (\dss \beta - (\lambda + 6) \alpha) \, & = \, (\lambda - 1)(\lambda + 4) (\dss \beta - (\lambda + 6) \alpha).
\end{align}
Consider therefore the two functions $h = \dss \beta + (\lambda - 1) \alpha$ and $h' = \dss \beta - (\lambda + 6) \alpha$. We have $\laps h = (\lambda + 1) (\lambda + 6) h$ and $\laps h' = (\lambda - 1) (\lambda + 4) h'$. Thus, by the Obata result of Remark~\ref{Obatarmk}, we know that $h$ is constant if $(\lambda + 1) (\lambda + 6) \leq 6$, which corresponds to $\lambda \in [-7, 0]$, and $h'$ is constant if $(\lambda - 1) (\lambda + 4) \leq 6$, which corresponds to $\lambda \in [-5, 2]$. Hence, for $\lambda \in [-5, 0]$, we conclude that both $h$ and $h'$ are constant. Thus, except possibly when $\lambda = -\frac{5}{2}$, we find that $\alpha$ is also constant because $h - h' = (2 \lambda + 5) \alpha$. But if $\lambda = - \frac{5}{2}$, then $(\lambda+1)(\lambda+6)<0$ and $(\lambda-1)(\lambda+4)<0$, so we have $h = h' = \dss \beta - \frac{7}{2} \alpha = 0$. Substituting $\lambda = -\frac{5}{2}$ and $\dss \beta = \frac{7}{2} \alpha$ into~\eqref{excludeextensiontempeq} yields $\laps \alpha = -\frac{21}{4} \alpha$ and thus $\alpha = 0$ in this case.

Since $\ds \alpha = 0$ for $\lambda \in [-5, 0]$, equation~\eqref{excludeextensiontempeq2} becomes
\begin{equation*}
\laps \beta \, = \, (\lambda + 1)(\lambda + 4) \beta,
\end{equation*}
and $(\lambda +1)(\lambda + 4) \leq 4$. Recall from~\cite{MNS, MS} that for a Gray manifold, we have $\mathrm{Ric}_{\scriptscriptstyle{\Sigma}} = 5 \gs$. Hence the Bochner formula gives
\begin{equation*}
\langle \laps \beta, \beta \rangle \, = \, \langle \nabs^* \nabs \beta, \beta \rangle + \mathrm{Ric}_{\scriptscriptstyle{\Sigma}} (\beta, \beta) \, = \, \langle \nabs^* \nabs \beta, \beta \rangle + 5 | \beta |^2.
\end{equation*}
Integrating the above equation over $\Sigma$, we find that if $\laps \beta = \mu \beta$, then we must have $\mu \geq 5$ for nonzero $\beta$. Thus if $\lambda \in [-5, 0]$, we must have $\beta = 0$.  Equation~\eqref{excludeextensiontempeq} with $\beta = 0$ then gives $\laps \alpha = (\lambda-1)(\lambda+6) \alpha$, so if $\lambda \in [-5,0]$ we obtain $\alpha = 0$. Therefore $\omega = 0$ for all $\lambda \in [-5, 0]$.
\end{proof}

In fact, we can say a little bit more about homogeneous harmonic $1$-forms on a $\G$~cone. We will need an additional tool, given by the following result.

\begin{lemma} \label{BochnerKillinglemma}
Suppose that $X$ is a \emph{coclosed} $1$-form on $\Sigma$ such that $\laps X = \mu X$.
\begin{itemize}
\item if $\mu < 10$ then $X = 0$,
\item if $\mu = 10$, then $X$ is metric dual to a Killing field.
\end{itemize}
\end{lemma}
\begin{proof}
Consider the operation $\dive_{\scriptscriptstyle{\Sigma}} : S^2 (T^* \Sigma) \to \Omega^1(\Sigma)$ defined in~\eqref{divedefneq}. It is easy to check that the formal adjoint $\dive^*_{\scriptscriptstyle{\Sigma}} : \Omega^1(\Sigma) \to S^2 (T^* \Sigma)$ of this map is given in local coordinates by
\begin{equation*}
(\dive_{\scriptscriptstyle{\Sigma}}^* X)_{ij} \, = \, - \frac{1}{2} ( (\nabla_{\scriptscriptstyle{\Sigma}})_i X_j + (\nabla_{\scriptscriptstyle{\Sigma}})_j X_i ) \, = \, - \frac{1}{2} (\mathcal L_X \gs)_{ij}.
\end{equation*}
Using this map, together with the Bochner formula on $1$-forms, a short computation in local coordinates yields the identity
\begin{equation*}
\laps X \, = \, 2 \dive_{\scriptscriptstyle{\Sigma}} \dive_{\scriptscriptstyle{\Sigma}}^* X + 2 \mathrm{Ric}_{\scriptscriptstyle{\Sigma}}(X) - \ds \dss X.
\end{equation*}
Since $\mathrm{Ric}_{\scriptscriptstyle{\Sigma}} = \frac{R}{6} \gs = 5\gs$, the second term above is just $10 X$. Suppose now that $X$ is a \emph{coclosed} $1$-form such that $\laps X = \mu X$. Taking the inner product on both sides with $X$ and integrating over $\Sigma$, we find that
\begin{equation*}
\mu || X ||^2 \, = \, 2 ||\dive_{\scriptscriptstyle{\Sigma}}^* X ||^2 + 10 || X ||^2
\end{equation*}
which immediately implies the result.
\end{proof}

\begin{prop} \label{excludeextensionrefinedprop}
The sets of homogeneous harmonic $1$-forms on a $\G$~cone $C$ of rate $\lambda \in (-6, -5)$ and those of rate $\lambda \in (0,1)$ are both in one-to-one correspondence with the set of scalar eigenfunctions of $\laps$ with eigenvalues in $(6, 14)$. Moreover, the homogeneous harmonic $1$-forms on $C$ of rate $\lambda \in (0,1)$ are \emph{exact and coclosed}. Explicitly, those with rate $\lambda \in (0,1)$ are of the form
\begin{equation*}
\frac{1}{\lambda + 1} \dc (r^{\lambda + 1} \alpha)
\end{equation*}
where $\alpha$ is a function on $\Sigma$ satisfying $\laps \alpha = (\lambda + 1)(\lambda + 6) \alpha$. Finally, the homogeneous harmonic $1$-forms with rate $\lambda = -6$ or $\lambda = 1$ are given by
\begin{equation*}
r^{\lambda} dr \wedge \alpha + \frac{1}{2} r^{\lambda + 1} \ds \alpha - \frac{1}{2} r^{\lambda + 1} Y
\end{equation*}
where $\alpha = K + f$ for some constant $K$ and a function $f$ on $\Sigma$ such that $\laps f = 14 f$, and $Y$ is the dual $1$-form to a Killing vector field on $\Sigma$. In particular, in the $\lambda = 1$ case this simplifies to
\begin{equation*}
\dc \left( \frac{r^2}{2} \alpha \right) - \frac{r^2}{2} Y.
\end{equation*}
\end{prop}
\begin{proof}
As in the proof of Proposition~\ref{excludeextensionprop}, we write $\omega = r^{\lambda} (dr \wedge \alpha + r \beta)$, where $\alpha$ is a function on $\Sigma$ and $\beta$ is a $1$-form on $\Sigma$ satisfying both~\eqref{excludeextensiontempeq} and~\eqref{excludeextensiontempeq2}. Let $c$ be a constant, and consider the $1$-form $\rho = \ds \alpha + c \beta$ on the link $\Sigma$. We seek a value of $c$ so that $\laps (\dss \rho) = \mu (\dss \rho)$ for some $\mu$. A straightforward computation reveals exactly two values of $c$ that work, namely $c = \lambda + 4$ and $c = - (\lambda + 1)$. In these two cases we have
\begin{align*}
& \text{ when $\rho = \ds \alpha + (\lambda + 4) \beta$, } & \laps (\dss \rho) \, & = \, (\lambda + 1)(\lambda + 6) (\dss \rho), \\
& \text{ when $\rho = \ds \alpha - (\lambda + 1) \beta$, } & \laps (\dss \rho) \, & = \, (\lambda - 1)(\lambda + 4) (\dss \rho).
\end{align*}
Thus, since $\laps (\dss \rho) = \mu (\dss \rho)$ implies $\dss \rho = 0$ if $\mu \leq 0$ (as $\dss\rho$ is coexact), we deduce that
\begin{equation} \label{lapsrhotempeq1}
\begin{aligned}
& \text{ when $\lambda \in [-6, -1]$, } & \rho \, & = \, \ds \alpha + (\lambda + 4) \beta \, \text{ is \emph{coclosed} on $\Sigma$}, \\
& \text{ when $\lambda \in [-4, 1]$, } & \rho \, & = \, \ds \alpha - (\lambda + 1) \beta \, \text{ is \emph{coclosed} on $\Sigma$}.
\end{aligned}
\end{equation}
In each case above we therefore have $\dss \rho = \dss \ds \alpha + c \dss \beta = \laps \alpha + c \dss \beta = 0$. If we assume that both $c$ and $c+2$ are nonzero, then using $\dss \beta = - \frac{1}{c} \laps \alpha$ we can compute from~\eqref{excludeextensiontempeq} and~\eqref{excludeextensiontempeq2} that
\begin{align*}
\laps (\ds \alpha) \, & = \, \frac{c}{c+2} (\lambda - 1) (\lambda + 6) (\ds \alpha), \\
\laps \beta \, & = \, (\lambda + 1)(\lambda + 4) \beta + 2 \ds \alpha,
\end{align*}
from which it follows that
\begin{equation*}
\laps \rho \, = \, \laps( \ds \alpha + c \beta ) \, = \, \left( \frac{c}{c+2} (\lambda - 1) (\lambda + 6) + 2 c \right) \ds \alpha + (\lambda + 1)(\lambda + 4) c \beta.
\end{equation*}
Somewhat remarkably, in \emph{both} of the cases $c = \lambda + 4$ and $c = -(\lambda + 1)$, the above expression reduces to
\begin{equation} \label{lapsrhotempeq2}
\laps \rho \, = \, (\lambda + 1)(\lambda + 4) \rho.
\end{equation}
Hence, by Lemma~\ref{BochnerKillinglemma}, if $(\lambda + 1)(\lambda + 4) < 10$, then $\rho = \ds \alpha + c \beta = 0$ in the two cases above. This inequality is equivalent to $\lambda \in (-6, 1)$. Note that the derivation of~\eqref{lapsrhotempeq2} breaks down in either case when $c = 0$ or $c + 2 = 0$. These correspond to $\lambda = -6, -4$ in the range $[-6,-1]$ and $\lambda = -1, 1$ in the range $[-4,1]$. However, we already know from Proposition~\ref{excludeextensionprop} that $\omega = 0$ if $\lambda \in [-5, 0]$, so the problems at rates $-4$ and $-1$ are irrelevant. Thus, the new information we have gained so far is that
\begin{align*}
& \text{ if $\lambda \in (-6, -5)$, } & & \text{then } \, \ds \alpha + (\lambda + 4) \beta \, = \, 0, \\
& \text{ if $\lambda \in (0, 1)$, } & & \text{then } \, \ds \alpha - (\lambda + 1) \beta \, = \, 0.
\end{align*}
Thus in both of the above cases $\beta$ is uniquely determined from $\alpha$. Moreover, substituting $\beta = - \frac{1}{c} \ds \alpha$ into~\eqref{excludeextensiontempeq} in each case yields
\begin{align*}
& \text{ if $\lambda \in (-6, -5)$, } & & \text{then } \, \laps \alpha \, = \, (\lambda - 1) (\lambda + 4) \alpha, \\
& \text{ if $\lambda \in (0, 1)$, } & & \text{then } \, \laps \alpha \, = \, (\lambda + 1) (\lambda + 6) \alpha.
\end{align*}
We thus deduce that the homogeneous harmonic $1$-forms of rate $\lambda \in (-6, -5)$ or of rate $\lambda \in (0,1)$ are in one-to-one correspondence with scalar eigenfunctions of $\laps$ with eigenvalues in $(6, 14)$.

Now, if $\lambda \in (0,1)$ then
\begin{equation*}
\omega = r^{\lambda} \alpha dr + \frac{1}{\lambda+1} r^{\lambda+1} \ds \alpha = \frac{1}{\lambda+1} \dc (r^{\lambda+1} \alpha),
\end{equation*}
which is closed (as it is exact) and coclosed, as claimed.

Finally, we consider the cases $\lambda = -6$ and $\lambda = 1$. In both cases, from equation~\eqref{lapsrhotempeq1} we deduce that $\rho = \ds \alpha - 2 \beta$ is coclosed. The derivation above of equation~\eqref{lapsrhotempeq2} does not work here, but we can argue directly as follows. Again, in both cases, equations~\eqref{excludeextensiontempeq3} and~\eqref{excludeextensiontempeq4} give  $\laps (\dss \beta) = 14 (\dss \beta)$ and $\laps ( \dss \beta - 7 \alpha ) = 0$, so in particular $\dss \beta - 7 \alpha$ is constant, and thus $2 \ds \dss \beta = 14 \ds \alpha$. Now~\eqref{excludeextensiontempeq} and~\eqref{excludeextensiontempeq2} give $\laps \alpha = 2 \dss \beta$ and $\laps \beta = 10 \beta + 2 \ds \alpha$, and thus
\begin{align*}
\laps \rho \, & = \, \laps \ds \alpha - 2 \laps \beta \, = \, 2 \ds \dss \beta - 20 \beta - 4 \ds \alpha \\
& = \, 10 \ds \alpha - 20 \beta \, = \, 10 \rho
\end{align*}
as before. So by Lemma~\ref{BochnerKillinglemma}, the vector field metric dual to $\rho$ is a Killing field. Then $\beta = \frac{1}{2} (\ds \alpha - \rho)$, so $\omega = r^{\lambda} ( dr \wedge \alpha + r \beta) = r^{\lambda} dr \wedge \alpha + \frac{1}{2} r^{\lambda + 1} (\ds \alpha - \rho)$. Moreover, $\laps (\dss \beta) = 14 (\dss \beta)$ and $\laps \alpha = 2 \dss \beta$ then yield $\laps (\laps \alpha) = 14 \laps \alpha$. Hence $\alpha = K + f$ where $\laps f = 14 f$.
\end{proof}

Next, we derive a similar result for the ``modified Laplacian'' $\mlapc = \dc \dsc + \frac{2}{3} \dsc \dc$ that will be needed later. For technical reasons we can only deal with the interval $[-5,1]$ instead of the full $[-6,1]$ but this will suffice for our purposes.

\begin{prop} \label{excludeextensionprop2}
Let $\omega$ be a $1$-form on a $\G$~cone, homogeneous of order $\lambda$, satisfying
\begin{equation*}
\mlapc \omega \, = \, \dc \dsc \omega + \frac{2}{3} \dsc \dc \omega \, = \, 0.
\end{equation*}
Then
\begin{equation*}
\omega \, = \, 0,\quad \text{if $\lambda \in [-5, 0]$.}
\end{equation*}
Moreover, for $\lambda \in (0,1)$, there is a one-to-one correspondence  between  homogeneous $1$-forms of order $\lambda$ in $\ker {\mlapc}$ and scalar eigenfunctions of $\laps$ with eigenvalue in $(6,14)$, and the homogeneous $1$-forms of order $\lambda \in (0,1)$ in $\ker {\mlapc}$ are all \emph{exact and coclosed}. Finally, the homogeneous  $1$-forms with rate $\lambda = 1$ in the kernel of $\mlapc$ are given by
\begin{equation*}
\dc \left( \frac{r^2}{2} \alpha \right) - \frac{r^2}{2} Y
\end{equation*}
where $\alpha = K + f$ for some constant $K$ and a function $f$ on $\Sigma$ such that $\laps f = 14 f$, and $Y$ is the dual $1$-form to a Killing vector field on $\Sigma$. 
\end{prop}
\begin{proof}
The proof is analogous to the proofs of Propositions~\ref{excludeextensionprop} and~\ref{excludeextensionrefinedprop}. First, it is easy to check that if $\dc \dsc \omega + \frac{2}{3} \dsc \dc \omega = 0$, where $\omega = r^{\lambda} (dr \wedge \alpha + r \beta)$, then
\begin{align} \label{excludeextension2tempeq}
\mlaps \alpha \, & = \, \frac{2}{3} \laps \alpha \, = \, (\lambda - 1)(\lambda + 6) \alpha - \frac{1}{3}(\lambda - 5) \, \dss \beta, \\  \label{excludeextension2tempeq2}
\mlaps \beta \, & = \, \ds \dss \beta + \frac{2}{3} \dss \ds \beta \, = \, \frac{2}{3} (\lambda + 1)(\lambda + 4) \beta + \frac{1}{3}(\lambda + 10) \, \ds \alpha.
\end{align}
Using~\eqref{excludeextension2tempeq} and~\eqref{excludeextension2tempeq2}, some computation yields
\begin{align} \label{excludeextension2tempeq3}
\laps \bigl( (\lambda - 5) \dss \beta - (\lambda - 1) (\lambda + 10) \alpha \bigr) \, & = \, (\lambda + 1)(\lambda + 6) \bigl( (\lambda - 5) \dss \beta - (\lambda - 1) (\lambda + 10) \alpha \bigr), \\
\label{excludeextension2tempeq4}
\laps (\dss \beta - (\lambda + 6) \alpha) \, & = \, (\lambda - 1)(\lambda + 4) (\dss \beta - (\lambda + 6) \alpha).
\end{align}
Note that~\eqref{excludeextension2tempeq4} is identical to~\eqref{excludeextensiontempeq4} but~\eqref{excludeextension2tempeq3} is quite different from~\eqref{excludeextensiontempeq3}. But the method of proof of Proposition~\ref{excludeextensionprop} is still valid, with $h' = \dss \beta - (\lambda + 6) \alpha$ as before but now with
\begin{equation*}
h = (\lambda - 5) \dss \beta - (\lambda - 1) (\lambda + 10) \alpha.
\end{equation*}
As before, for $\lambda \in [-5, 0]$, we find that both $h$ and $h'$ are constant, hence so is $\alpha$, except possibly when $\lambda = - \frac{5}{2}$, because
\begin{equation*}
h - (\lambda - 5)h' = -4 (2 \lambda + 5) \alpha.
\end{equation*}
As before, if $\lambda = - \frac{5}{2}$, then both $h = 0$ and $h' = 0$, and it is easy to check that both conditions are equivalent to $\dss \beta - \frac{7}{2} \alpha = 0$. Substituting $\lambda = -\frac{5}{2}$ and $\dss \beta = \frac{7}{2} \alpha$ into~\eqref{excludeextension2tempeq} yields $\laps \alpha = -\frac{21}{4} \alpha$ and thus $\alpha = 0$ in this case. Because $\ds \alpha = 0$ for all $\lambda \in [-5, 0]$, equation~\eqref{excludeextension2tempeq2} now becomes $\laps \beta = \frac{2}{3}(\lambda + 1)(\lambda + 4) \beta$, and $(\lambda +1)(\lambda + 4) \leq 4$. Exactly as in the proof of Proposition~\ref{excludeextensionprop}, we deduce that $\beta = 0$. Now equation~\eqref{excludeextension2tempeq} with $\beta = 0$ then gives $\laps \alpha = \frac{3}{2}(\lambda-1)(\lambda+6) \alpha$, so if $\lambda \in [-5,0]$ we obtain $\alpha = 0$. Therefore $\omega = 0$ for all $\lambda \in [-5, 0]$.

For the second part of the proposition, we proceed as in the proof of Proposition~\ref{excludeextensionrefinedprop}. Let $c_1$ and $c_2$ be constants, and consider the $1$-form $\rho = c_1 \ds \alpha + c_2 \beta$ on the link $\Sigma$. We seek values of $c_1$ and $c_2$ so that $\mlaps (\dss \rho) = \mu (\dss \rho)$ for some $\mu$, where $\mlaps = \dss \ds + \frac{2}{3} \dss \ds$ is the ``modified Laplacian'' on $\Sigma$. A tedious computation reveals exactly two solutions, namely
\begin{align*}
& \text{ when $\rho = (\lambda + 10) \ds \alpha - (\lambda + 4) (\lambda - 5) \beta$, } & \laps (\dss \rho) \, & = \, (\lambda^2 + 7 \lambda - 6) (\dss \rho), \\
& \text{ when $\rho = \ds \alpha - (\lambda + 1) \beta$, } & \laps (\dss \rho) \, & = \, (\lambda - 1)(\lambda + 4) (\dss \rho).
\end{align*}
We will only make use of the second equation above, which is identical to the analogous expression in Proposition~\ref{excludeextensionrefinedprop}. As before, we deduce that
\begin{equation} \label{lapsrho2tempeq1}
\text{ when $\lambda \in [-4, 1]$, } \qquad  \rho \, = \, \ds \alpha - (\lambda + 1) \beta \, \text{ is \emph{coclosed} on $\Sigma$}.
\end{equation}
In this range of rates we therefore have $\dss \rho = \dss \ds \alpha - (\lambda + 1) \dss \beta = \laps \alpha - (\lambda + 1) \dss \beta = 0$. As in Proposition~\ref{excludeextensionrefinedprop}, if we assume that both $c = -(\lambda + 1)$ and $2c - (\lambda - 5) = -3 (\lambda - 1)$ are nonzero, then using $\ds \beta = - \frac{1}{c} \laps \alpha$ we can compute from~\eqref{excludeextension2tempeq} and~\eqref{excludeextension2tempeq2} that
\begin{align*}
\laps (\ds \alpha) \, & = \, \frac{3c}{2c - (\lambda - 5)} (\lambda - 1) (\lambda + 6) (\ds \alpha), \\
\laps \beta \, & = \, (\lambda + 1)(\lambda + 4) \beta + \frac{1}{2} (\lambda + 10) (\ds \alpha) +  \frac{\frac{3}{2}c}{2c - (\lambda - 5)} (\lambda - 1) (\lambda + 6) (\ds \alpha),
\end{align*}
from which it follows that
\begin{equation} \label{lapsrho2tempeq2}
\laps \rho \, = \, (\lambda + 1)(\lambda + 4) \rho
\end{equation}
exactly as in~\eqref{lapsrhotempeq2} in the proof of Proposition~\ref{excludeextensionrefinedprop}. Hence, again by Lemma~\ref{BochnerKillinglemma}, if $(\lambda + 1)(\lambda + 4) < 10$, then $\rho = \ds \alpha + c \beta = 0$. This inequality is equivalent to $\lambda \in (-6, 1)$. Note that the derivation of~\eqref{lapsrho2tempeq2} breaks down when $\lambda = -1, 1$ in the range $[-4,1]$. However, we already established in the current proof that $\omega = 0$ if $\lambda \in [-5, 0]$, so the problem at rate $-1$ is irrelevant. Thus, the new information we have gained is that
\begin{align*}
& \text{ if $\lambda \in (0, 1)$, } & & \text{then } \, \ds \alpha - (\lambda + 1) \beta \, = \, 0.
\end{align*}
Hence $\beta$ is uniquely determined from $\alpha$. Moreover, substituting $\beta = \frac{1}{\lambda + 1} \ds \alpha$ into~\eqref{excludeextension2tempeq} yields
\begin{align*}
& \text{ if $\lambda \in (0, 1)$, } & & \text{then } \, \laps \alpha \, = \, (\lambda + 1) (\lambda + 6) \alpha
\end{align*}
exactly as in Proposition~\ref{excludeextensionrefinedprop}. The argument from the proof of Proposition~\ref{excludeextensionrefinedprop} now applies directly to show that $\omega$ is exact and coclosed when $\lambda \in (0,1)$, and to deduce that the homogeneous $1$-forms
of rate $\lambda \in (0,1)$ in the kernel of $\mlapc$ are in one-to-one correspondence with scalar eigenfunctions of $\laps$ with eigenvalues in $(6, 14)$.

Finally, the result for $\lambda = 1$ also follows in almost exactly the same way as the argument in the proof of Proposition~\ref{excludeextensionrefinedprop}. First, equation~\eqref{lapsrho2tempeq1} is identical to~\eqref{lapsrhotempeq1}, so $\rho = \ds \alpha - 2 \beta$ is again coclosed. Also, when $\lambda = 1$ the equations~\eqref{excludeextension2tempeq} and~\eqref{excludeextension2tempeq4} yield $\mlaps \alpha = \frac{4}{3} \dss \beta$ and $2 \ds \dss \beta = 14 \ds \alpha$. Now from $\mlaps \ds = \frac{3}{2} \ds \mlaps$ we get that $\mlaps \ds \alpha = 2 \ds \dss \beta$. Using all these equations together with equation~\eqref{excludeextension2tempeq2} we find
\begin{align*}
\mlaps \rho \, & = \, \mlaps \ds \alpha - 2 \mlaps \beta \, = \, 2 \ds \dss \beta - 2 \mlaps \beta \\
& = \, 14 \ds \alpha - 2 \left( \frac{20}{3} \beta + \frac{11}{3} \ds \alpha \right) \\
& = \, \frac{20}{3} \left( \ds \alpha - 2 \beta \right) \, = \, \frac{20}{3} \rho.
\end{align*}
But since $\dss \rho = 0$, we conclude that $\laps \rho = \dss \ds \rho = \frac{3}{2} \mlaps \rho = 10 \rho$, as before. So by Lemma~\ref{BochnerKillinglemma} again the vector field metric dual to $\rho$ is a Killing field. Moreover, equation~\eqref{excludeextension2tempeq2} gives $\laps (\dss \beta) = 14 (\dss \beta)$, and hence exactly as in the proof of Proposition~\ref{excludeextensionprop} we conclude that $\omega = r ( dr \wedge \alpha + r \beta) = d ( \frac{r^2}{2} \alpha) - \frac{r^2}{2} \rho$ where $\alpha = K + f$ with $\laps f = 14 f$.
\end{proof}

\begin{rmk} \label{excludeextensionprop2rmk}
The above result is crucial in the proof of Theorem~\ref{infinitesimalslicethm} because, by Proposition~\ref{specialellipticprop}, the operator $d d^* + \frac{2}{3} d^* d$ can be identified with $\pi_7 d^* d : \Omega^2_7 \to \Omega^2_7$, which is the operator that is related to our gauge-fixing condition.
\end{rmk}

The next lemma is similar to~\cite[Proposition 2.22]{Kdesings}. It is more general in that it is valid for any rate $\lambda$, but less general in that it is stated only for forms of pure degree. The method of proof, however, is different and follows the discussion immediately preceding~\cite[Proposition 5.6]{L2}. This result is needed to compute the dimension of the moduli space in Section~\ref{dimensionsec} because the dimension is computed using Theorem~\ref{indexchangethm}, and the spaces $\mathcal K(\lambda)$ a priori could involve $\log$ terms.

\begin{lemma} \label{nologslemma}
Let $m \geq 0$, and let $\gamma = \sum_{l=0}^m (\log r)^l \gamma_l$ be a $k$-form in the kernel of $d + \dsc$, where each $\gamma_l$ is homogeneous of order $\lambda$, and $\gamma_m \neq 0$. Then necessarily $m = 0$. That is, $\gamma = \gamma_0$ has no $\log$ terms.
\end{lemma}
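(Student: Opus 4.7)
The plan is to argue by contradiction, supposing $m \geq 1$ and deriving an $L^2$-identity on the link $\Sigma$ that forces $\gamma_m = 0$. Write each $\gamma_l = r^\lambda(r^{k-1}\,dr \wedge \alpha_l + r^k \beta_l)$ with $\alpha_l \in \Omega^{k-1}(\Sigma)$ and $\beta_l \in \Omega^k(\Sigma)$ both independent of $r$. Using $d(\log r) = r^{-1}\,dr$ together with the formulas in~\eqref{homoddseq}, I would first compute the log-corrected identities
\begin{align*}
d[(\log r)^l \gamma_l] &= (\log r)^l \dc \gamma_l + l\,(\log r)^{l-1} r^{\lambda+k-1}\,dr \wedge \beta_l, \\
\dsc[(\log r)^l \gamma_l] &= (\log r)^l \dsc \gamma_l - l\,(\log r)^{l-1} r^{\lambda+k-2}\, \alpha_l.
\end{align*}
The second identity follows from the Leibniz rule $\dsc(f\omega) = f\dsc\omega - (\grad f)\hk \omega$ applied to $f = (\log r)^l$, noting that $\grad(\log r) = r^{-1}\partial_r$ on the cone, so $(\grad f)\hk \gamma_l = l(\log r)^{l-1} r^{\lambda+k-2}\alpha_l$.

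Next, since the powers $(\log r)^j$ are linearly independent, imposing $d\gamma = 0$ and $\dsc\gamma = 0$ and collecting the coefficient of $(\log r)^j$ for each $0 \leq j \leq m$ (with the convention $\alpha_{m+1} = \beta_{m+1} = 0$) yields the four scalar conditions on $\Sigma$
\begin{align*}
\ds \beta_j &= 0, & \dss \alpha_j &= 0, \\
(\lambda+k)\beta_j - \ds \alpha_j + (j+1)\beta_{j+1} &= 0, & -(\lambda-k+7)\alpha_j + \dss \beta_j - (j+1)\alpha_{j+1} &= 0.
\end{align*}
At the top level $j = m$ these say precisely that $\gamma_m$ is closed and coclosed on $C$. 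At level $j = m-1$ the two inhomogeneous equations become
\begin{equation*}
(\lambda+k)\beta_{m-1} - \ds \alpha_{m-1} = -m\beta_m, \qquad -(\lambda-k+7)\alpha_{m-1} + \dss \beta_{m-1} = m\alpha_m.
\end{equation*}

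The final step is to pair these against $\beta_m$ and $\alpha_m$ respectively in $L^2(\Sigma)$. After integrating by parts on the compact link and substituting the level-$m$ identities $\ds \alpha_m = (\lambda+k)\beta_m$ and $\dss \beta_m = (\lambda-k+7)\alpha_m$, both left-hand sides collapse to the same ``mixed'' quantity
\begin{equation*}
(\lambda+k)\langle \beta_{m-1}, \beta_m \rangle_{L^2(\Sigma)} - (\lambda-k+7)\langle \alpha_{m-1}, \alpha_m \rangle_{L^2(\Sigma)},
\end{equation*}
while the right-hand sides are $-m\|\beta_m\|_{L^2(\Sigma)}^2$ and $+m\|\alpha_m\|_{L^2(\Sigma)}^2$ respectively. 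Equating them yields $m\bigl(\|\alpha_m\|_{L^2(\Sigma)}^2 + \|\beta_m\|_{L^2(\Sigma)}^2\bigr) = 0$, and since $m \geq 1$ this forces $\gamma_m = 0$, contradicting the hypothesis. Hence $m = 0$.

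The main delicate point is getting the sign correct in the $\dsc$ log-correction: with the negative sign derived above, the two $L^2$ pairings combine constructively to produce a positive-definite identity; a sign error would only give $\|\alpha_m\|^2 = \|\beta_m\|^2$, which carries no information. Everything else is direct bookkeeping using~\eqref{homoddseq} and integration by parts on the closed manifold $\Sigma$.
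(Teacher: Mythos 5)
Your proposal is correct and follows essentially the same route as the paper's proof: the same homogeneous decomposition $\gamma_l = r^{\lambda}(r^{k-1}dr\wedge\alpha_l + r^k\beta_l)$, the same log-corrected formulas for $d$ and $\dsc$, extraction of the $(\log r)^m$ and $(\log r)^{m-1}$ coefficients, and the same $L^2$ pairing on $\Sigma$ with integration by parts to force $m(\|\alpha_m\|^2 + \|\beta_m\|^2) = 0$. The signs and the pure-degree splitting of $(d+\dsc)\gamma = 0$ are handled correctly, so there is nothing to add.
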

\begin{proof}
Each $\gamma_l$ is homogeneous of order $\lambda$, so it can be written as
\begin{equation} \label{nologshomoeq}
\gamma_l \, = \, r^{k - 1 + \lambda} dr \wedge \alpha_l + r^{k + \lambda} \beta_l
\end{equation}
where $\alpha_l$ and $\beta_l$ are $(k-1)$-forms and $k$-forms on $\Sigma$, respectively, independent of $r$. For any $k$-form $\gamma_l$ on $C$, it is easy to check that
\begin{equation*}
(d + \dsc) ( (\log r)^l \gamma_l) = (\log r)^l (\dc + \dsc) \gamma_l + \frac{l}{r} (\log r)^{l-1} (dr \wedge \gamma_l) - \frac{l}{r} (\log r)^{l-1} \left( \ddr \hk \gamma_l \right).
\end{equation*}
Using this identity, we see that
\begin{align*}
(d + \dsc) \left( \sum_{l=0}^m (\log r)^l \gamma_l \right) \, = \, & (\log r)^m (\dc + \dsc) \gamma_m \\ & \quad {} + \sum_{l=0}^{m-1} (\log r)^l \left( (d + \dsc) \gamma_l + \frac{(l +1)}{r} dr \wedge \gamma_{l+1} - \frac{(l+1)}{r} \ddr \hk \gamma_{l+1} \right).
\end{align*}
The above expression must vanish as a polynomial in $\log r$. Setting the coefficient of $(\log r)^m$ equal to zero, and decomposing into forms of pure degree, we obtain $d \gamma_m = 0$ and $\dsc \gamma_m = 0$, which from~\eqref{nologshomoeq} can be simplified to
\begin{equation} \label{nologsmeq}
\begin{aligned}
d \alpha_m & = (\lambda + k) \beta_m, & \qquad d^*_\Sigma \alpha_m & = 0, \\
d \beta_m & = 0, & \qquad d^*_\Sigma \beta_m & = (\lambda + 7 - k) \alpha_m.
\end{aligned}
\end{equation}
Similarly the coefficient of $(\log r)^{m-1}$ gives $d \gamma_{m-1} + \frac{m}{r} dr \wedge \gamma_m = 0$ and $\dsc \gamma_{m-1} - \frac{m}{r} \ddr \hk \gamma_m = 0$, which simplify to
\begin{equation} \label{nologsmminusoneeq}
\begin{aligned}
d \alpha_{m-1} & = (\lambda + k) \beta_{m-1} + m \beta_m, & \qquad d^*_\Sigma \alpha_{m-1} & = 0, \\
d \beta_{m-1} & = 0, & \qquad d^*_\Sigma \beta_{m-1} & = (\lambda + 7 - k) \alpha_{m-1} + m \alpha_m.
\end{aligned}
\end{equation}
Using the systems of equations~\eqref{nologsmeq} and~\eqref{nologsmminusoneeq} on $\Sigma$ and taking $L^2$ inner products, we find
\begin{align*}
m || \alpha_m ||^2 + m || \beta_m ||^2 \, & = \, \langle \langle m \alpha_m , \alpha_m \rangle \rangle + \langle \langle m \beta_m , \beta_m \rangle \rangle \\ & = \, \langle \langle d^*_\Sigma \beta_{m-1} - (\lambda + 7 - k) \alpha_{m-1} , \alpha_m \rangle \rangle \\ & \qquad \qquad {} + \langle \langle d \alpha_{m-1} - (\lambda + k) \beta_{m-1}, \beta_m \rangle \rangle \\ & = \, \langle \langle \beta_{m-1}, d \alpha_m \rangle \rangle - (\lambda + 7 - k) \langle \langle \alpha_{m-1}, \alpha_m \rangle \rangle \\ & \qquad \qquad {} + \langle \langle \alpha_{m-1}, d^*_\Sigma \beta_m \rangle \rangle - (\lambda + k) \langle \langle \beta_{m-1}, \beta_m \rangle \rangle \\ & = \, 0.
\end{align*}
Since $\gamma_m \neq 0$, we conclude that $m=0$.
\end{proof}

\begin{rmk} \label{nologsrmk}
A generalization of Lemma~\ref{nologslemma} to mixed degree forms is possible, using the same techniques, and in any dimension. We do not state it because we will not have occasion to use it. This means in particular, that in the published version of~\cite{Kdesings}, the last sentence in the proof of~\cite[Proposition 2.23]{Kdesings} is incorrect. That is, there are never any ``$\log$ terms'' for the operator $d + \dsc$.
\end{rmk}

The next proposition is useful for analyzing the \emph{critical rates} of the operator $d + \dsc$ in Section~\ref{conifoldfredholmsec}.

\begin{prop} \label{ddstarhomokernelprop}
Let $\gamma = \sum_{k=0}^7 \gamma_{k}$ be a mixed degree form on the cone, homogeneous of order $\lambda$, and suppose that $(d + \dsc) \gamma = 0$.
\begin{itemize}
\item If $\lambda = -3$, then $\gamma = \beta + dr \wedge \alpha$, where $\beta$ and $\alpha$ are \emph{both} harmonic $3$-forms on $\Sigma$.
\item If $\lambda = -4$, then $\gamma = r^{-2} dr \wedge \alpha + \beta + (r^{-2} \sigma - r^{-1} dr \wedge d\sigma) + (dr \wedge \mu - r^{-1} \dss \mu)$, where $\alpha$ is a harmonic $2$-form on $\Sigma$, $\beta$ is a harmonic $4$-form on $\Sigma$, $\sigma$ is a \emph{coexact} $2$-form on $\Sigma$ satisfying $\laps \sigma = 2 \sigma$, and $\mu$ is an \emph{exact} $4$-form on $\Sigma$ satisfying $\laps \mu = 2 \mu$.
\item If $\lambda = -2$, then $\gamma = \alpha + r^2 dr \wedge \beta + (dr \wedge \sigma + r d\sigma) + (r dr \wedge \dss \mu + r^2 \mu)$, where $\alpha$ is a harmonic $2$-form on $\Sigma$, $\beta$ is a harmonic $4$-form on $\Sigma$, $\sigma$ is a \emph{coexact} $2$-form on $\Sigma$ satisfying $\laps \sigma = 2 \sigma$, and $\mu$ is an \emph{exact} $4$-form on $\Sigma$ satisfying $\laps \mu = 2 \mu$.
\end{itemize}
\end{prop}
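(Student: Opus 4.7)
The plan is to exploit the identity $(d+\dsc)^2=\lapc$ together with the first-order coupling implied by $(d+\dsc)\gamma=0$ and the spectrum of $\laps$ on the compact positive-Einstein link $\Sigma$. Squaring the equation gives $\lapc\gamma=0$, so each homogeneous component $\gamma_k=r^{\lambda+k-1}dr\wedge\alpha_k+r^{\lambda+k}\beta_k$ is separately harmonic on $C$, and by~\eqref{homolapeq} the pair $(\alpha_k,\beta_k)$ satisfies a $2\times 2$ system on $\Sigma$ in which each component is a $\laps$-eigenform coupled to the other by $\ds$ or $\dss$. For all three rates $\lambda\in\{-4,-3,-2\}$, Propositions~\ref{homolapexcludeprop} and~\ref{excludeextensionprop} force $\gamma_0=\gamma_1=\gamma_6=\gamma_7=0$, leaving only $\gamma_2,\gamma_3,\gamma_4,\gamma_5$ to analyze.

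Next I would decompose $(d+\dsc)\gamma=0$ by total degree: the degree-$k$ component reads $d\gamma_{k-1}+\dsc\gamma_{k+1}=0$, and formulas~\eqref{homoddseq} turn each such equation into two coupled equations on $\Sigma$ for $(\alpha_{k-1},\beta_{k-1})$ and $(\alpha_{k+1},\beta_{k+1})$ via $\ds$ and $\dss$. With the extreme components gone, the $k=1,2$ equations collapse to $\dsc\gamma_2=\dsc\gamma_3=0$ and the $k=5,6$ equations to $d\gamma_4=d\gamma_5=0$; expanding these via~\eqref{homoddseq} produces algebraic and differential constraints relating $\alpha_k$ and $\beta_k$ on $\Sigma$ for each $k\in\{2,3,4,5\}$. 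Feeding these back into the Laplacian system reduces its equations to ones of the form $\laps\xi=c\,\xi$, and the non-negativity of $\laps$ combined with the Hodge decomposition on $\Sigma$ immediately forces $\alpha_2=\beta_5=0$ in every case (either because $c<0$ would be required, or because the reduction makes the form simultaneously harmonic and in the image of $\ds$ or $\dss$).

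The remaining components are classified by the same Hodge-theoretic mechanism: any closed (resp.\ coclosed) $\laps$-eigenform with positive eigenvalue is automatically exact (resp.\ coexact), while an exact or coexact form in $\ker\laps$ vanishes. At $\lambda=-3$ this gives $\alpha_3=0$ with $\beta_3$ harmonic and $\beta_4=0$ with $\alpha_4$ harmonic, after which the inner coupling equations $d\gamma_3+\dsc\gamma_5=0$ and $d\gamma_2+\dsc\gamma_4=0$ force $\alpha_5=\beta_2=0$, recovering the stated form. At $\lambda=-4$ and $\lambda=-2$ the same machinery leaves harmonic $2$- and $4$-forms on $\Sigma$ in two of the four components and $\laps$-eigenforms of eigenvalue $2$ in the other two; the remaining coupling equations then pin the eigenvalue-$2$ pieces down precisely as the coexact $\sigma$ with $\alpha_4=-\ds\sigma$ (or its mirror for $\lambda=-2$) and the exact $\mu$ with $\beta_3=-\dss\mu$ (or its mirror) claimed in the statement. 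The main obstacle is the bookkeeping: one must carefully match each coupled equation on $\Sigma$ with the appropriate eigenvalue constraint and Hodge component, and verify that the precise algebraic relations in the statement (rather than merely abstract eigenspace memberships) drop out of the coupling once the Hodge-theoretic classification is in place.
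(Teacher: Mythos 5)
Your plan is correct and I verified it closes: at each of $\lambda=-4,-3,-2$ the component-wise harmonicity from $(d+\dsc)^2=\lapc$ together with \eqref{homoddseq} and \eqref{homolapeq} yields exactly the eigenvalue equations you describe (e.g.\ $\laps\alpha_2=-2\alpha_2$ forcing $\alpha_2=0$, or ``harmonic and (co)exact'' forcing vanishing), and the two inner coupling equations $d\gamma_2+\dsc\gamma_4=0$, $d\gamma_3+\dsc\gamma_5=0$ pin down the relations $\alpha_4=-\ds\sigma$ and $\beta_3=-\dss\mu$ (and their mirrors), recovering the stated normal forms. This is essentially the paper's approach: its proof simply cites Proposition~2.21 of~\cite{Kdesings} and asserts the remaining cases are ``proved in essentially the same way,'' and that argument is precisely the degree-by-degree computation on homogeneous forms combined with Hodge theory and the spectral estimates on the compact Einstein link that you carry out.
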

\begin{proof}
The even-degree case of the first statement is exactly Proposition~2.21 in~\cite{Kdesings}. The odd-degree case, and the second and third statements, are proved in essentially the same way.
\end{proof}
\begin{rmk} \label{ddstarhomokernelrmk}
Proposition~\ref{ddstarhomokernelprop} is used several times in Section~\ref{topologicalconifoldsec} to explicitly describe the change in the space of closed and coclosed $3$-forms on a $\G$~conifold at rates $-3$ and $-4$. In fact we will mainly need this proposition for $3$-forms. If $\gamma$ is a closed and coclosed $3$-form on $C$, homogeneous of order $\lambda$, then Proposition~\ref{ddstarhomokernelprop} says that: when $\lambda = -3$, then $\gamma = \beta$ is a harmonic $3$-form on $\Sigma$; and when $\lambda = -4$, then $\gamma = r^{-2} dr \wedge \alpha$ where $\alpha$ is a harmonic $2$-form on $\Sigma$, because in this case $\mu = 0$ implies $d^* \mu = 0$.
\end{rmk}

Finally, we need to consider the excluded range of orders of homogeneity for elements in the kernel of the modified Dirac operator $\mdiracc$ defined in equation~\eqref{modifieddiracdefneq}.

\begin{prop} \label{modifieddiracexcludeprop}
Let $\mdiracc : \Omega^0_1 \oplus \Omega^1_7 \to \Omega^3_1 \oplus \Omega^3_7$ be the modified Dirac operator on a $\G$~cone. Let $s = (f, X) \in \Omega^0_1 \oplus \Omega^1_7$ be homogeneous of order $\lambda$. Then $\mdiracc (s) = 0$ precisely when
\begin{align*}
s & = \begin{cases}
(0, K r^{-6} dr) & \text{if } \, \lambda = -6, \\
(0, 0) & \text{if } \, \lambda \in (-6, 0), \\
(K, 0) & \text{if } \, \lambda = 0, \\
(0, X) & \text{if } \, \lambda \in (0,1), \text{ where $\lapc X = 0$}, \\
(0, d(r^2 h) + r^2 Y) & \text{if } \, \lambda = 1, \text{ where $\laps h = 14 h$ and $Y$ is a Killing field on $\Sigma$.}
\end{cases}\end{align*}
Here $K$ is a constant.
\end{prop}
\begin{proof}
Suppose that $\mdiracc(s) = 0$ in $\Omega^3_1 \oplus \Omega^3_7$. Corollary~\ref{mdirackernelcor} tells us that $\lapc f = 0$ and $\lapc X = 0$. Hence, if $\lambda \in [-5,0]$ then Proposition~\ref{excludeextensionprop} shows that $X = 0$. Thus by equation~\eqref{modifieddiracdefneq}, in the range $[-5,0]$ the condition $\mdiracc(s) = 0$ implies $\dc f = 0$, so $f = K$ is constant and $K = 0$ unless $\lambda = 0$.  

Now suppose that $\lambda \in [-6,-5) \cup (0,1]$. Since $\lapc f = 0$ we have that $f = 0$ by Proposition~\ref{excludeextensionprop}. This establishes the value of $f$ in all cases in the statement of the propostion. It remains to establish the value of $X$ when $\lambda \in [-6,-5) \cup (0,1]$. For this range, since $f = 0$, the condition $\mdiracc (0, X) = (0,0)$ is equivalent to $\dsc X = 0$ and $\curl_{\scriptscriptstyle{C}} (X) = 0$.

If $\lambda \in (0,1)$, then by Proposition~\ref{excludeextensionrefinedprop} we know that $\lapc X = 0$ if and only if $\dc X = 0$ and $\dsc X = 0$. Since $\dc X = 0$ implies $\curl_{\scriptscriptstyle{C}} (X) = 0$, we have established the result for $\lambda \in (0,1)$.

We can see the result for $\lambda \in (-6,-5)$ by the symmetry of the situation at hand under $\lambda \mapsto -5 - \lambda$. Explicitly, if $\lambda \in (-6,-5)$ then because $\lapc X = 0$, we saw in the proof of Proposition~\ref{excludeextensionrefinedprop} that 
\begin{equation*}
X = r^{\lambda} \alpha dr - \frac{1}{\lambda+4} r^{\lambda+1} \ds \alpha,
\end{equation*}   
where $\alpha$ is a function on $\Sigma$ with $\laps \alpha = (\lambda-1)(\lambda+4) \alpha$. But then using~\eqref{homoddseq} and $\dsc  X = 0$ we find that 
\begin{equation*}
(\lambda+6) \alpha = - \frac{1}{\lambda+4} \dss \ds \alpha = - (\lambda-1) \alpha,
\end{equation*}
which is not possible in this range of rates unless $\alpha = 0$, and thus $X = 0$.  

Finally we consider the cases $\lambda = -6$ and $\lambda = 1$. We have shown so far that $f = 0$ and that $\mdirac(0, X) = 0$ if and only if $\dsc X = 0$ and $\curl_{\scriptscriptstyle{C}} (X) = 0$. Recall also that if $dX = 0$, then certainly $\curl_{\scriptscriptstyle{C}} (X) = 0$. Now, since $\lapc X = 0$, Proposition~\ref{excludeextensionrefinedprop} shows that in these two cases we can write
\begin{equation} \label{modifieddiracexcludetempeq}
\begin{aligned}
X \, & = \, K r^{\lambda} dr + \left( r^{\lambda} h dr + \frac{1}{2} r^{\lambda + 1} \ds h \right) + \frac{1}{2} r^{\lambda + 1} Y \\
& = \, X_1 + X_2 + X_3,
\end{aligned}
\end{equation}
where $K$ is a constant, $h$ is a function on $\Sigma$ such that $\laps h = 14 h$, and $Y$ is the dual $1$-form to a Killing vector field on $\Sigma$. We will consider each of the three terms in~\eqref{modifieddiracexcludetempeq} separately. It is easy to check using~\eqref{homoddseq} and the fact that $\dss Y = 0$ since $Y$ is Killing that
\begin{equation*}
\text{for $\lambda = -6$,} \quad \left\{ \begin{array}{lll} \dc X_1 = 0, & & \dsc X_1 = 0, \\[0.2em] \dc X_2 = - \frac{7}{2} r^{-6} dr \wedge dh, & & \dsc X_2 = \frac{7}{2} r^{-7} h, \\[0.2em] \dc X_3 = -5 r^{-6} dr \wedge Y + r^{-5} \ds Y, & & \dsc X_3 = 0. \end{array} \right.
\end{equation*}
Since $X_1$ is closed (hence curl-free) and coclosed, it is always in the kernel of $\mdirac$. For the remaining piece $X_2 + X_3$ to be in $\ker \mdirac$, we see from above that we require $h = 0$, so $X_2 = 0$. From Proposition~\ref{Killing.prop}, we find that since $\dsc X_3 = 0$, then $\curl_{\scriptscriptstyle{C}} (X_3) = 0$ if and only if the flow of $X_3$ preserves $\phc$, which never happens for $Y \neq 0$ by Proposition~\ref{NK-Killing-prop}. Therefore $X_3 = 0$ as well and we have established the result for $\lambda = -6$.

On the other hand, for $\lambda = 1$ the term $X_3 = \frac{1}{2} r^2 Y$ is always in the kernel of $\mdirac$ by Propositions~\ref{NK-Killing-prop} and~\ref{Killing.prop}. Then, using~\eqref{homoddseq} again we find that
\begin{equation*}
\text{for $\lambda = 1$,} \quad \left\{ \begin{array}{lll} \dc X_1 = 0, & & \dsc X_1 = - 7 K, \\[0.2em] \dc X_2 = 0, & & \dsc X_2 = 0. \end{array} \right.
\end{equation*}
Thus $X_2$ also lies in $\ker \mdirac$ and $X_1$ never does, unless $K = 0$, establishing the result for $\lambda = 1$.
\end{proof}

\subsection{Asymptotically conical (AC) \texorpdfstring{$\mathbf{\G}$}{G2} manifolds} \label{ACsec}

Let $M$ be a noncompact, connected smooth $7$-dimensional manifold.
\begin{defn} \label{ACdefn}
The manifold $M$ is called an \emph{asymptotically conical} $\G$~manifold with cone $C$ and \emph{rate} $\nu < 0$ if all of the following holds:
\begin{itemize} \setlength\itemsep{-0.5mm}
\item The manifold $M$ is a $\G$~manifold with torsion-free $\G$~structure $\phm$ and complete metric $\gm$.
\item There is a $\G$~cone $(C, \phc, \gc)$ with \emph{connected} link $\Sigma$.
\item There is a compact subset $L \subset M$, an $R > 1$, and a map $h : (R, \infty) \times \Sigma \to M$ that is a \emph{diffeomorphism} of $(R, \infty) \times \Sigma$ onto $M \backslash L$.
\item The pullback $h^* (\phm)$ is a torsion-free $\G$~structure on the subset $(R, \infty) \times \Sigma$ of $C$. We require that this approach the torsion-free $\G$~structure $\phc$ in a $C^{\infty}$ sense, with rate $\nu < 0$. This means that
\begin{equation} \label{ACdefneq}
| \nabc^j ( h^* (\phm) - \phc ) |_{\gc} \, = \, O (r^{\nu - j})
\qquad \forall j \geq 0
\end{equation}
in $(R,\infty) \times \Sigma$. Note that all norms and derivatives are computed using the cone metric $\gc$. It follows immediately from~\eqref{ACdefneq} and Taylor's theorem that the metric on $M$ is asymptotic to the cone metric at the same rate:
\begin{equation*}
| \nabc^j ( h^* (\gm) - \gc ) |_{\gc} \, = \, O (r^{\nu - j}).
\qquad \forall j \geq 0
\end{equation*}
\end{itemize}
\end{defn}
It is clear that an AC $\G$~manifold of rate $\nu_0$ is also an AC $\G$~manifold for all $\nu > \nu_0$.
\begin{rmk} \label{ACdefnrmk}
The link $\Sigma$ of an AC $\G$~manifold $M$ must be connected because $M$ can have only one end. This follows from the Cheeger--Gromoll splitting theorem, which says that a complete noncompact Ricci-flat manifold with more than one end isometrically splits into a Riemannian product, and thus the holonomy would be reducible.
\end{rmk}

\begin{ex} \label{BSexamples}
There are three known examples of asymptotically conical $\G$~manifolds, whose asymptotic cones have links given by the three homogeneous nearly K\"ahler manifolds (we have excluded the round $S^6$). They are all total spaces of vector bundles over a compact base. These manifolds were discovered by Bryant--Salamon~\cite{BS} and were the first examples of complete $\G$~manifolds. Specifically, they are described in the following list, where the metric on the base manifold is the one induced from the Bryant--Salamon metric by restriction.

\begin{itemize}
\item $\Lambda^2_-(S^4)$, the bundle of anti-self-dual $2$-forms over the $4$-sphere. This is a nontrivial rank $3$ vector bundle over the standard round $S^4$. This AC $\G$~manifold is asymptotic to the cone over the nearly K\"ahler $\C \PR^3$, with rate $\nu = -4$.
\item $\Lambda^2_-(\C \PR^2)$, the bundle of anti-self dual $2$-forms over the complex projective plane. This is a nontrivial rank $3$ vector bundle over the standard Fubini-Study $\C \PR^2$. This AC $\G$~manifold is asymptotic to the cone over the nearly K\"ahler flag manifold $F_{1,2}$, also with rate $\nu = -4$.
\item $\spi (S^3)$, the spinor bundle of the $3$-sphere. This is a trivial rank $4$ vector bundle over the standard round $S^3$, hence is topologically $S^3 \times \R^4$. This AC $\G$~manifold is asymptotic to the cone over the homogeneous nearly K\"ahler $S^3 \times S^3$, with rate $\nu = -3$.
\end{itemize}
\end{ex}

\begin{rmk} \label{ACexamplesrmk}
Explicit formulas for these asymptotically conical $\G$~structures, as well as the fact that their rates are $-4$, $-4$, and $-3$, respectively,  can be found in Bryant--Salamon~\cite{BS}, and also in Atiyah--Witten~\cite{AW}. We will not have need for these explicit formulas.
\end{rmk}

\subsection{Conically singular (CS) \texorpdfstring{$\mathbf{\G}$}{G2} manifolds} \label{CSsec}

Let $\overline M$ be a compact, connected topological space, and let $x_1, \ldots, x_n$ be a finite set of isolated points in $\overline M$. Assume that $M = \overline M \backslash \{x_1, \ldots, x_n \}$ is a smooth noncompact $7$-dimensional manifold that we will call the \emph{smooth part} of $\overline M$ and $\{ x_1, \ldots, x_n\}$ will be called the \emph{singular points} of $\overline M$.

\begin{defn} \label{CSdefn}
The space $\overline M$ is called a \emph{$\G$~manifold with isolated conical singularities}, with cones $C_1, \ldots, C_n$ at $x_1, \ldots, x_n$ and \emph{rates} $\nu_1, \ldots, \nu_n$, where each $\nu_i > 0$, if all of the following holds:
\begin{itemize} \setlength\itemsep{-0.5mm}
\item The smooth part $M$ is a $\G$~manifold with torsion-free $\G$~structure $\phm$ and metric $\gm$.
\item There are $\G$~cones $(C_i, \phci, \gci)$ with links $\Sigma_i$ for $i = 1, \ldots , n$.
\item There is a compact subset $K \subset M$ such that $M \backslash K$ is a union of open sets $S_1, \ldots, S_n$ whose closures $\overline S_1, \ldots, \overline S_n$ in $\overline M$ are all \emph{disjoint} in $\overline M$. There is an $\e \in (0,1)$, and for each $i = 1, \ldots, n$, there is a map $h_i : (0, \e) \times \Sigma_i \to M$ that is a \emph{diffeomorphism} of $(0, \e) \times \Sigma_i$ onto $S_i$.
\item The pullback $h_i^* (\phm)$ is a torsion-free $\G$~structure on the subset $(0, \e) \times \Sigma_i$ of $C_i$. We require that this approach the torsion-free $\G$~structure $\phci$ in a $C^{\infty}$ sense, with rate $\nu_i > 0$. This means that
\begin{equation} \label{CSdefneq}
| \nabci^j \! ( h_i^* (\phm) - \phci ) |_{\gci} \, = \, O (r^{\nu_i - j})
\qquad \forall j \geq 0
\end{equation}
in $(0,\e) \times \Sigma_i$. Note that all norms and derivatives are computed using the cone metric $\gci$. It follows immediately from~\eqref{CSdefneq} and Taylor's theorem that the metric on $M$ is asymptotic to the cone metric at the same rate:
\begin{equation*}
| \nabci^j \! ( h_i^* (\gm) - \gci ) |_{\gci} \, = \, O (r^{\nu_i - j})
\qquad \forall j \geq 0
\end{equation*}
\end{itemize}
It is clear that a CS $\G$~manifold of rate $\nu_0$ is also a CS $\G$~manifold for all $\nu < \nu_0$. We also note that $\overline M$ is the closure of $M$ in $\overline M$. We will often abbreviate the phrase ``compact $\G$~manifold with isolated conical singularities'' as conically singular or CS $\G$~manifold.
\end{defn}

There are at present still no examples of conically singular $\G$~manifolds, although they are expected to exist in abundance. The main theorem in~\cite{Kdesings} can be interpreted as evidence for the likelihood of their existence, in the sense that they should arise as ``boundary points'' of the moduli space of smooth compact $\G$~manifolds. Moreover, the discussion in Section~\ref{resolvedmodulisec} of the present paper, which is a corollary of our main theorem, can also be interpreted as saying that CS $\G$~manifolds should in fact make up a large part of the boundary of the moduli space of smooth compact $\G$~manifolds. The first author, in collaboration with Dominic Joyce, has a new construction of compact $\G$~manifolds~\cite{JK} that may be generalizable to produce the first examples of compact $\G$~manifolds with isolated conical singularities, which would all be modeled on the cone over the nearly K\"ahler $\C \PR^3$.

\section{Analysis on \texorpdfstring{$\mathbf{\G}$}{G2} conifolds} \label{analsec}

In this section we collect a plethora of analytic results, some general and some specific to $\G$~conifolds. We begin in Sections~\ref{conifoldSobolevsec} and~\ref{conifoldfredholmsec} by summarizing the essential aspects of the Lockhart--McOwen theory for AC and CS manifolds that we will require. This theory originally appeared in Lockhart--McOwen~\cite{LM} and Lockhart~\cite{Lock}. A very detailed exposition can also be found in Marshall~\cite{M}. Then in Section~\ref{Hodgesec} we use this theory to establish Hodge-theoretic results for weighted Sobolev spaces of forms. These are all combined in Section~\ref{specialindexchangesec} to establish a special index change theorem for an operator $\dd$ that plays the key role in our deformation theory. In Section~\ref{topologicalconifoldsec} we consider some topological results on $\G$~conifolds. These are important ingredients in computing the (virtual) dimension of the moduli spaces later. Finally, in Sections~\ref{parallelsec},~\ref{gaugefixingsec}, and~\ref{particularG2analsec} we discuss parallel tensors, gauge-fixing conditions, and some vanishing results particular to $\G$~conifolds that will be needed to prove our main theorem in Section~\ref{conifoldmodulisec}.

\subsection{Weighted Banach spaces on conifolds} \label{conifoldSobolevsec}

The essential idea is as follows. By introducing appropriately \emph{weighted} Banach spaces of sections of vector bundles over an AC manifold or over the smooth, noncompact part of a CS manifold, one generically obtains a nice Fredholm theory for an elliptic operator $P : V \to W$ such as the Laplacian or the Dirac operator. Basically, as long as we stay away from certain ``critical rates'', which form a discrete set, these operators will be Fredholm and we can write $W = \im (P) \oplus C$ for some finite-dimensional complement $C$ which is isomorphic to $\ker(P^*)$. The precise details are explained below.

We will mostly use this theory for weighted Sobolev spaces. However, it applies equally well to weighted H\"older spaces, and we will require the relations between these spaces (the \emph{Sobolev embedding theorems}) in order to deal with some regularity issues, in particular to ensure that the sections are at least twice continuously differentiable.

Throughout this section, we use $M$ to denote a \emph{$\G$~conifold}, which is either an asymptotically conical (AC) $\G$~manifold, as in Definition~\ref{ACdefn}, or the smooth part of a conically singular (CS) $\G$~manifold, as in Definition~\ref{CSdefn}. Many, but not all, of the results are valid for any \emph{Riemannian conifold}, although the results are always stated in the particular dimension $7$ for convenience.

The analytic results for AC manifolds hold equally well for CS manifolds with minor differences. The most significant difference is that all the inequalities involving rates must be reversed, since the noncompact ends correspond to $r \to 0$ instead of $r \to \infty$. Also, on a CS manifold we can have $n$ ends as opposed to just one.

In order to be able to define sensible ``weighted'' Banach spaces on $M$, we need the concept of a radius function.

\begin{defn} \label{radiusfunctiondefn}
A \emph{radius function} $\varrho$ is a smooth function on $M$ satisfying the following conditions.
\begin{itemize}
\item {\bf AC case:} On the compact subset $L$ of $M$, we define $\varrho \equiv 1$. If $x = h(r, p)$ for some $r \in (2R, \infty)$ and $p \in \Sigma$, then set $\varrho(x) = r$. In the region $h( (R, 2R) \times \Sigma)$, the function $\varrho$ is defined by interpolating smoothly between its definition near infinity and its definition in the compact subset $L$, in a decreasing fashion.
\item {\bf CS case:} On the compact subset $K = M \backslash \sqcup_{i=1}^n S_i$, we define $\varrho \equiv 1$. If $x = h_i(r, p)$ for some $r \in (0, \frac{1}{2}\e)$ and $p \in \Sigma_i$, then set $\varrho(x) = r$. In the regions $h_i( (\frac{1}{2}\e, \e) \times \Sigma_i)$, the function $\varrho$ is defined by interpolating smoothly between its definitions near the singularities and its definition in the compact subset $K$, in an increasing fashion.
\end{itemize}
\end{defn}

We can now define the weighted Sobolev spaces on $M$. Let $E$ be a vector bundle over $M$ with a fibrewise metric. In all instances in this paper, $E$ will either be the bundle $\Lambda^k(T^* M)$ of $k$-forms on $M$, or the space of all forms $\Lambda^{\bullet} (T^*M) = \bigoplus_{k=0}^7 \Lambda^k (T^*M)$ on $M$, or the spinor bundle $\spi(M)$ over $M$, described in Section~\ref{spinorsec}. The fibrewise metric in all these cases is naturally induced from the Riemannian metric on $M$, and the Levi-Civita connection $\nabm$ naturally induces a connection on $E$ which we continue to denote by $\nabm$.

We want to define the \emph{weighted Sobolev space} of sections of $E$ with rate $\lambda$. In the AC case, we let $\lambda \in \R$. In the CS case, we let $\lambda = (\lambda_1, \ldots, \lambda_n) \in \R^n$. We can add such $n$-tuples and multiply them by real numbers using the vector space structure of $\R^n$. We also define $\lambda + j = (\lambda_1 + j, \ldots, \lambda_n + j)$ for any $j \in \R$, and we say that $\lambda > \lambda'$ if $\lambda_i > \lambda_i'$ for all $i = 1, \ldots, n$. Finally we define $\varrho^{\lambda}$ to equal $\varrho^{\lambda_i}$ on $h_i( (0, \e) \times \Sigma_i)$ and to equal $1$ on the compact subset $K$. Then $\varrho^{\lambda}$ is a smooth function on $M$ which equals
$r^{\lambda_i}$ on the neighbourhood $h_i( (0, \frac{1}{2}\e) \times \Sigma_i)$ of the singular point $x_i$.

\begin{defn} \label{conifoldSobolevdefn}
Let $l \geq 0$, $ p > 1$, and let $\lambda$ be as above. We define the \emph{weighted Sobolev space} $L^p_{l, \lambda} (E)$ of sections of $E$ over $M$ as follows. Consider the space $C^{\infty}_{\mathrm{cs}}(E)$ of smooth compactly supported sections of $E$. For such sections the quantity
\begin{equation} \label{conifoldSobolevdefneq}
{||\gamma||}_{L^p_{l,\lambda}} \, = \, {\left( \sum_{j=0}^l \int_{M'} {| \varrho^{- \lambda + j} \nabm^j \gamma|}^p_{\gm} \varrho^{-7} \volm \right)}^{\frac{1}{p}}
\end{equation}
is clearly finite, and is a norm. We define the Banach space $L^p_{l, \lambda} (E)$ to be the completion of $C^{\infty}_{\mathrm{cs}}(E)$ with respect to this norm.
\end{defn}

\begin{rmk} \label{conifoldSobolevdefnrmk}
We make a few important remarks about this definition.
\begin{enumerate}[(a)]
\item As a topological vector space, $L^p_{l, \lambda} (E)$ is independent of the choice of radius function $\varrho$, and any two such choices lead to equivalent norms.
\item We clearly have $L^p_{l, \lambda} (E) \subseteq L^p_{l, \lambda'} (E)$ if $\lambda \leq \lambda'$ in the AC case or if $\lambda \geq \lambda'$ in the CS case.
\item An element $\gamma$ in $L^p_{l, \lambda} (E)$ can be intuitively thought of as a section of $E$ that is $l$ times weakly differentiable such that near each end, the tensor $\nabm^j \gamma$ is growing at most like $r^{\lambda - j}$.
\item The space $L^2_{l, \lambda} (E)$ is a \emph{Hilbert space}, with inner product coming from the polarization of the norm in~\eqref{conifoldSobolevdefneq}. Because of the factor $\varrho^{-7}$ in~\eqref{conifoldSobolevdefneq}, we have
\begin{equation*}
L^2_{0, - \frac{7}{2}} (E) \, = \, L^2 (E),
\end{equation*}
where $L^2(E)$ is the usual space of $L^2$ sections of $E$. Here and henceforth it is understood that in the CS case $\frac{7}{2}$ denotes the `constant' $n$-tuple $(\frac{7}{2}, \ldots, \frac{7}{2})$.
\end{enumerate}
\end{rmk}

\begin{rmk} \label{pismostly2rmk}
We will almost always just take $p=2$ in this paper. The only time we will need to consider $p \neq 2$ is in Lemma~\ref{Qgoodlemma}, which uses the general Sobolev embedding Theorem~\ref{Sobolevembeddingthm} below.
\end{rmk}

The following proposition about dual spaces is easy to see from Definition~\ref{conifoldSobolevdefn}.
\begin{prop} \label{dualspaceprop}
There is a Banach space isomorphism
\begin{equation*}
{\left( L^2_{0, \lambda} (E) \right)}^* \, \cong \, L^2_{0, -\lambda - 7} (E),
\end{equation*}
given by the $L^2$ inner product pairing.
\end{prop}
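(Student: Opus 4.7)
The plan is to reduce the statement to the standard self-duality of the unweighted $L^2$ space by means of an isometric rescaling. First, I verify that the pairing
\begin{equation*}
(\gamma, \eta) \, \longmapsto \, \langle \langle \gamma, \eta \rangle \rangle \, = \, \int_M \langle \gamma, \eta \rangle_{\gm} \, \volm
\end{equation*}
is indeed bounded on $L^2_{0, \lambda}(E) \times L^2_{0, -\lambda - 7}(E)$. Splitting the weight as $1 = \varrho^{-\lambda - \frac{7}{2}} \cdot \varrho^{\lambda + \frac{7}{2}}$ in the integrand and applying Cauchy--Schwarz pointwise and then in the unweighted $L^2$ sense gives
\begin{equation*}
| \langle \langle \gamma, \eta \rangle \rangle | \, \leq \, \left( \int_M |\gamma|^2 \varrho^{-2\lambda - 7} \volm \right)^{\! \! \frac{1}{2}} \! \left( \int_M |\eta|^2 \varrho^{2\lambda + 7} \volm \right)^{\! \! \frac{1}{2}} \! = \, ||\gamma||_{L^2_{0, \lambda}} \, ||\eta||_{L^2_{0, -\lambda - 7}}.
\end{equation*}
This yields a bounded linear map $T : L^2_{0, -\lambda - 7}(E) \to (L^2_{0, \lambda}(E))^*$ of norm at most $1$.

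Next I would introduce the multiplication operator $M_s \gamma = \varrho^s \gamma$. A direct computation from Definition~\ref{conifoldSobolevdefn} with $l=0$, $p=2$ shows that $M_{-\lambda - 7/2}$ is an isometric isomorphism from $L^2_{0, \lambda}(E)$ onto $L^2(E) = L^2_{0, -7/2}(E)$, since
\begin{equation*}
||M_{-\lambda - 7/2} \gamma||^2_{L^2} \, = \, \int_M |\varrho^{-\lambda - 7/2} \gamma|^2 \volm \, = \, \int_M |\gamma|^2 \varrho^{-2\lambda - 7} \volm \, = \, ||\gamma||^2_{L^2_{0, \lambda}}.
\end{equation*}
The symmetric statement holds for $M_{\lambda + 7/2} : L^2_{0, -\lambda - 7}(E) \to L^2(E)$. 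Moreover, the factors in the pairing combine exactly as
\begin{equation*}
\langle \langle \gamma, \eta \rangle \rangle \, = \, \int_M \langle \varrho^{-\lambda - 7/2} \gamma, \, \varrho^{\lambda + 7/2} \eta \rangle \, \volm \, = \, \langle \langle M_{-\lambda - 7/2} \gamma, \, M_{\lambda + 7/2} \eta \rangle \rangle_{L^2},
\end{equation*}
so the diagram identifying the weighted pairing with the standard $L^2$ pairing commutes.

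Finally, I would invoke self-duality of the Hilbert space $L^2(E)$ via the Riesz representation theorem: every bounded linear functional on $L^2(E)$ is given by $L^2$-pairing against a unique element of $L^2(E)$, isometrically. Transporting this through the isometries $M_{-\lambda - 7/2}$ and $M_{\lambda + 7/2}$ shows that the map $T$ is an isometric isomorphism. There is no real obstacle here; the only points to be mindful of are that $C^\infty_{\mathrm{cs}}(E)$ is dense in each weighted space (used to identify $T$ with the pairing on a dense subspace and to conclude injectivity), and that the definition of $\varrho$ and the factor $\varrho^{-7}$ in~\eqref{conifoldSobolevdefneq} are what make the dual rate come out to exactly $-\lambda - 7$.
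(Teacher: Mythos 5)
Your proof is correct, and it is essentially the argument the paper has in mind: the paper gives no proof at all, simply remarking that the proposition is easy to see from Definition~\ref{conifoldSobolevdefn}, and your route — the isometric rescaling $\gamma \mapsto \varrho^{-\lambda - \frac{7}{2}} \gamma$ onto $L^2(E)$ followed by the Riesz representation theorem, with the pairing transported through the commuting diagram — is exactly that straightforward verification.
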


We will likewise have need of the analogous weighted \emph{H\"older spaces}. Their definition is a bit more involved. See, for example~\cite{Lth, M} for the precise definition. However, all that we will require from the weighted H\"older spaces is that elements in them have some degree of differentiability with control on their growth rate on the ends, and that these spaces are related to the weighted Sobolev spaces by the \emph{embedding theorems}, which we will state precisely. The embedding theorems are used implicitly in the sketch proof of Theorem~\ref{kernelthm} below to explain why elements in the kernel of a uniformly elliptic operator are in fact $C^{\infty}$.

Let $m \geq 0$ and $\alpha \in (0,1)$. Then the weighted H\"older space $C^{m, \alpha}_{\lambda}(E)$ is a Banach space of sections of $E$, whose elements have $m$ \emph{continuous derivatives}.
\begin{thm}[Weighted Sobolev embedding theorem] \label{Sobolevembeddingthm}
Let $l,m \geq 0$ and let $\alpha \in (0,1)$.
\begin{itemize}
\item If $l \geq m$, $l - \frac{7}{p} \geq m - \frac{7}{q}$, and $p \leq q$, then there is a \emph{continuous embedding}
\begin{equation*}
L^p_{l, \lambda} (E) \hookrightarrow L^q_{m, \lambda} (E).
\end{equation*}
\item If $l - \frac{7}{2} \geq m + \alpha$, then there is a \emph{continuous embedding}
\begin{equation*}
L^p_{l, \lambda} (E) \hookrightarrow C^{m, \alpha}_{\lambda}(E).
\end{equation*}
\end{itemize}
\end{thm}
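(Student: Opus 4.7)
The plan is to reduce to the classical Sobolev and Morrey embedding theorems on a fixed compact manifold with boundary, by means of a dyadic decomposition of the cone ends combined with the dilation-equivariance of $\gc$. First decompose $M$ into its compact core, where $\varrho \equiv 1$ and the weighted norms coincide with the standard unweighted ones so both claimed embeddings follow from the classical theorems applied to this compact manifold with smooth boundary, and its ends. I describe the end analysis in the AC case; the CS case is entirely symmetric after replacing $2^k$ by $2^{-k}$.

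Cover the end $h((R,\infty)\times \Sigma)$ by the dyadic annuli $A_k = h([2^k R,\, 2^{k+1} R]\times \Sigma)$ for $k \geq 0$, and pull each back to the fixed unit-size annulus $A = h([1,2]\times \Sigma)$ via the dilation $\mathbf{t}_k$ of Section~\ref{g2conessec} with $t_k = 2^k R$. The identities $\mathbf{t}_k^* \gc = t_k^2 \gc$ and $\mathbf{t}_k^* \volc = t_k^7 \volc$, together with the fact that constant conformal rescaling does not alter Christoffel symbols (so $\mathbf{t}_k^* \nabla^j = \nabla^j \mathbf{t}_k^*$), imply that for a section $\gamma$ of the fixed bundle $E$ of forms or spinors the pointwise norm scales as $|\mathbf{t}_k^*(\nabla^j \gamma)|_{\gc} = t_k^{d+j}\,|\nabla^j\gamma|_{\gc}\circ \mathbf{t}_k$, where $d$ is a constant depending only on the tensor type of $E$. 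Feeding these identities into~\eqref{conifoldSobolevdefneq} with $\varrho \asymp t_k$ on $A_k$, the $t_k^{+j}$ factors from $\nabla^j$ cancel exactly the $t_k^{-j}$ factors built into $\varrho^{-\lambda + j}$, and $t_k^7$ from the volume form cancels $t_k^{-7}$ from $\varrho^{-7}$, yielding
\begin{equation*}
\|\gamma\|_{L^p_{l,\lambda}(A_k)} \, \asymp \, t_k^{-(\lambda + d)}\,\|\mathbf{t}_k^* \gamma\|_{L^p_l(A)}
\end{equation*}
with implicit constant independent of $k$, and the same rescaling factor $t_k^{-(\lambda + d)}$ appears for the $L^q_{m,\lambda}$ and $C^{m,\alpha}_\lambda$ norms.

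Applying the classical Sobolev and Morrey embeddings on the fixed compact annulus $A$ to $\mathbf{t}_k^* \gamma$ under the stated hypotheses gives uniform bounds in which the common $t_k^{-(\lambda+d)}$ prefactor cancels from both sides, leaving the uniform annular estimate $\|\gamma\|_{L^q_{m,\lambda}(A_k)} \leq C\|\gamma\|_{L^p_{l,\lambda}(A_k)}$ and its H\"older analogue. To conclude, sum over $k$: for the Sobolev-to-Sobolev case use the elementary inequality $(\sum_k a_k^q)^{1/q} \leq (\sum_k a_k^p)^{1/p}$ valid for $p\leq q$ and $a_k \geq 0$; for the H\"older embedding, combine the uniform annular $C^{m,\alpha}$ bounds with the bounded-overlap structure of a slightly thickened dyadic cover, controlling H\"older difference quotients across annular boundaries by the same rescaling argument applied on overlapping annuli. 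The main technical obstacle is verifying the scaling identity cleanly and uniformly: the weight exponents in~\eqref{conifoldSobolevdefneq} are designed precisely so that all $t_k^{\pm j}$ factors cancel, but one must check this carefully to confirm that the implicit constant is truly independent of $k$, $l$, $p$, and of the tensor type of $E$, so that the uniform annular estimates really can be summed.
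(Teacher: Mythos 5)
The paper does not actually prove Theorem~\ref{Sobolevembeddingthm}: its ``proof'' is just a citation to Marshall~\cite[Theorem 4.17]{M}. Your dyadic-rescaling argument is the standard proof of such weighted embeddings (and is essentially how the cited reference proceeds), and it is correct as sketched: the exponents in~\eqref{conifoldSobolevdefneq} are indeed chosen so that the factors $t_k^{\pm j}$ from derivatives and $t_k^{\pm 7}$ from the volume cancel, leaving a common prefactor $t_k^{-(\lambda+d)}$ that is independent of $p$, $q$, so the classical embeddings on the fixed annulus transfer with uniform constants, and the $\ell^p\hookrightarrow\ell^q$ summation (resp.\ sup over a bounded-overlap thickened cover) finishes the $L^q$ (resp.\ H\"older) case. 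Two small points worth making explicit: (i) the norms in~\eqref{conifoldSobolevdefneq} are defined with $\gm$ and $\nabm$, not $\gc$ and $\nabc$, so you should record that the AC/CS decay condition makes $\gm$ and $\gc$ uniformly comparable with all derivatives on the ends (equivalently, the rescaled metrics $t_k^{-2}\mathbf{t}_k^*(\gm)$ on the model annulus have uniformly bounded geometry), which is exactly what makes the classical embedding constants independent of $k$; (ii) the H\"older hypothesis $l - \tfrac{7}{2} \geq m + \alpha$ is the Morrey condition $l - \tfrac{7}{p} \geq m + \alpha$ only for $p = 2$, so your appeal to the classical Morrey embedding ``under the stated hypotheses'' should be read with $p=2$ (or $p\geq 2$), which is the only case the paper ever uses.
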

\begin{proof}
See Marshall~\cite[Theorem 4.17]{M} for a proof. We have only stated some special cases, which are all that we will require.
\end{proof}
\begin{cor} \label{C2cor}
If $l \geq 6$, then any section $\gamma \in L^2_{l, \lambda}(E)$ is twice continuously differentiable.
\end{cor}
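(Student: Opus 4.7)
The plan is to derive this immediately from the second clause of the weighted Sobolev embedding theorem (Theorem \ref{Sobolevembeddingthm}). I will specialize to $p=2$, $m=2$, and pick any $\alpha \in (0,1)$ with $\alpha \leq \tfrac{1}{2}$, for instance $\alpha = \tfrac{1}{2}$. With $l = 6$ the condition $l - \tfrac{7}{2} \geq m + \alpha$ reads $\tfrac{5}{2} \geq 2 + \alpha$, which is satisfied. The embedding then yields
\begin{equation*}
L^2_{l, \lambda}(E) \, \hookrightarrow \, L^2_{6, \lambda}(E) \, \hookrightarrow \, C^{2, \alpha}_{\lambda}(E),
\end{equation*}
where the first inclusion holds trivially (for $l \geq 6$ the weighted Sobolev norm dominates the $L^2_{6,\lambda}$ norm from the definition in~\eqref{conifoldSobolevdefneq}, since it involves a sum of nonnegative terms up to order $l$). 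Since by definition any section in $C^{2, \alpha}_{\lambda}(E)$ possesses two continuous derivatives, the conclusion follows. There is no genuine obstacle here: the only mild point is verifying the numerical inequality, which is immediate, and observing that weighted Sobolev norms are monotone in the differentiation order $l$.
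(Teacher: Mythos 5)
Your argument is exactly the paper's: apply the weighted Sobolev embedding theorem with $m=2$ (and, in the Hölder clause, any admissible $\alpha$, e.g.\ $\alpha = \tfrac{1}{2}$ so that $l - \tfrac{7}{2} \geq 2 + \alpha$ holds for $l \geq 6$), noting that elements of $C^{2,\alpha}_{\lambda}(E)$ have two continuous derivatives. The proposal is correct and matches the paper's proof, with the only addition being the (harmless) explicit monotonicity step $L^2_{l,\lambda} \hookrightarrow L^2_{6,\lambda}$.
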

\begin{proof}
This follows immediately from Theorem~\ref{Sobolevembeddingthm} by taking $m = 2$.
\end{proof}
We will \emph{always} assume that $l \geq 6$ without explicit mention, so that in particular any second order differential operators on such sections are unambiguously defined.

\subsection{Fredholm and elliptic operators on conifolds} \label{conifoldfredholmsec}

Many standard facts will be stated without proof in this section. The reader can consult~\cite{LM, Lock, M} for details. To make some of the equations easier to read, we will often use the following shorthand notation:
\begin{align*}
\Omega^{\bullet}_{l,\lambda} \, & = \, L^2_{l, \lambda}(\Lambda^{\bullet}(T^* M)), \\
\Omega^{k}_{l,\lambda} \, & = \, L^2_{l, \lambda}(\Lambda^{k}(T^* M)), \qquad 0 \leq k \leq 7, \\
\spi_{l, \lambda} \, & = \, L^2_{l, \lambda}(\spi(M)).
\end{align*}
We will be interested in the following three differential operators:
\begin{align} \label{ddstarmapdefneq}
(d + \dsm)_{l + 1, \lambda} \, & : \, \Omega^{\bullet}_{l + 1, \lambda} \, \to \, \Omega^{\bullet}_{l, \lambda - 1}, \\
\label{lapmapdefneq}
(\lapm)_{l + 2, \lambda} \, & : \, \Omega^{k}_{l + 2, \lambda} \, \to \, \Omega^{k}_{l, \lambda - 2}, \\\label{diracmapdefneq}
(\diracm)_{l + 1, \lambda} \, & : \, \spi_{l + 1, \lambda} \, \to \, \spi_{l, \lambda - 1}.
\end{align}
They are defined by extending the operators $d + \dsm$, $\lapm$, and $\diracm$ from smooth compactly supported sections to the Sobolev spaces. Note that the Laplacian $\lapm$ preserves the degree $k$ of forms, so strictly speaking we should include the degree $k$ as an extra label on the left hand side of~\eqref{lapmapdefneq}, but we will not do this, to avoid the proliferation of notation. We will let $r$ denote the order of the differential operator, which is $1$,  $2$, and $1$, respectively, in these three cases. Using the symbol $P$ to denote one of these operators, and $E$ to denote the vector bundle on which it acts, the above three operators can all be written as
\begin{equation} \label{Pmapdefneq}
P_{l + r, \lambda} \, : \, L^2_{l + r, \lambda} (E) \, \to \,  L^2_{l, \lambda - r} (E).
\end{equation}
In fact, we will also be interested in the modified Dirac operator $\mdiracm$ defined in Section~\ref{spinorsec}, as well as in the \emph{restriction} of $d + \dsm$ to the space $\Omega^k_{l+1, \lambda}$ of $k$-forms, which we will denote simply by $(\ddm)^k_{l+1 , \lambda}$. That is,
\begin{equation} \label{dddefneq}
\ddm^k \, = \, \rest{(d + \dsm)}{\Omega^k}.
\end{equation}
It is a standard fact that the operators $d + \dsm$, $\lapm$, and $\diracm$ are \emph{elliptic}, and in Proposition~\ref{modifieddiracprop} we proved that $\mdiracm$ is also elliptic. In fact these operators are also all \emph{uniformly elliptic} in the sense that near infinity, they approach the elliptic operators $d + \dsc$,  $\lapc$, $\diracc$, and $\mdiracc$ on the cone $C$, respectively. See Marshall~\cite[Chapter 4]{M} for the precise definition of uniform ellipticity in this context. We note here that the operator $\ddm^k$, being the restriction of $d + \dsm$ to the space of $k$-forms, is \emph{not} elliptic, but for suitable rates $\lambda$ and suitably redefined codomain, it will be Fredholm. This is discussed in Section~\ref{specialindexchangesec}. The following result is an \emph{elliptic regularity} statement for uniformly elliptic operators.

\begin{thm} \label{ellipticregthm}
Let $P$ be a uniformly elliptic operator. Suppose that $\gamma$ and $\upsilon$ are both locally integrable sections of $E$, and that $\gamma$ is a weak solution of the equation $P(\gamma) = \upsilon$. If $\gamma \in L^2_{0, \lambda} (E)$ and $\upsilon \in L^2_{l, \lambda - r} (E)$, then $\gamma \in L^2_{l + r, \lambda} (E)$, and $\gamma$ is a strong solution of $P(\gamma) = \upsilon$. Furthermore, we have
\begin{equation} \label{ellipticregeq}
{||\gamma||}_{L^2_{l+r,\lambda}} \, \leq \, C \left( {||P(\gamma)||}_{L^2_{l,\lambda -r}} + {||\gamma||}_{L^2_{0,\lambda}} \right)
\end{equation}
for some constant $C > 0$ independent of $\gamma$. That is, $\gamma$ has at least $r$ more derivatives worth of regularity than $\upsilon = P(\gamma)$.
\end{thm}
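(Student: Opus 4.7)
The plan is to prove this via the standard technique of reducing the weighted estimate on $M$ to uniform local estimates on a dyadic cover of the conical ends, combined with ordinary interior elliptic regularity on the compact part. The key point exploited throughout is that the cone metric $\gc = dr^2 + r^2 \gs$ has the property that rescaling $r \mapsto t r$ sends the metric on the annulus $\{1 \le r \le 2\}$ to the metric on $\{t \le r \le 2t\}$ up to a conformal factor $t^2$, and that $P$ approaches a dilation-equivariant operator $P_C$ on the cone at infinity (or at the singular points) by the definition of uniform ellipticity.

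First I would invoke classical interior elliptic regularity for uniformly elliptic operators on bounded domains in Euclidean space (or equivalently on a compact Riemannian manifold with boundary): there is a fixed model annulus $A = \{\tfrac{1}{2} < r < 3\} \times \Sigma$ equipped with the conical metric and operator $P_C$, and for any pair of nested subannuli $A_1 \subset \subset A$, the standard estimate
\begin{equation*}
\|\gamma\|_{L^2_{l+r}(A_1)} \, \le \, C \left( \|P_C \gamma\|_{L^2_l(A)} + \|\gamma\|_{L^2_0(A)} \right)
\end{equation*}
holds with a single constant $C$ depending only on the ellipticity constants of $P_C$ on $A$. By the uniform ellipticity hypothesis, the coefficients of $P$ on an annulus $A_k = \{2^k < r < 2^{k+1}\} \times \Sigma$ converge uniformly to those of $P_C$ as $k \to \infty$ (in the AC case; analogously as $k \to -\infty$ in the CS case), so after pulling $A_k$ back to $A$ by the dilation $\mathbf{2^k}$, the operator $P$ becomes a small perturbation of $P_C$ in $C^l$-norm. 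Hence for all sufficiently large $|k|$ the same local estimate holds on each $A_k$ with a constant $C$ independent of $k$.

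Next I would unwind the rescaling. Under $\mathbf{t}$, a tensor of degree $j$ scales by $t^j$, the cone volume form scales by $t^7$, and the weight $\varrho^\lambda$ by $t^\lambda$; a direct computation shows that the weighted norm $\|\cdot\|_{L^2_{l+r, \lambda}(A_k)}$ is invariant under this change of variables when the unweighted $L^2_{l+r}$ norm on the pulled-back $A$ is multiplied by $2^{-k \lambda}$ on the input and $2^{-k (\lambda - r)}$ on $P\gamma$. The bilinear structure is precisely what the weighted norm is designed to track, so the rescaled uniform estimate becomes, on each $A_k$,
\begin{equation*}
\|\gamma\|_{L^2_{l+r, \lambda}(A_k')} \, \le \, C \left( \|P\gamma\|_{L^2_{l, \lambda - r}(A_k)} + \|\gamma\|_{L^2_{0, \lambda}(A_k)} \right)
\end{equation*}
for a shrunk annulus $A_k'$, with $C$ independent of $k$. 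Squaring and summing over the dyadic cover (using that the overlaps between successive $A_k$ have bounded multiplicity so double-counting is harmless), and combining with the classical interior estimate on the compact region $L$ (AC) or $K$ (CS), yields the claimed global bound~\eqref{ellipticregeq}. Conclusion that $\gamma \in L^2_{l+r, \lambda}(E)$ and that it solves $P\gamma = \upsilon$ strongly rather than merely weakly then follows: membership in $L^2_{l+r, \lambda}$ on every $A_k$ with summable norms yields global membership, and Sobolev embedding (Theorem~\ref{Sobolevembeddingthm}) applied locally gives enough classical differentiability to interpret $P\gamma = \upsilon$ pointwise almost everywhere.

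The main obstacle, and the one requiring the most care, is the uniformity of the local elliptic constant across the rescaled annuli. This is where the \emph{uniform} ellipticity of $P$ (convergence of $P$ to $\diracc$, $\lapc$, or $d + \dsc$ on the cone, as formulated in Marshall~\cite[Chapter 4]{M}) is essential: without this, the local constants could in principle blow up as $k \to \pm \infty$ and the summed estimate would fail. Once that uniformity is in hand the rest is bookkeeping of the scaling exponents, and the argument proceeds in parallel in the AC and CS cases with the direction of the dyadic exhaustion reversed.
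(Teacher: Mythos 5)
The paper does not actually prove this statement: it is quoted as part of the standard Lockhart--McOwen theory, with the proof deferred to \cite{LM, Lock, M}. Your dyadic-rescaling argument --- uniform interior estimates on a fixed model annulus, transported to the annuli $\{2^k < r < 2^{k+1}\} \times \Sigma$ by the dilations, with the weights $\varrho^{-\lambda+j}$ and the factor $\varrho^{-7}$ exactly absorbing the scaling, then summed over a bounded-multiplicity cover together with the estimate on the compact piece --- is precisely the standard proof given in those references, and your outline of it is correct, including the observation that uniform ellipticity is what keeps the local constants bounded as $k \to \pm\infty$. One small correction: the final appeal to the Sobolev embedding theorem is neither needed nor available for small $l$; the conclusion that $\gamma$ is a strong solution already follows from $\gamma \in L^2_{l+r,\mathrm{loc}}$ with $l + r \geq r$, since then $P(\gamma)$ is defined as a locally $L^2$ section via weak derivatives and coincides with $\upsilon$ almost everywhere.
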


We will need to use some results about the kernels and indices of linear operators. To this end we must first define the \emph{critical rates} for these operators, which depend on the geometry of the links of the cones on each end.
\begin{defn} \label{criticalratesdefn}
Let $C$ be a $\G$~cone. Let $\Pc$ be one of the operators $d + \dsc$,  $\lapc$, $\diracc$, $\mdiracc$, or $\ddc^k$ acting on sections of some vector bundle $E$ over $C$. The set $\mathcal{D}_{\Pc}$ of \emph{critical rates} of the operator $\Pc$ on sections of $E$ is defined as follows:
\begin{equation} \label{criticalratesdefneq}
\mathcal{D}_{\Pc} = \left\{ \lambda \in \R; \, \, \exists \text{ a nonzero section $\gamma$ of $E$, homogeneous of order $\lambda$, with $\Pc(\gamma) = 0$} \right\}.
\end{equation}
(In the general theory of elliptic operators on $C$ one has to allow complex values for the critical rates, but since the operators we consider  
here are formally self-adjoint, or the restrictions thereof, the critical rates are necessarily real in this setting.)

The definition of `homogeneous of order $\lambda$' for a $k$-form on a cone was given in Definition~\ref{homoformsdefn}. If $\gamma$ is a mixed degree form or a spinor in $\spi(C)$ (which from Section~\ref{spinorsec} consists of a function and a $1$-form), homogeneous means that each graded component is homogeneous. The set $\mathcal D_{\Pc}$ is a countable, discrete subset of $\R$, and has finite intersection with any closed bounded interval of $\R$.

In the AC case a rate $\lambda \in \R$ is a \emph{critical rate} of $P$ if it is a critical rate of the corresponding operator $\Pc$ on its asymptotic cone $C$. In the CS case we have $n$ ends which are modeled on cones, and a rate $\lambda = (\lambda_1, \cdots, \lambda_n) \in \R^n$ will be a \emph{critical rate} for $P$ if any of its components $\lambda_i$ lie in the corresponding critical set $\mathcal D_{\Pci}$ for the cone $C_i$. We say that the ``interval'' $[\lambda, \lambda']$ does not contain any critical rates for $P$ on $M$ if each interval $[\lambda_i, \lambda'_i]$ contains no critical rates for $P$ on the cone $C_i$.
\end{defn}

\begin{thm} \label{kernelthm}
The \emph{kernel} of $P_{l + r, \lambda}$ is independent of $l$. Hence we can denote it unambigiously as $\ker(P)_{\lambda}$. This kernel is also invariant as we change the rate $\lambda$, as long as we do not hit any critical rates. That is, if the interval $[\lambda, \lambda']$ is contained in the complement of $\mathcal D_{P}$, then
\begin{equation*}
\ker (P)_{\lambda'}  \, = \, \ker (P)_{\lambda} .
\end{equation*}
\end{thm}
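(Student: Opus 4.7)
The plan is to handle the two claims separately: for independence of $l$ I would appeal directly to elliptic regularity (Theorem~\ref{ellipticregthm}), while for invariance under rate changes I would combine Sobolev containment with the Lockhart--McOwen Fredholm theory. Concretely, if $\gamma \in \ker(P_{l+r,\lambda})$ then $P(\gamma) = 0$ automatically lies in $L^2_{l',\lambda-r}(E)$ for every $l' \geq 0$, and the embedding $L^2_{l+r,\lambda}(E) \hookrightarrow L^2_{0,\lambda}(E)$ places $\gamma \in L^2_{0,\lambda}(E)$; Theorem~\ref{ellipticregthm} then lifts $\gamma$ to $L^2_{l'+r,\lambda}(E)$ for every $l' \geq 0$, so $\ker(P_{l+r,\lambda})$ is independent of $l$. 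For $\ddm^k$, which is not itself elliptic, I would observe that $(d + \dsm)\gamma = 0$ on a pure $k$-form splits into $d\gamma = 0$ and $\dsm\gamma = 0$, whence $\lapm \gamma = 0$ and elliptic regularity for $\lapm$ applies.

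For the second claim, assume the AC case; the CS case is entirely analogous, componentwise. Let $\lambda < \lambda'$ with $[\lambda,\lambda'] \cap \mathcal{D}_P = \emptyset$. The inclusion $L^2_{l+r,\lambda} \hookrightarrow L^2_{l+r,\lambda'}$ from Remark~\ref{conifoldSobolevdefnrmk}(b) immediately yields $\ker(P)_\lambda \subseteq \ker(P)_{\lambda'}$. To upgrade this containment to equality, I would invoke the Lockhart--McOwen theory: for each non-critical rate $\mu \notin \mathcal{D}_P$ the operator $P_{l+r,\mu}$ is Fredholm, and $\ind(P_{l+r,\mu})$ is constant on every connected component of $\R \setminus \mathcal{D}_P$, jumping only as $\mu$ crosses a critical rate. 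Moreover, Proposition~\ref{dualspaceprop} and the standard pairing argument identify $\coker(P_{l+r,\mu})$ with the kernel of the formal adjoint $P^*$ at a dual rate; since this dual rate moves oppositely to $\mu$, the same Sobolev containment argument shows that $\dim\coker(P)_\mu$ is non-increasing in $\mu$. Combining the index constancy with these two monotonicity statements yields
\begin{equation*}
\dim\ker(P)_{\lambda'} - \dim\ker(P)_\lambda \, = \, \dim\coker(P)_{\lambda'} - \dim\coker(P)_\lambda \, \leq \, 0,
\end{equation*}
while the inclusion of kernels gives the reverse inequality. Hence both sides vanish and $\ker(P)_\lambda = \ker(P)_{\lambda'}$.

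The main obstacle is the Lockhart--McOwen black box: establishing Fredholmness at non-critical rates and constancy of the index across them. Everything else in the argument is elementary bookkeeping about the monotonicity of kernels under Sobolev inclusion and the duality of cokernels supplied by Proposition~\ref{dualspaceprop}; the substantive analytic content is delegated to the Lockhart--McOwen theorem summarised at the start of Section~\ref{conifoldfredholmsec}.
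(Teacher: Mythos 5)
Your proposal is correct, and for the rate-invariance it is genuinely more explicit than what the paper does. For the independence of $l$ your bootstrap (kernel elements lie in $L^2_{0,\lambda}$, $P\gamma = 0$ lies in every $L^2_{l',\lambda-r}$, so Theorem~\ref{ellipticregthm} lifts $\gamma$ to every $L^2_{l'+r,\lambda}$) is essentially the paper's sketch, which combines the Sobolev/H\"older embedding with elliptic regularity to conclude kernel elements are smooth; your separate treatment of $\ddm^k$ via the degree splitting $d\gamma = 0$, $\dsm\gamma = 0$ and harmonicity is harmless extra care, though the theorem is only ever invoked in the paper for the uniformly elliptic operators. The real difference is in the second claim: the paper simply cites Lockhart--McOwen for the invariance of the kernel between critical rates, whereas you derive it inside the framework of Section~\ref{conifoldfredholmsec} from three ingredients -- the Sobolev containment $\ker(P)_{\lambda} \subseteq \ker(P)_{\lambda'}$, the constancy of $\ind(P_{l+r,\cdot})$ on intervals free of critical rates (Theorem~\ref{indexchangethm} with an empty sum), and the identification $\coker(P)_{\mu} \cong \ker(P^*)_{-7+r-\mu}$ of Theorem~\ref{fredholmalternativethm}, whose dual rate moves oppositely to $\mu$ and hence makes $\dim\coker$ non-increasing; the resulting equality of finite dimensions together with the containment forces equality of kernels, and the CS case is indeed the same with inequalities reversed componentwise. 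What your route buys is a transparent proof whose only black boxes are Fredholmness and the index-change formula, which the paper has already imported; what it does not buy is independence from Lockhart--McOwen, since those two imports carry the substantive analysis (and, within the paper's ordering, Theorem~\ref{indexchangethm} is stated after this result, so your argument reorganizes which LM facts are taken as primitive rather than replacing them).
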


\begin{proof}
The invariance of the kernel in the absence of critical rates is explained in~\cite{LM, Lock}. We present a sketch of the proof of the independence on $l$. The Sobolev embedding Theorem~\ref{Sobolevembeddingthm} says that for large enough $l$, we can embed the Sobolev space $L^2_{l, \lambda}(E)$ into an appropriate \emph{H\"older space} $C^{m, \alpha}_{\lambda}(E)$ having $m$ continuous derivatives. It follows from this theorem and the elliptic regularity of Theorem~\ref{ellipticregthm} that elements in the kernel of $P$ are smooth, and the independence of the kernels on $l$ follows from this.
\end{proof}

Recall that a linear map between Banach spaces is called \emph{Fredholm} if it has closed image, finite-dimensional kernel, and finite-dimensional cokernel. The main significance of the critical rates $\mathcal D_{P}$ is that they are related to the rates $\lambda$ for which the operator $P_{l + r, \lambda}$ of equation~\eqref{Pmapdefneq} is Fredholm, by the following theorem.

\begin{thm} \label{fredholmthm}
The map $P_{l + r, \lambda} \, : \, L^2_{l + r, \lambda} (E) \, \to \, L^2_{l, \lambda - r} (E)$ is Fredholm if and only if $\lambda \notin \mathcal D_{P}$, where the set of critical rates $\mathcal D_{P}$ is as given in Definition~\ref{criticalratesdefn}.
\end{thm}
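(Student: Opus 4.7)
The plan is to follow the standard Lockhart–McOwen strategy, reducing the analysis of $P_M$ on $M$ to a careful study of the model operator $P_C$ on the cone(s) at the end(s). I shall describe the AC case; the CS case is entirely analogous, with the role of $r\to\infty$ replaced by $r\to 0$ and all rate inequalities reversed.

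First I would analyse $P_C$ on the cone via Mellin transform. Substituting $t=\log r$ turns $C=(0,\infty)\times\Sigma$ into a cylinder $\R\times\Sigma$ of bounded geometry, and conjugating $P_C$ by an appropriate power of $r$ yields a translation-invariant operator on this cylinder. Taking the Fourier transform in $t$ (equivalently, the Mellin transform in $r$) produces, for each complex parameter $\zeta$, an elliptic operator $P_C(\zeta)$ of the same order $r$ on the compact manifold $\Sigma$. Elliptic theory on $\Sigma$ combined with analytic Fredholm theory in $\zeta$ shows that $P_C(\zeta)$ is invertible for $\zeta$ outside a discrete subset of $\C$; matching homogeneous solutions $r^{\lambda}\gamma_{\zeta}(\sigma)$ with Mellin eigensections identifies this exceptional set, restricted to the line $\real(\zeta)=\lambda$, with the set $\mathcal{D}_{P_C}$ of Definition~\ref{criticalratesdefn}.

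For the ``if'' direction, suppose $\lambda\notin\mathcal{D}_P$. Along the vertical line $\real(\zeta)=\lambda$, Plancherel and uniform bounds on $P_C(\zeta)^{-1}$ imply that the Mellin inverse realises $P_C : L^2_{l+r,\lambda}(E|_C)\to L^2_{l,\lambda-r}(E|_C)$ as an isomorphism near infinity. Patching this inverse, via smooth cutoffs, with a local parametrix on a compact core of $M$ (supplied by standard elliptic theory together with the weighted estimate of Theorem~\ref{ellipticregthm}) produces a two-sided parametrix $Q$ for $P_M$ whose error terms $P_MQ-I$ and $QP_M-I$ are compact between the relevant weighted spaces---compactness coming from the Rellich-type embedding $L^2_{l+r,\lambda'}\hookrightarrow L^2_{l,\lambda}$ for $\lambda'<\lambda$ on bounded regions. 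A standard functional-analytic argument then forces $P_{l+r,\lambda}$ to have closed range, finite-dimensional kernel, and finite-dimensional cokernel, hence Fredholm.

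For the ``only if'' direction, suppose $\lambda\in\mathcal{D}_P$ witnessed by a nonzero homogeneous section $\gamma_0$ of order $\lambda$ satisfying $P_C\gamma_0=0$. Choose cutoffs $\chi_n$ with $\chi_n\equiv 1$ on $\{n\le r\le 2n\}$ and translating support, and set $\gamma_n=\chi_n\gamma_0$. A direct computation shows that $\|\gamma_n\|_{L^2_{l+r,\lambda}}$ stays bounded below, while $P_M\gamma_n=[P_M,\chi_n]\gamma_0+(P_M-P_C)\chi_n\gamma_0$ has norm tending to zero: the commutator picks up a factor of $n^{-1}$ from differentiating $\chi_n$, and the AC assumption forces $P_M-P_C$ to decay at a strictly negative rate. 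Modulo a finite-dimensional part of the kernel, this Weyl-type sequence witnesses that the range of $P_{l+r,\lambda}$ is not closed, excluding Fredholmness. The main technical obstacle is the Mellin step: one must control $P_C(\zeta)^{-1}$ uniformly in $\zeta$ along the critical line, verify that cutoff errors are genuinely compact in the weighted topology, and handle rate transitions carefully. These are precisely the technicalities carried out in full in Lockhart--McOwen~\cite{LM}, Lockhart~\cite{Lock}, and Marshall~\cite{M}, to which I would defer for the remaining details.
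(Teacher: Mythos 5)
The paper itself gives no proof of this theorem: it is quoted as part of the standard Lockhart--McOwen package, with the reader referred to \cite{LM, Lock, M}, and your overall plan (Mellin analysis of the model operator on the cylinder, identification of the exceptional set on the line $\real(\zeta)=\lambda$ with the critical rates of Definition~\ref{criticalratesdefn}, parametrix patching plus weighted Rellich compactness for the ``if'' direction, and a singular sequence built from a homogeneous kernel element for the ``only if'' direction) is exactly the argument of those references. In that sense you are on the same track as the paper, and deferring the Mellin-side technicalities to \cite{LM, Lock, M} is legitimate.

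However, the one step you carry out in detail contains a genuine error. Take $\chi_n(\varrho)=\chi(\varrho/n)$ with plateau $\{n\le\varrho\le 2n\}$ and $\gamma_0$ homogeneous of order $\lambda$ with $P_C\gamma_0=0$. Each derivative falling on $\chi_n$ does contribute a factor $n^{-1}\sim\varrho^{-1}$, so pointwise $[P_M,\chi_n]\gamma_0=O(\varrho^{\lambda-r})$ on the transition annuli (here $r$ is the order of $P$); but the weight $\varrho^{-(\lambda-r)}$ and the measure $\varrho^{-7}\volm$ in the definition of $L^2_{l,\lambda-r}$ absorb this exactly, so the integrand is comparable to $\varrho^{-1}\,d\varrho$ and both $\|\chi_n\gamma_0\|_{L^2_{l+r,\lambda}}$ and $\|[P_M,\chi_n]\gamma_0\|_{L^2_{l,\lambda-r}}$ are comparable to the \emph{logarithmic} length of the annulus, which for $\{n\le\varrho\le 2n\}$ is the constant $\log 2$. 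Hence $\|P_M\gamma_n\|$ does not tend to zero relative to $\|\gamma_n\|$, and your sequence as written does not contradict Fredholmness. The standard repair is to let the plateau grow logarithmically, e.g.\ $\chi_n\equiv 1$ on $\{e^n\le\varrho\le e^{2n}\}$ with transition regions of fixed width in $\log\varrho$: then $\|\chi_n\gamma_0\|^2_{L^2_{l+r,\lambda}}\sim n\to\infty$, the commutator errors stay bounded because they live in the fixed-width transitions, and $(P_M-P_C)\chi_n\gamma_0$ is small because the AC (resp.\ CS) condition gives coefficient decay $O(\varrho^{\nu})$ with $\nu<0$ (resp.\ $\nu>0$). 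After normalization one has a weakly null unit sequence with $P_M\gamma_n\to 0$, which defeats the estimate $\|u\|\le C\bigl(\|P_Mu\|+\|Ku\|\bigr)$ for every compact $K$, and hence Fredholmness. With that correction your outline is the standard proof underlying the theorem.
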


Now consider the formal adjoint of the map
\begin{equation*}
P_{l + r, \lambda} \, : \, L^2_{l + r, \lambda} (E) \, \to \, L^2_{l, \lambda - r} (E).
\end{equation*} 
By Proposition~\ref{dualspaceprop}, the formal adjoint is a map
\begin{equation} \label{adjointmapeq}
P^*_{m+r, - 7 + r - \lambda} \, \, : \, \, L^2_{m+r, - 7 + r - \lambda} (E) \to L^2_{m , - 7 - \lambda} (E),
\end{equation}
where $l, m \geq 0$.
\begin{rmk} \label{sloppyrmk}
Here we are being slightly sloppy, in the following sense. Technically, we really have ${(L^2_{l, \lambda})}^* = L^2_{-l, - \lambda - 7}$, but we would like to avoid having to consider the meaning of $L^2_{l, \lambda}$ for $l < 0$. Fortunately, we will only ever be interested in the \emph{kernel} of the formal adjoint $P^*$ on spaces of the form $L^2_{m+r, \lambda}$, which by Theorem~\ref{kernelthm} is independent of $m$, so it is safe to assume that $m \geq 0$.
\end{rmk}

The next result is the version of the `Fredholm Alternative' for conifolds.
\begin{thm} \label{fredholmalternativethm}
Suppose that $\lambda$ is not in $\mathcal D_{P}$, so that by Theorem~\ref{fredholmthm}, the map 
\begin{equation*}
P_{l + r, \lambda} \, : \, L^2_{l + r, \lambda} (E) \, \to \, L^2_{l, \lambda - r} (E)
\end{equation*}
is Fredholm, and also uniformly elliptic. Then:
\begin{enumerate}[(a)]
\item We can choose a finite-dimensional subspace $W_{\lambda - r}$ of $L^2_{l, \lambda - r} (E)$ such that
\begin{equation} \label{fredholmalternativeeq}
L^2_{l, \lambda - r} (E) \, = \, P ( L^2_{l + r, \lambda} (E) ) \oplus W_{\lambda - r},
\end{equation}
such that
\begin{equation} \label{cokernelisomorphismeq}
W_{\lambda - r} \, \cong \, \ker (P^*)_{- 7 + r - \lambda}.
\end{equation}
\item Furthermore, if $\ker (P^*)_{- 7 + r - \lambda}$ lies in $L^2_{l, \lambda - r} (E)$, then we can take
\begin{equation*}
W_{\lambda - r} \, = \, \ker (P^*)_{- 7 + r - \lambda}.
\end{equation*}
By Remark~\ref{conifoldSobolevdefnrmk} (b), this happens whenever $\lambda \geq -\frac{7}{2} + r$ in the AC case, and whenever $\lambda \leq - \frac{7}{2} + r$ in the CS case.
\end{enumerate}
\end{thm}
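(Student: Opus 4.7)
The plan is to exploit the Fredholm property to produce a finite-dimensional complement of the image, then identify it via the $L^2$ pairing with the kernel of the formal adjoint. Since $P_{l+r,\lambda}$ is Fredholm by Theorem~\ref{fredholmthm}, its image is closed and of finite codimension, so a finite-dimensional algebraic complement $W_{\lambda-r}\subset L^2_{l,\lambda-r}(E)$ trivially exists; the content of part (a) is to identify $\dim W_{\lambda-r}$ with $\dim\ker(P^*)_{-7+r-\lambda}$. By Proposition~\ref{dualspaceprop}, the dual of $L^2_{l,\lambda-r}(E)$ is (modulo the mild negative-index issue flagged in Remark~\ref{sloppyrmk}) the weighted space $L^2_{-l,r-7-\lambda}(E)$, with the duality realized by the $L^2$ pairing. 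The cokernel of $P_{l+r,\lambda}$ is naturally isomorphic to the annihilator of $P(L^2_{l+r,\lambda}(E))$ inside this dual, which consists precisely of distributions $\upsilon$ satisfying $\langle P\gamma,\upsilon\rangle=0$ for every $\gamma\in C^{\infty}_{\mathrm{cs}}(E)$; by the defining property of the formal adjoint this says $P^*\upsilon=0$ in the distributional sense.

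Next I would invoke uniform ellipticity of $P^*$ and the elliptic regularity statement Theorem~\ref{ellipticregthm} to bootstrap such a weak solution into $L^2_{m+r,r-7-\lambda}(E)$ for every $m\ge 0$, and then appeal to Theorem~\ref{kernelthm} to conclude that this kernel is independent of $m$, so it coincides with $\ker(P^*)_{-7+r-\lambda}$. Composing with the annihilator-cokernel isomorphism delivers~\eqref{cokernelisomorphismeq}. To promote this to the internal direct sum decomposition~\eqref{fredholmalternativeeq}, one simply chooses lifts of a basis of the (finite-dimensional) cokernel to genuine elements of $L^2_{l,\lambda-r}(E)$; this is possible in any Banach space since finite-dimensional subspaces always admit topological complements, and closedness of the image is guaranteed by Fredholmness.

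For part (b), the rate inequality $\lambda>-\tfrac{7}{2}+r$ (AC) or $\lambda<-\tfrac{7}{2}+r$ (CS) forces $-7+r-\lambda<\lambda-r$ (resp.\ the reverse) so that Remark~\ref{conifoldSobolevdefnrmk}(b) gives the inclusion $\ker(P^*)_{-7+r-\lambda}\subseteq L^2_{l,\lambda-r}(E)$. I would then take $W_{\lambda-r}=\ker(P^*)_{-7+r-\lambda}$ literally and check that the sum is direct by verifying $P(L^2_{l+r,\lambda}(E))\cap\ker(P^*)_{-7+r-\lambda}=\{0\}$: if $\upsilon=P\gamma$ and $P^*\upsilon=0$, then
\begin{equation*}
\|\upsilon\|_{L^2}^{2} \,=\, \langle\langle P\gamma,\upsilon\rangle\rangle \,=\, \langle\langle \gamma, P^*\upsilon\rangle\rangle \,=\, 0,
\end{equation*}
where the integration by parts is legitimate because, under the rate assumption, $\gamma$ and $\upsilon$ lie in $L^2$-dually weighted spaces and can be approximated by elements of $C^{\infty}_{\mathrm{cs}}(E)$ for which the classical identity holds. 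Combined with the dimension count from part (a), the trivial intersection forces the direct sum decomposition.

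The main technical obstacle is the duality/integration-by-parts step, which requires care: one must legitimize the pairing $\langle\langle \gamma,P^*\upsilon\rangle\rangle$ when $\upsilon$ is a priori only a distribution in a negatively-indexed space, and one must verify that the annihilator-cokernel correspondence really lands in the positive-index kernel $\ker(P^*)_{-7+r-\lambda}$. This is precisely where elliptic regularity for $P^*$ is essential: it upgrades any distributional annihilator to a smooth section in every $L^2_{m+r,r-7-\lambda}(E)$, after which Theorem~\ref{kernelthm} collapses the family of kernels to a single space and density of $C^{\infty}_{\mathrm{cs}}(E)$ makes all the formal $L^2$ manipulations rigorous.
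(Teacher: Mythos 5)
Your argument is correct and is essentially the same route the paper takes: Theorem~\ref{fredholmalternativethm} is presented there as part of the standard Lockhart--McOwen theory (stated without detailed proof), with the identification~\eqref{cokernelisomorphismeq} justified exactly as you do it, via the duality of Proposition~\ref{dualspaceprop} together with the Banach-space annihilator fact recorded in Remark~\ref{annihilatorrmk}, and the distributional/regularity and negative-index issues handled by Theorem~\ref{ellipticregthm}, Theorem~\ref{kernelthm}, and Remark~\ref{sloppyrmk}. Your treatment of part (b) --- the inclusion forced by the rate inequality plus the trivial intersection obtained from the weighted integration by parts --- is the standard completion of that argument and agrees with the paper's statement.
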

\begin{rmk} \label{annihilatorrmk}
Equation~\eqref{cokernelisomorphismeq} is a consequence of general Banach space theory and Proposition~\ref{dualspaceprop}, since whenever a subspace $W$ of a Banach space $V$ is closed, any direct complement of it will be isomorphic to its annihilator in the dual space.
\end{rmk}
\begin{rmk} \label{fredholmrmk}
Because of Remark~\ref{conifoldSobolevdefnrmk} (b) and equation~\eqref{cokernelisomorphismeq} we see that $\ker(P)_{\lambda}$ and $\coker(P)_{\lambda}$ are always finite-dimensional, even if $\lambda$ is a critical rate. Thus the failure of $P$ to be Fredholm at a critical rate is due only to $\im (P)$ not being closed.
\end{rmk}

As we will be using both the Fredholm theory of $\lapm$ and that of $d + \dsm$, we will need to know how elements in the kernels of these two operators are related. In particular, a $k$-form which is closed and coclosed is always harmonic, but the converse will only be true for certain rates. Also, when a mixed degree form $\gamma = \sum_{k=0}^7 \gamma_k$ is closed and coclosed, it will not always be the case that each graded component $\gamma_k$ is independently closed and coclosed. Before presenting this result, we give a proof of ``integration by parts'' for weighted spaces.
\begin{lemma} \label{IBPlemma}
Let $\alpha \in \Omega^{k-1}_{l, \lambda}$ and $\beta \in \Omega^k_{m, \mu}$. If $\lambda + \mu < -6$ (AC) or $\lambda + \mu > - 6$ (CS), then we have
\begin{equation*}
\langle \langle d \alpha, \beta \rangle \rangle \, = \, \langle \langle \alpha, \dsm \beta \rangle \rangle.
\end{equation*}
\end{lemma}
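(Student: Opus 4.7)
The plan is a standard cutoff-and-limit argument, making the boundary-at-infinity (or at the singular points) terms vanish using the rate hypothesis. Since elements of the weighted Sobolev spaces are only weakly differentiable, we first note that for compactly supported sections the usual integration by parts $\langle\langle d(\chi\alpha),\beta\rangle\rangle=\langle\langle \chi\alpha, \dsm\beta\rangle\rangle$ holds by density of $C^\infty_{\mathrm{cs}}$ and the definition of $\dsm$ as the formal adjoint of $d$.

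Concretely, choose a family of smooth cutoff functions $\chi_R:M\to[0,1]$ as follows. In the AC case, let $\chi_R\equiv 1$ on $\{\varrho\leq R\}$, $\chi_R\equiv 0$ on $\{\varrho\geq 2R\}$, and $|d\chi_R|\leq C/R$ on the annulus $A_R=\{R\leq\varrho\leq 2R\}$. In the CS case, let $\chi_R\equiv 0$ on $\{\varrho\leq 1/(2R)\}$, $\chi_R\equiv 1$ on $\{\varrho\geq 1/R\}$, and $|d\chi_R|\leq CR$ on $A_R=\{1/(2R)\leq\varrho\leq 1/R\}$. Then $\chi_R\alpha$ is compactly supported and has one weak derivative, so
\begin{equation*}
\langle\langle d\chi_R\wedge\alpha,\beta\rangle\rangle+\langle\langle \chi_R\,d\alpha,\beta\rangle\rangle \,=\, \langle\langle d(\chi_R\alpha),\beta\rangle\rangle \,=\, \langle\langle \chi_R\alpha,\dsm\beta\rangle\rangle.
\end{equation*}

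The two terms not involving $d\chi_R$ converge, as $R\to\infty$, to $\langle\langle d\alpha,\beta\rangle\rangle$ and $\langle\langle \alpha,\dsm\beta\rangle\rangle$ respectively, by dominated convergence. Integrability of $|\langle d\alpha,\beta\rangle|$ and $|\langle\alpha,\dsm\beta\rangle|$ is the first use of the hypothesis: writing, for example,
\begin{equation*}
|d\alpha|\,|\beta|\,\volm \,=\, \bigl(|d\alpha|\varrho^{-(\lambda-1)-7/2}\bigr)\bigl(|\beta|\varrho^{-\mu-7/2}\bigr)\,\varrho^{\lambda+\mu+6}\,\volm,
\end{equation*}
the factor $\varrho^{\lambda+\mu+6}$ is bounded on the noncompact end precisely when $\lambda+\mu<-6$ in the AC case, or $\lambda+\mu>-6$ in the CS case, after which Cauchy--Schwarz in $L^2(\volm)$ bounds the integral by $\|d\alpha\|_{L^2_{0,\lambda-1}}\|\beta\|_{L^2_{0,\mu}}$.

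The main step, and the only nonroutine one, is showing that the boundary term $\langle\langle d\chi_R\wedge\alpha,\beta\rangle\rangle$ tends to zero. In the AC case, the same Cauchy--Schwarz decomposition restricted to $A_R$ gives
\begin{equation*}
\Bigl|\langle\langle d\chi_R\wedge\alpha,\beta\rangle\rangle\Bigr| \,\leq\, \tfrac{C}{R}\int_{A_R}|\alpha|\,|\beta|\,\volm \,\leq\, \tfrac{C}{R}\cdot R^{\lambda+\mu+7}\,\|\alpha\|_{L^2_{0,\lambda}(A_R)}\,\|\beta\|_{L^2_{0,\mu}(A_R)},
\end{equation*}
where the factor $R^{\lambda+\mu+7}$ arises by bounding $\varrho^{\lambda+\mu+6}$ pointwise on $A_R$ (together with the $\varrho^{-7}$ in the norm definition). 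Since $\lambda+\mu<-6$ the prefactor $R^{\lambda+\mu+6}\to 0$, and the truncated $L^2$ norms are bounded by the full norms, so the boundary term vanishes. The CS case is identical but with $|d\chi_R|\leq CR$ and powers of $R$ of opposite sign, where the hypothesis $\lambda+\mu>-6$ makes the resulting $R^{-(\lambda+\mu+6)}\to 0$. The only real obstacle is keeping track of the weight bookkeeping; once the estimate on $A_R$ is in place, passing to the limit completes the proof.
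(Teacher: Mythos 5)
Your argument is correct, but it follows a somewhat different route from the paper. The paper applies Stokes's theorem directly on $M_R = \{\varrho \leq R\}$, obtaining the exact boundary integral $\int_{\{R\}\times\Sigma} \alpha\wedge\stm\beta$, and kills it using the \emph{pointwise} decay $|\alpha|\leq C\varrho^{\lambda}$, $|\beta|\leq C\varrho^{\mu}$, which gives the same decisive exponent $R^{\lambda+\mu+6}$; those sup bounds rest on the standing assumption $l\geq 6$ and the weighted Sobolev--H\"older embedding. You instead smear the boundary over the annulus $A_R$ with a cutoff and estimate the error term $\langle\langle d\chi_R\wedge\alpha,\beta\rangle\rangle$ by Cauchy--Schwarz against the weighted $L^2$ norms, never invoking pointwise control; this is slightly longer but works with only one weak derivative of $\alpha$ and $\beta$ and no embedding theorem, so it is marginally more robust, and you also make explicit the absolute integrability of $\langle d\alpha,\beta\rangle$ and $\langle\alpha,\dsm\beta\rangle$, which the paper leaves implicit. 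Your weight bookkeeping is right where it matters: the displayed annulus estimate with $\tfrac{C}{R}\cdot R^{\lambda+\mu+7}=CR^{\lambda+\mu+6}$ is correct, though the parenthetical remark about ``bounding $\varrho^{\lambda+\mu+6}$'' there is a small slip --- for the pair $(\alpha,\beta)$ (as opposed to $(d\alpha,\beta)$) the compensating factor is $\varrho^{\lambda+\mu+7}$, exactly as your displayed inequality uses. With that cosmetic fix the proof stands as written, in both the AC and CS cases.
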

\begin{proof}
We give the proof in the AC case. The CS case is identical except that there are $n$ ends instead of just one, and $\varrho \to 0$ on each end instead of $\varrho \to \infty$. Let $M_R = \{ x \in M; \varrho(x) \leq R \}$, and observe that $\partial (M_R) = \{R \} \times \Sigma$. Hence, by Stokes's Theorem and the fact that $d\alpha \wedge \stm \beta - \alpha \wedge \stm \dsm \beta = d (\alpha \wedge \stm \beta)$, we find
\begin{equation*}
\int_{M_R} \langle d \alpha, \beta \rangle \, \volm - \int_{M_R} \langle \alpha, \dsm \beta \rangle \, \volm \, = \, \int_{\{R \} \times \Sigma} (\alpha \wedge \stm \beta).
\end{equation*}
The proof will be complete if we can establish that the integral on the right hand side above goes to zero as $R \to \infty$. But since $|\alpha| \leq C R^{\lambda}$ and  $|\beta| \leq C R^{\mu}$ on the end, we have
\begin{equation*}
\left|  \int_{\{R \} \times \Sigma} (\alpha \wedge \stm \beta) \right| \, \leq \,  \int_{\{R \} \times \Sigma} |\alpha \wedge \stm \beta| \vols \, \leq \, C R^{\lambda + \mu + 6}.
\end{equation*}
This goes to zero as $R \to \infty$ since $\lambda + \mu < - 6$.
\end{proof}
 
\begin{prop} \label{gradedprop}
Let $\gamma = \sum_{k=0}^7 \gamma_k \in \Omega^{\bullet}_{l+1, \lambda}$, where each $\gamma_k \in \Omega^k_{l+1, \lambda}$, and suppose $(d + \dsm) \gamma = 0$. If $\lambda < -\frac{5}{2}$ (AC) or $\lambda > -\frac{5}{2}$ (CS), then in fact $(d + \dsm) \gamma_k = 0$ for each $k$.
\end{prop}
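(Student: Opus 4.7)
The plan is to decompose the equation $(d+\dsm)\gamma=0$ into its graded components and then use integration by parts (Lemma~\ref{IBPlemma}) to pair $d\gamma_{k-1}$ against itself. Since $d$ raises and $\dsm$ lowers the degree by one, the equation $\sum_{k=0}^7 (d+\dsm)\gamma_k=0$ splits into
\begin{equation*}
d\gamma_{k-1} + \dsm \gamma_{k+1} \, = \, 0 \qquad \text{for each } k = 0, 1, \ldots, 7,
\end{equation*}
with the usual convention that $\gamma_{-1}=\gamma_8=0$. The goal is to show each individual term on the left-hand side vanishes, which is equivalent to showing $(d+\dsm)\gamma_k=0$ for each $k$, since then $d\gamma_k=0$ and $\dsm\gamma_k=0$ separately.

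I would then compute
\begin{equation*}
\| d\gamma_{k-1} \|^2 \, = \, \langle\langle d\gamma_{k-1}, -\dsm \gamma_{k+1} \rangle\rangle \, = \, -\langle\langle d(d\gamma_{k-1}), \gamma_{k+1} \rangle\rangle,
\end{equation*}
where the second equality is integration by parts. Since $d^2=0$, the right-hand side vanishes, forcing $d\gamma_{k-1}=0$ and, via the original relation, $\dsm\gamma_{k+1}=0$. Varying $k$ gives the result for all degrees simultaneously.

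The key issue to verify is that the integration by parts step is legitimate, i.e.\ that the rate hypotheses of Lemma~\ref{IBPlemma} are satisfied. To apply the lemma, take $\alpha = d\gamma_{k-1}$ and $\beta = \gamma_{k+1}$. Then $\alpha$ is a $k$-form lying in $\Omega^k_{l,\lambda-1}$ and $\beta$ is a $(k+1)$-form lying in $\Omega^{k+1}_{l+1,\lambda}$. The hypothesis of Lemma~\ref{IBPlemma} in the AC case is that the sum of rates be strictly less than $-6$, which here reads $(\lambda-1)+\lambda < -6$, equivalent to $\lambda < -\tfrac{5}{2}$, which is exactly the assumption. In the CS case the inequality reverses and one obtains $\lambda > -\tfrac{5}{2}$, again matching the hypothesis. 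The main (only) obstacle is this rate bookkeeping, and it is precisely sharp: the factor of $-1$ coming from the derivative $d\gamma_{k-1}$ is what produces the bound $-\tfrac{5}{2}$ rather than $-3$, explaining the restriction appearing in the statement.
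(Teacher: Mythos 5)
Your proof is correct and follows essentially the same route as the paper: split $(d+\dsm)\gamma=0$ into graded components, then kill each term by a single application of Lemma~\ref{IBPlemma}, whose rate hypothesis $(\lambda-1)+\lambda<-6$ (AC, reversed in the CS case) is exactly the stated bound $\lambda<-\tfrac{5}{2}$. The only cosmetic difference is that you move the derivative onto $d\gamma_{k-1}$ and use $d^2=0$, whereas the paper pairs $d\gamma_k$ with $\dsm\gamma_{k+2}$ and uses $(\dsm)^2=0$; the two computations are mirror images.
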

\begin{proof}
Decomposing the equation $(d + \dsm) \gamma = 0$ into graded components, we have
\begin{equation*}
d\gamma_{k-1} \, = \, - \dsm \gamma_{k+1},
\end{equation*}
where $\gamma_{-1} = \gamma_8 = 0$. Since $\gamma_k \in \Omega^{\bullet}_{l, \lambda}$ and $d \gamma_k \in \Omega^{\bullet}_{l - 1, \lambda - 1}$, the hypothesis on $\lambda$ and Lemma~\ref{IBPlemma} then give
\begin{equation*}
{|| d \gamma_k ||}^2 = \langle \langle d\gamma_k, d\gamma_k \rangle \rangle = - \langle \langle d\gamma_k, \dsm \gamma_{k+2} \rangle \rangle = - \langle \langle \gamma_k, \dsm \dsm \gamma_{k+2} \rangle \rangle = 0,
\end{equation*}
and hence $d\gamma_k = \dsm \gamma_k = 0$ for all $k$.
\end{proof}

\begin{cor} \label{L2cor}
Suppose that $\gamma \in \Omega^k_{l + 2, \lambda}$ and that $\lapm \gamma = 0$. Then we have:
\begin{equation} \label{k07lapcceq}
\text{For } k = 0, 7, \, \text{ if $\lambda < 0$ (AC) or $\lambda > -5$ (CS)}, \, \text{then $\dsm \gamma = 0$ and $d\gamma = 0$}.
\end{equation}
\begin{equation} \label{k16lapcceq}
\text{For } k = 1, 6, \, \text{ if $\lambda < -1$ (AC) or $\lambda > -4$ (CS)}, \, \text{then $\dsm \gamma = 0$ and $d\gamma = 0$}.
\end{equation}
\begin{equation} \label{k25lapcceq}
\text{For } k = 2, 5, \, \text{ if $\lambda < -2$ (AC) or $\lambda > -3$ (CS)}, \, \text{then $\dsm \gamma = 0$ and $d\gamma = 0$}.
\end{equation}
\end{cor}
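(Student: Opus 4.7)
The plan is to use Theorem~\ref{kernelthm} to move $\gamma$ to a weight at which the formal identity $\langle\langle \lapm\gamma,\gamma\rangle\rangle = {||d\gamma||}^2 + {||\dsm\gamma||}^2$ can be rigorously justified via Lemma~\ref{IBPlemma}. For $\gamma \in \Omega^k_{l+2,\mu}$, both $d\gamma$ and $\dsm\gamma$ have rate $\mu - 1$, so each application of Lemma~\ref{IBPlemma} requires $(\mu - 1) + \mu < -6$ in the AC case, or $(\mu - 1) + \mu > -6$ in the CS case, that is $\mu < -\frac{5}{2}$ or $\mu > -\frac{5}{2}$ respectively. The hypotheses of the corollary allow $\lambda$ to lie outside this IBP range, but by Proposition~\ref{homolapexcludeprop} the operator $\lapc$ has no critical rates in the open intervals $(-5,0)$, $(-4,-1)$, and $(-3,-2)$ for $k=0,7$, $k=1,6$, $k=2,5$ respectively, and the crucial numerical observation is that $-\frac{5}{2}$ lies in the interior of each of these three intervals.

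If $\lambda$ is already in the IBP range, one applies Lemma~\ref{IBPlemma} directly. Otherwise the hypothesis places $\lambda$ inside the critical-free interval for its degree, and I would choose some $\lambda'$ in the same critical-free interval but on the opposite side of $-\frac{5}{2}$: for example $\lambda' \in (-3,-\frac{5}{2})$ in the $k=2$ AC case, or $\lambda' \in (-\frac{5}{2},-2)$ in the $k=2$ CS case, and analogously for $k=0,1$. The closed interval of weights between $\lambda$ and $\lambda'$ then contains no critical rates of $\lapm$, so Theorem~\ref{kernelthm} gives $\ker(\lapm)_\lambda = \ker(\lapm)_{\lambda'}$, and in particular $\gamma \in \Omega^k_{l+2,\lambda'}$.

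Two applications of Lemma~\ref{IBPlemma}, to the pairings $\langle\langle d\dsm\gamma,\gamma\rangle\rangle$ and $\langle\langle \dsm d\gamma,\gamma\rangle\rangle$, now produce
\begin{equation*}
0 \, = \, \langle\langle \lapm\gamma, \gamma\rangle\rangle \, = \, {||d\gamma||}^2 + {||\dsm\gamma||}^2,
\end{equation*}
forcing $d\gamma = \dsm\gamma = 0$ in the appropriate weighted $L^2$ space, hence pointwise, since harmonic forms are smooth by elliptic regularity (Theorem~\ref{ellipticregthm}).

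The only delicate point is the geometric observation that $-\frac{5}{2}$ lies interior to each of the critical-free intervals furnished by Proposition~\ref{homolapexcludeprop}; once that is in hand, the result follows from the rate-invariance of the kernel and a single integration by parts, with no further ingredient required.
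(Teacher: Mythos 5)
Your proposal is correct and is essentially the paper's own argument: both exploit Proposition~\ref{homolapexcludeprop} and the kernel invariance of Theorem~\ref{kernelthm} to shift the weight to the appropriate side of $-\frac{5}{2}$ without crossing critical rates, and then conclude $d\gamma = \dsm\gamma = 0$ via the weighted integration by parts of Lemma~\ref{IBPlemma}. The only cosmetic difference is that the paper phrases the shift as moving in all cases to a single rate $\mu < -\frac{5}{2}$ (AC) or $\mu > -\frac{5}{2}$ (CS), rather than splitting into the case distinction you make, which changes nothing of substance.
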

\begin{proof}
In the AC setting, by Proposition~\ref{homolapexcludeprop} we see that in all three cases, as we decrease $\lambda$ there are no critical rates until at the earliest $\lambda = -3$. So by Theorem~\ref{kernelthm}, in all three cases we can say that $\gamma$ actually lies in $\ker (\lapm)_{\mu}$ for some $\mu < - \frac{5}{2}$. In particular we conclude that $d \gamma$ and $\dsm \gamma$ are both in $\Omega^{\bullet}_{l + 1, \mu -1}$. Then using Lemma~\ref{IBPlemma} we find
\begin{equation*}
0 \, = \, \langle \langle \lapm \gamma, \gamma \rangle \rangle \, = \, \langle \langle d \dsm \gamma + \dsm d \gamma , \gamma \rangle \rangle \, = \, || d \gamma ||^2 + || \dsm \gamma ||^2,
\end{equation*}
so $\dsm \gamma = 0$ and $d\gamma = 0$.

In the CS setting, by Proposition~\ref{homolapexcludeprop} we see that in all three cases, as we increase each $\lambda_i$ there are no critical rates until at the earliest $\lambda_i = -2$. Therefore by Theorem~\ref{kernelthm}, in all three cases we can say that $\omega_k$ actually lies in $\ker (\lapm)_{\mu}$ for some $\mu > -\frac{5}{2}$. The claims now follow just as in the AC case.
\end{proof}

Recall that the \emph{index} of a Fredholm operator $P$ is given by $\ind(P) = \dim(\ker P) - \dim(\ker P^*)$. In order to compute the dimension of the moduli space, we will need to understand how the index of $P$ changes as we cross a critical rate. To this end we require the following definition. Let $\Pc$ denote the operator corresponding to $P$ on the cone.

\begin{defn} \label{homowithlogsdefn}
Let $C$ be a $\G$~cone. For $\lambda \in \R$, we define the space $\mathcal K (\lambda)_{\Pc}$ to be
\begin{equation} \label{homowithlogseq}
\mathcal K(\lambda)_{\Pc} \, = \, \left\{ \begin{array}{l} \gamma = \sum_{j=0}^m (\log r)^j \gamma_j ; \, \text{ such that } \Pc \gamma = 0, \text{where} \\ \text{each } \gamma_j \text{ is a section of $E$ that is homogeneous of order $\lambda$} \end{array} \right\}.
\end{equation}
That is, $\mathcal K(\lambda)_{\Pc}$ consists of the sections of $E$ over $C$ in the kernel of $\Pc$, that are polynomials in $\log r$ whose coefficients are sections of $E$ over $C$ that are homogeneous of order $\lambda$. These spaces are all finite-dimensional. This follows from the ellipticity of $\Pc$ and is discussed in~\cite{LM}.
\end{defn}
The importance of the $\mathcal K(\lambda)_{\Pc}$ spaces is that their dimensions tell us how the index of $P$ changes when we cross a critical rate. The following crucial result appears in general in Lockhart--McOwen~\cite[\S 8]{LM}, and can also be found explicitly for AC manifolds in~\cite[\S 6.3.2]{Lth}.

\begin{thm} \label{indexchangethm}
Let $\nu < \mu$ be two \emph{noncritical rates} for $P$. By Theorem~\ref{fredholmthm}, the maps
\begin{equation*}
P_{l + r, \nu} \, : \, L^2_{l + r, \nu} (E) \, \to \, L^2_{l, \nu - r} (E)
\end{equation*}
and
\begin{equation*}
P_{l + r, \mu} \, : \, L^2_{l + r, \mu} (E) \, \to \, L^2_{l, \mu - r} (E)
\end{equation*}
are both Fredholm. The difference in their indices is given by
\begin{align} \label{ACindexchangeeq}
\ind (P_{l + r, \mu}) - \ind (P_{l + r, \nu}) \, & = \, \sum_{\lambda \in \mathcal D_{\Pc} \cap (\nu , \mu)} \dim \mathcal K(\lambda)_{\Pc}. & \text{(AC)} \\ \label{CSindexchangeeq}
\ind (P_{l + r, \mu}) - \ind (P_{l + r, \nu}) \, & = \, - \sum_{i=1}^n \, \, \sum_{\lambda \in \mathcal D_{\Pci} \cap (\nu_i , \mu_i)} \dim \mathcal K(\lambda)_{\Pci}. & \text{(CS)}
\end{align}
That is, the index of $P$ jumps precisely by the dimension of the space $\mathcal K(\lambda)_{\Pc}$ as we cross each critical rate in the interval $(\nu, \mu)$. This sum is finite because the set $\mathcal D_{\Pc}$ of critical rates has only finitely many points in any bounded interval.
\end{thm}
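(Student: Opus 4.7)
The plan is to reduce to the case where $(\nu, \mu)$ contains exactly one critical rate $\lambda_0$ and to prove that
\[
\ind(P_{l+r,\mu}) - \ind(P_{l+r,\nu}) \, = \, \pm \dim \mathcal K(\lambda_0)_{\Pc},
\]
with the sign $+$ in the AC case and $-$ in the CS case. The general statement then follows by subdividing $(\nu,\mu)$ into subintervals each containing a single critical rate and summing, using that $\mathcal D_{\Pc}$ is discrete (Definition~\ref{criticalratesdefn}). Because $P$ agrees with the conical model $\Pc$ outside a compact set, and compactly supported perturbations of elliptic operators do not change the Fredholm index, the jump must be computable entirely from data on the cone.

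The main analytic tool is the Mellin transform: substituting $r = e^t$ on an end, the radially homogeneous operator $\Pc$ on $C = (0,\infty) \times \Sigma$ is conjugated to an operator that is translation-invariant in $t$, and the resulting Fourier/Mellin transform in $t$ produces a family of elliptic operators $\widehat{\Pc}(z)$ on the compact link $\Sigma$ depending meromorphically on $z \in \mathbb C$. A rate $\lambda_0$ is critical precisely when $\widehat{\Pc}(z)$ fails to be invertible at $z = \lambda_0$, and a basis of the generalised null space of $\widehat{\Pc}$ at $\lambda_0$ (taking Jordan chains into account) corresponds to a basis of $\mathcal K(\lambda_0)_{\Pc}$, with each Jordan chain of length $m$ producing a section whose leading piece involves $(\log r)^{m-1}$. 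This is what makes the correspondence between the abstract pole structure of $\widehat{\Pc}$ and Definition~\ref{homowithlogsdefn} precise, and what ultimately computes the rank contribution.

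First I would construct on the cone the right inverses $Q_\mu$ and $Q_\nu$ of $\Pc$ on the relevant weighted spaces by contour integrals of $\widehat{\Pc}(z)^{-1}$ along the vertical lines $\{\operatorname{Re} z = \mu\}$ and $\{\operatorname{Re} z = \nu\}$ in the Mellin plane. Shifting one contour past $\lambda_0$ and collecting residues yields the key identity
\[
Q_\mu - Q_\nu \, = \, R_{\lambda_0},
\]
where $R_{\lambda_0}$ has finite rank equal to $\dim \mathcal K(\lambda_0)_{\Pc}$ and image lying inside $\mathcal K(\lambda_0)_{\Pc}$. Transferring this to $M$ via a cut-off function $\chi$ supported near the end (or near the singular points, in the CS case) produces parametrices for $P_{l+r,\mu}$ and $P_{l+r,\nu}$ that differ by a finite rank operator modulo compacts, from which the stated index jump follows via the standard relation between indices of two Fredholm operators differing by a finite rank perturbation.

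The main obstacle is the careful bookkeeping at Jordan blocks of $\widehat{\Pc}(z)$ of order greater than one, which is precisely what forces the $\log r$ factors in the definition of $\mathcal K(\lambda_0)_{\Pc}$. The AC versus CS sign difference arises because in the CS case the ends sit at $r \to 0$, so that increasing the weight $\lambda$ shrinks rather than enlarges the function space, reversing the direction of contour shifting. For the operator $d + \dsm$, which is the principal case of interest in this paper, Lemma~\ref{nologslemma} confirms that no log terms ever arise, simplifying the computation significantly. The detailed execution of the Mellin argument above is carried out in Lockhart--McOwen~\cite[\S 8]{LM} for the general setting and in~\cite[\S 6.3.2]{Lth} in the AC form used here, which I would invoke rather than reproduce.
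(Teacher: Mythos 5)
The paper does not prove this theorem at all: it is quoted directly from Lockhart--McOwen~\cite[\S 8]{LM} (and~\cite[\S 6.3.2]{Lth} for the AC form), so your decision to outline the Mellin-transform argument and then invoke those references is essentially the same route the paper takes, and your description of the mechanism — conjugating $\Pc$ to a translation-invariant operator via $r = e^t$, identifying critical rates with non-invertibility of the induced family $\widehat{\Pc}(z)$ on the link, matching Jordan chains with the $\log r$ polynomials in Definition~\ref{homowithlogsdefn}, shifting contours past $\lambda_0$ to pick up a residue operator of rank $\dim \mathcal K(\lambda_0)_{\Pc}$, and patching onto $M$ with a cut-off — is a faithful summary of how that proof goes, including the source of the AC/CS sign discrepancy.

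One step in your sketch, however, is wrong as stated and would not deliver the formula: you conclude by appealing to ``the standard relation between indices of two Fredholm operators differing by a finite rank perturbation.'' A finite-rank (hence compact) perturbation of a Fredholm operator does not change its index, so that relation, applied literally, would give $\ind(P_{l+r,\mu}) = \ind(P_{l+r,\nu})$ and hence a zero jump. The point is that $P_{l+r,\mu}$ and $P_{l+r,\nu}$ are not perturbations of one another: they are the same differential operator acting on \emph{different} weighted spaces, related (in the AC case) by the inclusion $L^2_{l+r,\nu}(E) \subseteq L^2_{l+r,\mu}(E)$ of Remark~\ref{conifoldSobolevdefnrmk}. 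The index jump is a \emph{relative} index: one must track how the kernel can grow and the cokernel can shrink as the weight crosses $\lambda_0$, and show that the total change is measured exactly by the rank of the residue operator $R_{\lambda_0}$, i.e.\ by the space of homogeneous (log-polynomial) solutions of $\Pc$ at rate $\lambda_0$ — this is precisely the asymptotic-expansion mechanism recorded in Proposition~\ref{asymptoticexpansionprop} and it is how Lockhart--McOwen argue. If you replace the finite-rank-perturbation sentence with this relative index comparison (or simply cite the relative index theorem for weight changes in the Lockhart--McOwen framework), the outline is sound.
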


Let $\mathcal K(\lambda)_{\Pci}$ be as in Definition~\ref{homowithlogsdefn}. The following result can be deduced from the Lockhart--McOwen theory, and appeared in a less general form in~\cite[Proposition 4.27]{Kdesings}.

\begin{prop} \label{asymptoticexpansionprop}
Let $(M, \ph)$ be a $\G$~conifold of rate $\nu$. Suppose that $\beta_1, \beta_2$ are two noncritical rates for $P$, and that $\beta_1 > \beta_2$ (AC) or $\beta_1 < \beta_2$ (CS). Suppose there exists a single critical rate for $P$ between $\beta_1$ and $\beta_2$. This means that for at least one end of the manifold indexed by $i \in \{1, \ldots, n \}$, there is a critical rate $\lambda_0$ for $P_i$ on the cone $C_i$ between $\beta_1$ and $\beta_2$. Let
\begin{equation*}
\mathcal{F}_{\beta_1} \, = \, \{ \gamma \in L^2_{l+r,\beta_1}(E) \, : \, P \gamma \in L^2_{l,\beta_2-r} (F) \}.
\end{equation*}
(In particular, if $\gamma \in \mathcal{F}_{\beta_1}$ then $P \gamma$ decays faster than expected.)  Then there are linear maps
\begin{equation*}
\upsilon : \mathcal{F}_{\beta_1} \to \mathcal{K}(\lambda_0)_{\Pci} \qquad \text{and} \qquad
\vartheta : \mathcal{K}(\lambda_0)_{\Pci} \to L^2_{l+r,\lambda_0+\nu} (\rest{E}{\text{$i^{\text{th}}$ end of $M$}})
\end{equation*}
such that, on the $i^{\text{th}}$ end of $M$, we have
\begin{equation} \label{asymptoticexpansioneqrev}
\gamma - h_i^{-1} \big( \upsilon(\gamma) \big) - \vartheta \big( \upsilon(\gamma) \big) \, \in \, L^2_{l+r,\beta_2} ( \rest{E}{\text{$i^{\text{th}}$ end of $M$}})
\end{equation}
for all $\gamma \in \mathcal{F}_{\beta_1}$.
\end{prop}

Proposition \ref{asymptoticexpansionprop} yields an immediate useful corollary describing how the kernel changes when crossing a critical rate.

\begin{cor} \label{asymptoticexpansioncor}
Consider the setup of Proposition~\ref{asymptoticexpansionprop}. 
There exists a linear map
\begin{equation*}
\eta : \mathcal{K}(\lambda_0)_{\Pci} \to L^2_{l+r,\lambda_0+\nu} ( \rest{E}{\text{$i^{\text{th}}$ end of $M$}} )
\end{equation*}
such that for all $\gamma_1 \in \ker (P)_{\beta_1}$, there exists $\gamma_2 \in \ker(P)_{\beta_2}$ such that, on the $i^{\text{th}}$ end of $M$, we have
\begin{equation} \label{asymptoticexpansioneq}
\gamma_1 - h_i^{-1} \big( \upsilon(\gamma_1) \big) - \eta \big(\upsilon(\gamma_1)\big) \, = \, \gamma_2 \, \in \, \ker(P)_{\beta_2}.
\end{equation}
Note that the term $\gamma_2$, which is in the kernel of $P$ with noncritical rate $\beta_2$, decays \emph{faster} on the end.
\end{cor}
\begin{proof}
If $\gamma_1 \in \ker(P)_{\beta_1}$ then on the $i^{\text{th}}$ end of $M$ we have
\begin{equation*}
\tilde{\gamma}_2 \, = \, \gamma_1 - h_i^{-1} \big( \upsilon(\gamma_1) \big) - \vartheta\big( \upsilon(\gamma_1) \big) \, \in \, L^2_{l+r,\beta_2} ( \rest{E}{\text{$i^{\text{th}}$ end of $M$}} )
\end{equation*}
for linear maps $\upsilon$ and $\vartheta$ as described in Proposition \ref{asymptoticexpansionprop}. The issue is that $P \tilde{\gamma}_2$ is not necessarily zero. We know that $\ker(P)_{\beta_2}$ is finite-dimensional, so 
\begin{equation*}
\mathcal{J}_{\beta_2} \, =\, \{ \rest{\gamma_2}{\text{$i^{\text{th}}$ end of $M$}} \, : \, \gamma_2 \in \ker(P)_{\beta_2} \} \, \subseteq \, L^2_{l+r, \beta_2} ( \rest{E}{\text{$i^{\text{th}}$ end of $M$}} )
\end{equation*}
is a finite-dimensional subspace and therefore has a direct complement $\mathcal{J}'_{\beta_2}$.  Let $\pi$ and $\pi'$ be the projection maps onto $\mathcal{J}_{\beta_2}$ and $\mathcal{J}'_{\beta_2}$, respectively. Then there is $\gamma_2 \in \ker(P)_{\beta_2}$ such that on the $i^{\text{th}}$ end of $M$ we have
\begin{align*}
\gamma_2 & \, = \, \pi (\tilde{\gamma}_2) \, = \, \tilde{\gamma}_2 - \pi' (\tilde{\gamma}_2) \\
& \, = \, \gamma_1 - h_i^{-1} \big( \upsilon(\gamma_1) \big) - \big( \vartheta \big( \upsilon(\gamma_1) \big) + \pi' (\tilde{\gamma}_2) \big) \\
& \, = \, \gamma_1 - h_i^{-1} \big( \upsilon(\gamma_1) \big) - \eta' (\gamma_1),
\end{align*}
where
\begin{equation*}
\eta' (\gamma_1) \, = \, \vartheta \big( \upsilon(\gamma_1) \big) + \pi' \big( \gamma_1 - h_i^{-1} \big( \upsilon(\gamma_1) \big) - \vartheta\big( \upsilon(\gamma_1) \big) \big).
\end{equation*}
We see that $\eta' (\gamma_1)$ is linear in $\gamma_1$ since $\upsilon$, $\vartheta$, and $\pi'$ are all linear maps. Moreover, if $\upsilon (\gamma_1) = 0$, we have $P \tilde{\gamma}_2 = P\gamma_1 = 0$, so $\eta'(\gamma_1) = 0$. Hence, $\eta'$ only depends on $\upsilon (\gamma_1)$, and thus we can define $\eta \big(\upsilon(\gamma_1)\big) = \eta' (\gamma_1)$ and the proof is complete.
\end{proof}

\begin{rmk} \label{kernelchangermk}
Recall Theorem~\ref{kernelthm} says that the kernel will only change as we cross a critical rate. The essential content of Corollary~\ref{asymptoticexpansioncor} is that, when the kernel does indeed change as we cross a critical rate $\lambda$, any section which is added or removed from the kernel must be asymptotic at the $i^{\text{th}}$ end to an element of $\mathcal K(\lambda)_{\Pci}$.
\end{rmk}

We can now deduce a further corollary to Proposition \ref{asymptoticexpansionprop} which will be crucial for understanding the change in index as we cross a critical rate.

\begin{cor} \label{asymptoticexpansioncor2}
Consider the setup of Proposition~\ref{asymptoticexpansionprop} and suppose further that $| \beta_2 - \lambda_0 | < |\nu|$. Let $\chi_i$ be a smooth cutoff function on $M$ which is $1$ on the $i^{\text{th}}$ end and $0$ on all other ends, so that $\chi_i \mathcal{K}(\lambda_0)_{\Pci}$ can be viewed as a subspace of $L^2_{l+r,\beta_1}(E)$. Define the map $\tilde P$ to be the restriction of $P$ to the subspace $L^2_{l+r, \beta_2} (E) + \chi_i \mathcal{K}(\lambda_0)_{\Pci}$ of $L^2_{l+r, \beta_1} (E)$. Then the two linear maps
\begin{align}
P & \, : \, L^2_{l+r, \beta_1} (E) \to L^2_{l, \beta_1-r} (F), \\
\tilde P & \, : \, L^2_{l+r, \beta_2} (E) + \chi_i \mathcal{K}(\lambda_0)_{\Pci} \to L^2_{l,\beta_2-r} (F)
\end{align}
satisfy $\ker(P) = \ker(\tilde P)$ and $\coker(P) \cong \coker(\tilde P)$.
\end{cor}
\begin{proof}
We know that if $\upsilon \in \mathcal{K}(\lambda_0)_{\Pci}$, then we can bound the rate of $P (\chi_i \upsilon)$ by $\lambda_0 - r$. However, since $P_{C_i} (\upsilon) = 0$ and $P$ is asymptotic to $P_{C_i}$, we see that $P (\chi_i \upsilon)$ has a bound on its rate by $\lambda_0 + \nu - r$. The assumption $|\beta_2 - \lambda_0| < |\nu|$ then ensures that $P (\chi_i \upsilon) \in L^2_{l, \beta_2-r} (F)$, so $\tilde P$ indeed maps into the space claimed. The assumption $|\beta_2 - \lambda_0| < |\nu|$ also guarantees that $L^2_{l+r,\lambda_0+\nu} \subseteq L^2_{l+r,\beta_2}$. The fact that $\ker(P) = \ker(\tilde P)$ then follows immediately from Corollary~\ref{asymptoticexpansioncor}.

Consider the natural linear map $\pi : \coker(\tilde P) \to \coker(P)$. Let $B$ denote the closure in $L^2_{l,\beta_2-r} (F)$ of the subspace $P( L^2_{l+r, \beta_2} (E) + \chi_i \mathcal{K}(\lambda_0)_{\Pci} )$. Since the restriction of $\tilde P$ to $L^2_{l+r, \beta_2} (E)$ is Fredholm, the quotient space $L^2_{l,\beta_2-r} (F) / P(L^2_{l+r, \beta_2} (E))$ is finite-dimensional. It follows that the quotient $L^2_{l,\beta_2-r} (F) / B$ is also finite-dimensional. Because $L^2_{l,\beta_2-r} (F)$ is a dense subspace of $L^2_{l,\beta_1-r} (F)$, we deduce that $L^2_{l,\beta_2-r} (F) / B$ is dense in $L^2_{l,\beta_1-r} (F) / B$, and since the first space is finite-dimensional, the two spaces are equal. Hence $\coker(\tilde P) = L^2_{l,\beta_2-r} (F) / B = L^2_{l,\beta_1-r} (F) / B$. It is now evident that $\pi : L^2_{l,\beta_1-r} (F) / B \to L^2_{l,\beta_1-r} (F) / P(L^2_{l+r, \beta_1} (E)) = \coker(P)$ is surjective. Suppose $[\xi] \in \ker \pi$. Then $\xi = P(\gamma)$ for some $\gamma \in L^2_{l+r,\beta_1} (E)$. This means that $\gamma \in \mathcal{F}_{\beta_1}$, in the notation of Proposition~\ref{asymptoticexpansionprop}, and thus $\gamma \in L^2_{l+r,\beta_2} (E) + \chi_i \mathcal{K}(\lambda_0)_{\Pci}$. We deduce that $[\xi] = 0$ and hence $\pi$ is injective and thus an isomorphism.
\end{proof}

\begin{rmk} \label{indexchangeformularmk}
The reader should observe that the index change formula in Theorem~\ref{indexchangethm} actually follows from Corollary~\ref{asymptoticexpansioncor2}.
\end{rmk}

\begin{defn} \label{Hdefn}
Let $0 \leq k \leq 7$. We define the space $\mathcal H^k_{\lambda}$ to be the space of closed and coclosed $k$-forms of rate $\lambda$ on the ends. Explicitly, we have
\begin{equation*}
\mathcal H^k_{\lambda} \, = \, \{ \gamma \in \Omega^k_{l, \lambda}; \, d \gamma = 0, \, \dsm \gamma = 0 \}.
\end{equation*}
This definition makes sense for any $l \geq 0$, since $\mathcal H^k_{\lambda}$ is a subspace of $\ker(d + \dsm)_{\lambda}$ where $d + \dsm$ is acting on $\Omega^{\bullet}_{l, \lambda}$, and thus by Theorem~\ref{kernelthm} and Remark~\ref{fredholmrmk} the space $\mathcal H^k_{\lambda}$ is independent of $l$ and finite-dimensional.
\end{defn}
Given a form $\gamma \in \Omega^k$, its \emph{pure-type} components are the components in the decompositions~\eqref{lambda3eq} and~\eqref{lambda2eq} into $\G$~representations.
\begin{lemma} \label{Hpuretypelemma}
Suppose that $\lambda < - \frac{5}{2}$ (AC) or $\lambda > -\frac{5}{2}$ (CS). Then the pure-type components of an element $\gamma$ in $\mathcal H^k_{\lambda}$ are each closed and coclosed.
\end{lemma}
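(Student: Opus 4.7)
The approach is to reduce the claim to an integration-by-parts argument applied separately to each harmonic pure-type component of $\gamma$. Since $\gamma \in \mathcal H^k_{\lambda}$ is both closed and coclosed it is in particular harmonic, so $\lapm \gamma = 0$. By Remark~\ref{laplaciansrmk}, for a torsion-free $\G$-structure the Hodge Laplacian preserves the pure-type decomposition of $\Omega^k$. Writing $\gamma = \sum_l \gamma_{(l)}$ with $\gamma_{(l)} \in \Omega^k_l$, we therefore obtain $\lapm \gamma_{(l)} = 0$ for every pure-type summand. Because the pure-type projections are pointwise algebraic maps built from the parallel forms $\ph$ and $\ps$, every $\gamma_{(l)}$ inherits the rate $\lambda$ (and Sobolev regularity) of $\gamma$.

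Having separated into harmonic pure-type pieces, I would next argue that any harmonic $k$-form $\alpha$ of rate strictly below $-\tfrac{5}{2}$ (AC) or strictly above $-\tfrac{5}{2}$ (CS) is automatically closed and coclosed. Applying Lemma~\ref{IBPlemma} to each summand of $\lapm = dd^* + d^*d$ should give
\begin{equation*}
0 \, = \, \langle \langle \lapm \alpha, \alpha \rangle \rangle \, = \, \langle \langle dd^* \alpha, \alpha \rangle \rangle + \langle \langle d^* d \alpha, \alpha \rangle \rangle \, = \, ||d^* \alpha||^2 + ||d \alpha||^2,
\end{equation*}
provided the two integrations by parts are justified. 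Each IBP identity pairs a form of rate $\lambda$ against a form of rate $\lambda - 1$, so Lemma~\ref{IBPlemma} applies exactly when $(\lambda - 1) + \lambda < -6$ (AC) or the reverse inequality (CS), that is, precisely when $\lambda < -\tfrac{5}{2}$ or $\lambda > -\tfrac{5}{2}$. The displayed identity then forces $d\alpha = 0$ and $d^* \alpha = 0$. Applying this with $\alpha = \gamma_{(l)}$ for each pure type yields the claim.

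No substantive obstacle arises: the proof is a short piece of rate bookkeeping combined with the structural fact from Remark~\ref{laplaciansrmk} that $\lapm$ respects the pure-type decomposition. The threshold $-\tfrac{5}{2}$ in the hypothesis is precisely the rate at which the boundary term in Lemma~\ref{IBPlemma} vanishes; this is the same cut-off that appears in Proposition~\ref{gradedprop} for essentially the same reason.
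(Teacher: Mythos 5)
Your proof is correct, and its skeleton matches the paper's: both start from harmonicity of $\gamma$, use Remark~\ref{laplaciansrmk} to conclude that each pure-type component is harmonic (and, as you note, the projections are parallel, so the components keep the rate $\lambda$ and the Sobolev regularity), and then kill $d$ and $d^*$ by an integration-by-parts identity. The difference is in how that last step is justified. The paper simply cites Corollary~\ref{L2cor}, whose proof first improves the rate using Theorem~\ref{kernelthm} and the absence of critical rates from Proposition~\ref{homolapexcludeprop} (because that corollary is stated under weaker rate hypotheses, e.g.\ $\lambda<0$ for functions), and only then integrates by parts; it also implicitly passes from the $\Omega^k_1$ and $\Omega^k_7$ components to the corresponding functions and $1$-forms via~\eqref{temp27eq1}--\eqref{temp27eq2}, since Corollary~\ref{L2cor} is not stated for $k=3,4$. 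You instead apply Lemma~\ref{IBPlemma} directly at the given rate: under $\lambda<-\tfrac{5}{2}$ (AC) or $\lambda>-\tfrac{5}{2}$ (CS) the pairings $\langle\langle dd^*\alpha,\alpha\rangle\rangle$ and $\langle\langle d^*d\alpha,\alpha\rangle\rangle$ are finite and the boundary terms vanish (the rates $\lambda$ and $\lambda-1$ sum to less than $-6$, resp.\ more than $-6$), so $\|d\alpha\|^2+\|d^*\alpha\|^2=0$ for each harmonic component $\alpha=\gamma_{(l)}$, regardless of its degree or type. This buys you a self-contained argument that avoids the critical-rate machinery and the degree bookkeeping, at the cost of using the full strength of the $-\tfrac{5}{2}$ hypothesis, which the paper's route does not need; both are valid under the stated hypotheses.
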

\begin{proof}
An element $\gamma$ of $\mathcal H^k_{\lambda}$ is closed and coclosed, hence harmonic. By Remark~\ref{laplaciansrmk} the projections onto the pure-type forms commute with the Laplacian, and thus the pure-type components of $\gamma$ are each harmonic. Then it follows from Corollary~\ref{L2cor} that the pure-type components of $\gamma$ are each closed and coclosed.
\end{proof}

For any $0 \leq k \leq 7$, the space $\mathcal H^k_{\lambda}$ is always a subspace of $\ker(d + \dsm)_{\lambda}$. Consider varying the rate $\lambda$ in the direction in which the space $\mathcal H^k_{\lambda}$ potentially gets larger. If new elements are added to $\mathcal H^k_{\lambda}$, then new elements are added to $\ker(d + \dsm)_{\lambda}$, and thus this can only happen when we cross a rate $\lambda_0$ which is critical for $d + \dsm$. The next lemma says that new elements are added to $\mathcal H^k_{\lambda}$ only when there exist closed and coclosed $k$-forms of rate $\lambda_0$ on the asymptotic cones.

\begin{lemma} \label{kernelchangelemma}
Let $\lambda_0$ be a critical rate for $d + \dsm$. For $\e > 0$ define
\begin{align*}
\lambda_+ & = \begin{cases} \lambda_0 + \e \quad \text{ (AC)}, \\ \lambda_0 - \e \quad \text{ (CS)}, \end{cases} &
\lambda_- & = \begin{cases} \lambda_0 - \e \quad \text{ (AC)}, \\ \lambda_0 + \e \quad \text{ (CS)}. \end{cases}
\end{align*}
Thus in either case $\lambda_+$ is a \emph{slower} rate of decay and $\lambda_-$ is a \emph{faster} rate of decay on the ends. Further choose $\e$ small enough so that $2 \e = | \lambda_+ - \lambda_- | < | \nu |$, where $\nu$ is the rate of the $\G$~conifold, and such that the interval $(\lambda_0 - \e, \lambda_0 + \e)$ contains no other critical rates for $d + \dsm$. Let $\gamma_+ \in \mathcal H^k_{\lambda_+}$. From equation~\eqref{asymptoticexpansioneq} we know that there exist elements $\upsilon \in \mathcal K(\lambda_0)_{d +\dsci}$, $\eta \in L^2_{l+r,\lambda_0+\nu}  ( \rest{E}{\text{$i^{\text{th}}$ end of $M$}} )$, and $\gamma_- \in \ker(d+d^*_M)_{\lambda_-}$ such that, on the $i^{\text{th}}$ end, 
\begin{equation*}
\gamma_+ - (h_i^{-1})^* (\upsilon) - \eta \, = \, \gamma_- \, \in \,  \ker(d+d^*_M)_{\lambda_-}.
\end{equation*}
Then the form $\upsilon$ is a closed and coclosed $k$-form on the cone $C_i$.
\end{lemma}
\begin{proof}
We give the proof in the AC case. The CS case is identical with the inequalities reversed. Since there is only one end, we drop the index $i$. Let $\gamma_+ \in \mathcal H^k_{\lambda_+}$. Let $\upsilon_m$ and $\eta_m$ denote the degree $m$ components of $\upsilon$ and $\eta$, for $m \neq k$. Since $\gamma$ is a pure degree $k$-form, we find that
\begin{equation*}
(h^{-1})^* (\upsilon_m) + \eta_m \, \text { is \emph{at most} $O(\varrho^{\lambda_-})$},
\end{equation*}
and $\eta_m$ is \emph{at most} $O(\varrho^{\lambda_+ + \nu})$. Our hypothesis that $\lambda_+ + \nu < \lambda_-$ then allows us to conclude that $\upsilon_m$ is \emph{at most} $O(\varrho^{\lambda_-})$. However, because the form $\upsilon$ lies in $\mathcal K(\lambda_0)_{d + \dsc}$, we know in fact that if $\upsilon_m \neq 0$ then it is \emph{at least} $O(\varrho^{\lambda_0})$, and $\lambda_0 > \lambda_-$. Thus we must have $\upsilon_m = 0$ for all $m \neq k$. Since $\upsilon \in \mathcal K(\lambda_0)_{d + \dsc}$, we conclude that $\upsilon$ is a closed and coclosed $k$-form on the cone $C$.
\end{proof}

\begin{cor} \label{kernelchangecor}
Let $0 \leq k \leq 7$. Let $\lambda, \mu$ be two noncritical rates for $d + \dsm$. If there are no closed and coclosed homogeneous $k$-forms on the asymptotic cones of $M$ of any rates between $\lambda$ and $\mu$, then $\mathcal H^k_{\lambda} = \mathcal H^k_{\mu}$. In particular, we have
\begin{equation} \label{kernel34eq}
\mathcal H^k_{\lambda} \, = \, \mathcal H^k_{\mu} \qquad \text{ if $\lambda, \mu \in (-4, -3)$}.
\end{equation}
\end{cor}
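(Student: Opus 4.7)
The plan is to leverage Lemma \ref{kernelchangelemma}, which characterizes precisely how $\mathcal H^k$ can change as one crosses a critical rate for $d + \dsm$. After interchanging $\lambda$ and $\mu$ if necessary, I may assume $\lambda$ is the faster-decay rate on each end (so $\lambda < \mu$ in the AC case and $\lambda > \mu$ in the CS case). Remark \ref{conifoldSobolevdefnrmk}(b) then gives the trivial inclusion $\mathcal H^k_\lambda \subseteq \mathcal H^k_\mu$, so the real task is to prove the reverse inclusion.

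First I would note that the set of critical rates for $d + \dsm$ strictly between $\lambda$ and $\mu$ is finite on each end, since $\mathcal D_{d + \dsc}$ meets any bounded interval in only finitely many points. Between two consecutive critical rates Theorem \ref{kernelthm} already gives $\ker(d + \dsm)$-invariance, so it suffices to handle the crossing of a single critical rate $\lambda_0$ lying on some end $C_i$, with all other critical rates of $d + \dsm$ outside a small symmetric window $(\lambda_0 - \e, \lambda_0 + \e)$ satisfying $2\e < |\nu|$. In that setup Lemma \ref{kernelchangelemma} applies directly: for any $\gamma_+ \in \mathcal H^k_{\lambda_+}$, the decomposition \eqref{kernelchangetempeq} produces a term $\upsilon \in \mathcal K(\lambda_0)_{d + \dsci}$ that is a closed and coclosed homogeneous $k$-form on the cone $C_i$. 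The hypothesis forbids any such nonzero $\upsilon$ at rate $\lambda_0$, forcing $\upsilon = 0$; by Remark \ref{asymptoticexpansionproprmk} the accompanying correction $\tilde\gamma$ also vanishes, so $\gamma_+ = \gamma_- \in \ker(d + \dsm)_{\lambda_-}$. Since $\gamma_+$ is a pure $k$-form so is $\gamma_-$, and it therefore lies in $\mathcal H^k_{\lambda_-}$. Iterating across each critical rate on each end yields $\mathcal H^k_\mu \subseteq \mathcal H^k_\lambda$.

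The "in particular" statement for $\lambda, \mu \in (-4, -3)$ is then immediate from Proposition \ref{homoddstarexcludeprop}: equations \eqref{k07ddstarexcludeeq}--\eqref{k34ddstarexcludeeq} together exclude nonzero closed-and-coclosed homogeneous forms of every pure degree $k \in \{0, \ldots, 7\}$ at every rate in the interval $(-4, -3)$, so the general hypothesis is verified.

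The main subtlety I anticipate is not mathematical but organizational: when several critical rates on different ends interleave inside the interval between $\lambda$ and $\mu$ (particularly in the CS case with several cone points), one must apply the single-crossing argument end-by-end, using the end-indexing $i$ built into Proposition \ref{asymptoticexpansionprop}, and keep track that at each stage $\gamma$ retains both its pure-degree-$k$ character and the closed-coclosed property. Once this bookkeeping is set up carefully, there is no genuine obstacle.
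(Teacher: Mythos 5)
Your argument is correct and follows the paper's proof essentially verbatim: you combine Lemma~\ref{kernelchangelemma} with Remark~\ref{asymptoticexpansionproprmk} to see that $\mathcal H^k_{\lambda}$ can only change upon crossing a rate at which the cone carries a homogeneous closed and coclosed $k$-form, and then invoke Proposition~\ref{homoddstarexcludeprop} for the interval $(-4,-3)$. The only point worth adding is an explicit appeal to Lemma~\ref{nologslemma}, which guarantees that the form $\upsilon \in \mathcal K(\lambda_0)_{d + \dsci}$ contains no $\log r$ terms and is therefore genuinely homogeneous of order $\lambda_0$, so that the hypothesis of the corollary really does force $\upsilon = 0$; the paper cites that lemma for precisely this purpose.
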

\begin{proof}
We observe first that Lemma~\ref{kernelchangelemma}, together with Corollary~\ref{asymptoticexpansioncor} and Lemma~\ref{nologslemma}, says that the space $\mathcal H^k_{\lambda}$ will only change when we cross a rate $\lambda_0$ for which there exists a homogeneous closed and coclosed $k$-form of rate $\lambda_0$ on some asymptotic cone $C_i$ of $M$. This proves the first statement. Equation~\eqref{kernel34eq} now follows from Proposition~\ref{homoddstarexcludeprop}, which says that there are no nontrivial homogeneous closed and coclosed $k$-forms of any rate in $(-4,-3)$ for any $\G$~cone $C$.
\end{proof}

\subsection{Hodge theoretic results for \texorpdfstring{$k$}{k}-forms} \label{Hodgesec}

In this section we derive Hodge theoretic results for $\G$~conifolds. Several of these results are used later in Section~\ref{conifoldmodulisec}. While we do not need the full strength of all of these results in the present paper, we nevertheless attempt to give a comprehensive treatment for the sake of future applications.

In order to avoid the proliferation of too much notation, from now on all the $M$ subscripts will be dropped. It will be understood that the Hodge star operator $\st$, the covariant derivative $\nabla$, the Hodge Laplacian $\Delta$, the coderivative $d^*$, the projection maps $\pi_k$, and the maps $\Lp$ and $\Qp$ defined in Lemma~\ref{quadlemma} will all be taken with respect to a fixed $\G$~structure $\ph = \phm$ on $M$. Furthermore, we often have to deal with sections of a vector bundle $E$ that are \emph{smooth}, but have particular growth on the ends. Therefore we use the following notation:
\begin{equation*}
C^{\infty}_{\lambda}(E) \, = \, \{ \gamma \in C^{\infty}(E); \, | \nabla^j s | = O(\varrho^{\lambda - j}) \, \forall j \geq 0 \}.
\end{equation*}

In this section we use Fredholm theory of the operator $d + d^*$ to determine Hodge theoretic results for the spaces of $k$-forms with specified rates of decay on the ends. These results are used repeatedly throughout the sequel. Recall from Definition~\ref{Hdefn} that $\mathcal H^k_{\lambda}$ is the space of closed and coclosed $k$-forms of rate $\lambda$ on the ends.

\begin{lemma} \label{Hodgeclosedlemma}
Suppose $\lambda+1$ is a noncritical rate for $d+d^*$. Then
\begin{equation*} 
d (\Omega^{k-1}_{l+1, \lambda+1}) + d^* (\Omega^{k+1}_{l+1, \lambda+1}) \, \subseteq \,  \Omega^k_{l, \lambda}
\end{equation*}
is a \emph{closed} subspace of \emph{finite codimension}.
\end{lemma}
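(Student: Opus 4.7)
The plan is to deduce the closedness and finite-codimensionality from the Fredholm property of the full operator $d + d^*$ acting on mixed-degree forms, by carefully tracking the degree-$k$ component.

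Since $\lambda+1$ is a noncritical rate for $d+d^*$, Theorem~\ref{fredholmthm} gives that the operator
\begin{equation*}
D \, = \, (d + d^*)_{l+1,\lambda+1} \, : \, \Omega^{\bullet}_{l+1,\lambda+1} \, \to \, \Omega^{\bullet}_{l,\lambda}
\end{equation*}
is Fredholm. In particular $\im(D)$ is a closed subspace of $\Omega^{\bullet}_{l,\lambda}$ of finite codimension. The key observation is that for any mixed-degree form $\eta = \sum_{j=0}^7 \eta_j$ with $\eta_j \in \Omega^j_{l+1,\lambda+1}$, the degree-$k$ component of $D\eta$ is exactly $d\eta_{k-1} + d^* \eta_{k+1}$. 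Letting $\pi_k : \Omega^{\bullet}_{l,\lambda} \to \Omega^k_{l,\lambda}$ denote projection onto the degree-$k$ component, we therefore have
\begin{equation*}
\pi_k \bigl( \im(D) \bigr) \, = \, d(\Omega^{k-1}_{l+1,\lambda+1}) + d^* (\Omega^{k+1}_{l+1,\lambda+1}),
\end{equation*}
since we can let $\eta_{k-1}$ and $\eta_{k+1}$ vary independently while setting the remaining components to zero.

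For closedness, I would take a sequence $\gamma_n = d\alpha_n + d^* \beta_n$ with $\alpha_n \in \Omega^{k-1}_{l+1,\lambda+1}$ and $\beta_n \in \Omega^{k+1}_{l+1,\lambda+1}$, converging to some $\gamma$ in $\Omega^k_{l,\lambda}$. Viewing each $\gamma_n$ as an element of $\Omega^{\bullet}_{l,\lambda}$ supported in degree $k$, the convergence $\gamma_n \to \gamma$ also holds in $\Omega^{\bullet}_{l,\lambda}$, so $\gamma$ lies in the closure of $\im(D)$, which equals $\im(D)$ since $D$ is Fredholm. Thus $\gamma = D\eta$ for some $\eta = \sum_j \eta_j$, and since $\gamma$ is of pure degree $k$, extracting the degree-$k$ component yields $\gamma = d\eta_{k-1} + d^* \eta_{k+1}$, establishing that $\gamma \in d(\Omega^{k-1}_{l+1,\lambda+1}) + d^* (\Omega^{k+1}_{l+1,\lambda+1})$.

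For finite codimension, write $\Omega^{\bullet}_{l,\lambda} = \im(D) \oplus C$ with $\dim C < \infty$. Any $\omega \in \Omega^k_{l,\lambda} \subseteq \Omega^{\bullet}_{l,\lambda}$ decomposes as $\omega = D\eta + c$ for some $\eta \in \Omega^{\bullet}_{l+1,\lambda+1}$ and $c \in C$; applying $\pi_k$ and using $\pi_k(\omega) = \omega$ gives
\begin{equation*}
\omega \, = \, \bigl( d\eta_{k-1} + d^* \eta_{k+1} \bigr) + \pi_k(c),
\end{equation*}
which shows that $\pi_k(C)$ is a finite-dimensional space spanning a complement of $d(\Omega^{k-1}_{l+1,\lambda+1}) + d^* (\Omega^{k+1}_{l+1,\lambda+1})$ inside $\Omega^k_{l,\lambda}$. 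The only subtle step is verifying that projecting a closed subspace onto a degree component preserves closedness in this setting; this is not a general fact about Banach space projections, but works here because the subspace $\Omega^k_{l,\lambda}$ sits inside $\Omega^{\bullet}_{l,\lambda}$ as a complemented closed subspace and the image we want is precisely the intersection-type object $\pi_k(\im D)$ realized via specific mixed-degree preimages, as the argument above makes explicit.
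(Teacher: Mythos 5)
Your finite-codimension argument is fine (and is essentially the paper's: project a finite-dimensional complement $C$ of $\im(D)$ in $\Omega^{\bullet}_{l,\lambda}$ into degree $k$), but the closedness step has a genuine gap. You assert that each $\gamma_n = d\alpha_n + d^*\beta_n$, viewed as a mixed-degree form concentrated in degree $k$, lies in $\im(D)$. It does not, in general: the natural preimage $\eta = \alpha_n + \beta_n$ satisfies $D\eta = d\alpha_n + d^*\beta_n + d^*\alpha_n + d\beta_n$, whose components in degrees $k-2$ and $k+2$ need not vanish, and a pure degree-$k$ form lies in $\im(D)$ only if one can solve the additional constraints $d\eta_{j-1} + d^*\eta_{j+1} = 0$ for all $j \neq k$, which is a nontrivial condition (it amounts to membership in $\im(D) \cap \Omega^k_{l,\lambda}$, a strictly smaller space than $\pi_k(\im(D))$ in general). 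So you cannot conclude that the limit $\gamma$ lies in $\im(D)$, and the final extraction of the degree-$k$ component has nothing to act on. The ``subtle step'' you flag at the end is precisely this point, and your closing remark does not resolve it: $\pi_k(\im(D))$ is the image of a closed subspace under a bounded projection, and such images are not closed in general; your sequence argument only proves closedness of $\im(D) \cap \Omega^k_{l,\lambda}$, which is not the space in the statement.

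The repair is to reverse the order of your two steps, which is what the paper does. First establish finite codimension of $d(\Omega^{k-1}_{l+1,\lambda+1}) + d^*(\Omega^{k+1}_{l+1,\lambda+1})$ in $\Omega^k_{l,\lambda}$ exactly as in your last paragraph (or, as in the paper, by noting it contains the degree-$k$ part of the finite-codimensional space $(d+d^*)(\Omega^{\bullet}_{l+1,\lambda+1})$). Then observe that this space is the image of the continuous linear map $(\alpha,\beta) \mapsto d\alpha + d^*\beta$ from the Banach space $\Omega^{k-1}_{l+1,\lambda+1} \oplus \Omega^{k+1}_{l+1,\lambda+1}$ to the Banach space $\Omega^k_{l,\lambda}$, and invoke the standard fact (cited in the paper from Lang) that the image of a continuous linear map of Banach spaces which has finite codimension is automatically closed. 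With that substitution your argument becomes complete; without it, the closedness claim is unproven.
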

\begin{proof}
Since $\lambda+1$ is noncritical, by Theorem~\ref{fredholmthm} we know that
\begin{equation*}
(d+d^*) (\Omega^\bullet_{l+1, \lambda+1}) \, \subseteq \, \Omega^\bullet_{l, \lambda}
\end{equation*}
is a closed subspace of finite codimension. Since
\begin{equation*}
(d+d^*) (\Omega^\bullet_{l+1, \lambda+1}) \, \subseteq \, d (\Omega^\bullet_{l+1, \lambda+1}) + d^* (\Omega^\bullet_{l+1, \lambda+1}),
\end{equation*}
we see that the latter space must also have finite codimension in $\Omega^{\bullet}_{l, \lambda}$.  Therefore,
\begin{equation*}
d (\Omega^{k-1}_{l+1, \lambda+1}) + d^* (\Omega^{k+1}_{l+1, \lambda+1}) \, \subseteq \, \Omega^k_{l, \lambda}
\end{equation*}
has finite codimension. However, the former space is the image of the continuous map $(\alpha,\beta) \mapsto d \alpha + d^* \beta$ from the Banach space $\Omega^{k-1}_{l+1, \lambda+1} \oplus \Omega^{k+1}_{l+1, \lambda+1}$ to the Banach space $\Omega^k_{l, \lambda}$ and hence 
by~\cite[Chapter XV, Corollary 1.8]{LangRFA} it is closed.
\end{proof}

We will see in Section~\ref{conifoldmodulisec} that several of the ingredients in the proof of our main theorem have two distinct flavours, depending on which of the following two situations we are considering:
\begin{itemize}
\item In the $L^2$ setting (when $\nu \leq -\frac{7}{2}$ in the AC case or when $\nu \geq -\frac{7}{2}$ in the CS case), many of the analytic arguments are simple, but this is precisely the regime in which \emph{obstructions} occur.
\item In the complementary regime (when $\nu > -\frac{7}{2}$ in the AC case or when $\nu < -\frac{7}{2}$ in the CS case), we are \emph{not} in $L^2$, and because of this most of the analytic arguments are more delicate. For example, we need the surjectivity of the Dirac operator to prove our infinitesimal slice theorem. However, this regime has the nice feature of having an \emph{unobstructed} deformation theory.
\end{itemize}
The Hodge theory results we establish are slightly different for these two settings, so we state and prove them separately.

We begin with the $L^2$ setting. Recall that $\Omega^k_{l, \lambda} = L^2_{l, \lambda}(\Lambda^k (T^* M))$ is a Hilbert space.
\begin{prop} \label{HodgedecompositionpropL2}
Suppose $\lambda + 1$ is noncritical for $d + d^*$. Let $0 \leq k \leq 7$. In the $L^2$ setting (when $\lambda \leq -\frac{7}{2}$ for the AC case or when $\lambda \geq - \frac{7}{2}$ for the CS case), there exists a decomposition
\begin{equation} \label{HodgeL2eq}
\Omega^{k}_{l, \lambda} \, = \, d( \Omega^{k-1}_{l+1, \lambda+1}) \oplus d^*( \Omega^{k+1}_{l+1, \lambda+1} ) \oplus \mathcal H^k_{\lambda} \oplus W^k_{l, \lambda}.
\end{equation}
Here $W^k_{l, \lambda}$ is a finite-dimensional space. Moreover, the spaces $d (\Omega^{k-1}_{l+1, \lambda+1})$, $d^* (\Omega^{k+1}_{l+1, \lambda+1})$, and $\mathcal H^k_{\lambda}$ are $L^2$-orthogonal to each other, and  $W^k_{l, \lambda}$ can be defined as the $L^2_{l,\lambda}$ orthogonal complement of $d (\Omega^{k-1}_{l+1, \lambda+1}) \oplus d^* (\Omega^{k+1}_{l+1, \lambda+1}) \oplus \mathcal H^k_{\lambda}$.
\end{prop}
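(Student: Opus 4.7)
The plan is to verify the three claimed pieces one at a time: pairwise $L^2$-orthogonality of $d(\Omega^{k-1}_{l+1,\lambda+1})$, $d^*(\Omega^{k+1}_{l+1,\lambda+1})$, and $\mathcal H^k_\lambda$; that their (direct) sum $S$ is a closed subspace of finite codimension in the Hilbert space $\Omega^k_{l,\lambda}$; and then to take $W^k_{l,\lambda}$ to be the $L^2_{l,\lambda}$-orthogonal complement of $S$, which will automatically be finite-dimensional. The $L^2$ rate hypothesis (AC: $\lambda<-\tfrac{7}{2}$; CS: $\lambda>-\tfrac{7}{2}$) is precisely what is needed to make Lemma~\ref{IBPlemma} applicable throughout, since for $\alpha,\beta$ of rate $\lambda+1$ one has $d\alpha,d^*\beta$ of rate $\lambda$, and integrability together with the IBP boundary condition both require $2\lambda<-7$ in the AC case (and the mirrored inequality in the CS case).

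First I would establish the pairwise $L^2$-orthogonality. For $\alpha\in\Omega^{k-1}_{l+1,\lambda+1}$ and $\beta\in\Omega^{k+1}_{l+1,\lambda+1}$, Lemma~\ref{IBPlemma} gives $\langle\langle d\alpha, d^*\beta\rangle\rangle = \langle\langle d^2\alpha,\beta\rangle\rangle = 0$. For $\eta\in\mathcal H^k_\lambda$ (so $d^*\eta=0$ and $d\eta=0$), the same lemma yields $\langle\langle d\alpha,\eta\rangle\rangle=\langle\langle\alpha, d^*\eta\rangle\rangle=0$ and similarly $\langle\langle d^*\beta,\eta\rangle\rangle=0$. (The rate conditions $\lambda+(\lambda+1)<-6$ in the AC case and the reverse in the CS case are exactly the hypothesis $\lambda<-\tfrac72$ or $\lambda>-\tfrac72$.) Orthogonality then forces the pairwise intersections to be trivial: e.g.\ if $d\alpha=d^*\beta$ then $\|d\alpha\|^2=\langle\langle d\alpha,d^*\beta\rangle\rangle=0$, and if $\eta=d\alpha+d^*\beta\in\mathcal H^k_\lambda$, then $\|\eta\|^2=\langle\langle\eta,d\alpha\rangle\rangle+\langle\langle\eta,d^*\beta\rangle\rangle=0$. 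Hence the sum
\begin{equation*}
S \, = \, d(\Omega^{k-1}_{l+1,\lambda+1}) \oplus d^*(\Omega^{k+1}_{l+1,\lambda+1}) \oplus \mathcal H^k_\lambda
\end{equation*}
is a genuine direct sum.

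Next I would argue that $S$ is closed of finite codimension in $\Omega^k_{l,\lambda}$. Lemma~\ref{Hodgeclosedlemma}, which applies because $\lambda+1$ is a noncritical rate for $d+d^*$, gives that $d(\Omega^{k-1}_{l+1,\lambda+1})+d^*(\Omega^{k+1}_{l+1,\lambda+1})$ is already closed of finite codimension. Since $\mathcal H^k_\lambda$ is finite-dimensional (by Definition~\ref{Hdefn}) and intersects the previous sum trivially, adjoining it preserves both closedness and the finite-codimension property by a standard argument in Banach spaces.

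Finally, since $\Omega^k_{l,\lambda}$ is a Hilbert space with the $L^2_{l,\lambda}$ inner product, the closed subspace $S$ admits a topological complement given by its $L^2_{l,\lambda}$-orthogonal complement $W^k_{l,\lambda}:=S^{\perp_{L^2_{l,\lambda}}}$, and this complement is finite-dimensional because $S$ has finite codimension. This yields the claimed decomposition. The one subtlety worth flagging, and the step I would handle most carefully, is the fact that pairwise orthogonality of the first three summands is with respect to the unweighted $L^2$ inner product while the complement $W^k_{l,\lambda}$ is defined with respect to the weighted $L^2_{l,\lambda}$ inner product; however this mismatch is harmless because orthogonality with respect to any inner product is only used to establish triviality of pairwise intersections and directness of the sum, and the directness of $(S,W^k_{l,\lambda})$ inside the Hilbert space $\Omega^k_{l,\lambda}$ follows from the definition of $W^k_{l,\lambda}$ as an orthogonal complement in that Hilbert structure.
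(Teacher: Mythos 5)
Your proof is correct and follows essentially the same route as the paper: unweighted $L^2$ integration by parts (the paper invokes it directly, you route it through Lemma~\ref{IBPlemma} with the explicit rate check) gives the mutual orthogonality, Lemma~\ref{Hodgeclosedlemma} gives closedness and finite codimension, and the $L^2_{l,\lambda}$-orthogonal complement in the Hilbert space $\Omega^k_{l,\lambda}$ furnishes $W^k_{l,\lambda}$. Your extra care about adjoining the finite-dimensional $\mathcal H^k_{\lambda}$ and about the weighted-versus-unweighted inner products is sound but does not change the argument.
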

\begin{proof}
Since we are in $L^2$, integration by parts is valid so if $\gamma \in \mathcal H^k_{\lambda}$ we have that
\begin{equation*}
\langle \langle d \alpha, d^* \beta \rangle \rangle_{L^2} \, = \, \langle \langle d \alpha, \gamma \rangle \rangle_{L^2} \, = \, \langle \langle d^* \beta, \gamma \rangle \rangle_{L^2} \, = \, 0.
\end{equation*}
Thus the spaces $d (\Omega^{k-1}_{l+1, \lambda+1})$, $d^* (\Omega^{k+1}_{l+1, \lambda+1})$, and $\mathcal H^k_{\lambda}$ are $L^2$-orthogonal to each other. By Lemma~\ref{Hodgeclosedlemma} we know that
\begin{equation*}
d (\Omega^{k-1}_{l+1, \lambda+1}) \oplus d^* (\Omega^{k+1}_{l+1, \lambda+1}) \oplus \mathcal H^k_{\lambda}
\end{equation*}
is a closed subspace of $\Omega^k_{l, \lambda}$ of finite codimension. Hence a finite dimensional complement $W^k_{l, \lambda}$ exists. Since $\Omega^k_{l, \lambda}$ is a Hilbert space, we know that the orthogonal complement of a closed subspace with respect to the Hilbert space inner product is a direct complement, so we can uniquely define $W^k_{l, \lambda}$ as claimed.
\end{proof}

\begin{cor} \label{Wkdimensioncor}
Consider the setup of Proposition~\ref{HodgedecompositionpropL2}. The dimension of the space $W^k_{l, \lambda}$ is given by
\begin{equation} \label{Wkdimensioneq}
\dim W^k_{l, \lambda} \, = \, \dim \mathcal H^k_{-7 - \lambda} - \dim \mathcal H^k_{\lambda}.
\end{equation}
In particular, the dimension of $W^k_{l, \lambda}$ is independent of $l \geq 0$.
\end{cor}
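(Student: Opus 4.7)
The plan is a duality argument identifying $W^k_{l, \lambda}$ with a direct complement of $\mathcal H^k_{\lambda}$ inside $\mathcal H^k_{-7-\lambda}$ via the $L^2$-pairing. I focus first on $l = 0$. By Lemma~\ref{Hodgeclosedlemma}, the subspace $V := d(\Omega^{k-1}_{1, \lambda+1}) \oplus d^*(\Omega^{k+1}_{1, \lambda+1}) \oplus \mathcal H^k_{\lambda}$ is closed in $\Omega^k_{0, \lambda}$, so $\dim W^k_{0, \lambda}$ equals the codimension of $V$, which by Banach duality equals the dimension of its annihilator $V^{\circ}$ in the topological dual. Proposition~\ref{dualspaceprop} gives $(\Omega^k_{0, \lambda})^* \cong \Omega^k_{0, -7-\lambda}$ via the $L^2$-pairing, which is continuous by the weighted Cauchy--Schwarz inequality
\begin{equation*}
\int |\gamma \cdot \eta|\, \volm \, = \, \int (|\gamma|\varrho^{\lambda+\frac{7}{2}})(|\eta|\varrho^{-\lambda-\frac{7}{2}})\, \volm \, \leq \, \|\gamma\|_{L^2_{0,-7-\lambda}} \|\eta\|_{L^2_{0,\lambda}}.
\end{equation*}

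Next I would compute $V^{\circ}$ explicitly. An element $\gamma \in V^{\circ}$ satisfies $\langle\langle\gamma, d\alpha\rangle\rangle = 0$ and $\langle\langle\gamma, d^*\beta\rangle\rangle = 0$ for all $\alpha, \beta$ in the relevant spaces, together with $\gamma \perp_{L^2} \mathcal H^k_{\lambda}$. Testing the first two conditions against compactly supported smooth test forms---for which integration by parts is boundary-free, thereby circumventing the fact that the involved rates $\lambda+1$ and $-7-\lambda$ sum exactly to $-6$ and just fail the strict hypothesis of Lemma~\ref{IBPlemma}---yields $d^*\gamma = 0$ and $d\gamma = 0$ distributionally; elliptic regularity for $d+d^*$ (Theorem~\ref{ellipticregthm}) then upgrades $\gamma$ to a smooth form satisfying these strongly. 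Hence $V^{\circ} = \{\gamma \in \mathcal H^k_{-7-\lambda} : \gamma \perp_{L^2} \mathcal H^k_{\lambda}\}$. In the $L^2$ setting $\mathcal H^k_{\lambda} \subset L^2 \cap \mathcal H^k_{-7-\lambda}$, and the $L^2$ inner product restricts to a positive definite form on the finite-dimensional $\mathcal H^k_{\lambda}$; a Riesz-representation argument on $\mathcal H^k_{\lambda}$ then shows $V^{\circ}$ is a direct vector-space complement of $\mathcal H^k_{\lambda}$ inside $\mathcal H^k_{-7-\lambda}$, giving $\dim V^{\circ} = \dim \mathcal H^k_{-7-\lambda} - \dim \mathcal H^k_{\lambda}$ and proving the formula for $l = 0$.

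For $l \geq 1$, the $l$-independence of $\dim W^k_{l, \lambda}$ follows from standard elliptic regularity: a finite-dimensional complement $W^k_{0, \lambda}$ may be perturbed to consist of smooth forms (hence contained in every $\Omega^k_{l, \lambda}$), and the regularity upgrade $V \cap \Omega^k_{l, \lambda} = V_l := d(\Omega^{k-1}_{l+1, \lambda+1}) \oplus d^*(\Omega^{k+1}_{l+1, \lambda+1}) \oplus \mathcal H^k_{\lambda}$ is obtained by solving $\Delta \phi = d\alpha + d^*\beta$ modulo the Laplacian's cokernel and then setting $\tilde\alpha = d^*\phi$, $\tilde\beta = d\phi$, whose regularity is controlled by Theorem~\ref{ellipticregthm} for $\Delta$. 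The main obstacle is the duality analysis in the $l=0$ case at the borderline pairing just described, resolved by working systematically with the weak/distributional formulation (where only compactly supported test forms appear and no boundary term arises) together with elliptic regularity to recover classical smoothness.
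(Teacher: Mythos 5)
Your $l=0$ argument is, up to bookkeeping, the paper's own proof: the paper applies the annihilator argument to the closed subspace $d(\Omega^{k-1}_{l+1,\lambda+1}) \oplus d^*(\Omega^{k+1}_{l+1,\lambda+1})$, identifies its annihilator with $\mathcal H^k_{-7-\lambda}$ via Proposition~\ref{dualspaceprop}, and reads off $\dim(\mathcal H^k_{\lambda} \oplus W^k_{l,\lambda}) = \dim \mathcal H^k_{-7-\lambda}$; you instead annihilate the larger space $V$ containing $\mathcal H^k_{\lambda}$ and then split $\mathcal H^k_{-7-\lambda} = \mathcal H^k_{\lambda} \oplus V^{\circ}$ by the Riesz argument, which is equivalent and correct in the $L^2$ setting (where indeed $\mathcal H^k_{\lambda} \subseteq L^2 \cap \mathcal H^k_{-7-\lambda}$). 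One small point you should make explicit: your computation of $V^{\circ}$ only proves the inclusion $V^{\circ} \subseteq \{\gamma \in \mathcal H^k_{-7-\lambda} : \gamma \perp \mathcal H^k_{\lambda}\}$; the reverse inclusion, which you need for the dimension count, follows because $C^{\infty}_{\mathrm{cs}}$ is dense in $\Omega^{k\pm 1}_{l+1,\lambda+1}$ by Definition~\ref{conifoldSobolevdefn} and the functional $\alpha \mapsto \langle\langle \gamma, d\alpha \rangle\rangle$ is continuous, so vanishing on compactly supported forms forces vanishing everywhere despite the borderline failure of Lemma~\ref{IBPlemma}. That is a one-line fix and at the same level of detail as the paper.

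The genuine gap is your treatment of $l \geq 1$. The claimed identity $V \cap \Omega^k_{l,\lambda} = V_l$ is not established by the sketch you give: solving $\Delta \phi = d\alpha + d^*\beta$ in $\Omega^k_{l+2,\lambda+2}$ is only possible modulo the cokernel of $\Delta_{l+2,\lambda+2}$, which in the $L^2$ regime (precisely where $W^k_{l,\lambda}$ can be nonzero) need not vanish, and it additionally requires $\lambda+2$ to be noncritical for $\Delta$, which is not among the hypotheses (only $\lambda+1$ noncritical for $d+d^*$ is assumed). So with your construction you only recover $d\alpha + d^*\beta$ up to a finite-dimensional error, and the regularity upgrade does not go through as stated. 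The fix is to drop the reduction to $l=0$ entirely: the annihilator argument runs verbatim in $\bigl(\Omega^k_{l,\lambda}\bigr)^*$ for every $l$, because by Remark~\ref{sloppyrmk} and Theorem~\ref{kernelthm} only the kernel of $d+d^*$ in the dual space matters and it is independent of the differentiability index, so the annihilator of $d(\Omega^{k-1}_{l+1,\lambda+1}) \oplus d^*(\Omega^{k+1}_{l+1,\lambda+1})$ is $\mathcal H^k_{-7-\lambda}$ for every $l \geq 0$. This is exactly what the paper does, and the $l$-independence of $\dim W^k_{l,\lambda}$ is then a consequence of formula~\eqref{Wkdimensioneq} rather than something needing a separate regularity argument.
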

\begin{proof}
By Remark~\ref{annihilatorrmk}, since the subspace $d( \Omega^{k-1}_{l+1, \lambda+1}) \oplus d^*( \Omega^{k+1}_{l+1, \lambda+1} )$ of $\Omega^k_{l, \lambda}$ is closed, any direct complement of it will be isomorphic to its annihilator in the dual space. It is trivial to see that this annihilator is the space $\mathcal H^k_{-7-\lambda}$ of closed and coclosed forms of the dual rate $-7 - \lambda$, using the fact that the dual space of $\Omega^k_{l,\lambda}$ is $\Omega^k_{-l, - 7 - \lambda}$ and Remark~\ref{sloppyrmk}. Thus $\dim (\mathcal H^k_{\lambda} \oplus W^k_{l, \lambda}) = \dim \mathcal H^k_{-7-\lambda}$, and hence equation~\eqref{Wkdimensioneq} follows immediately.
\end{proof}

The next result is the analogue to Proposition~\ref{HodgedecompositionpropL2} for the non-$L^2$ setting.
\begin{prop} \label{HodgedecompositionpropnonL2}
Suppose $\lambda + 1$ is noncritical for $d + d^*$. Let $0 \leq k \leq 7$. In the non-$L^2$ setting (when $\lambda > -\frac{7}{2}$ for the AC case or when $\lambda < - \frac{7}{2}$ for the CS case), there exists a decomposition
\begin{equation} \label{HodgenonL2eq}
\Omega^k_{l, \lambda} \, = \, A^k_{l, \lambda} \oplus B^k_{l, \lambda} \oplus \mathcal H^k_{- 7 - \lambda} \, = \, A^k_{l, \lambda} \oplus \mathcal H^k_{\lambda},
\end{equation}
where $B^k_{l, \lambda}$ is the intersection of $\mathcal H^k_{\lambda}$ with the Banach space $d( \Omega^{k-1}_{l+1, \lambda+1}) + d^*( \Omega^{k+1}_{l+1, \lambda+1} )$, and $A^k_{l, \lambda}$ is a topological complement of $B^k_{l, \lambda}$ in $d( \Omega^{k-1}_{l+1, \lambda+1}) + d^*( \Omega^{k+1}_{l+1, \lambda+1} )$, and thus a closed subspace. Moreover, the intersection of $A^k_{l, \lambda}$, the image of $d$, and the image of $d^*$ is zero.
\end{prop}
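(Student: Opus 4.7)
My plan is to derive the decomposition from the Fredholm theory of the elliptic operator $d + d^*$ acting on mixed-degree forms. Since $\lambda + 1$ is noncritical, the map
\[
(d+d^*)_{l+1, \lambda+1}\colon \Omega^\bullet_{l+1, \lambda+1} \to \Omega^\bullet_{l, \lambda}
\]
is Fredholm by Theorem~\ref{fredholmthm}, and by Lemma~\ref{Hodgeclosedlemma} the subspace $V^k := d(\Omega^{k-1}_{l+1, \lambda+1}) + d^*(\Omega^{k+1}_{l+1, \lambda+1})$ is closed of finite codimension in $\Omega^k_{l, \lambda}$. Theorem~\ref{fredholmalternativethm}, together with the self-adjointness of $d+d^*$, identifies the cokernel of the above map with $\ker(d+d^*)_{-7-\lambda}$; since $-7-\lambda < -\tfrac{5}{2}$ whenever $\lambda > -\tfrac{9}{2}$ (in particular for $\lambda > -4$), Proposition~\ref{gradedprop} splits this kernel as $\bigoplus_k \mathcal H^k_{-7-\lambda}$, so restricting to each graded component gives $\mathrm{codim}_{\Omega^k_{l, \lambda}}(V^k) = \dim \mathcal H^k_{-7-\lambda}$.

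I would next verify that $\mathcal H^k_{-7-\lambda}$ sits inside $\Omega^k_{l, \lambda}$ and provides a topological complement to $V^k$. The inclusion is automatic once $\lambda \geq -\tfrac{7}{2}$; in the remaining range $-4 < \lambda < -\tfrac{7}{2}$, Corollary~\ref{kernelchangecor} together with Proposition~\ref{homoddstarexcludeprop} yields $\mathcal H^k_{-7-\lambda} = \mathcal H^k_\lambda$, because no $\G$~cone admits a homogeneous closed and coclosed $k$-form at a rate in $(-4, -3)$ for any $k$. For the trivial intersection $\mathcal H^k_{-7-\lambda} \cap V^k = \{0\}$, suppose $\gamma = d\alpha + d^*\beta$ is also harmonic; by Theorem~\ref{kernelthm} and the asymptotic expansion of Proposition~\ref{asymptoticexpansionprop}, $\gamma$ actually decays strictly faster than $\varrho^{-7-\lambda}$ on the end (to the next noncritical rate for $d+d^*$), which is just enough to make integration by parts via Lemma~\ref{IBPlemma} valid, yielding $\|\gamma\|^2 = \langle\langle d^*\gamma, \alpha \rangle\rangle + \langle\langle d\gamma, \beta \rangle\rangle = 0$. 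Combined with the codimension count, these facts produce $\Omega^k_{l, \lambda} = V^k \oplus \mathcal H^k_{-7-\lambda}$.

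Defining $B^k_{l, \lambda}$ as in the statement, it is a finite-dimensional subspace of the Banach space $V^k$ and therefore admits a closed topological complement $A^k_{l, \lambda}$, giving the first equality of~\eqref{HodgenonL2eq} after substitution. For the second equality, use $\mathcal H^k_{-7-\lambda} \subseteq \mathcal H^k_\lambda$ (trivially when $\lambda \geq -\tfrac{7}{2}$, and via the identification above otherwise): any $\sigma \in \Omega^k_{l, \lambda}$ can be written as $\sigma = v + h$ with $v = a+b \in A^k_{l, \lambda} \oplus B^k_{l, \lambda}$ and $h \in \mathcal H^k_{-7-\lambda}$, and then $\sigma = a + (b+h)$ with $a \in A^k_{l, \lambda}$ and $b + h \in \mathcal H^k_\lambda$, so $\Omega^k_{l, \lambda} = A^k_{l, \lambda} + \mathcal H^k_\lambda$. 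The inclusion $A^k_{l, \lambda} \cap \mathcal H^k_\lambda \subseteq A^k_{l, \lambda} \cap B^k_{l, \lambda} = \{0\}$ (any element of $A^k_{l, \lambda}$ that is closed and coclosed automatically lies in $\mathcal H^k_\lambda \cap V^k = B^k_{l, \lambda}$) makes this a direct sum.

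Finally, any $\eta \in A^k_{l, \lambda}$ of the form $\eta = d\alpha = d^*\beta$ satisfies $d\eta = 0 = d^*\eta$, placing $\eta$ in $\mathcal H^k_\lambda \cap V^k = B^k_{l, \lambda}$; combined with $A \cap B = \{0\}$, this forces $\eta = 0$. I expect the main obstacle to be the integration-by-parts step for trivial intersection: the weighted rates $\lambda+1$ and $-7-\lambda$ sum to exactly the endpoint $-6$ where Lemma~\ref{IBPlemma} just fails, so one must use kernel-invariance across noncritical intervals (Theorem~\ref{kernelthm}) to squeeze out strictly faster decay of harmonic elements. A secondary technicality is identifying $\mathcal H^k_{-7-\lambda}$ correctly as a subspace of $\Omega^k_{l, \lambda}$ in the subrange $-4 < \lambda < -\tfrac{7}{2}$, which requires the cone-level homogeneous form analysis of Section~\ref{g2conessec}.
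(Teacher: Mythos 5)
Your overall route is the same as the paper's: closed image of $d+d^*$ at the noncritical rate $\lambda+1$ (Lemma~\ref{Hodgeclosedlemma}), identification of a complement of $V^k = d( \Omega^{k-1}_{l+1, \lambda+1}) + d^*( \Omega^{k+1}_{l+1, \lambda+1} )$ with $\mathcal H^k_{-7-\lambda}$, the inclusion $\mathcal H^k_{-7-\lambda} \subseteq \Omega^k_{l,\lambda}$ (trivial for $\lambda \geq -\tfrac{7}{2}$, and via the constancy of $\mathcal H^k$ on $(-4,-3)$ otherwise), and then the definitions of $B^k_{l,\lambda}$, $A^k_{l,\lambda}$ and the closing argument, which match the paper verbatim. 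Your direct treatment of the subrange $-4 < \lambda < -\tfrac{7}{2}$ inside the proof replaces the paper's comparison with the $L^2$ decomposition in Remark~\ref{Hodgedecompositionoverlaprmk} and is fine. However, there is one step whose justification, as written, does not go through: the trivial intersection $\mathcal H^k_{-7-\lambda} \cap V^k = \{0\}$.

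You assert that any $\gamma \in \mathcal H^k_{-7-\lambda}$ ``decays strictly faster than $\varrho^{-7-\lambda}$'' by Theorem~\ref{kernelthm} and Proposition~\ref{asymptoticexpansionprop}. Kernel invariance only improves the decay across an interval containing \emph{no} critical rates, so this requires knowing that $-7-\lambda$ is itself noncritical for $d+d^*$; your hypothesis only gives noncriticality of $\lambda+1$. If $-7-\lambda$ were critical, $\gamma$ could be asymptotic to a nonzero homogeneous kernel element of exactly that order, and Proposition~\ref{asymptoticexpansionprop} would give extra decay only for $\gamma$ minus its leading term, not for $\gamma$. The missing ingredient is the duality of critical rates for the formally self-adjoint $d+d^*$ (namely $\mu$ is noncritical if and only if $-6-\mu$ is, e.g.\ because the Banach-space adjoint of the Fredholm map $(d+d^*)_{l+1,\lambda+1}$ is $d+d^*$ acting at rate $-7-\lambda$, cf.\ Theorem~\ref{fredholmalternativethm}); with that stated, your argument closes. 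A cleaner fix avoids the borderline rate altogether: the weighted spaces are completions of compactly supported forms, and the pairing of $L^2_{0,\lambda}$ with $L^2_{0,-7-\lambda}$ is continuous by Proposition~\ref{dualspaceprop}, so for closed and coclosed $\gamma$ of rate $-7-\lambda$ one gets $\langle\langle d\alpha + d^*\beta, \gamma\rangle\rangle = 0$ by approximating $\alpha, \beta$ by compactly supported forms, hence $\|\gamma\|_{L^2}^2 = 0$ when $\gamma \in V^k$; this is exactly the annihilator argument underlying Corollary~\ref{Wkdimensioncor}, which the paper invokes here. (In the subrange $-4 < \lambda < -\tfrac{7}{2}$ the issue disappears anyway, since $\gamma \in \mathcal H^k_{\lambda}$ and the rates sum to $2\lambda + 1 < -6$, so Lemma~\ref{IBPlemma} applies directly.) A secondary point: your per-degree codimension claim ``restricting to each graded component'' should be justified either by the annihilator of $V^k$ as in Corollary~\ref{Wkdimensioncor}, or by combining your total-codimension bound with the per-degree lower bounds you establish afterwards; as phrased it is not immediate.
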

\begin{proof}
We prove the AC case ($\lambda > -\frac{7}{2}$). The CS case is identical with all inequalities reversed. Note first that by Lemma~\ref{Hodgeclosedlemma}, we have that $d( \Omega^{k-1}_{l+1, \lambda+1}) + d^*( \Omega^{k+1}_{l+1, \lambda+1} )$ is closed in $\Omega^k_{l, \lambda}$. The argument from the proof of Corollary~\ref{Wkdimensioncor} applies here, so any topological complement of $d( \Omega^{k-1}_{l+1, \lambda+1}) + d^*( \Omega^{k+1}_{l+1, \lambda+1} )$ will be isomorphic to $\mathcal H^k_{- 7 - \lambda}$. However, since $\lambda > - \frac{7}{2}$ is equivalent to $- 7 - \lambda < \lambda$, we see that $\mathcal H^k_{- 7 - \lambda}$ actually lies in $\Omega^k_{l, \lambda}$ and thus we can write
\begin{equation*}
\Omega^k_{l, \lambda} \, = \, \left( d( \Omega^{k-1}_{l+1, \lambda+1}) + d^*( \Omega^{k+1}_{l+1, \lambda+1} ) \right) \oplus \mathcal H^k_{- 7 - \lambda}.
\end{equation*}
Let $B^k_{l, \lambda}$ be the intersection of $\mathcal H^k_{\lambda}$ with $d( \Omega^{k-1}_{l+1, \lambda+1}) + d^*( \Omega^{k+1}_{l+1, \lambda+1})$ and let $A^k_{l, \lambda}$ be a topological complement of the finite-dimensional space $B^k_{l, \lambda}$ in the Banach space $d( \Omega^{k-1}_{l+1, \lambda+1}) + d^*( \Omega^{k+1}_{l+1, \lambda+1})$. We thus have
\begin{equation*}
\Omega^k_{l, \lambda} \, = \, A^k_{l, \lambda} \oplus B^k_{l, \lambda} \oplus \mathcal H^k_{-7 - \lambda}
\end{equation*}
with $\mathcal H^k_{\lambda} = B^k_{l, \lambda} \oplus \mathcal H^k_{-7 - \lambda}$. Finally, suppose $\eta = d \alpha = d^* \beta$ in $A^k_{l, \lambda}$. Then $\eta$ is both closed and coclosed, and thus lies in $\mathcal H^k_{\lambda}$. Since $A^k_{l, \lambda} \cap \mathcal H^k_{\lambda} = \{ 0 \}$, we have $\eta = 0$.
\end{proof}

\begin{rmk} \label{Hodgedecompositionoverlaprmk}
In fact, there is an ``overlap region'' $\lambda \in (-4, -3)$ in which \emph{both} decompositions~\eqref{HodgeL2eq} and~\eqref{HodgenonL2eq} agree. We show this in the AC case. The CS case is identical with all inequalities reversed. If $\lambda \in (-4, -\frac{7}{2}]$, then $- 7 - \lambda \in [-\frac{7}{2}, -3)$, and hence Corollary~\ref{kernelchangecor} and Corollary~\ref{Wkdimensioncor} tell us that $W^k_{l, \lambda} = 0$. Thus Proposition~\ref{HodgedecompositionpropL2} then says 
\begin{equation*}
\Omega^{k}_{l, \lambda} \, = \, d( \Omega^{k-1}_{l+1, \lambda+1}) \oplus d^*( \Omega^{k+1}_{l+1, \lambda+1} ) \oplus \mathcal H^k_{\lambda}.
\end{equation*}
Similarly, if $\lambda \in (-\frac{7}{2}, -3)$, then $- 7 - \lambda \in (-4, -\frac{7}{2})$, and hence Corollary~\ref{kernelchangecor} and $\mathcal H^k_{\lambda} = B^k_{l, \lambda} \oplus \mathcal H^k_{-7 - \lambda}$ tell us that $B^k_{l, \lambda} = 0$. Therefore in this case Proposition~\ref{HodgedecompositionpropnonL2} says $A^k_{l, \lambda} = d( \Omega^{k-1}_{l+1, \lambda+1}) \oplus d^*( \Omega^{k+1}_{l+1, \lambda+1} )$, and
\begin{equation*}
\Omega^{k}_{l, \lambda} \, = \, d( \Omega^{k-1}_{l+1, \lambda+1}) \oplus d^*( \Omega^{k+1}_{l+1, \lambda+1} ) \oplus \mathcal H^k_{\lambda}.
\end{equation*}
Since $\mathcal H^k_{\lambda}$ is independent of $\lambda \in (-4,-3)$, we have established that in the interval $(-4,-3)$ the two decompositions~\eqref{HodgeL2eq} and~\eqref{HodgenonL2eq} are identical.
\end{rmk}

We are now ready for the main result of this section, which is a Hodge-type decomposition for $k$-forms on $\G$~conifolds.
\begin{thm} \label{kformsHodgethm}
Let $(M, \ph)$ be a $\G$~conifold of rate $\nu$, and suppose that $\nu + 1$ is a noncritical rate for $d + d^*$. Let $\eta \in \Omega^k_{l, \nu}$.
\begin{itemize}
\item In the $L^2$ setting (when $\nu \leq -\frac{7}{2}$ for the AC case or when $\nu \geq -\frac{7}{2}$ in the CS case), we can express the $k$-form $\eta$, in a \emph{unique way}, as
\begin{equation} \label{kformsHodgeL2eq}
\eta \, = \, d \alpha + d^* \beta + \kappa + \gamma,
\end{equation}
where $\alpha \in \Omega^{k-1}_{l+1, \nu+1}$, $\beta \in \Omega^{k+1}_{l+1, \nu+1}$, $\kappa \in \mathcal H^k_{\nu}$, and $\gamma$ is in $W^k_{l, \nu}$. Moreover, if $d \eta = 0$, then we can express $\eta$ in a unique way as
\begin{equation} \label{kformsHodgeL2eq2}
\eta \, = \, d \alpha + \kappa + \delta
\end{equation}
where $\alpha \in \Omega^{k-1}_{l+1, \nu+1}$, $\kappa \in \mathcal H^k_{\nu}$, and
\begin{equation*}
\delta \in U^k_{\nu} \, = \, \{ d^* \beta + \gamma \, ; \, \beta \in \Omega^{k+1}_{l+1, \nu+1}, \, \gamma \in W^k_{l, \nu}, \, d (d^*\beta + \gamma) = 0 \}.
\end{equation*}
Moreover, $U^k_{\nu}$ is finite-dimensional and $\dim U^k_{\nu} \leq \dim W^k_{l, \nu}$.
\item In the non-$L^2$ setting (when $\nu > -\frac{7}{2}$ for the AC case or when $\nu < -\frac{7}{2}$ in the CS case), we can express the $k$-form $\eta$, in a \emph{unique way}, as
\begin{equation} \label{kformsHodgenonL2eq}
\eta \, = \, d \alpha + d^* \beta + \kappa,
\end{equation}
where $d \alpha + d^* \beta \in A^k_{l, \nu}$ and $\kappa \in \mathcal H^k_{\nu}$. Moreover, if $d \eta = 0$, then we can actually write
\begin{equation} \label{kformsHodgenonL2eq2}
\eta \, = \, d \alpha + \tilde \kappa
\end{equation}
for some $\tilde \kappa \in \mathcal H^k_{\nu}$.
\end{itemize}
Finally, in the interval $\nu \in (-4, -3)$, both cases can be applied and they agree.
\end{thm}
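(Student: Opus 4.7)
The plan is to prove each part of the theorem by direct application of the Hodge decompositions already established (Proposition~\ref{HodgedecompositionpropL2} in the $L^2$ setting, Proposition~\ref{HodgedecompositionpropnonL2} in the non-$L^2$ AC case), and then to refine the resulting expression under the hypothesis $d\eta = 0$. In each case the refinement exploits the observation that applying $d$ to a Hodge-type decomposition annihilates both the exact part and the harmonic part.

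For the $L^2$ case I would invoke Proposition~\ref{HodgedecompositionpropL2} with $\lambda = \nu$ to write $\eta$ uniquely as $d\alpha + d^*\beta + \kappa + \gamma$, establishing~\eqref{kformsHodgeL2eq}. When additionally $d\eta = 0$, applying $d$ and using $d^2\alpha = 0$ together with $d\kappa = 0$ yields $d(d^*\beta + \gamma) = 0$, so $\delta := d^*\beta + \gamma$ lies in $U^k_\nu$; uniqueness of the triple $(d\alpha, \kappa, \delta)$ descends immediately from uniqueness of the four-term decomposition, giving~\eqref{kformsHodgeL2eq2}. For the dimension bound $\dim U^k_\nu \leq \dim W^k_{l,\nu}$ I would exhibit the map $\Psi : U^k_\nu \to W^k_{l,\nu}$ sending $d^*\beta + \gamma \mapsto \gamma$, which is well-defined by uniqueness of the Hodge decomposition. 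Injectivity of $\Psi$ reduces to showing that $dd^*\beta = 0$ with $\beta \in \Omega^{k+1}_{l+1,\nu+1}$ forces $d^*\beta = 0$; this follows from Lemma~\ref{IBPlemma} applied to the pair of rates $(\nu,\nu+1)$, since $2\nu + 1$ is strictly less than $-6$ in the AC subcase (where $\nu < -\tfrac{7}{2}$) and strictly greater than $-6$ in the CS subcase (where $\nu > 0$), so that $\langle\langle dd^*\beta, \beta \rangle\rangle = \langle\langle d^*\beta, d^*\beta \rangle\rangle = 0$.

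For the non-$L^2$ AC case with $\nu \in (-4, 0)$ I would invoke Proposition~\ref{HodgedecompositionpropnonL2} to write $\eta = (d\alpha + d^*\beta) + \kappa$ where $d\alpha + d^*\beta \in A^k_{l,\nu}$ and $\kappa \in \mathcal H^k_\nu$, proving~\eqref{kformsHodgenonL2eq}. Under the hypothesis $d\eta = 0$, closedness of $\kappa$ again gives $dd^*\beta = -d(d\alpha) = 0$, so $d^*\beta$ is closed, is trivially coclosed, and lies in $\Omega^k_{l,\nu}$; hence $d^*\beta \in \mathcal H^k_\nu$, and setting $\tilde\kappa := \kappa + d^*\beta$ yields~\eqref{kformsHodgenonL2eq2}. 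The main obstacle I anticipate is the dimension bound for $U^k_\nu$: it hinges on the boundary-term estimate of Lemma~\ref{IBPlemma} being available, which is precisely what characterizes the $L^2$ regime. Everything else assembles mechanically from decompositions already in hand.
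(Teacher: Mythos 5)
Your proposal is correct and follows essentially the same route as the paper: both decompositions are read off from Propositions~\ref{HodgedecompositionpropL2} and~\ref{HodgedecompositionpropnonL2}, and the refinements under $d\eta = 0$ are obtained by applying $d$ and absorbing the closed-and-coclosed piece $d^*\beta$ into $\mathcal H^k_{\nu}$ in the non-$L^2$ case. The only cosmetic difference is in the bound $\dim U^k_{\nu} \leq \dim W^k_{l,\nu}$: you prove injectivity of $\delta \mapsto \gamma$ by a direct integration by parts (Lemma~\ref{IBPlemma} with rates $\nu$ and $\nu+1$, which is valid exactly in the $L^2$ regime), whereas the paper argues that a closed $d^*(\beta - \beta')$ lies in $\mathcal H^k_{\nu}$ yet is $L^2$-orthogonal to it — two equivalent forms of the same argument.
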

\begin{proof}
Equations~\eqref{kformsHodgeL2eq} and~\eqref{kformsHodgenonL2eq} are immediate from Propositions~\ref{HodgedecompositionpropL2} and~\ref{HodgedecompositionpropnonL2}. Now suppose $d \eta =0$.

In the $L^2$ setting, Proposition~\ref{HodgedecompositionpropL2} says that we can write $\eta$ uniquely as
\begin{equation*}
\eta \, = \, d \alpha + d^* \beta + \kappa + \gamma
\end{equation*}
for $\alpha \in \Omega^{k-1}_{l+1, \nu+1}$, $\beta \in \Omega^{k+1}_{l+1, \nu+1}$, $\kappa \in \mathcal H^k_{\nu}$, and $\gamma \in W^k_{l, \nu}$. Since $d \eta = 0$ and $d\alpha + \kappa$ is closed we see that $d(d^* \beta + \gamma) = 0$. Moreover, if $d d^* \beta = d d^* \beta' = - d \gamma$ then $d^* (\beta - \beta')$ is closed and coclosed and thus lies in $\mathcal H^k_{\nu}$, which implies that $d^* \beta = d^*\beta'$ since $d^* (\Omega^{k+1}_{l+1, \nu+1})$ is $L^2$-orthogonal to $\mathcal H^k_{\nu}$. Thus any $\gamma \in W^k_{l, \nu}$ can be paired with at most one $d^* \beta$ so that $\gamma + d^* \beta$ is closed. We have established~\eqref{kformsHodgeL2eq2}.

In the non-$L^2$ setting corresponding to equation~\eqref{kformsHodgenonL2eq} we get $d(d^* \beta) = 0$, so $\tilde \kappa = d^* \beta + \kappa$ is both closed and coclosed, and thus lies in $\mathcal H^k_{\nu}$, establishing~\eqref{kformsHodgenonL2eq2}.

The fact that both cases agree for $\nu \in (-4,-3)$ is immediate from Remark~\ref{Hodgedecompositionoverlaprmk}.
\end{proof}

\subsection{The obstruction space and a special index-change theorem} \label{specialindexchangesec}

In our study of the moduli space of $\G$~conifolds in Section~\ref{conifoldmodulisec}, we will need to consider the operator
\begin{equation} \label{Pdefneq}
\dd^k_{l,\lambda} \, = \, \rest{(d + d^*)_{l,\lambda}}{\Omega^k_{l, \lambda}} \, : \, \Omega^k_{l, \lambda} \to d(\Omega^k_{l, \lambda}) + d^*(\Omega^k_{l, \lambda}).
\end{equation}
For simplicity we will often use the symbol $\dd^k_{l, \lambda}$ to denote this map, which is just $(d + d^*)_{l, \lambda}$ with domain restricted to $\Omega^k_{l, \lambda}$ and codomain restricted to $d(\Omega^k_{l, \lambda}) + d^*(\Omega^k_{l, \lambda})$. One of the principal results we will need is a refined version of the ``index-change'' formula of Theorem~\ref{indexchangethm} for the operator $\dd^3_{l, \lambda}$ defined in~\eqref{Pdefneq} for $k = 3$. Note that Theorem~\ref{indexchangethm} \emph{does not} directly apply to this operator $\dd^k_{l, \lambda}$, because although (for generic rates $\lambda$) we show in Proposition~\ref{Pindexchangeprop} that it is Fredholm, it is clearly \emph{not} elliptic.

\begin{defn} \label{KPspacedefn}
Let $\lambda = (\lambda_1, \ldots, \lambda_n) \in \R^n$ be an $n$-tuple of rates, with $n=1$ in the AC case as usual. Suppose there exists a nontrivial \emph{closed and coclosed} $k$-form $\upsilon_i$ on the cone $C_i$, homogeneous of order $\lambda_i$ for some $i = 1, \ldots, n$. Then we say $\lambda$ is a \emph{critical rate} for the operator
\begin{equation*}
\dd^k_{l, \lambda} = \rest{(d + d^*)_{l, \lambda}}{\Omega^k_{l, \lambda}} :  \Omega^k_{l, \lambda} \to d(\Omega^k_{l, \lambda}) + d^*(\Omega^k_{l, \lambda})
\end{equation*}
on the conifold $M$. The critical rates for $\dd^k$ are thus a subset of the critical rates for the operator $d + d^* : \Omega^{\bullet}_{l, \lambda} \to \Omega^{\bullet}_{l - 1, \lambda - 1}$. From Lemma~\ref{nologslemma}, we know that there are no $\log r$ terms for the operator $d + d^*$ on the cone, so we can use the notation of Definition~\ref{homowithlogsdefn} to define the space $\mathcal K(\lambda_i)_{\ddci^k}$ to be \emph{exactly} the space of such forms $\upsilon_i$. That is,
\begin{equation} \label{KPdefneq}
\mathcal K(\lambda_i)_{\ddci^k} \, = \, \left\{ \gamma \in \Gamma( \Lambda^k(T^* C_i)) ; \, d \gamma = 0, \, \dsci \gamma = 0, \, \gamma \text{ is homogeneous of order $\lambda_i$} \right\}.
\end{equation}
\end{defn}

\begin{ex} \label{criticalratesexamples}
Consider the operator $\dd^3_{l, \lambda}$ on the AC $\G$~manifolds of Bryant--Salamon discussed in Example~\ref{BSexamples}. By Remark~\ref{ddstarhomokernelrmk}, we see that $\lambda = -3$ is a critical rate for $\dd^3_{l, \lambda}$ if and only if $b^3(\Sigma)$ is nonzero, which by~\eqref{NKBettinumberseq} occurs only for $\spi (S^3)$. Similarly, $\lambda = -4$ is a critical rate for $\dd^3_{l, \lambda}$ if and only if $b^2(\Sigma) = b^4(\Sigma)$ is nonzero, which by~\eqref{NKBettinumberseq} occurs only for $\Lambda^2_-(\C \PR^2)$ and $\Lambda^2_-(S^4)$. Hence, in all three cases the rate $\nu$ of convergence at infinity to the asymptotic cone is a critical rate for $\dd^3_{l, \lambda}$.
\end{ex}

The next lemma shows that elements in the space $\mathcal K(\lambda)_{\ddci^k}$ correspond to solutions to a certain system of eigenvalue equations on the link $\Sigma_i$ of the cone $C_i$.
\begin{lemma} \label{KPspacelemma}
Let $\gamma = r^{\lambda}(r^{k-1} dr \wedge \alpha_{k-1} + r^k \alpha_k)$ be a $k$-form on the cone $C = (0, \infty) \times \Sigma$, homogeneous of order $\lambda$, where $\alpha_{k-1} \in \Omega^{k-1}(\Sigma)$ and $\alpha_k \in \Omega^k(\Sigma)$. Then $(d + \dsc) \gamma = 0$ if and only if
\begin{equation} \label{KPeigenvalueeqs}
\begin{aligned}
\ds \alpha_{k-1} & = (\lambda + k) \alpha_k, & \quad  \ds \alpha_k & = 0, \\
\dss \alpha_{k-1} & = 0, & \quad \dss \alpha_k & = (\lambda - k + 7) \alpha_{k-1}.
\end{aligned}
\end{equation}
\end{lemma}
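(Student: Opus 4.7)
The plan is to deduce the lemma directly from the explicit formulas for $\dc$ and $\dsc$ on homogeneous forms that were already recorded in equation~\eqref{homoddseq}. The key observation is one of pure degree: $\dc\gamma$ is a $(k+1)$-form on $C$ while $\dsc\gamma$ is a $(k-1)$-form on $C$, so the single equation $(\dc + \dsc)\gamma = 0$ splits into the two independent equations $\dc\gamma = 0$ and $\dsc\gamma = 0$. No integration or analysis is needed; the entire content of the lemma is algebraic.

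Applying~\eqref{homoddseq} with $\alpha$ replaced by $\alpha_{k-1}$ and $\beta$ replaced by $\alpha_k$, I would write out
\[
\dc\gamma \, = \, r^{\lambda + k - 1} dr \wedge \bigl((\lambda + k)\alpha_k - \ds \alpha_{k-1}\bigr) + r^{\lambda + k}\, \ds \alpha_k,
\]
\[
\dsc\gamma \, = \, r^{\lambda + k - 3} dr \wedge (-\dss\alpha_{k-1}) + r^{\lambda + k - 2}\bigl(-(\lambda - k + 7)\alpha_{k-1} + \dss\alpha_k\bigr).
\]
Each of these is a sum of a term in $dr \wedge \Omega^{\bullet}(\Sigma)$ and a term in $\Omega^{\bullet}(\Sigma)$, and these two subbundles of $\Lambda^{\bullet} T^*C$ are linearly independent at every point. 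Since the forms $\alpha_{k-1}$ and $\alpha_k$ on $\Sigma$ are independent of $r$, the four bigraded pieces (two from $\dc\gamma$, two from $\dsc\gamma$) must each vanish identically on $\Sigma$, which immediately yields the four equations~\eqref{KPeigenvalueeqs}. The converse direction is identical: if~\eqref{KPeigenvalueeqs} holds, substitution into the two displayed formulas above shows $\dc\gamma = 0$ and $\dsc\gamma = 0$, and hence $(\dc + \dsc)\gamma = 0$.

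There is no real obstacle here; the lemma is a direct unpacking of~\eqref{homoddseq}. The only point that warrants verification is that the coefficients and signs in~\eqref{homoddseq} match exactly those appearing in the statement of~\eqref{KPeigenvalueeqs}, in particular the $(\lambda + k)$ and $(\lambda - k + 7)$ factors and the sign in the $\dss \alpha_{k-1} = 0$ equation, but this is a one-line check under the sign conventions already fixed in Section~\ref{g2conessec}.
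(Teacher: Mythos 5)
Your proposal is correct and is exactly the paper's argument: the paper's proof simply says the lemma is immediate from equation~\eqref{homoddseq}, and your write-up spells out that same substitution (with the degree splitting of $(d+\dsc)\gamma=0$ into $\dc\gamma=0$ and $\dsc\gamma=0$) in full detail. The coefficients and signs you extract match~\eqref{KPeigenvalueeqs}, so nothing is missing.
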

\begin{proof}
This is immediate from~\eqref{homoddseq}.
\end{proof}

We now proceed to discuss the \emph{obstruction space} $\mathcal O^k_{l, \lambda}$ for our deformation problem. To simplify notation, we define the linear spaces
\begin{equation} \label{Ydefneq}
\begin{aligned}
\mathcal Y \, & = \, d(\Omega^k_{l, \nu}) + d^*(\Omega^k_{l,\nu}), \\
\mathcal Y_0 \, & = \, (d + d^*)(\Omega^k_{l, \nu}).
\end{aligned}
\end{equation}
Clearly we have $\mathcal Y_0 \subseteq \mathcal Y$. We will show that both $\mathcal Y_0$ and $\mathcal Y$ are Banach spaces, and that there exists a finite-dimensional space $\mathcal O^k_{l, \lambda}$ such that $\mathcal Y = \mathcal Y_0 \oplus \mathcal O^k_{l, \lambda}$.

\begin{lemma} \label{noobstructionslemma}
In the AC case when $\lambda > -4$, or in the CS case when $\lambda < -3$, we have $\mathcal Y = \mathcal Y_0$, so we can take $\mathcal O^k_{l, \lambda} = \{ 0 \}$.
\end{lemma}
\begin{proof}
We need to show that $d(\Omega^k_{l, \lambda}) + d^*(\Omega^k_{l, \lambda}) \subseteq (d + d^*) (\Omega^k_{l, \lambda})$. Let $\sigma, \tau \in \Omega^k_{l, \lambda}$. By Remark~\ref{Hodgedecompositionoverlaprmk}, for such $\lambda$ we can apply the decomposition~\eqref{kformsHodgenonL2eq} to $\sigma - \tau$. Hence we can write $\sigma - \tau = \kappa + d \alpha + d^* \beta$ where in particular $ \kappa \in \mathcal H^k_{\lambda}$. But then we find that
\begin{align*}
d \sigma + d^* \tau \, & = \, (d + d^*) \tau + d(\sigma - \tau) \, = \, (d + d^*) \tau + d(d\alpha + d^*\beta + \kappa) \\
& = \, (d + d^*) \tau + d(d^* \beta) \, = \, (d + d^*)( \tau + d^* \beta),
\end{align*}
which is what we wanted to show.
\end{proof}

\begin{lemma} \label{Yspacelemma}
In the $L^2$ setting (when $\lambda \leq -\frac{7}{2}$ in the AC case or when $\lambda \geq -\frac{7}{2}$ in the CS case), there exists a finite-dimensional subspace $\widehat{\mathcal O}^k_{l, \lambda}$ of the space $\Omega^k_{l, \lambda}$ such that the space $\mathcal Y$ is the vector space sum of the subspaces $\mathcal Y_0$ and $d^* (\widehat{\mathcal O}^k_{l, \lambda})$. That is, we have
\begin{equation} \label{Yspacelemmaeq}
\mathcal Y \, = \, \mathcal Y_0 + d^* (\widehat{\mathcal O}^k_{l, \lambda}).
\end{equation}
\end{lemma}
\begin{proof}
Recall the finite-dimensional space $W^k_{l, \lambda}$ in the decomposition of Proposition~\ref{HodgedecompositionpropL2}. Let $(W_{\! \text{c}})^k_{l, \lambda}$ be the subspace of $W^k_{l, \lambda}$ consisting of closed forms, and similarly let $(W_{\! \text{cc}})^k_{l, \lambda}$ be the subspace of coclosed forms. We have $(W_{\! \text{c}})^k_{l, \lambda} \cap (W_{\! \text{cc}})^k_{l, \lambda} = \{ 0 \}$. Define $\widehat{\mathcal O}^k_{l, \lambda}$ to be the $L^2$-orthogonal complement in $W^k_{l, \lambda}$ of the subspace $(W_{\! \text{c}})^k_{l, \lambda} \oplus (W_{\! \text{cc}})^k_{l, \lambda}$. That is,
\begin{equation} \label{obstructionspacedefneq}
W^k_{l, \lambda} \, = \, \big( (W_{\! \text{c}})^k_{l, \lambda} \oplus (W_{\! \text{cc}})^k_{l, \lambda} \big) \oplus \widehat{\mathcal O}^k_{l, \lambda}.
\end{equation}
The second $\oplus$ symbol above is an orthogonal direct sum, but the sum $(W_{\! \text{c}})^k_{l, \lambda} \oplus (W_{\! \text{cc}})^k_{l, \lambda}$ need not be orthogonal. Hence any $\gamma \in W^k_{l, \lambda}$ can be written uniquely as $\gamma = \gamma_{\text{c}} + \gamma_{\text{cc}} + \gamma_o$ where $d \gamma_{\text{c}} = 0$ and $d^* \gamma_{\text{cc}} = 0$ and $\gamma_o \in \widehat{\mathcal O}^k_{l, \lambda}$ is neither closed nor coclosed.

It is clear that $(d + d^*) (\Omega^k_{l, \lambda}) + d^*(\widehat{\mathcal O}^k_{l, \lambda}) \subseteq d(\Omega^k_{l, \lambda}) + d^*(\Omega^k_{l, \lambda})$. We need to show the reverse inclusion. Let $\sigma, \tau \in \Omega^k_{l, \lambda}$. Applying the decomposition~\eqref{kformsHodgeL2eq} to $\sigma - \tau$, we can write $\sigma - \tau = \kappa + d \alpha + d^* \beta + \gamma$ where in particular $ \kappa \in \mathcal H^k_{\lambda}$ and $\gamma$ in $W^k_{l, \lambda}$. But then we find that
\begin{align*}
d \sigma + d^* \tau \, & = \, (d + d^*) \tau + d(\sigma - \tau) \\ & = \, (d + d^*) \tau + d(d\alpha + d^*\beta + \kappa + \gamma) \\
& = \, (d + d^*) \tau + d(d^* \beta + \gamma).
\end{align*}
By~\eqref{obstructionspacedefneq} we can write $\gamma = \gamma_\text{c} + \gamma_{\text{cc}} + \gamma_o$ for some \emph{closed} form $ \gamma_{\text{c}}$, some \emph{coclosed} form $\gamma_{\text{cc}}$, and some form $\gamma_o \in \widehat{\mathcal O}^k_{l, \lambda}$. Therefore we have\begin{align*}
d \sigma + d^* \tau \, & = \, (d + d^*) \tau + d (d^* \beta + \gamma_\text{cc} + \gamma_o) \\
& = \, (d + d^*)(\tau + d^* \beta + \gamma_{\text{cc}} + \gamma_o) + d^*(- \gamma_o) \\
& \in \, (d + d^*) (\Omega^k_{l, \lambda}) + d^* (\widehat{\mathcal O}^k_{l, \lambda})
\end{align*}
which is what we wanted to show.
\end{proof}

\begin{defn} \label{realobstructionspacedefn}
We define the finite-dimensional \emph{obstruction space} $\mathcal O^k_{l, \lambda}$ for rate $\lambda$ to be a direct complement to $\mathcal Y_0 = (d + d^*)(\Omega^k_{l, \lambda})$ in $\mathcal Y = d(\Omega^k_{l, \lambda}) + d^*(\Omega^k_{l,\lambda})$. That is,
\begin{equation} \label{realobstructionspacedefneq}
\mathcal Y \, = \, \mathcal Y_0 \oplus \mathcal O^k_{l, \lambda}.
\end{equation}
In particular, we have that $\mathcal O^k_{l, \lambda}$ is isomorphic to the quotient
\begin{equation}
\mathcal O^k_{l, \lambda} \, \cong \, \bigl( d(\Omega^k_{l, \lambda}) + d^*(\Omega^k_{l,\lambda}) \bigr) \, /  \, (d + d^*)(\Omega^k_{l, \lambda}).
\end{equation}
Recall the operator $\dd^k_{l, \lambda} = \rest{(d + d^*)_{l, \lambda}}{\Omega^k_{l, \lambda}} :  \Omega^k_{l, \lambda} \to d(\Omega^k_{l, \lambda}) + d^*(\Omega^k_{l, \lambda}).$ Thus, the space $\mathcal Y$ is the codomain of $\dd^k_{l, \lambda}$ and the image of $\dd^k_{l, \lambda}$ is
\begin{equation} \label{Y0defneq}
\im (\dd^k_{l, \lambda}) \, = \, (d + d^*)(\Omega^k_{l, \lambda}) \, = \, \mathcal Y_0.
\end{equation}
That is, we have
\begin{equation} \label{obstructionisomcokereq}
\mathcal O^k_{l, \lambda} \, \cong \, \coker \dd^k_{l, \lambda}.
\end{equation}
\end{defn}

\begin{rmk} \label{noobstructionrmk}
By Lemma~\ref{noobstructionslemma}, in the AC case when $\lambda > -4$, or in the CS case when $\lambda < -3$, we have $\mathcal O^k_{l, \nu} = \{ 0 \}$.
\end{rmk}

\begin{cor} \label{obstructionspacecor}
When $\lambda \leq -4$ in the AC case or when $\lambda \geq -3$ in the CS case, a finite-dimensional space $\mathcal O^k_{l, \lambda}$ in $\Omega^k_{l, \lambda}$ satisfying~\eqref{realobstructionspacedefneq} exists and can be chosen to consist of \emph{coexact} forms.

[Note that the $L^2$ setting properly corresponds to $\lambda \leq - \frac{7}{2}$ (AC) or $\lambda \geq - \frac{7}{2}$ (CS). However, Remark~\ref{noobstructionrmk} absorbs the cases $\lambda \in (-4, - \frac{7}{2}]$ (AC) and $\lambda \in [-\frac{7}{2}, -3)$ (CS) into the ``non-$L^2$'' setting, hence the restriction to $\lambda \leq -4$ (AC) and $\lambda \geq -3$ (CS) in this statement.]
\end{cor}
\begin{proof}
We choose $\mathcal O^k_{l, \lambda}$ to be a subspace of  $d^* (\widehat{\mathcal O}^k_{l, \lambda})$ from Lemma~\ref{Yspacelemma} that is a direct complement to $(d + d^*)(\Omega^k_{l, \lambda})$ in $d(\Omega^k_{l, \lambda}) + d^*(\Omega^k_{l,\lambda})$. That is,
\begin{equation*}
d(\Omega^k_{l, \lambda}) + d^*(\Omega^k_{l,\lambda}) \, = \, (d + d^*)(\Omega^k_{l, \lambda}) \oplus \mathcal O^k_{l, \lambda},
\end{equation*}
with $\mathcal O^k_{l, \lambda} \subseteq d^*(\widehat{\mathcal O}^k_{l, \lambda})$. For example, we can choose $\mathcal O^k_{l, \lambda}$ to be the intersection with $d^* (\widehat{\mathcal O}^k_{l, \lambda})$ of the $L^2$-orthogonal complement in $d(\Omega^k_{l, \lambda}) + d^*(\Omega^k_{l,\lambda})$ of $d^* (\widehat{\mathcal O}^k_{l, \lambda}) \cap (d + d^*)(\Omega^k_{l, \lambda})$.
In particular, we have that $\mathcal O^k_{l, \lambda}$ is isomorphic to the quotient
\begin{equation}
\mathcal O^k_{l, \lambda} \, \cong \, \bigl( d(\Omega^k_{l, \lambda}) + d^*(\Omega^k_{l,\lambda}) \bigr) \, /  \, (d + d^*)(\Omega^k_{l, \lambda}).
\end{equation}
From the definition of $\widehat{\mathcal O}^k_{l, \lambda}$ in the proof of Lemma~\ref{Yspacelemma}, we know that $d^*$ is injective on $\widehat{\mathcal O}^k_{l, \lambda}$. It follows that the dimension of $\mathcal O^k_{l, \lambda}$ is no larger than the dimension of $\widehat{\mathcal O}^k_{l, \lambda}$, which is finite.
\end{proof}

\begin{lemma} \label{YBanachspacelemma}
For generic rates $\lambda$, the spaces $\mathcal Y_0$ and $\mathcal Y = \mathcal Y_0 \oplus {\mathcal O}^k_{l, \lambda}$ are both Banach spaces.
\end{lemma}
\begin{proof}
If we assume that $\lambda + 1$ is noncritical for $d + d^*$, the map
\begin{equation*}
(d + d^*)_{l+1, \lambda+1} : \Omega^{\bullet}_{l+1, \lambda+1} \to \Omega^{\bullet}_{l, \lambda}
\end{equation*}
is Fredholm, and thus has closed image. In fact, the Lockhart--McOwen theory~\cite[Section 2]{LM} says that at a noncritical rate, for any $\eta \in \Omega^k_{l, \lambda}$ that is orthogonal (with respect to the $L^2_{l, \lambda}$ inner product) to the kernel of $d + d^*$, we have the estimate
\begin{equation*}
{||\eta||}_{L^2_{l,\lambda}} \, \leq \, C {||(d + d^*) \eta ||}_{L^2_{l - 1,\lambda - 1}}.
\end{equation*}
From this estimate, it is a standard result~\cite[Corollary 2.15]{AA} that $\mathcal Y_0 = (d + d^*)(\Omega^k_{l, \lambda})$ is a \emph{closed} subspace of $\Omega^{\bullet}_{l-1, \lambda-1}$, and thus a Banach space. By equation~\eqref{realobstructionspacedefneq}, since ${\mathcal O}^k_{l, \lambda}$ is finite-dimensional, we deduce that the space $d(\Omega^k_{l, \lambda}) + d^*(\Omega^k_{l,\lambda}) = \mathcal Y_0 \oplus {\mathcal O}^k_{l, \lambda}$ is also a Banach space.
\end{proof}

The next result establishes that generically, $\dd^k_{l, \lambda}$ is Fredholm, and surjective for certain rates.

\begin{lemma} \label{PFredholmlemma}
Let $\lambda$ be a \emph{noncritical rate} for $d + d^*$ on $M$. The map
\begin{equation*}
\dd^k_{l, \lambda} \, : \, \Omega^k_{l, \lambda} \to d(\Omega^k_{l, \lambda}) + d^*(\Omega^k_{l, \lambda})
\end{equation*}
is Fredholm. Moreover, $\dd^k_{l, \lambda}$ is \emph{surjective} if $\lambda > -4$ in the AC case and if $\lambda < -3$ in the CS case.
\end{lemma}
\begin{proof}
For any $\lambda$, we have $\ker \dd^k_{l, \lambda} = \mathcal H^k_{\lambda}$ is finite-dimensional. From~\eqref{obstructionisomcokereq} and Definition~\ref{realobstructionspacedefn} we know that $\coker \dd_{l, \lambda}$ is finite-dimensional. Finally, if $\lambda$ is not critical for $d + d^*$ on $M$, then we proved in Lemma~\ref{YBanachspacelemma} that $\dd^k_{l, \lambda}$ has closed image. Thus $\dd^k_{l, \lambda}$ is Fredholm. The statements about the surjectivity of $\dd^k_{l, \lambda}$ are a reiteration of Lemma~\ref{noobstructionslemma}.
\end{proof}

Next, we determine a particular characterization of $\coker \dd^k_{l, \lambda} \cong \mathcal O^k_{l, \lambda}$.

\begin{prop} \label{Pindexchangeprop}
Consider the setup of Lemma~\ref{PFredholmlemma}.
\begin{enumerate}[(a)]
\item The space $\ker \dd^k_{l, \lambda}$ is a subspace of $\ker (d + d^*)_{l, \lambda}$, and $\coker \dd^k_{l, \lambda}$ is a subspace of $\coker (d + d^*)_{l, \lambda}$, in the following sense: the topological complement $\mathcal O^k_{l, \lambda}$ of $\im (\dd^k_{l, \lambda})$ in $d(\Omega^k_{l, \lambda}) + d^*(\Omega^k_{l, \lambda})$, which is isomorphic to $\coker \dd^k_{l, \lambda}$, is a subspace of the orthogonal complement of $\im (d + d^*)_{l, \lambda}$ in $\Omega^{\bullet}_{l-1, \lambda-1}$, with respect to the Hilbert space inner product.
\item The space $\coker \dd^k_{l, \lambda}$ is isomorphic to the \emph{quotient} of the space $\ker (d + d^*)_{-6 - \lambda} \cap (\Omega^{k-1} \oplus \Omega^{k+1})$ of closed and coclosed forms of degree $k-1$ plus degree $k+1$ of rate $-6 - \lambda$ by the subspace $\mathcal H^{k-1}_{-6 - \lambda} \oplus \mathcal H^{k+1}_{-6 - \lambda}$ of closed and coclosed $(k-1)$-forms plus closed and coclosed $(k+1)$-forms of rate $-6 - \lambda$.
\end{enumerate}
\end{prop}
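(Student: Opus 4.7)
The plan is to assemble the Fredholmness in part (a) from Lemmas~\ref{Yspacelemma} and~\ref{YBanachspacelemma}, and then in part (b) to identify the cokernel via $L^2$ duality at the dual rate $-6-\lambda$, using the pairing from Proposition~\ref{dualspaceprop}.

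For part (a), I would first identify $\ker \dd^k_{l,\lambda}$. An element $\eta \in \Omega^k_{l,\lambda}$ satisfies $\dd^k_{l,\lambda} \eta = d\eta + d^*\eta = 0$, and since $d\eta \in \Omega^{k+1}$ and $d^*\eta \in \Omega^{k-1}$ lie in disjoint graded pieces, this forces $d\eta = 0$ and $d^*\eta = 0$ separately. Hence $\ker \dd^k_{l,\lambda} = \mathcal H^k_\lambda$, which is finite-dimensional and visibly contained in $\ker(d+d^*)_{l,\lambda}$. The image $(d+d^*)(\Omega^k_{l,\lambda})$ is closed by applying the Lockhart--McOwen estimate~\eqref{imageFhatlemmatempeq} to $\eta \in \Omega^k_{l,\lambda}$ orthogonal to $\mathcal H^k_\lambda$, and the codomain $d(\Omega^k_{l,\lambda}) + d^*(\Omega^k_{l,\lambda})$ is a Banach space by Lemma~\ref{YBanachspacelemma}. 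Finally, Lemma~\ref{Yspacelemma} expresses this codomain as $(d+d^*)(\Omega^k_{l,\lambda}) + d(\widehat{\mathcal O}^k_{l,\lambda})$, so $\coker \dd^k_{l,\lambda}$ is a quotient of the finite-dimensional space $d(\widehat{\mathcal O}^k_{l,\lambda})$, proving Fredholmness. To realize $\coker \dd^k_{l,\lambda}$ as a subspace of $\coker(d+d^*)_{l,\lambda}$ in the stated sense, I would take the topological complement of $\im \dd^k_{l,\lambda}$ in $d(\Omega^k) + d^*(\Omega^k)$ given by intersecting with the Hilbert-space orthogonal complement of $\im(d+d^*)_{l,\lambda}$ in $\Omega^\bullet_{l-1,\lambda-1}$.

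For part (b), the idea is to read off the cokernel from pairs of forms in degrees $k \pm 1$ at the dual rate. To a pair $(\alpha,\beta) \in \ker(d+d^*)_{-6-\lambda} \cap (\Omega^{k-1} \oplus \Omega^{k+1})$---equivalently, a pair satisfying the three graded equations $d^*\alpha = 0$, $d\beta = 0$, and $d\alpha + d^*\beta = 0$---I would associate the linear functional $\Phi_{(\alpha,\beta)}$ on $d(\Omega^k_{l,\lambda}) + d^*(\Omega^k_{l,\lambda})$ defined by
\begin{equation*}
\Phi_{(\alpha,\beta)}(d\sigma + d^*\tau) \, = \, \langle\langle d\sigma, \beta\rangle\rangle + \langle\langle d^*\tau, \alpha\rangle\rangle.
\end{equation*}
The conditions $d^*\alpha = 0$ and $d\beta = 0$ (combined with Lemma~\ref{IBPlemma} at the rates $\lambda$ and $-6-\lambda$, whose sum equals $-6$ up to a one-derivative shift that is comfortably in the integrable regime) make $\Phi_{(\alpha,\beta)}$ depend only on $d\sigma$ and $d^*\tau$ and not on the choices of $\sigma$ and $\tau$; the constraint $d\alpha + d^*\beta = 0$ then forces $\Phi_{(\alpha,\beta)}$ to annihilate $(d+d^*)(\Omega^k_{l,\lambda})$, so it descends to a functional on $\coker \dd^k_{l,\lambda}$. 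A direct computation shows $\Phi_{(\alpha,\beta)} \equiv 0$ if and only if $d\alpha = 0$ and $d^*\beta = 0$, which combined with the numerator conditions forces $(\alpha,\beta) \in \mathcal H^{k-1}_{-6-\lambda} \oplus \mathcal H^{k+1}_{-6-\lambda}$. Thus $(\alpha,\beta) \mapsto \Phi_{(\alpha,\beta)}$ induces an injection of the claimed quotient into $(\coker \dd^k_{l,\lambda})^*$, which by (a) is finite-dimensional.

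To finish, I would establish surjectivity, which is where the principal obstacle lies. Any bounded functional on $\coker \dd^k_{l,\lambda}$ lifts to a functional on $d(\Omega^k) + d^*(\Omega^k)$ annihilating $(d+d^*)(\Omega^k_{l,\lambda})$; Hahn--Banach extends it to $\Omega^{k-1}_{l-1,\lambda-1} \oplus \Omega^{k+1}_{l-1,\lambda-1}$, and via Proposition~\ref{dualspaceprop} this extension is represented by $L^2$ pairing with some pair $(\alpha,\beta) \in \Omega^{k-1}_{-6-\lambda} \oplus \Omega^{k+1}_{-6-\lambda}$ satisfying $d\alpha + d^*\beta = 0$ weakly, and then strongly by elliptic regularity (Theorem~\ref{ellipticregthm}). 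The delicate step is upgrading $(\alpha,\beta)$ to a representative that additionally satisfies $d^*\alpha = 0$ and $d\beta = 0$, so as to land in the paper's numerator; this is accomplished by adding a pair $(\alpha_0,\beta_0)$ with $d\alpha_0 = 0$ and $d^*\beta_0 = 0$ (which leaves $\Phi$ unchanged) chosen so that $\alpha - \alpha_0$ is coclosed and $\beta - \beta_0$ is closed. I would produce such $(\alpha_0,\beta_0)$ by applying the Hodge decomposition of Theorem~\ref{kformsHodgethm} at the dual rate $-6-\lambda$ to $\alpha$ and $\beta$ and solving the resulting inhomogeneous problems via a Green's-operator construction; the closed-but-not-coclosed and coclosed-but-not-closed components of the $W$-type pieces need to be handled using the splitting from Definition~\ref{obstructionspacedefn}. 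This Hodge-theoretic matching at the dual rate is the technically most delicate point of the proof.
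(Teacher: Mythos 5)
Your part (a) is essentially the paper's own argument: $\ker \dd^k_{l,\lambda} = \mathcal H^k_{\lambda}$, closed image of $(d+d^*)$ restricted to $k$-forms via the Lockhart--McOwen estimate, a Banach-space codomain by Lemma~\ref{YBanachspacelemma}, and finite codimension of the image by Lemma~\ref{Yspacelemma}. One caveat: you simply declare that intersecting the codomain with the Hilbert-space orthogonal complement $F$ of $\im(d+d^*)_{l,\lambda}$ yields a topological complement of $\im \dd^k_{l,\lambda}$. That is true, but it is not automatic (the ambient orthogonal decomposition need not respect the codomain); the paper establishes it by exhibiting the concrete complement $\mathcal O^k_{\lambda}$ of Definition~\ref{realobstructionspacedefn} and then verifying $\mathcal O^k_{\lambda} \subseteq F' \subseteq F$, which is what justifies the ``subspace of $\coker(d+d^*)$'' claim.

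The genuine gap is in part (b), precisely at the step you flag as most delicate. Your Hahn--Banach representative $(\alpha,\beta)$ at rate $-6-\lambda$ satisfies only the middle graded equation $d\alpha + d^*\beta = 0$, and surjectivity requires correcting it by a pair with $d\alpha_0 = 0$, $d^*\beta_0 = 0$ so that $d^*\alpha = 0$ and $d\beta = 0$ also hold; equivalently, you need every relevant form at the dual rate to split as closed plus coclosed within the same weighted space. The tools you cite do not give this in exactly the cases where the proposition has content: in the CS case with $\lambda > -\tfrac{5}{2}$ (in particular every rate $\nu > 0$ used in the deformation theory) the dual rate $-6-\lambda$ lies below $-\tfrac{7}{2}$, a regime not covered by Theorem~\ref{kformsHodgethm} (its non-$L^2$ statement is AC-only, for rates in $(-4,0)$), and in the AC case with $\lambda \leq -6$ the dual rate is $\geq 0$, again outside the stated range; moreover, where the $L^2$-type decomposition does apply, ``handling the $W$-type pieces using the splitting from Definition~\ref{obstructionspacedefn}'' does not produce the correction --- the $\widehat{\mathcal O}$-component is exactly the part for which no closed-plus-coclosed decomposition is supplied, so the decisive claim is left unproven. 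What actually rescues your route is that at the dual rate one is always in a regime where an analogue of Proposition~\ref{HodgedecompositionpropnonL2} holds (no $W$-pieces, every form is closed plus coclosed), but that extension is not in the paper and would have to be proved, at which point your argument is no shorter than the original. The paper's proof sidesteps the whole issue by computing annihilators, inside the Hilbert space $\Omega^{k-1}_{l-1,\lambda-1} \oplus \Omega^{k+1}_{l-1,\lambda-1}$, of the enlarged subspace $E' = d(\Omega^{k-2}_{l,\lambda}) + (d+d^*)(\Omega^{k}_{l,\lambda}) + d^*(\Omega^{k+2}_{l,\lambda})$: annihilating the extra summands $d(\Omega^{k-2}_{l,\lambda})$ and $d^*(\Omega^{k+2}_{l,\lambda})$ is what produces the conditions $d^*\alpha = 0$ and $d\beta = 0$ for free, so no upgrading of representatives is ever needed. (A minor further point: your appeal to Lemma~\ref{IBPlemma} at rates $\lambda$ and $-6-\lambda$ is borderline, since the rates sum to exactly $-6$ and the lemma needs a strict inequality; the vanishing of the pairing against the image should instead be justified by density of compactly supported forms in the weighted spaces together with continuity of the dual pairing of Proposition~\ref{dualspaceprop}.)
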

\begin{proof}
For any $\lambda$, we have $\ker \dd^k_{l, \lambda} = \mathcal H^k_{\lambda}$ is finite-dimensional. We know from Definition~\ref{realobstructionspacedefn} that $\coker \dd_{l, \lambda}$ is finite-dimensional. Finally, if $\lambda$ is not critical for $d + d^*$ on $M$, then we proved in Lemma~\ref{YBanachspacelemma} that $\dd^k_{l, \lambda}$ has closed image. Thus $\dd^k_{l, \lambda}$ is Fredholm. Next, we will prove the statements about the kernel and cokernel of $\dd^k_{l, \lambda}$. The arguments are identical in the CS case (except for the fact that we have $n$ ends instead of just one, and the inequalities are reversed) so we prove just the AC case.

It is clear from the definition of $\dd^k_{l, \lambda}$ that $\ker \dd^k_{l, \lambda}$ is a subspace of $\ker (d + d^*)_{l, \lambda}$. We need to establish the analogous result for $\coker \dd^k_{l, \lambda}$. To simplify notation, in this proof only, we will use $E$ to denote the subspace $(d + d^*)(\Omega^{\bullet}_{l, \lambda})$ of $\Omega^{\bullet}_{l-1, \lambda-1}$, which is \emph{closed} if $\lambda$ is noncritical for $d + d^*$. Also, let $F$ denote the orthogonal complement of $E$ with respect to the Hilbert space inner product on $\Omega^{\bullet}_{l, \lambda}$. Thus we have
\begin{equation*}
\Omega^{\bullet}_{l-1, \lambda-1} \, = \, (d + d^*)(\Omega^{\bullet}_{l, \lambda}) \oplus \coker (d + d^*)_{l, \lambda} \, = \, E \oplus F
\end{equation*}
where in fact by Remark~\ref{annihilatorrmk} we know that
\begin{equation*}
F \, \cong \, \Ann (E) \,= \, \ker (d + d^*)_{- 6 - \lambda}
\end{equation*}
where $\Ann (E)$ denotes the annihilator of $E$ in the dual space. 

Now consider the orthogonal projection $P$ of $E$ onto the closed subspace $\Omega^{k-1}_{l-1, \lambda - 1} \oplus \Omega^{k+1}_{l-1, \lambda - 1}$. We have that
\begin{equation*}
P(E) \, = \, d (\Omega^{k-2}_{l, \lambda}) + (d + d^*)(\Omega^k_{l, \lambda}) + d^* (\Omega^{k+2}_{l, \lambda}) \, = \, E'
\end{equation*}
is closed in the Hilbert space $\Omega^{k-1}_{l-1, \lambda - 1} \oplus \Omega^{k+1}_{l-1, \lambda - 1}$. Thus we can write
\begin{equation*}
\Omega^{k-1}_{l-1, \lambda - 1} \oplus \Omega^{k+1}_{l-1, \lambda - 1} \, = \, E' \oplus F'
\end{equation*}
where we take $F'$ to be the orthogonal complement of $E'$ with respect to the Hilbert space inner product on $\Omega^{k-1}_{l-1, \lambda - 1} \oplus \Omega^{k+1}_{l-1, \lambda - 1}$. By Remark~\ref{annihilatorrmk} we have
\begin{equation} \label{Pindexchangetempeq2}
F' \, \cong \, \Ann (E').
\end{equation}
It is trivial to compute that
\begin{equation} \label{Pindexchangetempeq3}
\Ann (E')  \, = \, \ker (d + d^*)_{-6 - \lambda} \cap (\Omega^{k-1} \oplus \Omega^{k+1}).
\end{equation}
That is, $F'$ is isomorphic to the space of forms of degree $k-1$ plus degree $k+1$ of rate $-6 - \lambda$ in the kernel of $d + d^*$.

From Lemma~\ref{Yspacelemma} and Lemma~\ref{YBanachspacelemma} we have that
\begin{equation*}
d(\Omega^k_{l, \lambda}) + d^*(\Omega^k_{l, \lambda}) \, = \, (d + d^*)(\Omega^k_{l, \lambda}) + d^*(\widehat{\mathcal O}^k_{\lambda}) \, = \, (d + d^*)(\Omega^k_{l, \lambda}) \oplus \mathcal O^k_{\lambda}
\end{equation*}
is closed in $\Omega^{k-1}_{l-1, \lambda - 1} \oplus \Omega^{k+1}_{l-1, \lambda - 1}$, and $\coker \dd^k_{l, \lambda} \cong \mathcal O^k_{\lambda}$. Note that $\mathcal O^k_{\lambda}$ is a subspace of $(k-1)$-forms, and is thus always transverse to $d^*(\Omega^{k+2}_{l, \lambda})$. In addition, it is transverse to $d(\Omega^{k-2}_{l, \lambda})$, because in the $L^2$ setting the images of $d$ and $d^*$ are orthogonal, and in the non-$L^2$ setting we know that $\mathcal O^k_{\lambda} = \{ 0 \}$. These observations tell us that we can also write
\begin{equation*}
\Omega^{k-1}_{l-1, \lambda - 1} \oplus \Omega^{k+1}_{l-1, \lambda - 1} \, = \, E'' \oplus F''
\end{equation*}
where
\begin{align*}
E'' \, & = \, d (\Omega^{k-2}_{l, \lambda}) + d(\Omega^k_{l, \lambda}) + d^*(\Omega^k_{l, \lambda}) + d^* (\Omega^{k+2}_{l, \lambda}) \\
& = \, E'  \oplus \mathcal O^k_{\lambda},
\end{align*}
and $F''$ is the orthogonal complement of $E''$ with respect to the Hilbert space inner product on $\Omega^{k-1}_{l-1, \lambda - 1} \oplus \Omega^{k+1}_{l-1, \lambda - 1}$. We therefore clearly have
\begin{equation} \label{Pindexchangetempeq4}
F' \, = \, \mathcal O^k_{\lambda} \oplus F''
\end{equation}
and we note again that by Remark~\ref{annihilatorrmk} we have
\begin{equation} \label{Pindexchangetempeq5}
F'' \, \cong \, \Ann (E'').
\end{equation}
In this case it is easy to see that
\begin{equation} \label{Pindexchangetempeq6}
\Ann (E'')  \, = \, \mathcal H^{k-1}_{-6 - \lambda} \oplus \mathcal H^{k+1}_{-6 - \lambda}.
\end{equation}
We now observe that equations~\eqref{Pindexchangetempeq2},~\eqref{Pindexchangetempeq3},~\eqref{Pindexchangetempeq4},~\eqref{Pindexchangetempeq5}, and~\eqref{Pindexchangetempeq6} together imply part (b) of the proposition.

Finally, we have that $\coker \dd^k_{l, \lambda} \cong \mathcal O^k_{\lambda}$, which is a subspace of $F'$. But we see that
\begin{align*}
F' \, & = \, \{ \gamma \in \Omega^{k-1}_{l-1, \lambda - 1} \oplus \Omega^{k+1}_{l-1, \lambda - 1}; \, \langle \langle P \alpha, \gamma \rangle \rangle_{\Omega^{\bullet}_{l-1, \lambda-1}} = 0, \, \forall \alpha \in E \} \\ & = \, \{ \gamma \in \Omega^{k-1}_{l-1, \lambda - 1} \oplus \Omega^{k+1}_{l-1, \lambda - 1}; \, \langle \langle \alpha, \gamma \rangle \rangle_{\Omega^{\bullet}_{l-1, \lambda-1}} = 0, \, \forall \alpha \in E \} \\ & = \, \{ \gamma \in \Omega^{\bullet}_{l-1, \lambda - 1} ; \, \langle \langle \alpha, \gamma \rangle \rangle_{\Omega^{\bullet}_{l-1, \lambda-1}} = 0, \, \forall \alpha \in E \} \cap \left( \Omega^{k-1}_{l-1, \lambda - 1} \oplus \Omega^{k+1}_{l-1, \lambda - 1} \right) \\ & = \, F \cap \left( \Omega^{k-1}_{l-1, \lambda - 1} \oplus \Omega^{k+1}_{l-1, \lambda - 1} \right) \\ & \, \subseteq F
\end{align*}
and the proof is complete.
\end{proof}

We pause here to state and prove an important result about homogeneous forms on a cone, namely Theorem~\ref{cones24thm} below, which relates closed and coclosed $k$-forms on $C$, homogeneous of order $\lambda$, to a particular subspace of forms on the cone $C$ of degree $k-1$ plus degree $k+1$, homogeneous of order $-6-\lambda$, in the kernel of $d + d^*$. Before we can state it, we need to define several spaces.

\begin{nota} \label{ABCnotation}
Consider a form $\gamma$ of degree $k-1$ plus degree $k+1$ on the cone, homogeneous of order $-6 - \lambda$. Using Definition~\ref{homoformsdefn} we can write
\begin{equation} \label{cokerPchangethmtempeq1}
\gamma \, = \, r^{-6 - \lambda} (r^{k-2} dr \wedge \beta_{k-2} + r^{k-1} \beta_{k-1} + r^k dr \wedge \beta_k + r^{k+1} \beta_{k+1})
\end{equation}
where each $\beta_m$ is an $m$-form on $\Sigma$. We will write this form as a $4$-tuple $(\beta_{k-2}, \beta_{k-1}, \beta_k, \beta_{k+1})$. From the equations~\eqref{homoddseq}, it follows easily that $\gamma$ is in the kernel of $d + d^*$ if and only if
\begin{equation} \label{cokerPchangethmtempeq2}
\begin{aligned}
\dss \beta_{k-2} & = 0, & \qquad \dss \beta_{k-1} & = - (\lambda + k - 2) \beta_{k-2}, \\
\ds \beta_k & = - (\lambda - k + 5) \beta_{k+1}, & \qquad \ds \beta_{k+1} & = 0, \\
\ds \beta_{k-2} + \dss \beta_k & = - (\lambda - k + 7) \beta_{k-1}, & \qquad \ds \beta_{k-1} + \dss \beta_{k+1} & = - (\lambda + k) \beta_k.
\end{aligned}
\end{equation}
We will denote by $A(\lambda)$ the space of solutions to the system of equations~\eqref{cokerPchangethmtempeq2}. Let $B(\lambda)$ denote the subspace consisting of forms $\gamma \in A(\lambda)$ of degree $k-1$ plus degree $k+1$, homogeneous of order $-6 - \lambda$, such that each pure degree component $\gamma_{k-1} =  r^{-6 - \lambda} (r^{k-2} dr \wedge \beta_{k-2} + r^{k-1} \beta_{k-1})$ and $\gamma_{k+1} =  r^{-6 - \lambda} ( r^k dr \wedge \beta_k + r^{k+1} \beta_{k+1})$ is independently closed and coclosed. Again using equations~\eqref{homoddseq}, we find that $\gamma$ is in $B(\lambda)$ if and only if, in addition to equations~\eqref{cokerPchangethmtempeq2}, we also have
\begin{equation} \label{cokerPchangethmtempeq3}
\ds \beta_{k-1} = 0, \qquad \dss \beta_k = 0.
\end{equation}
Finally, denote $C(\lambda)$ to be the subspace of $A(\lambda)$ consisting of forms of the type~\eqref{cokerPchangethmtempeq1} with $\beta_{k-2} = 0$ and $\beta_{k+1} = 0$. That is, $\gamma$ lies in $C(\lambda)$ if and only if $\gamma = r^{-6 - \lambda} (r^{k-1} \beta_{k-1} + r^k dr \wedge \beta_k)$ with
\begin{equation} \label{cokerPchangethmtempeq4}
\begin{aligned}
\dss \beta_{k-1} & = 0, & \qquad \ds \beta_k & = 0, \\
\dss \beta_k & = - (\lambda - k + 7) \beta_{k-1}, & \qquad \ds \beta_{k-1} & = - (\lambda + k) \beta_k.
\end{aligned}
\end{equation}
\end{nota}

\begin{rmk} \label{conesCrmk}
From Lemma~\ref{KPspacelemma}, we note that $(0, \beta_{k-1}, \beta_k, 0) \in C(\lambda)$ if and only if the homogeneous $k$-form $r^{\lambda} (r^{k-1} dr \wedge \beta_{k-1} - r^k \beta_k)$ is closed and coclosed. That is, the space $C(\lambda)$ is isomorphic to the space of closed and coclosed $k$-forms on the cone, homogeneous of order $\lambda$.
\end{rmk}

\begin{thm} \label{cones24thm}
We have $C(-k) \subseteq B(-k)$ and $C(k-7) \subseteq B(k-7)$. Furthermore, if $\lambda \neq -k$ and $\lambda \neq k-7$, then $A(\lambda) = B(\lambda) \oplus C(\lambda)$, where the direct sum is orthogonal with respect to the $L^2$ inner product on forms on $\Sigma$. That is, for $\lambda \notin \{-k, k-7\}$, the subspace of forms on the cone of degree $k-1$ plus degree $k+1$, homogeneous of order $-6 - \lambda$, in the kernel of $d + d^*$, and $L^2$-orthogonal to those forms which are independently closed and coclosed, is \emph{isomorphic} to the space of closed and coclosed $k$-forms, homogeneous of order $\lambda$.
\end{thm}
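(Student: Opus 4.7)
The plan is to verify everything by direct manipulation of the systems~\eqref{cokerPchangethmtempeq2}, \eqref{cokerPchangethmtempeq3}, and~\eqref{cokerPchangethmtempeq4}, together with integration by parts on $\Sigma$.

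For the inclusions $C(-k) \subseteq B(-k)$ and $C(k-7) \subseteq B(k-7)$, I would start with an element $(0, \beta_{k-1}, \beta_k, 0) \in C(\lambda)$ and check that at the excluded rates the extra conditions in~\eqref{cokerPchangethmtempeq3} follow automatically. For $\lambda = -k$, equation~\eqref{cokerPchangethmtempeq4} forces $d_{\Sigma} \beta_{k-1} = 0$ directly; then combining $d_{\Sigma} \beta_k = 0$ with the identity $d_{\Sigma} d_{\Sigma}^* \beta_k = -(\lambda-k+7) d_{\Sigma} \beta_{k-1} = 0$ gives $d_{\Sigma}^* \beta_k = 0$ by the usual harmonic argument on the compact manifold $\Sigma$. (This forces $(2k-7)\beta_{k-1}=0$, so $\beta_{k-1}=0$ as well, but this is not needed.) The case $\lambda = k-7$ is symmetric with the roles of $\beta_{k-1}$, $\beta_k$ and of $d_{\Sigma}$, $d_{\Sigma}^*$ interchanged.

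Now assume $\lambda \notin \{-k, k-7\}$. The heart of the argument is the following explicit decomposition. Given $(\beta_{k-2}, \beta_{k-1}, \beta_k, \beta_{k+1}) \in A(\lambda)$, I would define
\begin{equation*}
\beta_{k-1}^C \, = \, -\frac{1}{\lambda-k+7}\, d_{\Sigma}^* \beta_k, \qquad \beta_k^C \, = \, -\frac{1}{\lambda+k}\, d_{\Sigma} \beta_{k-1},
\end{equation*}
and set $\gamma^C = (0, \beta_{k-1}^C, \beta_k^C, 0)$. The formulas are forced by requiring that $\gamma - \gamma^C$ satisfy~\eqref{cokerPchangethmtempeq3}, and both denominators are nonzero by hypothesis. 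The first two $C(\lambda)$ equations in~\eqref{cokerPchangethmtempeq4} are immediate from $d_{\Sigma}^{*2} = 0$ and $d_{\Sigma}^2 = 0$. The remaining two equations of~\eqref{cokerPchangethmtempeq4} amount to the identities
\begin{equation*}
d_{\Sigma}^* d_{\Sigma} \beta_{k-1} \, = \, -(\lambda+k)\, d_{\Sigma}^* \beta_k, \qquad d_{\Sigma} d_{\Sigma}^* \beta_k \, = \, -(\lambda-k+7)\, d_{\Sigma} \beta_{k-1},
\end{equation*}
which I would obtain by applying $d_{\Sigma}^*$ and $d_{\Sigma}$, respectively, to the last two equations of~\eqref{cokerPchangethmtempeq2} and using $d_{\Sigma}^{*2}=d_{\Sigma}^2=0$. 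This shows $\gamma^C \in C(\lambda)$, and a direct check confirms $\gamma - \gamma^C \in B(\lambda)$, so $A(\lambda) = B(\lambda) + C(\lambda)$. The triviality of the intersection is immediate: for $(0, \beta_{k-1}, \beta_k, 0)$ in $B(\lambda) \cap C(\lambda)$ the $B$ conditions give $d_{\Sigma}\beta_{k-1}=0$ and $d_{\Sigma}^*\beta_k=0$, while the $C$ equations then force $(\lambda+k)\beta_k=0$ and $(\lambda-k+7)\beta_{k-1}=0$, hence both vanish.

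For $L^2$-orthogonality of the sum, I would use the crucial observation that the $C(\lambda)$ equations can be rewritten, under the standing hypothesis, as
\begin{equation*}
\beta_{k-1}^C \, = \, -\frac{1}{\lambda-k+7}\, d_{\Sigma}^* \beta_k^C, \qquad \beta_k^C \, = \, -\frac{1}{\lambda+k}\, d_{\Sigma} \beta_{k-1}^C,
\end{equation*}
so the two components of any element of $C(\lambda)$ are $d_{\Sigma}^*$-exact and $d_{\Sigma}$-exact on $\Sigma$, respectively. The inner product $\langle\langle \gamma^B, \gamma^C \rangle\rangle$ of elements $\gamma^B \in B(\lambda)$ and $\gamma^C \in C(\lambda)$ on $\Sigma$ is then
\begin{equation*}
\langle\langle \beta_{k-1}^B, \beta_{k-1}^C \rangle\rangle + \langle\langle \beta_k^B, \beta_k^C \rangle\rangle \, = \, -\tfrac{1}{\lambda-k+7}\langle\langle d_{\Sigma}\beta_{k-1}^B, \beta_k^C \rangle\rangle - \tfrac{1}{\lambda+k}\langle\langle d_{\Sigma}^*\beta_k^B, \beta_{k-1}^C \rangle\rangle,
\end{equation*}
which vanishes because $\gamma^B \in B(\lambda)$ satisfies $d_{\Sigma}\beta_{k-1}^B = 0$ and $d_{\Sigma}^*\beta_k^B = 0$. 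The main (minor) obstacle in this proof is simply bookkeeping: keeping careful track of the signs and the role of the excluded rates $\lambda \in \{-k,k-7\}$, which are precisely where the denominators in the construction of $\gamma^C$ would blow up.
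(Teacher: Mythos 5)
Your proof is correct, and for the central step it takes a genuinely different route from the paper. The treatment of the excluded rates $\lambda \in \{-k, k-7\}$ and of the $L^2$-orthogonality of $B(\lambda)$ and $C(\lambda)$ is essentially the same in spirit as the paper's (Hodge theory / integration by parts on the compact link, using that elements of $C(\lambda)$ have $\dss$-exact and $\ds$-exact components while elements of $B(\lambda)$ satisfy $\ds \beta_{k-1} = 0$ and $\dss \beta_k = 0$). Where you diverge is in proving $A(\lambda) = B(\lambda) \oplus C(\lambda)$: you construct an explicit algebraic splitting, setting $\beta_{k-1}^C = -(\lambda-k+7)^{-1} \dss \beta_k$ and $\beta_k^C = -(\lambda+k)^{-1} \ds \beta_{k-1}$, and verify via the identities $\dss \ds \beta_{k-1} = -(\lambda+k)\dss\beta_k$ and $\ds \dss \beta_k = -(\lambda-k+7)\ds\beta_{k-1}$ (obtained by differentiating the mixed equations in~\eqref{cokerPchangethmtempeq2}) that $\gamma^C \in C(\lambda)$ and $\gamma - \gamma^C \in B(\lambda)$, with trivial intersection giving directness. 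The paper instead argues in the reverse direction: it takes $\gamma \in A(\lambda)$ that is $L^2$-orthogonal to $B(\lambda)$, pairs it against a comparison element of $B(\lambda)$ built from $\gamma_{k-2}$ and $\gamma_{k+1}$, and uses nonnegativity of $\laps$ together with a sign analysis of $(\lambda+k-2)(\lambda-k+7)$ and $(\lambda+k)(\lambda-k+5)$ to force $\gamma_{k-2} = \gamma_{k+1} = 0$, i.e. $B(\lambda)^{\perp} \cap A(\lambda) \subseteq C(\lambda)$. Your version is shorter and purely algebraic, avoids the spectral positivity argument and its case analysis entirely, and makes the necessity of excluding $\lambda = -k, k-7$ transparent (they are exactly where your denominators vanish); the paper's version yields directly the statement that $C(\lambda)$ is the orthogonal complement of $B(\lambda)$ in $A(\lambda)$, which is the form invoked in Theorem~\ref{cokerPchangethm}, but that formulation also follows immediately from your orthogonal direct sum decomposition.
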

\begin{proof}
Suppose $\lambda = -k$, and that $(0, \beta_{k-1}, \beta_k, 0) \in C(-k)$. Then equations~\eqref{cokerPchangethmtempeq4} say that $\beta_{k-1}$ is a closed and coclosed (thus harmonic) $(k-1)$-form on $\Sigma$, which is also coexact. By Hodge theory, we get $\beta_{k-1} = 0$ and hence $\beta_k$ is a harmonic $3$-form on $\Sigma$. But then $(0, 0, \beta_k, 0)$ satisfies the equations~\eqref{cokerPchangethmtempeq2} and~\eqref{cokerPchangethmtempeq3}, and thus lies in $B(-k)$. The proof of $C(k-7) \subseteq B(k-7)$ is similar.

Next we show that if $\lambda \notin \{-k,k-7\}$, the subspaces $B(\lambda)$ and $C(\lambda)$ are $L^2$-orthogonal. Suppose $(\beta_{k-2}, \beta_{k-1}, \beta_k, \beta_{k+1}) \in B(\lambda)$ and $(0, \gamma_{k-1}, \gamma_k, 0) \in C(\lambda)$. Then using equations~\eqref{cokerPchangethmtempeq2} and~\eqref{cokerPchangethmtempeq3} for $(\beta_{k-2}, \beta_{k-1}, \beta_k, \beta_{k+1})$, and equations~\eqref{cokerPchangethmtempeq4} for $(0, \gamma_{k-1}, \gamma_k, 0)$ we compute that
\begin{align*}
\langle \langle \beta_{k-1} , \gamma_{k-1} \rangle \rangle \, & = \, \langle \langle - (\lambda - k + 7)^{-1} ( \ds \beta_{k-2} + \dss \beta_k) , - (\lambda - k + 7)^{-1} \dss \gamma_k \rangle \rangle \\ & = \, (\lambda - k + 7)^{-2} \langle \langle \ds \beta_{k-2}, \dss \gamma_k \rangle \rangle \, = \, 0
\end{align*}
and similarly
\begin{align*}
\langle \langle \beta_k , \gamma_k \rangle \rangle \, & = \, \langle \langle - (\lambda + k)^{-1} ( \ds \beta_{k-1} + \dss \beta_{k+1}) , - (\lambda + k)^{-1} \ds \gamma_{k-1} \rangle \rangle \\ & = \, (\lambda + k)^{-2} \langle \langle \dss \beta_{k+1}, \ds \gamma_{k-1} \rangle \rangle \, = \, 0.
\end{align*}
Thus we indeed have $B(\lambda) \perp C(\lambda)$.

Finally, we will complete the proof by showing that if $(\gamma_{k-2}, \gamma_{k-1}, \gamma_k, \gamma_{k+1}) \in A(\lambda)$ is $L^2$-orthogonal to $B(\lambda)$, then it is in $C(\lambda)$. This would imply that $A(\lambda) = B(\lambda) \oplus C(\lambda)$, as claimed. Define $(\beta_{k-2}, \beta_{k-1}, \beta_k, \beta_{k+1})$ by
\begin{align*}
\beta_{k-2} \, & = \, \gamma_{k-2}, & \qquad \beta_{k-1} \, & = \, - (\lambda - k + 7)^{-1} \ds \gamma_{k-2}, \\
\beta_k \, & = \, - (\lambda + k)^{-1} \dss \gamma_{k+1}, & \qquad \beta_{k+1} \, & = \, \gamma_{k+1}.
\end{align*}
Using the fact that $(\gamma_{k-2}, \gamma_{k-1}, \gamma_k, \gamma_{k+1})$ satisfies equations~\eqref{cokerPchangethmtempeq2}, we find that
\begin{equation*}
\beta_k = \gamma_k +(\lambda + k)^{-1}\ds \gamma_{k-1} \quad\text{and}\quad \beta_{k-1}=\gamma_{k-1}+(\lambda-k+7)^{-1}\dss\gamma_{k}.
\end{equation*}
Hence, $(\beta_{k-2}, \beta_{k-1}, \beta_k, \beta_{k+1})$ satisfies equations~\eqref{cokerPchangethmtempeq2} and~\eqref{cokerPchangethmtempeq3}, so $(\beta_{k-2}, \beta_{k-1}, \beta_k, \beta_{k+1})$ lies in $B(\lambda)$. Our hypothesis is that $(\gamma_{k-2}, \gamma_{k-1}, \gamma_k, \gamma_{k+1})$ is $L^2$-orthogonal to the space $B(\lambda)$. We compute
\begin{equation*}
\begin{aligned}
\langle \langle \gamma_{k-2}, \beta_{k-2} \rangle \rangle & \, = \, || \gamma_{k-2} ||^2, \\
\langle \langle \gamma_{k-1}, \beta_{k-1} \rangle \rangle & \, = \, \langle \langle \gamma_{k-1}, - (\lambda - k + 7)^{-1} \ds \gamma_{k-2} \rangle \rangle \, = \, - (\lambda - k + 7)^{-1} \langle \langle \dss \gamma_{k-1}, \gamma_{k-2} \rangle \rangle \\ & \, = \, - (\lambda - k + 7)^{-1} \langle \langle - (\lambda + k - 2) \gamma_{k-2}, \gamma_{k-2} \rangle \rangle \, = \, \frac{\lambda + k - 2}{\lambda - k + 7} \, || \gamma_{k-2} ||^2, \\
\langle \langle \gamma_k, \beta_k \rangle \rangle & \, = \, \langle \langle \gamma_k, - (\lambda + k)^{-1} \dss \gamma_{k+1} \rangle \rangle \, = \, - (\lambda + k)^{-1} \langle \langle \ds \gamma_k, \gamma_{k+1} \rangle \rangle \\ & \, = \, - (\lambda + k)^{-1} \langle \langle - (\lambda - k + 5) \gamma_{k+1}, \gamma_{k+1} \rangle \rangle \, = \, \frac{\lambda - k + 5}{\lambda + k} \, || \gamma_{k+1} ||^2, \\
\langle \langle \gamma_{k+1}, \beta_{k+1} \rangle \rangle & \, = \, || \gamma_{k+1} ||^2, \\
\end{aligned}
\end{equation*}
and thus we find that
\begin{equation} \label{cokerPchangethmtempeq5}
\sum_{m=k-2}^{k+1} \langle \langle \gamma_m, \beta_m \rangle \rangle \, = \, \left( 1 + \frac{\lambda + k - 2}{\lambda - k + 7} \right) || \gamma_{k-2} ||^2 + \left( 1 + \frac{\lambda - k + 5}{\lambda + k} \right) || \gamma_{k+1} ||^2 \, = \, 0.
\end{equation}
We have $\laps \gamma_{k-2} = \ds \dss \gamma_{k-2} + \dss \ds \gamma_{k-2} = - (\lambda - k + 7) \dss \gamma_{k-1} = (\lambda + k - 2)(\lambda - k + 7) \gamma_{k-2}$. Thus by the nonnegativity of the Hodge Laplacian, we have $\gamma_{k-2} = 0$ if $(\lambda + k - 2)(\lambda - k + 7) < 0$. On the other hand, if $(\lambda + k - 2)(\lambda - k + 7) \geq 0$, then since $\lambda \neq - k + 7$ we must have $1 + \frac{\lambda + k - 2}{\lambda - k + 7} > 0$. Similarly we observe that $\laps \gamma_{k+1} = \ds \dss \gamma_{k+1} + \dss \ds \gamma_{k+1} = - (\lambda + k) \ds \gamma_k = (\lambda + k)(\lambda - k + 5) \gamma_{k+1}$. Thus we have $\gamma_{k+1} = 0$ if $(\lambda + k)(\lambda - k + 5) < 0$. However, if $(\lambda + k)(\lambda - k + 5) \geq 0$, then since $\lambda \neq - k$ we must have $1 + \frac{\lambda - k + 5}{\lambda + k} > 0$.  Thus we conclude that in all cases when $\lambda \neq -k$ and $\lambda \neq k-7$, equation~\eqref{cokerPchangethmtempeq5} tells us that $\gamma_{k-2} = 0$ and $\gamma_{k+1} = 0$. Thus indeed we have $(\gamma_{k-2}, \gamma_{k-1}, \gamma_k, \gamma_{k+1}) \in C(\lambda)$, and the proof is complete.
\end{proof}

\begin{rmk} \label{cones24rmk}
Essentially, Theorem~\ref{cones24thm} and Remark~\ref{conesCrmk} together say that on the cone, if $\lambda \notin \{ -k, k - 7 \}$, then
\begin{equation*}
\mathcal H^{(k-1) + (k+1)}_{-6 - \lambda} \cong \mathcal H^k_{\lambda} \oplus ( \mathcal H^{k-1}_{-6 - \lambda} \oplus \mathcal H^{k+1}_{-6 - \lambda} ),
\end{equation*}
where the notation should be self-explanatory.
\end{rmk}

We now prove the analogue of Proposition~\ref{asymptoticexpansionprop} for $\dd^k$. Note that this is \emph{not} immediate because $\dd^k$ is \emph{not} uniformly elliptic. Recall that the critical rates for $\dd^k$ are a subset of the critical rates for $d + \dsm$ and both sets of rates are discrete subsets of $\R$. Hence, given any critical rate $\lambda_0$ for $\dd^k$, there exists $\epsilon > 0$ so that if $0 < | \beta - \lambda_0 | < \epsilon$ then $\beta$ is \emph{not} a critical rate for $d + \dsm$.

\begin{prop} \label{ddk.asymptoticexpansionprop}
Let $(M,\varphi)$ be a $G_2$~conifold of rate $\nu$. Let $\lambda_0$ be a critical rate for $\dd^k$ on $M$, arising from a critical rate for $\ddci^k$, and let  $\beta_1$, $\beta_2$ be two noncritical rates for $d + \dsm$ on $M$ so that either $\beta_1 > \beta_2$ (AC) or $\beta_1 < \beta_2$ (CS) and so that
$\lambda_0$ is the unique critical rate of $d + \dsm$ in the interval between $\beta_1$ and $\beta_2$. Suppose further that $| \beta_2 - \lambda_0 | < |\nu|$.

Let 
\begin{equation*}
\mathcal{F}_{\beta_1} \, = \, \{ \gamma \in \Omega^k_{l+1, \beta_1} \, : \, (d + \dsm) \gamma \in \Omega^{k-1}_{l,\beta_2-1} \oplus \Omega^{k+1}_{l, \beta_2 - 1} \}.
\end{equation*}
(Thus, if $\gamma \in \mathcal{F}_{\beta_1}$, then $\dd^k_{l+1, \beta_1} \gamma$ decays faster than expected.) Then there are linear maps 
\begin{equation} \label{upsilon.ddk.eq}
\upsilon : \mathcal{F}_{\beta_1} \to \mathcal{K}(\lambda_0)_{\ddci^k} \quad \text{and} \quad \vartheta : \mathcal{K}(\lambda_0)_{\ddci^k} \to \rest{{\Omega^{k}_{l+1, \lambda_0 + \nu}}}{\text{$i^{\text{th}}$ end of $M$}}
\end{equation}
such that, on the $i^{\text{th}}$ end of $M$, we have
\begin{equation} \label{ddk.asymptotic.eq}
\gamma-h_i^{-1} (\upsilon(\gamma)) - \vartheta (\upsilon(\gamma)) \in \rest{{\Omega^{k}_{l+1,\beta_2}}}{\text{$i^{\text{th}}$ end of $M$}}
\end{equation}
for all $\gamma \in \mathcal{F}_{\beta_1}$.
\end{prop}
\begin{proof}
We cannot apply Proposition~\ref{asymptoticexpansionprop} to $\dd^k$, but we can apply it to $d + \dsm$. Therefore, if we let 
\begin{equation*}
\tilde{\mathcal{F}}_{\beta_1} \, = \, \{ \gamma \in \Omega^{\bullet}_{l+1, \beta_1} \, : \, (d + \dsm) \gamma \in \Omega^{\bullet}_{l, \beta_2-1}\},
\end{equation*}
then there exist linear maps
\begin{equation}
\tilde{\upsilon} : \tilde{\mathcal{F}}_{\beta_1} \to \mathcal{K}(\lambda_0)_{d + \dsci} \quad \text{and} \quad \tilde{\vartheta} : \mathcal{K}(\lambda_0)_{d + \dsci} \to \rest{{\Omega^{\bullet}_{l+1,\lambda_0+\nu}}}{\text{$i^{\text{th}}$ end of $M$}}
\end{equation}
such that, on the $i^{\text{th}}$ end of $M$, we have
\begin{equation} \label{d+d*.ithend.eq}
\gamma - h_i^{-1} (\tilde{\upsilon}(\gamma)) - \tilde{\vartheta} (\tilde{\upsilon}(\gamma)) \in \rest{{\Omega^{\bullet}_{l+1,\beta_2}}}{\text{$i^{\text{th}}$ end of $M$}}
\end{equation}
for all $\gamma \in \tilde{\mathcal{F}}_{\beta_1}$.

Clearly, $\mathcal{F}_{\beta_1} \subseteq \tilde{\mathcal{F}}_{\beta_1}$, so we can restrict $\tilde{\upsilon}$ to $\mathcal{F}_{\beta_1}$ and  take $\gamma\in\mathcal{F}_{\beta_1}$ in \eqref{d+d*.ithend.eq}. Observe that $\gamma$ has no components in degrees $l \neq k$ and $\tilde{\vartheta} (\tilde{\upsilon}(\gamma)) \in \Omega^{\bullet}_{l+1,\beta_2}$ on the $i^{\text{th}}$ end since we chose $| \beta_2 - \lambda_0 | < \nu$. Thus the components of $h_i^{-1} (\tilde{\upsilon}(\gamma))$ in degree $l \neq k$ must decay at rate $\beta_2$. However, such components arise from homogeneous forms of rate $\lambda_0$ on the cone and so the only way this can occur is if these components are zero. Hence, $\tilde{\upsilon}(\gamma)$ is a pure degree $k$-form on $C_i$ satisfying $(d + \dsci) \tilde{\upsilon}(\gamma) = 0$. We deduce that the restriction of $\tilde{\upsilon}$ to $\mathcal{F}_{\beta_1}$ yields a linear map $\upsilon$ as in~\eqref{upsilon.ddk.eq}.

We now know that for degree $l \neq k$ we have that $\gamma - h_i^{-1} (\upsilon(\gamma))$ is zero and so trivially lies in $\Omega^{\bullet}_{l+1, \beta_2}$ on the $i^{\text{th}}$ end of $M$. Moreover, recall that $\tilde{\vartheta} (\upsilon(\gamma)) \in \Omega^{\bullet}_{l+1, \beta_2}$ on the $i^{\text{th}}$ end.  To complete the proof, if we let $\pi_k$ denote the projection from $\Omega^{\bullet}$ to $\Omega^k$, then $\vartheta = \pi_k \tilde{\vartheta}$ is a linear map as in~\eqref{upsilon.ddk.eq} so that~\eqref{ddk.asymptotic.eq} is satisfied, as required.
\end{proof}

Given Proposition~\ref{ddk.asymptoticexpansionprop}, the proof of the following analogues of Corollaries~\ref{asymptoticexpansioncor} and~\ref{asymptoticexpansioncor2} for $\dd^k$ carry over verbatim and so we state them without further proof.

\begin{cor} \label{ddk.asymptoticexpansioncor}
Consider the setup of Proposition~\ref{ddk.asymptoticexpansionprop}. 
There exists a linear map
\begin{equation*}
\eta : \mathcal{K}(\lambda_0)_{\ddci^k} \to \rest{\Omega^k_{l+1, \lambda_0 + \nu}}{\text{$i^{\text{th}}$ end of $M$}}
\end{equation*}
such that for all $\gamma_1 \in \ker (\dd^k_{l+1, \beta_1})$, there exists $\gamma_2 \in \ker(\dd^k_{l+1, \beta_2})$ such that, on the $i^{\text{th}}$ end of $M$, we have
\begin{equation} \label{ddk.ker.asymptoticexpansioneq}
\gamma_1 - h_i^{-1} \big( \upsilon(\gamma_1) \big) - \eta \big(\upsilon(\gamma_1)\big) \, = \, \gamma_2 \, \in \, \ker(\dd^k_{l+1,\beta_2}).
\end{equation}
Note that the term $\gamma_2$, which is in the kernel of $\dd^k$ with noncritical rate $\beta_2$, decays \emph{faster} on the end.
\end{cor}

\begin{cor} \label{ddk.asymptoticexpansioncor2}
Consider the setup of Proposition~\ref{ddk.asymptoticexpansionprop}. Let $\chi_i$ be a smooth cutoff function on $M$ which is $1$ on the $i^{\text{th}}$ end and $0$ on all other ends, so that $\chi_i \mathcal{K}(\lambda_0)_{\ddci^k}$ can be viewed as a subspace of $\Omega^k_{l+1, \beta_1}$. Define the map $\tilde{\dd}^k_{l+1, \beta_1}$ to be the restriction of $\dd^k_{l+1, \beta_1}$ to the subspace $\Omega^k_{l+1,\beta_2} + \chi_i \mathcal{K}(\lambda_0)_{\ddci^k}$ of $\Omega^k_{l+1, \beta_1}$. Then the two linear maps
\begin{align}
\dd^k_{l+1,\beta_1} & \, : \, \Omega^k_{l+1,\beta_1} \to d(\Omega^k_{l+1,\beta_1})+d^*(\Omega^k_{l+1,\beta_1}), \label{tilde.ddk.eq.0} \\
\tilde{\dd}^k_{l+1,\beta_1} & \, : \, \Omega^k_{l+1,\beta_2} + \chi_i \mathcal{K}(\lambda_0)_{\ddci^k} \to \big(d(\Omega^k_{l+1,\beta_1})\cap \Omega^{k+1}_{l,\beta_2-1}\big) + \big(d^*(\Omega^k_{l+1,\beta_1}) \cap \Omega^{k-1}_{l,\beta_2-1} \big) \label{tilde.ddk.eq}
\end{align}
satisfy $\ker(\dd^k_{l+1,\beta_1}) = \ker(\tilde{\dd}^k_{l+1,\beta_1})$ and $\coker(\dd^k_{l+1,\beta_1}) \cong \coker(\tilde{\dd}^k_{l+1,\beta_1})$.
\end{cor}

Corollary~\ref{ddk.asymptoticexpansioncor2} is not quite the statement that we require because the right hand side of~\eqref{tilde.ddk.eq} is not simply the right hand side of~\eqref{tilde.ddk.eq.0} with $\beta_1$ replaced with $\beta_2$. Whilst this does \emph{not} necessarily always hold, we now show that it \emph{does hold} when $\lambda_0$ is neither $-k$ nor $k-7$.

\begin{lemma} \label{dd*.beta1.beta2.lem}
Let $\beta_1$, $\beta_2$, and $\lambda_0$ be as in Proposition~\ref{ddk.asymptoticexpansionprop} and let $\chi_i$ and $\tilde{\dd}^k_{l+1,\beta_1}$ be as in Corollary \ref{ddk.asymptoticexpansioncor2}.   If $\lambda_0\notin\{-k,k-7\}$ then
\begin{equation}
\tilde{\dd}^k_{l+1,\beta_1}  :  \Omega^k_{l+1,\beta_2} + \chi_i \mathcal{K}(\lambda_0)_{\ddci^k}\to d (\Omega^k_{l+1, \beta_2}) + d^* (\Omega^k_{l+1, \beta_2}).
\end{equation}
\end{lemma}
\begin{proof}
Let $\gamma_i\in\mathcal{K}(\lambda_0)_{\ddci^k}$.  If $\lambda_0\neq -k$ then we see from Lemma~\ref{KPspacelemma} that
$$\gamma_i = d (r^{\lambda_0 + k} \alpha_i  )$$
for some $\alpha_i\in\Omega^{k-1}(\Sigma_i)$. Observe that
$$\eta_i=\chi_i\gamma_i-d(\chi_i r^{\lambda_0 + k} \alpha_i  )=-d(\chi_i)\wedge  r^{\lambda_0 + k}\alpha_i$$
is smooth and compactly supported with $d\eta_i=d(\chi_i\gamma_i)$. Therefore, for any $\gamma\in \Omega^k_{l+1,\beta_2}$, we have that
$$d(\gamma+\chi_i\gamma_i)=d(\gamma+\eta_i)\in d(\Omega^k_{l+1,\beta_2}).$$
Moreover, we see that if $\lambda\neq k-7$ then Lemma~\ref{KPspacelemma} shows that
$$*_C\gamma_i=d (r^{\lambda_0+7-k}\sigma_i)$$
for some $\sigma_i\in \Omega^{6-k}(\Sigma_i)$.  Recall that $|*_M\gamma_i-*_C\gamma_i|=O(r^{\lambda_0+\nu})$ and $|\beta_2-\lambda_0|<|\nu|$. Therefore, 
$$\zeta_i=\chi_i*_M\gamma_i-d(\chi_ir^{\lambda_0+7-k}\sigma_i)=\chi_i(*_M\gamma_i-*_C\gamma_i)-d(\chi_i)\wedge r^{\lambda_0+7-k}\sigma_i$$
lies in $\Omega^{7-k}_{l+1,\beta_2}$ and satisfies $d\zeta_i=d(\chi_i*_M\gamma_i)$.  We deduce that, for any $\gamma\in \Omega^k_{l+1,\beta_2}$ we have
$$d*_M(\gamma+\chi_i\gamma_i)=d(*_M\gamma+\zeta_i)\in d(\Omega^{7-k}_{l+1,\beta_2}).$$
Taking the Hodge star completes the proof. 
\end{proof}

We can now deduce our index change formula for the operator $\dd^k_{l, \lambda}$, away from the exceptional rates $-k$ and $k-7$.

\begin{thm} \label{Pindexchangethm}
Let $\mu_- < \mu_+$ be two \emph{noncritical rates} for $\dd^k$ on $M$. Suppose that both $-k$ and $k-7$ are \emph{not in} the set $\mathcal D_{\ddc^k} \cap (\mu_- , \mu_+)$ of critical rates between $\mu_-$ and $\mu_+$. Then the difference in the indices of $\dd^k_{l, \mu_-}$ and $\dd^k_{l, \mu_+}$ is given by
\begin{align*}
\ind (\dd^k_{l, \mu_+}) - \ind (\dd^k_{l, \mu_-}) \, & = \, \sum_{\lambda \in \mathcal D_{\ddc^k} \cap (\mu_- , \mu_+)} \dim \mathcal K(\lambda)_{\ddc^k}. & \text{(AC)} \\
\ind (\dd^k_{l, \mu_+}) - \ind (\dd^k_{l, \mu_-}) \, & = \, - \sum_{i=1}^n \, \, \sum_{\lambda \in \mathcal D_{\ddci^k} \cap (\mu_-, \mu_+)} \dim \mathcal K(\lambda)_{\ddci^k}. & \text{(CS)}
\end{align*}
\end{thm}
\begin{proof}
This is now immediate by combining Corollary~\ref{ddk.asymptoticexpansioncor2} and Lemma~\ref{dd*.beta1.beta2.lem}.
\end{proof}

\begin{rmk} \label{kernel.only.rmk}
The changes in the index of $\dd^k_{l, \lambda}$ at $\lambda = -k$ or $\lambda = k-7$ only arise from changes in the kernel. To see this, first notice that Corollary~\ref{ddk.asymptoticexpansioncor} says that $\ker \dd^k_{l, \lambda}$ can only change at a critical rate for $\dd^k$. Now consider $\coker \dd^k_{l, \lambda}$. By Proposition~\ref{Pindexchangeprop}, the cokernel will only change as we cross the rate $\lambda$ if there exist new elements of $\ker(d + d^*)_{- 6 - \lambda}$ that are in $\Omega^{k-1} \oplus \Omega^{k+1}$ but are \emph{transverse} to the subspace $\mathcal H^{k-1}_{- 6 - \lambda} \oplus \mathcal H^{k+1}_{-6 - \lambda}$.  Such elements must be asymptotic at the $i^{\text{th}}$ end to forms of degree $k-1$ plus degree $k+1$ on the cone $C_i$, homogeneous of order $-6 - \lambda$, which are in the kernel of $d + \dsci$, but which are \emph{transverse} to the spaces of closed and coclosed $(k-1)$-forms and closed and coclosed $(k+1)$-forms on $C_i$ homogeneous of order $-6 - \lambda$. In the language of Notation~\ref{ABCnotation}, this corresponds to rates $\lambda$ for which the quotient space $A(\lambda) / B(\lambda)$ is nonzero for some asymptotic cone $C_i$. If $\lambda \notin \{-k, k-7 \}$, then Theorem~\ref{cones24thm} says that this quotient $A(\lambda) / B(\lambda)$ is $C(\lambda)$, and thus by Remark~\ref{conesCrmk} such a $\lambda$ is a critical rate of $\dd^k_{l, \lambda}$. For $\lambda \in \{-k,  k-7 \}$, Theorem~\ref{cones24thm} tells us that $C(\lambda) \subseteq B(\lambda)$, so these rates cannot contribute to changes in $\coker \dd^k_{l, \lambda}$.
\end{rmk}

We will determine the changes in the kernel of $\dd^3_{l, \lambda}$ for $\lambda = -3$ and $\lambda = -4$ in Proposition~\ref{kernelchange34prop}.

\subsection{Topological results for conifolds} \label{topologicalconifoldsec}

The following proposition (in a general setting) appeared originally in Lockhart~\cite[Example 0.16]{Lock}, but a version in the setting of AC/CS manifolds is stated in~\cite[Theorem 6.5.2]{Lth}.
\begin{prop} \label{LMcohomologyprop}
Let $\mathcal H^k_{L^2}$ denote the subspace of $L^2 (\Lambda^k (T^* M))$ consisting of closed and coclosed $k$-forms. Then we have
\begin{align*}
\mathcal H^k_{L^2} \, & \cong \, H_{\mathrm{cs}}^k(M, \R), \text{ for $0 \leq k \leq 3$}, &
\mathcal H^k_{L^2} \, & \cong \, H^k(M, \R), \text{ for $4 \leq k \leq 7$}, & \text{(AC)} \\
\mathcal H^k_{L^2} \, & \cong \, H^k(M, \R), \text{ for $0 \leq k \leq 3$}, &
\mathcal H^k_{L^2} \, & \cong \, H_{\mathrm{cs}}^k(M, \R), \text{ for $4 \leq k \leq 7$}, & \text{(CS)}
\end{align*}
where $H_{\mathrm{cs}}^k(M, \R)$ denotes the degree $k$ compactly supported cohomology group of $M$, and $H^k(M, \R)$ denotes the degree $k$ de Rham cohomology group of $M$. The isomorphism $\mathcal H^k_{L^2} \cong H^k(M, \R)$ is given by the natural map $\alpha \mapsto [\alpha]$. The isomorphism $\mathcal H^k_{L^2} \cong H_{\mathrm{cs}}^k(M, \R)$ is given by the composition of the Hodge star $\st : \mathcal H^k_{L^2} \to \mathcal H^{7-k}_{L^2}$, the natural map $\mathcal H^{7-k}_{L_2} \to H^{7-k}(M, \R)$, and Poincar\'e duality $H^{7 - k}(M, \R) \cong H^k_{\mathrm{cs}}(M, \R)$.
\end{prop}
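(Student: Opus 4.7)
The plan is to use Hodge star duality to reduce to a single ``easy'' case, and then to establish that case using a cone Poincar\'e lemma applied to the asymptotic expansion of an $L^2$ harmonic form. First, since $\st$ is a pointwise isometry with $\dsm = \pm \st d \st$, it restricts to an isometric isomorphism $\st : \mathcal H^k_{L^2} \to \mathcal H^{7-k}_{L^2}$ (closedness and coclosedness are interchanged, and $L^2$ is preserved). Composing with Poincar\'e duality $H^{7-k}(M, \R) \cong H^k_{\mathrm{cs}}(M, \R)$ exchanges the two families of isomorphisms in the proposition, so it suffices to prove $\mathcal H^k_{L^2} \cong H^k_{\mathrm{cs}}(M, \R)$ in the ``easy'' ranges $0 \leq k \leq 3$ (AC) and $4 \leq k \leq 7$ (CS).

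By elliptic regularity (Theorem~\ref{ellipticregthm}), any $\alpha \in \mathcal H^k_{L^2}$ is smooth, and since $L^2 = L^2_{0, -7/2}$ and the critical rates of $d + \dsm$ are discrete, $\mathcal H^k_{L^2} = \mathcal H^k_\lambda$ for some noncritical $\lambda$ with $\lambda < -7/2$ (AC) or $\lambda > -7/2$ (CS), by Theorem~\ref{kernelthm}. Proposition~\ref{asymptoticexpansionprop} and Lemma~\ref{nologslemma} furnish $\alpha$ with an asymptotic expansion on each end in terms of homogeneous closed and coclosed forms on the cone whose rates $\mu$ lie in the $L^2$-permissible range. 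A direct computation from~\eqref{homoddseq} shows that a homogeneous closed $k$-form on a cone of rate $\mu$ is exact with primitive of rate $\mu + 1$ whenever $\mu \neq -k$. The degree restriction combined with the $L^2$ constraint rules out the obstructive rate $\mu = -k$: for AC with $k \leq 3$ any $L^2$-permissible $\mu$ satisfies $\mu < -7/2 < -k$, and for CS with $k \geq 4$ any $L^2$-permissible $\mu$ satisfies $\mu > -7/2 > -k$.

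Iterating the cone Poincar\'e lemma term by term produces a primitive $\zeta$ of $\alpha$ on each end with decay rate $\lambda + 1$. Let $\chi$ be a smooth cutoff supported on the ends with $\chi \equiv 1$ sufficiently far out. Then $\alpha_c := \alpha - d(\chi \zeta)$ is closed and compactly supported, and we define $\Psi(\alpha) := [\alpha_c] \in H^k_{\mathrm{cs}}(M, \R)$; independence of the choices of $\zeta$ and $\chi$ follows by a standard argument, since two such primitives differ by a closed form on the end which is again exact with decay by the same Poincar\'e lemma. For injectivity, if $\Psi(\alpha) = 0$ then $\alpha = d(\chi \zeta + \beta_c)$ for some compactly supported $\beta_c$; Lemma~\ref{IBPlemma} applies since the sum of rates satisfies $\lambda + (\lambda + 1) < -6$ (AC) or $> -6$ (CS), yielding $||\alpha||^2_{L^2} = \langle\langle \dsm \alpha, \chi\zeta + \beta_c \rangle\rangle = 0$. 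For surjectivity, given a closed compactly supported representative $\omega$ of a class in $H^k_{\mathrm{cs}}(M, \R)$, the $L^2$ Hodge decomposition of Theorem~\ref{kformsHodgethm} applied to $\omega$ produces a harmonic component $\kappa \in \mathcal H^k_{L^2}$, and one checks using the uniqueness of the decomposition and the construction of $\Psi$ that $\Psi(\kappa) = [\omega]$.

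The main obstacle is carrying out the iterated cone Poincar\'e construction rigorously: the asymptotic expansion of $\alpha$ may involve infinitely many terms, and one must verify that the successive primitives combine into a globally defined section of the correct decay on the end, with the remaining error absorbed as a compactly supported term. The degree restriction is precisely the dimensional condition which guarantees that the forbidden rate $\mu = -k$ never appears in the $L^2$-permissible range; without it, a nontrivial leading homogeneous piece corresponding to $H^k(\Sigma, \R)$ could obstruct the existence of a decaying primitive, which is exactly the mechanism by which the two families of isomorphisms differ between the AC and CS settings.
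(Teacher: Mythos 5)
The paper itself does not prove this proposition; it is quoted from Lockhart \cite[Example 0.16]{Lock} and \cite[Theorem 6.5.2]{Lth}, so the only question is whether your argument stands on its own. Your architecture (reduce via the Hodge star and Poincar\'e duality to one family of degrees, map $\mathcal H^k_{L^2}$ to $H^k_{\mathrm{cs}}(M,\R)$ by cutting off a decaying primitive on the ends, injectivity by Lemma~\ref{IBPlemma}, surjectivity via Theorem~\ref{kformsHodgethm}) is the standard one and is essentially sound, but the step on which everything rests --- producing a primitive $\zeta$ of $\alpha$ on the end with $|\zeta| = O(\varrho^{\lambda+1})$ --- is not established. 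You propose to get it from an asymptotic expansion of $\alpha$ into homogeneous closed and coclosed cone forms (Proposition~\ref{asymptoticexpansionprop}, Lemma~\ref{nologslemma}) integrated term by term, and you yourself flag the convergence of this procedure as ``the main obstacle'' without resolving it. As stated it cannot be carried out: Proposition~\ref{asymptoticexpansionprop} extracts, across a \emph{single} critical rate, one homogeneous term $\upsilon$ together with a correction $\tilde\gamma$ of rate $\beta_1+\nu$ that is neither homogeneous nor closed and coclosed on the cone, plus a remainder that again lies only in the kernel at a faster rate. Iterating gives finitely many terms modulo a remainder of some finite decay; it never terminates, there is no convergent series of homogeneous pieces to sum primitives over, and the non-homogeneous corrections are not covered by your per-term Poincar\'e lemma. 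So the key object $\zeta$ is never constructed.

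The fix is to discard the expansion entirely. Elements of $\mathcal H^k_{L^2}$ are smooth and, by Theorem~\ref{kernelthm} and the embedding Theorem~\ref{Sobolevembeddingthm}, pointwise $O(\varrho^{\lambda})$ for some noncritical $\lambda < -\tfrac72$ (AC) or $\lambda > -\tfrac72$ (CS); since $\lambda < -\tfrac72 < -k$ for $k \le 3$ (AC) and $\lambda > -\tfrac72 > -k$ for $k \ge 4$ (CS), the radial homotopy operator (the cone Poincar\'e lemma of \cite[Lemma 2.12]{Kdesings}, of which Lemma~\ref{exactformslemma} is the $3$-form case) applies \emph{directly} to the closed form $\alpha$ on each end and yields $\alpha = d\zeta$ with $|\zeta| = O(\varrho^{\lambda+1})$ in one step --- exactly your per-term computation from~\eqref{homoddseq}, but applied once to $\alpha$ itself, with the degree restriction guaranteeing the relevant convergence of the radial integral. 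With that replacement your remaining steps go through, but two points still need to be written out: (i) in the surjectivity argument, $\Psi(\kappa) = [\omega]$ amounts to showing the compactly supported exact form $d(a + \chi\zeta)$ (where $a$ is the decaying primitive from the Hodge decomposition, taken at a rate in $(-4,-\tfrac72)$ where $W^k_{l,\lambda} = 0$) is zero in $H^k_{\mathrm{cs}}$, which requires a pairing argument against representatives of $H^{7-k}(M)$ pulled back from $\Sigma$ on the end, since exactness with a merely decaying primitive does not by itself kill a compactly supported class; and (ii) your Hodge-star reduction only transfers dimensions, whereas the proposition asserts that the \emph{natural map} $\beta \mapsto [\beta]$ is the isomorphism in the complementary degrees --- this does follow from your construction, via $\int_M \beta \wedge \bigl( \st\beta - d(\chi\zeta) \bigr) = \|\beta\|_{L^2}^2$ together with the rate count $2\lambda + 1 < -6$ (AC) or $> -6$ (CS), but it has to be said.
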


\begin{cor} \label{LMcohomologycor}
We have that
\begin{equation} \label{dimensionbasecaseeq}
\left. \begin{aligned} \mathcal H^3_{\lambda} \, & \cong \, H^3_{\mathrm{cs}} (M, \R) \quad & \text{(AC)} \\  \mathcal H^3_{\lambda} \, & \cong \, H^3(M, \R) \quad & \text{(CS)} \end{aligned} \quad \right\} \quad \text{for all $\lambda \in (-4,-3)$}.
\end{equation}
\end{cor}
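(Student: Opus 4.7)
The plan is to combine Corollary~\ref{kernelchangecor}, which shows that $\mathcal H^3_\lambda$ is constant on $(-4,-3)$, with Proposition~\ref{LMcohomologyprop}, which identifies $\mathcal H^3_{L^2}$ with the relevant cohomology group. The bridge between the weighted space $\mathcal H^3_\lambda$ and the $L^2$ space $\mathcal H^3_{L^2}$ will be elliptic regularity (Theorem~\ref{ellipticregthm}), together with the identification $L^2 = L^2_{0,-7/2}$ and the nesting of weighted Sobolev spaces recorded in Remark~\ref{conifoldSobolevdefnrmk}(b).

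First I will invoke Corollary~\ref{kernelchangecor}: by Proposition~\ref{homoddstarexcludeprop} there are no nontrivial closed and coclosed homogeneous $3$-forms on any $\G$~cone of rate in $(-4,-3)$, so $\mathcal H^3_\lambda$ is independent of $\lambda$ on this interval. This reduces the claim to identifying $\mathcal H^3_\lambda$ with the cohomology for any single well-chosen rate in $(-4,-3)$.

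For the AC case I will pick any $\lambda \in (-4,-\tfrac{7}{2})$. Remark~\ref{conifoldSobolevdefnrmk}(b) then gives $L^2_{l,\lambda} \subseteq L^2$, so $\mathcal H^3_\lambda \subseteq \mathcal H^3_{L^2}$ at once. For the reverse inclusion, given $\gamma \in \mathcal H^3_{L^2}$, the equation $(d+d^*)\gamma = 0$ combined with the uniform ellipticity of $d+d^*$ and Theorem~\ref{ellipticregthm} upgrades $\gamma \in L^2 = L^2_{0,-7/2}$ to $\gamma \in L^2_{l,-7/2}$ for every $l$. In the AC case $L^2_{l,-7/2} \subseteq L^2_{l,\mu}$ for any $\mu > -\tfrac{7}{2}$, so choosing $\mu \in (-\tfrac{7}{2},-3)$ places $\gamma$ in $\mathcal H^3_\mu$, which by the first step equals $\mathcal H^3_\lambda$. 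Together with Proposition~\ref{LMcohomologyprop} this yields $\mathcal H^3_\lambda = \mathcal H^3_{L^2} \cong H^3_{\mathrm{cs}}(M,\R)$. The CS case is the mirror argument: I will pick $\lambda \in (-\tfrac{7}{2},-3)$; the inclusion $\mathcal H^3_\lambda \subseteq \mathcal H^3_{L^2}$ now uses $\lambda > -\tfrac{7}{2}$ with the reversed direction of nesting in the CS setting, and the reverse inclusion follows by elliptic regularity together with the CS inclusion $L^2_{l,-7/2} \subseteq L^2_{l,\mu}$ for $\mu \in (-4,-\tfrac{7}{2})$, combined as before with Corollary~\ref{kernelchangecor}.

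None of the individual steps is technically deep, but the main bookkeeping hazard will be keeping track of the two opposite directions of nesting between weighted spaces in the AC and CS cases, so that in each setting the auxiliary rate $\mu$ is chosen on the correct side of $-\tfrac{7}{2}$. The key geometric input being exploited is that the interval $(-4,-3)$ straddles the $L^2$ rate $-\tfrac{7}{2}$ and contains no critical rates for pure-degree-$3$ closed and coclosed homogeneous forms on the asymptotic cones, which is precisely what allows a single passage through $L^2$ to identify the weighted and $L^2$ Hodge-theoretic spaces in both conifold settings.
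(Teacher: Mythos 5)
Your proposal is correct and follows essentially the same route as the paper, whose proof simply cites Corollary~\ref{kernelchangecor} together with Proposition~\ref{LMcohomologyprop}. The only difference is that you spell out the bridge $\mathcal H^3_{L^2} = \mathcal H^3_{-7/2}$ via elliptic regularity and the nesting of weighted spaces, a step the paper treats as immediate (and uses implicitly elsewhere, e.g.\ in Proposition~\ref{LMcohomologyprop2}).
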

\begin{proof}
This is immediate from Corollary~\ref{kernelchangecor} and Proposition~\ref{LMcohomologyprop}.
\end{proof}

The next proposition comes directly from~\cite[Corollary 7.10]{Lock}. It will be used in the proof of Proposition~\ref{kernelchange34propalt} below.
\begin{prop} \label{LMcohomologyprop2}
The classes in $H^3(M, \R)$ that can be represented by forms in $\mathcal H^3_{L_2} = \mathcal H^3_{-\frac{7}{2}}$ are precisely those classes that lie in the image of the natural inclusion of $H^3_{\mathrm{cs}}(M, \R)$ in $H^3(M, \R)$.
\end{prop}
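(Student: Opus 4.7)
The plan is to prove the two containments $\mathrm{image}(\mathcal H^3_{L^2} \to H^3(M,\R)) \subseteq \mathrm{image}(H^3_{\mathrm{cs}}(M,\R) \to H^3(M,\R))$ and vice versa, by combining Lemma~\ref{exactformslemma} with the Hodge decomposition of Theorem~\ref{kformsHodgethm} evaluated just off the $L^2$ rate.

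For the first containment (AC case; the CS case is symmetric with inequalities reversed), I would start with $\alpha \in \mathcal H^3_{-7/2}$, which is smooth, closed, coclosed, and satisfies $|\alpha|_{\gm} = O(\varrho^{-7/2})$ on the end. Since $-7/2 < -3$, Lemma~\ref{exactformslemma}(ii) produces a $2$-form $\zeta$ on $(R, \infty) \times \Sigma$ with $\alpha = d\zeta$ there. Choosing a smooth cutoff $\chi$ equal to $1$ on $\{\varrho \geq R+1\}$ and $0$ on $\{\varrho \leq R\}$, the form $\tilde{\alpha} := \alpha - d(\chi \zeta) = (1-\chi)\alpha - d\chi \wedge \zeta$ is closed, compactly supported, and cohomologous to $\alpha$ on $M$, so $[\alpha]$ lies in the image of $H^3_{\mathrm{cs}}(M,\R) \to H^3(M,\R)$.

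For the reverse containment, I would take a class $[\eta]$ represented by a closed, compactly supported $3$-form $\eta_0$, which automatically lies in every weighted Sobolev space. The strategy is to pick $\lambda = -7/2 - \e$ for small generic $\e > 0$, chosen so that $\lambda + 1$ is noncritical for $d + d^*$ and $\lambda \in (-4, -3)$. Corollary~\ref{kernelchangecor} then gives $\mathcal H^3_{\lambda} = \mathcal H^3_{L^2}$, and Corollary~\ref{Wkdimensioncor} gives $W^3_{l, \lambda} = 0$. Applying equation~\eqref{kformsHodgeL2eq2} of Theorem~\ref{kformsHodgethm}, we obtain $\eta_0 = d\alpha + \kappa + \delta$ with $\kappa \in \mathcal H^3_{L^2}$, $\alpha \in \Omega^2_{l+1, \lambda + 1}$, and (since $W^3_{l,\lambda}$ is trivial) $\delta = d^* \beta$ for some $\beta \in \Omega^4_{l+1, \lambda + 1}$ satisfying $d d^* \beta = 0$.

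The delicate step, and the main obstacle, is to conclude $\delta = 0$ by integration by parts, because $\beta$ itself sits just outside $L^2$. The rates of $d^* \beta$ and $\beta$ sum to $\lambda + (\lambda + 1) = -6 - 2\e < -6$, which is precisely the inequality required by Lemma~\ref{IBPlemma}; it yields
\[
\|d^* \beta\|^2 \, = \, \langle \langle d^* \beta, d^* \beta \rangle \rangle \, = \, \langle \langle \beta, d d^* \beta \rangle \rangle \, = \, 0,
\]
so $\delta = 0$ and $\eta_0 = d\alpha + \kappa$, with $\alpha$ smooth on $M$ by elliptic regularity. Hence $[\eta] = [\eta_0] = [\kappa]$ is represented by the $L^2$ harmonic form $\kappa$. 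The key subtlety is that the single choice $\lambda = -7/2 - \e$ must simultaneously make the Hodge decomposition available, force $W^3_{l,\lambda} = 0$, preserve $\mathcal H^3_{\lambda} = \mathcal H^3_{L^2}$, and barely satisfy the strict inequality required for integration by parts; the constancy of $\mathcal H^3_{\lambda}$ on $(-4,-3)$ (via Corollary~\ref{kernelchangecor}) is exactly what makes all of these compatible.
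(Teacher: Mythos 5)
The paper does not actually prove this proposition: it is quoted directly from Lockhart \cite[Corollary 7.10]{Lock} and used only for statement (TAC2) of Proposition~\ref{kernelchange34propalt}, i.e.\ in the AC case. Your self-contained argument is therefore a genuinely different route, and in the AC case it is correct and uses only machinery already in the paper. The first inclusion (decay at rate $-\tfrac{7}{2}<-3$, Lemma~\ref{exactformslemma}(ii), and a cutoff to produce a compactly supported representative) is the standard argument and is fine. For the converse, working at the auxiliary rate $\lambda=-\tfrac{7}{2}-\e\in(-4,-3)$ is exactly the right device: Corollaries~\ref{kernelchangecor} and~\ref{Wkdimensioncor} force $W^3_{l,\lambda}=0$ (this is precisely Remark~\ref{Hodgedecompositionoverlaprmk}), so the closed form $\eta_0$ decomposes as $d\alpha+\kappa+d^*\beta$ with $dd^*\beta=0$, and your borderline integration by parts is legitimate since the rates of $d^*\beta$ and $\beta$ sum to $2\lambda+1=-6-2\e<-6$, which is the hypothesis of Lemma~\ref{IBPlemma}; the resulting $\kappa\in\mathcal H^3_{\lambda}\subseteq\mathcal H^3_{L^2}$ represents the class. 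Two small points of hygiene: Theorem~\ref{kformsHodgethm} is phrased with $\nu$ equal to the rate of the conifold, so strictly you should invoke Proposition~\ref{HodgedecompositionpropL2} (valid for any noncritical weight) and repeat its two-line refinement for closed forms at your chosen $\lambda$; and the remark that $\alpha$ is ``smooth by elliptic regularity'' is neither justified nor needed, since a $C^k$ primitive already suffices to conclude $[\eta_0]=[\kappa]$ in de Rham cohomology. What your proof buys is independence from Lockhart's general Hodge theory; what the citation buys is brevity and a statement valid in greater generality.

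The one genuine error is the parenthetical claim that the CS case is ``symmetric with inequalities reversed.'' It is not: in the CS case Lemma~\ref{exactformslemma} requires rate greater than $-3$ on $(0,\e)\times\Sigma$, whereas $L^2$ corresponds to rate $-\tfrac{7}{2}$, so the cutoff argument breaks down; and in fact the statement itself cannot hold for CS conifolds in general, because by Proposition~\ref{LMcohomologyprop} the natural map $\mathcal H^3_{L^2}\to H^3(M,\R)$ is surjective in the CS case, while $\im(H^3_{\mathrm{cs}}(M,\R)\to H^3(M,\R))=\ker\Upsilon^3$ is a proper subspace whenever $\Upsilon^3\neq 0$. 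So you should state and prove the result for AC manifolds only, which is the only way the paper uses it.
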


\begin{defn} \label{Upsilonmapsdefn}
Let $\Sigma = \sqcup_{i=1}^n \Sigma_i$ be the disjoint union of the links of the $n$ asymptotic cones for the $\G$~conifold $M$. Of course, in the AC case we have $n = 1$. Then $H^k(\Sigma, \R) = \bigoplus_{i=1}^n H^k(\Sigma_i, \R)$. Also, by embedding each $\Sigma_i$ in the $i^{\text{th}}$ end of $M$, we get a smooth embedding of $\Sigma$ in $M$, which induces a linear map
\begin{equation*}
\Upsilon^k \, : \, H^k(M, \R) \to H^k(\Sigma, \R).
\end{equation*}
This linear map is most easily described as follows. Let $[\gamma] \in H^k(M, \R)$ be a cohomology class, represented by a closed $k$-form $\gamma$ on $M$. Then the $i^{\text{th}}$ component of the class $\Upsilon^k ([\gamma]) \in \bigoplus_{i=1}^n H^k(\Sigma_i, \R)$ is the class represented by the \emph{restriction} of $\gamma$ to $\Sigma_i$. Note that in general $\Upsilon^k$ is neither injective nor surjective.
\end{defn}
\begin{rmk} \label{Upsilonmapsrmk}
The images of the maps $\Upsilon^k$ for $k = 3,4$ are related to \emph{topological obstructions} to the desingularization of CS $\G$~conifolds, as discussed in~\cite[Section 5]{Kdesings}.
\end{rmk}

From~\cite[$\S$2.4]{JSL1}, any conifold $M$ gives rise to a long exact sequence
\begin{equation} \label{longexactsequenceeq}
\cdots \longrightarrow H_{\mathrm{cs}}^k(M, \R) \overset{\mathcal{I}^k}{\longrightarrow} H^k (M, \R) \overset{\Upsilon^k}{\longrightarrow} \bigoplus_{i=1}^n H^k(\Sigma_i, \R) \overset{\partial^k}{\longrightarrow} H_{\mathrm{cs}}^{k+1}(M, \R) \longrightarrow \cdots
\end{equation}
where $\Upsilon^k: H^k (M, \R) \rightarrow \bigoplus_{i=1}^n H^k(\Sigma_i, \R)$ is the map from Definition~\ref{Upsilonmapsdefn}, and $\mathcal{I}^k : H_{\mathrm{cs}}^k(M, \R) \to H^k(M, \R)$ is the natural map induced from inclusion of the complex of compactly supported forms into the complex of all smooth forms. This is the long exact sequence for cohomology of $M$ relative to its topological boundary $\Sigma$.

Let $b^k = \dim H^k(M, \R)$ and $b^k_{\mathrm{cs}} = \dim H^k_{\mathrm{cs}}(M)$ be the ordinary and compactly supported $k^{\text{th}}$ Betti numbers of $M$, respectively. Note that by Poincar\'e duality we have $H^k (M, \R) \cong H^{7-k}_{\mathrm{cs}}(M, \R)$ and thus $b^k = b^{7-k}_{\mathrm{cs}}$. The next lemma contains results that will be used to compute the virtual dimension of the conifold moduli space in Section~\ref{dimensionsec} and for the applications in Section~\ref{smoothCSsec}.

\begin{lemma} \label{exactsequencelemma}
Let $M$ be a $\G$~conifold. The following equations hold.
\begin{align} \label{imcseq}
b^k - \dim(\im \Upsilon^k) \, & = \, \dim(\im \mathcal{I}^k) \, = \, \dim (\im (H_{\mathrm{cs}}^k \to H^k )) , \\ \label{exactkereq1} \dim(\ker \Upsilon^k) \, & = \, b^k - \dim (\im \Upsilon^k), \\ \label{exactkereq2} \dim (\ker \Upsilon^k) \, & = \, b^k_{\mathrm{cs}} - \dim (\im \Upsilon^{7-k}), \\
\label{exacteq4} \dim \bigl( H^k( \Sigma, \R) \bigr) \, & = \, \dim (\im \Upsilon^k) + \dim (\im \Upsilon^{6-k}).
\end{align}
\end{lemma}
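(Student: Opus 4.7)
The plan is to deduce all four identities from the long exact sequence~\eqref{longexactsequenceeq} using elementary linear algebra (rank--nullity and exactness at each term) together with a single input from Poincar\'e--Lefschetz duality. I will treat the four identities in the order~\eqref{exactkereq1},~\eqref{imcseq},~\eqref{exacteq4},~\eqref{exactkereq2}.

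First I will apply rank--nullity to $\Upsilon^k : H^k(M, \R) \to \bigoplus_i H^k(\Sigma_i, \R)$ to obtain $b^k = \dim(\ker \Upsilon^k) + \dim(\im \Upsilon^k)$, which is precisely~\eqref{exactkereq1}. Exactness of~\eqref{longexactsequenceeq} at $H^k(M, \R)$ then gives $\im \mathcal I^k = \ker \Upsilon^k$, and substituting into~\eqref{exactkereq1} yields~\eqref{imcseq}.

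For~\eqref{exacteq4}, I will apply rank--nullity to $\partial^k$ and exactness at $H^k(\Sigma, \R)$ (which gives $\ker \partial^k = \im \Upsilon^k$) to obtain
\[
\dim H^k(\Sigma, \R) \, = \, \dim(\ker \partial^k) + \dim(\im \partial^k) \, = \, \dim(\im \Upsilon^k) + \dim(\im \partial^k),
\]
reducing the problem to showing $\dim(\im \partial^k) = \dim(\im \Upsilon^{6-k})$. For this I will interpret~\eqref{longexactsequenceeq} as the cohomology long exact sequence of the pair $(\overline M, \Sigma)$, where $\overline M$ is a truncation of $M$ to a compact oriented $7$-manifold with boundary $\Sigma$ (deleting $r < \e$ neighborhoods of the singular points in the CS case, or the $r > R$ region in the AC case), so that $H^k(\overline M) \cong H^k(M, \R)$ and $H^k(\overline M, \Sigma) \cong H^k_{\mathrm{cs}}(M, \R)$. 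Poincar\'e--Lefschetz duality for $(\overline M, \Sigma)$ combined with Poincar\'e duality on the closed $6$-manifold $\Sigma$ makes the sequence self-dual in such a way that $\partial^k$ is identified, up to isomorphism, with the transpose of $\Upsilon^{6-k} : H^{6-k}(M, \R) \to H^{6-k}(\Sigma, \R)$. Hence the two maps have equal rank, and~\eqref{exacteq4} follows. This is the one nontrivial input.

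Finally, for~\eqref{exactkereq2}, I will apply rank--nullity to $\mathcal I^k$ and use exactness at $H^k_{\mathrm{cs}}(M, \R)$ (which gives $\ker \mathcal I^k = \im \partial^{k-1}$) to obtain $b^k_{\mathrm{cs}} = \dim(\im \partial^{k-1}) + \dim(\im \mathcal I^k)$. Substituting $\dim(\im \mathcal I^k) = \dim(\ker \Upsilon^k)$ from exactness at $H^k(M,\R)$ and $\dim(\im \partial^{k-1}) = \dim(\im \Upsilon^{7-k})$ from the duality step in the previous paragraph gives~\eqref{exactkereq2}. The main obstacle throughout is the duality identification $\dim(\im \partial^k) = \dim(\im \Upsilon^{6-k})$; all other steps are immediate consequences of the exact sequence.
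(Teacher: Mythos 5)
Your proposal is correct. The first two identities are handled exactly as in the paper (rank--nullity for $\Upsilon^k$ plus exactness of~\eqref{longexactsequenceeq} at $H^k(M,\R)$), but your duality input differs from the paper's. The paper first proves~\eqref{exactkereq2} by observing that the Poincar\'e pairing $[\alpha],[\beta] \mapsto \int_M \alpha \wedge \beta$ between $\ker \Upsilon^k$ and $\ker \Upsilon^{7-k}$ is nondegenerate (using that the Hodge star preserves compact support), hence $\dim \ker \Upsilon^k = \dim \ker \Upsilon^{7-k}$, and combines this with $b^{7-k} = b^k_{\mathrm{cs}}$; it then obtains~\eqref{exacteq4} by a diagram chase that uses~\eqref{exactkereq2}. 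You instead extract the single rank identity $\dim(\im \partial^k) = \dim(\im \Upsilon^{6-k})$ from Poincar\'e--Lefschetz duality of the long exact sequence of the compact pair $(\overline M, \Sigma)$, and then derive~\eqref{exacteq4} and~\eqref{exactkereq2} from it in the opposite order; your uses of exactness and rank--nullity there are all sound, including the shift $k \mapsto k-1$ giving $\dim(\im \partial^{k-1}) = \dim(\im \Upsilon^{7-k})$. The two duality statements are equivalent in substance, but your route is the purely topological one: it requires identifying $H^k_{\mathrm{cs}}(M,\R)$ with $H^k(\overline M, \Sigma)$ for a truncation $\overline M$ and invoking the (standard, but worth citing or checking) naturality of the Lefschetz and Poincar\'e duality isomorphisms with respect to the maps in the sequence, after which $\partial^k$ is the transpose of $\Upsilon^{6-k}$ up to isomorphism; the paper's route stays closer to the differential-form framework it has already set up and avoids introducing the pair $(\overline M,\Sigma)$ explicitly. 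Either argument suffices, and yours has the mild advantage of isolating exactly which duality is used and needing $b^{7-k}=b^k_{\mathrm{cs}}$ only implicitly.
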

\begin{proof}
Equation~\eqref{exactkereq1} is just the rank-nullity theorem. From~\eqref{exactkereq1} and the exactness of~\eqref{longexactsequenceeq}, we find
\begin{equation*}
b^k \, = \, \dim (\im \Upsilon^k) + \dim (\ker \Upsilon^k) \, = \, \dim (\im \Upsilon^k) + \dim ( \im \mathcal{I}^k)
\end{equation*}
from which we immediately obtain~\eqref{imcseq}.

Using the fact that the Hodge star operator takes compactly supported forms to compactly supported forms, it is easy to see that the \emph{Poincar\'e pairing} between $\ker \Upsilon^k$ and $\ker \Upsilon^{7-k}$ given by $[\alpha], [\beta] \mapsto \int_M (\alpha \wedge \beta)$ is nondegenerate. Hence $\dim (\ker \Upsilon^k) = \dim (\ker \Upsilon^{7-k})$. Thus we have
\begin{align*}
\dim (\ker \Upsilon^k) \, & = \, \dim (\ker \Upsilon^{7-k}) \\ & = \, b^{7-k} - \dim(\im \Upsilon^{7-k}) \\ & = \, b^k_{\mathrm{cs}} - \dim(\im \Upsilon^{7-k})
\end{align*}
which establishes~\eqref{exactkereq2}. For equation~\eqref{exacteq4}, we apply repeatedly the long exact sequence~\eqref{longexactsequenceeq} and rank-nullity, to obtain
\begin{align*}
\dim \bigl( H^k (\Sigma, \R) \bigr) - \dim (\im \Upsilon^k ) \, & = \, \dim \bigl( H^k (\Sigma, \R) \bigr) - \dim (\ker \partial^k ) \\ & = \, \dim (\im \partial^k) \\ & = \dim (\ker \mathcal I^{k+1} ) \\ & = \, b^{k+1}_{\text{cs}} - \dim (\im \mathcal I^{k+1} ) \\ & = \, b^{k+1}_{\text{cs}} - \dim (\ker \Upsilon^{k+1} ) \\ & = \, \dim (\im \Upsilon^{6-k})
\end{align*}
where we have used~\eqref{exactkereq2} in the last step.
\end{proof}

\begin{cor} \label{Upsiloncor}
The Hodge star operator $\sts$ on $\Sigma$ maps $\im \Upsilon^k$ isomorphically onto $(\im \Upsilon^{6-k})^{\perp}$. That is, the harmonic representatives of elements of $\im \Upsilon^k \subseteq H^k(\Sigma, \R)$ are sent by $\sts$ onto harmonic representatives of elements of $(\im \Upsilon^{6-k})^{\perp} \subseteq H^{6-k}(\Sigma, \R)$.
\end{cor}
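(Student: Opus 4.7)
The approach is to identify $H^k(\Sigma,\R)$ with the space of harmonic $k$-forms on $\Sigma$ via ordinary Hodge theory on the compact $6$-manifold $\Sigma$, to translate the orthogonality claim into a cohomological pairing, and then to apply Stokes's theorem on a compact cobordism in $M$ whose boundary is $\Sigma$.

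First I would dispose of the dimension count. Since $\sts$ commutes with $\laps$, it sends harmonic $k$-forms isometrically to harmonic $(6-k)$-forms and is in particular injective on $\im\Upsilon^k$. Using equation~\eqref{exacteq4},
\begin{equation*}
\dim(\im\Upsilon^{6-k})^{\perp}\;=\;\dim H^{6-k}(\Sigma,\R)-\dim\im\Upsilon^{6-k}\;=\;\dim\im\Upsilon^k\;=\;\dim\sts(\im\Upsilon^k),
\end{equation*}
so the isomorphism claim reduces to the inclusion $\sts(\im\Upsilon^k)\subseteq(\im\Upsilon^{6-k})^{\perp}$.

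Second, the facts that $\sts^*=\sts^{-1}$ as a map between forms of complementary degree on $\Sigma$ and that $\langle\gamma,\delta\rangle_{L^2(\Sigma)}=\int_\Sigma\gamma\wedge\sts\delta$ for forms of the same degree combine to give the identity
\begin{equation*}
\langle\sts\alpha,\beta\rangle_{L^2(\Sigma)}\;=\;\langle\alpha,\sts^{-1}\beta\rangle_{L^2(\Sigma)}\;=\;\int_\Sigma\alpha\wedge\beta,
\end{equation*}
valid for any $k$-form $\alpha$ and any $(6-k)$-form $\beta$ on $\Sigma$. Since the right-hand side depends only on cohomology classes, the required orthogonality reduces to showing that $\int_\Sigma \tilde\alpha|_\Sigma\wedge\tilde\beta|_\Sigma=0$ whenever $\tilde\alpha\in\Omega^k(M)$ and $\tilde\beta\in\Omega^{6-k}(M)$ are closed.

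Third, I would invoke Stokes's theorem on a compact manifold-with-boundary in $M$. In the AC case, fix $r_0>R$ and take $K=L\cup h([R,r_0]\times\Sigma)$, which has boundary the embedded copy of $\Sigma$ at $r=r_0$; since $\tilde\alpha\wedge\tilde\beta$ is closed,
\begin{equation*}
\int_\Sigma\tilde\alpha|_\Sigma\wedge\tilde\beta|_\Sigma\;=\;\int_{\partial K}\tilde\alpha\wedge\tilde\beta\;=\;\int_K d(\tilde\alpha\wedge\tilde\beta)\;=\;0.
\end{equation*}
In the CS case, for some $r_0\in(0,\e)$ take $K=M\setminus\bigsqcup_{i=1}^n h_i((0,r_0)\times\Sigma_i)$; then $\partial K=\bigsqcup_i\Sigma_i$ and, crucially, on every end the outward normal to $K$ points in the direction of decreasing $r$, so all $n$ orientation signs agree. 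Stokes now produces $\sum_i\int_{\Sigma_i}\tilde\alpha_i\wedge\tilde\beta_i=0$, which is precisely the required orthogonality with respect to the direct-sum $L^2$ inner product on $\Sigma=\sqcup_i\Sigma_i$. The only subtle point is this orientation check in the CS case: a priori Stokes delivers only a single identity among $n$ separate pairings, and one must verify that this identity matches the orientation conventions built into the definition of $\Upsilon^k$.
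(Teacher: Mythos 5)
Your proposal is correct and is essentially the paper's own proof: both reduce the claim to the vanishing of $\int_\Sigma \tilde\alpha|_\Sigma \wedge \tilde\beta|_\Sigma$ for closed forms $\tilde\alpha, \tilde\beta$ on $M$ extending the harmonic representatives, kill that integral by Stokes's theorem on the compact region of $M$ bounded by $\{r_0\}\times\Sigma$, and finish with the dimension count from equation~\eqref{exacteq4}. Your extra care with the CS-case orientations and with noting that only the single summed identity $\sum_i\int_{\Sigma_i}\tilde\alpha_i\wedge\tilde\beta_i=0$ is needed is a nice explicit check of a point the paper handles implicitly with its $\pm$ signs, but it is not a different argument.
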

\begin{proof}
Let $[\sigma] \in \im \Upsilon^k$ have harmonic representative $\sigma \in \Omega^k (\Sigma)$. Then there exists a closed $k$-form $\zeta$ on $M$ such that $\rest{\zeta}{{ \{ R \} \times \Sigma}} = \sigma$. Let $[\tau] \in \im \Upsilon^{6-k}$ have harmonic representative $\tau \in \Omega^{6-k} (\Sigma)$. Then there exists a closed $(6-k)$-form $\eta$ on $M$ such that $\rest{\eta}{{ \{ R \} \times \Sigma}} = \tau$. In the CS case, let $M_{R} = \{ x \in M; \varrho(x) \geq R \}$, and observe that $\partial M_{R} = \{R \} \times \Sigma$. (In the AC case we just reverse the inequality in the definition of $M_{R}$). Now, using Stokes's theorem, we find
\begin{align*}
\langle \langle \sts \sigma, \tau \rangle \rangle_{\Sigma} \, & = \, \pm \int_{\Sigma} \tau \wedge \sigma \, = \, \pm R^6 \int_{ \{ R \} \times \Sigma} \tau \wedge \sigma \\ & = \, \pm R^6 \int_{\partial M_{R}} \zeta \wedge \eta \, = \, \pm R^6 \int_{M_{R}} d( \zeta \wedge \eta) \, = \, 0.
\end{align*}
Thus we have $[\sts \sigma] \in (\im \Upsilon^{6-k})^{\perp}$. Counting dimensions using equation~\eqref{exacteq4} completes the proof.
\end{proof}

Let us denote by $[\gamma]_{\Sigma}$ the cohomology class in $H^k (\Sigma, \R)$ of a closed form $\gamma$ on $\Sigma$, and by $[\eta]_{M}$ the cohomology class in $H^k (M, \R)$ of a closed form $\eta$ on $M$. The next lemma is used in the proof of Proposition~\ref{kernelchange34prop}.

\begin{lemma} \label{kernelchange34preliminarylemma}
Let $M$ be a $\G$~conifold.
\begin{itemize}
\item In the AC case, let $[\gamma]_{\Sigma} \in \im \Upsilon^3 \subseteq H^3 (\Sigma, \R)$. There exists $\eta \in \mathcal H^3_{-3 + \e}$ such that $\Upsilon^3 [\eta]_M = [\gamma]_{\Sigma}$.
\item In the CS case, let $[\gamma]_{\Sigma} \in \im \Upsilon^4 \subseteq H^4 (\Sigma, \R)$. There exists $\eta \in \mathcal H^4_{-4 - \e}$ such that $\Upsilon^4 [\eta]_M = [\gamma]_{\Sigma}$.
\end{itemize}
\end{lemma}
\begin{proof}
Consider the AC case. By~\cite[Corollary 5.9]{M} there exists a \emph{smooth} $3$-form $\zeta$ on $M$ such that $| \zeta | = O(\varrho^{-3})$ on the ends with $d\zeta = 0$ and $\Upsilon^3 [\zeta]_M = [\gamma]_{\Sigma}$. Then $\zeta \in \Omega^3_{l, -3 + \e}$ for any $\e > 0$ and $l \geq 0$. We are in the non-$L^2$ regime of Theorem~\ref{kformsHodgethm} and can therefore apply equation~\eqref{kformsHodgenonL2eq2} to the closed form $\zeta$ to deduce that $\zeta = \eta + d \alpha$ for some $\eta \in \mathcal H^3_{-3 + \e}$. Then $[\zeta]_M = [\eta]_M$ so $\Upsilon^3 [\eta]_M = [\gamma]_{\Sigma}$ as required. In the CS case, we again use~\cite[Corollary 5.9]{M} to obtain a smooth $4$-form $\zeta$ on $M$, with $| \zeta | = O(\varrho^{-4})$ on the ends, such that $d\zeta = 0$ and $\Upsilon^4 [\zeta]_M = [\gamma]_{\Sigma}$. This time, $\zeta \in \Omega^3_{l, -4 - \e}$ for any $\e > 0$ and $l \geq 0$. We can now apply equation~\eqref{kformsHodgenonL2eq2} as before to deduce the result.
\end{proof}

We can now use Lemma~\ref{kernelchange34preliminarylemma} to establish the main result of this section.

\begin{prop} \label{kernelchange34prop}
Let $\lambda_0$ be a critical rate for $d + \dsm$ (understood to be a ``constant'' $n$-tuple in the CS case), and let $\e > 0$ be chosen so that there are no other critical rates in $(\lambda_0 - \e, \lambda_0 + \e)$. Then for $\lambda_0 = -3$ we have
\begin{equation} \label{kernelchange34eq1}
\begin{aligned}
\dim \mathcal H^3_{-3 + \e} - \dim \mathcal H^3_{-3 - \e} \, & = \, \dim (\im \Upsilon^3) & (AC), \\
\dim \mathcal H^3_{-3 + \e} - \dim \mathcal H^3_{-3 - \e} \, & = \, - \dim (\im \Upsilon^3) & (CS),
\end{aligned}
\end{equation}
and for $\lambda_0 = -4$ we have
\begin{equation} \label{kernelchange34eq2}
\begin{aligned}
\dim \mathcal H^3_{-4 + \e} - \dim \mathcal H^3_{-4 - \e} \, & = \, \dim (\im \Upsilon^4) & (AC), \\
\dim \mathcal H^3_{-4 + \e} - \dim \mathcal H^3_{-4 - \e} \, & = \, - \dim (\im \Upsilon^4) & (CS).
\end{aligned}
\end{equation}
\end{prop}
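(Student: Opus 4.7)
The plan is to establish, in each of the four cases (AC or CS, $\lambda_0 = -3$ or $\lambda_0 = -4$), a linear isomorphism between the quotient $\mathcal H^3_{\lambda_+}/\mathcal H^3_{\lambda_-}$ and $\im \Upsilon^k$ for the appropriate value of $k$, where, following the notation of Lemma~\ref{kernelchangelemma}, $\lambda_+$ denotes the slower rate of decay on the ends. The signs in~\eqref{kernelchange34eq1} and~\eqref{kernelchange34eq2} then follow from the convention that $\lambda_+ = \lambda_0 + \e$ in the AC case but $\lambda_+ = \lambda_0 - \e$ in the CS case.

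First I construct an injective \emph{leading term map} out of the quotient. Given $\eta \in \mathcal H^3_{\lambda_+}$, Lemma~\ref{kernelchangelemma} and Remark~\ref{asymptoticexpansionproprmk} write, on the $i^{\text{th}}$ end, $\eta = (h_i^{-1})^* \upsilon_i + \tilde \gamma_i + \eta_-$ with $\upsilon_i$ a homogeneous closed and coclosed $3$-form on $C_i$ of order $\lambda_0$, a faster-decaying correction $\tilde \gamma_i$ that vanishes precisely when $\upsilon_i = 0$, and $\eta_- \in \mathcal H^3_{\lambda_-}$. By Proposition~\ref{ddstarhomokernelprop} and Remark~\ref{ddstarhomokernelrmk}, the form $\upsilon_i$ is either a harmonic $3$-form $\beta_i$ on $\Sigma_i$ when $\lambda_0 = -3$, or $r^{-2} dr \wedge \alpha_i$ for a harmonic $2$-form $\alpha_i$ on $\Sigma_i$ when $\lambda_0 = -4$. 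The assignment $\eta \mapsto (\upsilon_i)_{i=1}^n$ descends to a well-defined map on $\mathcal H^3_{\lambda_+}/\mathcal H^3_{\lambda_-}$ whose kernel is zero by Remark~\ref{asymptoticexpansionproprmk}, and whose codomain is isomorphic via Hodge theory on each $\Sigma_i$ to $H^3(\Sigma, \R)$ when $\lambda_0 = -3$, and, after termwise application of $\sts$, to $H^4(\Sigma, \R)$ when $\lambda_0 = -4$.

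Next I show the image of this map lies in $\im \Upsilon^3$ or $\im \Upsilon^4$. When $\lambda_0 = -3$, the form $\eta$ is closed, so $[\eta] \in H^3(M, \R)$; for $r$ deep in the cone region the restriction of $\eta$ to $\{r\} \times \Sigma_i$ is cohomologous to $\beta_i$, since $\tilde \gamma_i + \eta_-$ decays strictly faster than $\varrho^{-3}$ and so its restricted pullback vanishes in cohomology in the limit, giving $\Upsilon^3[\eta] = ([\beta_i])_i \in \im \Upsilon^3$. When $\lambda_0 = -4$, I apply the Hodge star: a direct computation on the cone metric gives $\stc(r^{-2} dr \wedge \alpha) = \sts \alpha$, so $\stm \eta \in \mathcal H^4_{\lambda_+}$ is asymptotic to $\sts \alpha_i$ on end $i$, and the same restriction argument yields $\Upsilon^4[\stm \eta] = ([\sts \alpha_i])_i \in \im \Upsilon^4$.

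It remains to prove surjectivity onto $\im \Upsilon^3$ or $\im \Upsilon^4$. In the AC case with $\lambda_0 = -3$ this is precisely Lemma~\ref{kernelchange34preliminarylemmaA}, and in the CS case with $\lambda_0 = -4$ it follows from Lemma~\ref{kernelchange34preliminarylemmaB} after taking $\stm$. For the remaining two cases (AC at $\lambda_0 = -4$ and CS at $\lambda_0 = -3$), both of which fall in the $L^2$ regime, the same strategy applies: given a class in $\im \Upsilon^k$, use Marshall's construction~\cite[Corollary 5.9]{M} to produce a smooth closed representative $\zeta$ of the appropriate rate whose restriction to each $\Sigma_i$ realizes the prescribed component, then apply a weighted Hodge-type decomposition (as in Theorem~\ref{kformsHodgethm}) together with Lemma~\ref{IBPlemma} and Corollary~\ref{L2cor} to project $\zeta$ onto a closed and coclosed representative without changing its leading asymptotic term. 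The main technical obstacle is to ensure that the correction subtracted from $\zeta$ has strictly faster decay than $\zeta$ itself, so that the leading term survives the projection; this reduces, via the Fredholm alternative at noncritical dual rates, to checking that the relevant auxiliary space of harmonic forms consists only of closed and coclosed forms, which holds in both remaining cases by Corollary~\ref{kernelchangecor} together with the exclusion ranges of Proposition~\ref{homoddstarexcludeprop}.
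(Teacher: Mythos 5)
Your overall architecture --- an injective leading-term map on the quotient $\mathcal H^3_{\lambda_+}/\mathcal H^3_{\lambda_-}$, containment of its image in $\im \Upsilon^3$ or $\im \Upsilon^4$ (the paper's ``necessity'' step, which it justifies with Lemma~\ref{exactformslemma} rather than a limiting argument about restricted classes), and then surjectivity onto $\im \Upsilon^k$ --- is the same as the paper's, and your treatment of the AC case at $-3$ (Lemma~\ref{kernelchange34preliminarylemmaA}) and the CS case at $-4$ (Lemma~\ref{kernelchange34preliminarylemmaB}) coincides with it. The gap is in your last paragraph, where you assert that the Lemma~\ref{kernelchange34preliminarylemmaA} strategy carries over to the two remaining cases (AC at $-4$, CS at $-3$), with the key point reducing to the auxiliary space of harmonic forms being closed and coclosed ``by Corollary~\ref{kernelchangecor} together with Proposition~\ref{homoddstarexcludeprop}''. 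Neither result controls harmonic forms: that is the role of Corollary~\ref{L2cor} via Lemma~\ref{IBPlemma}, and the rate numerology puts these cases outside their reach. Concretely, for AC at $-4$ one starts from a closed $4$-form of rate $-4$, so $d^*\zeta$ has rate about $-5$ and the dual-rate complement consists of harmonic $3$-forms of rate about $-2$; Corollary~\ref{L2cor} (and Proposition~\ref{homolapexcludeprop}) have no statement for $3$-forms, and if you shift the weight so the dual kernel becomes $\mathcal H^3_{-3+\e}$, that space neither lies in the weighted space you are decomposing nor is it independent of the change you are trying to compute, so the complement cannot be taken to consist of closed and coclosed forms. For CS at $-3$ the decomposition itself can be arranged (the auxiliary space is $\mathcal H^2_{-3+\e} = \mathcal H^2_{-4-\e}$), but the integration by parts that kills the $dd^*\alpha + \beta$ piece needs the two rates to sum to more than $-6$ in the CS case (Lemma~\ref{IBPlemma}), whereas here they sum to roughly $-7$; the boundary term grows like $r^{-1}$ and does not vanish. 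A secondary misreading feeds this: in Lemma~\ref{kernelchange34preliminarylemmaA} the correction $d\alpha$ has rate $-3+\e$, i.e.\ \emph{slower} decay than $\zeta$, so the leading term is not ``preserved'' directly --- only the de Rham class is, and the leading term is recovered afterwards through the necessity step.

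The paper closes exactly these two cases by a much simpler route, available precisely because they sit in the $L^2$ regime you noticed: in the AC case $\mathcal H^3_{-4+\e} = \mathcal H^3_{L^2}$ and $\chi^4$ is an isomorphism onto $H^4(M, \R)$ by Proposition~\ref{LMcohomologyprop}, so composing with $\Upsilon^4$ already realizes all of $\im \Upsilon^4$; in the CS case $\mathcal H^3_{-3-\e} = \mathcal H^3_{L^2}$ and $\chi^3$ is an isomorphism onto $H^3(M, \R)$, giving all of $\im \Upsilon^3$. Indeed the bracketed remark inside the proof of Lemma~\ref{kernelchange34preliminarylemmaB} explicitly warns that the Lemma~\ref{kernelchange34preliminarylemmaA} projection argument fails at rate $-4$, which is why that lemma is proved through the dual $2$-forms and an index-change count rather than by the construction you propose. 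To repair your proof, replace the last paragraph by an appeal to Proposition~\ref{LMcohomologyprop} (or Proposition~\ref{kernelchange34propalt}) in those two cases.
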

\begin{proof}
Consider first the AC case. By Lemma~\ref{kernelchangelemma}, the space $\mathcal H^3_{\lambda}$ changes by the addition of forms that are asymptotic to closed and coclosed $3$-forms in $\mathcal K(-3)_{d + \dsc}$ as we cross $\lambda_0 = -3$. Also, by Lemma~\ref{nologslemma} and Remark~\ref{ddstarhomokernelrmk}, a $3$-form $\upsilon$ in $\mathcal K(-3)_{d + \dsc}$ must be of the form $\upsilon = \beta$ for some harmonic $3$-form $\beta$ on $\Sigma$. Explicitly, we have that if $\gamma_1 \in \mathcal H_{-3 + \e}$, then \emph{on the end} we have
\begin{equation*}
\gamma_1 \, = \, (h^{-1})^* \beta + \tilde \gamma + \gamma_2, 
\end{equation*}
where $\tilde \gamma + \gamma_2 = O(\varrho^{-3 + \e + \nu}) + O(\varrho^{-3 - \e})$. If $\e$ is sufficiently small so that $-3 + \e + \nu < -3$, then Lemma~\ref{exactformslemma} tells us that the $3$-form component of $\tilde \gamma + \gamma_2$ is exact on the end. Hence, we find that $\Upsilon^3[\gamma_1] = [\beta]$, so a necessary condition for $\beta$ to define a $3$-form on $M$ which adds to $\mathcal H^3_{\lambda}$ as $\lambda$ crosses $\lambda_0 = -3$ is that $[\beta] \in \im \Upsilon^3$. Sufficiency follows from Lemma~\ref{kernelchange34preliminarylemma}. This establishes~\eqref{kernelchange34eq1}.

In exactly the same way, the space $\mathcal H^3_{\lambda}$ changes by the addition of forms that are asymptotic to closed and coclosed $3$-forms in $\mathcal K(-4)_{d + \dsc}$ as we cross $\lambda_0 = -4$. This time, Lemma~\ref{nologslemma} and Remark~\ref{ddstarhomokernelrmk} says that a $3$-form $\upsilon$ in $\mathcal K(-4)_{d + \dsc}$ must be of the form $\upsilon = r^{-2} dr \wedge \alpha$ for some harmonic $2$-form $\alpha$ on $\Sigma$. But $\stm \mathcal H^3_{\lambda} = \mathcal H^4_{\lambda}$, and $\stm \upsilon = \stm (r^{-2} dr \wedge \alpha) = \sts \alpha$, a harmonic $4$-form on $\Sigma$. Thus the previous argument can be repeated to conclude that changes in $\mathcal H^4_{\lambda}$ (and hence to $\mathcal H^3_{\lambda}$) as we cross $\lambda_0 = -4$ correspond to elements of $\im \Upsilon^4$. (Necessity follows exactly as above, but this time sufficiency is easier, since it follows directly from the isomorphism $\mathcal H^3_{-4 + \e} \cong H^4 (M, \R)$ given in Proposition~\ref{LMcohomologyprop}.) This establishes~\eqref{kernelchange34eq2}.

To prove the CS case, the arguments for $\lambda = -3, -4$ are analogous to the $\lambda = -4, -3$ arguments of the AC case, respectively, using the CS statement of Lemma~\ref{kernelchange34preliminarylemma}.
\end{proof}

We pause here to note that we can reinterpret Proposition~\ref{kernelchange34prop} in terms of cohomology, as follows.
\begin{prop} \label{kernelchange34propalt}
Let $\chi^k$ denote the natural map $\chi^k : \mathcal H^k \to H^k (M, \R)$ given by $\chi^k (\eta) = [\eta]_M$.
\begin{itemize}
\item Suppose we are in the AC case.  Then
\begin{equation*}
\mathcal H^3_{-4 - \e} \, \subseteq \, \mathcal H^3_{-4 + \e} \, = \, \mathcal H^3_{-3 - \e} \, \subseteq \, \mathcal H^3_{-3 + \e}
\end{equation*}
and
\begin{enumerate}[({TAC}1)]
\item $\quad \chi^3 : \mathcal H^3_{-3 + \e} \rightarrow H^3(M, \R)$ is surjective,
\item $\quad \ker \Upsilon^3 = \chi^3 ( \mathcal H^3_{-3 - \e} )$,  
\item $\quad \chi^4: \mathcal H^4_{-4 + \e} \rightarrow H^4(M, \R)$ is an isomorphism,
\item $\quad \ker \Upsilon^4 = \chi^4 ( \mathcal H^4_{-4 - \e} )$.
\end{enumerate}
\item Suppose we are in the CS case.  Then
\begin{equation*}
\mathcal H^3_{-4 - \e} \, \supseteq \, \mathcal H^3_{-4 + \e} \, = \, \mathcal H^3_{-3 - \e} \, \supseteq \, \mathcal H^3_{-3 + \e}
\end{equation*}
and
\begin{enumerate}[({TCS}1)]
\item $\quad \chi^4 : \mathcal H^4_{-4 - \e} \rightarrow H^4(M, \R)$ is surjective,
\item $\quad \ker \Upsilon^4 = \chi^4 ( \mathcal H^4_{-4 + \e} )$,  
\item $\quad \chi^3: \mathcal H^3_{-3 - \e} \rightarrow H^3(M, \R)$ is an isomorphism,
\item $\quad \ker \Upsilon^3 = \chi^3 ( \mathcal H^3_{-3 + \e} )$.
\end{enumerate}
\end{itemize}
\end{prop}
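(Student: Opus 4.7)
The plan is to exploit the Hodge star duality $k\leftrightarrow 7-k$ together with Lockhart--McOwen asymptotic theory, and to handle the AC and CS cases in parallel. First I would establish the chain of inclusions and equalities. The non-strict inclusions in both cases are immediate from Remark~\ref{conifoldSobolevdefnrmk}(b), while the middle equality $\mathcal H^3_{-4+\e}=\mathcal H^3_{-3-\e}$ follows from Corollary~\ref{kernelchangecor} combined with Proposition~\ref{homoddstarexcludeprop}, which rules out homogeneous closed and coclosed $3$-forms on a $\G$-cone at any rate strictly in $(-4,-3)$. The crucial observation is that this middle space is exactly $\mathcal H^3_{L^2}$ in both settings: in AC, rate $-4+\e<-\tfrac72$ for small $\e$ gives $L^2$ decay; in CS, rate $-3-\e>-\tfrac72$ is $L^2$; and Lockhart--McOwen asymptotic theory ensures any $L^2$ closed and coclosed $3$-form has rate lying in this constant interval.

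With this identification in hand, (TAC3) and (TCS3) follow immediately from Proposition~\ref{LMcohomologyprop}: in AC, the map $\chi^4$ factors as the Hodge star isomorphism $\mathcal H^3_{L^2}\to\mathcal H^4_{L^2}$ followed by the natural class isomorphism $\mathcal H^4_{L^2}\cong H^4(M,\R)$ of Proposition~\ref{LMcohomologyprop}; in CS, $\chi^3$ is itself the natural class isomorphism $\mathcal H^3_{L^2}\cong H^3(M,\R)$. Next I would derive (TAC2) directly from Proposition~\ref{LMcohomologyprop2}, which gives $\chi^3(\mathcal H^3_{L^2})=\im\mathcal{I}^3$, and this equals $\ker\Upsilon^3$ by exactness of~\eqref{longexactsequenceeq}. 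The analogue (TCS2) requires the CS counterpart of Proposition~\ref{LMcohomologyprop2}, namely that the $L^2$ harmonic $4$-forms in CS represent exactly $\im\mathcal{I}^4=\ker\Upsilon^4$; granting this, one precomposes with the Hodge star to obtain $\chi^4(\mathcal H^3_{L^2})=\ker\Upsilon^4$.

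The dimension-count claims (TCS4) and (TAC4) then follow quickly. For (TCS4), the rate argument shows $\chi^3(\mathcal H^3_{-3+\e})\subseteq\ker\Upsilon^3$, since a form with $|\alpha|=O(r^{-3+\e})$ restricts to $\{r\}\times\Sigma_i$ as an intrinsic $3$-form of size $O(r^\e)$ vanishing cohomologically as $r\to 0$; injectivity on this subspace descends from (TCS3); and Proposition~\ref{kernelchange34prop} with equation~\eqref{exactkereq1} gives $\dim\mathcal H^3_{-3+\e}=b^3-\dim\im\Upsilon^3=\dim\ker\Upsilon^3$, forcing equality. The argument for (TAC4) is the Hodge-star dual, using $\chi^4$, (TAC3), and the rate $-4-\e$ restriction vanishing at infinity. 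Finally, the surjectivity claims (TAC1) and (TCS1) are assembled from Lemmas~\ref{kernelchange34preliminarylemmaA} and~\ref{kernelchange34preliminarylemmaB} combined with (TAC2) and (TCS2): given $[\tau]\in H^3(M,\R)$, Lemma~\ref{kernelchange34preliminarylemmaA} yields $\eta_1\in\mathcal H^3_{-3+\e}$ with $\Upsilon^3\chi^3(\eta_1)=\Upsilon^3[\tau]$, so $[\tau]-\chi^3(\eta_1)\in\ker\Upsilon^3=\chi^3(\mathcal H^3_{-3-\e})$ by (TAC2), and absorbing $\mathcal H^3_{-3-\e}\subseteq\mathcal H^3_{-3+\e}$ gives $[\tau]\in\chi^3(\mathcal H^3_{-3+\e})$; (TCS1) is analogous.

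The main obstacle will be the implicit CS analog of Proposition~\ref{LMcohomologyprop2} that is needed for (TCS2). The issue is that Proposition~\ref{LMcohomologyprop} realizes $\mathcal H^4_{L^2}\cong H^4_{\mathrm{cs}}(M,\R)$ in CS via the composition Hodge-star/natural-map/Poincar\'e-duality, which is \emph{not} a priori the same as the de Rham class map $\omega\mapsto[\omega]\in H^4(M,\R)$; the compatibility that these two maps agree after composition with $\mathcal{I}^4$ is essentially a Hodge--de Rham naturality statement, and the cleanest way to prove it would be via a direct Hodge decomposition argument (Theorem~\ref{kformsHodgethm}) applied to a compactly supported closed representative of an arbitrary class in $\im\mathcal{I}^4$.
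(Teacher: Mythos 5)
Your proposal is correct and follows essentially the same route as the paper: the chain of inclusions via Corollary~\ref{kernelchangecor}, (TAC1)/(TCS1) from Lemmas~\ref{kernelchange34preliminarylemmaA} and~\ref{kernelchange34preliminarylemmaB} (with the error term absorbed using (TAC2)/(TCS2)), (TAC2) from Proposition~\ref{LMcohomologyprop2} together with the exact sequence~\eqref{longexactsequenceeq}, (TAC3)/(TCS3) from Proposition~\ref{LMcohomologyprop}, and (TAC4)/(TCS4) by the restriction-decay argument plus a rank--nullity count using Proposition~\ref{kernelchange34prop}. The one ingredient you flag as an obstacle --- the degree-$4$ CS analogue of Proposition~\ref{LMcohomologyprop2} --- is exactly what the paper uses implicitly when it declares the CS case ``essentially the same''; it comes from the same cited source (Lockhart's Corollary 7.10, which is not specific to degree $3$), so your proposed Hodge-decomposition workaround, whose $W$-piece would in any case need extra care, is not actually needed.
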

\begin{proof}
We prove the AC case. The CS case is essentially the same. The relation $\mathcal H^3_{-4 - \e} \subseteq \mathcal H^3_{-4 + \e} = \mathcal H^3_{-3 - \e} \subseteq \mathcal H^3_{-3 + \e}$ is just Corollary~\ref{kernelchangecor}. Statement (TAC1) is Lemma~\ref{kernelchange34preliminarylemma}. Statement (TAC2) follows from Proposition~\ref{LMcohomologyprop2}, which says $\im \chi^3 = \im (H^3_{\mathrm{cs}}(M, \R) \to H^3(M, \R) )$, and the long exact sequence~\eqref{longexactsequenceeq}. Statement (TAC3) is part of Proposition~\ref{LMcohomologyprop}. Finally, (TAC4) follows from the proof of equation~\eqref{kernelchange34eq2}, statement (TAC3), and the rank-nullity theorem applied to the map $\Upsilon^4 : H^4 (M, \R) \to \oplus_{i=1}^n H^4 (\Sigma_i, \R)$.
\end{proof}

\begin{rmk} \label{kernelchange34propaltrmk}
We see that to go between the AC and CS cases, we effectively switch $3$ with $4$ in the maps and cohomology groups, and $+\e$ with $-\e$.
\end{rmk}
 
The results of this section will be used in Section~\ref{dimensionsec} to compute the virtual dimension of the moduli space, which will have both topological and analytic components.

\subsection{Parallel tensors on \texorpdfstring{$\mathbf{\G}$}{G2} conifolds} \label{parallelsec}

The holonomy $H(\nabla)$ of a connection $\nabla$ on the tangent bundle $TM$ of a connected manifold $M$ is contained in (and often equal to) the subgroup of the general linear group whose action fixes all parallel tensors on $M$. See, for example, Joyce~\cite[Chapter 2]{J4} for more details. In particular, the holonomy of the Levi-Civita connection on an oriented Riemannian manifold $M^n$ is reduced from $\mathrm{SO}(n)$ by each additional parallel tensor. On an irreducible $\G$~manifold (one whose holonomy is exactly $\G$) the only parallel tensors are the metric $g$, the volume form $\vol$, the $\G$~structure $\ph$, and the dual $4$-form $\ps = \st \ph$. Since $\G$~conifolds are all irreducible, they admit no nontrivial parallel $1$-forms.

We now recall the Bochner--Weitzenb\"ock formula, valid for any Riemannian manifold $(M, g)$. Let $X$ be a $1$-form. Then
\begin{equation*}
\langle \Delta X, X \rangle \, = \, \langle \nabla^* \nabla X, X \rangle + \mathrm{Ric}(X, X)
\end{equation*}
where $\Delta$ is the Hodge Laplacian, $\nabla$ is the Levi-Civita covariant derivative, and $\mathrm{Ric}$ is the Ricci tensor of $(M, g)$, with indices raised by the metric to become a symmetric bilinear form on $1$-forms. Since all $\G$~manifolds are Ricci-flat, the last term above vanishes, and we have
\begin{equation} \label{bochnerformulaeq}
\langle \Delta X, X \rangle \, = \, \langle \nabla^* \nabla X, X \rangle.
\end{equation}

\begin{lemma} \label{bochnerlemma}
Let $M$ be a $\G$~conifold. Let $f$ be a harmonic function and let $X$ be a harmonic $1$-form on $M$. If
\begin{equation} \label{bochnerlemmafeq}
f  \, = \, O(\varrho^{\lambda}) \quad \text{ for some $\lambda < 1$ (AC) or $\lambda > - 5$ (CS),}
\end{equation}
then $f$ is constant. If
\begin{equation} \label{bochnerlemmaXeq}
X \, = \, O(\varrho^{\lambda}) \quad \text{ for some $\lambda \leq 0$ (AC) or $\lambda \geq -5$ (CS),}
\end{equation}
then $X = 0$.
\end{lemma}
\begin{proof}
We give the proof in the AC case. First suppose that $X$ is a harmonic $1$-form such that $X =  O(\varrho^{\lambda})$ for some $\lambda<-\frac{5}{2}$. We want to integrate both sides of~\eqref{bochnerformulaeq} over $M$. Note that
\begin{align*}
\langle \nabla^* \nabla X, X \rangle \, & = \, -g^{ij} (\nabla_i \nabla_j X_k) X_m g^{km} \\
& = \, - g^{ij} \nabla_i \bigl( (\nabla_j X_k) X_m g^{km} \bigr) + g^{ij} g^{km} (\nabla_j X_k) (\nabla_i X_m) \\
& = \, d^* Y + | \nabla X |^2
\end{align*}
for the vector field $Y = \langle \nabla X, X \rangle$. Since $X \in L^2_{l, \lambda}$, we have $\nabla X \in L^2_{l-1, \lambda - 1}$ and thus the vector field $Y = \langle \nabla X, X \rangle$ is $O(\varrho^{2 \lambda - 1})$ as $\varrho \to \infty$. Let $M_R = \{ x \in M; \varrho(x) \leq R \}$, and observe that $\partial (M_R) \cong \{R \} \times \Sigma$. Hence, by Stokes's Theorem and the fact that $Y = O(\varrho^{2 \lambda -1})$, for $R$ sufficiently large we have
\begin{equation*}
\left| \int_{M_R} (d^* Y) \, \volm \right| \, = \, \left| \int_{\partial (M_R)} (Y \hk \volm) \right| \,
\leq \, C R^{2 \lambda - 1} \left| \int_{\{R\} \times \Sigma} \vol_{\{R\} \times \Sigma} \right| \, = \, \tilde C R^{2 \lambda + 5}
\end{equation*}
which goes to zero as $R \to \infty$, since $\lambda < - \frac{5}{2}$. Therefore, since $\Delta X = 0$, when we integrate both sides of~\eqref{bochnerformulaeq} over $M$, we obtain
\begin{equation}
0 \, = || \nabla X ||^2_{L_2}. 
\end{equation}
Hence $\nabla X = 0$, so $X$ is a parallel $1$-form. But the hypothesis that $M$ is a $\G$~conifold then implies that $X = 0$. To conclude we observe that the space of harmonic 1-forms on $M$ with decay rate $O(\varrho^\nu)$ does not change for $\nu \in [-5,0]$ by 
Proposition~\ref{excludeextensionprop} and Theorem~\ref{kernelthm}.

The statement about functions follows in the same way by integrating the equation $\langle \Delta f, f \rangle = \langle \nabla^* \nabla f, f \rangle$ and using the excluded rates from Proposition~\ref{excludeextensionprop}.

The CS case is almost identical except that there are $n$ ends instead of just one, and $\varrho \to 0$ on each end instead of $\varrho \to \infty$. One can argue by reversing the appropriate inequalities on the decay and using the lower bound for the excluded rates instead of the upper bound.
\end{proof}

\begin{lemma} \label{bochnerlemma2}
Let $M$ be a $\G$~conifold. Let $X$ be a $1$-form on $M$ that satisfies $d d^* X + \frac{2}{3} d^* d X = 0$, and
\begin{equation*}
X \, = \, O(\varrho^{\lambda}) \quad \text{ for some $\lambda \leq 0$ (AC) or $\lambda \geq -5$ (CS).}
\end{equation*}
Then we have $X = 0$.
\end{lemma}
\begin{proof}
The proof is very similar to the proof of Lemma~\ref{bochnerlemma}, using $d d^* X + \frac{2}{3} d^* d X = \Delta X - \frac{1}{3} d^* d X$ and $| dX | \leq | \nabla X|$. One first proves it for $X = O(\varrho^{\lambda})$ with $\lambda<-\frac{5}{2}$, and the uses the excluded rates on the cones in Proposition~\ref{excludeextensionprop2} to conclude.
\end{proof}

\subsection{A gauge-fixing condition on moduli spaces of \texorpdfstring{$\mathbf{\G}$}{G2} conifolds} \label{gaugefixingsec}

In this section we discuss a gauge-fixing condition on moduli spaces of $\G$~conifolds. At first our discussion is quite general, to motivate the definition of the gauge-fixing condition that we choose.

Let $(M, \ph)$ be a $\G$~manifold, which is \emph{not} necessarily compact. Let $\mathcal T$ be the space of all torsion-free $\G$~structures on $M$. Then the space $\mathcal D$ of diffeomorphisms of $M$ acts on $\mathcal T$ by pullback. If $F \in \mathcal D$, then
\begin{equation*}
F : \ph \mapsto F^* \ph.
\end{equation*}
Consider a smooth curve $F_t = \exp(tX)$ in $\mathcal D$, where $X$ is a smooth vector field on $M$. This path passes through the identity diffeomorphism $F_0 = \mathrm{Id}_M$ at $t=0$. Therefore, the tangent space $T_{\ph} (\mathcal D \cdot \ph)$ at $\ph$ to the orbit $\mathcal D \cdot \ph$ is spanned by elements of the form $\rest{\ddt}{t=0} (F_t^* \ph) = \mathcal L_X \ph = d(X \hk \ph)$. Thus we have
\begin{equation} \label{gaugefixeq1}
T_{\ph} (\mathcal D \cdot \ph) \, = \, d( \Omega^2_7).
\end{equation}
Let $\tilde \ph$ be another torsion-free $\G$~structure on $M$, such that $\tilde \ph = \ph + \eta$ for some smooth $3$-form $\eta$. Since both $\ph$ and $\tilde \ph$ are torsion-free and thus closed, we must have $d \eta = 0$. In order to break the diffeomorphism invariance, we want to consider those new $\G$~structures for which $\tilde \ph - \ph = \eta$ is \emph{transverse} to the infinitesimal diffeomorphisms which, as explained above, are all of the form $\mathcal L_X \ph = d( X \hk \ph)$ for a smooth vector field $X$. Suppose that $\eta$ lies in $L^2 (\Lambda^3 (T^* M))$. Then the condition that $\eta$ is actually $L^2$-orthogonal to $d(\Omega^2_7)$ is that
\begin{equation*}
0 \, = \, \langle \langle d(X \hk \ph) , \eta \rangle \rangle \, = \langle \langle X \hk \ph , d^* \eta \rangle \rangle.
\end{equation*}
Notice that this condition is always implied by the stronger condition that $\pi_7 (d^* \eta) = 0$ \emph{pointwise}. This observation motivates the following definition.

\begin{defn} \label{gaugefixingdefn}
Suppose $\tilde \ph = \ph + \eta$ is another $\G$~structure on $M$, for some closed $3$-form $\eta$. We say that $\tilde \ph$ satisfies the \emph{gauge-fixing condition} (with respect to $\ph$) if
\begin{equation*}
\pi_7 (d^* \eta) \, = \, 0.
\end{equation*}
Here $\pi_7$ and $d^*$ are taken with respect to the $\G$~structure $\ph$.
\end{defn}

We now relate this gauge-fixing condition to a slightly different condition in the conifold case.

\begin{lemma} \label{gaugefixinglemmaA}
Let $M$ be a $\G$~conifold. Let $\zeta$ be a smooth $3$-form such that $d \zeta = 0$ and $\pi_7 (d^* \zeta) = 0$. Let
\begin{equation} \label{gaugefixinglemmaAeq}
\zeta \, = \, \pi_1 \zeta + \pi_7 \zeta + \pi_{27} \zeta \, = \, f \ph + \st (X \wedge \ph) + \pi_{27} \zeta
\end{equation}
for some function $f$ and some $1$-form $X$. Then $\Delta f = 0$ and $\Delta X = 0$. In addition, if $f$ and $X$ satisfy the decay conditions in~\eqref{bochnerlemmafeq} and~\eqref{bochnerlemmaXeq}, respectively, then $f = c$ is constant and $X = 0$, so $\zeta = c \ph + \pi_{27} \zeta$.
\end{lemma}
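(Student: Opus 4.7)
The plan is to exploit the gauge-fixing condition $\pi_7(d^*\zeta) = 0$ together with $d\zeta = 0$ to force $\Delta\zeta$ into $\Omega^3_{27}$, and then to use that the Hodge Laplacian preserves the type decomposition of $3$-forms on a torsion-free $\G$~manifold to extract pure-type equations for $f$ and $X$.

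First I would note that $\pi_7(d^*\zeta) = 0$ is the statement that $d^*\zeta \in \Omega^2_{14}$. Since trivially $d^*(d^*\zeta) = 0$, Proposition~\ref{special14prop} applied with $\mu = d^*\zeta$ gives $d(d^*\zeta) \in \Omega^3_{27}$. Combined with $d\zeta = 0$, this yields
\begin{equation*}
\Delta \zeta \, = \, dd^*\zeta + d^*d\zeta \, = \, dd^*\zeta \, \in \, \Omega^3_{27}.
\end{equation*}
Next, by Remark~\ref{laplaciansrmk} the Hodge Laplacian preserves the decomposition $\Omega^3 = \Omega^3_1 \oplus \Omega^3_7 \oplus \Omega^3_{27}$. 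Using~\eqref{temp27eq1} and~\eqref{temp27eq2} together with $\Delta \st = \st \Delta$ on a Ricci-flat manifold, we would decompose
\begin{equation*}
\Delta \zeta \, = \, (\Delta f)\ph + \st((\Delta X) \wedge \ph) + \Delta(\pi_{27}\zeta),
\end{equation*}
with the three summands in $\Omega^3_1$, $\Omega^3_7$, and $\Omega^3_{27}$ respectively. Projecting the relation $\Delta \zeta \in \Omega^3_{27}$ onto $\Omega^3_1$ and $\Omega^3_7$, and using the injectivity of the isomorphisms $f \mapsto f\ph$ and $X \mapsto \st(X \wedge \ph)$, would then force $\Delta f = 0$ and $\Delta X = 0$.

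For the second claim, under the decay assumptions~\eqref{bochnerlemmaXeq} and~\eqref{bochnerlemmafeq}, I would appeal directly to Lemma~\ref{bochnerlemma}: the harmonic function $f$ must be constant and the harmonic $1$-form $X$ must vanish, so $\zeta = c\ph + \pi_{27}\zeta$ for some constant $c$.

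No step here looks like a serious obstacle, as all necessary inputs---preservation of the pure-type decomposition by $\Delta$, the structural result of Proposition~\ref{special14prop}, and the Bochner argument packaged in Lemma~\ref{bochnerlemma}---are already established in the excerpt. The only spot requiring mild care is the type-decomposition bookkeeping, particularly identifying $\Omega^3_7$ with $\Omega^1$ via the injective map $X \mapsto \st(X \wedge \ph)$ so that the vanishing of the $\Omega^3_7$ component of $\Delta\zeta$ genuinely forces $\Delta X = 0$.
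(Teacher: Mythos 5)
Your proposal is correct and follows essentially the same route as the paper: use Proposition~\ref{special14prop} (with $\mu = d^*\zeta$, automatically coclosed) plus $d\zeta = 0$ to get $\Delta\zeta = dd^*\zeta \in \Omega^3_{27}$, then invoke the type-preservation of $\Delta$ via~\eqref{temp27eq1}--\eqref{temp27eq2} and injectivity of the identifications to conclude $\Delta f = 0$, $\Delta X = 0$, and finish with Lemma~\ref{bochnerlemma}. No gaps.
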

\begin{proof}
The fact that $\Delta f = 0$ and $\Delta X = 0$ is precisely Corollary~\ref{special27cor2}. The rest of the statement now follows immediately from Lemma~\ref{bochnerlemma}.
\end{proof}

\begin{rmk} \label{gaugefixingrmk}
The gauge-fixing condition of Definition~\ref{gaugefixingdefn} is used by Joyce~\cite{J4} to study the moduli space of \emph{compact} $\G$~manifolds. We have to modify this gauge-fixing condition for the moduli space of $\G$~conifolds, because of complications arising from noncompactness. Specifically, we achieve this in Theorems~\ref{infinitesimalslicethm} and~\ref{onetoonethm}.
\end{rmk}

\begin{rmk} \label{gaugefixingdesingsrmk}
A slightly different gauge-fixing condition for AC $\G$~manifolds was given in~\cite[Definition 3.3]{Kdesings}. That other condition is discussed further in Section~\ref{gaugefixexistssec} of the present paper, where we establish when it can be achieved. Also, Lemma~\ref{Gph.lemma} relates the two gauge-fixing conditions.
\end{rmk}

We end this section with a result that is essentially a \emph{linearized version} of our Theorem~\ref{onetoonethm} that comes much later, because we have now assembled the tools to state and prove this linearized version. It will be crucial in explicitly describing a certain finite-dimensional space later in Theorem~\ref{infinitesimalslicethm}, consisting of forms which are orthogonal to the linearized action by diffeomorphisms but do not satisfy the natural gauge-fixing condition. We will show that this space can be nonzero for geometric reasons, and is not a defect in our analytic approach.

\begin{thm} \label{linearized.onetoonethm}
Let $M$ be a $\G$~conifold. Let $f$ be a function satisfying the decay condition~\eqref{bochnerlemmafeq} and let $X$ be a $1$-form satisfying~\eqref{bochnerlemmaXeq} and $d^* X = 0$. Let $\zeta$ be a $3$-form on $M$ such that $d \zeta = 0$ and such that
\begin{equation} \label{linearized.onetoonezetadecayeq}
\zeta \, = \, O(\varrho^{\lambda}) \quad \text{ for some $\lambda \leq 0$ (AC) or $\lambda > - 5$ (CS),}
\end{equation}
The following two conditions are equivalent.
\begin{enumerate}[(a)]
\item $\quad \pi_1 \zeta = (f + c) \ph, \quad \pi_7 \zeta = \st (X \wedge \ph), \quad \text{and } d (L_{\ph} \zeta) = 0 \quad \text{for some constant } c$,
\item $\quad d \st \zeta \, = \, \frac{7}{3} df \wedge \ps + 2 dX \wedge \ph$.
\end{enumerate}
\end{thm}
\begin{proof}
Suppose that \emph{(a)} holds. We have $\zeta = (f + c) \ph + \st (X \wedge \ph) + \eta$ for some $\eta \in \Omega^3_{27}$. From~\eqref{Lpeq} we find $L_{\ph} \zeta = \frac{4}{3} (f + c) \ps + X \wedge \ph - \st \eta$, and therefore $d (L_{\ph} \zeta) = 0$ implies $\frac{4}{3} df \wedge \ps + dX \wedge \ph - d \st \eta = 0$. Hence $d \st \zeta = df \wedge \ps + dX \wedge \ps + d \st \eta = \frac{7}{3} df \wedge \ps + 2 dX \wedge \ph$, which is \emph{(b)}. Note that the proof that \emph{(a)} implies \emph{(b)} did not need the hypotheses on $f$ and $X$, nor even that $d \zeta = 0$.

Now suppose that $f$ is a function, $X$ is a 1-form and $\zeta$ is a 3-form satisfying~\eqref{bochnerlemmafeq},~\eqref{bochnerlemmaXeq} and~\eqref{linearized.onetoonezetadecayeq}, respectively, together with $d^*X=0$ and $d\zeta=0$, and that (b) holds. We can write $\zeta = \tilde f \ph + \st (\tilde X \wedge \ph) + \tilde \eta$ for some function $\tilde f$, some $1$-form $\tilde X$, and some $\tilde \eta \in \Omega^3_{27}$. Then we have
\begin{equation} \label{linearized.onetoonetempeq}
d \st \zeta \, = \, d \tilde f \wedge \ps + d \tilde X \wedge \ph + d \st \tilde \eta \, = \, \frac{7}{3} df \wedge \ps + 2 dX \wedge \ph.
\end{equation}
We claim that \emph{(a)} will follow if we can show that $\tilde f = f + c$ and $\tilde X = X$. Indeed, if this is the case, then~\eqref{linearized.onetoonetempeq} becomes $d \st \tilde \eta = \frac{4}{3} df \wedge \ps + dX \wedge \ph$ which is precisely $d (L_{\ph} \zeta) = 0$. To show that $\tilde f = f + c$ and $\tilde X = X$, we can use Corollary~\ref{special27cor} and $d \zeta = 0$ to deduce that $d^* \tilde X = 0$ and that $\pi_7 d^* \zeta = ( - \frac{7}{3} d \tilde f + \frac{4}{3} \curl \tilde X) \hk \ph$. But the hypothesis \emph{(b)} says, using~\eqref{o27eq},~\eqref{o214eq}, and~\eqref{curllemmaeq}, that
\begin{align*}
d^* \zeta \, & = \, - \st d \st \zeta \, = \, - \st \big( \frac{7}{3} df \wedge \ps + 2 dX \wedge \ph \big) \\
& = \, - \frac{7}{3} df \hk \ph + 4 \pi_7 dX - 2 \pi_{14} dX \, = \, \bigl( - \frac{7}{3} df + \frac{4}{3} \curl X \bigr) \hk \ph - 2 \pi_{14} dX.
\end{align*}
Hence we find that $- \frac{7}{3} df + \frac{4}{3} \curl X = - \frac{7}{3} d \tilde f + \frac{4}{3} \curl \tilde X$, or equivalently that $d (\tilde f - f) = \frac{4}{7} \curl (\tilde X - X)$. Taking $d^*$ and $\curl$ of this equation, and using the identities in Remark~\ref{curlrmk} and the fact that $d^* (\tilde X - X) = 0$, gives $\Delta (\tilde f - f) = 0$ and $\Delta (\tilde X - X) = 0$. The decay hypothesis on $\zeta$ in~\eqref{linearized.onetoonezetadecayeq} ensure that both $\tilde f$ and $\tilde X$ satisfy the same conditions~\eqref{bochnerlemmafeq} and~\eqref{bochnerlemmaXeq}, respectively, as $f$ and $X$. Thus the differences $\tilde f - f$ and $\tilde X - X$ also have the same decay, and hence by Lemma~\ref{bochnerlemma} we can conclude that $\tilde X - X = 0$ and $\tilde f - f = c$ for some constant $c$, which is what we needed to show.
\end{proof}

\subsection{Further vanishing results for \texorpdfstring{$\mathbf{\G}$}{G2} conifolds} \label{particularG2analsec}

In this section we present further vanishing results that are particular to $\G$~conifolds. The first result is just a special case of~\cite[Proposition 10.3.4]{J4}, with essentially the same proof, except that it has been adapted to the setting of conifolds. Therefore we need to make assumptions that some forms have a certain decay rate on the ends, and for this reason we give the proof for completeness.

\begin{lemma} \label{threezeroeslemma}
Let $(M, \ph)$ be a $\G$~conifold, so in particular $d \ph = d \Theta (\ph) = 0$. Suppose further that $\tilde \ph$ is another \emph{closed} $\G$~structure on $M$ such that
\begin{equation} \label{threezeroeseq}
d(\Theta(\tilde \ph)) \, = \,  \theta \wedge \ps + dX \wedge \ph
\end{equation}
for some $1$-forms $\theta$ and $X$ on $M$. Further assume that
\begin{equation*}
\left. \begin{aligned}
d(\Theta(\tilde \ph)) \, & = \, O(\varrho^{\lambda}) \\ X \, & = \, O(\varrho^{\lambda + 1}) \\ 
\end{aligned} \right\} \quad \text{ for some $\lambda < - \frac{7}{2}$ (AC) or $\lambda > - \frac{7}{2}$ (CS).}
\end{equation*}
Note that this says, in particular, that $d(\Theta(\tilde \ph))$, $\theta$, and $dX$ are in $L^2$. There is a universal constant $\e$ such that if $\tilde \ph$ is within $\e$ of $\ph$ in the $C^0$ norm on $M$, then $\theta = 0$ and $dX = 0$, so  $d(\Theta(\tilde \ph)) = 0$ and thus $\tilde \ph$ is also torsion-free.
\end{lemma}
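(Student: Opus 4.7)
The plan is to adapt Joyce's strategy from~\cite[Proposition~10.3.4]{J4} for compact $\G$~manifolds to the weighted $L^2$ framework of $\G$~conifolds. Write $\eta = \tilde \ph - \ph$. The closedness of both $\ph$ and $\tilde \ph$ gives $d\eta = 0$, and the $C^0$-smallness of $\eta$ lets me apply Lemma~\ref{quadlemma} to obtain $\Theta(\tilde \ph) = \ps + \Lp(\eta) + \Qp(\eta)$. The rate threshold $\lambda < -\frac{7}{2}$ (AC) or $\lambda > -\frac{7}{2}$ (CS) places $d\Theta(\tilde \ph)$, $\theta$, $dX$, and $X \wedge \ph$ all in $L^2$ with enough headroom to justify the integration by parts of Lemma~\ref{IBPlemma} in every subsequent pairing.

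The backbone of the argument is that, by Remark~\ref{torsionrmk}, the closedness of $\tilde \ph$ forces $d\Theta(\tilde \ph)$ to lie entirely in $\tilde \Omega^5_{14}$---the $\Omega^5_{14}$-component defined using $\tilde \ph$ itself, not $\ph$. Since the pointwise $\G$~type projections depend smoothly on the underlying $\G$~structure, I get the operator estimate $\|\tilde \pi_7 - \pi_7\|_{C^0} \leq C \|\eta\|_{C^0} \leq C \e$, and hence
\begin{equation*}
\|\pi_7 (d \Theta (\tilde \ph))\|_{L^2} \, \leq \, C \e \, \|d \Theta(\tilde \ph)\|_{L^2}.
\end{equation*}
Decomposing the right-hand side of the hypothesized equation using $\theta \wedge \ps \in \Omega^5_7$ (from the identity $\st(\theta \wedge \ps) = \theta \hk \ph$ established in the proof of Corollary~\ref{do27cor}) and $dX \wedge \ph \in \Omega^5_7 \oplus \Omega^5_{14}$ according to $dX = \pi_7(dX) + \pi_{14}(dX)$, I read off
\begin{equation*}
\pi_7(d \Theta(\tilde \ph)) \, = \, \theta \wedge \ps + \pi_7(dX) \wedge \ph, \qquad \pi_{14}(d \Theta(\tilde \ph)) \, = \, \pi_{14}(dX) \wedge \ph \, = \, \st \pi_{14}(dX).
\end{equation*}

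Next, I would expand the squared $L^2$ norms. The $\Omega^5_{14}$ identity directly bounds $\|\pi_{14}(dX)\|_{L^2}$ in terms of $\|d\Theta(\tilde \ph)\|_{L^2}$ via $\|\pi_{14}(dX)\|^2_{L^2} = \|d\Theta(\tilde \ph)\|^2_{L^2} - \|\pi_7(d\Theta(\tilde\ph))\|^2_{L^2}$. On the $\Omega^5_7$ side, the cross term $\langle\langle \theta \wedge \ps,\, \pi_7(dX) \wedge \ph\rangle\rangle$ is handled by noting that $dX \wedge \ph = d(X \wedge \ph)$ (using $d\ph = 0$) and integrating by parts via Lemma~\ref{IBPlemma}:
\begin{equation*}
\langle\langle \theta \wedge \ps,\, d(X \wedge \ph)\rangle\rangle \, = \, \langle\langle d^*(\theta \wedge \ps),\, X \wedge \ph\rangle\rangle.
\end{equation*}
Because $\st(\theta \wedge \ps) = \theta \hk \ph$, the expression $d^*(\theta \wedge \ps)$ is algebraically a first-order differential operator in $\theta$, so Cauchy--Schwarz together with the decay of $X$ absorbs this cross term as part of the overall $\|\theta\|^2 + \|dX\|^2$ bound. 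Combining these pieces produces an inequality of the form
\begin{equation*}
\|\theta\|^2_{L^2} + \|dX\|^2_{L^2} \, \leq \, C \e \, \bigl(\|\theta\|^2_{L^2} + \|dX\|^2_{L^2}\bigr),
\end{equation*}
and choosing $\e$ below the universal constant $\frac{1}{C}$ forces $\theta = 0$ and $dX = 0$.

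The main obstacle I expect is closing this self-improvement loop with a \emph{single} universal small parameter. The estimate from the $\pi_7$-side gives only a coupled bound on the pair $(\theta, \pi_7(dX))$ through $\pi_7(d\Theta(\tilde \ph)) = \theta \wedge \ps + \pi_7(dX) \wedge \ph$, and these two summands lie in the same type component $\Omega^5_7$, so they are not automatically orthogonal. Disentangling them requires the integration by parts above, but that step demands a first-order estimate on $\theta$ bootstrapped from $\theta \wedge \ps = d\Theta(\tilde \ph) - dX \wedge \ph$, and careful bookkeeping is needed to ensure no higher-order derivatives of $\theta$ (which are not \emph{a priori} in $L^2$) are required. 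In the compact case treated by Joyce this bookkeeping is hidden by Stokes's theorem; here it is Lemma~\ref{IBPlemma} together with the rate condition $\lambda < -\frac{7}{2}$ (or $> -\frac{7}{2}$ in CS) that makes the boundary contributions at infinity (AC) or at the singular points (CS) vanish, so that the pairing $\lambda + \mu < -6$ (or $> -6$) of Lemma~\ref{IBPlemma} is satisfied.
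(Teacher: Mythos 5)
Your setup is on the right track and matches the paper's opening moves: the pointwise comparison of the type decompositions for $C^0$-close $\G$~structures, the membership $d\Theta(\tilde\ph)\in\widetilde\Omega^5_{14}$ from Remark~\ref{torsionrmk}, and the reading-off of the components $\zeta_7 = \theta\wedge\ps + \pi_7(dX)\wedge\ph$ and $\zeta_{14} = \st\pi_{14}(dX)$ via~\eqref{o27eq} and~\eqref{o214eq}. But the closing mechanism has a genuine gap, and it is exactly the one you flag as the "main obstacle" without resolving it. Your plan to disentangle $\theta\wedge\ps$ from $\pi_7(dX)\wedge\ph$ rests on the integration by parts $\langle\langle \theta\wedge\ps,\, d(X\wedge\ph)\rangle\rangle = \langle\langle d^*(\theta\wedge\ps),\, X\wedge\ph\rangle\rangle$, followed by "absorbing" this term into $\|\theta\|^2 + \|dX\|^2$. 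There is no a priori control on $\nabla\theta$ whatsoever: the hypotheses bound only $d\Theta(\tilde\ph)$ and $X$, so $d^*(\theta\wedge\ps)$ need not be in $L^2$, and even formally $\|d^*(\theta\wedge\ps)\|$ cannot be bounded by $\|\theta\|$ (it is a first-order operator, not zeroth-order). Consequently the self-improving inequality $\|\theta\|^2 + \|dX\|^2 \le C\e\,(\|\theta\|^2+\|dX\|^2)$ is never actually derived, and even if some version of it were, the constant would depend on derivative norms of $\theta$ and the advertised universality of $\e$ would be lost. Note also that the $\pi_7$-estimate by itself can never separate the two summands: pointwise one can have $\theta\wedge\ps = 2\st\pi_7(dX)$, so that $\zeta_7=0$ with $\theta$ and $dX$ arbitrarily large, which shows that some global identity beyond type-decomposition estimates is indispensable.

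What the paper uses instead, and what your proposal omits, are two Stokes-type identities coming from \emph{exact} $7$-forms, which require only the stated decay of $X$ and $d\Theta(\tilde\ph)$ (this is precisely where $\lambda < -\frac{7}{2}$, resp.\ $\lambda > -\frac{7}{2}$, kills the boundary contribution) and no derivative bounds on $\theta$ at all. First, $dX\wedge dX\wedge\ph = d(X\wedge dX\wedge\ph)$ integrates to zero and yields $2\|\pi_7(dX)\|^2 = \|\pi_{14}(dX)\|^2$; second, $d\Theta(\tilde\ph)\wedge dX$ integrates to zero and yields $\langle\langle \zeta_7, \st\pi_7(dX)\rangle\rangle = -\|\zeta_{14}\|^2$. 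Combining these with the pointwise consequence $\|\zeta_7\|\le\|\zeta_{14}\|$ of $C^0$-closeness and Cauchy--Schwarz gives $\|\zeta_7\|\,\|\pi_7(dX)\| \ge \sqrt{2}\,\|\zeta_7\|\,\|\pi_7(dX)\|$, and it is the universal numerical gap between $1$ and $\sqrt{2}$ (pure $\G$ representation theory) that forces $\zeta_7$, $\zeta_{14}$, $dX$, and then $\theta$ (by injectivity of wedging with $\ps$ on $1$-forms) to vanish, with an $\e$ that is genuinely universal. To repair your argument you should replace the cross-term integration by parts with these two exactness identities; without them the proof does not close.
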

\begin{proof}
We give the proof in the AC case. The CS case is identical except that there are $n$ ends instead of just one, and $\varrho \to 0$ on each end instead of $\varrho \to \infty$. Let $V$ be a $7$-dimensional vector space, with two $\G$~structures $\ph$ and $\tilde \ph$. It follows from simple linear algebra that if $\tilde \ph$ and $\ph$ are close with respect to the metric $\gp$ induced by $\ph$, then the decompositions $\Lambda^5(V) = \widetilde \Lambda^5_7 \oplus \widetilde \Lambda^5_{14}$ and $\Lambda^5(V) = \Lambda^5_7 \oplus \Lambda^5_{14}$ with respect to $\tilde \ph$ and $\ph$, respectively, will also be close. In particular there exists a universal constant $\e$ such that if $| \widetilde \ph - \ph| < \e$, using the metric $| \cdot |$ from $\ph$, then an element $\xi \in \Lambda^5(V)$ for which $\widetilde \pi_7(\xi) = 0$ will also have $\pi_7 (\xi)$ small enough so that $|\pi_7(\xi)| \leq |\pi_{14}(\xi)|$.

Unless stated otherwise, all our projections and inner products will be taken with respect to the $\G$~structure $\ph$. To simplify notation, we will sometimes write $\zeta = d(\Theta(\tilde \ph))$. We take the decomposition of~\eqref{threezeroeseq} in $\Omega^5 = \Omega^5_7 \oplus \Omega^5_{14}$:
\begin{align} \label{threezeroeseq2}
\zeta_7 & = \, {[d(\Theta(\tilde \ph))]}_7 \, = \,  {[\theta \wedge \ps]}_7 + {[dX \wedge \ph]}_7 = \theta \wedge \ps -2 \st  \pi_7(dX) , \\ \label{threezeroeseq3} \zeta_{14} & = \, {[d(\Theta(\tilde \ph))]}_{14} \, = \, {[\theta \wedge \ps]}_{14} +  {[dX \wedge \ph]}_{14} = 0 + \st \pi_{14}(dX) ,
\end{align}
where we have used the Hodge stars of equations~\eqref{o27eq} and~\eqref{o214eq}. Because $\tilde \ph$ is closed, we know by Remark~\ref{torsionrmk} that $\zeta = d(\Theta(\tilde \ph))$ lies in the space $\widetilde{\Omega}^5_{14}$, where the tilde denotes the decomposition with respect to $\tilde \ph$. Hence, if $| \tilde \ph - \ph |_{C^0} < \e$, by the above remarks we have that $| \zeta_7 | \leq | \zeta_{14} |$. Since we assume that $\zeta = d(\Theta(\tilde \ph))$ is in $L^2$, we can integrate over $M$ to conclude that
\begin{equation} \label{threezeroeseq4}
|| \zeta_7 || \, \leq || \zeta_{14} ||.
\end{equation}
Now consider the $7$-form $dX \wedge dX \wedge \ph$, which is exact since $\ph$ is closed. By equations~\eqref{o27eq} and~\eqref{o214eq}, we have
\begin{equation} \label{threezeroeseqtemp}
d X \wedge d X \wedge \ph \, = \, d X \wedge \left( -2  \st \pi_7 (dX) + \st \pi_{14}(d X) \right) \, = \, \left( -2 | \pi_7 (dX) |^2 + | \pi_{14}(dX)|^2 \right) \vol.
\end{equation}
The integral over $M$ of the right hand side is finite because $dX$ is assumed to be in $L^2$. To compute the integral over $M$ of the left hand side, let $M_R = \{ x \in M; \varrho(x) \leq R \}$, and observe that $\partial (M_R) = \{R \} \times \Sigma$. Hence, by Stokes's Theorem and the hypothesis that $X = O(\varrho^{\lambda + 1})$, for $R$ sufficiently large we have
\begin{equation*}
\left| \int_{M_R} d X \wedge d X \wedge \ph \right| = \left| \int_{\partial (M_R)} X \wedge d X \wedge \ph \right| \leq C R^{2 \lambda + 1} \left| \int_{\{R\} \times \Sigma} \vol_{\{R\} \times \Sigma} \right| = \tilde C R^{2 \lambda + 7}
\end{equation*}
which goes to zero as $R \to \infty$, since $\lambda < - \frac{7}{2}$. Therefore, integrating both sides of~\eqref{threezeroeseqtemp} over $M$, we obtain
\begin{equation} \label{threezeroeseq5}
2 ||  \pi_7 (dX) ||^2 \, = || \, \pi_{14} (dX)||^2. 
\end{equation}

Similarly,  $d(\Theta(\tilde \ph)) \wedge dX$ is an exact $7$-form, and
\begin{equation*}
d(\Theta(\tilde \ph)) \wedge dX \, = \, \zeta \wedge \st \st dX \, = \, \langle \zeta_7, \st \pi_7 (dX) \rangle \vol + \langle \zeta_{14}, \st \pi_{14} (dX) \rangle \vol.
\end{equation*}
Since both $\zeta = d(\Theta(\tilde \ph))$ and $dX$ are in $L^2$, and since $\zeta = O(\varrho^{\lambda})$ and $X = O(\varrho^{\lambda + 1})$, we can argue exactly as before to integrate both sides over $M$ to conclude that
\begin{equation} \label{threezeroeseq6}
\langle \langle \, \zeta_7, \st \pi_7 (dX) \, \rangle \rangle \, = \, -\langle \langle \, \zeta_{14}, \st \pi_{14} (dX) \, \rangle \rangle \, = \, - || \zeta_{14} ||^2,
\end{equation}
using the fact that $\zeta_{14} = \st\pi_{14}(dX)$ from equation~\eqref{threezeroeseq3}.

Now we use the Cauchy--Schwarz inequality, and equations~\eqref{threezeroeseq4},~\eqref{threezeroeseq5}, and~\eqref{threezeroeseq6} to compute
\begin{align*}
|| \zeta_7 || \, || \pi_7 (dX) || & = \, || \zeta_7 || \, || \st \pi_7(dX) || \\ & \geq \, - \langle \langle \, \zeta_7, \st \pi_7 (dX) \, \rangle \rangle \\ & = \, +\langle \langle \, \zeta_{14}, \st \pi_{14} (dX) \, \rangle \rangle \\ & = \, || \zeta_{14} || \, || \st \pi_{14}(dX) || \\ & = \, || \zeta_{14} || \, || \pi_{14} (dX) || \\ & = \, \sqrt{2} \, || \zeta_{14} || \, || \pi_{7}(dX) || \\ & \geq \sqrt{2} \, || \zeta_7 || \, || \pi_7(dX) ||.
\end{align*}
Therefore we have concluded that
\begin{equation*}
|| \zeta_7 || \, || \pi_7 (dX) || \, \geq \,  \sqrt{2} \, || \zeta_7 || \, || \pi_7(dX) ||.
\end{equation*}
We have two cases. If $\pi_7(dX) = 0$, then by~\eqref{threezeroeseq5} we have $\pi_{14} (dX) = 0$, so $\zeta_{14} = 0$ by~\eqref{threezeroeseq3}, and thus $\zeta_7 = 0$ by~\eqref{threezeroeseq4}. If, on the other hand, we have $\zeta_7 = 0$, then~\eqref{threezeroeseq6} forces $\zeta_{14} = 0$, and then~\eqref{threezeroeseq3} and~\eqref{threezeroeseq5} together give $dX = 0$. In either case we also get $\theta \wedge \ps = 0$ from~\eqref{threezeroeseq2}, which implies that $\theta = 0$, since wedge product with $\ps$ is injective on $1$-forms.
\end{proof}

\begin{rmk} \label{threezeroesrmk}
The reason that Lemma~\ref{threezeroeslemma} is true is because of the representation theory of $\G$. Essentially, equations~\eqref{o27eq} and~\eqref{o214eq} and Stokes's theorem force the very powerful restrictions~\eqref{threezeroeseq5} and~\eqref{threezeroeseq6} on the forms $\zeta = d(\Theta(\tilde \ph))$ and $dX$. The remaining ingredients are the $C^0$ proximity of $\tilde \ph$ and $\ph$, together with the facts that $\tilde \ph$ is closed and $\ph$ is torsion-free, which force the $\Omega^5_7$ component of $\zeta$ to be controlled by the $\Omega^5_{14}$ component.
\end{rmk}

The remaining results in this section concern the modified Dirac operator $\mdirac$ defined in Section~\ref{spinorsec}, and the operator $\mlap = d d^* + \frac{2}{3} d^* d$ which appears often in relation to gauge-fixing. The first result, Lemma~\ref{mdiracHodgelemma}, is used to prove one case of the infinitesimal slice theorem in Section~\ref{infinitesimalslicethmsec}. The second result, Lemma~\ref{modifiedlaplemma}, is used in Section~\ref{ACextensionsec} to extend our AC deformation theory to higher rates and in Section~\ref{smoothCSsec} to establish smoothness of the CS moduli space under certain conditions.

Suppose that $\lambda+1$ is a noncritical rate for $\mdirac$. Thus the operator
\begin{equation*}
\mdirac_{l+1, \lambda+1} : L^2_{l+1, \lambda+1}( \Lambda^0_1 \oplus \Lambda^1_7) \to L^2_{l, \lambda}( \Lambda^3_1 \oplus \Lambda^3_7)
\end{equation*}
is Fredholm, and therefore by Theorem~\ref{fredholmalternativethm} we have
\begin{equation} \label{mdiracHodgeeq1}
L^2_{l, \lambda}( \Lambda^3_1 \oplus \Lambda^3_7 ) \, = \, \mdirac \left( L^2_{l+1, \lambda+1}(\Lambda^0_1 \oplus \Lambda^1_7 ) \right) \oplus (\mathcal V_{\ph}')_{l, \lambda},
\end{equation}
where $(\mathcal V_{\ph}')_{l, \lambda}$ is a \emph{finite-dimensional} subspace of $L^2_{l, \lambda}( \Lambda^3_1   \oplus \Lambda^3_7 )$, such that
\begin{equation} \label{mdiracHodgeeq2}
(\mathcal V_{\ph}')_{l, \lambda} \, \cong \, \ker (\mdiracstar)_{- 7 - \lambda}.
\end{equation}

\begin{lemma} \label{mdiracHodgelemma}
Let $(M, \phm)$ be a $\G$~conifold. Consider the map $\mdirac_{l+1, \lambda+1}$.  
\begin{itemize}
\item[(a)] In the AC case, it is injective if $\lambda < -1$, and it is surjective if $\lambda > -7$. 
\item[(b)]In the CS case, it is injective if $\lambda > -1$, it has a one-dimensional kernel if $\lambda \in  (-7, -1]$, it has a one-dimensional cokernel if $\lambda \in [-7, -1)$, and it is surjective if $\lambda < -7$.
\end{itemize}
\end{lemma}
\begin{proof}
Let $ (f , X)$ lie in $\ker (\mdirac)_{\mu}$. Corollary~\ref{mdirackernelcor} tells us that $\Delta f = 0$ and $\Delta X = 0$.

Suppose first that $M$ is AC. By Lemma~\ref{bochnerlemma}, $f$ is constant if $\mu<1$ and it will thus be zero if $\mu<0$. Now $X$ is a harmonic $1$-form on $M$, so by Lemma~\ref{bochnerlemma} if $\mu\leq 0$ we have $X=0$. Applying this for $\mu = \lambda+1$ says that if $\lambda < -1$ then $\mu < 0$ so $f = 0$ and $X = 0$. Hence, if we set $\mu = -7 - \lambda$, we see $\ker (\mdirac)_{- 7 - \lambda} = 0$ for $\lambda > -7$ and thus by~\eqref{mdiracHodgeeq1} and~\eqref{mdiracHodgeeq2} we conclude that $\mdirac_{l+1, \lambda+1}$ is surjective.

Now suppose that $M$ is CS. In this case, Lemma~\ref{bochnerlemma} says that $f$ is constant and $X =0 $ if $\mu > -5$, and thus $(f,X) = (0,0)$ if $\mu > 0$. (Notice that the pair $(K,0)$ when $K$ is a nonzero constant is indeed in $\ker \mdirac_{\mu}$ for $\mu = 0$.) Moreover, Proposition~\ref{modifieddiracexcludeprop} shows that there are no exceptional rates in $(-6,0)$ for $\mdirac$, so we can in fact say that $f$ is constant and $X = 0$ if $\mu \in (-6,0]$. Setting $\mu = \lambda+1$ we see that $\ker(\mdirac)_{\lambda+1}$ is one-dimensional if $\lambda \in (-7,-1]$ and is zero if $\lambda > -1$. For the cokernel we let $\mu = -7 - \lambda$, so that $\mu \in (-6,0]$ if and only if $\lambda \in [-7, -1)$ and $\mu > 0$ if and only if $\lambda < -7$.
\end{proof}

Recall that, by Proposition~\ref{specialellipticprop}, the operator $\pi_7d^*d : \Omega^2_7 \to \Omega^2_7$ is elliptic and, under the identification $\Omega^1 \cong \Omega^2_7$, corresponds to the operator $\mlap = d d^* + \frac{2}{3} d^* d$ on $1$-forms. Suppose that $\lambda + 1$ is a noncritical rate for $\pi_7 d^*d$. Thus the operator
\begin{equation*}
( \pi_7d^*d )_{l+1, \lambda+1} : (\Omega^2_7)_{l+1,\lambda+1} \to ( \Omega^2_7 )_{l-1,\lambda-1}
\end{equation*}
is Fredholm, and hence
\begin{equation*}
(\Omega^2_7)_{l-1,\lambda-1} \, = \, ( \pi_7d^*d ) \left( (\Omega^2_7)_{l+1,\lambda+1} \right) \oplus U_{\lambda-1},
\end{equation*}
where $U_{\lambda-1}$ is a \emph{finite-dimensional} subspace of $( \Omega^2_7 )_{l-1, \lambda-1}$, such that
\begin{equation*}
U_{\lambda-1} \, \cong \, \ker ( \pi_7d^*d )_{- 6 - \lambda}.
\end{equation*}

\begin{lemma} \label{modifiedlaplemma}
Let $(M, \phm)$ be a $\G$~conifold. Consider the map $(\pi_7d^*d)_{l+1, \lambda+1}$. 
\begin{itemize}
\item[(a)] In the AC case it is injective for $\lambda \leq -1$ and surjective for $\lambda \geq -6$. 
\item[(b)] In the CS case it is injective for $\lambda \geq -6$ and surjective for $\lambda \leq -1$.
\end{itemize}
\end{lemma}
\begin{proof}
In the AC case, suppose  $\omega \in \ker (\pi_7d^*d)_{\mu}$. By Proposition~\ref{specialellipticprop} and Lemma~\ref{bochnerlemma2}, we conclude that $\omega = 0$ if $\mu \leq 0$. Thus $( \pi_7d^*d
)_{l+1, \lambda+1}$ is injective when $\lambda \leq -1$ (taking $\mu = \lambda + 1$) and surjective for all $\lambda \geq -6$ (taking $\mu = -6 - \lambda$). In the CS case, suppose $\omega \in \ker (\pi_7d^*d)_{\mu}$. Again by Proposition~\ref{specialellipticprop} and Lemma~\ref{bochnerlemma2}, we conclude that $\omega = 0$ if $\mu \geq - 5$. Applying this to $\mu=\lambda+1$ and $\mu=-6-\lambda$ gives the result.
\end{proof}

\section{The deformation theory of \texorpdfstring{$\mathbf{\G}$}{G2} conifolds} \label{conifoldmodulisec}

In this section we study the deformation theory of $\G$~conifolds, and state and prove our main theorem. Recall that for us, a $\G$~conifold $(M, \phm)$ is either an AC $\G$~manifold of some rate $\nu < 0$ as in Definition~\ref{ACdefn} or a CS $\G$~manifold of some rates $(\nu_1, \ldots, \nu_n) > (0, \ldots, 0)$ as in Definition~\ref{CSdefn}.

\subsection{The \texorpdfstring{$\mathbf{\G}$}{G2} conifold moduli space} \label{conifoldmodulidefnsec}

In this section we define the $\G$~conifold moduli space, and then give some informal arguments to motivate how we will proceed to prove our main theorem.

Let $\mathcal T_{\nu}$ denote the set of all torsion-free conifold $\G$~structures on $M$ which converge at the same rate $\nu$ on the ends to the same $\G$~cones as the original conifold $\G$~structure $\ph$. Explicitly,
\begin{equation*}
\mathcal T_{\nu} \, = \, \{ \tilde \ph \in \Omega^3_+(M) ; \, \tilde \ph - \ph \in C^{\infty}_{\nu}(\Lambda^3 T^*M), \, d\tilde \ph = 0, \, d\Theta(\tilde \ph) = 0 \}.
\end{equation*}

In order to define the \emph{moduli space} of torsion-free conifold $\G$~structures on $M$, we need to take the quotient of $\mathcal T_{\nu}$ by an appropriate equivalence relation. The torsion-free condition is diffeomorphism invariant, but arbitrary diffeomorphisms do not preserve the convergence condition on the ends.  As mentioned in the introduction, we choose to quotient out by those diffeomorphisms which \emph{fix} the $\G$~cones on the ends.

\begin{rmk} \label{dividemorermk}
One could then in principle further divide out by extra diffeomorphisms later, such as those which are asymptotic to automorphisms of the $\G$~cones on the ends. We eventually do something like this in the CS case in Section~\ref{smoothCSsec}. See Definition~\ref{reduced.moduli.defn} and the rest of that section.
\end{rmk}

Thus we are interested in diffeomorphisms isotopic to the identity which are \emph{generated by vector fields that decay to zero on the ends}. For such diffeomorphisms to preserve the rate of convergence at the ends to the asymptotic cones, their infinitesimal generators (vector fields) must be of rate $\nu+1$ on the ends. Specifically, we define $\mathcal D_{\nu + 1}$ to be the group generated by the set
\begin{equation*}
\{ \exp(X); \, X \in \Gamma(TM), \, X \in C^{\infty}_{\nu + 1}(TM) \}.
\end{equation*}
This is a connected component of the identity in the space of all diffeomorphisms of $M$, and hence a subgroup of $\mathrm{Diff}^0(M)$, the diffeomorphisms isotopic to the identity.

It is clear that $\mathcal D_{\nu + 1}$ acts on $\mathcal T_{\nu}$ by pullback. We now define the \emph{$\G$~conifold moduli space} $\mathcal M_{\nu}$ of rate $\nu$ on $M$ to be the quotient space $\mathcal M_{\nu} = \mathcal T_{\nu} / \mathcal D_{\nu + 1}$. This defines $\mathcal M_{\nu}$ as a topological space. We want to describe the structure of $\mathcal M_{\nu}$ more precisely. In the AC case, we will see that for generic rates that lie in a certain range, the space $\mathcal M_{\nu}$ is actually a finite-dimensional smooth manifold. For other rates in the AC case, and in general for the CS case, the deformation theory will be obstructed and we will describe the obstruction spaces explicitly.

The orbit of $\ph$ under $\mathcal{D}_{\nu+1}$ is an (infinite-dimensional) smooth manifold contained in $\mathcal{T}_\nu$.
Consider a smooth curve $F_t = \exp(tX)$ in $\mathcal D_{\nu + 1}$, where $X \in C^{\infty}_{\nu + 1}(TM)$. This path passes through the identity diffeomorphism $F_0 = \mathrm{Id}_M$ at $t=0$. Therefore, the tangent space $T_{\ph} (\mathcal D_{\nu + 1} \cdot \ph)$ at $\ph$ to the orbit $\mathcal D_{\nu + 1} \cdot \ph$ is spanned by elements of the form $\rest{\ddt}{t=0} (F_t^* \ph) = \mathcal L_X \ph = d(X \hk \ph)$. Thus we have
\begin{equation} \label{orbittangentspaceeq}
T_{\ph} (\mathcal D_{\nu + 1} \cdot \ph) \, = \, d\left( C^{\infty}_{\nu + 1}(\Lambda^2_7 (T^*M)) \right).
\end{equation}

To study $\mathcal{M}_{\nu}$ it is useful to understand what its tangent space at the orbit of $\varphi$ would be, were it a smooth manifold near there.  To this end, suppose $\ph_t$ is a smooth path in $\mathcal T_{\nu}$ passing through $\ph$ at $t=0$. Thus $\ph_t$ is a torsion-free $\G$~structure for all $t$, and therefore by Lemma~\ref{quadlemma} we have that the $3$-form $\eta = \rest{\ddt}{t=0}{\ph_t}$ satisfies $d\eta = 0$ and $\frac{4}{3} d^* \pi_1 (\eta) + d^* \pi_7 (\eta) - d^* \pi_{27}(\eta) = 0$, where the projections are taken with respect to the $\G$~structure $\ph = \ph_0$. Hence we have shown that if $\mathcal{T}_{\nu}$ were  a smooth manifold its tangent space at $\ph$ would satisfy
\begin{equation} \label{premodulitangentspaceeq}
T_{\ph} \mathcal T_{\nu} \,  \subseteq 
\, \{ \eta \in C^{\infty}_{\nu}(\Lambda^3T^*M); \, d \eta = 0, \, d(\Lp(\eta)) = 0 \}, 
\end{equation}
where
\begin{equation*}
\Lp(\eta) \, = \, \frac{4}{3} \st \! \pi_1 (\eta) + \st \pi_7 (\eta) - \st \pi_{27}(\eta)
\end{equation*}
is the linearization of $\Theta$ at $\ph$ defined in equation~\eqref{Lpeq}. For the purpose of motivation, let us assume that the subspace inclusion in~\eqref{premodulitangentspaceeq} is actually an equality. Then if $\mathcal M_{\nu} = \mathcal T_{\nu} / \mathcal D_{\nu + 1}$ were indeed a smooth manifold, we would have that
\begin{equation} \label{sliceeq}
T_{[\ph]} \mathcal M_{\nu} \, \oplus \, T_{\ph} (\mathcal D_{\nu + 1} \cdot \ph) \, = \, T_{\ph} \mathcal T_{\nu}.
\end{equation}
Thus, one of our goals will be to use~\eqref{orbittangentspaceeq} and~\eqref{premodulitangentspaceeq} to find a direct complement of $T_{\ph} (\mathcal D_{\nu + 1} \cdot \ph)$ in $ T_{\ph} \mathcal T_{\nu}$. This will tell us what the ``tangent space'' at $[\ph]$ to $\mathcal M_{\nu}$ would have to be. Then we will use the Banach space implicit function theorem to describe the structure of $\mathcal M_{\nu}$. We will prove our main theorem without requiring any assumption about equality in~\eqref{premodulitangentspaceeq}.

The main theorem that we prove in the next section about the $\G$~conifold moduli space is the following.

\begin{thm} \label{mainthm}
Let $(M, \ph)$ be a $\G$~conifold, asymptotic to particular $\G$~cones on the ends, at some rate $\nu$. Let $\mathcal M_{\nu}$ be the moduli space of all torsion-free $\G$~structures on $M$, asymptotic to the same cones on the ends, at the same rate $\nu$, modulo the action of diffeomorphisms that fix the $\G$~cones on the ends, and fix the rate of convergence $\nu$ to those cones. Then for generic rates $\nu$ (more precisely, for all rates except for a finite set of ``critical rates'' for the operator $d + \dsm$ in the sense of Lockhart--McOwen theory), we have
\begin{itemize}
\item In the AC case, if $\nu \in (-4, 0)$, the space $\mathcal M_{\nu}$ is a \emph{smooth manifold} whose dimension consists of topological and analytic contributions, given precisely in Corollary~\ref{vdimfinalcor}.
\item In the AC case if $\nu < -4$, or in the CS case for any $\nu > 0$, the space $\mathcal M_{\nu}$ is in general only a \emph{topological space}, and the deformation theory may be obstructed. The virtual dimension of $\mathcal M_{\nu}$ again consists of topological and analytic contributions, given precisely in Corollary~\ref{vdimfinalcor}.
\end{itemize}
\end{thm}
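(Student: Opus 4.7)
The plan is to execute the four-step program announced just before Theorem~\ref{mainthm}: (a) prove an infinitesimal slice theorem; (b) upgrade it to a genuine slice theorem via the inverse function theorem; (c) rewrite the torsion-free plus gauge-fixed condition as a nonlinear PDE whose linearization is the operator $\dd^3$ of Section~\ref{specialindexchangesec}; and (d) apply the Banach space implicit function theorem, computing dimensions using the specialized Fredholm theory for $\dd^3$.

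For step (a), I would show that any closed $\eta \in C^{\infty}_\nu(\Lambda^3 T^* M)$ satisfying $d(\Lp \eta) = 0$ decomposes as $\eta = d(X \hk \ph) + \zeta$ with $X$ a vector field of rate $\nu + 1$ and $\zeta$ satisfying the gauge-fixing condition $\pi_7(d^* \zeta) = 0$ of Definition~\ref{gaugefixingdefn}. The argument bifurcates by regime. In the non-$L^2$ AC case ($\nu > -\tfrac{7}{2}$), I would use surjectivity of the modified Dirac operator $\mdirac$ (Lemma~\ref{mdiracHodgelemma}) combined with Lemmas~\ref{gaugefixinglemmaA}--\ref{gaugefixinglemmaC}, which identify the gauge-fix condition with a Dirac-type system on functions and vector fields; the restriction $\nu < -\tfrac{5}{2}$ enters precisely here, so that the Bochner estimate of Lemma~\ref{bochnerlemma} rules out parallel remainders. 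In the $L^2$ settings (AC with $\nu < -\tfrac{7}{2}$, or any CS), I would instead invoke the Hodge decomposition of Proposition~\ref{HodgedecompositionpropL2} together with surjectivity (AC) or injectivity (CS) of $d d^* + \tfrac{2}{3} d^* d$ on $\Omega^2_7$ from Lemma~\ref{modifiedlaplemma}. Step (b) then follows from the inverse function theorem applied to $(X, \zeta) \mapsto \exp(X)^*(\ph + \zeta)$, whose linearization at the origin is exactly the isomorphism furnished by step (a).

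For step (c), since $d\zeta = 0$ on the slice, equation~\eqref{dThetaeq} reduces $d \Theta(\ph + \zeta) = 0$ to the single PDE $d^* \stp (\Lp(\zeta) + \Qp(\zeta)) = 0$, and the quadratic estimates~\eqref{quadeq2} on $\Qp$ supply exactly the hypotheses needed for a Banach-space implicit function theorem on the appropriate weighted Sobolev completions; the linearization of the combined system restricted to closed gauge-fixed $3$-forms is essentially $\dd^3$. The main difficulty --- and the crux of step (d) --- is that $\dd^3$ is not elliptic, so the standard Lockhart--McOwen machinery does not apply directly, which is precisely the reason for the bespoke analysis of Section~\ref{specialindexchangesec}. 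Proposition~\ref{Pindexchangeprop} identifies $\coker \dd^3_{l,\nu} \cong \mathcal O^3_{l,\nu}$, and by Remark~\ref{cokerPdescriptionrmk} this cokernel vanishes in the AC case for $\nu \in (-4, -\tfrac{5}{2})$; in this unobstructed range the implicit function theorem yields a genuine smooth manifold. In the obstructed regimes, $\breve{\mathcal M}_\nu$ is defined by removing the finite-dimensional family of would-be deformations whose image in $\mathcal O^3_{l,\nu}$ could prevent solvability, and its virtual dimension is computed as $\dim \ker \dd^3_{l,\nu}$ corrected by the gauge directions, then made topologically explicit (yielding Corollary~\ref{vdimfinalcor}) by combining the long exact sequence~\eqref{longexactsequenceeq}, the cohomological identifications of Proposition~\ref{LMcohomologyprop}, the kernel-jump formulae of Proposition~\ref{kernelchange34prop}, and the specialized index-change Theorem~\ref{Pindexchangethm}.
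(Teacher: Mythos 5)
Your overall architecture matches the paper's (slice theorem, reduction to a nonlinear PDE, implicit function theorem, index computation for $\dd^3$), but there are genuine gaps in how you treat the obstructed regimes, and they stem from conflating two different finite-dimensional spaces. First, the clean decomposition you assert in step (a) --- every closed $3$-form of rate $\nu$ splits as $d(X \hk \ph)$ plus a gauge-fixed form --- is simply false in the $L^2$ setting (AC with $\nu < -\tfrac{7}{2}$, and every CS case). There the map $\pi_7 d^* d$ on $(\Omega^2_7)_{l+1,\nu+1}$ is Fredholm but not surjective onto $\pi_7 d^*(\mathcal C_{l,\nu})$, so the decomposition requires a third finite-dimensional summand $(\mathcal E_{\ph})_{l,\nu}$ (injectivity of $d d^* + \tfrac{2}{3} d^* d$ in the CS case only gives directness of your two summands, not that they span). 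It is precisely the deformations in $(\mathcal E_{\ph})_{l,\nu}$ that the reduced moduli space $\breve{\mathcal M}_{\nu}$ omits. Your proposal instead defines $\breve{\mathcal M}_{\nu}$ by ``removing deformations whose image in $\mathcal O^3_{l,\nu}$ could prevent solvability,'' which confuses the slice defect with the obstruction space $\mathcal O_{\nu} \cong \coker \dd^3$. These play different roles: $(\mathcal E_{\ph})_{l,\nu}$ enters the definition of $\breve{\mathcal M}_{\nu}$, while $\mathcal O_{\nu}$ is handled by augmenting the nonlinear map to $\widehat F(\eta,\xi) = (d+d^*)\eta - d^* \st \Qp(\eta) + \xi$ with $\xi \in \mathcal O_{\nu}$, so that the linearization becomes surjective; the implicit function theorem then exhibits $\breve{\mathcal M}_{\nu}$ locally as the zero set of a smooth map $\Psi_{\nu} : \mathcal V \subseteq \mathcal H^3_{\nu} \to \mathcal O_{\nu}$, whence the \emph{virtual} dimension $\dim \mathcal H^3_{\nu} - \dim \mathcal O_{\nu} = \ind \dd^3$. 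Without this Kuranishi-type augmentation your step (d) has no mechanism for applying the implicit function theorem when $\dd^3$ fails to be surjective, and ``removing'' deformations does not repair that.

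Second, your step (c) only records the easy direction (torsion-free and gauge-fixed implies the PDE). To identify the moduli space with the zero set of the PDE one needs the converse: that a small solution of $(d+d^*)\eta = d^*\st\Qp(\eta)$ is genuinely torsion-free and gauge-fixed. This is the substantive analytic step, resting on the Stokes/representation-theoretic vanishing result (Lemma~\ref{threezeroeslemma}, the analogue of Joyce's Proposition 10.3.4) applied to $d\Theta(\tilde\ph) = \tfrac{7}{3} df \wedge \ps + 2\, dX \wedge \ph$, and it is exactly here --- not in the Bochner argument of the infinitesimal slice theorem, as you claim --- that the AC restriction $\nu < -\tfrac{5}{2}$ is forced, since one needs $d\Theta(\tilde\ph) = O(\varrho^{\nu-1})$ and $X = O(\varrho^{\nu})$ to lie below the $L^2$ threshold $-\tfrac{7}{2}$ for the boundary terms to vanish. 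Omitting this converse leaves the correspondence between $\breve{\mathcal M}_{\nu}$ and the analytic zero set unproven, so the smoothness claim in the range $\nu \in (-4,-\tfrac{5}{2})$ and the virtual dimension formula do not yet follow from your outline.
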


\subsection{Proof of the main theorem} \label{maintheoremproofsec}

In this section we prove Theorem~\ref{mainthm} on the deformation theory of $\G$~conifolds. The proof of Theorem~\ref{mainthm} is broken up into the following four steps.
\begin{enumerate}[{\it Step 1:}] \setlength\itemsep{-0.3mm}
\item We prove a slice theorem, showing that the space of torsion-free gauge-fixed $\G$~structures with the correct asymptotics on the ends is homeomorphic to the $\G$~conifold moduli space.
\item We demonstrate that the moduli space ${\mathcal M}_{\nu}$ is locally isomorphic to the zero set of a smooth nonlinear map.
\item We use the Banach space implicit function theorem to describe the structure of this zero set, and explain when it is a smooth manifold.
\item We compute the (expected) dimension of the $\G$~conifold moduli ${\mathcal M}_{\nu}$ space in terms of topological and analytic data.
\end{enumerate}

\subsubsection{Step 1: Gauge-fixing and the slice theorem} \label{infinitesimalslicethmsec}

In order to break the diffeomorphism invariance, we need to prove a ``slice theorem'' that establishes a local homeomorphism between: (i) the space of $\G$~structures satisfying a particular condition modulo diffeomorphisms which preserve this condition, and (ii) a space of solutions to a system of differential equations. Ideally, we would like to prove this slice theorem directly for torsion-free $\G$~structures which have prescribed cone-like behaviour on the ends. However, the fact that the torsion-free condition is \emph{nonlinear} makes it difficult to do this directly. Instead, we first prove a slice theorem for the space of \emph{closed} $\G$~structures with prescribed cone-like behaviour on the ends, which is a linear condition, and then in Section~\ref{onetoonesec} we impose the torsion-free condition to describe a smaller subset of this space.

Our approach to the slice theorem for closed $\G$~structures is very similar to that of Nordstr\"om~\cite{Nord}, who considers the asymptotically cylindrical case. A more detailed treatment is in~\cite{NordTh}, to which we will occasionally refer. To begin, we need to find a direct complement of  the tangent space to the space of diffeomorphisms that preserve the cone-like behaviour of the appropriate rate on the ends, within the space of closed $3$-forms with the same decay at the ends. In order to later apply the Banach space implicit function theorem to determine the structure of the moduli space, we will need to consider (weighted) Sobolev spaces of forms, and thus we actually need to establish a ``slice theorem'' for forms in such weighted Sobolev spaces.

In the space $\Omega^3_{l, \nu}$, the analogue of the space of infinitesimal diffeomorphisms defined in equation~\eqref{orbittangentspaceeq} is the space of $3$-forms that are the exterior derivative of a $2$-form of type $\Lambda^2_7$ in the appropriate Sobolev space. Explicitly, we define $\mathcal D_{l + 1, \nu + 1}$ to be the group generated by the set
\begin{equation*}
\{ \exp(X); \, X \in \Gamma(TM), \, X \in L^2_{l+1, \nu+1}(T M) \}.
\end{equation*}
and thus the tangent space to the orbit $\mathcal D_{l + 1, \nu + 1} \cdot \ph$ at $\ph$ is given by
\begin{equation*}
T_{\ph} (\mathcal D_{l + 1, \nu + 1} \cdot \ph) \, = \, d \left( {L^2_{l+1, \nu+1}(\Lambda^2_7 (T^* M)) }\right) \, \subseteq \, \Omega^3_{l, \nu}.
\end{equation*}
Similarly, in $\Omega^3_{l, \nu}$ the tangent space at $\ph$ to the closed $\G$~structures $\mathcal C^+_{l, \nu}$ that are asymptotic to $\ph$ with rate $\nu$ is given by
\begin{equation*}
T_{\ph} \mathcal C^+_{l, \nu} \, = \, \{ \eta \in \Omega^3_{l, \nu} \, ; \, d \eta = 0 \} \, = \, \mathcal C_{l, \nu}.
\end{equation*}
It is clear that $T_{\ph} (\mathcal D_{l + 1, \nu + 1} \cdot \ph)$ is a subspace of $T_{\ph} \mathcal C^+_{l, \nu} = \mathcal C_{l, \nu}$.

\begin{defn} \label{infinitesimalmodulispacedefn}
Given a rate $\nu$, define $(\mathcal G_{\ph})_{l, \nu}$ to be the following subspace of $\Omega^3_{l, \nu}$:
\begin{equation*}
(\mathcal G_{\ph})_{l, \nu} \, = \, \{ \eta \in \Omega^3_{l, \nu} \, ; \, d \eta = 0, \, \pi_7 (d^* \eta) = 0 \}.
\end{equation*}
Thus $(\mathcal G_{\ph})_{l, \nu}$ is a proper subspace of $\mathcal C_{l, \nu}$, corresponding to the ``gauge-fixed'' (with respect to $\ph$) infinitesimal deformations of closed $\G$~structures, given by Definition~\ref{gaugefixingdefn}.
\end{defn}

We can equivalently describe the space $(\mathcal G_{\ph})_{l,\nu}$ for our rates $\nu$ of interest as follows.

\begin{lemma}\label{Gph.lemma}
If $\nu<0$ in the AC case or $\nu>0$ in the CS case, then
 \begin{equation*}
(\mathcal G_{\ph})_{l, \nu} \, = \, \{ \eta \in (\Omega^3_{27})_{l, \nu} \, ; \, d \eta = 0 \}.
\end{equation*}
\end{lemma}

\begin{proof}
Suppose that $\eta \in (\mathcal G_\ph)_{l,\nu}$ and write $\pi_1 \eta = f \ph$ and $\pi_7 \eta = X \hk \ph$. Since $f$ and $X$ have decay $O(\varrho^{\nu})$, by Lemma~\ref{gaugefixinglemmaA} we deduce that $X = 0$ and $f$ is constant. Moreover, $f$ must tend to zero on the ends of $M$, so $f = 0$ also. Hence $\eta \in (\Omega^3_{27})_{l,\nu}$ with $d \eta = 0$. Conversely, by Corollary~\ref{special27cor3}, any closed form $\eta \in (\Omega^3_{27})_{l,\nu}$ will satisfy $\pi_7 (d^* \eta) = 0$, so $\eta \in (\mathcal G_{\ph})_{l,\nu}$.  
\end{proof}

Before we can state the first main result of this subsection, which is an infinitesimal version of our slice theorem, we need to construct a finite-dimensional space $(\mathcal{V}_{\ph})_{l,\nu}$ of $\Omega^3_{l, \nu}$ that will play a crucial role in the rest of the paper. It is defined in terms of the modified Dirac operator $\mdirac$ of~\eqref{modifieddiracdefneq}.

\begin{prop} \label{Vspaceprop}
Consider the map $\pi_1 d : (\Omega^3_{1+7})_{l, \nu} \to \Omega^4_{l-1, \nu - 1}$. This map is continuous, hence its kernel $\mathcal Q_{l, \nu} = \{ \beta \in (\Omega^3_{1+7})_{l, \nu} \, ; \pi_1 d \beta = 0 \}$ is a \emph{closed} subspace of $(\Omega^3_{1+7})_{l, \nu}$. There exists a \emph{finite-dimensional subspace} $(\mathcal V_{\ph})_{l, \nu}$ of $(\Omega^3_{1+7})_{l, \nu}$ such that
\begin{equation} \label{Vspacedefneq}
\mathcal Q_{l, \nu} \, = \, \bigl( \im \mdirac_{l+1, \nu+1} \cap \mathcal Q_{l, \nu} \bigr) \oplus (\mathcal V_{\ph})_{l, \nu}.
\end{equation}
Moreover, if $\nu > -7$ (AC) or $\nu < -7$ (CS), then $(\mathcal V_{\ph})_{l, \nu} = \{ 0 \}$.
\end{prop}
\begin{proof}
From equations~\eqref{mdiracHodgeeq1} and~\eqref{mdiracHodgeeq2}, we know $\im \mdirac_{l+1, \nu+1}$ has finite codimension in $(\Omega^3_{1+7})_{l, \nu}$. 
Thus its intersection with the closed subspace $\mathcal Q_{l, \nu}$ will have finite codimension in $\mathcal Q_{l, \nu}$, establishing~\eqref{Vspacedefneq}. If $\nu > -7$ (AC) or $\nu < -7$ (CS), then from Lemma~\ref{mdiracHodgelemma}, we have that $\im \mdirac_{l+1, \nu + 1} = \Omega^3_{l, \nu}$, and thus $\mathcal Q_{l, \nu} = \im \mdirac_{l+1, \nu+1} \cap \mathcal Q_{l, \nu}$, so $(\mathcal V_{\ph})_{l, \nu} = \{ 0 \}$ in these cases.
\end{proof}

\begin{thm} \label{infinitesimalslicethm}
Suppose that $\nu < 0$ (AC) or $\nu > 0$ (CS).
\begin{enumerate}[{$[$1$\, ]$}]
\item In the $L^2$ setting: AC when $\nu < -\frac{7}{2}$, or CS for any $\nu > 0$. There exists a finite-dimensional subspace $(\mathcal E_{\ph})_{l, \nu}$ of $\mathcal C_{l, \nu}$ such that
\begin{equation} \label{CSinfinitesimalslicethmeq}
\mathcal C_{l, \nu}  \, = \, T_{\ph} (\mathcal D_{l + 1, \nu + 1} \cdot \ph) \oplus (\mathcal G_{\ph})_{l, \nu} \oplus (\mathcal E_{\ph})_{l, \nu}.\\
\end{equation}
Moreover, there is an injective linear map $\pi_{1+7} : (\mathcal E_{\ph})_{l, \nu} \to (\mathcal V_{\ph})_{l, \nu}$, and hence the space $(\mathcal E_{\ph})_{l, \nu}$ is trivial whenever $(\mathcal V_{\ph})_{l, \nu}$ is trivial.
\item In the non-$L^2$ setting: AC when $\nu \in (-4, -0)$, there are two subcases.
\begin{itemize}
\item When $\nu \in (-4, -1]$, we have
\begin{equation} \label{ACinfinitesimalslicethmeq}
\mathcal C_{l, \nu}  \, = \, T_{\ph} (\mathcal D_{l + 1, \nu + 1} \cdot \ph) \oplus (\mathcal G_{\ph})_{l, \nu}.
\end{equation}
\item When $\nu \in (-1, 0)$ there is a closed subspace $(\mathcal G'_{\ph})_{l, \nu}$ of $(\mathcal G_{\ph})_{l, \nu}$, of finite codimension, such that
\begin{equation} \label{ACinfinitesimalslicethmeq2}
\mathcal C_{l, \nu}  \, = \, T_{\ph} (\mathcal D_{l + 1, \nu + 1} \cdot \ph) \oplus (\mathcal G'_{\ph})_{l, \nu}.
\end{equation}
\end{itemize}
\end{enumerate}
\end{thm}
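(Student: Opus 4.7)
The strategy is to reduce the gauge-fixing condition to a Fredholm problem for the elliptic operator $\tilde\Delta X = d d^* X + \frac{2}{3} d^* d X$ on $1$-forms. Given a closed $3$-form $\eta \in \mathcal C_{l,\nu}$, I would seek a $1$-form $X \in L^2_{l+1,\nu+1}(T^*M)$, identified with $X \hk \ph \in \Omega^2_7$, so that $\zeta := \eta - d(X \hk \ph)$ lies in $(\mathcal G_\ph)_{l,\nu}$. Closedness of $\zeta$ is automatic, and the remaining condition $\pi_7(d^* \zeta) = 0$ becomes $\pi_7 d^* d(X \hk \ph) = \pi_7 d^* \eta$. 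By Proposition~\ref{specialellipticprop}, under the isomorphism $\Omega^2_7 \cong \Omega^1$ the operator $\pi_7 d^* d$ is exactly $\tilde\Delta$, so everything reduces to the solvability of $\tilde\Delta X = Y$ for a specific $Y \in L^2_{l-1,\nu-1}(\Omega^1)$ determined by $\eta$.

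The solvability step is handled by Lemma~\ref{modifiedlaplemma}. In the AC case $\tilde\Delta_{l+1,\nu+1}$ is surjective for all $\nu > -5$, so for $\nu \in (-4, 0)$ the equation $\tilde\Delta X = Y$ can always be solved and no extra summand $(\mathcal E_\ph)_{l,\nu}$ is needed; this is the source of~\eqref{ACinfinitesimalslicethmeq} and~\eqref{ACinfinitesimalslicethmeq2}. In the $L^2$ setting (AC with $\nu < -\tfrac{7}{2}$, CS with $\nu > 0$), the operator $\tilde\Delta$ is only Fredholm and may have a nontrivial cokernel; I would define $(\mathcal E_\ph)_{l,\nu}$ to be a finite-dimensional subspace of $\mathcal C_{l,\nu}$ whose image under $\pi_7 d^*$ complements the image of $\tilde\Delta$ inside $\pi_7 d^*(\mathcal C_{l,\nu})$, yielding~\eqref{CSinfinitesimalslicethmeq}.

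The remaining issue is directness of the sums, which amounts to showing that an element of $T_\ph(\mathcal D_{l+1,\nu+1} \cdot \ph) \cap (\mathcal G_\ph)_{l,\nu}$ must vanish. Such an element has the form $d(X \hk \ph)$ with $X \in \ker \tilde\Delta_{l+1,\nu+1}$. In the $L^2$ case, integration by parts via Lemma~\ref{IBPlemma} applied to $\tilde\Delta X = 0$ gives $||d^* X||^2 + \tfrac{2}{3}||dX||^2 = 0$, so $X$ is harmonic, and Lemma~\ref{bochnerlemma} then forces $X = 0$. In the AC non-$L^2$ case integration by parts is unavailable, but Proposition~\ref{excludeextensionprop2} guarantees that $\tilde\Delta$ has no critical rates in $(-4, 0]$, so the kernel of $\tilde\Delta_{l+1,\nu+1}$ is constant for $\nu+1 \in (-4, 0]$; matching with the trivial $L^2$ kernel at a rate just below $-\tfrac{5}{2}$ shows the kernel is $\{0\}$ throughout. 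This covers $\nu \in (-4, -1]$ and establishes~\eqref{ACinfinitesimalslicethmeq}.

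For $\nu \in (-1, 0)$, i.e.\ $\nu + 1 \in (0, 1)$, the kernel of $\tilde\Delta_{l+1,\nu+1}$ may jump across a critical rate and produce a nontrivial finite-dimensional intersection with $T_\ph(\mathcal D_{l+1,\nu+1} \cdot \ph)$; I would define $(\mathcal G'_\ph)_{l,\nu}$ as a closed topological complement of this intersection inside $(\mathcal G_\ph)_{l,\nu}$, giving~\eqref{ACinfinitesimalslicethmeq2}. The hardest technical point I expect is the bookkeeping of critical rates of $\tilde\Delta$ across the $L^2$/non-$L^2$ transition, and verifying that the finite-dimensional correction $(\mathcal E_\ph)_{l,\nu}$ can actually be chosen inside the closed subspace $\mathcal C_{l,\nu}$ rather than merely inside $\Omega^3_{l,\nu}$.
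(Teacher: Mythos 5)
Your proposal is correct, and in the $L^2$ setting it is essentially the paper's own argument: Fredholmness of $\pi_7 d^* d$ on $\Omega^2_7$ at generic rates, the choice of $(\mathcal E_{\ph})_{l,\nu}$ mapping isomorphically under $\pi_7 d^*$ onto a complement of $\im (\pi_7 d^* d)$ inside $\pi_7 d^*(\mathcal C_{l,\nu})$, and the vanishing of $T_{\ph}(\mathcal D_{l+1,\nu+1}\cdot\ph)\cap(\mathcal G_{\ph})_{l,\nu}$ (the paper gets this from $L^2$-orthogonality of the two summands; your integration-by-parts-plus-Bochner variant is equivalent). Where you genuinely diverge is the spanning step in the non-$L^2$ AC case: the paper writes $\eta = \kappa + d\alpha$ via Theorem~\ref{kformsHodgethm}, uses surjectivity of the modified Dirac operator (Lemma~\ref{mdiracHodgelemma}) to correct $d\alpha$ by $d(Y\hk\ph)$ up to a term $\st(dh\wedge\ph)+\eta_{27}$, and then applies Lemma~\ref{gaugefixinglemmaB}; you instead solve $\pi_7 d^* d(X\hk\ph) = \pi_7 d^*\eta$ directly, via Proposition~\ref{specialellipticprop} and the surjectivity of $dd^*+\tfrac{2}{3} d^* d$ in Lemma~\ref{modifiedlaplemma}, which is proved independently in Section~\ref{particularG2analsec} (the paper only uses it later), so there is no circularity; note that for this second-order operator the codomain should be read as $L^2_{l-1,\lambda-1}$, and the dual-rate kernel still vanishes on the range you need. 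Your route is shorter and keeps a single operator in play: $\tilde\Delta = dd^* + \tfrac{2}{3}d^*d$ controls spanning through its cokernel, directness through its kernel (your treatment of $\nu\le-1$ via Proposition~\ref{excludeextensionprop2}, Theorem~\ref{kernelthm} and Lemma~\ref{bochnerlemma}, and of $\nu\in(-1,0)$ via a complement of the finite-dimensional intersection, coincides with the paper's), and the correction summand in the $L^2$ case; the paper's Dirac route trades this for a first-order operator with a cleaner critical-rate picture (no critical weights in $(-5,0)$, Proposition~\ref{modifieddiracexcludeprop}). The one caveat, shared equally by the paper (both Lemma~\ref{mdiracHodgelemma} and Lemma~\ref{modifiedlaplemma} ultimately rest on the Fredholm alternative at the weight $\nu+1$, which for $\nu\in(-1,0)$ may be a critical rate), is a genericity issue that is harmless since the main theorem is stated for generic $\nu$, and it is not a gap specific to your argument.
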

\begin{proof}
Case [1]: Note that $\mdirac(h,Y) = 0$ implies $\Delta h = 0$ and $\Delta Y = 0$ so $\mdirac$ is in fact injective in this setting by Lemma~\ref{bochnerlemma}. Let $\eta \in \mathcal{C}_{l,\nu}$. Since $d \eta = 0$, we have $\pi_1 d (\pi_{1+7} \eta) = - \pi_1 d (\pi_{27} \eta) = 0$ by Proposition~\ref{special3formprop}. Therefore, $\pi_{1+7} \eta$ lies in the space $\mathcal Q_{l, \nu}$ of Proposition~\ref{Vspaceprop}, so by Proposition~\ref{Vspaceprop} and the injectivity of $\mdirac$ for this range of rates, there exists unique $(h,Y) \in (\Omega^0_1 \oplus \Omega^1_7)_{l+1, \nu + 1}$ and $\beta \in (\mathcal V_{\ph})_{l, \nu}$ such that
\begin{equation*}
\eta - \mdirac(h,Y) - \beta \, \in \, (\Omega^3_{27})_{l, \nu}
\end{equation*}
where in particular $\mdirac(h,Y) \in \mathcal Q_{l, \nu}$, so $\pi_1 d \mdirac(h,Y) = 0$. Then equation~\eqref{mdiracusefuleq} says that $\Delta h = 0$. Moreover, $h \to 0$ on the ends of $M$ so the maximum principle implies that $h = 0$.  Thus, because equation~\eqref{modifieddiracdefneq} says $\mdirac(h,Y) = \frac{1}{2} \st (dh \wedge \ph) + \pi_{1+7}( d(Y \hk \ph) )$ and we have that $h = 0$, we conclude that $\pi_{1+7} \bigl( \eta - d(Y \hk \ph) \bigr) \in (\mathcal{V}_{\ph})_{l,\nu}$, so that
\begin{equation*}
\mathcal{C}_{l, \nu} \, = \, d (\Omega^2_7)_{l+1, \nu+1} \oplus \{ \zeta \in \mathcal{C}_{l,\nu} \, ; \ \pi_{1+7} \zeta \in (\mathcal{V}_{\ph})_{l,\nu} \}.
\end{equation*}
By Lemma~\ref{Gph.lemma}, the space $(\mathcal{G}_{\ph})_{l, \nu}$ is precisely the subspace of $\{ \zeta \in \mathcal{C}_{l,\nu} \, ; \ \pi_{1+7} \zeta \in (\mathcal{V}_{\ph})_{l,\nu} \}$ consisting of elements which are of pure type $27$, that is those elements in the kernel of $\pi_{1 + 7}$. Hence, there exists a finite-dimensional space $(\mathcal{E}_{\ph})_{l, \nu}$ such that
\begin{equation} \label{Stempeq}
\{ \zeta \in \mathcal{C}_{l,\nu} \, ; \ \pi_{1+7} \zeta \in (\mathcal{V}_{\ph})_{l,\nu} \} \, = \, (\mathcal{G}_{\ph})_{l, \nu} \oplus (\mathcal{E}_{\ph})_{l, \nu}.
\end{equation}
By construction, the map $\pi_{1+7} : (\mathcal{E}_{\ph})_{l,\nu} \to (\mathcal{V}_{\ph})_{l,\nu}$ is injective. This gives the required decomposition.

Case [2]: Let $\eta \in \mathcal C_{l, \nu}$. We will first show that $\eta$ can be written as the sum of an element of $T_{\ph} (\mathcal D_{l + 1, \nu + 1} \cdot \ph)$ and an element of $(\mathcal G_{\ph})_{l, \nu}$, and then show, if $\nu \leq -1$, that the intersection of these two subspaces is trivial.  Since $\eta$ is closed, Theorem~\ref{kformsHodgethm} implies that we can write $\eta = \kappa + d \alpha$ for some $\kappa \in \mathcal H^3_{\nu}$ and $\alpha\in\Omega^2_{l+1,\nu+1}$. By Lemma~\ref{mdiracHodgelemma}, the modified Dirac operator $\mdirac$ of equation~\eqref{modifieddiracdefneq} is surjective, so there exists a pair $(2 h, Y) \in ( \Omega^0_1 \oplus \Omega^1)_{l+1,\nu+1}$ such that
\begin{equation*}
\pi_{1 + 7} (d \alpha) = \st (dh \wedge \ph) + \pi_{1 + 7} (d (Y \hk \ph)).
\end{equation*}
Since $\st (dh \wedge \ph)$ is pointwise of type $\Lambda^3_7$, the above equation says that
\begin{equation} \label{infinitesimalslicethmeq2}
d \alpha - d(Y \hk \ph) \, = \, \st (dh \wedge \ph) + \zeta_{27}
\end{equation}
for some $\zeta_{27} \in (\Omega^3_{27})_{l,\nu}$. Since $\pi_1 (d \eta_{27}) = 0$, we have $\pi_1 d \st (dh \wedge \ph) = 0$ and hence by~\eqref{temp27eq1}, we conclude that $h$ is harmonic. Because $h = O(\varrho^{\nu+1})$ for $\nu < 0$, Lemma~\ref{bochnerlemma} implies that $dh = 0$. Thus, $d \zeta_{27} = 0$ so $\zeta_{27} \in (\mathcal G_\ph)_{l,\nu}$ by Lemma~\ref{Gph.lemma}.  Since $\kappa$ is closed and coclosed, $\kappa \in (\mathcal G_\ph)_{l,\nu}$ as well, and hence $\eta = \kappa + d \alpha = \kappa + \zeta_{27} + d (Y \hk \ph)$ lies in $(\mathcal G_\ph)_{l,\nu} + T_{\ph} (\mathcal D_{l + 1, \nu + 1} \cdot \ph)$, as required.

To complete the proof of case [2] we need to consider the intersection $T_{\ph} (\mathcal D_{l + 1, \nu + 1} \cdot \ph) \cap (\mathcal G_{\ph})_{l, \nu}$. Let $d(X \hk \ph)$ lie in this intersection. Let $\mu = X \hk \ph$. We have $\pi_7 (d^* d \mu) = 0$. By Proposition~\ref{specialellipticprop}, we have $d d^* X + \frac{2}{3} d^* d X = 0$. But $\mu \in L^2_{l+1, \nu + 1} (\Lambda^2_7 (T^* M))$, so $X \in L^2_{l+1, \nu + 1} (TM)$. Now using Proposition~\ref{excludeextensionprop2} on the excluded range of $1$-forms $X$ satisfying $d d^* X + \frac{2}{3} d^* d X = 0$ and Theorem~\ref{kernelthm} on the invariance of the kernel we conclude that if $\nu + 1 \leq 0$, then in fact we can say that $X$ is actually $O(r^{\nu' +1})$ where $\nu' +1 = -4 + \e < -\frac{7}{2}$. Thus we can use Lemma~\ref{bochnerlemma2} to conclude that $X = 0$ and thus $d (X \hk \ph) = 0$ when $\nu \leq -1$. If $\nu \in (-1, 0)$, then all we have shown is that $X$ is in the kernel of $\pi_7 d^* d : \Omega^2_7 \to \Omega^2_7$. Since this operator is elliptic, there is only a finite-dimensional space $\mathcal J_{\nu}$ of such $1$-forms. Choosing a topological complement $(\mathcal G'_{\ph})_{l, \nu}$ of the finite-dimensional space $\{ d(X \hk \ph); \, X \in \mathcal J_{\nu} \}$ in $(\mathcal G_{\ph})_{l, \nu}$ completes the proof. 
\end{proof}

\begin{rmk} \label{infinitesimalslicethmrmkCS}
In the $L^2$ case we cannot always conclude that the space $(\mathcal E_{\ph})_{l, \nu}$ vanishes. However, in the CS setting, the space $(\mathcal E_{\ph})_{l,\nu}$ can be identified with a subspace of the cokernel of $\mdirac$, so this fact could be used to provide an upper bound for the dimension of $(\mathcal E_{\ph})_{l, \nu}$ in terms of certain eigenvalue equations on the links $\Sigma_i$.
\end{rmk}

\begin{rmk} \label{infinitesimalslicethmrmkAC}
In the AC case when $\nu > - 4$, Theorem~\ref{infinitesimalslicethm} says that the space $(\mathcal G_{\ph})_{l, \nu}$ is only a good infinitesimal slice when $\nu \leq -1$. When $\nu \in (-1, 0)$, not all gauge-fixed infinitesimal deformations are actually transverse to the orbit of the diffeomorphism action. Hence we need to consider a \emph{smaller slice} whose tangent space is $(\mathcal G'_{\ph})_{l, \nu}$. Note that $\nu \leq -1$ is satisfied by all currently known examples. We also notice that the complement of $(\mathcal G'_{\ph})_{l,\nu}$ in $(\mathcal G_{\ph})_{l,\nu}$ is $\ker(\pi_7d^*d)_{\nu+1}$, and since $\pi_7d^*d$ is surjective at these rates by Lemma~\ref{modifiedlaplemma}, we can use Proposition~\ref{excludeextensionprop2} to describe this space via scalar eigenfunctions of $\laps$. 
\end{rmk}

\begin{cor} \label{infinitesimalslicecor} 
Let $M$ be a $\G$~conifold with rate $\nu$. Recall, from Proposition~\ref{Vspaceprop}, that we defined a finite-dimensional subspace $(\mathcal{V}_{\ph})_{l,\nu} \subseteq (\Omega^3_1\oplus\Omega^3_7)_{l,\nu}$, all of whose elements $\beta$ satisfy $\pi_1 d \beta = 0$. Using this space $(\mathcal{V}_{\ph})_{l,\nu}$, define
\begin{equation*}
(\mathcal{S}_{\ph})_{l,\nu} \, = \, \{ \eta \in \mathcal{C}_{l,\nu} \, : \, \pi_{1+7} \eta \in (\mathcal{V}_{\ph})_{l,\nu} \}.
\end{equation*}
Then we have
\begin{equation*}
\mathcal{C}_{l,\nu} \, = \, d \big( (\Omega^2_7)_{l+1, \nu+1} \big) + (\mathcal{S}_{\ph})_{l,\nu},
\end{equation*}
and the intersection of the spaces on the right-hand side is finite-dimensional.  

Moreover, if $M$ is AC with rate $\nu \leq -1$ or if $M$ is CS then 
\begin{equation*}
d \big( (\Omega^2_7)_{l+1, \nu+1} \big) \, \cap \, (\mathcal{S}_{\ph})_{l,\nu} \, = \, \{0\}.
\end{equation*}
Finally, if $M$ is AC with rate $\nu > -7$ then $(\mathcal{V}_{\ph})_{l,\nu} = \{0\}$.
\end{cor}
\begin{proof}
All of these statements follow immediately from properties of $(\mathcal{V}_{\ph})_{l,\nu}$ in Proposition~\ref{Vspaceprop} and from the proof of Theorem~\ref{infinitesimalslicethm}.
\end{proof}

Now let $(\mathcal S'_{\ph})_{l, \nu}$ be a direct complement to $d \bigl( (\Omega^2_7)_{l+1, \nu+1} \bigr) \cap (\mathcal S_{\ph})_{l,\nu}$ in the space $(\mathcal S_{\ph})_{l,\nu}$ defined in Corollary~\ref{infinitesimalslicecor}. Notice that in the AC case when $\nu \leq -1$ and in the CS case we have that $(\mathcal S'_{\ph})_{l, \nu}=(\mathcal S_{\ph})_{l,\nu}$. Moreover, we also have that $(\mathcal S_{\ph})_{l,\nu} = (\mathcal G_{\ph})_{l, \nu}$ in the AC case when $\nu\in(-4,-1]$. If we now let $B_{\e}(0)$ be the $\e$-ball in $C^0 (\Omega^3)$ and define a map $\exp_{\ph}: (\mathcal S'_{\ph})_{l, \nu} \cap B_{\e}(0) \to \mathcal C^+_{l, \nu}$ by affine translation, namely
\begin{equation*}
\exp_{\ph} (\eta) \, = \, \ph + \eta,
\end{equation*}
then it is clear that for $\e$ small the image $\mathcal S^{\e}_{l, \nu}$ of this map is an (infinite-dimensional) smooth submanifold of $\mathcal C^+_{l, \nu}$ whose tangent space at $\ph$ is exactly $T_{\ph} \mathcal S^{\e}_{l, \nu} = (\mathcal S'_{\ph})_{l, \nu}$. We would like to conclude that, near the point $\ph$, the space $\mathcal S^{\e}_{l, \nu}$ contains exactly one representative from each orbit of the action of the diffeomorphisms in $\mathcal D_{l+1, \nu+1}$. More precisely, we want to establish that, near $\ph$,  the space $\mathcal S^{\e}_{l, \nu}$ is homeomorphic to $\mathcal C^+_{l, \nu} / \mathcal D_{l+1, \nu+1}$. In fact, what we can actually prove is that near $\ph$, the space of \emph{torsion-free} elements in $\mathcal S^{\e}_{l, \nu}$ is homeomorphic to our moduli space $\mathcal M_{\nu}$. This result will be sufficient for our purposes. The details of this argument are discussed in Nordstr\"om~\cite[Section 3.1]{NordTh}. Our situation admits several nice features that allow us to use the simplifications that Nordstr\"om explains in~\cite[Section 3.1.3]{NordTh}, which we now briefly discuss. 

We define $\mathcal R^{\e}_{l, \nu} = \mathcal S^{\e}_{l, \nu} \cap \mathcal T$ to be the torsion-free $\G$~structures in $\mathcal S_{l, \nu}$. The content of Corollary~\ref{onetoonesmoothcor} in the next section is that $\mathcal{R}^{\e}_{l, \nu}$ consists of \emph{smooth} elements, so we can drop the subscript $l$ on $\mathcal R^{\e}_{l, \nu} = \mathcal R^{\e}_{\nu}$ and we are able to use \cite[Theorem 3.1.4]{NordTh}. Thus, we can conclude as in~\cite[page 51]{NordTh}, that the space $\mathcal R^{\e}_{\nu}$ is homeomorphic to an open neighbourhood of $[\ph]$ in $\mathcal M_{\nu}$. So the problem of understanding the local structure of $\mathcal M_{\nu}$ reduces to understanding $\mathcal R^{\e}_{\nu}$. We summarize the preceding discussion in the following theorem.

\begin{thm} \label{slicethm}
Let $\mathcal R^{\e}_{l, \nu} = \{ \eta  \in (\mathcal S'_{\ph})_{l, \nu}; \, | \eta  |_{C^0} < \e, d (\Theta(\ph + \eta )) = 0 \}$ be the space parametrizing the torsion-free gauge-fixed $\G$~structures close to $\ph$. Then for $\e$ sufficiently small, any element $\eta \in \mathcal R^{\e}_{l, \nu}$ has $\eta$ smooth and hence $\mathcal R^{\e}_{l, \nu}$ is independent of $l$. Moreover $\mathcal R^{\e}_{l, \nu}$ is homeomorphic to an open neighbourhood of the point $\mathcal D_{\nu + 1} \cdot \ph$ in $\mathcal M_{\nu}$.
\end{thm}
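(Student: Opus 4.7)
The statement is essentially a bookkeeping consolidation of the discussion preceding it, and my plan is to carry out the two assertions in turn.

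For the smoothness claim, the key observation is that any element $\eta + \zeta \in \mathcal R^{\e}_{l, \nu}$ satisfies the coupled system $d\eta = 0$, $\pi_7(d^*\eta) = 0$, and $d\Theta(\ph + \eta + \zeta) = 0$. Applying $\stp$ to the last equation and using~\eqref{dThetaeq}, this becomes
\begin{equation*}
d^* \stp \Lp(\eta + \zeta) \, = \, -\, d^* \stp \Qp(\eta + \zeta).
\end{equation*}
Combined with $d\eta = 0$ and $\pi_7(d^*\eta) = 0$, and noting that $\zeta$ lies in the finite-dimensional space $(\mathcal E_{\ph})_{l, \nu}$ (hence is smooth by elliptic regularity applied to its defining conditions), this becomes a nonlinear elliptic system in $\eta$ with smooth coefficients. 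A standard bootstrap in the weighted Sobolev spaces, using the Sobolev embedding Theorem~\ref{Sobolevembeddingthm} to control the nonlinear term via the quadratic estimates of Lemma~\ref{quadlemma}, then establishes that $\eta \in C^{\infty}_{\nu}$ provided $\e$ is small enough that $\ph + \eta + \zeta$ remains nondegenerate. This is exactly the content of the promised Corollary~\ref{onetoonesmoothcor} in the next section, and once established it implies $\eta$ is independent of $l$.

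For the second assertion --- the homeomorphism with a neighbourhood of $[\ph]$ in $\mathcal M_{\nu}$ --- the plan is to apply~\cite[Theorem~3.1.4]{NordTh}, whose key input is a slice construction via the Banach space implicit function theorem. Concretely, I would consider the map
\begin{equation*}
\Phi \, : \, L^2_{l+1, \nu+1}(T^* M) \, \times \, \bigl( (\mathcal G'_{\ph})_{l, \nu} \oplus (\mathcal E_{\ph})_{l, \nu} \bigr) \, \to \, \Omega^3_{l, \nu}
\end{equation*}
defined by $\Phi(X, \eta + \zeta) = \exp(X)^*(\ph + \eta + \zeta) - \ph$. Its derivative at the origin sends $(X, \eta + \zeta)$ to $d(X \hk \ph) + (\eta + \zeta)$, whose image, by the infinitesimal slice Theorem~\ref{infinitesimalslicethm}, fills out $\mathcal C_{l, \nu}$ with trivial intersection between the two summands. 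Thus $\Phi$ is a local diffeomorphism from a neighbourhood of $0$ onto a neighbourhood of $0$ in $\mathcal C_{l, \nu}$. Intersecting the image with the torsion-free locus then shows that each $\mathcal D_{l+1, \nu+1}$-orbit of a nearby torsion-free $\G$-structure meets $\mathcal R^{\e}_{l, \nu}$ in precisely one element, yielding the desired local homeomorphism onto the Sobolev moduli space.

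The final step passes from the Sobolev moduli space $\mathcal T_{l, \nu}/\mathcal D_{l+1, \nu+1}$ to the smooth moduli space $\mathcal M_{\nu} = \mathcal T_{\nu}/\mathcal D_{\nu+1}$. This is precisely where the smoothness assertion from the first step enters: by Corollary~\ref{onetoonesmoothcor}, each representative in $\mathcal R^{\e}_{l, \nu}$ is smooth, so the simplifications of~\cite[Section~3.1.3]{NordTh} apply and give the identification. The main obstacle is verifying the technical prerequisites for this final step --- namely that $\exp$ sends $L^2_{l+1, \nu+1}(T^* M)$ into $\mathcal D_{l+1, \nu+1}$, that $(X, \sigma) \mapsto \exp(X)^* \sigma$ is smooth between the appropriate weighted Sobolev spaces, and that $\Theta$ is itself smooth on such spaces. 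These are weighted-space analogues of the Ebin--Palais theory, and although essentially routine, they are the reason we require the rate bound $\nu < -\frac{5}{2}$ in the AC case (cf.~Remark~\ref{infinitesimalslicethmrmkAC}).
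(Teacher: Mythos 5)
Your proposal follows the paper's own route: the smoothness and $l$-independence claim is deferred, exactly as in the paper, to Corollary~\ref{onetoonesmoothcor} (whose elliptic-regularity content you sketch correctly), and the homeomorphism with a neighbourhood of $\mathcal D_{\nu+1}\cdot\ph$ in $\mathcal M_{\nu}$ is obtained the same way the paper obtains it, by feeding the decomposition of Theorem~\ref{infinitesimalslicethm} into Nordstr\"om's slice theorem \cite[Theorem 3.1.4]{NordTh} together with the simplifications of \cite[Section 3.1.3]{NordTh}, which is precisely where the Ebin--Palais-type issues (non-smoothness of $(X,\sigma)\mapsto \exp(X)^*\sigma$ at equal Sobolev regularity, possible kernel of $X \mapsto d(X\hk\ph)$) are dealt with. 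The only quibble is your closing aside: the restriction $\nu<-\tfrac{5}{2}$ in the AC case comes from the gauge-fixing and one-to-one correspondence arguments of Section~\ref{onetoonesec} (cf.\ Remarks~\ref{firstrestrictionrmk} and~\ref{onetooneratesrmk}), not from the weighted Ebin--Palais technicalities.
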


\subsubsection{Step 2: Local one-to-one correspondence with solutions of an elliptic PDE} \label{onetoonesec}

In this section we establish a (local) one-to-one correspondence between (i) gauge-fixed torsion-free $\G$~structures with the same conical asymptotics on the ends as $\ph$ that are sufficiently $C^0$-close to $\ph$; and (ii) solutions of a nonlinear elliptic partial differential equation on $M$.

Let $(M, \ph)$ be a $\G$~conifold of rate $\nu$. Let $\e > 0$ be the constant from Lemma~\ref{threezeroeslemma}. Consider the set of $\G$~structures $\tilde \ph$ that are asymptotic, at the same rate $\nu$, to the same $\G$~cones on the ends, which are $\e$-close to $\ph$ in the $C^0$ norm, such that the difference $\tilde \ph - \ph$ lies in $\Omega^3_{l, \nu}$.

The next result should be compared with~\cite[Theorem 10.3.6]{J4}. It is both a generalization to the conifold setting, and a reformulation in terms of the first order operator $d + d^*$ rather than the Laplacian $\Delta$.  

\begin{thm} \label{onetoonethm}
Consider the subset of $\G$~structures $\tilde \ph$ with $\eta = \tilde \ph - \ph \in \Omega^3_{l, \nu}$, satisfying $| \eta |_{C^0} < \e$,
\begin{equation} \label{onetooneEQ1}
d \eta = 0, \, \quad \, d \Theta(\ph + \eta) = 0, \, \text{ and } \, \pi_{1+7} (\eta) = f \ph + \st (X \wedge \ph) \in  (\mathcal{V}_{\ph})_{l, \nu}.
\end{equation}
In particular, this includes all $\eta \in {\mathcal R}^{\e}_{l, \nu}$ as defined in Theorem~\ref{slicethm}. 

Define the finite-dimensional space 
\begin{equation} \label{mathcalWdefneq}
(\mathcal W_{\ph})_{l-1,\nu-1} \, = \, \left\{ \frac{7}{3} d^* \pi_1 (\beta) + 2 d^* \pi_7 (\beta) \in \Omega^2_{l-1, \nu-1} \, ; \, \, \beta \in (\mathcal{V}_{\ph})_{l,\nu} \right\}
\end{equation}
and consider the following nonlinear  condition on $\eta$:
\begin{equation} \label{onetooneEQ2}
(d + d^*) \eta - d^* \st (\Qp (\eta) ) \, = \, \frac{7}{3} d^* (f \ph) + 2 d^* \st (X \wedge \ph)
 \in (\mathcal W_{\ph})_{l-1, \nu-1}. 
\end{equation}

Then, for any $\G$~conifold, the conditions in~\eqref{onetooneEQ1} imply~\eqref{onetooneEQ2}. Moreover, in the CS case, and in the AC case \emph{if $\nu < -\frac{5}{2}$}, there is a one-to-one correspondence between $3$-forms $\eta \in \Omega^3_{l, \nu}$ with $| \eta |_{C^0} < \e$ that satisfy~\eqref{onetooneEQ1}, and solutions in $\Omega^3_{l,\nu}$ of~\eqref{onetooneEQ2}.
\end{thm}

\begin{rmk} \label{onetoonethmrmk}
Consider the case when $(\mathcal{V}_{\ph})_{l,\nu} = \{0\}$. By Proposition~\ref{Vspaceprop}, a sufficient condition for this is that $\nu > -7$ (AC) or $\nu < -7$ (CS). In those cases where $(\mathcal{V}_{\ph})_{l,\nu} = \{0\}$, the constraint in~\eqref{onetooneEQ1} that $\pi_{1+7} \eta \in (\mathcal{V}_{\ph})_{l,\nu}$ is the condition that $\eta \in (\Omega^3_{27})_{l,\nu}$. By Lemma~\ref{Gph.lemma} we find that this is equivalent to $\eta \in (\mathcal G_{\ph})_{l,\nu}$. Moreover, in such cases equation~\eqref{onetooneEQ2} becomes the usual equation in the study of compact $\G$~manifolds, namely $(d+d^*) \eta = d^* \big ( \st Q_{\ph}(\eta) \big)$.
\end{rmk}

\begin{proof}[Proof of Theorem~\ref{onetoonethm}]
First we establish an identity~\eqref{onetooneeq0} that will be used to prove both directions of this theorem. If we substitute $\eta$ into equation~\eqref{dThetaeq} and simplify using equation~\eqref{Lpeq}, we obtain:
\begin{align*}
\st d(\Theta(\tilde \ph)) & = \, - d^* \st (\Lp(\eta)) - d^* \st (\Qp(\eta)) \\ & = \, - d^* \Bigl( \frac{4}{3} \pi_1 (\eta) + \pi_7 (\eta) - \pi_{27} (\eta) \Bigr) - d^* \st (\Qp(\eta)).
\end{align*}
If we add and subtract $d^* \eta$ we find that
\begin{equation} \label{onetooneeq0}
\st d(\Theta(\tilde \ph)) =  - \frac{7}{3} d^* \pi_1 (\eta) - 2 d^* \pi_7 (\eta) + d^* \eta - d^* \st (\Qp(\eta)).
\end{equation}

Now suppose that~\eqref{onetooneEQ1} holds. Substituting~\eqref{onetooneEQ1} into~\eqref{onetooneeq0} immediately gives~\eqref{onetooneEQ2}, with the same function $f$ and $1$-form $X$, as we wanted to show.

Conversely, suppose that equation~\eqref{onetooneEQ2} holds. We thus have $d \eta = 0$, which is one of the three equations in~\eqref{onetooneEQ1}, and we also have $d^* \eta - d^* \st (\Qp(\eta)) =\frac{7}{3} d^*\pi_1(\beta) + 2 d^* \pi_7 (\beta)$ for some $\beta \in (\mathcal{V}_{\ph})_{l, \nu}$. Substituting this into equation~\eqref{onetooneeq0} and taking the Hodge star, we obtain
\begin{equation} \label{dThetaeq2}
d(\Theta(\tilde \ph)) \, = \, \frac{7}{3} d \st \pi_1 (\eta - \beta) + 2 d \st \pi_7 (\eta - \beta).
\end{equation}
Note that $\pi_1 (\eta - \beta) = f \ph$ and $\pi_7 (\eta - \beta) = \st (X \wedge \ph)$ for some function $f$ and some $1$-form $X$. Therefore, equation~\eqref{dThetaeq2} can be written as
\begin{equation} \label{dThetaeq3}
d(\Theta(\tilde \ph)) \, = \, \frac{7}{3} df \wedge \ps + 2 \, dX \wedge \ph,
\end{equation}
using the fact that $\ph$ is closed and coclosed. Now by the $C^0$-closeness assumption, we can apply Lemma~\ref{threezeroeslemma} to~\eqref{dThetaeq3} to conclude that all three terms of~\eqref{dThetaeq2} vanish, provided we can show that $d(\Theta(\tilde \ph)) = O(\varrho^{\lambda})$ and $X = O(\varrho^{\lambda + 1})$ for some $\lambda < - \frac{7}{2}$ (AC) or $\lambda > -\frac{7}{2}$ (CS).

Since $\tilde \ph - \ph$ is $O(\varrho^{\nu})$, equation~\eqref{dThetaeq} and Lemma~\ref{quadlemma} give us that $d(\Theta(\tilde \ph))$ is $O(\varrho^{\nu - 1}) + O(\varrho^{2 \nu - 1})$. But $\nu < 0$ in the AC case and $\nu > 0$ in the CS case, so in both cases the first term dominates on the ends, and thus $d(\Theta(\tilde \ph)) = O(\varrho^{\nu - 1})$. Certainly in the CS case we have $\nu - 1 > -\frac{7}{2}$. In the AC 
case we need $\nu - 1 < - \frac{7}{2}$, that is $\nu < - \frac{5}{2}$, which is our hypothesis. Meanwhile $X \wedge \ph = \st \pi_7 (\eta - \beta)$ is $O(\varrho^{\nu})$, so $X = O(\varrho^{\nu})$ since $\ph = O(1)$. (Recall it is the \emph{difference} $\ph - \phc$ that is $O(\varrho^{\nu})$. The $\G$~structure $\ph$ is $O(\varrho^0)$ because $\phc$ is.) Therefore $X = O(\varrho^{\lambda + 1})$ for some $\lambda < -\frac{7}{2}$ (AC) or $\lambda > -\frac{7}{2}$ (CS) is equivalent to $\nu < - \frac{5}{2}$ (AC) or $\nu > -\frac{5}{2}$ (CS), which both hold. Thus we can indeed apply Lemma~\ref{threezeroeslemma} to~\eqref{dThetaeq3} to conclude that $df = 0$ and $dX = 0$. Moreover, $f\ph = \pi_1 (\eta - \beta) = O(\varrho^{\nu})$, means that $f$ tends to $0$ on the ends, and thus $f = 0$. 

All that remains to do in order to establish that~\eqref{onetooneEQ2} implies~\eqref{onetooneEQ1} is to show that $\pi_{1+7} (\eta) = \pi_{1+7}(\beta)$. We have shown that $\pi_{1+7}(\eta - \beta) = \st (X \wedge \ph)$ where $dX = 0$. Recall that $d \eta = 0$. Also, by Proposition~\eqref{Vspaceprop} we have $\pi_1 d \beta = 0$, since $\beta \in (\mathcal{V}_{\ph})_{l,\nu}$. Thus $\pi_1 d (\eta - \beta) = 0$, which means that $d^*X = 0$ by Proposition~\ref{special3formprop}, and hence $X$ is closed and coclosed. The decay conditions on $X$ mean we can apply Lemma~\ref{bochnerlemma} to deduce that $X = 0$, and therefore $\pi_{1+7} (\eta - \beta) = 0$, as required.
\end{proof}
\begin{rmk} \label{onetooneratesrmk}
One direction in the proof of Theorem~\ref{onetoonethm} did not require the assumption of $C^0$-closeness. In the AC case, we needed the hypothesis $\nu < -\frac{5}{2}$ for one direction of this theorem to be able to apply the various gauge-fixing results of Section~\ref{gaugefixingsec}. Without this assumption, we do not have a one-to-one correspondence. All we would know is that solutions to~\eqref{onetooneEQ1} give solutions to~\eqref{onetooneEQ2}, but not conversely. \emph{However}, we will nevertheless be able to understand the moduli space in the AC case all the way up to rate $\nu < 0$, using a slightly modified argument. This is done in Corollary~\ref{finalmodulicor}.
\end{rmk}

\begin{cor} \label{onetoonesmoothcor}
After possibly making $\e > 0$ smaller, the space ${\mathcal R}^{\e}_{l, \nu}$ is equal to the set of \emph{smooth} forms $\eta$ with $|\eta|_{C^0}<\e$ 
that satisfy~\eqref{onetooneEQ1}.
\end{cor}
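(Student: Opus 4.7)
The corollary has two directions. One direction is straightforward from Theorem~\ref{onetoonethm}: a smooth $\eta$ with $|\eta|_{C^0} < \e$ satisfying~\eqref{onetooneEQ2} lies (by the converse direction of that theorem, valid in the CS case for $\nu > 0$ and in the AC case for $\nu < -\frac{5}{2}$) in $\breve{\mathcal R}^{\e}_{l,\nu}$, since smoothness places $\eta$ in $L^2_{l,\nu}$ for every $l$ and guarantees the gauge-fixing and torsion-free conditions. The essential content of the corollary is the other direction: every $\eta \in \breve{\mathcal R}^{\e}_{l,\nu}$ is in fact smooth.

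To establish smoothness, the plan is to employ an elliptic regularity bootstrap for the nonlinear equation $(d+d^*)\eta = d^*\st(Q_\ph(\eta))$, which by the forward direction of Theorem~\ref{onetoonethm} is satisfied by any $\eta \in \breve{\mathcal R}^{\e}_{l,\nu}$ (this direction of the theorem holds for all $\nu < 0$ in the AC case and all $\nu > 0$ in the CS case). Starting from $\eta \in L^2_{l,\nu}$ with $l \geq 6$, Corollary~\ref{C2cor} combined with the Sobolev embedding Theorem~\ref{Sobolevembeddingthm} gives $\eta \in C^{l-4,\alpha}_\nu$, and in particular $|\eta|_{C^0} < \e$ is controlled pointwise. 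Using the estimates $|Q_\ph(\eta)| \leq C|\eta|^2$ and $|\nabla Q_\ph(\eta)| \leq C|\eta||\nabla\eta|$ from Lemma~\ref{quadlemma} (extended to higher derivatives by the chain and product rules), together with weighted Sobolev multiplication estimates that follow from Theorem~\ref{Sobolevembeddingthm}, I would show that $Q_\ph(\eta) \in L^2_{l,2\nu}$ and hence control $d^*\st Q_\ph(\eta)$ in $L^2_{l,\nu-1}$ after absorbing the nonlinearity into a small perturbation of the linear elliptic problem. Applying the elliptic regularity Theorem~\ref{ellipticregthm} for the uniformly elliptic operator $d+d^*$ on $\Omega^\bullet$ then produces $\eta \in L^2_{l+1,\nu}$. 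Iterating this procedure yields $\eta \in L^2_{l',\nu}$ for every $l' \geq l$, and a final application of Theorem~\ref{Sobolevembeddingthm} concludes $\eta \in C^\infty_\nu$.

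The main obstacle is that the nonlinear term $d^*\st Q_\ph(\eta)$ involves one derivative of $\eta$ (through the differential of the quadratic quantity $Q_\ph(\eta)$), matching the differential order of the left-hand operator $d+d^*$. This places the bootstrap in the quasilinear regime, where a naive application of elliptic regularity fails to gain regularity. The resolution uses the pointwise smallness $|\eta|_{C^0} < \e$: because $Q_\ph$ is quadratic at leading order, the factor $|\eta| \leq \e$ in $|\nabla Q_\ph(\eta)| \leq C|\eta||\nabla\eta|$ makes the nonlinearity a small perturbation of the linear elliptic problem, so that by possibly shrinking $\e > 0$ at the outset the perturbative contribution can be absorbed at each bootstrap step to give the one-derivative gain. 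The weighted aspect is handled uniformly using the rate-inclusion properties of Remark~\ref{conifoldSobolevdefnrmk}(b): the fact that $Q_\ph(\eta)$ naturally lives at the better rate $2\nu$ ensures compatibility with the rate $\nu-1$ demanded by Theorem~\ref{ellipticregthm} in both the AC and CS settings.
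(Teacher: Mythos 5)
Your reading of the corollary is right: the substantive content is that every $\eta \in \breve{\mathcal R}^{\e}_{l, \nu}$ is smooth, the translation back and forth with solutions of~\eqref{onetooneEQ2} being supplied by Theorem~\ref{onetoonethm}. But your route for the regularity step differs from the paper's and, as written, has a gap precisely at the decisive point. The paper does not bootstrap by hand: it observes that any small solution of~\eqref{onetooneEQ2} also solves the \emph{determined} nonlinear system $(d+d^*)\gamma = d^* \st (\Qp(\pi_3 \gamma))$ on mixed-degree forms, notes (via the argument of Lemma~\ref{linearizationlemma}) that this is a nonlinear elliptic equation, and then quotes the interior regularity theorem for nonlinear elliptic systems \cite[Theorem 6.8.1]{Morrey} to conclude smoothness. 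Your proposal is, in effect, an attempt to reprove that black box.

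The gap is in the ``absorption'' step. Since $\Qp$ is a pointwise function of $\eta$, the term $d^*\st(\Qp(\eta))$ is of the \emph{same} first order as $d+d^*$, so the equation is quasilinear; smallness of $|\eta|_{C^0}$ buys you ellipticity of the perturbed symbol, but it does not let you absorb the term inside the estimate~\eqref{ellipticregeq}: that theorem is stated only for the fixed smooth-coefficient operators $d+d^*$, $\lapm$, $\diracm$, $\mdiracm$, not for the $\eta$-dependent operator $(d+d^*) - A(x,\eta)\cdot\nabla$, and in any case an absorption argument presupposes that the top-order quantity $\|\nabla^{l+1}\eta\|$ you are trying to control is already known to be finite. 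Making your scheme rigorous requires difference quotients (or linear elliptic regularity for systems whose coefficients have only the finite H\"older/Sobolev regularity of $\eta$ itself), i.e.\ exactly the quasilinear interior regularity theory that Morrey's theorem packages; note also that only local smoothness is needed, so the weighted machinery is not really the issue. Two smaller points: Lemma~\ref{Qgoodlemma} gives $\Qp(\eta)\in\Omega^4_{l,\nu}$, hence $d^*\st(\Qp(\eta))\in\Omega^2_{l-1,\nu-1}$, not $\Omega^2_{l,\nu-1}$, which is why the naive loop gains nothing; and in the easy direction your justification ``smoothness places $\eta$ in $L^2_{l,\nu}$ for every $l$'' is not correct --- membership in the weighted spaces is a decay statement, not a regularity statement (indeed $C^\infty_{\nu} \not\subseteq L^2_{l,\nu}$ because of the borderline radial integral) --- although the paper's own formulation of which class the ``smooth forms'' live in is loose on the same point.
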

\begin{proof}
Recall from the statement of Theorem~\ref{onetoonethm} that all $\eta \in {\mathcal R}^{\e}_{l, \nu}$ satisfy~\eqref{onetooneEQ1}, and hence by Theorem~\ref{onetoonethm} such $\eta$ also satisfy equation~\eqref{onetooneEQ2}. In particular, $\eta$ is a solution to the equation\begin{equation*}
(d + d^*) \zeta \, = \, d^* \st (\Qp (\zeta_3 ) ) + \frac{7}{3} d^* \pi_1 (\beta) + 2 d^* \pi_7 (\beta),
\end{equation*}
for an unknown $\zeta \in \Omega^{\text{odd}}$, with $\zeta_3$ being the component of $\zeta$ in $\Omega^3$, and $\beta \in (\mathcal{V}_{\ph})_{l,\nu}$ is smooth. Since $Q_{\ph}$ is the quadratic part of the nonlinear map $\Theta$ at $0$, the linearization of the above equation at $0$ is elliptic, so it is a nonlinear elliptic equation. Hence its solutions are smooth~\cite[Theorem 6.8.1]{Morrey}.
\end{proof}

\subsubsection{Step 3: Applying the Banach space implicit function theorem} \label{IFTsec}

In this section we will apply the Banach space implicit function theorem to study the local structure of the moduli space $\mathcal M_{\nu}$ of $\G$~conifolds of rate $\nu$.

For completeness, we explicitly state here the Banach space implicit function theorem that we will use. Its proof can be found, for example, in Lang~\cite[Theorem 6.2.1]{Lang}. The hats on $\widehat{\mathcal U}$ and $\widehat F$ are employed to match notation with the eventual use of this theorem later in this section.

\begin{thm}[Banach space implicit function theorem] \label{IFT}
Let $\mathcal X$ and $\mathcal Y$ be Banach spaces, and let $\widehat{\mathcal U} \subseteq \mathcal X$ be an open neighbourhood of $0$. Let $\widehat F : \widehat{\mathcal U} \to \mathcal Y$ be a $C^k$-map (with $k \geq 1$) such that $\widehat F(0) = 0$. Suppose that the differential $\DFhat : \mathcal X \to \mathcal Y$ is \emph{surjective}, with kernel $\mathcal K$ such that $\mathcal X = \mathcal K \oplus \mathcal Z$ for some \emph{closed} subspace $\mathcal Z$ of $\mathcal X$. Then there exist open sets $\mathcal A \subseteq \mathcal K$ and $\mathcal B \subseteq \mathcal Z$, both containing $0$, with $\mathcal A \times \mathcal B \subseteq \mathcal U$, and a \emph{unique} $C^k$-map $G : \mathcal A \to \mathcal B$ such that
\begin{equation*}
\widehat F^{-1}(0) \cap (\mathcal A \times \mathcal B) \, = \, \{ (x, G(x)); \, x \in \mathcal A \}
\end{equation*}
in $\mathcal X = \mathcal K \oplus \mathcal Z$. That is, the zero set of $\widehat F$ near the origin in $\mathcal X$ is parametrized by a neighbourhood of the origin in the space $\mathcal K$.
\end{thm}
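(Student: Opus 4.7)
The plan is to reduce Theorem~\ref{IFT} to the Banach space inverse function theorem via a standard ``straightening'' of the surjection $\DFhat$. First I would use the splitting $\mathcal X = \mathcal K \oplus \mathcal Z$ given by hypothesis and form the auxiliary map
\begin{equation*}
\Phi \, : \, \widehat{\mathcal U} \cap (\mathcal K \oplus \mathcal Z) \, \to \, \mathcal K \oplus \mathcal Y, \qquad \Phi(k, z) \, = \, \bigl( k , \, \widehat F(k + z) \bigr).
\end{equation*}
This $\Phi$ is $C^k$, sends $0$ to $0$, and its differential at the origin acts by $D\Phi|_0 (h, w) = (h, \DFhat(h + w)) = (h, \DFhat(w))$, since $\mathcal K = \ker \DFhat$ absorbs the first component.

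The key point is then that $\DFhat$ restricted to $\mathcal Z$ is a Banach space isomorphism onto $\mathcal Y$. Indeed, $\DFhat$ is a continuous surjection of Banach spaces with closed kernel $\mathcal K$, and $\mathcal Z$ is a closed complement, so $\rest{\DFhat}{\mathcal Z} : \mathcal Z \to \mathcal Y$ is a continuous bijection between Banach spaces; the open mapping theorem then gives that its inverse is bounded. Consequently $D\Phi|_0$ is a bounded linear isomorphism from $\mathcal K \oplus \mathcal Z$ to $\mathcal K \oplus \mathcal Y$, with explicit inverse $(h, y) \mapsto (h, (\rest{\DFhat}{\mathcal Z})^{-1}(y - \DFhat(h)))$.

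Applying the Banach space inverse function theorem (for instance~\cite[Theorem~5.2.1]{Lang}), $\Phi$ is a local $C^k$-diffeomorphism from some open neighbourhood $\mathcal V \times \mathcal W \subseteq \mathcal K \oplus \mathcal Z$ of $0$ onto its image. The zero set we care about is
\begin{equation*}
\widehat F^{-1}(0) \cap (\mathcal V \times \mathcal W) \, = \, \Phi^{-1}( \mathcal V \times \{ 0 \} ) \cap (\mathcal V \times \mathcal W),
\end{equation*}
which, being the preimage under a diffeomorphism of the ``horizontal'' slice $\mathcal V \times \{0\} \subseteq \mathcal K \oplus \mathcal Y$, is itself a $C^k$ submanifold that projects $C^k$-diffeomorphically onto $\mathcal V$. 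Writing $G : \mathcal V \to \mathcal W$ for the composition of this inverse projection with the projection onto $\mathcal Z$ yields the desired $C^k$-map whose graph is $\widehat F^{-1}(0) \cap (\mathcal V \times \mathcal W)$; uniqueness of $G$ is automatic from the diffeomorphism property of $\Phi$.

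I do not anticipate any serious obstacle: the only non-formal step is the use of the open mapping theorem to promote the algebraic bijection $\rest{\DFhat}{\mathcal Z}$ to a Banach space isomorphism, and this is exactly where the hypothesis that $\mathcal Z$ is \emph{closed} (rather than merely an algebraic complement) is essential. Everything else is bookkeeping around a direct invocation of the Banach inverse function theorem, which itself is a standard application of the contraction mapping principle.
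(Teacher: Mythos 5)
Your argument is correct; note that the paper gives no proof of Theorem~\ref{IFT} at all (it simply cites Lang), and your reduction — straightening the surjection via $\Phi(k,z) = (k, \widehat F(k+z))$, using the open mapping theorem to see that $\rest{\DFhat}{\mathcal Z} : \mathcal Z \to \mathcal Y$ is a Banach isomorphism, and then invoking the Banach inverse function theorem — is precisely the standard proof found in that reference. The only bookkeeping you elide is shrinking $\mathcal V$ so that $\mathcal V \times \{0\}$ lies in the image of $\Phi$ restricted to $\mathcal V \times \mathcal W$ (so that $G$ is defined on all of $\mathcal V$), which is harmless since the theorem allows the neighbourhoods to be chosen as small as needed.
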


For the remainder of this section, until Corollary~\ref{finalmodulicor}, we assume $\nu < - \frac{5}{2}$ in the AC case so that Theorem~\ref{onetoonethm} gives a one-to-one correspondence. We also let $\e > 0$ be the constant from Lemma~\ref{threezeroeslemma} and Theorem~\ref{onetoonethm} and let $\mathcal U$ denote the open subset of $\Omega^3_{l, \nu}$ consisting of $3$-forms which are within $\e$ of $\ph$ in the $C^0$ norm.

To begin, we define a \emph{nonlinear map}
\begin{equation*}
F \, : \, \mathcal U \subseteq \Omega^3_{l, \nu} \to \Omega^{2}_{l-1, \nu-1} \oplus \Omega^{4}_{l-1, \nu-1}
\end{equation*}
by the rule
\begin{equation} \label{Fdefneq}
F(\eta) \, = \, (d + d^*) \eta - d^* \st \left( \Qp (\eta) \right).
\end{equation}
The motivation for this definition is that, by Theorem~\ref{onetoonethm}, a neighbourhood of $\ph$ in the moduli space $\mathcal{M}_{\nu}$ is homeomorphic to the space of $\eta \in \mathcal{U}$ such that $F(\eta) \in ({\mathcal W}_{\ph})_{l-1, \nu-1}$. Thus, we want to solve $(\pi_{/ \mathcal{W}} \circ  F) (\eta) = 0$, where $\pi_{/ \mathcal{W}}$ is the projection to the quotient of $\Omega^{2}_{l-1, \nu-1}  \oplus \Omega^{4}_{l-1, \nu-1}$ by the finite-dimensional space $({\mathcal W}_{\ph})_{l-1, \nu-1}$.

We now show that the map $F$ is well-defined.
\begin{lemma} \label{Qgoodlemma}
For $\eta \in \mathcal U$, we have $\Qp(\eta) \in \Omega^4_{l,\nu}$, and so $d^* \st (\Qp(\eta)) \in \Omega^2_{l - 1,\nu - 1}$.
\end{lemma}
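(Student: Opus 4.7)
The plan is to combine the pointwise quadratic estimates from Lemma~\ref{quadlemma} with the $C^0$-smallness hypothesis $|\eta|_{C^0} < \e$ and the Sobolev embedding Theorem~\ref{Sobolevembeddingthm} to promote pointwise bounds into weighted Sobolev control of $\Qp(\eta)$.

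First I would dispose of the zeroth and first order cases directly. By Lemma~\ref{quadlemma} combined with the hypothesis $|\eta|_{C^0} < \e$, the pointwise estimates
\begin{equation*}
|\Qp(\eta)| \, \leq \, C|\eta|^2 \, \leq \, C\e|\eta|, \qquad |\nab{}\Qp(\eta)| \, \leq \, C|\eta||\nab{}\eta| \, \leq \, C\e|\nab{}\eta|
\end{equation*}
both hold. Multiplying by the appropriate weight and integrating against $\varrho^{-7}\volm$ yields $\Qp(\eta) \in L^2_{1,\nu}(\Lambda^4 T^*M)$, with norm controlled by $C\e\|\eta\|_{L^2_{1,\nu}}$.

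Next, for higher derivatives $\nab{}^j \Qp(\eta)$ with $2 \leq j \leq l$, I would proceed by Fa\`a di Bruno--type chain rule. Since $\Qp$ is a smooth pointwise bundle map vanishing to second order at $0$, each such higher covariant derivative can be written schematically as a finite sum of terms of the form
\begin{equation*}
A_k(\eta) \cdot (\nab{}^{j_1}\eta) \otimes \cdots \otimes (\nab{}^{j_k}\eta), \qquad k \geq 1, \, \, j_1 + \cdots + j_k = j, \, \, j_i \geq 1,
\end{equation*}
where $A_k(\eta)$ is a smooth tensor-valued function of $\eta$, bounded in $C^0$ (for $k \geq 2$) or itself $O(|\eta|)$ (for $k=1$, since $\Qp'(0)=0$). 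In every term the total ``$\eta$-content'' is at least two factors. Because our standing convention $l \geq 6$ and Theorem~\ref{Sobolevembeddingthm} provide the embeddings $L^2_{l, \nu} \hookrightarrow C^{m, \alpha}_\nu$ with $m$ as large as needed, in each such term I keep one factor in weighted $L^2$ and estimate all remaining factors in weighted $L^\infty$ via this embedding. Summing the resulting weighted estimates over the finite chain-rule expansion gives $\nab{}^j \Qp(\eta) \in L^2_{0, \nu - j}$ for each $j \leq l$, hence $\Qp(\eta) \in \Omega^4_{l, \nu}$.

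The second conclusion $d^* \st(\Qp(\eta)) \in \Omega^2_{l-1, \nu-1}$ is then immediate, since $\st$ is a bundle isometry preserving every weighted Sobolev space, and $d^*$ is a first-order differential operator mapping $L^2_{l, \nu} \to L^2_{l-1, \nu-1}$. The main technical obstacle will be the bookkeeping in the chain-rule expansion for $\nab{}^j \Qp(\eta)$ and the verification that weighted Sobolev multiplication estimates are applied correctly on a conifold; but because $\Qp$ vanishes to second order at $0$ and $|\eta|_{C^0} < \e$, each term in the expansion contains a factor that is either pointwise small or quantitatively absorbable by the Sobolev embedding, so the required $L^2_{l,\nu}$ bound on $\Qp(\eta)$ follows cleanly.
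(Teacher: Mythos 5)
There is a genuine gap in your treatment of the higher derivatives $\nab{}^j \Qp(\eta)$ for $2 \leq j \leq l$. Your strategy is to take each chain-rule term $A_k(\eta)\cdot(\nab{}^{j_1}\eta)\otimes\cdots\otimes(\nab{}^{j_k}\eta)$, keep one factor in weighted $L^2$ and put ``all remaining factors'' in weighted $L^\infty$ via the embedding $L^2_{l,\nu}\hookrightarrow C^{m,\alpha}_{\nu}$ ``with $m$ as large as needed.'' But $m$ is \emph{not} as large as needed: the embedding requires $l-\tfrac{7}{2}\geq m+\alpha$, and the standing convention in the paper is only $l\geq 6$, which gives pointwise control of $\eta$ and its first (and second) covariant derivatives only --- indeed the paper's proof explicitly notes that nothing is known pointwise about $\nab{}^k\eta$ for $k>2$. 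Consequently a term such as $|\nab{}^3\eta|\cdot|\nab{}^3\eta|$ (arising when $j=l=6$, $k=2$, $j_1=j_2=3$) cannot be handled by your scheme: neither factor lies in a weighted $L^\infty$ space under the available regularity, so the ``one in $L^2$, the rest in $L^\infty$'' splitting breaks down precisely for the balanced splittings of the derivative order. Your argument would only close if one strengthened the hypothesis to roughly $l\geq 8$, so that every non-maximal factor has order at most $l/2\leq l-4$; that is a weaker statement than the lemma as used in the paper.

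The paper's proof resolves exactly this point differently: for the terms with two or more derivative factors it applies H\"older's inequality with exponents $q_i=\frac{j-a}{m_i}$ (so that $\sum_i \frac{1}{q_i}=1$) and then invokes the general weighted Sobolev embedding $L^2_{l,\nu}\subseteq L^{2q_i}_{m_i,\nu}$ with $p=2q_i>2$ to control each factor $\nab{}^{m_i}\eta$ in a weighted $L^{2q_i}$ norm, using the extra decay $\varrho^{\nu(1-q_i)}$ (AC) respectively growth (CS) to absorb the weights. This Gagliardo--Nirenberg-type interpolation is the essential ingredient your proposal is missing, and it is the sole reason the paper introduces $L^p$ spaces with $p\neq 2$ at all (see Remark~\ref{pismostly2rmk}). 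Your handling of the $j=0,1$ cases and of the terms with a single derivative factor (where the vanishing of $\Qp$ to second order supplies the extra factor of $|\eta|=O(\varrho^{\nu})$) is fine and agrees in spirit with the paper's cases $b=0$ and $b=1$; the gap is confined to, but fatal in, the multi-factor case.
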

\begin{proof}
This argument is very similar to~\cite[Proposition 6.4]{JSL2} and~\cite[Proposition 5.7]{LN}, with some minor differences. We present it here for completeness. From Lemma~\ref{quadlemma} we have $Q_{\ph} (0) = 0$ and $| Q_{\ph} (\eta)| \leq C | \eta |^2$ for some positive constant $C$. That is, the smooth function $Q_{\ph}$ is the quadratic term in the second order Taylor polynomial for the smooth function $\Theta$,  as a function of $\eta \in \Lambda^3 (T^*_x M)$, for fixed $x \in M$. More precisely, if we let $x^1, \ldots, x^7$ be local coordinates on $M$, and let $y^1, \ldots, y^{35}$ be local fibre coordinates for a trivialization of the bundle $\Lambda^3 (T^* M)$, then we can regard $Q_{\ph}$ locally as a smooth function
\begin{equation*}
R(x) \, = \, Q_{\ph} (x, y(x))
\end{equation*}
such that, \emph{for fixed $x$} and for $|y| \leq \e$, we have
\begin{equation} \label{Qgoodlemmaeq1}
(\nabla_x)^a (\partial_y)^b Q(x,y) \, = \, O(|y|^{\max(0, 2 - b)}).
\end{equation}
We need to modify~\eqref{Qgoodlemmaeq1} by inserting the appropriate function of $x$ as a multiplier for such an estimate to hold uniformly on $M$. Since $\ph$ is asymptotic to a $\G$~cone at each end, it is clear that the appropriate uniform estimate is
\begin{equation} \label{Qgoodlemmaeq2}
| (\nabla_x)^a (\partial_y)^b Q(x,y) | \, \leq \, C \varrho^{-a} |y|^{\max(0, 2 - b)}, \qquad \forall a, b \geq 0,
\end{equation}
where $\varrho$ is a radius function on $M$. Since we always assume that $l \geq 6$, by Corollary~\ref{C2cor} we have $\eta \in C^{2, \alpha}_{\nu}$, and thus in particular
\begin{equation} \label{Qgoodlemmaeq3}
|\eta| = O(\varrho^{\nu}) \qquad \text{and} \qquad | \nabla \eta | = O( \varrho^{\nu - 1} ).
\end{equation}
Note that we know nothing about $| \nabla^k y |$ for $k > 2$. Now because $\nu < 0$ in the AC case with $\varrho \to \infty$ on the end, and likewise because $\nu > 0$ in the CS case with $\varrho \to 0$ on each end, in either case we find that $\varrho^{\nu}$, and thus $\eta$, is \emph{bounded} on $M$.

To prove that $Q_{\ph} (\eta) \in \Omega^4_{l, \nu}$ we need to show that
\begin{equation*}
\nabla^j R \, \in \, L^2_{0, \nu - j} \qquad \text{for } 0 \leq j \leq l.
\end{equation*}
By the chain rule, we have
\begin{equation} \label{Qgoodlemmaeq4}
| \nabla^j R | \, \leq \, C_j \sum_{\substack{a, b \, \geq \, 0 \\[0.2em] a + b \, \leq \, j}} | (\nabla_x)^a (\partial_y)^b R(x, y(x)) | \cdot \left[ \, \sum_{\substack{ m_1, \ldots, m_b \, \geq \, 1 \\[0.2em] a + m_1 + \cdots + m_b \, = \, j}} \left( \prod_{i = 1}^b | \nabla^{m_i} y(x) | \right) \right]
\end{equation}
for some positive constant $C_j$ that is purely combinatorial and depends only on $j$. To show that $\nabla^j R$ is in $L^2_{0. \nu - j}$, we need to prove that the integral
\begin{equation*}
\int_M | \varrho^{j - \nu} \, \nabla^j R |^2 \varrho^{-7} \volm
\end{equation*}
is finite. From the inequality~\eqref{Qgoodlemmaeq4}, it suffices to prove that each of the integrals
\begin{equation} \label{Qgoodlemmaeq5}
\int_M \varrho^{2 j - 2 \nu} | (\nabla_x)^a (\partial_y)^b R(x, y(x)) |^2 \left( \prod_{i = 1}^b | \nabla^{m_i} y(x) |^2 \right) \varrho^{-7} \volm
\end{equation}
is finite, where $a, b \geq 0$, $m_1, \ldots, m_b \geq 1$, $a + b \leq j$ and $a + m_1 + \cdots + m_b = j$.

Consider first the case $b = 0$. In this case, $a=j$ and the product is empty. Hence, from~\eqref{Qgoodlemmaeq2} and the fact that $| y | = | \eta |$ is bounded on $M$, we have $| (\nabla_x)^j R| \leq C \varrho^{-j} | y |^2 \leq C \varrho^{-j} | y|$. Hence the integral in~\eqref{Qgoodlemmaeq5} is bounded above by
\begin{equation*}
C \int_M \varrho^{2j - 2 \nu} \varrho^{ - 2 j} | \eta |^2 \varrho^{-7} \volm = C \int_M \varrho^{-2 \nu} | \eta |^2 \varrho^{ - 7} \volm
\end{equation*}
which is finite since $\eta \in L^2_{l, \nu} \subseteq L^2_{0, \nu}$.

Next we consider the case $b = 1$. This time, $m_1 \geq 1$ and $a + m_1 = j$. Thus, from~\eqref{Qgoodlemmaeq2} and~\eqref{Qgoodlemmaeq3}, we have $| (\nabla_x)^a (\partial_y) R| \leq C \varrho^{-a} | y | \leq C \varrho^{-a + \nu}$. Hence the integral in~\eqref{Qgoodlemmaeq5} is bounded above by
\begin{equation} \label{Qgoodlemmaeq6}
C \int_M \varrho^{2j - 2 \nu} \varrho^{ - 2 a + 2 \nu} | \nabla^{m_1} \eta |^2 \varrho^{-7} \volm = C \int_M \varrho^{2 m_1} | \nabla^{m_1} \eta |^2 \varrho^{ - 7} \volm.
\end{equation}
However, since $\eta \in L^2_{l, \nu}$, we have $\nabla^{m_1} \eta \in L^2_{l - m_1, \nu - m_1} \subseteq L^2_{0, \nu - m_1}$ and therefore the integral
\begin{equation} \label{Qgoodlemmaeq7}
\int_M \varrho^{-2 \nu + 2 m_1} | \nabla^{m_1} \eta |^2 \varrho^{-7} \volm
\end{equation}
is finite. But in either the AC case or the CS case, the function $\varrho^{-2 \nu} \to \infty$ at the ends, so outside of a compact set the integrand of~\eqref{Qgoodlemmaeq7} dominates the integrand of~\eqref{Qgoodlemmaeq6}. Hence the integrals in~\eqref{Qgoodlemmaeq5} with $b=1$ are indeed finite.

Finally we consider the general case of $b \geq 2$. Now from~\eqref{Qgoodlemmaeq2} we have $| (\nabla_x)^a (\partial_y)^b R| \leq C \varrho^{-a}$. Hence the integral in~\eqref{Qgoodlemmaeq5} is bounded above by
\begin{equation} \label{Qgoodlemmaeq8}
C \int_M \varrho^{2j - 2 \nu} \varrho^{ - 2 a} \left( \prod_{i=1}^b | \nabla^{m_i} \eta |^2 \right) \varrho^{-7} \volm.
\end{equation}
For $i = 1, \dots, b$, define $q_i = \frac{j - a}{m_i}$. Since $b \geq 2$ and $a + m_1 + \cdots + m_b = j$, we have $q_i > 1$, and also $\sum_{i = 1}^b \frac{1}{q_i} = 1$. Observe also that the integrand of~\eqref{Qgoodlemmaeq8} can be written as $\prod_{i=1}^b s_i$, where
\begin{equation*}
s_i \, = \, \varrho^{2 m_i - \frac{2 \nu}{q_i}} | \nabla^{m_i} \eta |^2 \varrho^{-\frac{7}{q_i}}.
\end{equation*}
Now by H\"older's inequality, we have $\int_M (\prod_{i=1}^b s_i ) \volm = || \prod_{i=1}^b s_i ||_1 \leq \prod_{i=1}^b || s_i ||_{q_i}$. Thus we can finish the proof if we can show the finiteness of the integrals
\begin{equation} \label{Qgoodlemmaeq9}
|| s_i ||_{q_i}^{q_i} \, = \, \int_M s_i^{q_i} \volm \, = \, \int_M \varrho^{2 m_i q_i - 2 \nu} | \nabla^{m_i} \eta |^{2 q_i} \varrho^{-7} \volm.\end{equation}
We claim that the above integral is indeed finite, by the Sobolev embedding Theorem~\ref{Sobolevembeddingthm}. To see this, let $p = 2$, and let $q = 2 q_i > 2$ since $q_i > 1$. Let $m = m_i$. We have $l \geq m$ since $m_i \leq j \leq l$. Furthermore, the last remaining inequality we need to use the embedding theorem is
\begin{equation*}
l - \frac{7}{2} \, \geq \, m - \frac{7}{q} \, = \, m_i - \frac{7}{2 q_i} \, = \, m_i \left( 1 - \frac{7}{2(j - a)} \right),
\end{equation*}
which is easy to verify from $2 \leq j - a \leq l$ and $0 < \frac{m_i}{l} \leq 1$. Thus Theorem~\ref{Sobolevembeddingthm} tells us that $L^2_{l, \nu} \subseteq L^{2 q_i}_{m_i, \nu}$, and therefore
\begin{equation} \label{Qgoodlemmaeq10}
\int_M \varrho^{2 m_i q_i - 2 \nu q_i} | \nabla^{m_i} \eta |^{2 q_i} \varrho^{-7} \volm \, = \, \int_M \varrho^{2 \nu(1 - q_i)} \varrho^{2 m_i q_i - 2 \nu} | \nabla^{m_i} \eta |^{2 q_i} \varrho^{-7} \volm
\end{equation}
is finite. But now, just as in the $b=1$ case, since $q_i > 1$, the integrand of~\eqref{Qgoodlemmaeq10} dominates the integrand of~\eqref{Qgoodlemmaeq9} outside of a compact set, and hence the proof is complete.
\end{proof}

We now consider the \emph{linearization} $\DF$ of the map $F$ defined in equation~\eqref{Fdefneq}.
\begin{lemma} \label{linearizationlemma}
The linearization $\DF$ of $F$ at the origin is the map 
\begin{equation} \label{linearizationeq}
\begin{aligned}
\DF \, : \, & \Omega^3_{l, \nu} \to \Omega^{\bullet}_{l-1, \nu-1} \\
& \dot \eta \mapsto (d + d^*) \dot \eta.
\end{aligned}
\end{equation}
\end{lemma}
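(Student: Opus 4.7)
The plan is to exploit the additive decomposition $F = F_1 - F_2$, where $F_1(\eta) = (d + d^*)\eta$ and $F_2(\eta) = d^* \st (\Qp(\eta))$, and to linearize each summand separately. The first map $F_1$ is already \emph{linear and bounded} as a map $\Omega^3_{l,\nu} \to \Omega^\bullet_{l-1, \nu-1}$ (indeed, this is just the usual continuity of $d + d^*$ between the weighted Sobolev spaces), so its Fr\'echet derivative at $0$ is simply itself, namely $\dot\eta \mapsto (d + d^*)\dot\eta$. The entire content of the lemma is therefore that the Fr\'echet derivative of $F_2$ at $0$ vanishes, i.e.\ that $F_2$ is a quadratic remainder in the precise sense of vanishing to second order in the $L^2_{l,\nu}$ norm.

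Next I would derive this quadratic vanishing from Lemma~\ref{quadlemma}. The key analytic input is the pointwise bound $|\Qp(\eta)| \leq C |\eta|^2$, together with the derivative bound $|\nabla \Qp(\eta)| \leq C |\eta| \, |\nabla \eta|$, and the smoothness of $\Theta$ as a fibrewise nonlinear map. Iterating the derivative bound together with the chain rule (and the uniform estimate~\eqref{Qgoodlemmaeq2} on all $x$- and $y$-derivatives of the local representative of $\Qp$), one gets for each $0 \leq j \leq l$ a pointwise estimate of the schematic form
\begin{equation*}
|\nabla^j \Qp(\eta)| \, \leq \, C_j \sum_{a + m_1 + \cdots + m_b = j, \, b \geq 1} \varrho^{-a} |\eta|^{\max(0, 2-b)} \prod_{i=1}^b |\nabla^{m_i} \eta|,
\end{equation*}
which is exactly the structure that appears in the proof of Lemma~\ref{Qgoodlemma}. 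The same H\"older/Sobolev-embedding argument used there (with the fact that $|\eta|$ is uniformly bounded on $M$ for $\eta \in \mathcal U$) then shows that $\Qp$ is $C^1$ as a map $\Omega^3_{l,\nu} \to \Omega^4_{l,\nu}$, with $\| \Qp(\eta) \|_{L^2_{l,\nu}} = O(\| \eta \|_{L^2_{l,\nu}}^2)$ as $\eta \to 0$. Applying the bounded linear operator $d^* \st : \Omega^4_{l,\nu} \to \Omega^3_{l-1,\nu-1}$ gives $\| F_2(\eta) \|_{L^2_{l-1,\nu-1}} = O(\| \eta \|_{L^2_{l,\nu}}^2)$, so that
\begin{equation*}
\lim_{\eta \to 0} \frac{\| F_2(\eta) - F_2(0) \|_{L^2_{l-1,\nu-1}}}{\| \eta \|_{L^2_{l,\nu}}} \, = \, 0,
\end{equation*}
which is precisely the statement that $D F_2 |_0 = 0$.

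Combining, $\DF |_0 = D F_1 |_0 - D F_2 |_0 = (d + d^*) - 0$, which gives~\eqref{linearizationeq}. The main subtlety in the argument is not the linearization calculation itself, which is essentially formal, but rather the verification that $F_2$ is Fr\'echet differentiable between the \emph{weighted} Sobolev spaces in question. That is handled by the same chain-rule-plus-H\"older calculation that established Lemma~\ref{Qgoodlemma}; once one has that the quadratic and higher-order behaviour of $\Qp$ transfers faithfully into the weighted Sobolev norms (so that no growth in $\varrho$ ruins the $o(\|\eta\|)$ estimate), the result is immediate.
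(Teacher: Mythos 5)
Your proposal is correct and follows essentially the same route as the paper: the paper's proof is a one-line observation that the $\Qp$ term vanishes to second order because $\Qp$ is by Lemma~\ref{quadlemma} the quadratic remainder of $\Theta$, so $\DF$ is just $(d+d^*)$. Your extra step — running the chain-rule/H\"older/Sobolev-embedding estimates of Lemma~\ref{Qgoodlemma} quantitatively to get $\| d^* \st \Qp(\eta) \|_{L^2_{l-1,\nu-1}} = O(\|\eta\|_{L^2_{l,\nu}}^2)$, hence a vanishing Fr\'echet derivative of the nonlinear term in the weighted norms — is a legitimate fleshing-out of what the paper treats as immediate, not a different argument.
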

\begin{proof}
This follows immediately from the definition of $\Qp$ as the quadratic approximation of the nonlinear map $\Theta$ in Lemma~\ref{quadlemma}.
\end{proof}

From Lemma~\ref{linearizationlemma}, the linearization $\DF$ always maps \emph{onto} the space $\mathcal Y_0 = (d + d^*)(\Omega^3_{l, \nu})$ defined in~\eqref{Ydefneq}, which is a Banach space by Lemma~\ref{YBanachspacelemma}. However, to be able to apply the Banach space implicit function theorem to $F$, we would need to know that $F$ maps into $\mathcal Y_0$. If we could show this, we could redefine the codomain of the map $F$ to be $\mathcal Y_0$, surjectivity would then be immediate and we would be able to apply Theorem~\ref{IFT}. However, the problem is that we only know that $F$ maps into $\mathcal Y = d(\Omega^3_{l,\nu})+d^* (\Omega^3_{l, \nu})$, which is in general \emph{not} contained in $\mathcal{Y}_0=(d + d^*)(\Omega^3_{l, \nu})$, and thus $DF|_0$ may not surject onto a Banach space containing the image of $F$. 

We showed in Corollary~\ref{obstructionspacecor} that $\mathcal Y = \mathcal Y_0 \oplus \mathcal O_{\nu}$ for a finite-dimensional space $\mathcal{O}_{\nu}$. Now the image of $F$ is contained in $\mathcal{Y}$, but $DF|_0$ does not map onto $\mathcal{Y}$, so we ``correct'' the map $F$ to a map
\begin{equation*}
\widehat F \, : \,  \mathcal U \oplus \mathcal O_{\nu} \to \mathcal Y= \mathcal Y_0 \oplus \mathcal O_{\nu}
\end{equation*}
by the rule
\begin{equation} \label{Fhatdefneq}
\widehat F(\eta, \xi) \, = \, (d + d^*) \eta - d^* \st \left( \Qp (\eta) \right) + \xi.
\end{equation}

\begin{lemma} \label{imageFhatlemma}
For generic rates $\nu$, the space $\mathcal Y = \mathcal Y_0 \oplus \mathcal O_{\nu}$ is a Banach space, and the map $\widehat F$ defined in equation~\eqref{Fhatdefneq} actually maps into this space.
\end{lemma}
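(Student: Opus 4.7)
The plan is to verify the two claims separately, both as essentially immediate consequences of material already established in Sections~\ref{conifoldSobolevsec}--\ref{specialindexchangesec}.

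First I would check that $\mathcal Y = \mathcal Y_0 \oplus \mathcal O_{\nu}$ is a Banach space. By Lemma~\ref{YBanachspacelemma}, for generic rates $\nu$ (concretely, rates $\nu$ for which $\nu + 1$ is noncritical for $d + d^*$), the space $d(\Omega^3_{l, \nu}) + d^*(\Omega^3_{l,\nu})$ is a closed subspace of $\Omega^2_{l-1, \nu-1} \oplus \Omega^4_{l-1, \nu-1}$, and hence inherits a Banach space structure. Definition~\ref{realobstructionspacedefn} identifies $\mathcal O_{\nu} = \mathcal O^3_{l, \nu}$ as a (finite-dimensional) topological complement of $\mathcal Y_0 = (d + d^*)(\Omega^3_{l, \nu})$ inside this Banach space, so the decomposition $\mathcal Y = \mathcal Y_0 \oplus \mathcal O_{\nu}$ is a direct topological sum and $\mathcal Y$ is itself a Banach space.

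Next I would show that $\widehat F$ takes its values in $\mathcal Y$ by checking each of the three summands of~\eqref{Fhatdefneq} individually. The linear term $(d + d^*)\eta$ lies in $\mathcal Y_0 \subseteq \mathcal Y$ by the very definition~\eqref{Y0defneq}. The term $\xi$ lies in $\mathcal O_{\nu} \subseteq \mathcal Y$ by construction of the domain of $\widehat F$. Finally, by Lemma~\ref{Qgoodlemma} we know that for $\eta \in \mathcal U$ we have $\Qp(\eta) \in \Omega^4_{l, \nu}$, so $\st \Qp(\eta) \in \Omega^3_{l, \nu}$ and therefore
\begin{equation*}
d^* \st \bigl( \Qp(\eta) \bigr) \, \in \, d^*(\Omega^3_{l, \nu}) \, \subseteq \, d(\Omega^3_{l, \nu}) + d^*(\Omega^3_{l, \nu}) \, = \, \mathcal Y.
\end{equation*}
Adding these three contributions, each lying in $\mathcal Y$, gives $\widehat F(\eta, \xi) \in \mathcal Y$ as required.

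There is no genuine obstacle here; the content of the lemma lies in the \emph{definitions} of $\mathcal Y_0$ and $\mathcal O_{\nu}$ made earlier. The only subtle point worth highlighting in the write-up is conceptual rather than technical: the nonlinear term $d^* \st (\Qp(\eta))$ belongs a priori only to $d^*(\Omega^3_{l, \nu})$, and in the non-$L^2$ regime there is no reason for this to sit inside $(d + d^*)(\Omega^3_{l, \nu}) = \mathcal Y_0$. This is precisely the reason for introducing the auxiliary direction $\mathcal O_{\nu}$ and enlarging $F$ to $\widehat F$, so that a well-defined map into a genuine Banach space codomain is obtained and the Banach space implicit function theorem (Theorem~\ref{IFT}) can be applied in the next step of the argument.
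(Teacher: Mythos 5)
Your proposal is correct and follows essentially the same route as the paper: the Banach space statement is exactly Lemma~\ref{YBanachspacelemma} combined with Definition~\ref{realobstructionspacedefn}, and the mapping statement reduces, via Lemma~\ref{Qgoodlemma}, to noting that $d^* \st (\Qp(\eta)) \in d^*(\Omega^3_{l,\nu}) \subseteq \mathcal Y$ while the other two terms lie in $\mathcal Y_0$ and $\mathcal O_{\nu}$ by definition. Your closing remark about why the codomain must be enlarged from $\mathcal Y_0$ to $\mathcal Y$ is a fair summary of the paper's own motivation preceding the lemma.
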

\begin{proof}
The first statement is precisely Lemma~\ref{YBanachspacelemma}. Since $\mathcal Y_0 = (d + d^*)(\Omega^3_{l, \nu})$, to show that the map $\widehat F$ of equation~\eqref{Fhatdefneq} maps into $\mathcal Y$, we need only show that $d^* \st \left( \Qp (\eta) \right)$ lies in $\mathcal Y$.  However, we showed in Lemma~\ref{Qgoodlemma} that the $3$-form $\chi = \st \left( \Qp (\eta) \right)$ lies in $\Omega^3_{l, \nu}$, so the result is now immediate.
\end{proof}

\begin{cor} \label{linearizationFhatsurjectivecor}
The linearization $\DFhat$ of $\widehat F$ at the origin is the map
\begin{equation} \label{linearizationFhateq}
\begin{aligned}
\DFhat \, : \, \Omega^3_{l, \nu} \oplus \mathcal O_{\nu} & \to \mathcal Y \\
(\dot \eta, \dot \xi) & \mapsto (d + d^*) \dot \eta + \dot \xi.
\end{aligned}
\end{equation}
and $\DFhat$ is \emph{surjective} onto $\mathcal Y$.
\end{cor}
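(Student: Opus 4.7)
The plan is to verify the formula for the linearization by direct differentiation of~\eqref{Fhatdefneq}, and then read off surjectivity from the decomposition of $\mathcal Y$ that was set up in Definition~\ref{realobstructionspacedefn}. The map $\widehat F$ is the sum of three terms: the bounded linear operator $(\eta,\xi) \mapsto (d+d^*)\eta$, the bounded linear operator $(\eta,\xi) \mapsto \xi$, and the nonlinear piece $(\eta,\xi) \mapsto -d^*\st(\Qp(\eta))$. The first two terms are their own linearizations at the origin, so the only thing that requires work is showing that the linearization of $\eta \mapsto d^*\st(\Qp(\eta))$ at $\eta = 0$ vanishes.

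To handle the nonlinear term, I would invoke Lemma~\ref{quadlemma}, which gives the pointwise estimates $\Qp(0) = 0$, $|\Qp(\eta)| \leq C|\eta|^2$, and $|\nabla \Qp(\eta)| \leq C|\eta|\,|\nabla \eta|$, together with the analogous higher-derivative estimates implicit in the proof of Lemma~\ref{Qgoodlemma}. By Corollary~\ref{C2cor} the inclusion $\Omega^3_{l,\nu} \hookrightarrow C^{0}_{\nu}$ is continuous, so if $\eta$ has small $L^2_{l,\nu}$-norm it also has small $C^0$-norm on $M$. Combining this with the quadratic bound, essentially the argument of Lemma~\ref{Qgoodlemma} gives a bound of the form
\begin{equation*}
{|| \Qp(\eta) ||}_{L^2_{l,\nu}} \, \leq \, C \, {|| \eta ||}_{C^0_\nu} \cdot {|| \eta ||}_{L^2_{l,\nu}},
\end{equation*}
so that ${|| \Qp(\eta) ||}_{L^2_{l,\nu}} = o({|| \eta ||}_{L^2_{l,\nu}})$ as $\eta \to 0$. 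Post-composing with the bounded linear operator $d^*\st : \Omega^4_{l,\nu} \to \Omega^3_{l-1,\nu-1}$, whose image lies in $\mathcal Y$ by Lemma~\ref{imageFhatlemma}, this establishes Fr\'echet differentiability of $\widehat F$ at the origin with linearization equal to formula~\eqref{linearizationFhateq}.

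For surjectivity, I would simply unwind the definitions. By Lemma~\ref{Yspacelemma} and Definition~\ref{realobstructionspacedefn}, we have the direct sum decomposition $\mathcal Y = \mathcal Y_0 \oplus \mathcal O_{\nu}$, where $\mathcal Y_0 = (d+d^*)(\Omega^3_{l,\nu})$. Given any $\sigma \in \mathcal Y$, write $\sigma = \sigma_0 + \sigma_1$ with $\sigma_0 \in \mathcal Y_0$ and $\sigma_1 \in \mathcal O_{\nu}$. By definition of $\mathcal Y_0$, there exists $\dot\eta \in \Omega^3_{l,\nu}$ with $(d+d^*)\dot\eta = \sigma_0$. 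Setting $\dot\xi = \sigma_1 \in \mathcal O_{\nu}$, we obtain $\DFhat(\dot\eta, \dot\xi) = (d+d^*)\dot\eta + \dot\xi = \sigma_0 + \sigma_1 = \sigma$, proving surjectivity.

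Neither step presents a genuine obstacle; the work was really done in Lemmas~\ref{Qgoodlemma},~\ref{Yspacelemma},~\ref{imageFhatlemma} and the construction of $\mathcal O^k_{l,\nu}$ as a complement in Definition~\ref{realobstructionspacedefn}. The only mildly subtle point is confirming the vanishing of the linearization of the nonlinear term in the \emph{weighted Sobolev} norm rather than merely pointwise; this is where the quadratic nature of $\Qp$, combined with the Sobolev embedding $\Omega^3_{l,\nu} \hookrightarrow C^0_\nu$, is essential — it is precisely what converts the pointwise quadratic bound into the desired $o({||\eta||}_{L^2_{l,\nu}})$ control. With this in hand, the corollary is now in a form to which the Banach space implicit function Theorem~\ref{IFT} can be applied in the next step, with kernel $\ker\DFhat \subseteq \Omega^3_{l,\nu} \oplus \mathcal O_\nu$ directly tied to the (reduced) moduli space.
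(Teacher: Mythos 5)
Your proposal is correct and follows essentially the same route as the paper: the paper simply cites Lemma~\ref{linearizationlemma} (linearization of the quadratic term vanishes, which is what your Sobolev-embedding argument spells out in detail) together with equation~\eqref{Fhatdefneq} for the formula, and surjectivity is read off directly from the decomposition $\mathcal Y = (d+d^*)(\Omega^3_{l,\nu}) \oplus \mathcal O_{\nu}$ of Definition~\ref{realobstructionspacedefn}, exactly as you do. (Only a cosmetic slip: $d^*\st$ sends $\Omega^4_{l,\nu}$ to $\Omega^2_{l-1,\nu-1}$, not $\Omega^3_{l-1,\nu-1}$.)
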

\begin{proof}
The first statement follows from Lemma~\ref{linearizationlemma} and equation~\eqref{Fhatdefneq}, while the second statement is immediate from~\eqref{realobstructionspacedefneq}.
\end{proof}

Consider the Banach space $\mathcal X = \Omega^3_{l, \nu} \oplus \mathcal O_{\nu}$. For generic rates $\nu$, we have shown that the differential $\DFhat$ maps surjectively from $\mathcal X$ onto $\mathcal Y$. It is also clear that $\widehat F$ is a $C^{\infty}$ map from $\widehat{\mathcal U} = \mathcal U \times \mathcal O_{\nu}$ to $\mathcal Y$. Finally, note from Corollary~\ref{linearizationFhatsurjectivecor} and Definition~\ref{realobstructionspacedefn} that
\begin{equation} \label{kerDFhateq}
\mathcal K \, = \, \ker \DFhat \, = \, \ker \DF \oplus \{ 0 \} \, = \, \mathcal H^3_{\nu},
\end{equation}
and we have $\mathcal X = \mathcal K \oplus \mathcal Z$ for a closed subspace $\mathcal Z$ by Propositions~\ref{HodgedecompositionpropL2} and~\ref{HodgedecompositionpropnonL2}. 

In the case when $(\mathcal{W}_{\ph})_{l-1,\nu-1}=0$, which is automatic in the AC case for $\nu>-4$, the construction above suffices and we can apply the implicit function theorem. However, in general, we only wish to solve $F(\eta) \in (\mathcal{W}_{\ph})_{l-1, \nu-1}$, rather than $F (\eta) = 0$. Thus we let 
\begin{equation*}
\pi_{/ \mathcal{W}} : \mathcal{Y} \to \mathcal{Y}_{/ \mathcal{W}} = \mathcal{Y} / (\mathcal{W}_{\ph})_{l-1, \nu-1}
\end{equation*}
denote the projection map, so that our problem is to solve $(\pi_{/ \mathcal{W}} \circ F) (\eta) = 0$. We do not know that the linearization $\pi_{/ \mathcal{W}} \circ \rest{DF}{0}$ maps onto $\mathcal{Y}_{/ \mathcal{W}}$, but the existence of $\mathcal{O}_{\nu}$ means that there exists a finite-dimensional subspace $\mathcal{O}_{/ \mathcal{W}} \subseteq \mathcal{Y}_{/ \mathcal{W}}$ such that
\begin{equation*}
\mathcal{Y}_{/ \mathcal{W}} \, = \, \pi_{/ \mathcal{W}}(\mathcal{Y}_0) \oplus \mathcal{O}_{/ \mathcal{W}}.
\end{equation*}
Define the space
\begin{equation*}
\mathcal{X}_{/ \mathcal{W}} \, = \, \Omega^3_{l,\nu} \oplus \mathcal{O}_{/ \mathcal{W}}.
\end{equation*}
Recall from above that $\mathcal{X} = \Omega^3_{l,\nu} \oplus \mathcal{O}_{\nu}$, thus to construct $\mathcal X_{/ \mathcal{W}}$ we just add the subspace $\mathcal{O}_{/ \mathcal W}$ of $\mathcal{O}_{\nu}$ to $\Omega^3_{l, \nu}$ rather than all of $\mathcal{O}_{\nu}$.

We can now define a smooth map between Banach spaces by
\begin{equation} \label{FhatW.defn.eq}
\begin{aligned}
\widehat{F}_{/ \mathcal{W}} \, : \, \mathcal{U}\oplus\mathcal{O}_{/\mathcal{W}}\subseteq\mathcal{X}_{/ \mathcal{W}} \, & \to \, \mathcal{Y}_{/ \mathcal{W}} \\(\eta, \xi) \, & \mapsto \, (\pi_{/ \mathcal{W}} \circ F) (\eta) + \xi,
\end{aligned}
\end{equation}
whose linearization $\rest{D \widehat{F}_{/ \mathcal{W}}}{0}$ at the origin is surjective by Corollary~\ref{linearizationFhatsurjectivecor}. Moreover, the kernel of this linearization is
\begin{equation*}
\mathcal{K}_{/ \mathcal{W}} \,= \,\ker(\pi_{/ \mathcal{W}} \circ \rest{DF}{0} )
\end{equation*}
and $\mathcal{X}_{/ \mathcal{W}} = \mathcal{K}_{/ \mathcal{W}} \oplus \mathcal{Z}_{/ \mathcal{W}}$ for a closed subspace $\mathcal{Z}_{/ \mathcal{W}}$.

Thus, we can apply Theorem~\ref{IFT} to conclude that there exist open sets $\mathcal A \subseteq \mathcal K_{/ \mathcal{W}}$ and $\mathcal B \subseteq \mathcal Z_{/ \mathcal{W}}$, both containing $0$, with $\mathcal A \times \mathcal B \subseteq \widehat{\mathcal U} = \mathcal U \times \mathcal O_{/ \mathcal{W}}$, and a $C^{\infty}$-map $G : \mathcal A \to \mathcal B$ such that
\begin{equation*}
\widehat F_{/ \mathcal{W}}^{-1}(0) \cap (\mathcal A \times \mathcal B) \, = \, \{ (x, G(x)); \, x \in \mathcal A \}
\end{equation*}
in $\mathcal X_{/ \mathcal{W}} = \mathcal K_{/ \mathcal{W}} \oplus \mathcal Z_{/ \mathcal{W}}$. We have therefore established the following result.

\begin{cor} \label{widehatFspacecor}
The set $\widehat F_{/ \mathcal{W}}^{-1}(0)$ is a smooth manifold, diffeomorphic to an open neighbourhood $\mathcal A$ of the origin in $\mathcal K_{/ \mathcal{W}}$. In particular, when $(\mathcal{W}_{\ph})_{l-1, \nu-1} = 0$, then we have that $\mathcal{K}_{/ \mathcal{W}} = \mathcal{H}^3_{\nu}$ and $\dim \widehat F_{/ \mathcal{W}}^{-1}(0) = \dim \mathcal H^3_{\nu}$.
\end{cor}
Notice that $\mathcal Z_{/ \mathcal{W}} = \mathcal Z' \oplus \mathcal O_{/ \mathcal{W}}$ for some closed subspace $\mathcal Z'$ of $\Omega^3_{l, \nu}$. Thus the projection map $\pi_o : \mathcal B \to \mathcal O_{/\mathcal{W}}$ is well-defined and smooth. It is also clear that $(\pi_{/ \mathcal{W}} \circ  F)^{-1}(0)$ is homeomorphic to the subset $(\pi_o \circ G)^{-1}(0)$ of $\mathcal A \subseteq \mathcal K_{/ \mathcal{W}}$. Hence we have shown the following.
\begin{cor} \label{Psimapcor}
The composition $\Psi_{\nu} = \pi_o \circ G$ is a smooth map
\begin{equation*}
\Psi_{\nu} \, : \, \mathcal A \to \mathcal O_{/ \mathcal{W}}
\end{equation*}
from the open subset $\mathcal A$ of the finite-dimensional vector space $\mathcal K_{/ \mathcal{W}}$ to the finite-dimensional vector space $\mathcal O_{/ \mathcal{W}}$, whose zero set $\Psi_{\nu}^{-1}(0)$ is homeomorphic to $(\pi_{/ \mathcal{W}} \circ F)^{-1}(0)$.
\end{cor}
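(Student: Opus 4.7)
My plan is to prove both claims by unwinding the setup of the Banach space implicit function theorem from Corollary~\ref{widehatFspacecor}.

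Smoothness of $\Psi_{\nu} = \pi_o \circ G$ is immediate: $G : \mathcal V \to \mathcal W$ is $C^{\infty}$ (since $\widehat F$ is smooth, because $\Theta$ and hence $Q_{\ph}$ are smooth), and $\pi_o$ is the restriction of a continuous linear projection onto $\mathcal O_{\nu}$. Both $\mathcal H^3_{\nu}$ and $\mathcal O_{\nu}$ are finite-dimensional by Definition~\ref{Hdefn} and Definition~\ref{realobstructionspacedefn}, so nothing more needs to be said here.

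The main task is the homeomorphism of zero sets, and the crucial move is to decompose the equation $\widehat F = 0$ using the splitting $\mathcal Y = \mathcal Y_0 \oplus \mathcal O_{\nu}$ of Lemma~\ref{Yspacelemma}. I will write $F(\eta) = F_0(\eta) + F_o(\eta)$ and $G(x) = G_0(x) + G_o(x)$ along this splitting, so that $G_o \equiv \Psi_{\nu}$. Since $(d + d^*)\eta \in \mathcal Y_0$ while $\xi \in \mathcal O_{\nu}$, the equation
\begin{equation*}
\widehat F(\eta, \xi) \, = \, F(\eta) + \xi \, = \, 0
\end{equation*}
decouples into $F_0(\eta) = 0$ and $\xi = -F_o(\eta)$. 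Consequently, the parametrization $x \mapsto (x + G_0(x), G_o(x))$ of $\widehat F^{-1}(0) \cap (\mathcal V \times \mathcal W)$ from Corollary~\ref{widehatFspacecor} automatically satisfies $G_o(x) = - F_o(x + G_0(x))$, and the additional requirement $F(\eta) = 0$ becomes simply $F_o(\eta) = 0$, i.e.\ $\Psi_{\nu}(x) = 0$.

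Therefore, locally near $\ph$, the set $F^{-1}(0)$ is parametrized by $x \mapsto x + G_0(x)$ as $x$ ranges over $\Psi_{\nu}^{-1}(0) \subseteq \mathcal V$. This map is continuous (in fact smooth), and its inverse is the restriction to $F^{-1}(0)$ of the bounded linear projection $\Omega^3_{l, \nu} \to \mathcal H^3_{\nu}$ coming from the topological direct sum decomposition $\Omega^3_{l, \nu} = \mathcal H^3_{\nu} \oplus \mathcal Z_0$ supplied by Propositions~\ref{HodgedecompositionpropL2} and~\ref{HodgedecompositionpropnonL2}. The only mildly delicate point, and the one that requires a sentence of care, is ensuring that this parametrization actually stays inside the neighbourhood $\mathcal U$ on which $F$ is defined; since $G_0(0) = 0$ and $G_0$ is continuous, this is arranged by shrinking $\mathcal V$ if necessary, so there is no real obstacle.
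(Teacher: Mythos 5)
Your proposal is correct and follows the same route the paper intends: the paper simply asserts that $\pi_o$ is well defined and smooth via $\mathcal Z = \mathcal Z_0 \oplus \mathcal O_{\nu}$ and that the identification of $F^{-1}(0)$ with $(\pi_o \circ G)^{-1}(0)$ is ``clear'', and your decoupling of $\widehat F(\eta,\xi)=0$ along $\mathcal Y = \mathcal Y_0 \oplus \mathcal O_{\nu}$ together with the graph parametrization $x \mapsto (x + G_0(x), G_o(x))$ is exactly the intended verification. One tiny remark: the containment of $x + G_0(x)$ in $\mathcal U$ is already guaranteed by the implicit function theorem's condition $\mathcal V \times \mathcal W \subseteq \widehat{\mathcal U} = \mathcal U \times \mathcal O_{\nu}$, so no shrinking of $\mathcal V$ is actually needed.
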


By combining Corollary~\ref{Psimapcor} with Theorem~\ref{onetoonethm} and Corollary~\ref{widehatFspacecor} we have the following result.
\begin{thm} \label{finalmodulithm}
Near $[\ph]$, the moduli space $\mathcal{M}_{\nu}$ is homeomorphic to the zero set $\Psi_{\nu}^{-1}(0)$ of a smooth map $\Psi_{\nu}$ from an open subset $\mathcal A$ of the finite-dimensional vector space $\mathcal K_{/ \mathcal{W}}$ to the finite-dimensional vector space $\mathcal O_{/ \mathcal{W}}$. In particular, in the AC case when $\nu \in (-4, -\frac{5}{2})$, we have $\mathcal O_{/ \mathcal{W}} = \{0\}$ and $\pi_{/ \mathcal{W}}$ is the identity, and thus in this case we conclude that the moduli space $\mathcal M_{\nu}$ is a smooth manifold of dimension $\dim \mathcal H^3_{\nu}$. 
\end{thm}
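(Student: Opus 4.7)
The plan is to assemble Theorem~\ref{finalmodulithm} from the three principal ingredients already established in Steps~1--3, treating it essentially as a chaining of homeomorphisms together with a dimension computation in the unobstructed range.

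First, I would invoke Corollary~\ref{reducedmodulispacecor} and Theorem~\ref{slicethm} to identify an open neighbourhood of $[\ph]$ in the reduced moduli space $\breve{\mathcal M}_{\nu}$ with the topological space $\breve{\mathcal R}^{\e}_{l,\nu}$ of small torsion-free gauge-fixed $\G$~structures near $\ph$; by Corollary~\ref{onetoonesmoothcor} this set consists of smooth forms and is independent of $l$. The hypothesis $\nu < -\frac{5}{2}$ (AC case) or $\nu > 0$ (CS case) then allows me to invoke the harder direction of Theorem~\ref{onetoonethm}, which yields a bijection between $\breve{\mathcal R}^{\e}_{l,\nu}$ and the zero set of the nonlinear map $F$ of equation~\eqref{Fdefneq} inside the open set $\mathcal U$. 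Finally, Corollary~\ref{Psimapcor} identifies this zero set with $\Psi_{\nu}^{-1}(0)$, where $\Psi_{\nu}: \mathcal V \to \mathcal O_{\nu}$ is a smooth map from an open subset of the finite-dimensional space $\mathcal H^3_{\nu}$ (the kernel of $\DFhat$, recorded in equation~\eqref{kerDFhateq}) to the finite-dimensional obstruction space $\mathcal O_{\nu}$. Composing these three homeomorphisms produces the first assertion of the theorem.

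For the second assertion in the AC case when $\nu \in (-4, -\frac{5}{2})$, two verifications are required. On the obstruction side, Definition~\ref{realobstructionspacedefn} stipulates that $\mathcal O^3_{l,\lambda} = \{0\}$ throughout the AC range $\lambda > -4$, so $\Psi_{\nu}$ is forced to vanish identically; consequently $\Psi_{\nu}^{-1}(0) = \mathcal V$ is an open subset of $\mathcal H^3_{\nu}$, and Corollary~\ref{widehatFspacecor} already supplies the smooth structure of dimension $\dim \mathcal H^3_{\nu}$. On the reduction side, since $(-4, -\frac{5}{2}) \subseteq (-4, -1]$, Theorem~\ref{infinitesimalslicethm} case~[2] gives the slice decomposition $\mathcal C_{l,\nu} = T_{\ph}(\mathcal D_{l+1,\nu+1} \cdot \ph) \oplus (\mathcal G_{\ph})_{l,\nu}$ with no extra $(\mathcal E_{\ph})_{l,\nu}$ summand, whence Corollary~\ref{reducedmodulispacecor} forces $\breve{\mathcal M}_{\nu} = \mathcal M_{\nu}$. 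Combining these two facts concludes that $\mathcal M_{\nu}$ itself is a smooth manifold of dimension $\dim \mathcal H^3_{\nu}$.

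The main ``obstacle,'' such as it is, is purely bookkeeping: one must check that each homeomorphism in the chain descends to genuine open neighbourhoods of $[\ph]$ rather than merely to open subsets of some ambient Banach space, and that the smooth structure transferred from $\mathcal V \subseteq \mathcal H^3_{\nu}$ through $\widehat F^{-1}(0)$ down to $\mathcal M_{\nu}$ is independent of the auxiliary choices (of $l$, of $\e$, and of the topological complement $\mathcal O_{\nu}$ inside $d(\widehat{\mathcal O}^3_{l,\nu})$). All the substantive analytic work---the slice theorem, the one-to-one correspondence with the nonlinear PDE, and the application of the Banach space implicit function theorem---has already been carried out in Steps 1--3, so no further estimates or constructions are needed.
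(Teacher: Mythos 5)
Your proposal is correct and follows essentially the same route as the paper, which proves this theorem precisely by combining Corollary~\ref{reducedmodulispacecor} (via Theorem~\ref{slicethm}), the one-to-one correspondence of Theorem~\ref{onetoonethm}, and Corollary~\ref{Psimapcor}, with the unobstructed AC case handled exactly as you describe through $\mathcal O_{\nu} = \{0\}$ and $\breve{\mathcal M}_{\nu} = \mathcal M_{\nu}$. Your additional bookkeeping remarks (smoothness and $l$-independence via Corollary~\ref{onetoonesmoothcor}, and the dimension count via Corollary~\ref{widehatFspacecor}) are consistent with what the paper has already established in Steps 1--3.
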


\begin{rmk} \label{finalmodulitmk}
We could conclude that $\mathcal{M}_{\nu}$ is a smooth manifold if we knew that the map $\Psi_{\nu}$ was the zero map. However, this will not be true in general.
\end{rmk}

We can also extend Theorem~\ref{finalmodulithm} to the AC case for rates $\nu \in [-\frac{5}{2}, 0)$ as follows.
\begin{cor}\label{finalmodulicor}
Let $M$ be an AC $\G$~manifold with generic rate $\nu \in (-4,0)$. Then the moduli space $\mathcal{M}_{\nu}$ is a smooth manifold near $[\ph]$ with dimension $\dim \mathcal{H}^3_{\nu} - \dim\mathcal{H}^1_{\nu+1}$.
\end{cor}
\begin{proof}
Recall from Remark~\ref{onetoonethmrmk} and equation~\eqref{mathcalWdefneq} that $(\mathcal{W}_{\ph})_{l-1, \nu-1} = \{0\}$ in this case. Moreover, from Definition~\ref{realobstructionspacedefn} and we also have that $\mathcal{O}_{\nu} = \{0\}$ since $\nu > -4$. Thus $\widehat F_{/ \mathcal{W}} = F$, so $F^{-1}(0)$ is a smooth manifold near $0$, given as the graph $\Gamma$ of a map $G$ defined on an open set $\mathcal{A}$ in $\mathcal{H}^3_{\nu}$. Notice that if $\eta \in \mathcal{H}^3_{\nu}$, then $\eta \in (\mathcal{G}_{\ph})_{l,\nu}$ since $d^*\eta = 0$ implies $\pi_7d^*\eta = 0$. 

Let $\eta = d (X \hk \ph) \in d(\Omega^2_7)_{l+1, \nu+1} \cap (\mathcal{G}_{\ph})_{l,\nu}$. Then $\mlap X = 0$, since $\mlap = \pi_7 d^* d$. We claim that $dX = 0$ and $d^*X = 0$. To prove the claim, first note that the operator $\dd^1_{l+1, \lambda+1}$ given in~\eqref{Pdefneq} is injective for all $\lambda \leq -1$ by Lemma~\ref{bochnerlemma} and surjective for all $\lambda > -5$ by Lemma~\ref{Yspacelemma} and~\eqref{Ydefneq}. Since the kernel is zero for all $\lambda \leq -1$, and since the cokernel does not change for all $\lambda > -5$, Theorem~\ref{Pindexchangethm} says that all the homogeneous closed and coclosed $1$-forms on the cone $C$ of order $\lambda + 1 < \nu + 1$ satisfy $\lambda+1 \in (0, \nu+1)$, and they define elements of $\ker \dd^1_{\nu+1} = \mathcal{H}^1_{\nu+1}$, and thus elements of $\ker \mlap_{\nu+1}$. Recall that $\ker \mlap_{\lambda+1} = \{0\}$ for $\lambda \leq -1$ by Lemma~\ref{bochnerlemma2} and all homogeneous $1$-forms $Y$ of order $\lambda+1\in (0,1)$ on $C$ solving $\mlapc Y = 0$ are closed and coclosed by Proposition~\ref{excludeextensionprop2}. Thus, we have shown that $\ker \mlap_{\nu+1} = \mathcal{H}^1_{\nu+1}$. We deduce that $X$ is closed and coclosed as claimed. 

Consequently, since $d^* (X \hk \ph) = \st (dX \wedge \ps) = 0$, we have that $\eta = d (X \hk \ph)$ in fact satisfies
\begin{equation*}
d^* \eta = d^* d (X \hk \ph) \, = \, \Delta (X \hk \ph) \, = \, (\Delta X) \hk \ph \, = \, 0.
\end{equation*}
We therefore conclude that $d(\Omega^2_7)_{l+1, \nu+1} \cap (\mathcal{G}_{\ph})_{l, \nu} \subseteq \mathcal{H}^3_{\nu}$. Notice that the map
\begin{equation*}
X \in \mathcal{H}^1_{\nu+1} \, \mapsto \, d (X \hk \ph) \in \mathcal{H}^3_{\nu}
\end{equation*}
is injective since $d (X \hk \ph) = 0$ if and only if $X$ is Killing by Proposition~\ref{Killing.prop} as $dX = 0$ and $d^*X = 0$. Hence, the intersection $d(\Omega^2_7)_{l+1, \nu+1} \cap (\mathcal{G}_{\ph})_{l,\nu}$ is in fact isomorphic to $\mathcal{H}^1_{\nu+1}$.

Now recall from Theorem~\ref{infinitesimalslicethm} that $(\mathcal{G}'_{\ph})_{l,\nu}$ is a direct complement of $d(\Omega^2_7)_{l+1, \nu+1} \cap (\mathcal{G}_{\ph})_{l,\nu}$ in $(\mathcal{G}_{\ph})_{l,\nu}$. If we now define $\mathcal{A}' = \mathcal{A} \cap (\mathcal{G}_{\ph}')_{l,\nu}$ then the graph $\Gamma'$ of $G$ over $\mathcal{A}'$ is a smooth submanifold of $\Gamma$, and thus of $F^{-1}(0)$. Moreover, the tangent space of $\Gamma'$ is isomorphic to the complement of $d(\Omega^2_7)_{l+1, \nu+1} \cap \mathcal{H}^3_{\nu}$ in $\mathcal{H}^3_{\nu}$. Since $T_{\ph} (\mathcal{D}_{\nu+1} \cdot \ph) = d (\Omega^2_7)_{l+1, \nu+1}$, we see that $\Gamma'$ describes all of the solutions to $F(\eta) = 0$ where $\eta \in \mathcal{C}_{l,\nu}$ is near $0$ and gauge-fixed; that is, the AC torsion-free $\G$~structures with rate $\nu$ on $M$ up to the action of $\mathcal{D}_{\nu+1}$. Hence, a neighbourhood of $[\ph]$ in $\mathcal{M}_{\nu}$ can be identified with $\Gamma'$, and we deduce that $\mathcal{M}_{\nu}$ is a smooth near $[\ph]$ and has dimension $\dim \mathcal{H}^3_{\nu} - \dim \mathcal{H}^1_{\nu+1}$.
\end{proof}

\subsubsection{Step 4: Computing the virtual dimension of the moduli space} \label{dimensionsec}

In this section we compute the expected (virtual) dimension of the moduli space $\mathcal{M}_{\nu}$ for rates in both the AC and the CS cases, including exact results for the dimension in the unobstructed setting. From Corollary~\ref{Psimapcor} and the discussion in Section~\ref{IFTsec}, this virtual dimension is
\begin{equation} \label{virtualdimeq}
\vdim \mathcal M_{\nu} \, = \, \ind D (\pi_{/ \mathcal{W}} \circ F) \, = \, \dim \mathcal{K}_{/ \mathcal{W}} - \dim \mathcal{O}_{/ \mathcal{W}},
\end{equation}
where the map $D (\pi_{/ \mathcal{W}} \circ F) = \pi_{/ \mathcal{W}} \circ DF : \Omega^3_{l, \nu} \to \mathcal{Y}_{/ \mathcal{W}}$ is the composition of the two Fredholm maps $DF: \Omega^3_{l, \nu} \to \mathcal Y$ and $\pi_{/ \mathcal{W}} : \mathcal Y \to \mathcal{Y}_{/ \mathcal{W}}$.

\begin{prop} \label{KWOWdimprop}
The difference in dimensions $\dim \mathcal{K}_{/ \mathcal{W}} - \dim \mathcal{O}_{/ \mathcal{W}}$ is given by
\begin{equation} \label{vdimtempeq}
\dim \mathcal{K}_{/ \mathcal{W}} - \dim \mathcal{O}_{/ \mathcal{W}} \, = \, \dim \mathcal{H}^3_{\nu} - \dim\mathcal{O}_{\nu} + \dim (\mathcal{W}_{\ph})_{l-1, \nu-1}.
\end{equation}
\end{prop}
\begin{proof}
Since $DF = \dd^3_{l, \nu}$, we have
\begin{equation*}
\ind DF \, = \, \dim (\ker DF ) - \dim (\coker DF) \, = \, \dim \mathcal H^3_{\nu} - \dim \mathcal O_{\nu}.
\end{equation*}
Moreover, we also have
\begin{equation*}
\ind \pi_{/ \mathcal{W}} \, = \, \dim (\ker \pi_{/ \mathcal{W}}) - \dim (\coker \pi_{/ \mathcal{W}}) \, = \, \dim (\mathcal{W}_{\ph})_{l-1, \nu-1} - 0.
\end{equation*}
Equation~\eqref{vdimtempeq} now follows from the fact that the index of the composition of two Fredholm maps is the sum of the indices of each map.
\end{proof}

\begin{lemma} \label{VWisom.lem}
The map from $(\mathcal{V}_{\ph})_{l,\nu}$ to $(\mathcal{W}_{\ph})_{l-1, \nu-1}$ given by $\beta \mapsto \frac{7}{3} d^* \pi_1 (\beta) + 2 d^* \pi_7 (\beta)$ is an isomorphism.
\end{lemma}
\begin{proof}
From equation~\eqref{mathcalWdefneq} the map is surjective by definition, so it only remains to show that it is injective. Write $\beta = f \ph + \st (X \wedge \ph)$ and suppose that $\frac{7}{3} d^* \pi_1 (\beta) + 2 d^* \pi_7 (\beta) = 0$. This simplifies to $- \frac{7}{3} \st (df \wedge \ps) -2 \st (dX \wedge \ph) = 0$. We see from~\eqref{o27eq} and~\eqref{curllemmaeq} that this implies $-\frac{7}{3} df + \frac{4}{3} \curl  X = 0$. Taking $d^*$ of both sides and using Remark~\ref{curlrmk} gives $\Delta f = 0$ and thus $f = 0$ by the maximum principle. Hence, $d^* \pi_7 (\beta) = 0$, which means $dX \wedge \ph = 0$ and thus $dX = 0$. Moreover, from Proposition~\ref{special3formprop} we have $d^*X = 0$ as $\pi_1 (d \beta) = 0$. We thus conclude that $X = 0$ from Lemma~\ref{bochnerlemma}, completing the proof.
\end{proof}

\begin{cor} \label{vdim.cor}
The virtual dimension $\vdim \mathcal M_{\nu}$ of $\mathcal M_{\nu}$ is given by
\begin{equation} \label{vdimeq}
\vdim \mathcal M_{\nu} \, = \, \dim \mathcal{K}_{/ \mathcal{W}} - \dim \mathcal{O}_{/ \mathcal{W}} \, = \, \dim \mathcal{H}^3_{\nu} - \dim\mathcal{O}_{\nu} + \dim (\mathcal{V}_{\ph})_{l, \nu}.
\end{equation}
\end{cor}
\begin{proof}
This is immediate from~\eqref{virtualdimeq},~\eqref{vdimtempeq}, and Lemma~\ref{VWisom.lem}.
\end{proof}

We have thus shown in~\eqref{vdimeq} that the expected dimension of $\mathcal{M}_{\nu}$ is made up of two contributions: $\dim (\mathcal{V}_{\ph})_{l,\nu}$ and $(\dim \mathcal{H}^3_{\nu} - \dim \mathcal{O}_{\nu})$. From equations~\eqref{Ydefneq},~\eqref{realobstructionspacedefneq}, and ~\eqref{Y0defneq}, the latter contribution is precisely the \emph{index} of the map
\begin{equation*}
\dd^3_{l,\nu} : \Omega^3_{l, \nu} \to \mathcal Y
\end{equation*}
defined in equation~\eqref{Pdefneq}. Thus, we have
\begin{equation} \label{virtualdimeq2}
\vdim {\mathcal M}_{\nu} \, = \, \ind(\dd_{l, \nu})+\dim(\mathcal{V}_{\ph})_{l,\nu}.
\end{equation}

\begin{rmk} \label{simplifynotationrmk}
In equation~\eqref{virtualdimeq2} above and henceforth, to simplify notation, we will always denote $\dd^3_{l,\nu}$ as $\dd_{l,\nu}$ because we will only make use of the map $\dd^k_{l,\nu}$ for $k=3$ from now on.
\end{rmk}

Before moving on to the explicit computation of $\vdim \mathcal M_{\nu}$, we pause to introduce some auxiliary spaces and to derive expressions for the dimensions of both $\mathcal{K}_{/ \mathcal{W}}$ and $\mathcal{O}_{/ \mathcal{W}}$ that will be used later in Section~\ref{smoothCSsec}.

\begin{lemma} \label{Etildelemma}
Recall the spaces $(\mathcal{G}_{\ph})_{l,\nu}$, $(\mathcal{E}_{\ph})_{l,\nu}$, $(\mathcal{V}_{\ph})_{l,\nu}$, and $(\mathcal{S}_{\ph})_{l,\nu}$ from Lemma~\ref{Gph.lemma}, Theorem~\ref{infinitesimalslicethm}, and Corollary~\ref{infinitesimalslicecor}. Consider the cases when the finite-dimensional space $(\mathcal{E}_{\ph})_{l,\nu}$ is not necessarily trivial. This corresponds to $\nu > 0$ (CS) or $\nu < -7$ (AC). We can choose a finite-dimensional space $(\tilde{\mathcal{E}}_{\ph})_{l,\nu}$ such that
\begin{equation} \label{Etildeeq}
(\mathcal{S}_{\ph})_{l,\nu} \cap \ker (d \circ \Lp) \, = \, \mathcal H^3_{\nu} \oplus (\tilde{\mathcal{E}}_{\ph})_{l,\nu}.
\end{equation}
Moreover, the restriction of $\pi_{1+7}$ to $(\tilde{\mathcal{E}}_{\ph})_{l,\nu}$ is injective, and we denote its image by
\begin{equation*}
(\tilde{\mathcal{V}}_{\ph})_{l,\nu} \, = \, \pi_{1+7} (\tilde{\mathcal{E}}_{\ph})_{l,\nu} \, \subseteq \, ({\mathcal{V}}_{\ph})_{l,\nu}.
\end{equation*}
\end{lemma}
\begin{proof}
First, we note that for these rates, $\mathcal H^3_{\nu}\subseteq\Omega^3_{27}$, since there are no nontrivial harmonic functions or $1$-forms of these rates by Lemma~\ref{bochnerlemma}. It now follows from Lemma~\ref{Gph.lemma} and equation~\eqref{Lpeq} that
\begin{equation*}
(\mathcal{G}_{\ph})_{l,\nu} \cap \ker (d \circ \Lp) \, = \, \mathcal H^3_{\nu}.
\end{equation*}
Recall in equation~\eqref{Stempeq} during the proof of Theorem~\ref{infinitesimalslicethm} we established that in these cases
\begin{equation*}
(\mathcal{S}_{\ph})_{l,\nu} \, = \, (\mathcal{G}_{\ph})_{l,\nu} \oplus (\mathcal{E}_{\ph})_{l,\nu}
\end{equation*}
for a finite-dimensional space $(\mathcal{E}_{\ph})_{l,\nu}$. Hence, because $(\mathcal{G}_{\ph})_{l,\nu}$ has finite codimension in $(\mathcal{S}_{\ph})_{l,\nu}$, it follows that $\mathcal H^3_{\nu} = (\mathcal{G}_{\ph})_{l,\nu} \cap \ker (d \circ \Lp)$ has finite codimension in $(\mathcal{S}_{\ph})_{l,\nu} \cap \ker (d \circ \Lp)$. Thus we can choose a finite-dimensional complement $(\tilde{\mathcal{E}}_{\ph})_{l,\nu}$ satisfying~\eqref{Etildeeq}. By construction, we have $(\mathcal{G}_{\ph})_{l,\nu} \cap (\tilde{\mathcal{E}}_{\ph})_{l,\nu} = \{ 0 \}$. The injectivity of the restriction of $\pi_{1+7}$ to $(\tilde{\mathcal{E}}_{\ph})_{l,\nu}$ follows from the fact that the kernel of $\pi_{1+7}$ on $(\mathcal{S}_{\ph})_{l,\nu}$ is equal to $(\mathcal{G}_{\ph})_{l,\nu}$ by definition. The fact that $(\tilde{\mathcal{V}}_{\ph})_{l,\nu} =  \pi_{1+7} (\tilde{\mathcal{E}}_{\ph})_{l,\nu}$ is a subspace of $({\mathcal{V}}_{\ph})_{l,\nu}$ is by definition of the space $(\mathcal{S}_{\ph})_{l,\nu}$ in Corollary~\ref{infinitesimalslicecor}.
\end{proof}

\begin{rmk} \label{Etildetmk}
We could now go back and redefine $(\mathcal{E}_{\ph})_{l,\nu}$ to ensure that it contains $(\tilde{\mathcal{E}}_{\ph})_{l,\nu}$ as a subspace, but this will not be necessary for us. However, this observation justifies our choice of notation, because $(\tilde{\mathcal{V}}_{\ph})_{l,\nu}$ is definitely a subspace of $(\mathcal{V}_{\ph})_{l,\nu}$.
\end{rmk}

\begin{prop} \label{KW.dim.prop}
Recall the notation of Lemma \ref{Etildelemma}.  The dimension of $\mathcal{K}_{/ \mathcal{W}}$ is given by
\begin{equation} \label{KWdimeq}
\dim \mathcal{K}_{/ \mathcal{W}} \, = \, \dim \mathcal{H}^3_{\nu} + \dim(\tilde{\mathcal{E}}_{\ph})_{l,\nu} \, = \, \dim \mathcal{H}^3_{\nu} + \dim (\tilde{\mathcal{V}}_{\ph})_{l,\nu}.
\end{equation}
\end{prop}
\begin{proof}
We have that $\zeta \in \mathcal{K}_{/ \mathcal{W}}$ if and only if $\pi_{/ \mathcal{W}} (d+d^*) \zeta = 0$, which by definition is equivalent to 
\begin{equation} \label{KW.eq}
d \zeta = 0 \quad \text{and} \quad d* \zeta = \frac{7}{3} df \wedge \ps + 2 dX \wedge \ph
\end{equation}
for some function $f$ and $1$-form $X$ such that $\beta = f \ph + \st (X \wedge \ph) \in (\mathcal{V}_{\ph})_{l,\nu}$. We also have $d^*X = 0$ since $\pi_1 (d\beta) = 0$. Since $\nu < 0$ in the AC case and $\nu > 0$ in the CS case, Theorem~\ref{linearized.onetoonethm} shows that solutions to~\eqref{KW.eq} correspond to closed $3$-forms $\zeta \in \Omega^3_{l,\nu}$ with $\pi_{1+7} (\zeta) = \beta$ and $d (\Lp \zeta) = 0$. (Note that the constant $c$ from Theorem~\ref{linearized.onetoonethm} is necessarily zero here because both $\zeta$ and $\beta$ decay to zero on the ends.) Thus we deduce that $\zeta \in \mathcal{K}_{/ \mathcal{W}}$ if and only if $\zeta \in (\mathcal{S}_{\ph})_{l,\nu} \cap \ker (d \circ \Lp)$. The result now follows from Lemma~\ref{Etildelemma}.
\end{proof}

\begin{prop}\label{OW.dim.prop}
The dimension of $\dim \mathcal{O}_{/ \mathcal{W}}$ is given by
\begin{equation} \label{OWdimeq}
\dim \mathcal{O}_{/ \mathcal{W}} \, = \, \dim \mathcal{O}_{\nu} - \dim(\mathcal{V}_{\ph})_{l,\nu} + \dim(\tilde{\mathcal{V}}_{\ph})_{l,\nu}.
\end{equation}
\end{prop}
\begin{proof}
This is immediate from equations~\eqref{vdimeq} and~\eqref{KWdimeq}.
\end{proof}

We now return to the explicit computation of $\vdim \mathcal M_{\nu}$. Because of equation~\eqref{virtualdimeq2}, in order to explicitly compute $\vdim {\mathcal M}_{\nu}$, we need to use our special index change Theorem~\ref{Pindexchangethm} for the operator $\dd_{l, \nu}$. The first step is to compute the index of $\dd_{l, \lambda}$ exactly in some special cases. Recall from Corollary~\ref{LMcohomologycor} and Proposition~\ref{kernelchange34prop} we have, for $\e > 0$ sufficiently small,
\begin{equation} \label{top34dimensionseq}
\left.
\begin{aligned}
\dim \mathcal H^3_{-3 + \e} \, & = \, b^3_{\mathrm{cs}} + \dim (\im \Upsilon^3) \\
\dim \mathcal H^3_{\lambda} \, & = \, b^3_{\mathrm{cs}}, \, \, \lambda \in (-4,-3) \\
\dim \mathcal H^3_{-4 - \e} \, & = \, b^3_{\mathrm{cs}} - \dim (\im \Upsilon^4)
\end{aligned}
\, \, \right\} \, \, \text{ (AC)},
\left. \qquad 
\begin{aligned}
\dim \mathcal H^3_{-3 + \e} \, & = \, b^3 - \dim (\im \Upsilon^3) \\
\dim \mathcal H^3_{\lambda} \, & = \, b^3, \, \, \lambda \in (-4,-3) \\
\dim \mathcal H^3_{-4 - \e} \, & = \, b^3 + \dim (\im \Upsilon^4)
\end{aligned}
\, \, \right\} \, \, \text{ (CS)}, 
\end{equation}
where $b^3 = \dim H^3(M, \R)$ and $b^3_{\mathrm{cs}} = \dim H^3_{\mathrm{cs}}(M)$ are the ordinary and compactly supported third Betti numbers of $M$, respectively. The above equations say that the rates $\lambda = -3$ and $\lambda = -4$ contribute to changes in the \emph{kernel} of $\dd$, not the cokernel. This fact also follows from Theorem~\ref{cones24thm}, as we stated in Remark \ref{kernel.only.rmk}, but the above equations tell us exactly how the kernel (and thus the index) changes at these rates. 

\begin{prop} \label{Pindexexactprop}
The index of $\dd_{l, \lambda}$ is purely topological for a certain range of rates, as follows.
\begin{equation*}
\left.
\begin{aligned}
\ind (\dd_{l, -3 + \e}) \, & = \, b^3_{\mathrm{cs}} + \dim (\im \Upsilon^3) \\
\ind (\dd_{l, \lambda}) \, & = \, b^3_{\mathrm{cs}}, \, \, \lambda \in (-4,-3) \\
\ind (\dd_{l, -4 - \e}) \, & = \, b^3_{\mathrm{cs}} - \dim (\im \Upsilon^4)
\end{aligned}
\, \, \right\} \, \, \text{ (AC)},
\left. \qquad 
\begin{aligned}
\ind (\dd_{l, -3 + \e}) \, & = \, b^3 - \dim (\im \Upsilon^3) \\
\ind (\dd_{l, \lambda}) \, & = \, b^3, \, \, \lambda \in (-4,-3) \\
\ind (\dd_{l, -4 - \e}) \, & = \, b^3 + \dim (\im \Upsilon^4)
\end{aligned}
\, \, \right\} \, \, \text{ (CS)}, 
\end{equation*}
\end{prop}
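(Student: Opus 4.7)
By~\eqref{virtualdimeq2}, what must be computed is $\ind(\dd_{l,\lambda}) = \dim \mathcal H^3_{\lambda} - \dim \coker \dd^3_{l,\lambda}$ at each of the three indicated rates. The plan is to anchor the computation on the ``middle'' interval $\lambda \in (-4, -3)$, where the cokernel vanishes and so the index equals the (topologically known) kernel dimension, and then transport this value across the exceptional rates $\lambda = -3$ and $\lambda = -4$ using the index-change machinery from Section~\ref{specialindexchangesec} together with the explicit kernel-jump formulas of Proposition~\ref{kernelchange34prop}.

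First I would show that $\ind(\dd_{l,\lambda}) = \dim \mathcal H^3_{\lambda}$ for every $\lambda \in (-4,-3)$. In the AC case this is essentially by fiat: Definition~\ref{realobstructionspacedefn} gives $\mathcal O^3_{l,\lambda} = \{0\}$ because $\lambda > -4$, and Corollary~\ref{LMcohomologycor} supplies $\dim \mathcal H^3_{\lambda} = b^3_{\mathrm{cs}}$. In the CS case Corollary~\ref{LMcohomologycor} still gives $\dim \mathcal H^3_{\lambda} = b^3$, but the vanishing of $\coker \dd^3_{l,\lambda}$ must be argued directly. I would use the explicit cokernel description in Proposition~\ref{Pindexchangeprop}(b), which identifies $\coker \dd^3_{l,\lambda}$ with the quotient
\[
\bigl( \ker (d+d^*)_{-6-\lambda} \cap (\Omega^2 \oplus \Omega^4) \bigr) \, \big/ \, \bigl( \mathcal H^2_{-6-\lambda} \oplus \mathcal H^4_{-6-\lambda} \bigr),
\]
and combine Proposition~\ref{gradedprop} with the invariance of kernels across noncritical intervals (Theorem~\ref{kernelthm}) and the excluded-range results of Propositions~\ref{homoddstarexcludeprop} and~\ref{ddstarhomokernelprop}, together with Theorem~\ref{cones24thm}, to show that every element of the relevant kernel is already a sum of closed-and-coclosed pure-degree components. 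This forces the quotient to vanish.

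With the middle-interval value in hand, the remaining two formulas come from crossing the exceptional rates $\lambda = -3$ and $\lambda = -4$. These are exactly the rates $-k$ and $k-7$ for $k=3$ that were excluded from Theorem~\ref{Pindexchangethm}, so one cannot apply that theorem verbatim; however it was shown during the proof of Theorem~\ref{cokerPchangethm}, by appealing to Theorem~\ref{cones24thm}, that at these two exceptional rates one has $C(\lambda) \subseteq B(\lambda)$, so crossing $\lambda = -3$ or $\lambda = -4$ leaves $\coker \dd^3_{l,\lambda}$ unchanged and only alters the kernel $\mathcal H^3_{\lambda}$. The precise change in the kernel is given by Proposition~\ref{kernelchange34prop}: the jump is $\pm \dim(\im \Upsilon^3)$ across $\lambda = -3$ and $\pm \dim(\im \Upsilon^4)$ across $\lambda = -4$, with the sign fixed by whether one is in the AC or CS setting. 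Adding these kernel contributions to the value $b^3_{\mathrm{cs}}$ (AC) or $b^3$ (CS) obtained on $(-4,-3)$ produces the three formulas stated in the proposition.

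The main obstacle is the middle-interval computation in the CS case. The AC middle interval reduces immediately to the definition, but in CS one must genuinely verify that the cokernel vanishes, which is where Theorem~\ref{cones24thm} and the cone analysis of Section~\ref{conifoldfredholmsec} do the real work. Once this anchor is secured, the passage across $\lambda = -3$ and $\lambda = -4$ is bookkeeping: cokernel-invariance at these rates combined with the explicit kernel jumps of Proposition~\ref{kernelchange34prop}.
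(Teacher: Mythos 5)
Your proposal is correct and follows essentially the same route as the paper: anchor the computation on $\lambda \in (-4,-3)$ where the cokernel of $\dd^3_{l,\lambda}$ vanishes and $\dim \mathcal H^3_{\lambda}$ is given by Corollary~\ref{LMcohomologycor}, then cross the exceptional rates $-3$ and $-4$ using the fact (from Theorem~\ref{cones24thm}, via the proof of Theorem~\ref{cokerPchangethm}) that only the kernel changes there, with the jumps supplied by Proposition~\ref{kernelchange34prop}. The only minor difference is in the CS middle interval: the paper gets the cokernel vanishing only for $\lambda < -\tfrac{7}{2}$ (Proposition~\ref{gradedprop} plus Proposition~\ref{Pindexchangeprop}(b)) and then invokes constancy of the index on $(-4,-3)$ via Corollary~\ref{kernelchangecor} and Theorem~\ref{cokerPchangethm}, which is slightly cleaner than your direct dual-rate argument, since for $\lambda \in (-\tfrac{7}{2},-3)$ the dual rate lies in $(-3,-\tfrac{5}{2})$ where Proposition~\ref{gradedprop} and plain kernel invariance (Theorem~\ref{kernelthm}) no longer apply and one must fall back on Corollary~\ref{kernelchangeconversecor} together with Theorem~\ref{cones24thm}, as you indicate.
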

\begin{proof}
By Lemma~\ref{PFredholmlemma}, the space $\coker(\dd_{l, \lambda}) = \{0 \}$ if $\lambda > -4$ (AC) or $\lambda < -3$ (CS). Moreover, we know that the index cannot change at all in the interval $(-4, -3)$ since by Corollary~\ref{kernelchangecor} there are no critical rates for $\dd$ in this interval. Finally, equation~\eqref{top34dimensionseq} and the discussion following it tells us that the cokernel \emph{does not} change at the rates $-4$ and $-3$, and the change in the kernel at those rates is given by~\eqref{top34dimensionseq}. When these facts are all combined we obtain the statements above.
\end{proof}

We now compute $\dim (\mathcal{V}_{\ph})_{l,\nu}$, by using index change formulas for the Dirac operator, the scalar Laplacian, and the Laplacian on $1$-forms.
\begin{prop} \label{V.dim.prop}
The dimension of $(\mathcal{V}_{\ph})_{l,\nu}$ is given as follows.
\begin{enumerate}[(a)]
\item In the AC case, for generic $\nu < 0$,
\begin{equation*}
\dim (\mathcal{V}_{\ph})_{l,\nu} \, = \, \begin{cases} 0 & \text{if $\nu \in (-7,0)$,} \\
\sum_{\lambda \in (\nu,-7]} \dim \mathcal{K}(\lambda+1)_{\mdiracc} - \sum_{\lambda \in (\nu,-7]}
\dim \mathcal{K}(\lambda+1)_{\lapc} & \text{if $\nu<-7$,}
\end{cases}
\end{equation*}
where $\lapc$ acts on functions on $C$.
\item In the CS case, for $\nu > 0$ sufficiently close to zero,
\begin{equation*}
\dim (\mathcal{V}_{\ph})_{l,\nu} \, = \, 1 + \sum_{i=1}^n \sum_{\lambda \in (-1,0]} \dim \mathcal{K}(\lambda+1)_{\mdiracci},
\end{equation*}
where $\mdiracci$ is the modified Dirac operator on $C_i$.
\end{enumerate}
\end{prop}
\begin{proof}
Recall from Proposition~\ref{Vspaceprop} that $(\mathcal{V}_{\ph})_{l,\nu}$ is a finite-dimensional subspace of $(\Omega^3_1 \oplus \Omega^3_7)_{l, \nu}$ whose elements $\beta$ in particular satisfy $\pi_1 (d \beta) = 0$ for all $\beta \in (\mathcal{V}_{\ph})_{l,\nu}$. Moreover, from the definition of the space $(\mathcal{V}_{\ph}')_{l,\nu}$ in~\eqref{mdiracHodgeeq1} and~\eqref{mdiracHodgeeq2}, it is clear that $(\mathcal{V}_{\ph})_{l,\nu}$ can be taken to be a subspace of $(\mathcal{V}_{\ph}')_{l,\nu}$, which is itself isomorphic to the cokernel of the modified Dirac operator $\mdirac_{l+1, \nu+1}$ acting on $(\Omega^0 \oplus \Omega^1)_{l+1, \nu+1}$. We can identify $\mdirac$ with the usual Dirac operator $\dirac_{l+1,\nu+1}$ given by $\dirac(f,X) = (d^*X, df + \curl X)$, and thus the cokernel of $\mdirac$ with $\ker \dirac_{-7 -\nu}$.

The condition $\pi_1 (d \beta) = 0$ for $\beta \in (\Omega^3_1 \oplus \Omega^3_7)_{l,\nu}$ with $\beta = h \ph + f \st (Y \wedge \ph)$ corresponds to $d^*Y = 0$ 
by Proposition~\ref{special3formprop}. We observe that the coclosed $1$-forms in $\Omega^1_{l,\nu}$ are dual to the exact $1$-forms in $\Omega^1_{-7 -\nu}$. Hence, $(\mathcal{V}_{\ph})_{l,\nu}$ is isomorphic to a direct complement of $(\ker \dirac_{-7-\nu}) \cap d (\Omega^0_{-6 -\nu})$ in $\ker \dirac_{-7 -\nu}$. Since the curl of an exact $1$-form vanishes, we note that $(\ker \dirac_{-7 -\nu}) \cap d (\Omega^0_{-6 -\nu}) \cong d (\ker \Delta_{-6 -\nu})$ where $\Delta$ is the scalar Laplacian. Thus, we have established that
\begin{equation} \label{dimVphtempeq}
\dim (\mathcal{V}_{\ph})_{l,\nu} \, = \, \dim (\ker \dirac_{-7 -\nu}) - \dim \bigl( d (\ker \Delta_{-6 -\nu}) \bigr).
\end{equation}
Consider first the AC case. We already established in Proposition~\ref{Vspaceprop} that $(\mathcal{V}_{\ph})_{l,\nu} = \{0\}$ for $\nu > -7$. Moreover, by Lemma~\ref{mdiracHodgelemma}, the operator $\mdirac_{l+1, \nu+1}$ is injective for $\nu < -1$, so any changes in the index of $\mdirac$ below this rate must add to the cokernel, which is isomorphic to $\ker (\mdirac)_{- 7 - \nu}$. We need to determine which changes in fact add to $(\mathcal{V}_{\ph})_{l,\nu}$. Hence, by~\eqref{dimVphtempeq}, the amount by which the dimension of $(\mathcal{V}_{\ph})_{l,\nu}$ will change as we cross the rate $\lambda$ is thus $\dim \mathcal{K}(\lambda+1)_{\mdiracc}$ minus the change in dimension of $d (\ker \Delta_{-6 -\lambda})$. We claim that this change in dimension is exactly $\dim \mathcal{K}(\lambda+1)_{\lapc}$. To see this, first note by Lemma~\ref{bochnerlemma}, the scalar Laplacian $\Delta_{\lambda + 1}$ is injective for $\lambda + 1 < 0$, since a constant function that is  $O(\varrho^{\lambda +1})$ must vanish if $\lambda + 1 < 0$. Thus any change in the index of $\Delta_{\lambda + 1}$ at rate $\lambda$ adds to the cokernel, which has dimension $\dim (\ker \Delta_{-6 - \lambda})$. Since $\lambda \leq -7$ now, we have $-6 - \lambda \geq 1$, and the constant functions appear in $\ker \Delta_{- 6 - \lambda}$ at rate $0$. Thus it is only the addition of nonconstant functions to $\ker \Delta_{-6 - \lambda}$ as we cross the rate $\lambda$ that can occur. But $d$ is injective on nonconstant functions in $\ker \Delta_{-6 -\nu}$, so the change in the dimension of $d (\ker \Delta_{-6 -\lambda})$ is precisely $\dim \mathcal{K}(\lambda+1)_{\lapc}$. This establishes the result for (a).

Next, consider the CS case. Now Lemma~\ref{mdiracHodgelemma} says that $\mdirac_{l+1, \lambda+1}$ has a $1$-dimensional kernel and cokernel if $\lambda \in (-7,-1)$ and thus in particular its index in this range is zero. Proposition~\ref{modifieddiracexcludeprop} shows that the index change at $\lambda = -1$ corresponds precisely to constant functions on the cones $C_i$, and thus the total index change at  rate $\lambda = -1$ is $n$, the number of singularities. However, we know that $\mdirac_{l+1, \lambda+1}$ is injective for $\lambda > -1$ by Lemma~\ref{mdiracHodgelemma}, and since $\dim \ker \mdirac_{\lambda + 1} = 1$ for $\lambda \in (-7,-1)$, the change in the kernel of the Dirac operator must be $1$ as we cross $\lambda = -1$. Hence, the change in the cokernel of $\mdirac$ as we cross $\lambda = -1$ must be $n - 1$ and thus $\dim \coker \mdirac_{\lambda + 1} = \dim \ker \mdirac_{- 7 - \lambda} = n$ for $\lambda = -1 + \e$ for some $\e$ sufficiently small.

Since $\mdirac_{l+1, \lambda+1}$ is injective for $\lambda > -1$, any further changes in the index for $\lambda \in (-1,0]$ must all add to the cokernel. Thus, for $\nu > 0$ and sufficiently close to $0$ we have that
\begin{equation} \label{dimVphtempeq2}
\dim (\ker \mdirac_{-7-\nu}) \, = \, n + \sum_{i=1}^n \sum_{\lambda \in (-1,0]} \dim \mathcal{K}(\lambda+1)_{\mdiracci}.
\end{equation}
We still need to consider the dimension of $d (\ker \Delta_{-6 -\lambda})$ for rates $\lambda \in (-1,0]$, which corresponds to $-6 - \lambda \in [-6,-5)$. Proposition~\ref{excludeextensionprop} shows that there are no changes in the index of the scalar Laplacian in this range of rates and thus no changes to $d (\ker \Delta_{-6 -\lambda})$. The same proposition also says that the change of the index of $\Delta$ at rate $-5$ is precisely $n$, corresponding to a single harmonic function on each cone $C_i$. Lemma~\ref{bochnerlemma} shows that $\ker \Delta_{-6 -\lambda}$ consists of constant functions if $-6 - \lambda > -5$, and thus $d (\ker \Delta_{-6 -\lambda}) = \{0\}$ in the range $\lambda < -1$. Moreover, Lemma~\ref{bochnerlemma} shows that $\Delta_{\lambda+1}$ is injective if $\lambda + 1 > 0$, which is $-6 - \lambda < -5$, and it has a $1$-dimensional kernel if $\lambda + 1 \in (-5, 0]$, which is $-6 - \lambda \in [-5, 0)$. Thus at $\lambda = -1$, which is $-6 - \lambda = -5$, the dimension of $\ker \Delta_{\lambda+1}$ must change by $1$ and the dimension of $\ker \Delta_{-6-\lambda}$ must therefore change by $n-1$. Note that the new elements that appear in $\ker \Delta_{-6-\lambda}$ are nonconstant functions, on which the map $d$ is injective. Hence
\begin{equation} \label{dimVphtempeq3}
\dim \bigl( d ( \ker \Delta_{-6-\lambda}) \bigr) \, = \, \dim (\ker \Delta_{-6-\lambda}) \, = \, n-1 \qquad \text{for all $\lambda \in (-1,0]$}.
\end{equation}
Combining equations~\eqref{dimVphtempeq},~\eqref{dimVphtempeq2}, and~\eqref{dimVphtempeq3} completes the proof in the CS case.
\end{proof}

We can now combine our results to deduce the following dimension formula.
\begin{cor} \label{vdimfinalcor}
For generic rates $\nu$, the virtual dimension $\vdim {\mathcal M}_{\nu}$ of the  moduli space ${\mathcal M}_{\nu}$  is as follows.
\begin{itemize}
\item In the asymptotically conical (AC) case, we have
\begin{align*}
\vdim {\mathcal M}_{\nu} \, & = \, b^3_{\mathrm{cs}} - \dim (\im \Upsilon^4) - \sum_{\lambda \in  (\nu , -4)} \dim \mathcal K(\lambda)_{\ddc} & & \\
& \qquad {} + \sum_{\lambda \in (\nu, -7]} \dim \mathcal{K}(\lambda+1)_{\mdiracc} - \sum_{\lambda \in (\nu,-7]} \dim \mathcal{K}(\lambda+1)_{\lapc^0}, & & \nu < -7; \\
\vdim {\mathcal M}_{\nu} \, & = \, b^3_{\mathrm{cs}} - \dim (\im \Upsilon^4) - \sum_{\lambda \in  (\nu , -4)} \dim \mathcal K(\lambda)_{\ddc}, & & \nu \in(-7, -4); \\
\dim \mathcal M_{\nu} \, & = \, b^3_{\mathrm{cs}}, & & \nu \in (-4, -3); \\
\dim \mathcal M_{\nu} \, & = \, b^3_{\mathrm{cs}} + \dim (\im \Upsilon^3) + \sum_{\lambda \in  (-3 , \nu)} \dim \mathcal K(\lambda)_{\ddc} & & \\
& \qquad {} - \sum_{\lambda \in (-3,\nu)} \dim \mathcal{K}(\lambda+1)_{\lapc^1} & & \nu \in (-3, 0), 
\end{align*}
where $\lapc^k$ acts on $k$-forms on $C$.

Note that when $\nu \in (-4, 0)$, the deformation problem is unobstructed, the moduli space $\mathcal M_{\nu}$ is a smooth manifold, and the virtual dimension \emph{is} the actual dimension of $\mathcal M_{\nu}$.

\item In the conically singular (CS) case, we have for $\nu > 0$ sufficiently close to $0$ that
\begin{equation} \label{CSvdimeq}
\begin{aligned}
\vdim {\mathcal M}_{\nu} \, & = \, \dim (\im (H_{\mathrm{cs}}^3 \to H^3 )) - \sum_{i=1}^n \, \, \sum_{\lambda \in \mathcal D_{\ddci} \cap (-3 , 0]} \dim \mathcal K(\lambda)_{\ddci} \\
& \qquad {} + 1 + \sum_{i=1}^n \sum_{\lambda \in (-1,0]} \dim \mathcal{K}(\lambda+1)_{\mdiracci},
\end{aligned}
\end{equation}
where $\mdiracci$ is the modified Dirac operator on $C_i$.
\end{itemize}
Note that in general there are both \emph{topological} and \emph{analytic} contributions to the virtual dimension.
\end{cor}
\begin{proof}
We calculate the index of $\dd_{l,\nu}$ from Proposition~\ref{Pindexexactprop} and equation~\eqref{virtualdimeq2}, along with Theorem~\ref{Pindexchangethm}. In the CS case we also use equation~\eqref{imcseq}. We then use the formula for $\dim (\mathcal{V}_{\ph})_{l, \nu}$ in Proposition~\ref{V.dim.prop} to give the result except in the AC case for rates $\nu \in [-\frac{5}{2}, 0)$.  We can deal with this case using Corollary~\ref{finalmodulicor} which shows that the dimension is $\dim \mathcal{H}^3_{\nu} - \dim \mathcal{H}^1_{\nu+1}$, and the factor $\dim \mathcal{H}^1_{\nu+1}$ is equal to the sum of dimensions of the homogeneous closed and coclosed $1$-forms on $C$ of order $\lambda + 1 \in (0, \nu+1)$ by the proof of Corollary~\ref{finalmodulicor}. Since homogeneous harmonic $1$-forms on $C$ of order $\lambda + 1 \in (0, 1)$ are closed and coclosed by Proposition~\ref{excludeextensionprop}, the result follows.
\end{proof}

\begin{rmk} \label{ACoverlaprmk}
Note that in the AC case, the above proof gave one formula for $\dim \mathcal M_{\nu}$ when $\nu \in (-3, - \frac{5}{2})$ and another formula for $\nu \in [- \frac{5}{2}, 0)$. However, both of these formulas can be expressed by the single formula for $\dim \mathcal M_{\nu}$ valid for generic $\nu$ in the interval $(-3, 0)$ that is given in the statement of Corollary~\ref{vdimfinalcor}. This can be verified by applying~\eqref{top34dimensionseq}, Theorem~\ref{Pindexchangethm}, and Lemma~\ref{bochnerlemma}.
\end{rmk}

\begin{rmk} \label{dilationsrmk}
In the AC case, the deformation that corresponds to rescaling at infinity, which is generated by the dilation vector field, is always included in our actual moduli space $\mathcal M_{\nu}$. Note that in particular this observation, together with our dimension formula from Corollary~\ref{vdimfinalcor} implies topological restrictions on AC $\G$~manifolds.
\end{rmk}

\begin{ex} \label{BSexample}
Let us apply Theorem~\ref{mainthm} and Corollary~\ref{vdimfinalcor} to the three known examples of AC $\G$~manifolds, the Bryant--Salamon manifolds of Example~\ref{BSexamples}. The manifolds $\Lambda^2_-(S^4)$ and $\Lambda^2_-(\C\PR^2)$ are of rate $\nu = -4$ and the manifold $\spi(S^3)$ is of rate $\nu = -3$. Since $-4$ and $-3$ are excluded in Theorem~\ref{mainthm}, we can only describe their deformations as AC $\G$~manifolds of rate $\nu + \e$. For the first two examples, we find that the moduli space $\mathcal M_{-4 + \e}$ is a smooth manifold of dimension $b^3_{\mathrm{cs}} = b^4 = 1$. 

For $N = \spi(S^3)$, we find that the moduli space $\mathcal M_{-3 + \e}$ is a smooth manifold of dimension $b^3_{\mathrm{cs}} + \dim (\im \Upsilon^3) = b^4 + \dim (\im \Upsilon^3) = 0 + \dim(\im \Upsilon^3)$. A simple diagram chase in~\eqref{longexactsequenceeq} using the facts that $H^3(\Sigma) = \R^2$, $H^4_{\mathrm{cs}}(N) = \R$, and $H^4(N) = \{0\}$ gives $\dim (\im \Upsilon^3) = 1$.

Notice that the dimension of the moduli space has to be at least one because of dilations, as discussed in Remark~\ref{dilationsrmk} above. Therefore, the Bryant--Salamon manifolds are \emph{locally rigid} as AC $\G$~manifolds of rate $\nu + \e$, modulo the scalings which are always present. In Section~\ref{ACextensionsec}, we show how to extend this result to establish that the Bryant--Salamon manifolds are in fact locally rigid as AC $\G$~manifolds of rate $-\e$ for any small $\e > 0$. Moreover, in Section~\ref{cohomogeneitysec} we push this still further to prove that the Bryant--Salamon manifolds are in fact \emph{globally rigid}.
\end{ex}

In the remainder of this section we make some brief remarks about interpreting these dimension formulas in the CS case.

Consider the $i^{\text{th}}$ end of a CS $\G$~manifold $M$. Let $Y_i$ be a Killing field on $\Sigma_i$, which we could then view as a vector field $Y_i$ on $C_i$ of rate $1$ so that by Proposition~\ref{NK-Killing-prop} we have $\mathcal{L}_{Y_i}\phci = d( Y_i \hk \phci) = 0$. Let $X_i$ be a smooth vector field on $M$ which is equal to the pullback of $Y_i$ on the $i^{\text{th}}$ end of $M$ and is $0$ on the other ends of $M$. Then $\exp( X_i ) \in \mathcal D_1$ is a diffeomorphism of rate $1$ on the ends which is the identity on all of the ends except the $i^{\text{th}}$ one. Now consider $d ( X_i \hk \ph)$. Using the fact that $d(Y_i \hk \phci) = 0$, since $X_i$ is asymptotic to $Y_i$ and $\ph$ is asymptotic to $\phci$ we deduce that $d ( X_i \hk \ph)$ decays on the $i^{\text{th}}$ end with rate $\nu$ and vanishes on the other ends.

We reiterate here that, since $X_i$ is of rate $1$ on the ends, $d(X_i \hk \ph)$ should \emph{a priori} be of rate $0$ on the ends, but in fact it has \emph{faster} decay, being of rate $\nu > 0$ on the ends. Thus $\xi_i = d( X_i \hk \ph) \in \mathcal C_{l, \nu}$ but $\pi_7d^*\xi_i\neq 0$: in fact, $\pi_7d^*$ is injective on the span of the $\xi_i$. We can see this because if $\pi_7 d^* d \xi_i = 0$ then, since $X_i \hk \ph \in (\Omega^2_7)_{l+1,1}$ and  $\pi_7 d^* d$ is in injective on $(\Omega^2_7)_{l+1, \lambda+1}$ for $\lambda \geq -6$ by Lemma~\ref{modifiedlaplemma}, we deduce that $X_i = 0$, a contradiction. Therefore, the $3$-forms $\xi_i$ correspond to linearly independent elements in $(\mathcal E_{\ph})_{l, \nu}$ and thus in $(\mathcal{V}_{\ph})_{l,\nu}$. Each such additional infinitesimal deformation corresponds to a reparametrization of the conical model for the singularity, given by an automorphism of the nearly K\"ahler structure on the link $\Sigma_i$. This fact matches well with the dimension formula for $(\mathcal{V}_{\ph})_{l,\nu}$ in Proposition~\ref{V.dim.prop}(b), since there we see that the space of Killing fields on each $\Sigma_i$ is a subspace of $\mathcal{K}(1)_{\mdiracci}$ and thus contributes to $\dim \mathcal{K}(1)_{\mdiracci}$.

\section{Applications and open problems} \label{applicationssec}

In this section we present several applications of our results, and discuss some open problems. In Section~\ref{grayspectrumsec} we discuss some aspects of the spectrum of the Laplacian on Gray manifolds. These are used in the next three sections. In Section~\ref{ACextensionsec} we first derive an alternative form of the dimension formula for the AC moduli space for generic rates $\nu \in (-3, 0)$, and use this to establish the local rigidity (modulo trivial scalings) of the Bryant--Salamon manifolds as AC $\G$~manifolds of rate $\nu < 0$. In Section~\ref{cohomogeneitysec} we establish that under certain conditions, an AC~$\G$~manifold must be of cohomogeneity one, and this implies that the Bryant--Salamon manifolds are unique as AC~$\G$~manifolds with given asymptotic cone. In Section~\ref{smoothCSsec} we investigate when the CS moduli space is smooth and unobstructed. In Section~\ref{resolvedmodulisec} we relate our main theorem to the desingularization theorem of~\cite{Kdesings}, providing evidence that CS $\G$~manifolds likely make up a large part of the ``boundary'' of the moduli space of smooth compact $\G$~manifolds. In Section~\ref{gaugefixexistssec} we show that a gauge-fixing condition that is needed in~\cite{Kdesings} can always be achieved. Finally we conclude in Section~\ref{openproblemssec} with some open problems for the future.

\subsection{The spectrum of the Laplacian on Gray manifolds} \label{grayspectrumsec}

We first need to discuss some results about the spectrum of the Laplacian on $2$-forms, for compact Gray manifolds, that is, for $6$-dimensional strictly nearly K\"ahler manifolds. These results are required for the applications in Sections~\ref{ACextensionsec},~\ref{cohomogeneitysec}, and~\ref{smoothCSsec}. The current section has a different flavour from the rest of the paper. Readers who are only interested in the use of these results for the applications can just note Proposition~\ref{grayspectrumprop1} and Proposition~\ref{grayspectrumfinalprop}.

Slightly similar calculations can be found (at least implicitly if not explicitly) in \cite{MS}. Note that in~\cite{MS}, the form $\Psi^+ + i \Psi^-$ corresponds to our $-\Omega = - \real(\Omega) - i \imag(\Omega)$.

Let $(\Sigma, J , \omega, \Omega)$ be a Gray manifold, so in particular from~\eqref{grayeq} we have
\begin{equation} \label{grayagaineq}
d \omega \, = \, -3 \, \real(\Omega) \quad \text{and} \quad d \imag(\Omega) \, =\, 2 \, \omega^2.
\end{equation}
From Lemma~\ref{KPspacelemma} we find that the closed and coclosed $3$-forms $\gamma = r^{\lambda} (r^3 \alpha_3 + r^2 dr \wedge \alpha_2)$ on the cone, homogeneous of order $\lambda$, correspond to \emph{coclosed} (in fact coexact, if $\lambda \neq -4$) $2$-forms $\alpha_2$ on the link that satisfy $\laps \alpha_2 = (\lambda + 3)(\lambda + 4) \alpha_2$. From Corollary~\ref{vdimfinalcor}, we are particularly interested in such forms for $\lambda \in (-3, 0]$, in which case $0 < (\lambda + 3)(\lambda + 4) \leq 12$, with equality if and only if $\lambda = 0$. This motivates us to study on $\Sigma$ the pair of equations
\begin{equation} \label{maingrayeq}
\laps \xi = (\lambda+3)(\lambda+4) \xi \quad \text{and} \quad \dss \xi = 0
\end{equation}
for $\lambda \in (-3,0]$, thus $\mu = (\lambda+3)(\lambda+4) \in (0,12]$, and $\xi$ a section of $\Lambda^2 (T^* \Sigma)$.

We can decompose the bundle of real $2$-forms on $\Sigma$ into $\SUth$ representations as follows:
\begin{equation*}
\Lambda^2 (T^* \Sigma) \, = \, \Lambda^{(2,0)+(0,2)} (T^*\Sigma)\oplus \R \langle \omega\rangle \oplus \Lambda^{(1,1)}_0 (T^* \Sigma) ,
\end{equation*}
where
\begin{equation*}
T \Sigma \, \cong \, \Lambda^{(2,0)+(0,2)} T^* \Sigma \quad \text{via} \quad Y \mapsto Y \hk \real (\Omega).
\end{equation*}
We may then write any section $\xi$ of  $\Lambda^2 T^* \Sigma$ as 
\begin{equation} \label{grayeq1}
\xi \, = \, Y \hk \real (\Omega) + f \omega + \gamma
\end{equation}
for a vector field $Y$, a smooth function $f$, and $\gamma$ a section of $\Lambda^{(1,1)}_0 T^* \Sigma$. 

Following~\cite{MS}, define the Hermitian connection $\bar{\nabla}$ by $\bar{\nabla}_X = \nabla_X - \frac{1}{2} A_X$, where $A_X = J(\nabla_X J)$, and define $\bar{\Delta}$, the \emph{Hermitian Laplace operator}, explicitly via the formula $\bar{\Delta} = \bar{\nabla}^* \bar{\nabla} + q(\bar{R})$ where $\bar R$ is the curvature tensor of $\bar \nabla$, and $q(\bar R)$ is the associated curvature operator.  The fundamental formula \cite[equation (17)]{MS} states that for a section $\gamma$ of $\Lambda^{(1,1)}_0 T^* \Sigma$, we have
\begin{equation} \label{fundformulaeq}
(\laps - \bar{\Delta}) \gamma \, = \, -(J \dss \gamma) \hk \real(\Omega).
\end{equation}
Hence, if $\xi$ in~\eqref{grayeq1} has $f = 0$ and $Y = 0$ then $\xi = \gamma \in C^{\infty} (\Lambda^{(1,1)}_0 T^* \Sigma)$ satisfies~\eqref{maingrayeq} if and only if 
\begin{equation} \label{gamma.eq}
\bar{\Delta}\gamma = (\lambda+3)(\lambda+4) \gamma \quad \text{and} \quad \dss \gamma = 0.
\end{equation}
This equation will play a key role, so we define 
\begin{equation} \label{mult.gamma.eq}
m_{\Sigma}(\lambda) \, = \, \dim \{ \gamma \in C^{\infty}(\Lambda^{(1,1)}_0 T^*\Sigma )\, ; \, \gamma \text{ satisfies~\eqref{gamma.eq}} \}.
\end{equation}
We can now refine our description of the homogeneous closed and coclosed $3$-forms on a $\G$~cone.

\begin{prop} \label{grayspectrumprop1}
Let $\eta$ be a closed and coclosed $3$-form on the cone $C$, homogeneous of order $\lambda \in (-3,0]$. Then $\eta$ is of the form $\eta = \dc \beta$, where $\beta = r^{\lambda+3} \xi$, for some $\xi \in C^{\infty}(\Lambda^2T^*\Sigma)$ satisfying~\eqref{maingrayeq} with $\mu = (\lambda+3)(\lambda+4)$. Furthermore, $\beta$ is homogeneous of order $\lambda + 1$ with $\lapc \pi_7(\beta) = 0$ and $\dsc \pi_7 (\beta) = \dsc \pi_{14}(\beta) = 0$.

Finally, the homogeneous closed and coclosed $3$-forms $\eta$ on $C$ of order $\lambda$ are precisely those of the following form:
\begin{enumerate}[(i)]
\item When $\lambda \in (-3, -1]$, then $\eta = \dc (r^{\lambda + 3} \xi)$, where $\xi = \gamma \in C^{\infty}(\Lambda^{(1,1)}_0T^*\Sigma)$ and $\gamma$ satisfies~\eqref{gamma.eq}.
\item When $\lambda \in (-1, 0)$, then
\begin{equation*}
\eta \, = \, \dc \big( \stc (\dc (r^{\lambda+2} f) \wedge \psc) + r^{\lambda+3} \gamma \big),
\end{equation*}
where $(f, \gamma) \in C^{\infty}(\Sigma) \oplus C^{\infty}(\Lambda^{(1,1)}_0 T^*\Sigma)$ with $\laps f = (\lambda+2)(\lambda+7) f$ and $\gamma$ satisfies~\eqref{gamma.eq}.
\item When $\lambda = 0$, then
\begin{equation*}
\eta \, = \, K \phc + \dc \bigl( \stc (\dc (r^2 f) \wedge \psc) + r^3 \gamma \bigr)
\end{equation*}
for some constant $K$, some $f \in C^{\infty}(\Sigma)$ satisfying $\laps f =14 f$, and $\gamma \in C^{\infty}(\Lambda^{(1,1)}_0 T^* \Sigma)$ satisfies~\eqref{gamma.eq} with $\lambda = 0$.
\end{enumerate}
\end{prop}
\begin{rmk} \label{grayspectrumprop1rmk}
Notice that the solutions of~\eqref{gamma.eq} with $\lambda = 0$ describe infinitesimal deformations of the nearly K\"ahler structure on $\Sigma$, modulo scaling, by the work in~\cite{MNS}.
\end{rmk}
\begin{proof}[Proof of Proposition~\ref{grayspectrumprop1}]
Let $\eta$ be a closed and coclosed $3$-form on the cone, homogeneous of order $\lambda \in (-3, 0]$. Then by~\eqref{homoddseq}, we can write
\begin{equation} \label{grayspectrumtempeq0}
\eta = r^{\lambda} (r^3 \ds \xi + r^2 dr \wedge (\lambda + 3) \xi) = \dc (r^{\lambda+3} \xi)
\end{equation}
for a unique $2$-form $\xi$ on $\Sigma$ satisfying~\eqref{maingrayeq}. Let $\beta = r^{\lambda+3} \xi$ so that $\eta = \dc \beta$. Using~\eqref{homoddseq} again, we have $\dsc \beta = r^{\lambda+1} \dss \xi = 0$. Since $\eta = \dc \beta$ and $\dsc \eta = 0$, we deduce that 
\begin{equation*}
\lapc \beta \, = \, \dsc \dc \beta + \dc \dsc \beta \, = \, \dsc \dc \beta \, = \, 0,
\end{equation*}
so $\beta = r^{\lambda+3} \xi$ is a homogeneous harmonic $2$-form of order $\lambda+1$. In particular, by Remark~\ref{laplaciansrmk}, since $\phc$ is torsion-free, $\beta_7 = \pi_7 (\beta)$ satisfies $\lapc \beta_7 = 0$, and thus $\beta_7 = \stc (X \wedge \psc)$ where $X$ is a homogeneous harmonic $1$-form of order $\lambda+1$. Combining Propositions~\ref{excludeextensionprop},~\ref{excludeextensionrefinedprop}, and~\ref{modifieddiracexcludeprop} shows that for $\lambda \in (-3,0]$ any homogeneous harmonic $1$-form $X$ of order $\lambda + 1$ satisfies $\curl_{\scriptstyle{C}} (X) = 0$. Since $\curl_{\scriptstyle{C}}(X) = \stc \bigl( (dX) \wedge \psc \bigr)$, we deduce that $\dsc \st (X \wedge \psc) = 0$. This means that $\dsc \beta_7 = 0$ and thus $\dsc \beta_{14} = 0$ as well since $\dsc \beta = 0$.

Now write $\xi$ in the form~\eqref{grayeq1}. A routine computation gives
\begin{equation} \label{grayspectrumtempeq1}
X \, = \, r^{\lambda + 1} \Bigl( - f dr + \frac{2}{3}r Y \Bigr).
\end{equation}
That is, for any $2$-form $\beta$ on $C$ of the form $\beta = r^{\lambda + 3} \xi$, with $\xi$ of the form~\eqref{grayeq1}, we have that the component $\pi_7 (\beta) = \beta_7$ is independent of $\gamma$, it depends only on $f$ and $Y$.

Therefore, if we let $X$ be given by~\eqref{grayspectrumtempeq1} and define $\beta' = r^{\lambda+3} (Y \hk \real(\Omega) + f \omega)$, we deduce that $\beta_7' = \beta_7$, and thus $\beta'_7$ is coclosed. Moreover, $\dsc \dc \beta'_7 = \lapc \beta'_7 = 0$, and thus $\dc \beta'_7$ is a homogeneous closed and coclosed $3$-form of order $\lambda$. Hence, every harmonic $1$-form $X$ of rate $\lambda + 1 \in (-2, 1]$ determines a homogeneous closed and coclosed $3$-form of rate $\lambda \in (-3, 0]$, namely $\dc \beta'_7$.

Next, we observe that $\dc \beta_7' = \dc \bigl( \stc (X \wedge \psc) \bigr) = \dc (X \hk \phc) = \mathcal L_{X} \phc$, where the vector field $X$ metric dual to the $1$-form $X$ with respect to the cone metric is $X = - r^{\lambda + 1} f \, \ddr + \frac{2}{3} r^{\lambda} Y$. We can thus use Proposition~\ref{Killing-cone-prop} to conclude that $\dc \beta'_7 = 0$ if and only if $\lambda = 0$, $f = 0$, and $Y$ is Killing.

Let $\xi_j = Y \hk \real(\Omega) + f \omega + \gamma_j$ be solutions of~\eqref{maingrayeq} for $j=1, 2$. Then $\gamma = \gamma_1 - \gamma_2$ is a section of $\Lambda^{(1,1)}_0 T^*\Sigma$ which satisfies $\dss \gamma = 0$ and $\laps \gamma = \mu \gamma$ where $\mu = (\lambda+3)(\lambda+4)$. Hence by~\eqref{fundformulaeq}, we find that $\gamma$ satisfies~\eqref{gamma.eq}. Moreover, $\dc (r^{\lambda+3} \gamma)$ is a closed and coclosed $3$-form if $\gamma$ satisfies~\eqref{gamma.eq}.

We have $\eta = \dc (X \hk \phc) + \dc (r^{\lambda + 3} \gamma)$. The statements \emph{(i)}, \emph{(ii)}, and \emph{(iii)}, now follow easily from the description of the homogeneous harmonic $1$-forms of rate $\lambda + 1\in (-2,1]$ in Propositions~\ref{excludeextensionprop} and~\ref{excludeextensionrefinedprop}. The only point to note is that in the case $\lambda + 1 = 1$ we have $\dc *(r dr \wedge \psc) = 3 \phc$.
\end{proof}

It is therefore clear that we should study solutions to~\eqref{gamma.eq}. Specifically, we observe that the calculations in \cite{MS} enable us to determine the eigenvalues (and their multiplicities) in $(0,12]$ for $\bar{\Delta}$ acting on coclosed forms in $\Lambda^{(1,1)}_0 (T^*\Sigma)$ for the three homogeneous Gray manifolds $\Sigma = \C \PR^3$, $S^3 \times S^3$, and $\SUth / T^2$.

\begin{prop} \label{grayspectrumfinalprop}
Let $\Sigma$ be one of the three homogeneous Gray manifolds: $\C \PR^3$, $S^3 \times S^3$, or $\SUth / T^2$. There are no nontrivial coclosed primitive $(1,1)$-forms which are eigenforms of the Hermitian Laplace operator $\bar \Delta$ with eigenvalue in $(0,12)$. Moreover, in the first two cases, we can also exclude the eigenvalue $12$. For the flag manifold $\SUth / T^2$, we get an $8$-dimensional space of such forms with eigenvalue $12$. These forms correspond to infinitesimal deformations of the nearly K\"ahler structure, but it was recently shown by Foscolo~\cite{F} that they \emph{cannot} be integrated to actual deformations.
\end{prop}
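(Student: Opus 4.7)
The plan is to exploit that each of the three Gray manifolds $\Sigma$ is a compact homogeneous space $G/H$ carrying its canonical $G$-invariant nearly K\"ahler structure, and to reduce the eigenvalue problem on $\Lambda^{(1,1)}_0 T^*\Sigma$ to finite-dimensional representation theory of $G$. The key observation, essentially contained in~\cite{MS}, is that on a homogeneous nearly K\"ahler manifold the Hermitian connection $\bar{\nabla}$ coincides with the canonical invariant connection determined by the reductive decomposition $\mathfrak g = \mathfrak h \oplus \mathfrak m$. Consequently, on any homogeneous vector bundle $E_W = G \times_H W$, the rough Laplacian $\bar\nabla^* \bar\nabla$ acts on the $V_\pi$-isotypical component of $\Gamma(E_W)$ by the Casimir scalar $\mathrm{Cas}_G(\pi)$, while the curvature term $q(\bar R)$ descends to an $H$-equivariant endomorphism of $W$ and hence restricts to a matrix on each $\mathrm{Hom}_H(V_\pi, W)$. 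Combined with Frobenius reciprocity
\begin{equation*}
\Gamma(\Lambda^{(1,1)}_0 T^*\Sigma) \,\cong\, \bigoplus_{\pi \in \widehat G} V_\pi \otimes \mathrm{Hom}_H(V_\pi, W), \qquad W = \Lambda^{(1,1)}_0 T^*_{eH}\Sigma,
\end{equation*}
this reduces the spectral problem to diagonalizing a finite-dimensional operator on each isotypical component, and the upper bound $\mu \leq 12$ restricts attention to the (finite) set of $\pi$ with $\mathrm{Cas}_G(\pi)$ below an explicit constant.

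First, I would identify $W$ as an $H$-module in each case and enumerate the irreducible $G$-representations $\pi$ with $\mathrm{Hom}_H(V_\pi,W) \neq 0$ whose Casimir eigenvalue is small enough to allow a $\bar\Delta$-eigenvalue in $(0,12]$. For $\Sigma = \C\PR^3 = \mathrm{Sp}(2)/(\mathrm{Sp}(1)\times \mathrm{U}(1))$ and for $\Sigma = S^3\times S^3 = (\mathrm{SU}(2))^3/\Delta \mathrm{SU}(2)$, a direct computation of $\mathrm{Cas}_G(\pi)$ for each candidate $\pi$, normalized so that the scalar curvature of the nearly K\"ahler metric is $R = 30$, shows that no nontrivial $\bar\Delta$-eigenvector among coclosed primitive $(1,1)$-sections has eigenvalue in $(0,12]$. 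For $\Sigma = \mathrm{SU}(3)/T^2$, where the isotropy group is only a torus so that many irreducibles $\pi$ contribute, the analogous enumeration (carried out in~\cite{MS}) shows that the only contribution to eigenvalue $\mu \in (0,12]$ on coclosed primitive $(1,1)$-forms comes from the adjoint representation, and the coclosedness constraint isolates precisely the $8$-real-dimensional subspace corresponding to $\mathfrak{su}(3) \ominus \mathfrak{t}^2$; all such forms have eigenvalue exactly $\mu = 12$.

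Finally, by~\cite{MNS}, coclosed primitive $(1,1)$-eigenforms of $\bar\Delta$ with eigenvalue $\mu = 2R/5 = 12$ are precisely the infinitesimal deformations of the nearly K\"ahler structure, which identifies the $8$-dimensional space on the flag manifold as in the statement and leaves open the question of integrability. The main obstacle in the plan is the representation-theoretic bookkeeping in the $\mathrm{SU}(3)/T^2$ case, where $H$ is abelian and most $\mathrm{SU}(3)$-representations contribute: for each relevant $\pi$ one must compute the multiplicity with which the $H$-representation $W$ occurs in $V_\pi|_H$, the explicit action of $q(\bar R)$ on each factor, and the intersection with the kernel of $\dss$. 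These calculations are essentially all contained in~\cite{MS}, so in practice the proof reduces to invoking Proposition~\ref{grayspectrumprop1} to reformulate the spectral problem and then applying the case-by-case Casimir analysis of~\cite{MS, MNS} to the three known Gray manifolds.
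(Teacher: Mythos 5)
Your proposal is correct and follows essentially the same route as the paper: both reduce the problem to the homogeneous representation theory of the three Gray manifolds, enumerate the $G$-representations with Casimir value in the admissible range that map nontrivially into $\Lambda^{(1,1)}_0$, impose coclosedness, and then quote the case-by-case computations of~\cite{MS} (and~\cite{MNS} for the identification of the eigenvalue-$12$ space on $\SUth/T^2$ with infinitesimal nearly K\"ahler deformations). The only minor slip is that for the canonical connection it is the full operator $\bar\Delta = \bar\nabla^*\bar\nabla + q(\bar R)$, not the rough Laplacian alone, that acts as the Casimir on isotypical components, but this does not affect the argument since the combined operator is what gets diagonalized in~\cite{MS}.
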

\begin{proof}
{\em Case 1:} Let us start with the case of $\C\PR^3$, which follows from the work in~\cite[$\S$5.5]{MS}. Let $E$ denote the usual representation of $\SUtw$ on $\C^2$ and let $\C_l$ be the $\mathrm{U}(1)$ representation on $\C$ given by multiplication by $z^l$ for $z$ in the unit circle in $\C$.  Let $E_{k,l} = \mathrm{Sym}^k (E) \otimes \C_l$ for $k \geq 1$ and $l\in\Z$, and $k \equiv l$ mod 2, which are the irreducible representations of $\mathrm{U}(2)$. From~\cite[Lemma 5.8]{MS} and the discussion before it, we have decompositions
\begin{equation*}
T^* \C \PR^3 \, \cong \, E_{0,-2} \oplus E_{1,1} \oplus E_{0,2} \oplus E_{1,-1} \quad \text{and} \quad \Lambda^{(1,1)}_0 (T^* \C \PR^3) \, \cong \, E_{0,0} \oplus E_{1,3} \oplus E_{1,-3} \oplus E_{2,0}.
\end{equation*} 
If $V_{a,b}$ is an irreducible $\mathrm{SO}(5)$ representation with highest weight $(a,b)$ for $a\geq b\geq 0$, it corresponds to a possible eigenspace of $\bar{\Delta}$ on $\Lambda^{(1,1)}_0 (T^* \C \PR^3)$ with eigenvalue $2 (a(a+3) + b(b+1))$ if there is a homomorphism from $V_{a,b}$ to $\Lambda^{(1,1)}_0 (T^* \C \PR^3)$. So the only possible positive eigenvalue less than $12$ is $8$ for $(a,b) = (1,0)$. However, since $V_{1,0} \cong T^* (\C \PR^3)$ we see that there are no such homomorphisms from $V_{1,0}$ to $\Lambda^{(1,1)}_0 (T^* \C \PR^3)$, and thus the lowest possible positive eigenvalue is $12$. Moreover, the multiplicity of the possible eigenvalue $12$ on coclosed forms in $\Lambda^{(1,1)}_0 (T^* \C \PR^3)$ is shown to be $0$ in~\cite[Theorem 5.10]{MS}, so there are no solutions of~\eqref{maingrayeq} for $\mu \in (0,12]$ on $\C \PR^3$ other than $\xi = c \omega$.

{\em Case 2:} Next we consider the case of $S^3 \times S^3$ as in~\cite[$\S$5.4]{MS}. If $E$ is as above, then by~\cite[Lemma 5.5]{MS},
\begin{equation*}
\Lambda^{(1,1)}_0 (T^* (S^3 \times S^3)) \, \cong \, \mathrm{Sym}^2 (E) \oplus \mathrm{Sym}^4 (E).
\end{equation*}
Then the irreducible representation of $\SUtw \times \SUtw \times \SUtw$ given by
\begin{equation*}
V_{a,b,c} \, = \, \mathrm{Sym}^a (E) \otimes \mathrm{Sym}^b (E) \otimes \mathrm{Sym}^c (E)
\end{equation*}
for $a,b,c \geq 0$ corresponds to a possible eigenspace of $\bar{\Delta}$ of eigenvalue $\frac{3}{2}(a(a+2) + b(b+2) + c(c+2))$ if there is a homomorphism from $V_{a,b,c}$ to $\Lambda^{(1,1)}_0 (T^*(S^3 \times S^3))$. The only possible positive eigenvalues less than $12$ are $\frac{9}{2}$ and $9$ for $(a,b,c) = (1,0,0)$ and for $(a,b,c) = (1,1,0)$, respectively, up to permutation.  There are no homomorphisms from $V_{1,0,0} = \mathrm{Sym}^1 (E)$ to $\mathrm{Sym}^2 (E) \oplus \mathrm{Sym}^4 (E)$, but there \emph{is} one from $V_{1,1,0} = \mathrm{Sym}^1 (E) \otimes \mathrm{Sym}^1 (E) = \mathrm{Sym}^0 (E) \oplus \mathrm{Sym}^2 (E)$ to $\mathrm{Sym}^2 (E) \oplus \mathrm{Sym}^4 (E)$.  
However, the fact that $\mathrm{Sym}^0 (E)$ is a factor in $V_{1,1,0}$ means that $V_{1,1,0}$ also corresponds to an eigenspace for $\Delta$ on functions with eigenvalue $9$, and such functions, by~\cite[Proposition 4.11]{MS}, define elements of $\Lambda^{(1,1)}_0 (T^* (S^3 \times S^3))$ that are eigenforms of $\bar{\Delta}$ with eigenvalue $12$ but which are \emph{not} coclosed, as the eigenvalue is not $6$. We hence deduce that $9$ does not arise as an eigenvalue for $\bar{\Delta}$ acting on coclosed forms in $\Lambda^{(1,1)}_0 (T^* (S^3 \times S^3))$. Moreover, the multiplicity of the possible eigenvalue $12$ on coclosed forms in $\Lambda^{(1,1)}_0 (T^*(S^3 \times S^3))$ is shown to be $0$ in~\cite[Corollary 5.7]{MS}, so again there are no solutions of~\eqref{maingrayeq} for $\mu \in (0,12]$ on $S^3 \times S^3$ other than $\xi = c \omega$.

{\em Case 3:} Finally, we consider the case of $\SUth / T^2$ as in~\cite[$\S$5.6]{MS}. Let $E$ denote the usual representation of $\SUth$ on $\C^3$ and let  
\begin{equation*}
V_{k,l} \, = \, \ker (\mathrm{Sym}^k (E) \otimes \mathrm{Sym}^l (\bar{E}) \rightarrow \mathrm{Sym}^{k-1} (E) \otimes \mathrm{Sym}^{l-1} (\bar{E})) ,\end{equation*}
where the map is the contraction map. If $\e_i$ for $i = 1,2,3$ is the standard basis on $\R^3$ then the weights of $V_{k,l}$ are 
\begin{equation*}
(a-a') \e_1 + (b-b') \e_2 + (c-c') \e_3
\end{equation*}
where $k = a + b + c$, $l = a' + b' + c'$ and $a, b, c, a', b', c' \geq 0$. The representation $V_{k,l}$ corresponds to a possible eigenvalue of $2( k(k + 2) + l(l+2))$ of $\bar{\Delta}$. So we need $(k,l) = (1,0)$ or $(k,l) = (0,1)$ for a positive eigenvalue less than $12$. Now by~\cite[Corollary 5.11]{MS} the possible weights of the $T^2$ representation on $\Lambda^{(1,1)}_0 (T^*(\SUth / T^2))$ are $0$ and $\pm 3 \e_i$ for $i = 1, 2, 3$. However, for eigenvalues less than $12$ we cannot achieve weights $\pm 3 \e_i$ since $k,l \in \{0,1\}$ and we cannot achieve weight $0$ since 
$k \neq l$. Thus there are no eigenvalues in $(0,12)$. Moreover, by the work in~\cite[$\S$5.6 \& $\S$6]{MS}, the space of solutions to~\eqref{maingrayeq} for $\mu \in (0,12]$ is zero \emph{unless} $\mu = 12$, in which case the solutions are of the form $\xi = c \omega + \gamma$, where the $\gamma$'s lie in a space of dimension $8$, isomorphic to $\mathfrak{su}(3)$.
\end{proof}

\subsection{An alternative dimension formula in the AC case} \label{ACextensionsec}

We can now use our refined description of the closed and coclosed $3$-forms on a $\G$~cone in Proposition~\ref{grayspectrumprop1} to give another description of the dimension of the AC moduli space for generic rates $\nu \in (-3,0)$.

\begin{prop}\label{AC.extension.prop}
Let $(M, \ph)$ be an AC $\G$~manifold of generic rate $\nu \in (-3,0)$. Then 
\begin{equation*}
\dim \mathcal{M}_{\nu} = b^3_{\text{\emph{cs}}}(M) + \dim (\im \Upsilon^3) + \sum_{\lambda\in (-3,\nu)} m_{\Sigma}(\lambda),
\end{equation*}
where $m_{\Sigma}(\lambda)$ is given in~\eqref{mult.gamma.eq}.
\end{prop}

\begin{proof}
We know by Corollary~\ref{vdimfinalcor} that 
\begin{equation*}
\dim \mathcal{M}_{\nu} \, = \, b^3_{\mathrm{cs}}(M) + \dim (\im \Upsilon^3) + \sum_{\lambda \in  (-3,  \nu)} \dim \mathcal K(\lambda)_{\ddc} - \sum_{\lambda \in (-3, \nu)} \dim \mathcal{K}(\lambda+1)_{\lapc^1}.
\end{equation*}
Recall that $\mathcal{K}(\lambda)_{\ddc}$ is the space of homogeneous closed and coclosed $3$-forms on $C$ of order $\lambda$ and that $\mathcal{K}(\lambda+1)_{\lapc^1}$ is the space of homogeneous harmonic $1$-forms on $C$ of order $\lambda+1$.

Propositions~\ref{excludeextensionprop} and~\ref{grayspectrumprop1} show that for $\lambda \in (-3,-1]$ we have 
\begin{equation*}
\dim \mathcal{K}(\lambda)_{\ddc} \, = \, m_{\Sigma}(\lambda) \quad \text{and} \quad \dim \mathcal{K}(\lambda+1)_{\lapc^1} \, = \, 0.
\end{equation*}
Moreover, combining Propositions~\ref{grayspectrumprop1} and~\ref{excludeextensionrefinedprop} shows that for $\lambda \in (-1, 0)$, which corresponds to $\lambda + 1 \in (0, 1)$, we have 
\begin{equation*}
\dim \mathcal{K}(\lambda)_{\ddc} \, = \, m_{\Sigma}(\lambda) + \dim \mathcal{K}(\lambda+1)_{\lapc^1}.
\end{equation*}
The result is now immediate.
\end{proof}

Proposition~\ref{AC.extension.prop} allows us to more effectively compute the dimension of the moduli space of AC $\G$ manifolds. In particular, we can show the following result for the Bryant--Salamon examples.

\begin{cor} \label{BSlocallyuniquetozerocor}
The Bryant--Salamon $\G$~conifolds are locally rigid, modulo scalings, as AC $\G$~manifolds with the same asymptotic cones, up to any rate $\nu < 0$.
\end{cor}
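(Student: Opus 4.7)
The plan is to reduce the statement to a direct application of Proposition~\ref{ACuptozeroprop}, using the spectral information of Proposition~\ref{grayspectrumfinalprop} for the three known Gray manifolds that appear as the links of the asymptotic cones of the Bryant--Salamon examples.

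First I would observe that the three Bryant--Salamon AC $\G$~manifolds $\Lambda^2_-(S^4)$, $\Lambda^2_-(\C\PR^2)$, and $\spi(S^3)$ of Example~\ref{BSexamples} are asymptotic to $\G$~cones whose links are $\C\PR^3$, $\SUth/T^2$, and $S^3 \times S^3$, respectively. The map $\lambda \mapsto \mu = (\lambda+3)(\lambda+4)$ is strictly increasing on $(-7/2,\infty)$ and sends the open interval $(-5/2-\e, 0)$ into the open interval $(0,12)$. By Proposition~\ref{grayspectrumfinalprop}, none of the three Gray manifolds admits a nonzero coclosed primitive $(1,1)$-eigenform of $\bar{\Delta}$ with eigenvalue lying in $(0,12)$, and then Proposition~\ref{grayspectrumprop1} together with the discussion immediately preceding Proposition~\ref{ACuptozeroprop} shows that~\eqref{graymaineq2} has no nontrivial solutions for any $\lambda \in (-5/2-\e, 0)$. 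The hypothesis of Proposition~\ref{ACuptozeroprop} is therefore verified in each case, and I conclude that for every generic rate $\nu \in (-3, 0)$ the moduli space $\mathcal M_{\nu}$ is a smooth manifold of dimension $\dim \mathcal H^3_{-5/2-\e}$.

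To pin down this dimension I would appeal to Corollary~\ref{vdimfinalcor}: the same spectral input guarantees that $\mathcal D_{\ddc} \cap (-3, -5/2) = \emptyset$ on each of the three cones, hence $\dim \mathcal H^3_{-5/2-\e} = b^3_{\mathrm{cs}} + \dim(\im \Upsilon^3)$, which was already computed to equal $1$ in each of the three cases in Example~\ref{BSexample}. Combining this with the fact that for $\Lambda^2_-(S^4)$ and $\Lambda^2_-(\C\PR^2)$ the moduli space is already known by Example~\ref{BSexample} to be smooth and one-dimensional at every generic $\nu \in (-4, -3)$ yields a smooth one-dimensional $\mathcal M_{\nu}$ at every generic $\nu < 0$ in the meaningful range for each example. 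By Remark~\ref{dilationsrmk} the scalings of the asymptotic cone furnish a one-parameter family of honest deformations inside $\mathcal M_{\nu}$, and these must therefore exhaust a neighbourhood of the base point in the smooth one-manifold $\mathcal M_{\nu}$, which is exactly the asserted local rigidity modulo scalings. The whole argument is essentially bookkeeping once Propositions~\ref{ACuptozeroprop} and~\ref{grayspectrumfinalprop} are in hand; the one subtle point worth flagging is that the $8$-dimensional eigenspace of $\bar{\Delta}$ at $\mu = 12$ on the flag manifold $\SUth/T^2$ does not spoil the case of $\Lambda^2_-(\C\PR^2)$, since the corresponding rate $\lambda = 0$ is precisely the excluded endpoint of the open interval appearing in the hypothesis of Proposition~\ref{ACuptozeroprop}.
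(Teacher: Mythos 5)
Your proof is correct and takes essentially the same route as the paper: spectral exclusion via Proposition~\ref{grayspectrumfinalprop}, extension of the deformation theory to all rates $\nu<0$ via Proposition~\ref{ACuptozeroprop}, the dimension count from Corollary~\ref{vdimfinalcor} and Example~\ref{BSexample}, and identification of the one-dimensional moduli space with the scalings of Remark~\ref{dilationsrmk}. The only small imprecision is attributing the value $b^3_{\mathrm{cs}} + \dim(\im \Upsilon^3) = 1$ for $\Lambda^2_-(S^4)$ and $\Lambda^2_-(\C\PR^2)$ to Example~\ref{BSexample}, which only works at rate $-4+\e$; the vanishing of $\dim(\im \Upsilon^3)$ there (so that nothing is added on crossing $\lambda=-3$) comes from $b^3(\Sigma)=0$ in~\eqref{NKBettinumberseq}, which is exactly the point the paper's proof states explicitly.
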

\begin{proof}
Proposition~\ref{grayspectrumfinalprop} says that for the three Bryant--Salamon manifolds, there are no solutions to~\eqref{gamma.eq} for any $\lambda \in (-3, 0)$, and so $m_{\Sigma}(\lambda) = 0$ for all $\lambda \in (-3,0)$ in these cases. The conclusion now follows for $\spi(S^3)$ by Example~\ref{BSexample} and Proposition~\ref{AC.extension.prop}. For the cases $\Lambda^2_-(S^4)$ and $\Lambda^2_-(\C\PR^2)$, we have to also use the fact that there are no new deformations as we cross $\lambda = -3$, arising from the term $\dim (\im \Upsilon^3)$ in Proposition~\ref{AC.extension.prop}. But this is immediate since $H^3(\Sigma) = \{0\}$ for both these manifolds, and thus $\Upsilon^3 = 0$.
\end{proof}

\subsection{Cohomogeneity of AC \texorpdfstring{$\mathbf{\G}$}{G2} manifolds} \label{cohomogeneitysec}

In this section we combine our deformation theory with the spectral theory results of Section~\ref{grayspectrumsec} to obtain a strong result about the cohomogeneity of AC $\G$~manifolds under certain conditions.

In the following, we choose $\e > 0$ sufficiently small so that $(-3, -3 + \e)$ contains no rates for homogeneous closed and coclosed $3$-forms on the given asymptotic cone.

\begin{prop} \label{moduli-immersionprop}
Let $(M, \ph)$ be an AC $\G$ manifold with rate $\nu = -3 + \e$. The map from the moduli space $\mathcal M_{\nu}$ to $H^3(M) \times H^4(M)$ given by
\begin{equation*}
\mathcal D_{\nu+1} \cdot \tilde{\ph} \, \mapsto \, ( [ \tilde{\ph} ] , [ \Theta(\tilde{\ph}) ] )
\end{equation*}
is an immersion.
\end{prop}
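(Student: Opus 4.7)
The plan is to identify the tangent space to $\mathcal M_\nu$ at $[\ph]$ with $\mathcal H^3_\nu$ via Theorem~\ref{mainthm} and Corollary~\ref{widehatFspacecor}, compute the differential of the given map, and show it is injective. Representing a point near $[\ph]$ by a torsion-free $\G$~structure of the form $\tilde\ph = \ph + \eta + O(|\eta|^2)$ with $\eta \in \mathcal H^3_\nu$, Lemma~\ref{quadlemma} gives the differential of $\tilde\ph \mapsto ([\tilde\ph], [\Theta(\tilde\ph)])$ at $\ph$ as $\eta \mapsto ([\eta], [\Lp(\eta)])$. This is well-defined on cohomology because along a path $\ph_t$ in $\mathcal T_\nu$ with $\rest{\frac{d}{dt}}{t=0}\ph_t = \eta$, the $4$-forms $\Theta(\ph_t)$ are closed, so differentiating yields $d\Lp(\eta) = 0$.

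The first substantive step is to show that $\Lp(\eta) = -\st\eta$ on $\mathcal H^3_\nu$. Since $\nu = -3+\e < -\tfrac{5}{2}$ for small $\e > 0$, Lemma~\ref{Hpuretypelemma} ensures that the pure-type pieces $\pi_1(\eta) = f\ph$ and $\pi_7(\eta) = \st(X \wedge \ph)$ are individually closed and coclosed. The identity $d(f\ph) = df \wedge \ph = 0$ forces $df = 0$, and since $f$ decays to zero on the end this gives $f \equiv 0$. Similarly, equation~\eqref{temp27eq2} identifies $\Delta \pi_7(\eta) = 0$ with $\Delta X = 0$, and Lemma~\ref{bochnerlemma} forces a harmonic $1$-form of rate less than $-\tfrac{5}{2}$ to vanish, so $X = 0$. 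Hence $\eta = \pi_{27}(\eta)$ and~\eqref{Lpeq} gives $\Lp(\eta) = -\st\eta$, so the differential becomes $\eta \mapsto (\chi^3(\eta), -\chi^4(\eta))$ in the notation introduced before Lemma~\ref{kernelchange34preliminarylemmaA}.

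For injectivity, suppose $\chi^3(\eta) = 0$ and $\chi^4(\eta) = 0$. The strategy is first to use $\chi^3(\eta) = 0$ to push $\eta$ into the smaller space $\mathcal H^3_{-4+\e}$, and then to invoke item~(TAC3) of Proposition~\ref{kernelchange34propalt}. By Lemma~\ref{kernelchangelemma} and the asymptotic analysis in the proof of~\eqref{kernelchange34eq1} in Proposition~\ref{kernelchange34prop}, on the end we may write $\eta = (h^{-1})^*\beta + \tilde\eta + \eta_-$, where $\beta$ is a harmonic $3$-form on $\Sigma$ with $[\beta] = \Upsilon^3(\chi^3(\eta))$ and $\eta_- \in \mathcal H^3_{-3-\e}$. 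From $\chi^3(\eta) = 0$ we obtain $[\beta] = 0$, and harmonicity forces $\beta = 0$; Remark~\ref{asymptoticexpansionproprmk} then gives $\tilde\eta = 0$, so $\eta = \eta_- \in \mathcal H^3_{-3-\e} = \mathcal H^3_{-4+\e}$ by Corollary~\ref{kernelchangecor}. Finally, (TAC3) says $\chi^4$ restricts to an isomorphism from $\mathcal H^3_{-4+\e}$ onto $H^4(M, \R)$, so $\chi^4(\eta) = 0$ forces $\eta = 0$. The main delicate point is this identification of the leading asymptotic term of $\eta$ with the topological quantity $\Upsilon^3(\chi^3(\eta))$, which is essentially the content of Proposition~\ref{kernelchange34prop} but requires care in matching the cohomological vanishing with the vanishing of the leading order term.
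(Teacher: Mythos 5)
Your proposal is correct and follows essentially the same route as the paper: identify the differential on $\mathcal H^3_{\nu}$ as $\eta \mapsto ([\eta], [-\stp\eta])$ after showing $\pi_1(\eta) = \pi_7(\eta) = 0$, then use the asymptotic decomposition at the critical rate $-3$ together with $\Upsilon^3$ to force the leading term to vanish, landing $\eta$ in the $L^2$ space $\mathcal H^3_{-4+\e}$, and finish with the isomorphism $\chi^4 : \mathcal H^3_{L^2} \cong H^4(M,\R)$. The only cosmetic difference is that the paper kills $\Upsilon^3([\eta_-])$ via $\mathcal H^3_{L^2} \cong H^3_{\mathrm{cs}}(M)$ and the long exact sequence, while you invoke the exactness-on-the-end argument already contained in the proof of Proposition~\ref{kernelchange34prop}; these amount to the same thing.
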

\begin{proof}
The map is well-defined because we are considering diffeomorphisms isotopic to the identity, so choosing different elements of the orbit will not change the cohomology classes. Hence it suffices to show that its derivative at the orbit of any point $\tilde \ph$ is injective. Since the argument is identical (modulo cumbersome notation) at any point, we show the case $\tilde \ph = \ph$. The tangent space to the moduli space at the orbit of $\ph$ is $\mathcal H^3_{\nu}$ hence the derivative is
\begin{equation*}
\eta \, \mapsto \, ( [ \eta ] , [ L_{\ph}(\eta) ] ) ,
\end{equation*}
which maps $\eta \in \mathcal H^3_{\nu}$ to $H^3(M) \times H^4(M)$.
 
Since $d \eta = d^* \eta = 0$ we know that $\Delta \eta = 0$. Hence $\pi_1( \Delta \eta ) =0 $ and $\pi_7( \Delta \eta ) = 0$ and thus $\pi_1 (\eta)$ and $\pi_7 (\eta)$ are harmonic by 
the torsion-freeness of $\ph$. Arguing exactly as in the proof of Theorem~\ref{onetoonethm}, we find that $\pi_1(\eta) = 0$ and $\pi_7(\eta) = 0$. Therefore
\begin{equation*}
L_{\ph}(\eta) = \stp \left( \frac{4}{3} \pi_1(\eta) + \pi_7(\eta) - \pi_{27}(\eta) \right) \, = \, \stp \left( \frac{7}{3} \pi_1(\eta) + 2 \pi_7(\eta) - \eta \right) \, = \, - \stp \eta.
\end{equation*}
Thus we need to show that the map
\begin{equation} \label{moduli-immersion-tempeq}
\eta \, \mapsto \, ( [\eta] , [- \stp \eta] )
\end{equation}
from $\mathcal H^3_{\nu}$ to $H^3(M) \times H^4(M)$ is injective. Suppose that $\eta$ maps to $(0,0)$ under this map. Since the only exceptional rate of $d + d^*$ in $[ -\frac{7}{2}, \nu]$ is $-3$ we know by Lemma~\ref{kernelchangelemma} that we can write $\eta = \eta_+ + \eta_-$ where $\eta_- \in \mathcal H^3_{-\frac{7}{2}}$ and  $\eta_+$ is asymptotic to a closed and coclosed $3$-form $\gamma$ on the asymptotic cone $C$ of rate $-3$, and $\eta_+$ is nonzero if and only if $\gamma$ is nonzero. Moreover, from equation~\eqref{homoddseq} we know that $\gamma$ is independent of the radial direction on $C$ as it is a closed and coclosed $3$-form on the link $\Sigma$ of $C$.

Now, $\mathcal H^3_{-\frac{7}{2}} = \mathcal H^3_{L^2}$ is the space of $L^2$ closed and coclosed $3$-forms and so by Proposition~\ref{LMcohomologyprop} is isomorphic to $H^3_{\mathrm{cs}}(M)$. Hence $[\eta_-]$ lies in the image of $H^3_{\mathrm{cs}}(M)$ in $H^3(M)$ under inclusion. Therefore, from the long exact sequence~\eqref{longexactsequenceeq}, we find that under the natural map $\Upsilon^3 : H^3(M) \rightarrow H^3(\Sigma)$ we have $\Upsilon^3 ([\eta_-]) = 0$. Furthermore, $\Upsilon^3 ([\eta_+]) = [\gamma]$, since $\eta_+$ is asymptotic to $\gamma$. As we are assuming that $[\eta] = 0$ we find that $0 = \Upsilon^3 ([\eta]) = \Upsilon^3 ([\eta_+] + [\eta_-]) = [\gamma]$ which implies that $\gamma = 0$.  We find therefore that $\eta_+ = 0$ and hence $\eta = \eta_-$.
  
Again from Proposition~\ref{LMcohomologyprop} we know that $\mathcal H^3_{-\frac{7}{2}}$ is isomorphic to $H^4(M)$, which means that $[ \stp \eta_-] = 0$ in $H^4(M)$ if and only if $\stp \eta_- = 0$. We conclude that $\eta = \eta_- = 0$ and thus the map~\eqref{moduli-immersion-tempeq} is indeed injective. 
\end{proof}

\begin{rmk} \label{moduli-immersionrmk}
A similar immersion (but with more structure) exists for the moduli space of smooth compact $\G$~manifolds. See~\cite[Theorem 10.4.5]{J4} for details.
\end{rmk}

We now apply Proposition~\ref{moduli-immersionprop} to ``lift'' an automorphism of the link to an automorphism of the AC~$\G$~manifold, as follows.

\begin{prop} \label{extend.aut.link.prop}
Let $(M, \phm)$ be an AC $\G$~manifold with rate $\nu = -3 + \e$, where $M$ has asymptotic cone $C$ with link $\Sigma$. Suppose that there are no solutions to~\eqref{gamma.eq} for any $\lambda \in (-3, \nu]$. Let $\Fs$ be a diffeomorphism of $\Sigma$ isotopic to and sufficiently close to the identity which preserves the nearly K\"ahler structure on $\Sigma$. Then there exists a diffeomorphism $\Fm$ of $M$ preserving $\phm$ which is asymptotic to $\Fs$ with rate $\nu + 1$.
\end{prop}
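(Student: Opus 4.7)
The plan is to first construct a diffeomorphism $\tilde F$ of $M$ that is asymptotic to $\Fs$ on the end but only approximately preserves $\phm$, and then correct it by composing with a gauge transformation in $\mathcal D_{\nu + 1}$ to obtain an exact automorphism. First I would define $\Fc : C \to C$ by $\Fc(r, \sigma) = (r, \Fs(\sigma))$; since $\Fs$ preserves the nearly K\"ahler structure $(\os, \Os)$ of $\Sigma$, the explicit formulae in Definition~\ref{g2conedefn} give $\Fc^* \phc = \phc$. Because $\Fs$ is close to the identity, I may write $\Fs = \exp(\Xs)$ for a small vector field $\Xs$ on $\Sigma$ and lift it to the radially constant vector field $\Xc$ on $C$. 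Using the asymptotic chart $h$, transfer $\Xc$ to a vector field on the end of $M$, multiply by a cutoff $\chi$ equal to $1$ outside a compact set and supported in the end, and set $\tilde F = \exp(\chi \, h_* \Xc)$. Then $\tilde F$ is a smooth diffeomorphism of $M$, isotopic to the identity, and equal to $h \circ \Fc \circ h^{-1}$ outside a larger compact set.

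Next I would verify that $\tilde \ph := \tilde F^* \phm$ lies in $\mathcal T_\nu$ and is $C^\infty_\nu$-close to $\phm$. Pullback preserves torsion-freeness, so $\tilde \ph$ is torsion-free. Writing $\phm = h_* \phc + \alpha$ on the end, where $\alpha = O(\varrho^\nu)$, the identity $\tilde F^* (h_* \phc) = h_* \phc$ on the far end yields $\tilde \ph - \phm = \tilde F^* \alpha - \alpha$ up to a compactly supported term. Since $|\Xc|_{\gc} = O(\varrho)$ and $|\nab{}^j \Xc|_{\gc} = O(\varrho^{1-j})$, while $|\nab{}^j \alpha|_{\gc} = O(\varrho^{\nu - j})$, a direct estimate on $\tilde F^* \alpha - \alpha$ from its leading term $\mathcal L_{\chi h_* \Xc} \alpha$ shows that $\tilde \ph - \phm$ is again of rate $\nu$. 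Thus $\tilde \ph \in \mathcal T_\nu$; moreover the weighted norm of $\tilde \ph - \phm$ is controlled by a $C^k$-norm of $\Xs$, so by taking $\Fs$ sufficiently close to the identity we can place the class $[\tilde \ph] \in \mathcal M_\nu$ in an arbitrarily small neighborhood of $[\phm]$.

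Finally I would invoke Proposition~\ref{moduli-immersionprop}. Since $\tilde F$ is isotopic to the identity, pullback by $\tilde F$ acts trivially on de Rham cohomology, so $[\tilde \ph] = [\phm]$ in $H^3(M, \R)$ and $[\Theta(\tilde \ph)] = [\tilde F^* \Theta(\phm)] = [\Theta(\phm)]$ in $H^4(M, \R)$. The hypothesis on the absence of solutions to~\eqref{graymaineq2} for $\lambda \in (-3, \nu]$, together with Theorem~\ref{mainthm}, ensures $\mathcal M_\nu$ is a smooth manifold near $[\phm]$. Proposition~\ref{moduli-immersionprop} then gives that the cohomology map is an immersion at $[\phm]$ and hence, by the inverse function theorem for immersions, injective on a neighborhood $U$ of $[\phm]$ in $\mathcal M_\nu$. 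For $\Fs$ sufficiently close to the identity the class $[\tilde \ph]$ lies in $U$, and equality of cohomology classes forces $[\tilde \ph] = [\phm]$ in $\mathcal M_\nu$; that is, there exists $G \in \mathcal D_{\nu + 1}$ with $G^* \phm = \tilde F^* \phm$. Setting $\Fm := \tilde F \circ G^{-1}$ gives $\Fm^* \phm = \phm$, and since $G^{-1}$ is generated by a vector field of rate $\nu + 1$ while $\tilde F$ coincides with $h \circ \Fc \circ h^{-1}$ far out on the end, $\Fm$ is asymptotic to $\Fs$ with rate $\nu + 1$, as required.

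The step I expect to be the main obstacle is the quantitative closeness in the second paragraph: translating the qualitative hypothesis ``$\Fs$ is close to the identity'' into $C^\infty_\nu$-smallness of $\tilde \ph - \phm$ that is strong enough to place $[\tilde \ph]$ inside the injectivity neighborhood from the immersion. This requires careful tracking of how the $C^k$ size of $\Xs$ controls the weighted norms of $\mathcal L_{\chi h_* \Xc} \alpha$ and the higher-order Taylor remainder in $\tilde F^* \alpha - \alpha$, so that the choice of ``sufficiently close to the identity'' for $\Fs$ can be made explicit.
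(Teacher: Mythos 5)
Your proposal is correct and follows essentially the same route as the paper: lift the Killing field $\Xs$ of $\Sigma$ to the cone, cut off to get a diffeomorphism $F$ of $M$ asymptotic to $\Fs$, note that $F^*\phm$ is an AC torsion-free structure with the same classes in $H^3(M,\R)\times H^4(M,\R)$, and use the immersion of Proposition~\ref{moduli-immersionprop} to conclude the two structures lie on the same $\mathcal D_{\nu+1}$-orbit, then compose with the resulting gauge diffeomorphism. The quantitative closeness issue you flag is exactly what the hypothesis ``$\Fs$ sufficiently close to the identity'' is there to absorb, and the paper handles it implicitly in the same way.
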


\begin{proof}
By Proposition~\ref{AC.extension.prop}, we have that $\mathcal M_{\nu}$ is smooth and equal to $\mathcal M_{-3 + \e}$, so we can apply Proposition~\ref{moduli-immersionprop} to $\mathcal M_{\nu}$. By hypothesis, we can write $\Fs = \exp(\Xs)$ for a Killing field $\Xs$ on $\Sigma$. The Killing field $\Xs$ naturally defines a Killing field $\Xc$ on the cone $C$. Define a smooth increasing cutoff function $\rho : (0, \infty) \rightarrow [0,1]$ such that $\rho(r) = 0$ for $r \in (0, R]$ and $\rho(r) = 1$ for $r \geq R+1$. Using the notation of Definition~\ref{ACdefn}, we can then define a vector field $X$ on $M$ such that $h_*(\rho \Xc) = X$ on $h \bigl( (R, \infty) \times \Sigma \bigr) = M \setminus L$ and which vanishes on the compact subset $L$ of $M$. Finally, we let $F = \exp(X)$.

Now, since $\phm$ is asymptotic to $\phc$ and $F$ is asymptotic to $\Fc = \exp(\Xc)$, we see that $F^* \phm$ is asymptotic to $\Fc^* \phc = \phc$ with rate $\nu$. Moreover, since $F$ is isotopic to the identity we see that $[F^*\phm] = [\phm]$ and $[\Theta(F^*\phm)] = [\Theta(\phm)]$. Using Proposition~\ref{moduli-immersionprop} we find that the orbits of $F^* \phm$ and $\phm$ under $\mathcal{D}_{\nu+1}$ in $\mathcal M_{\nu}$ are equal. Thus there exists $\widetilde F \in \mathcal D_{\nu+1}$ such that $\widetilde F^* (F^* \phm) = \phm$. Since $\widetilde F$ is asymptotic to the identity and $F$ is asymptotic to $\Fs$ we can set $F_M = F \circ \widetilde F$.
\end{proof}

\begin{rmk} \label{conlon.hein.rmk}
A similar but slightly weaker uniqueness result for AC~Calabi-Yau manifolds was obtained by Conlon--Hein in~\cite{CH}. They show that if a biholomorphism of an AC~Calabi-Yau manifold $M$ is asymptotic to an isometry of the cone, then it must be an isometry of $M$. 
\end{rmk}

\begin{cor} \label{cohom.one.cor}
Let $(M, \phm)$ be an AC~$\G$~manifold with rate $\nu < 0$ such that the link $\Sigma$ of the asymptotic cone is one of the three possible \emph{homogeneous} Gray manifolds, namely $\C \PR^3$, $\mathrm{SU}(3) / T^3$, or $S^3 \times S^3$. Then $(M, \phm)$ has cohomogeneity one.

Hence, the Bryant--Salamon $\G$~manifolds $\Lambda^2_- (S^4)$, $\Lambda^2_- (\C  \PR^2)$ and $\spi(S^3)$ are the unique AC $\G$ manifolds of rate $\nu < 0$ asymptotic to the cones on $\C \PR^3$, $\mathrm{SU}(3)/T^2$ and $S^3\times S^3$, respectively.
\end{cor}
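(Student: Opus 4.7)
The plan is to use Proposition~\ref{extend.aut.link.prop} to lift nearly K\"ahler automorphisms of the link $\Sigma$ to $\phm$-preserving diffeomorphisms of $M$, thereby producing a Lie subgroup of $\{F \in \mathrm{Diff}(M) : F^* \phm = \phm\}$ whose action on $M$ has principal orbits of codimension one. Since the identity component $G$ of $\mathrm{Aut}(\Sigma, J, \omega, \Omega)$ acts transitively when $\Sigma$ is one of the three homogeneous Gray manifolds, this will yield cohomogeneity one. The second assertion will then follow by invoking the Cleyton--Swann~\cite{ClSw} classification of cohomogeneity-one $\G$~manifolds.

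First I would verify that Proposition~\ref{extend.aut.link.prop} applies. Because $(M, \phm)$ is AC of rate $\nu < 0$ it is also AC of every larger rate, so choosing any $\e' \in (\max(0, \nu + 3), 3)$ we may view $M$ as AC of rate $\nu' = -3 + \e' \in (\nu, 0)$. By Proposition~\ref{grayspectrumfinalprop}, for each of the three homogeneous Gray manifolds the Hermitian Laplace operator $\bar{\Delta}$ admits no coclosed primitive $(1,1)$-eigenform with eigenvalue in the open interval $(0, 12)$; since $\lambda \mapsto (\lambda + 3)(\lambda + 4)$ is a bijection $(-3, 0) \to (0, 12)$, this rules out solutions of~\eqref{graymaineq2} for every $\lambda \in (-3, \nu']$, so the hypotheses of Proposition~\ref{extend.aut.link.prop} hold at rate $\nu'$.

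Next, applying the proposition to elements of $G$ sufficiently close to $\mathrm{id}_\Sigma$ produces lifts $\Fm \in \mathrm{Diff}(M)$ with $\Fm^* \phm = \phm$ asymptotic to $\Fs$ with rate $\nu' + 1$. The group of such $\phm$-preserving diffeomorphisms is a finite-dimensional Lie group since it sits inside $\mathrm{Isom}(M, g_\phm)$, and I would show that these lifts combine into a Lie subgroup $K^0$ whose asymptotic-evaluation into $G$ surjects onto a neighbourhood of $\mathrm{id}_\Sigma$. This forces $\dim K^0 \geq \dim G \geq 6$. Since the $K^0$-action near infinity pushes forward under $h$ to the transitive $G$-action on each cross section $\{r\} \times \Sigma$, the principal $K^0$-orbits on the end are six-dimensional, and $(M, \phm)$ is of cohomogeneity one. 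In the two cases $\Sigma = \C \PR^3$ and $\Sigma = \mathrm{SU}(3)/T^2$, the Cleyton--Swann classification then singles out $\Lambda^2_-(S^4)$ and $\Lambda^2_-(\C \PR^2)$ as the only possible $M$, yielding the uniqueness statement.

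The hard part will be the assembly of the individual lifts $\Fm$ into an honest Lie subgroup of the correct dimension. This reduces to a rigidity claim: two $\phm$-preserving diffeomorphisms asymptotic to the same element of $G$ must coincide in a neighbourhood of the identity. Infinitesimally the required statement is that a Killing field $Y$ on $M$ preserving $\phm$ with $|Y| = O(\varrho^{\nu + 1})$ on the end must vanish. Because $g_\phm$ is Ricci-flat, the $1$-form dual to $Y$ is harmonic, and Lemma~\ref{bochnerlemma} immediately forces $Y = 0$ whenever $\nu + 1 < -\tfrac{5}{2}$. When $\nu$ is close to $0$ this direct Bochner argument fails; in that regime I would first apply Proposition~\ref{excludeextensionprop} to exclude any nontrivial homogeneous asymptotic model for $Y$ on the cone in the excluded range of rates, thereby upgrading the decay of $Y$ to one at which Lemma~\ref{bochnerlemma} applies. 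Once this rigidity is in hand, the asymptotic-evaluation map becomes injective near the identity, upgrading the local lifting to a local Lie group isomorphism onto a neighbourhood of the identity in $G$ and completing the argument.
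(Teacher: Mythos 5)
Your proposal follows essentially the same route as the paper: Proposition~\ref{grayspectrumfinalprop} rules out solutions of~\eqref{graymaineq2} for all $\lambda \in (-3,0)$, so Proposition~\ref{extend.aut.link.prop} lifts nearly K\"ahler automorphisms of the link to $\phm$-preserving diffeomorphisms of $M$, the homogeneity of $\Sigma$ then gives cohomogeneity one, and uniqueness follows from the Cleyton--Swann classification (legitimately here, since the relevant groups $\mathrm{Sp}(2)$ and $\SUth$ are simple; recall the classification in~\cite{ClSw} is only for simple-group actions, which is why the paper does not claim uniqueness for $\spi(S^3)$). Your adjustment of the rate, replacing $\nu$ by $\nu' = -3+\e' \in (\nu,0)$ so that the eigenvalue $(\lambda+3)(\lambda+4)$ never reaches $12$ --- the one value not excluded for $\SUth/T^2$ --- is exactly the point that needs care, and you handle it correctly.

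The one issue is the ``rigidity'' claim you single out as the hard part. First, it is not actually needed: to conclude $\dim K^0 \geq \dim G \geq 6$ you only need the boundary-value homomorphism to hit a neighbourhood of the identity in the connected group $G$ (its image is then an open, hence closed, hence full subgroup), and injectivity of the lifting plays no role in bounding the orbit dimension near infinity. Second, your proposed proof of that claim has a gap when $\nu \in (-1,0)$: there the Killing field $Y$ measuring the difference of two lifts is only $O(\varrho^{\nu+1})$ with $\nu+1 > 0$, and Proposition~\ref{excludeextensionprop} excludes homogeneous harmonic $1$-forms only for orders in $[-5,0]$, so it cannot by itself upgrade the decay of $Y$ into the range where Lemma~\ref{bochnerlemma} applies; positive rates in $(0,1)$ correspond to link eigenvalues in $(6,14)$, which Obata's theorem does not rule out. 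So either restrict that auxiliary uniqueness statement to $\nu \leq -1$, prove it by other means, or --- simplest --- drop it, since the corollary does not require it.
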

\begin{proof}
It follows from Proposition~\ref{grayspectrumfinalprop} that there are no solutions to~\eqref{gamma.eq} for any $\lambda \in (-3, 0)$ for these links. Hence Proposition~\ref{extend.aut.link.prop} applies to $(M, \phm)$. Consequently, any automorphism of the homogeneous nearly K\"ahler manifold can be extended to an automorphism of the AC~$\G$~manifold, so $M$ must have cohomogeneity one.

The cohomogeneity one $\G$~manifolds where the action is by a simple group are classified in \cite[Theorem 9.3]{ClSw}. The uniqueness of $\Lambda^2_- (S^4)$ and $\Lambda^2_- (\C \PR^2)$ amongst such cohomogeneity one $\G$~manifolds follows.

If $M$ is asymptotic to the cone on the homogeneous Gray manifold $S^3\times S^3$, then we know that $M$ has a cohomogeneity one action of $\mathrm{SU}(2)^3$. It then follows from work in~\cite{Brandhuber} that the Bryant--Salamon manifold $\spi(S^3)$ is the unique smooth complete $\G$~manifold with such a cohomogeneity one action.
\end{proof}

\subsection{Smoothness of the CS moduli space for certain cones} \label{smoothCSsec}

In this section we establish that the CS moduli space $\mathcal{M}_{\nu}$ is in fact \emph{smooth} if the singularities are all modeled on $\G$~cones satisfying certain conditions. This includes two of the known $\G$~cones over homogeneous nearly K\"ahler manifolds, and may include the third as well.  We also show that it is more natural to consider a certain \emph{reduced} CS moduli space, due to the potential presence of Killing fields on the nearly K\"ahler links at the singularities, which fits well with Proposition~\ref{extend.aut.link.prop}.  

As mentioned in Remark~\ref{finalmodulitmk}, the moduli space $\mathcal{M}_{\nu}$ will be smooth if $\mathcal{O}_{/ \mathcal{W}} = \{0\}$. It is therefore useful to establish an upper bound for the dimension of $\mathcal{O}_{/ \mathcal{W}}$ in order to determine sufficient conditions under which it vanishes.

On the $i^{\text{th}}$ cone $C_i$, consider the system of equations
\begin{equation} \label{graymaineq3}
\bar \Delta \gamma = (\lambda + 3)(\lambda + 4) \gamma, \qquad \dssi \gamma = 0, \qquad \gamma \text{\, is of type $\Lambda^{(1,1)}_0$},
\end{equation}
and let $m_{\Sigma_i}(\lambda)$ be the dimension of the space of solutions to~\eqref{graymaineq3} for the cone $C_i$.

\begin{prop} \label{OW.dim.prop2}
Let $(M, \ph)$ be a CS $\G$~manifold with rate $\nu > 0$ near $0$. Then we have
\begin{equation*}
\dim \mathcal{O}_{/ \mathcal{W}} \, \leq \, \sum_{i=1}^n \sum_{\lambda \in (-3,0]} m_{\Sigma_i}(\lambda)+n-1.
\end{equation*}
\end{prop}
\begin{proof}
Throughout this proof, $\lapci^1$ denotes the Hodge Laplacian $\lapci$ on $C_i$ restricted to $1$-forms. From Remark~\ref{noobstructionrmk}, we have that $\mathcal{O}_{\lambda} = \{0\}$ for $\lambda<-3$.  Moreover, we observed in Remark~\ref{kernel.only.rmk} that the closed and coclosed forms of rate $-3$ on $C_i$ do not contribute to the cokernel of $\dd$, so we find that that $\mathcal{O}_{\lambda} = \{0\}$ for $\lambda < -3 + \e$ for $\e > 0$ sufficiently small.  Hence, Theorem~\ref{Pindexchangethm} implies we have a surjective map
\begin{equation} \label{surjective.coker.map.eq}
\begin{aligned}
\vartheta : \oplus_{i=1}^n \oplus_{\lambda \in (-3,0]} \mathcal{K}(\lambda)_{\dd_{C_i}} & \to \coker\dd_{l,\nu}, \\
(\gamma_1, \ldots, \gamma_n) & \mapsto \sum_{i=1}^n d^* (\chi_i \gamma_i) \; \text{modulo } \im \dd_{l,\nu},
\end{aligned}
\end{equation}
where $\gamma_i \in \oplus_{\lambda \in (-3,0]} \mathcal{K}(\lambda)_{\dd_{C_i}}$ and $\chi_i$ is a cut-off function which is $1$ on the $i^{\text{th}}$ end.  (In equation~\eqref{surjective.coker.map.eq} we have omitted pullbacks from the cones $C_i$ to the ends for simplicity and shall continue to do so throughout this proof.) We therefore also obtain a surjective map
\begin{equation} \label{surjective.coker.map.eq.2}
\vartheta : \oplus_{i=1}^n \oplus_{\lambda\in (-3,0]} \mathcal{K}(\lambda)_{\dd_{C_i}} \to \coker(\pi_{/\mathcal{W}} \circ \dd_{l,\nu}).
\end{equation}
We want to use the maps~\eqref{surjective.coker.map.eq} and~\eqref{surjective.coker.map.eq.2} to bound the dimension of $\mathcal{O}_{\nu}$ and $\mathcal{O}_{/\mathcal{W}}$, respectively.

Using Proposition~\ref{excludeextensionrefinedprop}, Theorem~\ref{Pindexchangethm}, and Proposition~\ref{grayspectrumprop1} we see from \eqref{surjective.coker.map.eq} that
\begin{equation} \label{smoothCStempeq}
\begin{aligned}
\dim \mathcal{O}_{\nu} \, & \leq \, \sum_{i=1}^n \sum_{\lambda \in (-3,0]} \dim \mathcal{K}(\lambda)_{\ddci} \\
 & = \, \sum_{i=1}^n \sum_{\lambda \in (-3,0]} m_{\Sigma_i}(\lambda) + \sum_{i=1}^n \sum_{\lambda \in (-1,0]} \dim \mathcal{K}(\lambda+1)_{\lapci^1}-\sum_{i=1}^n \dim(\mathrm{Kill} \,\Sigma_i).
\end{aligned}
\end{equation}
We deduce that forms in $\mathcal{O}_{\nu}$ correspond either to coclosed primitive $(1,1)$-forms on $\Sigma_i$ which are eigenforms for $\bar{\Delta}$ with eigenvalue in $(0, 12]$ or to homogeneous harmonic $1$-forms of order $\lambda + 1 \in (0,1]$ not arising from Killing fields on $\Sigma_i$.

Observing that there are no harmonic functions on $C_i$ of rate $\lambda + 1 \in (0,1]$ by Proposition~\ref{excludeextensionprop}, or by comparing the results of Propositions~\ref{excludeextensionrefinedprop} and~\ref{modifieddiracexcludeprop}, the space $\mathcal{K}(\lambda + 1)_{\mdiracci}$ is a subspace of $\mathcal{K}(\lambda + 1)_{\lapci^1}$. It therefore follows from Proposition~\ref{V.dim.prop} that there is a one-to-one correspondence between a subspace of the homogeneous harmonic $1$-forms on $C_i$ of order $\lambda + 1 \in (0,1]$ and a codimension 1 subspace of $(\mathcal{V}_{\ph})_{l,\nu}$. This correspondence can be understood as follows. If we are given, for $i=1,\ldots,n$, a $1$-form $X_i$ on $C_i$ of order $\lambda + 1$ in $\ker \mdiracci$, which is thus harmonic, we then have a $1$-form $X = \sum_{i=1}^n\chi_i X_i$ on $M$. If there is no element of $\ker \mdirac$ on $M$ asymptotic to $X$ on the ends, then $\pi_{1+7} d*( X \wedge \ps )$ defines an element  in $(\mathcal{V}_{\ph})_{l,\nu}$, in the sense that we can choose the codimension $1$ subspace of $(\mathcal{V}_{\ph})_{l,\nu}$ to consist of such elements. The remaining form generating $(\mathcal{V}_{\ph})_{l,\nu}$ can be taken to be $\pi_{1+7}\zeta$ where 
\begin{equation} \label{phm.obs.eq} 
\zeta = \ph_M - \sum_{i=1}^n \frac{1}{3} d (\chi_i \st ( rdr \wedge \psci ) ),
\end{equation}
since $\ph_M$ is asymptotic to $\ph_{C_i} = \frac{1}{3} d (\st ( rdr \wedge \psci ) )$ on each end.

Proposition~\ref{grayspectrumprop1} shows that a homogeneous harmonic $1$-form $X_i$ on $C_i$ of order $\lambda + 1 \in (0,1]$ defines a homogeneous closed and coclosed $3$-form $\dci *(X_i \wedge \psci)$ of order $\lambda \in (-1,0]$ on $C_i$, which must then in turn define a form which either subtracts from the kernel or adds to the cokernel of the operator $\dd$ as the rate crosses $\lambda$ by Theorem~\ref{Pindexchangethm}, \emph{if it is non-zero}. Note that, by Propositions~\ref{Killing-cone-prop} and~\ref{NK-Killing-prop}, we have $\dci*(X_i\wedge\psci) = 0$ if and only if $X_i$ is dual to a Killing field on $\Sigma_i$, which means that $X_i$ does not affect the change of the kernel or cokernel of $\dd$ if and only if $X_i$ is dual to a Killing field on $\Sigma_i$.  

Suppose that
\begin{equation*}
\begin{aligned}
\gamma \, = \, (\gamma_1,\ldots,\gamma_n) & \in \oplus_{i=1}^n \Big\{ \dci \st (X_i \wedge \psci) \, ; \, X_i \in \oplus_{\lambda \in (-1,0]} \mathcal{K}(\lambda+1)_{\mdiracci} \Big\} \\
& \qquad \subseteq \oplus_{i=1}^n \oplus_{\lambda\in (-3,0]} \mathcal{K}(\lambda)_{\dd_{C_i}}
\end{aligned}
\end{equation*}
and let $\zeta = d \st (X \wedge \ps)$ be essentially the pullback of $\gamma$ to $M$, where $X = \sum_{i=1}^n \chi_i X_i$ as above. Observe that $d^* \zeta = \vartheta(\gamma)$ modulo the image of $\dd$. Recall that by Lemma~\ref{VWisom.lem}, we have an isomorphism $P : (\mathcal{V}_{\ph})_{l, \nu} \to (\mathcal{W}_{\ph})_{l-1, \nu-1}$ given by $P(\beta) = \frac{7}{3} d^* \pi_1 (\beta) + 2 d^* \pi_7 (\beta)$. Proposition~\ref{infinitesimaldiffeosprop} shows that
\begin{equation} \label{W.char.eq}
d^* \zeta \, = \, P (\pi_{1+7} \zeta) \, = \, \frac{7}{3} d^* \pi_1 (\zeta) + 2 d^* \pi_7 (\zeta).
\end{equation}
We have seen above that $\pi_{1+7} \zeta \in (\mathcal{V}_{\ph})_{l, \nu}$ and thus, by~\eqref{mathcalWdefneq}, we see that $d^* \zeta \in (\mathcal{W}_{\ph})_{l-1, \nu-1}$. We deduce that the map in~\eqref{surjective.coker.map.eq.2} applied to $\gamma$, given by
\begin{equation*}
\vartheta : \gamma \mapsto \pi_{/\mathcal{W}} \circ d^* \zeta \qquad \text{modulo } \im\dd_{l,\nu} + (\mathcal{W}_{\ph})_{l-1, \nu-1},
\end{equation*}
is trivial for all choices of $\gamma$.

From these considerations, since we already accounted for the fact that Killing fields on $\Sigma_i$ do not affect the index of $\dd$ and we know that $\ph_M$ itself must subtract from the kernel of $\dd$ as we cross rate $0$, we deduce that 
\begin{align*}
\dim \mathcal{O}_{/ \mathcal{W}} & \leq \, \sum_{i=1}^n \sum_{\lambda \in (-3,0]} m_{\Sigma_i}(\lambda) + \sum_{i=1}^n \sum_{\lambda \in (-1,0]} \dim \mathcal{K}(\lambda + 1)_{\lapci^1} - 1 - \sum_{i=1}^n \sum_{\lambda \in (-1,0]} \dim \mathcal{K}(\lambda+1)_{\mdiracci}.
\end{align*}
On the other hand, Propositions~\ref{excludeextensionrefinedprop} and~\ref{modifieddiracexcludeprop} show that 
\begin{equation} \label{smoothCStempeq2}
\dim \mathcal{K}(\lambda+1)_{\lapci^1} - \dim \mathcal{K}(\lambda+1)_{\mdiracci} \, = \, \left\{ \begin{array}{rl} 0 & \lambda \in (-1,0), \\ 1 & \lambda = 0. \end{array} \right.
\end{equation}
The result then follows.
\end{proof}

Proposition~\ref{OW.dim.prop2} then motivates the following definition.
\begin{defn} \label{CSgoodsingsdefn}
Let $M$ be a CS $\G$~manifold. We say that $M$ has \emph{good singularities} if for each link $\Sigma_i$ of the corresponding asymptotic cone $C_i$, the system of equations~\eqref{graymaineq3} has no nontrivial solutions for $\lambda \in (-3, 0]$. Specifically, $M$ has good singularities if  $m_{\Sigma_i}(\lambda) = 0$ for all $\lambda \in (-3,0]$.
\end{defn}

\begin{thm} \label{smoothCSthm}
Let $M$ be a CS $\G$~manifold with good singularities, and choose $\nu > 0$ close enough to zero so that there are no other critical rates between $0$ and $\nu$. Then the moduli space $\mathcal{M}_{\nu}$ is \emph{smooth} with dimension
\begin{equation} \label{smoothCSdimeq}
\dim\mathcal{M}_{\nu} \, = \, \dim (\im (H_{\mathrm{cs}}^3 \to H^3)) + \sum_{i=1}^n \dim ( \mathrm{Kill} \, \Sigma_i),
\end{equation}
where $\mathrm{Kill} \, \Sigma_i$ is the space of Killing fields on $\Sigma_i$.
\end{thm}

\begin{rmk} \label{smoothCSrmk}
The formula~\eqref{smoothCSdimeq} for $\dim \mathcal{M}_{\nu}$ in the case of \emph{good singularities} differs from the expected dimension in equation~\eqref{CSvdimeq}. Specifically, combining equations~\eqref{CSvdimeq},~\eqref{smoothCStempeq},~\eqref{smoothCStempeq2} and \eqref{smoothCSdimeq} in the case of good singularities gives
\begin{equation*}
\vdim \mathcal M_{\nu} \, = \, \dim (\im (H_{\mathrm{cs}}^3 \to H^3 )) + \sum_{i=1}^n \dim ( \mathrm{Kill} \, \Sigma_i) - (n-1) \,=\,  \dim\mathcal{M}_{\nu}-(n-1).
\end{equation*}
The point is that, in the case of good singularities, we show in the proof of Theorem~\ref{smoothCSthm} below that the obstruction space $\mathcal{O}_{/ \mathcal{W}}$ has dimension  $ n-1$ and that these obstructions are ineffective.  Hence $\vdim\mathcal{M}_{\nu}$ differs from $\dim\mathcal{M}_{\nu}$ precisely by $\dim\mathcal{O}_{/\mathcal{W}}$.
\end{rmk}

\begin{proof}[Proof of Theorem~\ref{smoothCSthm}]
Recall that the hypothesis about good singularities means that $m_{\Sigma_i}(\lambda) = 0$ for all $\lambda \in (-3,0]$, and hence $\dim \mathcal{O}_{/ \mathcal{W}} \leq n-1$ by Proposition~\ref{OW.dim.prop2}. We will first argue that in this case, $\dim \mathcal{O}_{/ \mathcal{W}} = n - 1$ and that the elements of $\mathcal{O}_{/ \mathcal{W}}$, viewed as $2$-forms, lie in $\Omega^2_7$.

We see from Proposition~\ref{excludeextensionrefinedprop} and the proof of Proposition~\ref{OW.dim.prop2} that these $n-1$ forms which either subtract from the kernel of $\dd_{l,\lambda}$ acting on $3$-forms or add to $\mathcal{O}_{\lambda}$ as $\lambda$ crosses $0$ arise from $\dim \mathcal{K}(1)_{\Delta_{C_i}}$, specifically from the $1$-forms $r dr = d \bigl( \frac{r^2}{2} \bigr)$ on the $C_i$'s.  The corresponding 3-forms are simply $\phci$ on $C_i$.

Notice that $\phm$ itself is $O(1)$ and thus lies in $\ker \dd_{-\e}$ for any $\e > 0$ but not in $\ker \dd_{\nu}$. Moreover, $\phm$ is asymptotic to $\phci = \frac{1}{3} d \bigl( \st (r dr \wedge \psci) \bigr)$ at each singularity. Thus, if we choose the same $1$-form $rdr$ for \emph{every} $C_i$, this corresponds to $\phm$ which subtracts from the kernel. We now want to show that the other $n - 1$ forms one obtains from choosing non-identical multiples of $r dr$ on the $C_i$'s lead to elements of $\mathcal{O}_{\nu}$.

Suppose that $\eta$ is a closed and coclosed 3-form on $M$ which is of order $0$ and asymptotic to $c_i\phci$ on the $i$th end, where $c_i$ are constants. We know that $\pi_1\eta$ is harmonic, since $\eta$ is harmonic, and hence by Lemma~\ref{bochnerlemma} we have that $\pi_1\eta=c$ constant. We deduce that  $c_i=c$ for all $i$, which means that the forms  $c_i\phci$ can only define an element of the kernel of $\dd$ if all of the $c_i$ are equal.

Hence we find that the $(n-1)$-dimensional space of $1$-forms one obtains by taking non-identical multiples of $r dr$ on each $C_i$ defines an $(n-1)$-dimensional subspace $\mathcal N$ of $\mathcal{O}_{\nu}$. Explicitly, we may write
\begin{equation*}
\mathcal{N} \, = \, \left\{ \sum_{i=1}^n c_i d^* d \big( \chi_i \st (r dr \wedge \psci) \big) \, ; \,c_i \in \mathbb{R} , \, \sum_{i=1}^n c_i = 0 \right\}\subseteq\Omega^2_7,
\end{equation*}
where $\chi_i$ is a cut-off function which is $1$ on the $i^{\text{th}}$ end, and we omit the pullbacks from the $C_i$ to the ends to simplify the presentation. We now want to show that $\mathcal{N}$ is transverse to $\im\dd_{l,\nu}+(\mathcal{W}_{\ph})_{l-1,\nu-1}$. In the proof of Proposition~\ref{OW.dim.prop2}, we observed that we could take $(\mathcal{V}_{\ph})_{l,\nu}$ to consist of $\pi_{1+7}\zeta$ where $\zeta$ lies in the span of the set consisting of the $d( \st (X \wedge \ps) )$ for certain $X \in \Omega^1_{l+1, \nu+1}$ together with the form in equation~\eqref{phm.obs.eq}. We see by Proposition~\ref{infinitesimaldiffeosprop} that all such $\zeta$ lie in the kernel of $d \circ L_{\ph}$ and hence $\zeta$ satisfies~\eqref{W.char.eq}, from which it follows that the $d^* \zeta$ can be taken to span $(\mathcal{W}_{\ph})_{l-1, \nu-1}$ by~\eqref{mathcalWdefneq}. Now suppose that
\begin{equation*}
\sum_{i=1}^n c_i d^* d \big( \chi_i \st (r dr \wedge \psci) \big) \in \mathcal{N} \cap (\im \dd_{l,\nu} + (\mathcal{W}_{\ph})_{l-1, \nu-1}).
\end{equation*}
Then there exists $\eta \in \Omega^3_{l, \nu}$ satisfying $d \eta = 0$ and $d^* \zeta \in (\mathcal{W}_{\ph})_{l-1, \nu-1}$ so that
\begin{equation*}
\sum_{i=1}^n c_i d^* d \big( \chi_i \st (r dr \wedge \psci) \big) \, = \, d^* \eta + d^* \zeta.
\end{equation*}
Therefore, since both $\eta$ and $\zeta$ are closed, we see that
\begin{equation*}
\gamma \, = \, \sum_{i=1}^n c_i d \big( \chi_i \st (r dr \wedge \psci) \big) - \eta - \zeta
\end{equation*}
is a closed and coclosed $3$-form so that $\pi_1\gamma$ is asymptotic to $3 c_i \phci$ on the $i^{\text{th}}$ end. As above we deduce that $c_i = c$ is constant for all $i$, which forces $c_i = 0$ for all $i$. In conclusion, $\mathcal N \subseteq \Omega^2_7$ is an $(n-1)$-dimensional subspace of $\mathcal{O}_{/ \mathcal{W}}$, so $\dim \mathcal{O}_{/ \mathcal{W}} = n-1$ and $\mathcal{O}_{/ \mathcal{W}}=\mathcal{N}$.

We now want to argue that these obstructions in $\mathcal{O}_{/ \mathcal{W}}$ are actually ineffective in this setting. Consider the map $F : \mathcal{U} \to \mathcal{Y}$ given in~\eqref{Fdefneq}. We claim that if 
\begin{equation} \label{obs.ineffective.eq}
F(\eta) + d^* \zeta + \xi \, = \, 0
\end{equation}
for some $d^* \zeta \in (\mathcal{W}_{\ph})_{l-1, \nu-1}$, then $\xi \in \mathcal{N} = \mathcal{O}_{/ \mathcal{W}}$, then $\xi =0$. To establish this claim we follow a similar approach to the proof of Theorem~\ref{onetoonethm}. Since $d^*\zeta$ and $\xi$ are 2-forms we have from~\eqref{obs.ineffective.eq} and~\eqref{Fdefneq} that $d\eta = 0$. We see from~\eqref{onetooneeq0} that
\begin{equation*}
\st d (\Theta(\ph + \eta)) \, = \, -\frac{7}{3} d^* \pi_1(\eta) - 2 d^* \pi_7(\eta) + F(\eta).
\end{equation*}
Using~\eqref{obs.ineffective.eq} and the fact that $\zeta$ satisfies~\eqref{W.char.eq}, we deduce that
\begin{equation*}
\st d (\Theta(\ph + \eta)) \, = \, -\frac{7}{3} d^*\pi_1(\zeta + \eta) - 2 d^* \pi_7(\zeta + \eta) - \xi.
\end{equation*}
Since $\mathcal{N} \subseteq \Omega^2_7$ and consists of coexact forms, we may write $\st \xi = dh \wedge \ps$ for some function $h$ with $dh \in \Omega^1_{l-1, \nu-1}$ which tends to constants on each end \emph{which are not all equal, unless $h = 0$} (by definition of the space $\mathcal{N}$). We also know that $\pi_1(\eta + \zeta) = f \ph$ for some $f \in \Omega^0_{l, \nu}$ and $\pi_7(\eta + \zeta) = \st (X \wedge \ph)$ for some $X \in \Omega^1_{l, \nu}$. Therefore, we have
\begin{equation*}
d (\Theta(\ph + \eta)) \, = \, -\left( \frac{7}{3}df + dh \right) \wedge \ps - 2 dX \wedge\ph
\end{equation*}
and thus we may apply Lemma~\ref{threezeroeslemma} (since $\nu>0$) to deduce that $\frac{7}{3}df + dh = 0$ (and that $dX=0$). Thus, $\frac{7}{3}f + h = c$ is constant, but $f$ tends to $0$ on each end whereas $h$ tends to different constants on at least two ends, if it is nonzero. So we must have $f = h = 0$. We conclude that $\xi = 0$ in~\eqref{obs.ineffective.eq} as desired. We have therefore shown that $(\pi_{/ \mathcal{W}} \circ F)(\eta) + \xi = 0$ for $\eta \in \mathcal{U}$ and $\xi \in \mathcal{O}_{/ \mathcal{W}} \subseteq \Omega^2_7$ if and only if $\xi = 0$ and $(\pi_{/ \mathcal{W}} \circ F) (\eta) = 0$.

Recall that the set of $\eta$ such that $(\pi_{/ \mathcal{W}} \circ F) (\eta) = 0$ describes a neighbourhood of $[\ph]$ in $\mathcal{M}_{\nu}$ by Theorem~\ref{finalmodulithm}. Our discussion above thus shows that
\begin{equation*}
\hat{F}_{/ \mathcal{W}}^{-1}(0) \, = \, \{ \eta + \xi \, ; \, (\pi_{/ \mathcal{W}} \circ F)(\eta) + \xi = 0 \} \, = \, \{ \eta \, ; \, (\pi_{/ \mathcal{W}} \circ F)(\eta) = 0 \},
\end{equation*}
describes $\mathcal{M}_{\nu}$ near $[\ph]$. Moreover, Corollary~\ref{widehatFspacecor} gives us that $\hat{F}_{/ \mathcal{W}}^{-1}(0)$ is smooth and has dimension given by $\dim \mathcal{K}_{/ \mathcal{W}}$.

We deduce that $\mathcal{M}_{\nu}$ is smooth near $[\ph]$ and has dimension equal to $\dim \mathcal{K}_{/ \mathcal{W}}$, which by Corollary~\ref{vdimfinalcor} and the fact that in this case $\dim \mathcal{O}_{/ \mathcal{W}} = n - 1$, is
\begin{equation*}
\dim (\im (H_{\mathrm{cs}}^3 \to H^3 )) - \sum_{i=1}^n \sum_{\lambda \in (-3 , 0]} \dim \mathcal K(\lambda)_{\ddci} + 1 + \sum_{i=1}^n \sum_{\lambda \in (-1,0]} \dim \mathcal{K}(\lambda + 1)_{\mdiracci} + (n - 1).
\end{equation*}
Propositions~\ref{modifieddiracexcludeprop} and~\ref{grayspectrumprop1}  show that for $\lambda \in (-3,-1]$ we have
\begin{equation*}
\dim \mathcal{K}(\lambda)_{\ddci} \, = \, m_{\Sigma_i}(\lambda) \quad \text{and} \quad \dim \mathcal{K}(\lambda+1)_{\Delta_{\mdiracci}} \, = \, 0,
\end{equation*}
and for $\lambda \in (-1,0)$ we have 
\begin{equation*}
\dim \mathcal{K}(\lambda)_{\ddci} \, = \, m_{\Sigma_i}(\lambda) + \dim \mathcal{K}(\lambda+1)_{\Delta_{C_i}}=m_{\Sigma_i}(\lambda) + \dim \mathcal{K}(\lambda+1)_{\mdiracci}.
\end{equation*}
Finally, for $\lambda = 0$, Propositions~\ref{excludeextensionrefinedprop},~\ref{modifieddiracexcludeprop}, and~\ref{grayspectrumprop1} show that
\begin{equation*}
\dim \mathcal{K}(0)_{\ddci} \, = \,  m_{\Sigma_i}(0) + \dim \mathcal{K}(1)_{\Delta_{C_i}} - \dim \mathrm{Kill} \, \Sigma_i \quad \text{and} \quad \dim \mathcal{K}(1)_{\mdiracci} \, = \, \dim \mathcal{K}(1)_{\Delta_{C_i}} - 1.
\end{equation*}
Since we assume that $m_{\Sigma_i}(\lambda) = 0$ for $i = 1,\ldots, n$ and $\lambda \in (-3,0]$, we have that
\begin{equation*}
\sum_{\lambda \in (-3 , 0]} \dim \mathcal K(\lambda)_{\ddci} \, = \, \sum_{\lambda \in (-1,0)} \dim \mathcal{K}(\lambda+1)_{\mdiracci} + \dim \mathcal{K}(1)_{\Delta_{C_i}} - \dim \mathrm{Kill} \, \Sigma_i
\end{equation*}
and
\begin{equation*}
\sum_{\lambda \in (-1,0]} \dim \mathcal{K}(\lambda + 1)_{\mdiracci} \, = \, \sum_{\lambda \in (-1,0)} \dim \mathcal{K}(\lambda + 1)_{\mdiracci} + \dim \mathcal{K}(1)_{\Delta_{C_i}} - 1.
\end{equation*}
Combining these formulas gives the result.
\end{proof}

Theorem~\ref{smoothCSthm} and Proposition~\ref{extend.aut.link.prop} give the indication that we should actually consider a \emph{reduced} moduli space defined as follows.
\begin{defn} \label{reduced.moduli.defn}
Let $(M, \ph)$ be a CS $\G$ manifold. Recall that $\mathcal{T}_{\nu}$ is the space of torsion-free $\G$ structures on $M$ which differ from $\ph$ by a $3$-form in $C^{\infty}_{\nu}$. Let $\breve{\mathcal{D}}_{\nu+1}$ be the subgroup of $\text{Diff}^0(M)$ generated by vector fields which are asymptotic with rate $\nu+1$ on each end to a Killing field on $\Sigma_i$. Since Killing fields on $\Sigma_i$ preserve the $\G$ form on $C_i$ by Proposition~\ref{NK-Killing-prop}, we see that $\breve{\mathcal{D}}_{\nu+1}$ acts on $\mathcal{T}_{\nu}$. The reduced moduli space $\breve{\mathcal{M}}_{\nu}$ is defined to be the quotient $\mathcal{T}_{\nu} / \breve{\mathcal{D}}_{\nu+1}$. Notice that since $\mathcal{D}_{\nu+1} \subseteq \breve{\mathcal{D}}_{\nu+1}$, we can view $\breve{\mathcal{M}}_{\nu}$ as a quotient of $\mathcal{M}_{\nu}$.
\end{defn}

We can now show the following.
\begin{cor} \label{smoothCScor}
Let $M$ be a CS $\G$~manifold with rate $\nu > 0$ near $0$ such that $M$ has good singularities. Then the reduced moduli space $\breve{\mathcal{M}}_{\nu}$ is smooth with dimension $\dim (\im (H_{\mathrm{cs}}^3 \to H^3 ))$.
\end{cor}
\begin{proof}
Recall that we have a diffeomorphism $h_i : (0,\e) \times \Sigma_i \to S_i \subseteq M$, where $S_i$ is an open set in $M$ which is disjoint from the other ends of $M$, satisfying~\eqref{CSdefneq}. Let $X$ be a Killing field on $\Sigma_i$, which then defines a Killing field which we also call $X$ on $C_i$. We then have a vector field $(h_i)_*(X)$ on the $i^{\text{th}}$ end of $M$. If $\chi : M \to [0,1]$ is a smooth function which is $1$ on $h_i (0, \frac{\e}{2} )$ and $0$ on $M \setminus S_i$, then $Y = \chi (h_i)_* X$ is a vector field on $M$ such that $(h_i^{-1})_* Y = O(r)$ as $r \to 0$. Hence $Y \hk \ph \notin (\Omega^2_7)_{\nu+1}$ for any $\nu > 0$ but $d (Y \hk \ph) \in \mathcal{C}_{l,\nu}$ since $d (X \hk \phc) = 0$.

Suppose that $\pi_7 d^*d (Y \hk \ph) = 0$. Then $Y = 0$ by Lemma~\ref{bochnerlemma2}, which is a contradiction. Recalling the definition of $(\mathcal{E}_{\ph})_{l,\nu}$ from the proof of Theorem~\ref{infinitesimalslicethm}, we deduce that $\eta = d (Y \hk \ph)$ defines an element in $(\mathcal{E}_{\ph})_{l,\nu}$, so $\pi_{1+7} \eta \in (\mathcal{V}_{\ph})_{l,\nu}$. Moreover, by Proposition~\ref{infinitesimaldiffeosprop}, we have $d (L_{\ph} \eta) = 0$. Theorem~\ref{linearized.onetoonethm} then shows that
\begin{equation*}
d \eta \, = \, 0 \quad \text{and} \quad d^* \eta \, = \, \frac{7}{3} d^* \pi_1 \eta + 2 d^* \pi_7 \eta.
\end{equation*}
That is, $\eta \in \mathcal{K}_{/ \mathcal{W}}$.

We conclude that each Killing field on $\Sigma_i$ defines an element of $\mathcal{K}_{/ \mathcal{W}}$, and hence the Killing fields on $\Sigma_i$ for $i = 1, \ldots, n$ define a subspace $\mathcal{K}_1$ of $\mathcal{K}_{/ \mathcal{W}}$ of dimension $\sum_{i=1}^n \dim (\mathrm{Kill} \, \Sigma_i)$. Notice that $d (\Omega^2_7)_{\nu+1} \oplus \mathcal{K}_1$ is equal to the tangent space to $\breve{\mathcal{D}}_{\nu+1} \cdot \ph$ at $\ph$. Let $\breve{\mathcal{K}}$ be a direct complement of this subspace in $\mathcal{K}_{/ \mathcal{W}}$, which then has dimension $\dim (\im (H_{\mathrm{cs}}^3 \to H^3 ))$ by the proof of Theorem~\ref{smoothCSthm}. In fact, $\breve{\mathcal{K}}$ can be taken to be $\mathcal{H}^3_{-3+\varepsilon}$.

Recall that $\mathcal{M}_{\nu}$ is identified near $[\ph]$ with a neighbourhood of $0$ in $F_{/ \mathcal{W}}^{-1}(0)$, which is given as a graph $\Gamma$ of a map $G$ over an open subset $\mathcal{V}$ of $\mathcal{K}_{/ \mathcal{W}}$. Hence, the graph $\breve{\Gamma}$ of $G$ over $\mathcal{V} \cap \breve{\mathcal{K}}$ is a smooth submanifold of $F_{/ \mathcal{W}}^{-1}(0)$.  Since $\breve{\mathcal{K}}$ is a space of elements of $\mathcal{K}_{/ \mathcal{W}}$ tranverse to the orbit of $\ph$ under $\breve{\mathcal{D}}_{\nu+1}$, the graph $\breve{\Gamma}$ can be identified with a neighbourhood of $\breve{\mathcal{M}}_{\nu}$ near $[\ph]$, which then gives the result.
\end{proof}

\begin{cor} \label{smoothCSsomeconescor}
Let $M$ be a CS $\G$~manifold of rate $\nu = 0 + \e$, all of whose conical singularities are modeled on $\G$~cones whose links are either $\C \PR^3$ or $S^3 \times S^3$. Then both the moduli space $\mathcal{M}_{\nu}$ and the reduced moduli space $\breve{\mathcal{M}}_{\nu}$ of CS deformations of $M$ with rate $\nu$ are smooth manifolds.
\end{cor}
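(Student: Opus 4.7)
The plan is to deduce Corollary~\ref{smoothCSsomeconescor} directly from Theorem~\ref{smoothCSthm} by verifying that any CS $\G$~manifold whose singularities are modeled on the cones over $\C\PR^3$ or $S^3 \times S^3$ satisfies the hypothesis of having \emph{good singularities} in the sense of Definition~\ref{CSgoodsingsdefn}. Once this is checked, Theorem~\ref{smoothCSthm} immediately yields that $\breve{\mathcal M}'_{\nu}$ is a smooth manifold (of the predicted dimension), with no further work required.

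To verify the good singularities hypothesis, I would translate equation~\eqref{graymaineq3} on each link $\Sigma_i$ into a statement about the spectrum of the Hermitian Laplace operator $\bar{\Delta}_i$. By the definition of~\eqref{graymaineq3}, a nontrivial solution for some $\lambda \in (-3, 0]$ corresponds exactly to a nonzero coclosed section $\gamma$ of $\Lambda^{(1,1)}_0 (T^* \Sigma_i)$ satisfying $\bar{\Delta}_i \gamma = \mu \gamma$ with $\mu = (\lambda + 3)(\lambda + 4)$. As $\lambda$ ranges over $(-3, 0]$, the eigenvalue $\mu$ ranges over the interval $(0, 12]$, with $\mu = 12$ occurring precisely at $\lambda = 0$.

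The main (only) substantive step is now to invoke the spectral computation of Proposition~\ref{grayspectrumfinalprop}. That proposition says that for $\Sigma_i = \C\PR^3$ and $\Sigma_i = S^3 \times S^3$ there are no nontrivial coclosed primitive $(1,1)$-eigenforms of $\bar{\Delta}_i$ with eigenvalue in the open interval $(0, 12)$, and moreover that the eigenvalue $12$ is also excluded in these two cases. Hence equation~\eqref{graymaineq3} has no nontrivial solutions for any $\lambda \in (-3, 0]$ on either link, so $M$ has good singularities.

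I do not expect any serious obstacle here: the nontrivial spectral input has already been carried out in Proposition~\ref{grayspectrumfinalprop}, and the translation between critical rates on the cones $C_i$ and the eigenvalue equation on the links is precisely the content of Proposition~\ref{grayspectrumprop1}. The only mild subtlety is the endpoint $\lambda = 0$, which corresponds to $\mu = 12$ and to infinitesimal deformations of the nearly K\"ahler structure on $\Sigma_i$; this is exactly the case in which Proposition~\ref{grayspectrumfinalprop} distinguishes between the three homogeneous Gray manifolds, and it is precisely the exclusion of the flag manifold $\SUth/T^2$ that forces us to restrict our statement to singularities modeled on the cones over $\C\PR^3$ and $S^3 \times S^3$. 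With the good singularities condition verified, Theorem~\ref{smoothCSthm} applies and gives the conclusion.
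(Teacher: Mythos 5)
Your proposal is correct and matches the paper's own argument, which proves the corollary exactly by combining Theorem~\ref{smoothCSthm} with Proposition~\ref{grayspectrumfinalprop} (the good-singularities condition of Definition~\ref{CSgoodsingsdefn} is already phrased in terms of equation~\eqref{graymaineq3}, so the translation $\mu = (\lambda+3)(\lambda+4) \in (0,12]$ for $\lambda \in (-3,0]$ is precisely the intended check). Your remark about the endpoint $\mu = 12$, which excludes the flag manifold $\SUth/T^2$, correctly identifies why the statement is restricted to links $\C\PR^3$ and $S^3 \times S^3$.
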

\begin{proof}
This follows from Theorem~\ref{smoothCSthm}, Corollary~\ref{smoothCScor} and Proposition~\ref{grayspectrumfinalprop}.
\end{proof}

\begin{rmk}
We can define a weaker notion of good singularities, where we allow for nontrivial solutions of~\eqref{graymaineq3} for $\lambda = 0$ as long as they always define \emph{integrable} nearly K\"ahler deformations of the link of the cone at the singularity. In this setting, the arguments above can be extended to show that the moduli space is still smooth. These various notions of good singularities are closely related to the idea of a \emph{stability index} for $\G$~conical singularities, which is potentially a source of further study, as we shall mention in Section~\ref{openproblemssec}.
\end{rmk}

\subsection{Relation with the moduli of resolved CS \texorpdfstring{$\mathbf{\G}$}{G2} manifolds} \label{resolvedmodulisec}

In this section we relate our results to the resolution of singularities construction of~\cite{Kdesings}. Our observations provide evidence that CS $\G$~manifolds likely arise as the ``most common'' form of singular object in any attempt to compactify the moduli space of compact smooth $\G$~manifolds.

Let $M$ be a CS $\G$~manifold with one conical singularity and let $N$ be an AC $\G$~manifold asymptotic to the same $\G$~cone, with link $\Sigma$ at infinity, as $M$ has at its singularity. The main result of~\cite{Kdesings} says that, if a particular necessary topological condition~\cite[Theorem 3.8]{Kdesings} is satisfied, then one can \emph{desingularize} $M$ by gluing in $N$ to obtain a compact \emph{smooth} $\G$~manifold, which we will denote by $X$. When $M$ has a single conical singularity, the topological condition can be expressed using the maps $\Upsilon^k$ of Definition~\ref{Upsilonmapsdefn} as follows:
\begin{equation} \label{topconditioneq}
\Upsilon^3_N (\phn) \, \in \, \im (\Upsilon^3_M), \qquad \qquad \Upsilon^4_N (\psn) \, \in \, \im (\Upsilon^4_M).
\end{equation}
\begin{rmk} \label{UpsilonACrmk}
In~\cite[Definition 2.40]{Kdesings}, the elements $\Upsilon^3_N (\phn)$ and $\Upsilon^4_N (\psn)$ are denoted by $\Phi(N)$ and $\Psi(N)$, respectively.
\end{rmk}

Since $X = M \cup N$, and since $M \cap N$ is homotopy equivalent to $\Sigma$, the Mayer--Vietoris long exact sequence gives
\begin{equation} \label{mayervietoriseq}
\cdots \longrightarrow H^k(X) \longrightarrow H^k(M) \oplus H^k(N) \overset{\delta^k}{\longrightarrow} H^k(\Sigma) \longrightarrow H^{k+1}(X) \longrightarrow \cdots
\end{equation}
where the map $\delta^k : H^k(M) \oplus H^k(N) \to H^k(\Sigma)$ is given by $\delta^k(a) = \Upsilon^k_M(a) - \Upsilon^k_N(a)$, and the $\Upsilon^k$ maps are those of Definition~\ref{Upsilonmapsdefn}.

\begin{lemma} \label{mayervietorislemma}
Let $M$, $N$, $X$, and $\Sigma$ be as above. The following equation holds.
\begin{equation} \label{mvlemmaeq}
\begin{aligned}
b^k(X) \, & = \,  b^k(M) - \dim(\im \Upsilon^k_M) \\ & \qquad {} + b^k_{\mathrm{cs}}(N) - \dim( \im \Upsilon^{7-k}_N) + \dim ( (\im \Upsilon^{7-k}_M) \cap (\im \Upsilon^{7-k}_N) ) \\ & \qquad {} + \dim ( (\im \Upsilon^k_M) \cap (\im \Upsilon^k_N) ).
\end{aligned}
\end{equation}
\end{lemma}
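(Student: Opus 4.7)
The plan is to extract the formula for $b^k(X)$ directly from the Mayer--Vietoris sequence~\eqref{mayervietoriseq}, and then convert the resulting expression into the form stated in~\eqref{mvlemmaeq} using the results of Lemma~\ref{exactsequencelemma} and Corollary~\ref{Upsiloncor}. From the exact piece
\begin{equation*}
H^{k-1}(\Sigma) \xrightarrow{\partial^{k-1}} H^k(X) \longrightarrow H^k(M) \oplus H^k(N) \xrightarrow{\delta^k} H^k(\Sigma)
\end{equation*}
the rank-nullity theorem immediately gives
\begin{equation*}
b^k(X) \, = \, \dim \ker \delta^k + \bigl( \dim H^{k-1}(\Sigma) - \dim \im \delta^{k-1} \bigr).
\end{equation*}
So the task reduces to computing each of the two summands on the right.

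First I would compute $\dim \ker \delta^k$ using the fact that $\im \delta^k = \im \Upsilon^k_M + \im \Upsilon^k_N$, since $\delta^k(a,b) = \Upsilon^k_M(a) - \Upsilon^k_N(b)$. The standard dimension formula for a sum of subspaces gives
\begin{equation*}
\dim \ker \delta^k \, = \, b^k(M) + b^k(N) - \dim \im \Upsilon^k_M - \dim \im \Upsilon^k_N + \dim \bigl( (\im \Upsilon^k_M) \cap (\im \Upsilon^k_N) \bigr).
\end{equation*}

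The key step is the treatment of $\dim H^{k-1}(\Sigma) - \dim \im \delta^{k-1}$. Here I would invoke Corollary~\ref{Upsiloncor}, which tells us that for each of $M$ and $N$, the orthogonal complement of $\im \Upsilon^{k-1}$ in $H^{k-1}(\Sigma)$ is exactly $\sts (\im \Upsilon^{7-k})$. Using the elementary identity $(A+B)^\perp = A^\perp \cap B^\perp$ for subspaces of the inner product space $H^{k-1}(\Sigma)$, and the fact that $\sts$ is a linear isomorphism so that $\sts(\im \Upsilon^{7-k}_M) \cap \sts(\im \Upsilon^{7-k}_N) = \sts \bigl( \im \Upsilon^{7-k}_M \cap \im \Upsilon^{7-k}_N \bigr)$, one obtains
\begin{equation*}
\dim H^{k-1}(\Sigma) - \dim \im \delta^{k-1} \, = \, \dim \bigl( (\im \Upsilon^{7-k}_M) \cap (\im \Upsilon^{7-k}_N) \bigr).
\end{equation*}
This conversion from a degree $k-1$ intersection to a degree $7-k$ intersection is the main conceptual step, and the only place the $\G$--structure really enters (via Hodge duality on $\Sigma$); I expect it to be the one step requiring genuine care.

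Finally, I would combine the two summands and rewrite $b^k(N) - \dim \im \Upsilon^k_N$ as $b^k_{\mathrm{cs}}(N) - \dim \im \Upsilon^{7-k}_N$; this is precisely the content of equations~\eqref{exactkereq1} and~\eqref{exactkereq2} applied to $N$, where $M$ being CS means we retain $b^k(M) - \dim \im \Upsilon^k_M$ as written, while $N$ being AC makes the compactly-supported version the natural one. Adding the pieces yields exactly~\eqref{mvlemmaeq}. As a sanity check I would verify the formula in the trivial case $k=0$ (where both intersection terms and both rank terms are easily computed and one recovers $b^0(X) = 1$), which is already a useful consistency test for signs and index shifts.
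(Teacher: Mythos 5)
Your argument is correct, and its overall skeleton (Mayer--Vietoris plus rank--nullity, computing $\dim\ker\delta^k$ and $\dim\coker\delta^{k-1}$ separately, then converting $b^k(N) - \dim(\im \Upsilon^k_N)$ into $b^k_{\mathrm{cs}}(N) - \dim(\im \Upsilon^{7-k}_N)$ via~\eqref{exactkereq1} and~\eqref{exactkereq2}) is the same as the paper's; your computation of $\dim\ker\delta^k$ via $\im\delta^k = \im\Upsilon^k_M + \im\Upsilon^k_N$ and the sum-of-subspaces formula is just a rephrasing of the paper's formula $\dim\ker\delta^k = \dim\ker\Upsilon^k_M + \dim\ker\Upsilon^k_N + \dim\bigl((\im\Upsilon^k_M)\cap(\im\Upsilon^k_N)\bigr)$. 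The one step where you genuinely diverge is the identification $\dim\coker\delta^{k-1} = \dim\bigl((\im\Upsilon^{7-k}_M)\cap(\im\Upsilon^{7-k}_N)\bigr)$: the paper gets this by passing to the formal adjoint $(\delta^{k-1})^*$, rewriting it via Poincar\'e duality in terms of the connecting homomorphisms $\partial^{7-k}_M, \partial^{7-k}_N$ of~\eqref{longexactsequenceeq}, and using exactness to identify $\ker\partial^{7-k} = \im\Upsilon^{7-k}$; you instead use Corollary~\ref{Upsiloncor} together with $(A+B)^{\perp} = A^{\perp}\cap B^{\perp}$ and the fact that $\sts$ is an isomorphism. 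Your route is legitimate and arguably cleaner, since it reuses a corollary already proved in the paper rather than redoing the adjoint computation (indeed Corollary~\ref{Upsiloncor} encodes the same Stokes/Poincar\'e-pairing input that drives the paper's adjoint argument). Two small points to make explicit: the orthogonal complements must all be taken with respect to the $L^2$ inner product on harmonic representatives on $\Sigma$, which is the sense of $\perp$ in Corollary~\ref{Upsiloncor}, and the corollary applies with the \emph{same} Hodge star for both $M$ and $N$ precisely because both are asymptotic to the same cone, so the link $\Sigma$ carries a single metric — this is what makes $\sts(\im\Upsilon^{7-k}_M)\cap\sts(\im\Upsilon^{7-k}_N) = \sts\bigl(\im\Upsilon^{7-k}_M\cap\im\Upsilon^{7-k}_N\bigr)$ meaningful.
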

\begin{proof}
We will use the shorthand notation $H^k_A$ for $H^k(A)$. By the rank-nullity theorem we have
\begin{equation} \label{mvtempeq1}
b^k(X) \, = \, \dim(\ker (H^k_X \to H^k_M \oplus H^k_N)) + \dim(\im(H^k_X \to H^k_M \oplus H^k_N)).
\end{equation}
The exactness of~\eqref{mayervietoriseq} gives
\begin{equation} \label{mvtempeq2}
 \dim(\im(H^k_X \to H^k_M \oplus H^k_N)) \, = \, \dim (\ker \delta^k),
\end{equation}
but since $\delta^k = \Upsilon^k_M - \Upsilon^k_N$, it is easy to check that
\begin{equation} \label{mvtempeq3}
\dim(\ker \delta^k) \, = \, \dim (\ker \Upsilon^k_M) + \dim(\ker \Upsilon^k_N) + \dim ( (\im \Upsilon^k_M) \cap (\im \Upsilon^k_N) ).
\end{equation}
Substituting~\eqref{mvtempeq2} and~\eqref{mvtempeq3} into~\eqref{mvtempeq1} gives
\begin{equation} \label{mvtempeq4}
\begin{aligned}
b^k(X) \, & = \, \dim(\ker (H^k_X \to H^k_M \oplus H^k_N)) \\ & \qquad {} +  \dim (\ker \Upsilon^k_M) + \dim(\ker \Upsilon^k_N) + \dim ( (\im \Upsilon^k_M) \cap (\im \Upsilon^k_N) ).
\end{aligned}
\end{equation}

Again using the exactness of~\eqref{mayervietoriseq}, we find
\begin{align*}
\dim (\ker (H^k_X \to H^k_M \oplus H^k_N)) \, & = \, \dim (\im (H^{k-1}_{\Sigma} \to H^k_X)) \\ & = \, b^{k-1}(\Sigma) - \dim (\ker (H^{k-1}_{\Sigma} \to H^k_X)) \\ & = \, b^{k-1}(\Sigma) - \dim (\im \delta^{k-1}) \\ & = \, \dim (\coker \delta^{k-1}).
\end{align*}
But the dimension of this cokernel is equal to the dimension of the kernel of the \emph{formal adjoint}, which is a map $(\delta^{k-1})^* : H^{6-(k-1)}(\Sigma) \to H^{7-(k-1)}_{\mathrm{cs}}(M) \oplus H^{7-(k-1)}_{\mathrm{cs}}(N)$. It can be easily checked from the definitions of all the maps involved that $(\delta^{k-1})^* (a) = (\partial^{7-k}_M (a), \partial^{7-k}_N(a))$, where the maps $\partial^k_{A} : H^k (\Sigma) \to (H^{k+1}_{\mathrm{cs}})(A)$ are the connecting homomorphisms in the long exact sequence~\eqref{longexactsequenceeq}. Thus we have
\begin{equation} \label{mvtempeq5}
\begin{aligned}
\dim (\ker (H^k_X \to H^k_M \oplus H^k_N)) \, & = \, \dim (\ker (\delta^{k-1})^*) \\ & = \, \dim ((\ker \partial^{7-k}_M) \cap (\ker \partial^{7-k}_N)) \\ & = \, \dim ( (\im \Upsilon^{7-k}_M) \cap (\im \Upsilon^{7-k}_N) )
\end{aligned}
\end{equation}
where in the last step above we have used the exactness of~\eqref{longexactsequenceeq} for both $M$ and $N$. Finally, we substitute~\eqref{mvtempeq5} into~\eqref{mvtempeq4} and use~\eqref{exactkereq1} for $M$ and~\eqref{exactkereq2} for $N$ to obtain~\eqref{mvlemmaeq}.
\end{proof}

Specializing Lemma~\ref{mayervietorislemma} to $k=3$ gives
\begin{equation} \label{mvmaineq}
\begin{aligned}
b^3(X) \, & = \,  b^3(M) - \dim(\im \Upsilon^3_M) \\ & \qquad {} + b^3_{\mathrm{cs}}(N) - \dim( \im \Upsilon^4_N) + \dim ( (\im \Upsilon^4_M) \cap (\im \Upsilon^4_N) ) \\ & \qquad {} + \dim ( (\im \Upsilon^3_M) \cap (\im \Upsilon^3_N) ).
\end{aligned}
\end{equation}
Notice that the left hand side gives the dimension of the moduli space of deformations of the \emph{smooth, compact} $\G$~manifold $X$. Also, by Corollary~\ref{smoothCScor} and equation~\eqref{imcseq} the first two terms on the right hand side give the dimension of the reduced moduli space $\breve{\mathcal M}_{\nu}$ of the CS $\G$~manifold $M$, \emph{in the cases when Theorem~\ref{smoothCSthm} applies}. In particular, by Corollary~\ref{smoothCSsomeconescor}, this is true if there is one conical singularity whose link is either $\C \PR^3$ or $S^3 \times S^3$.

Let us consider two particular cases.

{\em Case 1.} Suppose $b^4(N) = b^3_{\mathrm{cs}}(N) = 1$, $b^3(N) = b^4_{\mathrm{cs}}(N) = 0$, and $b^3(\Sigma) = 0$. Further assume that $\Upsilon^4_N (\psn) \neq 0$ in $H^4(\Sigma)$. [In particular, these assumptions all hold for the Bryant--Salamon manifolds $\Lambda^2_-(S^4)$ and $\Lambda^2_-(\C \PR^2)$, with links $\Sigma = \C \PR^3$ and $\Sigma = \SUth / T^2$, respectively.]

With these assumptions, some simple diagram chasing using the exact sequence~\eqref{longexactsequenceeq} gives
\begin{equation*}
\Upsilon^3_M = 0, \quad \Upsilon^3_N = 0, \quad \ker(\Upsilon^4_N) = \{ 0 \}, \quad \dim (\im(\Upsilon^4_N)) = 1.
\end{equation*}
The condition~\eqref{topconditioneq} in this case thus becomes $\im \Upsilon^4_N = ( \im \Upsilon^4_N) \cap ( \im \Upsilon^4_M)$, and~\eqref{mvmaineq} therefore becomes $b^3(X) = b^3(M) + 1$.

{\em Case 2.} Suppose $b^4(N) = b^3_{\mathrm{cs}}(N) = 0$, $b^3(N) = b^4_{\mathrm{cs}}(N) = 1$, and $b^4(\Sigma) = 0$. Further assume that $\Upsilon^3_N (\phn) \neq 0$ in $H^3(\Sigma)$. [In particular, these assumptions all hold for the Bryant--Salamon manifold $\spi (S^3)$, with link $\Sigma = S^3 \times S^3$.]

As before, diagram chasing using~\eqref{longexactsequenceeq} in this case yields
\begin{equation*}
\Upsilon^4_M = 0, \quad \Upsilon^4_N = 0, \quad \ker(\Upsilon^3_N) = \{ 0 \}, \quad \dim (\im(\Upsilon^3_N)) = 1.
\end{equation*}
The condition~\eqref{topconditioneq} in this case therefore becomes $\im \Upsilon^3_N = ( \im \Upsilon^3_N) \cap ( \im \Upsilon^3_M)$, and~\eqref{mvmaineq} thus becomes $b^3(X) = b^3(M) = b^3(M) - \dim( \im \Upsilon^3_M) + 1$.

To summarize: in both cases (which include all the known examples of AC $\G$~manifolds), we find that the dimension of the moduli space of glued compact $\G$~manifolds that are constructed in~\cite{Kdesings} is exactly one dimension higher than the ``reduced moduli space'' $\breve{\mathcal M}_{\nu}$ of the CS $\G$~manifold which has been resolved so, informally, we can view the reduced CS moduli space almost literally as ``the boundary'' of the moduli space of compact $\G$~manifolds, at least locally.

\subsection{Existence of a gauge-fixing diffeomorphism} \label{gaugefixexistssec}

In~\cite{Kdesings}, a gauge-fixing condition was defined for AC $\G$~manifolds that  is slightly different from our Definition~\ref{gaugefixingdefn}.

\begin{defn} \label{alternativegaugefixingdefn}
Consider an AC $\G$~manifold $(M, \ph)$, which comes equipped with a choice of diffeomorphism $h : (R, \infty) \times \Sigma \to M \setminus L$ for some compact subset $L \subset M$ and some $R > 0$, where $\Sigma$ is the link of the asymptotic $\G$~cone. The diffeomorphism $h$ is said to satisfy the gauge-fixing condition if $h^*\ph - \phc$ lies in $\Omega^3_{27}$ with respect to the $\G$~structure $\phc$ on the cone. 
\end{defn}

In~\cite{Kdesings}, it was promised that the present paper would give a proof that such a diffeomorphism $h$ can always be chosen to satisfy the gauge-fixing condition. The existence of such a gauge-fixing diffeomorphism follows from a modification of our slice theorem. To state it, we need to introduce some notation.

Let $(M,\ph)$ be an AC $\G$~manifold so that $\ph$ is asymptotic with rate $\nu \in (-4,0)$ to the torsion-free $\G$~structure $\phc$ on the asymptotic cone $C$. For notational convenience we set 
\begin{equation*}
\bar{\Omega}^k_{l, \nu} \, = \, \{ h^* \eta \, ; \, \eta \in \Omega^k_{l, \nu} \} \subseteq \Omega^k ( (R, \infty) \times \Sigma).
\end{equation*} 
Using the conical torsion-free $\G$~structure $\phc$, we can decompose the $3$-forms on the conical end as $\Omega^3 ( (R, \infty \times \Sigma ) = \bar{\Omega}^3_1 \oplus \bar{\Omega}^3_7 \oplus \bar{\Omega}^3_{27}$ and let $\bar{\pi}_1$, $\bar{\pi}_7$, and $\bar{\pi}_{27}$ be the projections onto the components. (We also have a similar decomposition for the $2$-forms.) Let
\begin{equation*}
(\mathcal{G}_{\phc})_{l,\nu} \, = \, \{ \eta \in (\bar{\Omega}^3_{27})_{l, \nu} \, ; \, d \eta = 0 \}
\end{equation*}
and let
\begin{equation*}
(\mathcal{T}_{\ph})_{l, \nu} \, = \, h^* T_{\ph} ({\mathcal D}_{l+1, \nu+1} \cdot \ph) \, = \, \{ d (X \hk h^* \ph) \, ; \, X \in \bar{\Omega}^1_{l+1, \nu+1} \}.
\end{equation*}
We can now state our modified slice theorem, which we prove later in this section.

\begin{thm} \label{modified.inf.slice.thm.2}
Let $(M, \ph)$ be an AC $\G$~manifold with generic rate $\nu \in (-4,0)$. Use the notation above and let $l \geq 4$. By taking $R$ sufficiently large, on $( R, \infty ) \times \Sigma \cong M \setminus L$ we have
\begin{equation} \label{C.decomp.eq.2}
\bar{\mathcal C}_{l, \nu} \, = \, \{ \eta \in \bar{\Omega}^3_{l, \nu} \, ; \, d \eta = 0 \} \, = \, (\mathcal{T}_{\ph})_{l, \nu} + ({\mathcal G}_{\phc})_{l, \nu}.
\end{equation}
\end{thm} 

Although one would like to deduce this result by modifying the proof of Theorem~\ref{infinitesimalslicethm}, that method runs into difficulties for rates $\nu \geq -3$. We therefore take an alternative approach.

Suppose that $\eta \in \bar{\mathcal C}_{l, \nu}$. In this section only, we will use $\bar{\mathcal J}^1_{l, \nu}$ to denote the coexact $1$-forms in $\bar{\Omega}^1_{l,\nu}$; that is,  in the image of $\dsc$. Observe that since $d \eta = 0$ and $\phc$ is exact, we have that  $\stc (\eta \wedge \phc) \in \bar{\mathcal J}^1_{l, \nu}$. (We also have that the function $\stc ( \eta \wedge \psc)$ is coexact but we shall not need this.)

Now consider the map 
\begin{equation*}
F_C : X \in \bar{\Omega}^1_{l+1, \nu+1} \mapsto \big( \stc ( d (X \hk \phc) \wedge \psc) , \stc ( d ( X \hk \phc) \wedge \phc) \big) \in \bar{\Omega}^0_{l, \nu} \oplus \bar{\Omega}^1_{l, \nu}.
\end{equation*}
The map $F_C$ can be naturally identified with the map $X \mapsto \bar{\pi}_{1+7} d ( X \hk \phc )$ and we can write
\begin{equation*}
F_C(X) \, = \, (-3 \dsc X , 2 \curlc X)
\end{equation*}
by Lemma~\ref{dXhkphlemma}, where $\curlc$ is the curl operator on $C$.  Notice that since $\phc$ is closed, we have
\begin{equation*}
F_C: \bar{\Omega}^1_{l+1, \nu+1} \to \bar{\Omega}^0_{l, \nu} \oplus \bar{\mathcal J}^1_{l, \nu}.
\end{equation*}
Our first key observation is the following, which is that $F_C$ has a right-inverse.

\begin{prop} \label{GC.prop}
For generic rates $\nu$, there is a bounded linear map 
\begin{equation*}
G_C: \bar{\Omega}^0_{l, \nu} \oplus \bar{\mathcal J}^1_{l, \nu} \to \bar{\Omega}^1_{l+1, \nu+1}
\end{equation*}
such that $F_C\circ G_C = \id$.
\end{prop}
\begin{proof}
To begin, we know that for generic rates $\nu$ the Laplacian on the cone 
\begin{equation*}
\lapc^0 : \bar{\Omega}^0_{l+2, \nu+2} \to \bar{\Omega}^0_{l,\nu}
\end{equation*}
is surjective. This can be proved easily by separation of variables and by decomposing a function on the cone using eigenfunctions on the link, which converts the surjectivity question into a problem concerning the solution of a system of linear second order ordinary differential equations in the radius $r$ with regular singular points at $r=0$. Thus, there exists a bounded linear map
\begin{equation*}
G_{\Delta}^0: \bar{\Omega}^0_{l, \nu} \to \bar{\Omega}^0_{l+2, \nu+2}
\end{equation*}
such that $\lapc^0 \circ G_{\Delta}^0 = \id$. Hence, if we let 
\begin{equation*}
G_C (f,0) = -\frac{1}{3} d (G_{\Delta}^0 f),
\end{equation*}
then
\begin{equation*}
F_C \circ G_C(f,0) \, = \, \left( -3 \dsc \left(- \frac{1}{3} d (G_{\Delta}^0 f) \right), 0 \right) \, = \, \big( \lapc^0 (G_{\Delta}^0 f), 0 \big) \, = \, (f,0).
\end{equation*}
 
Next, if $X \in \bar{\mathcal J}^1_{l, \nu}$, then $\stc X \in \bar{\Omega}^6_{l,\nu}$ is exact so it may be written $\stc X = d \xi$ for some $\xi$. We want to show that $\xi$ can be chosen so that $\xi \in \bar{\Omega}^5_{0, \nu+1}$. We can write
\begin{equation*}
\stc X \, = \, dr \wedge \kappa(r) + \omega(r),
\end{equation*}
where $\kappa(r)$ and $\omega(r)$ are \emph{uniquely} determined by $X$. Define the $5$-form $\beta(r)$ by
\begin{equation*}
\beta(r) \, = \, \int_0^r \kappa(s) ds.
\end{equation*}
Then $\beta \in \bar{\Omega}^5_{l+1, \nu+1}$. Moreover, since $d \beta = dr \wedge \kappa(r) + \ds \beta$, we find that $\stc X - d (\beta(r) )$ is exact and has no $dr$ component, so in fact $\stc X - d \beta = d \sigma = \ds \sigma$ for some $5$-form $\sigma$ on the link, independent of $r$. By taking the coexact part of any such $\sigma$, it is determined uniquely. Since $\sigma$ is independent of $r$, it is homogeneous of order $-5$, and thus $\sigma \in \bar{\Omega}^5_{0, -5} \subseteq \bar{\Omega}^5_{0, \nu+1}$, because $-5 < -3 < \nu + 1$.

This particular choice of $\xi = \beta(r) + \sigma$ therefore by construction satisfies $\xi \in \bar{\Omega}^5_{0, \nu+1}$. Hence, we have $\stc \xi \in \bar{\Omega}^2_{0, \nu+1}$, and $X = \dsc ( \stc \xi)$. We also know that 
\begin{equation*}
\lapc^2 : \bar{\Omega}^2_{2, \nu+2} \to \bar{\Omega}^2_{0, \nu}
\end{equation*}
is surjective for generic $\nu$ (using the same argument as described above), so there is a bounded linear map  
\begin{equation*}
G_{\Delta}^2 : \bar{\Omega}^2_{0, \nu} \to \bar{\Omega}^2_{2, \nu+2}
\end{equation*}
such that $\lapc^2 \circ G^2_{\Delta} = \id$. If we then let
\begin{equation*}
G_C (0,X) \, = \, \frac{1}{2} \curlc (\dsc (G_{\Delta}^2 \stc \xi )),
\end{equation*}
then using Remark~\ref{curlrmk} we find that
\begin{align*}
F_C \circ G_C(0,X) \, & = \, \left( 0, 2 \curlc \left( \frac{1}{2} \curlc (\dsc (G_{\Delta}^2 \stc \xi) ) \right) \right) \\
& = \, \left( 0, \dsc d (\dsc (G_{\Delta}^2 \stc \xi)) \right) \\
& = \, \big( 0, \dsc (\lapc^2( G_{\Delta}^2 \stc \xi)) \big) \\
& = \, (0, \dsc (\stc \xi)) \, = \, (0,X).
\end{align*}
Since $F_C$ is essentially the Dirac operator, and $X \in \bar{\mathcal J}^1_{l, \nu}$, elliptic regularity then ensures that $G_C(0,X) \in \bar{\Omega}^1_{l+1,\nu+1}$, so we have defined $G_C$ as required.  
\end{proof}

However, the actual map that we require a right-inverse for is
\begin{equation*}
F : X \in \bar{\Omega}^1_{l+1, \nu+1} \mapsto \big( \stc ( d ( X \hk h^* \ph) \wedge \psc), \stc (d ( X \hk h^* \ph) \wedge \phc) \big) \in \bar{\Omega}^0_{l, \nu} \oplus \bar{\mathcal{J}}^1_{l, \nu}.
\end{equation*}
(Note that $F$ maps into the space claimed because $h^* \ph$ is asymptotic to $\phc$ and $d ( X \hk h^* \ph)$ is closed.)

\begin{prop} \label{G.prop}
For generic rates $\nu$, and by making $R$ larger if necessary, there is a bounded linear map 
\begin{equation*}
G : \bar{\Omega}^0_{l, \nu} \oplus \bar{\mathcal J}^1_{l, \nu} \to \bar{\Omega}^1_{l+1, \nu+1}
\end{equation*}
so that $F \circ G = \id$.
\end{prop}
\begin{proof}
By Proposition~\ref{GC.prop}, we have that
\begin{equation*}
F \, = \, F_C \circ ( \id + G_C \circ ( F - F_C) ).
\end{equation*}
Since $h^* \ph - \phc$ is of order $O(r^{\nu})$, by making $R$ larger if necessary, we can ensure that the operator $G_C \circ (F - F_C)$ has norm strictly less than $1$. Hence,
\begin{equation*}
\id + G_C \circ (F - F_C) : \bar{\Omega}^1_{l+1, \nu+1} \to \bar{\Omega}^1_{l+1, \nu+1}
\end{equation*} 
is a bounded linear invertible map. We therefore let
\begin{equation*}
G \, = \, \big( \id + G_C \circ ( F - F_C ) \big)^{-1} \circ G_C
\end{equation*}
to obtain the required map.
\end{proof}

We now have the ingredients required to prove Theorem~\ref{modified.inf.slice.thm.2}.

\begin{proof}[Proof of Theorem~\ref{modified.inf.slice.thm.2}]
Let $\eta \in \bar{\mathcal{C}}_{l, \nu}$ be as defined in~\eqref{C.decomp.eq.2}. Then, as we already observed, we have
\begin{equation*}
( \stc (\eta \wedge \psc), \stc (\eta \wedge \phc) ) \in \bar{\Omega}^0_{l, \nu} \oplus \bar{\mathcal{J}}^1_{l, \nu}.
\end{equation*}
Let
\begin{equation*}
X \, = \, G( \stc (\eta \wedge \psc), \stc (\eta \wedge \phc)) \in \bar{\Omega}^1_{l+1, \nu+1}
\end{equation*}
where $G$ is given by Proposition~\ref{G.prop}, choosing $R$ larger if necessary so that the proposition applies. By construction, we 
then have that
\begin{equation*}
\bar{\pi}_{1+7} d ( X \hk h^* \ph ) \, = \, \bar{\pi}_{1+7} \eta.
\end{equation*}
Therefore
\begin{equation*}
\gamma \, = \, \eta - d ( X \hk h^* \ph) \in (\mathcal{G}_{\phc})_{l, \nu}
\end{equation*} 
and thus we have obtained the decomposition given in~\eqref{C.decomp.eq.2}.
\end{proof}

Given the modified infinitesimal slice theorem (Theorem~\ref{modified.inf.slice.thm.2}) we can now give the promised gauge-fixing result.

\begin{prop} \label{gaugefixingpromiseprop.2}
Let $(M, \ph)$ be an AC $\G$~manifold with generic rate $\nu \in (-4,0)$ and diffeomorphism $h : (R, \infty) \times \Sigma \to M \setminus L$ as given in Definition~\ref{ACdefn}. Then, after possibly making $R$ larger, there exists a diffeomorphism $f \in \mathcal{D}_{\nu+1}$ such that the diffeomorphism $f\circ h : (R, \infty) \times \Sigma \to M \setminus f(L)$ satisfies the gauge-fixing condition in Definition~\ref{alternativegaugefixingdefn}.
\end{prop}

\begin{rmk}
Since an AC $\G$~manifold which is asymptotic with rate $\nu_0$ is also asymptotic with rate $\nu > \nu_0$, any AC $\G$~manifold is AC with generic rate $\nu \in (-4,0)$.
\end{rmk}

Before giving the proof of Proposition~\ref{gaugefixingpromiseprop.2}, we need a lemma.

\begin{lemma} \label{gaugefixingpromise.lemma}
Let $\eta \in \mathcal{C}_{l, \nu}$. There exists a diffeomorphism $f \in \mathcal{D}_{l+1, \nu+1}$, which is the identity on a compact subset $L'$ of $L \subseteq M$, such that $(f\circ h)^* \eta$ lies in $(\mathcal{G}_{\phc})_{l, \nu}$ whenever $\eta$ is sufficiently small on the end.
\end{lemma}
\begin{proof}
Theorem~\ref{modified.inf.slice.thm.2} implies that given any closed $3$-form $\eta \in \mathcal{C}_{l, \nu}$ there exists $X \in \Omega^1_{l+1, \nu+1}$ such that, on the end, $\eta - d ( X \hk \ph )$ lies in $(h^{-1})^* \bar{\Omega}^3_{27}$. We can choose $X$ to vanish on a large compact subset $L'$ of $L \subseteq M$ and choose $X$ uniquely on the end of $M$ by insisting that $d (X \hk h^* \ph)$ lies in a direct complement $(\mathcal{T}'_{\ph})_{l, \nu}$ of $(\mathcal{T}_{\ph})_{l, \nu} \cap (\mathcal{G}_{\phc})_{l, \nu}$ in $(\mathcal{T}_{\ph})_{l,\nu}$. Such a complement exists because the intersection $(\mathcal{T}_{\ph})_{l, \nu} \cap (\mathcal{G}_{\phc})_{l, \nu}$ is isomorphic to a subset of the kernel of the modified Dirac operator, which thus forms a closed set and consists of smooth forms.  A modification of earlier slice theorem arguments for the AC manifold $M$ imply that there exists a diffeomorphism $f \in \mathcal{D}_{l+1, \nu+1}$, which is the identity on $L'$, such that $(f\circ h)^* \eta$ lies in $(\mathcal{G}_{\phc})_{l, \nu}$ whenever $\eta$ is sufficiently small on the end.
\end{proof}

\begin{proof}[Proof of Proposition~\ref{gaugefixingpromiseprop.2}]
First we claim that there exists a closed and coclosed $3$-form $\zeta \in \mathcal{H}^3_{\nu}$ on $M$ and a $2$-form $\beta$ on $M \setminus L$ such that
\begin{equation} \label{phc.decomp.eq.2}
\ph - (h^{-1})^* \phc \, = \, \zeta + d \beta.
\end{equation}
To see this, first note that as observed in~\cite[Proposition 2.5]{Kdesings}, the form $\phc$ is exact, so 
\begin{equation*}
[h^* \ph - \phc] \, = \, [h^* \ph] \, = \, \Upsilon^3 [\ph] \, \in H^3(\Sigma, \R).
\end{equation*}
If $\nu < -3$ then $\Upsilon^3 [\ph] = 0$ by~\cite[Proposition 2.39]{Kdesings}, so $[h^* \ph - \phc] = 0$ and hence $\ph-(h^{-1})^* \phc$ is exact on $M \setminus L$. Thus equation~\eqref{phc.decomp.eq.2} holds in this case with $\zeta = 0$. If instead $\nu \geq -3$ we can assume $\nu > -3$ as $\nu$ is supposed to be generic and we are always free to increase the rate. Lemma~\ref{kernelchange34preliminarylemma} gives the existence of a closed and coclosed $3$-form $\zeta \in \mathcal{H}^3_{\nu}$ on $M$ such that $\Upsilon^3 [\zeta] = \Upsilon^3 [\ph]$. Thus, $\ph - (h^{-1})^* \phc - \zeta$ is exact on $M \setminus L$ and~\eqref{phc.decomp.eq.2} again holds.

Now let $R' > R$ and let $\chi: M \to [0,1]$ be a smooth cutoff function which is $1$ on $h ( (R', \infty) \times \Sigma)$ and $0$ on $L$. Let $\xi = \zeta + d (\chi \beta)$, which is defined on all of $M$, and so that $\xi$ agrees with $\ph-(h^{-1})^* \phc$ outside a compact set by~\eqref{phc.decomp.eq.2}.  Applying Lemma~\ref{gaugefixingpromise.lemma} to $\eta = \xi$, since we can make $\xi$ small on the end by making $R$ larger, implies the existence of a diffeomorphism $f\in\mathcal{D}_{l+1, \nu+1}$ such that $(f\circ h)^* \xi \in  (\mathcal{G}_{\phc})_{l, \nu}$, which is the identity on $L'$ and is the unique such diffeomorphism on the end close to the identity, up to composition with a given space of diffeomorphisms in $\mathcal{D}_{\nu+1}$ (generated by vector fields in the kernel of the modified Dirac operator). We also have that $f \circ h$ is gauge-fixed and satisfies the asymptotic decay conditions~\eqref{ACdefneq} up to order $l$, that $\ph$ and $\phc$ are torsion-free $\G$~structures, and $\xi$ is smooth. We can thus use either elliptic regularity or the aforementioned uniqueness for $f$ to deduce that $f \in \mathcal{D}_{\nu+1}$ and $ f\circ h$ satisfies~\eqref{ACdefneq} for all orders.
\end{proof}

\subsection{Open problems} \label{openproblemssec}

There remain several interesting and important open problems for future study.
\begin{itemize}
\item It is important to find more examples, especially with little or no symmetry, of Gray manifolds (compact strictly nearly K\"ahler $6$-manifolds). This would provide new examples of $\G$~cones, and hopefully one could construct new AC $\G$~manifolds with these asymptotic cones.  In particular, it is worthwhile investigating whether the $\G$~cones whose links are the cohomogeneity one nearly K\"ahler manifolds in~\cite{FH} or the locally homogeneous nearly K\"ahler manifolds in~\cite{CV} actually arise as asymptotic cones of AC $\G$~manifolds.
\item The work of Moroianu--Nagy--Semmelmann~\cite{MNS} describes in detail the \emph{infinitesimal} deformations of Gray manifolds. More recent work by Foscolo~\cite{F} shows that the deformations are in general obstructed, and in particular that the homogeneous Gray manifolds are all rigid. It is still an interesting question to understand more completely the integrability of such infinitesimal deformations to actual deformations. Understanding this would allow us to consider more general deformations of $\G$~conifolds where we allow the asymptotic cones to also deform.
\item A related question is to better understand the spectrum of the Laplacian on $2$-forms for Gray manifolds. Some work on this already appears in Moroianu--Nagy--Semmelmann~\cite{MNS} and Moroianu--Semmelmann~\cite{MS}. But a more thorough understanding would allow us to conclude whether the results in Sections~\ref{ACextensionsec} and~\ref{smoothCSsec} are more general or are particular to $\G$~conifolds whose links are the known Gray manifolds.
\item One can define a \emph{stability index} for $\G$~cones in a similar way to the stability index for special Lagrangian cones~\cite{JSL2} or for coassociative cones~\cite{L1}. Results about the spectrum of the Laplacian for Gray manifolds would also tell us something about the stability index of $\G$~cones. Knowledge of the stability index tells us more about when the CS deformation theory is \emph{unobstructed}.
\item We need to construct the first examples of CS $\G$~manifolds. As mentioned earlier, the approach in~\cite{JK} of constructing compact \emph{smooth} $\G$~manifolds may possibly be generalizable to construct CS $\G$~manifolds.
\end{itemize}

\end{document}